\documentclass[11pt, reqno]{amsart}

\usepackage{style}

\title[Asymptotic stability of an anisotropic micropolar fluid]{\vspace{-10cm}Anisotropic micropolar fluids subject to a uniform microtorque: the stable case}

\author{Antoine Remond-Tiedrez}
\address{
Department of Mathematical Sciences\\
Carnegie Mellon University\\
Pittsburgh, PA 15213, USA
}
\email[A. Remond-Tiedrez]{aremondt@andrew.cmu.edu}

\author{Ian Tice}
\address{
Department of Mathematical Sciences\\
Carnegie Mellon University\\
Pittsburgh, PA 15213, USA
}
\email[I. Tice]{iantice@andrew.cmu.edu}
\thanks{I. Tice was supported by an NSF CAREER Grant (DMS \#1653161).}

\subjclass[2010]{Primary: 35B40, 74A60, 76A05; Secondary: 35M31, 35Q30, 76D03}
% 35B40 = Asymptotic behaviour (in PDE)
% 74A60 = Micromechanical theories
% 76A05 = Non-Newtonian fluids
% 35M31 = Initial value problems for systems of mixed type
% 35Q30 = Navier-Stokes equations
% 76D03 = Existence, uniqueness, and regularity theory for incompressible viscous fluids 

\keywords{Anisotropic micropolar fluid, nonlinear asymptotic stability}

\begin{document}

	\begin{abstract}
		We study a three-dimensional, incompressible, viscous, micropolar fluid with anisotropic microstructure on a periodic domain.
		Subject to a uniform microtorque, this system admits a unique nontrivial equilibrium.
		We prove that when the microstructure is inertially oblate  (i.e. pancake-like) this equilibrium is nonlinearly asymptotically stable.

        Our proof employs a nonlinear energy method built from the natural energy dissipation structure of the problem.  Numerous difficulties arise due to the dissipative-conservative structure of the problem.  Indeed, the dissipation fails to be coercive over the energy, which itself is weakly coupled in the sense that, while it provides estimates for the fluid velocity and microstructure angular velocity, it only provides control of two of the six components of the microinertia tensor.  To overcome these problems, our method relies on a delicate combination of two distinct tiers of energy-dissipation estimates, together with transport-like advection-rotation estimates for the microinertia.  When combined with a quantitative rigidity result for the microinertia, these allow us to deduce the existence of global-in-time decaying solutions near equilibrium.
	\end{abstract}

	\maketitle

	\vspace{-2em}
	{\small\tableofcontents}
	\newpage
	\newgeometry{margin=0.75in}

%----------------------------------------------------------------------------------------------------
%	INTRODUCTION
%----------------------------------------------------------------------------------------------------

\section{Introduction}
\label{sec:intro}

\subsection{Brief description of the model}
\label{sec:discuss_model}

	Following the tradition of generalised continuum mechanics dating back to the Cosserat brothers \cite{cosserats},
	micropolar fluids were introduced by Eringen in \cite{eringen_first}.
	The theory of micropolar fluids extends classical continuum mechanics by taking into account the effects due to microstructure present in the continuum.
	For viscous, incompressible fluids, this results in a model coupling the Navier-Stokes equations to an evolution equation for the rigid microstructure present
	at every point in the continuum.
	This theory has been used to describe aerosols and colloidal suspensions, such as those appearing in biological fluids \cite{maurya_biological_fluids},
	blood flow \cite{ramkissoon_air_bubble_blood_flow, beg_al_blood_flow, mekheimer_kot_blood_flow},
	lubrication \cite{allen_kline_lubrication, bayada_lukaszewicz_lubrication, rajasekhar_nicodemus_sharma_lubrication}
	and the lubrication of human joints \cite{sinha_singh_prasad_human_joints},
	as well as liquid crystals \cite{eringen_liquid_crystals, lhuillier_rey_liquid_crystals, gay_balmaz_ratiu_tronci_liquid_crystals}
	and ferromagnetic fluids \cite{nochetto_salgado_tomas_ferrohydrodynamics}.

	We now provide a brief description of the model where we only introduce new terminology and concepts if they are necessary to formulate the main result.
	For a more thorough discussion of the model and for its careful derivation we direct the reader's attention to Section 2 of \cite{rt_tice_amf_iut}
	and Chapter 1 of \cite{thesis} respectively.

	The state of a three-dimensional micropolar fluid at a point in space-time is described by the following variables:
	the fluid's velocity is a vector $u\in\R^3$,
	the fluid's pressure is a scalar $p\in\R^3$,
	the microstructure's angular velocity is a vector $\omega\in\R^3$,
	and the microstructure's moment of inertia is a positive definite symmetric matrix $J\in\R^{3\times 3}$
	which is called the \emph{microinertia tensor}.
	Here we study \emph{homogeneous} micropolar fluids, meaning that the microstructures at any two points of the fluid are identical up to a proper rotation.
	Equivalently, this means that the microinertia tensors at any two points of the fluid are equal up to conjugation (by that same rotation).
	Note that the shape of the microstructure determines the microinertia tensor, but the converse fails since the same microinertia tensor may be achieved by microstructures of differing shapes.

	We restrict our attention to problems in which the microinertia plays a significant role,
	and so in this paper we only consider \emph{anisotropic} micropolar fluids.
	This means that the microinertia is not isotropic, or in other words that $J$ has at least two distinct eigenvalues.
	To be precise, we study micropolar fluids whose microstructure has an \emph{inertial axis of symmetry}.
	That is to say that there are physical constants $\lambda, \nu > 0$ which depend on the microstructure such that, at every point,
	$J$ is a symmetric matrix with spectrum $\cbrac{\lambda, \lambda, \nu}$.
	Studying microstructures with an inertial axis of symmetry may be viewed as the intermediate case between
	the isotropic case where the microinertia has a repeated eigenvalues of multiplicity three and
	the ``fully'' anisotropic case where the microinertia has three distinct eigenvalues.

	The equations governing the motion of a micropolar fluid in the periodic spatial domain $\T^3 = \R^3 / \Z^3$ subject to an external microtorque $\tau e_3$ are
	\begin{subnumcases}{}
		\pdt u + \brac{u \cdot \nabla} u = (\mu + \kappa/2) \Delta u + \kappa \nabla\times\omega - \nabla p
		&on $\brac{0, T} \times \T^3$,
		\label{eq:full_sys_u}\\
		\nabla\cdot u = 0
		&on $\brac{0, T} \times \T^3$,
		\label{eq:full_sys_div}\\
		J\brac{ \pdt\omega + \brac{u\cdot\nabla}\omega } + \omega\times J\omega
			= \kappa\nabla\times u - 2\kappa\omega + \brac{\tilde{\alpha} - \tilde{\gamma}} \nabla\brac{\nabla\cdot\omega} + \tilde{\gamma} \Delta\omega + \tau e_3
		\hspace{-1.2em}&on $\brac{0, T} \times \T^3$, { \hspace{1.5em} }
		\label{eq:full_sys_omega}\\
		\pdt J + \brac{u\cdot \nabla} J = \sbrac{\Omega, J}
		&on $\brac{0, T} \times \T^3$,
		\label{eq:full_sys_J}
	\end{subnumcases}
	where $\sbrac{ \,\cdot\,, \,\cdot\,}$ denotes the commutator between two matrices,
	$\tilde{\alpha} = \alpha + 4\beta/3$ and $\tilde{\gamma} = \beta+\gamma$ where $\mu$, $\kappa$, $\alpha$, $\beta$, and $\gamma$ are non-negative physical viscosity constants,
	$\tau$ denotes the magnitude of the microtorque, and $\Omega$ is the $3$-by-$3$ antisymmetric matrix identified with $\omega$ via the identity $\Omega v = \omega\times v$
	for every $v\in\R^3$.

	We are considering the situation in which external forces are absent and the external microtorque is constant, namely equal to $\tau e_3$ for some constant $\tau > 0$.
	In particular note that the choice of $e_3$ as the direction of the microtorque is made without loss of generality since the equations of motion are equivariant under proper rotations.
	More precisely: if $\brac{u, p, \omega, J}$ is a solution of \eqref{eq:full_sys_u}--\eqref{eq:full_sys_J} then, for any $\mathcal{R} \in SO(3)$,
	$\brac{u, p, \mathcal{R}\omega, \mathcal{R} J \mathcal{R}^T}$ is a solution of \eqref{eq:full_sys_u}--\eqref{eq:full_sys_J} \emph{provided} that the
	microtorque $\tau e_3$ is replaced by $\tau \mathcal{R} e_3$.
	
	We can motivate this choice to have no external forces and a constant microtorque in two ways.
	On one hand it it reminiscent of certain chiral active fluids constituted of self-spinning particles which continually drive energy into the system
	\cite{banerjee_souslov_et_at_chiral_active_fluids}, as our constant microtorque does.
	On the other hand this choice of an external force-microtorque pair is motivated by the lack of analytical results on anisotropic micropolar fluids.
	As a first foray into the world of anisotropic micropolar fluids, it is natural to look for the simplest external force-microtorque pair which gives rise
	to non-trivial equilibria for the angular velocity $\omega$ and the microinertia $J$.
	The simplest such external force-microtorque pair is precisely $(0, \tau e_3)$.
	
\paragraph{\textbf{The equilibrium and its stability}}

	\vspace{0.5em}
	Let us now turn our attention to the aforementioned equilibrium.
	Subject to a constant and uniform microtorque, the unique equilibrium of the system is the following:
	the fluid's velocity is quiescent ($u_{eq} = 0$),
	the pressure is null ($p_{eq} = 0$),
	the angular velocity is aligned with the microtorque ($\omega_{eq} = \frac{\tau}{2\kappa}$),
	and the inertial axis of symmetry of the microstructure is aligned with the microtorque such that the microinertia is $J_{eq} = \diag\brac{\lambda, \lambda, \nu}$.

	A physically-motivated heuristic suggests that the stability of the equilibrium depends on the microstructure,
	and more precisely that the equilibrium is stable if $\nu > \lambda$ and unstable if $\lambda > \nu$.
	This heuristic explanation is based on the analysis of the energy associated with the system
	and with a comparison with the ODE describing the rotation of a damped rigid body subject to an external torque.
	While we defer to the companion paper \cite{rt_tice_amf_iut} for a detail discussion of this heuristic,
	the core of the argument based on the analysis of the energy can be seen from the energy-dissipation relation recorded later in this paper.
	In particular, the energy recorded in \eqref{eq:def_E_lin} below (which then appears first in a rigorous setting in \fref{Proposition}{prop:gen_pert_ED_rel})
	only remains positive-definite when $\nu > \lambda$, which suggests that this may characterize the stable regime.

	In the former case where $\nu > \lambda$ we say that the microstructure is \emph{inertially oblate}, or pancake-like,
	and in the latter case where $\lambda > \nu$ we say that the microstructure is \emph{inertially oblong}, or rod-like.
	This nomenclature is justified by the following fact.
	For rigid bodies with an axis of symmetry and a uniform mass density, the terms ``oblate'',
	which essentially means that the body is shorter along its axis of symmetry than it is wide across it,
	and ``inertially oblate'' describe the same thing (and similarly for the terms ``oblong'' and ``inertially oblong'').
	Examples of inertially oblong and oblate rigid bodies are provided in \fref{Figure}{fig:rigid_bodies}.

	In a companion paper \cite{rt_tice_amf_iut} we prove the instability of inertially oblong microstructure.
	In this paper we prove the asymptotic stability of inertially oblate microstructure in \fref{Theorem}{thm:early_statement_main_result}
	In particular, combining the main results of these two papers produces a \emph{sharp} nonlinear stability criterion, recorded in \fref{Theorem}{thm:sharp_criterion}.
	\begin{figure}[b]
		\centering
		\begin{subfigure}[h!]{0.45\textwidth}
			\centering
			\includegraphics{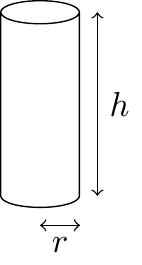}
			\caption{This rigid body is inertially oblong if $h^2 > 6 r^2$.}
			\label{fig:oblong}
		\end{subfigure}
		\begin{subfigure}[h!]{0.45\textwidth}
			\centering
			\includegraphics{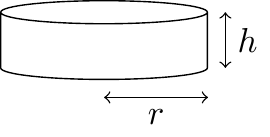}
			\caption{This rigid body is inertially oblate if $h^2 < 6 r^2$.}
			\label{fig:oblate}
		\end{subfigure}
		\caption{Two rigid bodies with uniform density which possess an inertial axis of symmetry.}
		\label{fig:rigid_bodies}
	\end{figure}
\subsection{A brief summary of techniques and difficulties}
\label{sec:mixed_type}

	The main thrust of this paper is to prove that if the microstructure is inertially oblate then the equilibrium is nonlinearly asymptotically stable
	with almost exponential decay to equilibrium.
	Here ``almost exponential'' means that the rate of decay is algebraic and grows unboundedly as further smallness and regularity assumptions are imposed on the initial data.
	In order to provide context for our main result and to motivate the presence of the various functionals used in it,
	we will now attempt a quick overview of the difficulties associated to \eqref{eq:full_sys_u}--\eqref{eq:full_sys_J} and our techniques for dealing with them.  A more detailed discussion is presented in  \fref{Section}{sec:strategy_and_difficulties}.

	As in many viscous fluid problems, the system \eqref{eq:full_sys_u}--\eqref{eq:full_sys_J} is of mixed dissipative-conservative type, with some of the unknowns having dissipation mechanisms and others not.  This manifests as the mixed parabolic-hyperbolic structure of the PDEs.  Such systems usually have a physical dissipation functional, $\mathfrak{D}$, that couples to a physical energy functional, $\mathfrak{E}$, via the energy-dissipation relation
\begin{equation}\label{eq:toy_ed}
 \frac{d}{dt} \mathfrak{E} + \mathfrak{D} = 0.
\end{equation}
    We won't need the precise form of $\mathfrak{E}$ and $\mathfrak{D}$ for our problem here, so we don't state them precisely, but they can be found in $(2.8)$ of \cite{rt_tice_amf_iut}.  Our technique for analyzing the problem is based on higher-regularity versions of this structure, and since differentiating linearizes the PDEs, it's actually the linearized versions, $\mathfrak{E}_{lin}$ and $\mathfrak{D}_{lin}$ that are most relevant in our discussion.  Indeed, the questions of if and how the unknowns appear in the linearized versions of $\mathfrak{E}_{lin}$ and $\mathfrak{D}_{lin}$ become paramount.

    In general energy-dissipation relations, if we have a bound $\mathfrak{E}_{lin} \leqslant C \mathfrak{D}_{lin}$, then the dissipation is said to be \emph{coercive}, and we expect to be able to prove the exponential decay of $\mathfrak{E}_{lin}$ via a linearized version of \eqref{eq:toy_ed} and a Gronwall argument.  However, if this inequality does not hold, we say the dissipation fails to be coercive, and the decay of solutions is no longer obvious.  As we will see below, the latter holds for our problem, \eqref{eq:full_sys_u}--\eqref{eq:full_sys_J}.
     
    Without coercivity, the role of the energy becomes more complicated.  On the one hand, more terms in the energy means more a priori control, but on the other it means more things that the dissipation may fail to control, further complicating a proof of decay.  If all of the unknowns appear in $\mathfrak{E}_{lin}$ we say that there is \emph{strong coupling}, and otherwise we say that there is \emph{weak coupling}.  Based on the above relation to the dissipation, it may seem that weak coupling is preferable, but this is only true from the point of view of exploiting the dissipation for decay information.  
    
    In the  case of strong coupling we can estimate all of the system's unknowns at the same time, with the same derivative counts.  In the case of weak coupling only some of the unknowns appear in the energy, and the remaining quantities must be estimated in other ways.  This typically entails exploiting some sort of conservative hyperbolic structure that scales differently in terms of derivative counts than the main energy-dissipation part.  At the linear level this isn't a problem because the weak coupling actually leads to a decoupling of these parts of the linearized problem, and we just get different estimates for each part.  However, the essential difficulty with weak coupling comes at the nonlinear level, where the scaling mismatch can make dealing with the high-regularity interaction terms extremely delicate.

    Let's now focus this general discussion onto the specifics of the problem \eqref{eq:full_sys_u}--\eqref{eq:full_sys_J}.  In this case, if $(v,\theta,K)$ denotes the linearization of $(u,\omega,J)$ about the equilibrium, then 
\begin{align}
\mathfrak{E}_{lin} &=  \int_{\T^3} \frac{1}{2} \abs{v}^2
				+ \frac{1}{2} J\theta\cdot\theta
				+ \frac{\tilde{\tau}^2}{\nu-\lambda}  \frac{1}{2} \abs{a}^2, \text{ and }
\label{eq:def_E_lin}\\
\mathfrak{D}_{lin} &=
 \int_{\T^3} \frac{\mu}{2} \vbrac{\symgrad v}^2
			+ 2\kappa \vbrac{ \frac{1}{2} \nabla\times v - \theta}^2
			+ \alpha\vbrac{\nabla\cdot\theta}^2
			+ \frac{\beta}{2} \vbrac{\symgrad^0 \theta}^2
			+ 2\gamma \vbrac{\nabla\times\theta}^2,
\label{eq:def_D_lin}
\end{align}
    where $a = (K_{13},K_{23})$ and $\symgrad$, $\symgrad^0$
    denote twice the symmetric part and twice the traceless symmetric part of the gradient, respectively, and are defined precisely below \eqref{eq:intro_stress_tensors_def}.  From these expressions it's clear that coercivity fails and that we have weak coupling.  Indeed, only two of the six components of the symmetric $K \in \mathbb{R}^{3 \times 3}$ appear in $\mathfrak{E}_{lin}$ (see \fref{Section}{sec:weak_coupling} below for a more detailed discussion of the special role played by $a$).  To estimate the entirety of $K$ we are forced to appeal to the advection-rotation equation \eqref{eq:full_sys_J} and its linearization.  This is a hyperbolic equation coupling to both $u$ and $\omega$, but at different levels of regularity, which already reveals potentially problematic mismatches with energy-dissipation estimates.  On the plus side, we can readily obtain $L^q-$based estimates from \eqref{eq:full_sys_J} for values of $q$ other than $2$.  On the down side, the estimates provided at the highest level of regularity are quite bad, as they grow linearly in time, which makes using them globally in time a delicate proposition.
	
	Our strategy for getting around these problems is to employ a version of the two-tier energy method that was introduced in \cite{guo_tice_periodic,guo_tice_infinite} to handle the viscous surface wave problem, which is a strongly coupled problem with coercivity failure.  Roughly, the idea behind this scheme of a priori estimates is that control of high regularity terms (the high tier) can be synthesized with decay estimates of low regularity terms (the low tier) to simultaneously overcome coercivity and interaction difficulties and prove the existence of global-in-time algebraically-decaying solutions.  The two-tier method is a strategy and not a black box, so it must be adapted to the particulars of each problem.  In our case, due to the weak coupling, the complicated structure of the hyperbolic equation for $K$, and troubles in interfacing with the local existence theory, this requires significant work.
	
	To see how decay information can be recovered in the two-tier scheme, consider the following.  The energy-dissipation structure at low regularity will tell us that (assuming that the nonlinear interactions are wrought under control)
	\begin{equation}
	\label{eq:intro_low_ED_rel}
		\frac{\mathrm{d}}{\mathrm{d}t} \mathcal{E}_\text{low} + \frac{1}{2} \mathcal{D}_\text{low} \leqslant 0
	\end{equation}
	where $ \mathcal{E}_\text{low} $ and $ \mathcal{D}_\text{low} $ are low regularity energy and dissipation functionals built from $\mathfrak{E}_{lin}$ and $\mathfrak{D}_{lin}$, respectively.
	To be concrete:
	\begin{equation}
	\label{eq:discuss_func_E_low}
		\mathcal{E}_\text{low} \sim \normtyp{(u, \theta)}{H}{2}^2 + \normtyp{a}{H}{2}^2 + \normtyp{ \pdt(u,\theta) }{L}{2}^2 + \normtyp{ \pdt a }{L}{2}^2
	\end{equation}
	where we have introduced the perturbative angular velocity $\theta = \omega - \omega_{eq}$.
	The exact form of $ \mathcal{D}_\text{low} $ is not relevant here; all that matters is that $ \mathcal{E}_\text{low} \lesssim \mathcal{E}_\text{high}^{1-\sigma} \mathcal{D}_\text{low}^\sigma$
	for some high regularity energy functional $ \mathcal{E}_\text{high} $ and some $\sigma\in (0,1)$ which behaves as $\sigma \sim \frac{high - low}{high - low + 1}$.
	Here $low$ and $high$ are placeholders for regularities indices precisely measuring the regularity of the solution at each level.
	Crucially, this observation may be combined with \eqref{eq:intro_low_ED_rel},
	\emph{provided} that $ \mathcal{E}_\text{high} $ is bounded,
	to deduce the algebraic decay of $ \mathcal{E}_\text{low} $ at a rate proportional to $high - low$.
	Note that this is precisely almost exponential decay since the growing rate of decay is dependent on the regularity of the solution.
	More concretely, for some non-negative integer $M$, $\mathcal{E}_\text{high}$ and $ \mathcal{D}_\text{high}$ are given by
	\begin{align}
		\mathcal{E}_\text{high} = \sum_{j=0}^M \normtypns{\pdt^j (u, \theta, a) }{H}{2M-2j}^2
			+ \normtypns{(K, \pdt K, \pdt^2 K)}{H}{2M-3}^2 
			+ \sum_{j=3}^M \normtypns{\pdt^j K}{H}{2M-2j+2}^2,
	\label{eq:discuss_func_E_high}\\
		\text{ and } 	
		\mathcal{D}_\text{high} = \sum_{j=0}^M \normtypns{\pdt^j (u,\theta)}{H}{2M-2j+1}^2
			+ \sum_{j=0}^3 \normtypns{\pdt^j a}{H}{2M-j-1}^2 + \sum_{j=4}^M \normtypns{\pdt^j a}{H}{2M-2j+3}^2
	\label{eq:discuss_func_D_high}
	\end{align}
	where we have introduced the perturbative microinertia $K = J - J_{eq}$
	and where $ \mathcal{D}_\text{high} $ is a high regularity dissipation functional whose integral in time will remain bounded.  Remarkably, although $\mathcal{E}_{low}$ provides no direct control of $K$ except its components in $a = (K_{13}, K_{23})$, a special algebraic identity for symmetric matrices with spectrum $\{\lambda,\lambda,\nu\}$ leads to a quantitative rigidity result that will allow us to obtain decay information about all of $K$ from $a$ alone.

	We have now witnessed the first key idea of the two-tier energy method:
	the decay of the low level energy is intimately tied to the boundedness of the high level energy.  In the above sketch this dependence only goes one way, but in practice it also goes the other way since the transport estimates for $K$ at the highest derivative count result in an upper bound that grows linearly in time (see \fref{Section}{sec:strategy_and_difficulties} for a more thorough discussion). This warrants the introduction of the last functional we need in order to state the main result. We define $ \mathcal{F}_\text{high} $ to contain all terms for which the only control we have is growing in time, namely
	\begin{equation}
	\label{eq:discuss_func_F_high}
		\mathcal{F}_\text{high} = \normtyp{K}{H}{2M+1}^2 + \normtyp{\pdt K}{H}{2M}^2 + \normtyp{\pdt^2 K}{H}{2M-1}^2.
	\end{equation}

\subsection{Statement of the main result}
\label{sec:intro_main_result}
	
	In order to state the main result we introduce the global assumptions at play throughout this paper.
	\begin{definition}[Global assumptions]
	\label{def:global_assumptions}
		We assume that the initial microinertia $J_0$ has an inertial axis of symmetry and is inertially oblate,
		i.e. for every $x\in\T^3$ the spectrum of $J_0 (x)$ is $\cbrac{\lambda, \lambda, \nu}$ where $\nu > \lambda > 0$.
		We also assume that the initial velocity $u_0$ has average zero
		and that the viscosity constants $\mu$, $\kappa$, $\alpha$, $\beta$, and $\gamma$ are strictly positive.
	\end{definition}
	Note that the assumption that $u$ have average zero is justified by the invariance of \eqref{eq:full_sys_u}--\eqref{eq:full_sys_J}
	under Galilean transformations $u(t,y) \mapsto u(t,y + t\bar{u}) - \bar{u}$ for any constant $\bar{u}\in\R$.
	We may now state the main result. A more precise form of this result is found in \fref{Theorem}{thm:gwp_decay_clean}.
	\begin{thm}[Nonlinear asymptotic stability and decay]
	\label{thm:early_statement_main_result}
		Suppose that the global assumptions of \fref{Definition}{def:global_assumptions} hold
		and let $X_{eq} = \brac{u_{eq}, \omega_{eq}, J_{eq}} = \brac{0, \frac{\tau}{2\kappa} e_3, \diag\brac{\lambda, \lambda, \nu}}$ and $p_{eq} = 0$ be the equilibrium solution
		of \eqref{eq:full_sys_u}--\eqref{eq:full_sys_J}.
		For every integer $M\geqslant 4$ there exists $\eta, C > 0$ such that solutions to \eqref{eq:full_sys_u}--\eqref{eq:full_sys_J}
		exist globally in-time for every initial condition in the $\eta$-ball defined by
		\begin{equation*}
			\normtyp{(u_0, \omega_0 - \omega_{eq})}{H}{2M}^2 + \normtyp{J_0 - J_{eq}}{H}{2M+1}^2 < \eta.
		\end{equation*}
		Moreover the solutions satisfy the estimate
		\begin{equation*}
			\sup_{t\geqslant 0}
			\mathcal{E}_\text{low} (t) {\brac{1+t}}^{2M-2} + \mathcal{E}_\text{high} (t) + \frac{ \mathcal{F}_\text{high} (t) }{ \brac{1+t} } + \int_0^\infty \mathcal{D}_\text{high} (s) ds
			\leqslant C \brac{ \normtyp{(u_0, \omega_0 - \omega_{eq}}{H}{2M}^2 + \normtyp{J_0 - J_{eq}}{H}{2M+1}^2 }.
		\end{equation*}
		Recall that the functionals present on the left-hand side are defined in \eqref{eq:discuss_func_E_low}--\eqref{eq:discuss_func_F_high}.
	\end{thm}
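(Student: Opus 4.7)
The plan is to prove the theorem via a standard continuity argument that combines a local well-posedness result with the global-in-time a priori estimates afforded by the two-tier energy scheme described in the introduction. I would first invoke (or establish) local existence for initial data with $(u_0,\omega_0 - \omega_{eq}) \in H^{2M}$ and $J_0 - J_{eq} \in H^{2M+1}$, producing a solution on a maximal interval $[0, T_{\max})$ on which $\mathcal{E}_{\mathrm{high}}$ and $\mathcal{F}_{\mathrm{high}}$ are finite. I would then fix a small bootstrap parameter $\delta > 0$ and define
\[
T^\ast = \sup\Bigl\{ T \in [0, T_{\max}) : \sup_{0 \leqslant t \leqslant T}\Bigl( (1+t)^{2M-2} \mathcal{E}_{\mathrm{low}}(t) + \mathcal{E}_{\mathrm{high}}(t) + \tfrac{\mathcal{F}_{\mathrm{high}}(t)}{1+t} \Bigr) + \int_0^T \mathcal{D}_{\mathrm{high}}(s)\,ds \leqslant \delta \Bigr\}.
\]
The theorem then reduces to showing that, for $\eta$ sufficiently small, on $[0, T^\ast]$ the master quantity on the left-hand side is in fact bounded by a constant multiple of $\eta$ plus a superlinear power of $\delta$; this closes the bootstrap by absorption and yields $T^\ast = T_{\max} = \infty$ via continuation.

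The heart of the work is to derive four independent families of a priori estimates under the bootstrap hypothesis. \emph{High tier.} I would apply $\partial_t^j$ and spatial derivatives to the perturbed system, test against the natural multipliers corresponding to the linearized energy-dissipation, integrate by parts to expose $\mathfrak{E}_{lin}$ and $\mathfrak{D}_{lin}$ at each regularity layer, and bound the nonlinear remainders by $\sqrt{\mathcal{E}_{\mathrm{high}}}\,\mathcal{D}_{\mathrm{high}}$ using Sobolev embedding and product estimates, so that they are absorbed into $\mathcal{D}_{\mathrm{high}}$. \emph{Low tier.} An analogous but lower-derivative-count computation yields $\frac{d}{dt}\mathcal{E}_{\mathrm{low}} + \frac{1}{2}\mathcal{D}_{\mathrm{low}} \leqslant 0$, with the crucial feature that the low-tier nonlinear interactions decay fast enough to preserve this dissipative structure. \emph{Transport estimates.} Since \eqref{eq:full_sys_J} is a rotation-advection equation with no dissipation, $K$ and its derivatives can only be estimated by Gr\"onwall applied to this equation tested against itself; at the very top of the high tier this forces linear-in-time growth, which is precisely the origin of the $(1+t)^{-1}\mathcal{F}_{\mathrm{high}}$ term. \emph{Rigidity.} The algebraic constraint $\mathrm{spec}(J) = \{\lambda,\lambda,\nu\}$ translates, after linearization about $J_{eq}$ and a smallness hypothesis, into pointwise identities that express the components of $K$ outside $a = (K_{13},K_{23})$ in terms of $a$ plus quadratic remainders in $K$; this quantitative rigidity recovers full control of $K$ at low regularity from the low-tier energy, bridging the weak-coupling gap.

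To close the argument, I would combine the low-tier dissipative inequality with an interpolation of the form $\mathcal{E}_{\mathrm{low}} \lesssim \mathcal{E}_{\mathrm{high}}^{1-\sigma} \mathcal{D}_{\mathrm{low}}^\sigma$ with $\sigma = (2M-2)/(2M-1)$, obtaining the differential inequality $\frac{d}{dt}\mathcal{E}_{\mathrm{low}} + C\, \delta^{-(1-\sigma)/\sigma}\, \mathcal{E}_{\mathrm{low}}^{1/\sigma} \leqslant 0$; integration yields the decay $\mathcal{E}_{\mathrm{low}}(t) \lesssim (1+t)^{-(2M-2)}$. Substituting this decay back into the time-integrated high-tier energy-dissipation inequality, and into the transport estimates for $K$, produces $\sup_t \mathcal{E}_{\mathrm{high}}(t) + \int_0^\infty \mathcal{D}_{\mathrm{high}} \lesssim \eta + \delta^{3/2}$ and $\mathcal{F}_{\mathrm{high}}(t) \lesssim (1+t)(\eta + \delta^{3/2})$. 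Choosing $\delta$ and $\eta$ sufficiently small then closes the bootstrap.

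The main obstacle I anticipate is the interplay between the weak coupling of $\mathfrak{E}_{lin}$ in $K$ and the growing-in-time transport estimate that yields $\mathcal{F}_{\mathrm{high}}$. When bounding the nonlinear interactions at the top of the high tier one repeatedly meets products containing very high derivatives of $K$, whose only a priori control lies in the growing functional $\mathcal{F}_{\mathrm{high}}$; to keep the time integrals of these contributions convergent one must carefully partition derivative counts so that each growing factor is paired with a fast-decaying factor drawn from $\mathcal{E}_{\mathrm{low}}$, and there is essentially one correct derivative splitting that achieves this balance. A related subtle point is interfacing the scheme with the local existence theory, since the functionals $\mathcal{E}_{\mathrm{high}}$ and $\mathcal{F}_{\mathrm{high}}$ involve time derivatives of $(u,\omega,J)$ whose initial values must be recovered from the PDEs themselves; this typically requires a compatibility or regularization argument to ensure that the initial master quantity is genuinely controlled by $\|(u_0,\omega_0 - \omega_{eq})\|_{H^{2M}}^2 + \|J_0 - J_{eq}\|_{H^{2M+1}}^2$.
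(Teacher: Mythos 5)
Your a priori scheme is essentially the paper's: two tiers of energy-dissipation estimates, advection-rotation (transport) estimates for $K$ whose top-order bound grows linearly in time, the interpolation ($\theta$-coercivity) $\mathcal{E}_{\text{low}} \lesssim \mathcal{E}_{\text{high}}^{1-\sigma}\mathcal{D}_{\text{low}}^{\sigma}$ with $\sigma = (2M-2)/(2M-1)$ yielding the $(1+t)^{-(2M-2)}$ decay, the pairing of growing factors against decaying ones in the high-tier interactions, and the recovery of the initial values of the time-derivative norms from the equations. Two smaller remarks: the paper deliberately does \emph{not} use the quantitative rigidity of the spectrum inside the scheme (full control of $K$ at regularity $2M-3$ comes from the advection-rotation estimates; rigidity is only used a posteriori to deduce decay of $K$ in the corollary), and the decaying factors needed against the growth of $\mathcal{F}_{\text{high}}$ are not all ``drawn from $\mathcal{E}_{\text{low}}$'': one interaction (the commutator with $J\partial_t$) forces decay of $\|\partial_t^2\theta\|_{L^2}$, which is absent from $\mathcal{E}_{\text{low}}$ and requires a separate auxiliary estimate using the $\theta$-equation and the propagation of the spectrum of $J$.

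The genuine gap is in your globalization step. You close a bare bootstrap ``by absorption and continuation,'' but this is precisely where the paper identifies an obstruction requiring new estimates. The difficulty is the mismatch between the local well-posedness energy, which controls $K$ in $H^{2M}$ (and, restarted at time $T$, only yields bounds of the form $\rho\brac{C(1+T)\eta}$ because $\|K(T)\|_{H^{2M}}^2$ is only bounded by the growing functional $\mathcal{F}_{\text{high}}$), and the a priori energy, which controls $K$ only in $H^{2M-3}$. Consequently one cannot verify the smallness hypothesis of the a priori theorem on an extended interval $[0,T+\tau]$ from the local theory alone, uniformly in $T$. Your alternative, arguing persistence of the bootstrap condition by continuity of the master functional, silently assumes strong continuity in time of the top-order norms (e.g. $t\mapsto\|u(t)\|_{H^{2M}}^2$, $\mathcal{F}_{\text{high}}(t)$), which is not free for a solution produced by a compactness/Galerkin argument with only $L^\infty_t H^k$ control. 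The paper bridges this by a separate set of local-in-time \emph{reduced} energy estimates for $(u,\theta,a)$ whose validity rests on smallness of the timescale rather than smallness of the data, together with an auxiliary estimate converting $\|Z\|_{P^{2M}}$ control into control of $\mathcal{E}_M^{(K)}$ and $\mathcal{F}_M$; without some substitute for this step your bootstrap does not close as written, even though the a priori core of your argument is correct.
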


	Note that in \fref{Theorem}{thm:early_statement_main_result} the pressure has disappeared from consideration in the estimates.
	This is because the pressure only plays an auxiliary role in the problem and may be eliminated altogether from \eqref{eq:full_sys_u}
	by projection onto the space of divergence-free vector fields.

	At face value \fref{Theorem}{thm:early_statement_main_result} only provides us with decay of $u$, $\theta$, and $a$ in terms of the norms appearing in $ \mathcal{E}_\text{low}$.
	However we may interpolate between $ \mathcal{E}_\text{low}$ and $ \mathcal{E}_\text{high}$ to obtain decay estimates on intermediate norms of $u$, $\theta$, and $a$.
	Algebraic identities may then be used to show that, if $ \normtyp{K}{L}{\infty} $ is sufficiently small, then $\abs{K} \lesssim \abs{a}$ pointwise,
	from which we may deduce the decay of $K$.
	The ensuing decay rates are recorded precisely in \fref{Corollary}{cor:add_decay} below
	(which is proved at the end of \fref{Section}{sec:gwp}).
	Note that this corollary only records the decay of the unknowns and their first time derivative.
	The decay rates of higher-order temporal derivatives can then be established by differentiating \eqref{eq:full_sys_u}--\eqref{eq:full_sys_J},
	however since they are not necessary for our purposes here and so we omit them.
	Crucially, with these decay rates in hand we deduce that \fref{Theorem}{thm:early_statement_main_result} is indeed a proof of \emph{asymptotic stability}.

	\begin{cor}[Decay rates]
	\label{cor:add_decay}
		Under the hypotheses of \fref{Theorem}{thm:early_statement_main_result} the global solution $(u, \theta, K)$ satisfies
		\begin{align*}
			\sup_{t\geqslant 0} \Bigg(
			\sup_{0\leqslant s\leqslant 2M+1} \normtyp{K(t)}{H}{s}^2 {\brac{1+t}}^{2M-4-s(2M-3)/(2M+1)}
			+ \sup_{0\leqslant s\leqslant 2M} \normtyp{\pdt K (t)}{H}{s}^2 {\brac{1+t}}^{2M-4-s(2M-3)/(2M)}
		\\
			+ \sup_{2 \leqslant s \leqslant 2M} \brac{ \normtyp{(u, \theta, a)(t) }{H}{s}^2 + \normtyp{\pdt(u, \theta, a)(t)}{H}{s-2}^2 } {\brac{1+t}}^{2M-s}
			\Bigg)
		\lesssim \brac{ \normtyp{(u_0, \theta_0, K_0)}{H}{2M}^2 + \normtyp{K_0}{H}{2M+1}^2 }.
		\end{align*}
	\end{cor}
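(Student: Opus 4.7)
The plan is to combine the three time-dependent bounds supplied by \fref{Theorem}{thm:early_statement_main_result}---decay of $\mathcal{E}_{\text{low}}$ at rate $(1+t)^{-(2M-2)}$, uniform boundedness of $\mathcal{E}_{\text{high}}$, and at worst linear-in-time growth of $\mathcal{F}_{\text{high}}$---with two tools independent of the time dynamics: standard Sobolev (Gagliardo--Nirenberg) interpolation in $H^s$, and the pointwise quantitative rigidity inequality for symmetric matrices with spectrum $\{\lambda, \lambda, \nu\}$ announced in the paragraph preceding the corollary.

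First I would establish the claimed decay for $(u, \theta, a)$ and its first time derivative. The functional $\mathcal{E}_{\text{low}}$ of \eqref{eq:discuss_func_E_low} controls $\|(u, \theta, a)\|_{H^2}^2 + \|\partial_t(u, \theta, a)\|_{L^2}^2$ and decays at rate $(1+t)^{-(2M-2)}$, while $\mathcal{E}_{\text{high}}$ of \eqref{eq:discuss_func_E_high} controls $\|(u, \theta, a)\|_{H^{2M}}^2 + \|\partial_t(u, \theta, a)\|_{H^{2M-2}}^2$ uniformly in $t$. Standard Sobolev interpolation then yields, for each $s \in [2, 2M]$,
\[
\|(u, \theta, a)\|_{H^s}^2 \lesssim \left(\|(u, \theta, a)\|_{H^2}^2\right)^{(2M-s)/(2M-2)} \left(\|(u, \theta, a)\|_{H^{2M}}^2\right)^{(s-2)/(2M-2)} \lesssim (1+t)^{-(2M-s)},
\]
and an analogous interpolation of $\partial_t(u, \theta, a)$ between $L^2$ and $H^{2M-2}$ delivers the matching $(1+t)^{-(2M-s)}$ decay rate on the $H^{s-2}$ norm.

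Next, I would transfer this decay to the full tensor $K$. The pointwise rigidity inequality $|K(x)| \lesssim |a(x)|$---valid whenever $\|K\|_{L^\infty}$ is sufficiently small, which is secured from the smallness of the initial data together with the Sobolev embedding $H^{2M} \hookrightarrow L^\infty$ applied to the $\mathcal{E}_{\text{high}}$ bound---gives $\|K\|_{L^p} \lesssim \|a\|_{L^p}$ for every $p$ and every $t$. Coupled with the decay of $a$ derived in the previous step, this furnishes a low-regularity decay estimate on $\|K\|_{L^2}$. A parallel estimate for $\|\partial_t K\|_{L^2}$ is then obtained either by differentiating the rigidity identity (handling the nonlinear dependence on $K$ via absorption into the smallness hypothesis) or, more directly, by substituting the right-hand side of the transport--rotation equation \eqref{eq:full_sys_J} and invoking the decay of $u$, $\theta$, and $K$ in suitable intermediate norms. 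Interpolating each of these low-regularity estimates against the $\mathcal{F}_{\text{high}}$-controlled top-regularity norms $\|K\|_{H^{2M+1}}^2 \lesssim 1+t$ and $\|\partial_t K\|_{H^{2M}}^2 \lesssim 1+t$ via
\[
\|K\|_{H^s}^2 \lesssim \left(\|K\|_{L^2}^2\right)^{(2M+1-s)/(2M+1)} \left(\|K\|_{H^{2M+1}}^2\right)^{s/(2M+1)}
\]
and the analogous inequality for $\partial_t K$ produces the affine-in-$s$ exponents claimed by the corollary, the competition between the decaying low-regularity factor and the linearly growing top-regularity factor yielding precisely those rates.

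The main obstacle I anticipate is the rigidity step: the pointwise comparison $|K| \lesssim |a|$ demands that $\|K\|_{L^\infty}$ stay small \emph{uniformly} in time, and although this is morally immediate from the combination of initial smallness with the uniform $\mathcal{E}_{\text{high}}$ bound, the constants must be tracked carefully so that they are genuinely independent of $t$ (and so that the absorption argument needed to extract $|K| \lesssim |a|$ from the algebraic identity is legitimate everywhere on $[0,\infty) \times \T^3$). A secondary technical point is the derivation of the low-regularity decay for $\|\partial_t K\|_{L^2}$, since $\partial_t K$ does not appear directly in $\mathcal{E}_{\text{low}}$ at the $L^2$ level; this requires either the careful time-differentiation of the rigidity identity or an elementary $L^2$ estimate of the right-hand side of \eqref{eq:full_sys_J}. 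Once these two points are settled, the remainder of the argument is a direct and essentially routine application of Sobolev interpolation.
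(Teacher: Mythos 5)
Your proof is correct and follows essentially the same path as the paper's one-line proof, which simply combines Theorem \ref{thm:gwp_decay_clean} with Propositions \ref{prop:decay_intermediate_norms}, \ref{prop:quant_rigidity}, and \ref{prop:rates_decay_K_pdt_K}: Sobolev interpolation of the decaying $\mathcal{E}_{\text{low}}$ against the bounded $\mathcal{E}_{\text{high}}$, quantitative rigidity to transfer $L^2$ decay from $a$ to $K$, an $L^2$ bound on $\partial_t K$ read off from the advection--rotation equation, and interpolation against the linearly growing top-regularity norms controlled by $\mathcal{F}_{\text{high}}$. Two minor remarks. First, of your two proposed routes to the low-regularity $\partial_t K$ bound, the transport-equation substitution is the one the paper uses in the proof of Proposition \ref{prop:rates_decay_K_pdt_K}; time-differentiating the pointwise rigidity identity is less pleasant, since it would require controlling the time derivative of the implicit eigenvector field of $J$. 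Second, seeding the interpolation with $\normtyp{K}{L}{2}^2 \lesssim \normtyp{a}{L}{2}^2 \lesssim \mathcal{E}_{\text{low}} \lesssim (1+t)^{-(2M-2)}$ as you do, rather than with the weaker $(1+t)^{-(2M-4)}$ bound that Proposition \ref{prop:rates_decay_K_pdt_K} extracts from $\overline{\mathcal{K}}_2$, yields the slightly sharper exponent $2M-2 - s(2M-1)/(2M+1)$ in place of $2M-4 - s(2M-3)/(2M+1)$; the former dominates the latter for all $s \leqslant 2M+1$ (they agree at $s=2M+1$), so your estimate implies, and is therefore sufficient for, the corollary as stated.
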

 
\paragraph{\textbf{Sharp nonlinear stability criterion}}

	\vspace{0.5em}
	We may combine the main result of this paper, namely \fref{Theorem}{thm:early_statement_main_result}, with the decay rates of \fref{Corollary}{cor:add_decay}
	and the main result of \cite{rt_tice_amf_iut} to deduce a sharp nonlinear stability criterion recorded in \fref{Theorem}{thm:sharp_criterion} below.
	In order to formulate \fref{Theorem}{thm:sharp_criterion} in a clean way we define appropriate spaces, namely
	\begin{align*}
		\mathcal{H}_0 &= H^{2M} \brac{ \T^3;\, \R^3} \times H^{2M} \brac{\T^3;\, \R^3} \times H^{2M+1} \brac{\T^3;\, \sym(3)},\\
		\mathcal{H}_s &= H^{2M} \brac{ \T^3;\, \R^3} \times H^{2M} \brac{\T^3;\, \R^3} \times H^{2M-4/(2M-3)} \brac{\T^3;\, \sym(3)}, \text{ and }\\
		\mathcal{H}_{as} &= H^{2M-\varepsilon} \brac{ \T^3;\, \R^3} \times H^{2M-\varepsilon} \brac{\T^3;\, \R^3} \times H^{2M-4/(2M-3)-\varepsilon} \brac{\T^3;\, \sym(3)},
	\end{align*}
	where $\varepsilon > 0$ may be taken to be arbitrarily small.
	We  may now state the sharp nonlinear stability criterion.
	\begin{thm}
	\label{thm:sharp_criterion}
		Let $X_{eq} = \brac{u_{eq}, \omega_{eq}, J_{eq}} = \brac{0, \frac{\tau}{2\kappa} e_3, \diag\brac{\lambda, \lambda, \nu}}$ be the equilibrium solution
		of \eqref{eq:full_sys_u}--\eqref{eq:full_sys_J}.
		\begin{itemize}
			\item	If the microstructure is inertially oblong ($\lambda > \nu$)
				then the equilibrium is nonlinearly unstable in $L^2$.
			\item	If the microstructure is inertially oblate ($\nu > \lambda$)
				then the equilibrium is nonlinearly $\mathcal{H}_s$-stable in $\mathcal{H}_0$
				and nonlinearly asymptotically $\mathcal{H}_{as}$-stable in $\mathcal{H}_0$.
		\end{itemize}
	\end{thm}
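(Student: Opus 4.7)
The plan is to assemble three ingredients that are already available: the instability result from the companion paper in the oblong regime, together with Theorem \ref{thm:early_statement_main_result} and Corollary \ref{cor:add_decay} in the oblate regime. The first bullet of the theorem, namely nonlinear $L^2$-instability when $\lambda > \nu$, is precisely the main conclusion of \cite{rt_tice_amf_iut}, so for that bullet I would simply cite the companion paper and be done.

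For the inertially oblate case $\nu > \lambda$, I fix $M \geqslant 4$ so that Theorem \ref{thm:early_statement_main_result} applies, and translate its conclusion into the language of $\mathcal{H}_s$- and $\mathcal{H}_{as}$-stability in $\mathcal{H}_0$. Given a target accuracy $\varepsilon > 0$, I would pick $\delta > 0$ small enough that the $\delta$-ball in $\mathcal{H}_0$ is contained in the $\eta$-ball of the theorem and that $C\delta^2 \leqslant \varepsilon^2$. Global existence is then immediate from the theorem, and the uniform-in-time bound $\mathcal{E}_{\text{high}}(t) \lesssim \delta^2$, read off from \eqref{eq:discuss_func_E_high}, yields the required $H^{2M}$ control of $(u, \theta) = (u, \omega - \omega_{eq})$. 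For the microinertia perturbation $K = J - J_{eq}$, I would invoke Corollary \ref{cor:add_decay} at the sharp value $s_{\ast} = 2M - 4/(2M-3)$. A direct algebraic verification gives $2M - 4 - s_{\ast}(2M-3)/(2M+1) = 0$, so the time weight degenerates to $1$ and the corollary supplies $\|K(t)\|_{H^{s_{\ast}}}^2 \lesssim \delta^2$ uniformly in $t \geqslant 0$. Together, these two bounds constitute the $\mathcal{H}_s$-stability statement.

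For the asymptotic statement I would perturb both regularity indices downward by $\varepsilon$. Since the exponent $2M - 4 - s(2M-3)/(2M+1)$ of the $K$-weight in Corollary \ref{cor:add_decay} is affine and strictly decreasing in $s$, evaluating it at $s_{\ast} - \varepsilon$ produces the strictly positive decay rate $\varepsilon(2M-3)/(2M+1)$ controlling $\|K(t)\|_{H^{s_{\ast} - \varepsilon}}^2$, while evaluating the $(u, \theta, a)$ bound at $s = 2M - \varepsilon$ yields the decay rate $\varepsilon$ for $\|(u, \theta)(t)\|_{H^{2M - \varepsilon}}^2$. Both components therefore converge to zero in $\mathcal{H}_{as}$ as $t \to \infty$, which together with the already-established boundedness provides asymptotic $\mathcal{H}_{as}$-stability in $\mathcal{H}_0$.

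I do not anticipate any genuine obstacle in this final theorem: all of the analytical work has been packed into Theorem \ref{thm:early_statement_main_result}, Corollary \ref{cor:add_decay}, and the companion paper \cite{rt_tice_amf_iut}. The only step requiring attention is the arithmetic calibration: checking that the indices defining $\mathcal{H}_s$ and $\mathcal{H}_{as}$ precisely match the critical regularity $s_{\ast} = 2M - 4/(2M-3)$ at which the corollary's decay exponent for $K$ crosses zero, and verifying that decreasing all indices by the same $\varepsilon$ produces a uniform positive decay rate.
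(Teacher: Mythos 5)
Your proposal is correct and follows essentially the same route as the paper, which deduces Theorem \ref{thm:sharp_criterion} by citing the companion paper for the oblong instability and combining Theorem \ref{thm:early_statement_main_result} with Corollary \ref{cor:add_decay} for the oblate case; your index arithmetic (the exponent $2M-4-s(2M-3)/(2M+1)$ vanishing exactly at $s_*=2M-4/(2M-3)$, and the positive rates $\varepsilon(2M-3)/(2M+1)$ and $\varepsilon$ after lowering the indices) is exactly the calibration underlying the definitions of $\mathcal{H}_s$ and $\mathcal{H}_{as}$.
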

	\noindent
	The notions of nonlinear stability and instability above are those familiar from dynamical systems.
	\emph{Nonlinear stability} in $L^2$ means that there exists a radius $\delta > 0$ and a sequence of initial data ${\cbrac{ X_n^0 }}_{n=0}^\infty$
	which converge to $X_{eq}$ in $L^2$ such that the solutions to \eqref{eq:full_sys_u}--\eqref{eq:full_sys_J} starting from $X_n^0$ exit the $\delta$-ball about $X_{eq}$
	in finite time (which depends on $n$).
	\emph{Nonlinear $\mathcal{H}_s$-stability in $\mathcal{H}_0$} means that for every $\varepsilon > 0$
	there exists a $\delta$-ball about $X_{eq}$ in $\mathcal{H}_0$ in which \eqref{eq:full_sys_u}--\eqref{eq:full_sys_J} is globally well-posed
	and such that solutions remain in the $\varepsilon$-ball about $X_{eq}$ in $\mathcal{H}_s$ for all time.
	about $X_{eq}$ in $\mathcal{H}_0$ also converge to $X_{eq}$ in $\mathcal{H}_{as}$ as time $t\to\infty$.
	
\subsection{Previous work}
\label{sec:prev_work}
	
	The continuum mechanics community has actively and extensively studied micropolar fluids over the past fifty years.
	While an exhaustive literature review is beyond the scope of this paper, we highlight the mathematics literature here.
	To the best of our knowledge, current mathematical results only consider \emph{isotropic} microstructure,
	which means that the microinertia $J$ is a scalar multiple of the identity.
	In particular, when a micropolar fluid is isotropic the precession term $\omega \times J\omega$ which appears in \eqref{eq:full_sys_omega} now vanishes
	and \eqref{eq:full_sys_J}, which governs the dynamics of the microinertia, is trivially satisfied.
	Note that in two-dimensions the microinertia is a scalar, such that all two-dimensional micropolar fluids are isotropic.

	The results known about isotropic micropolar fluids follow the pattern of what is known about viscous fluids.
	In two-dimensions global well-posedness holds \cite{lukaszewiscz_01} and quantitative rates of decay are obtained in \cite{dong_chen}.
	In three-dimensions, where well-posedness was first discussed by Galdi and Rionero \cite{galdi_rionero},
	weak solutions where constructed globally in time by {\L}ukaszewicz \cite{lukaszewicz_90}, who also proved that strong solutions are unique \cite{lukaszewicz_89}.
	More recent work has established global well-posedness for small data in critical Besov spaces \cite{chen_miao}
	and in the space of pseudomeasures \cite{ferreira_villamizar_roa}, and a blow-up criterion was derived \cite{yuan}.
	There is also a body of works dedicated to the study of partially inviscid limits taking one or more of the viscosity coefficients to zero.
	We refer to \cite{dong_zhang} for an illustrative example.

	Various extensions of the model of incompressible micropolar fluids presented here have been considered.
	These extensions have treated the compressible case \cite{liu_zhang}, coupled the system to heat transfer \cite{tarasinska, kalita_langa_lukaszewicz},
	and to magnetic fields \cite{ahmadi_shahinpoor, rojas_medar_marko}.
	Again, to the best of our knowledge all of these works consider \emph{isotropic} micropolar fluids.

	As mentioned above, we employ a two-tier nonlinear energy method as our scheme of a priori estimates.  This was originally used by Guo and Tice \cite{guo_tice_periodic,guo_tice_infinite} in the analysis of the viscous surface wave problem, where the conservative variable is the free surface function, which is strongly coupled.  This technique was also used to deal with the strongly coupled mass density for the compressible surface wave problem by Jang, Tice, Wang \cite{jang_tice_wang_compressible_internal_wave}.  Two-tier schemes have also proved useful in MHD problems without magnetic viscosity, where the magnetic field is the conservative unknown and is weakly coupled according to our above classification: see, for instance, the works of Ren, Wu, Ziang, and Zhang    \cite{ren_wu_xiang_zhang_MHD}, Abidi and Zhang \cite{abidi_zhang_MHD}, Tan and Wang \cite{tan_wang_MHD}, and Wang \cite{wang_MHD}.  The weak coupling of our present problem is more complicated than in these MHD results since some of the components of $K$, namely $a = (K_{13},K_{23})$, are strongly coupled, which means that $K$ cannot be conveniently ``integrated out'' by solving for it in terms of the other unknowns.
	
%----------------------------------------------------------------------------------------------------
%	STRATEGY AND DIFFICULTIES
%----------------------------------------------------------------------------------------------------
	
\section{Strategy and difficulties}
\label{sec:strategy_and_difficulties}

	In this section we describe the various obstructions to proving a stability result like \fref{Theorem}{thm:early_statement_main_result} and we discuss our strategy to overcome them.
	Since we study the nonlinear stability of a non-trivial equilibrium it is natural to change variables and use perturbative unknowns.
	This is done in \fref{Section}{sec:perturb_form}.

	In \fref{Sections}{sec:no_spectral_gap} and \ref{sec:weak_coupling} we then discuss the two main obstructions, namely the lack of a spectral gap and the weak coupling.
	In a nutshell, the difficulties are as follows.
	The lack of spectral gap leads to a failure of coercivity.
	The key in overcoming that is to prove a $\theta$-coercivity estimate.
	On top of that, weak coupling means that, even with $\theta$-coercivity, we cannot immediately deduce decay of all the unknowns
	(and so this comes after the fact via an algebraic identity and interpolation).

	We conclude in \fref{Sections}{sec:discuss_ap}--\ref{sec:discuss_gwp} with a discussion of the various moving pieces of our proof of \fref{Theorem}{thm:early_statement_main_result}.
	The centerpiece of our proof is the scheme of a priori estimates introduced in \fref{Section}{sec:discuss_ap}.
	\fref{Section}{sec:discuss_lwp} describes the local well-posedness theory
	and \fref{Section}{sec:discuss_cont} discusses how to ``glue'' the local well-posedness theory and the a priori estimates by means of a continuation argument.
	Finally \fref{Section}{sec:discuss_gwp} explains how to synthesize the various pieces of the proof in order to deduce global well-posedness and decay, and hence asymptotic stability.

\subsection{Perturbative formulation and overall strategy}
\label{sec:perturb_form}

	Since we study the nonlinear stability of \eqref{eq:full_sys_u}--\eqref{eq:full_sys_J} about the equilibrium
	$
		(u_{eq}, p_{eq}, \omega_{eq}, J_{eq}) = (0, 0, \frac{\tau}{2\kappa} e_3, \diag(\lambda, \lambda, \nu))
	$
	we naturally seek to write this system in terms of the perturbative variables
	$
		(u, p, \theta, K) = (u, p, \omega, J) - (\omega_{eq}, p_{eq}, \omega_{eq}, J_{eq}).
	$
	We may then write \eqref{eq:full_sys_u}--\eqref{eq:full_sys_J} equivalently as
	\begin{subnumcases}{}
		\pdt u + u\cdot\nabla u = (\mu + \kappa/2) \Delta u + \kappa\nabla\times\theta - \nabla p
		&on $(0,T)\times \T^n$,
		\label{eq:pertub_sys_no_ten_pdt_u}\\
		\nabla\cdot u = 0
		&on $(0,T)\times \T^n$,
		\label{eq:pertub_sys_no_ten_div}\\
		(J_{eq} + K)(\pdt\theta + u\cdot\nabla\theta) + (\omega_{eq} + \theta) \times (J_{eq} + K)(\omega_{eq} + \theta)
		\nonumber\\\qquad
			= \kappa\nabla\times u - 2\kappa\theta + (\tilde{\alpha} - \tilde{\gamma}) \nabla(\nabla\cdot\theta) + \tilde{\gamma}\Delta\theta
		&on $(0,T)\times \T^n$,
		\label{eq:pertub_sys_no_ten_pdt_theta}\\
		\pdt K + u\cdot\nabla K = \sbrac{\Omega_{eq} + \Theta, J_{eq} + K}.
		&on $(0,T)\times \T^n$,
		\label{eq:pertub_sys_no_ten_pdt_K}
	\end{subnumcases}
	where recall that $a = (K_{12}, K_{13})$.
	This is the system that will be studied in this paper,
	and there are two important remarks to make about \eqref{eq:pertub_sys_no_ten_pdt_u}--\eqref{eq:pertub_sys_no_ten_pdt_K}.

	The first remark is that the pressure plays a different role from the other unknowns, since it is essentially the Lagrange multiplier associated with the incompressibility constraint.
	We will therefore remove the pressure from consideration by the usual trick of projecting \eqref{eq:pertub_sys_no_ten_pdt_u} onto the space of divergence-free vector fields.
	This is done using the Leray projection $\mathbb{P}_L$ which on the three-dimensional torus takes the simple form $\mathbb{P}_L = - \nabla\times\Delta\inv\nabla\times$.
	We may then deduce from \eqref{eq:pertub_sys_no_ten_pdt_u} that
	\begin{equation}
	\label{eq:full_sys_u_Leray_proj}
		\pdt u + \mathbb{P}_L ( u\cdot\nabla u) = (\mu + \kappa/2)\Delta u + \kappa\nabla\times \theta.
	\end{equation}
	This equation will often be useful, in particular when it comes to the local well-posedness theory where it is convenient to
	view \eqref{eq:pertub_sys_no_ten_div}--\eqref{eq:pertub_sys_no_ten_pdt_K} and \eqref{eq:full_sys_u_Leray_proj} as an ODE.

	The second remark builds off of the fact that, as hinted at in \fref{Section}{sec:mixed_type} above and as discussed in more detail in \fref{Section}{sec:weak_coupling} below,
	$a$ is a component of $K$ which plays a particularly important role.
	It will therefore be crucial, when performing energy estimates, to read off from \eqref{eq:pertub_sys_no_ten_pdt_K} the equation governing the dynamics of $a$, namely
	\begin{equation}
	\label{eq:pertub_sys_pdt_a}
		\pdt a + u\cdot\nabla a = -(\nu - \lambda) \bar{\theta}^\perp + (\bar{K} - K_{33}I_2) \bar{\theta}^\perp + \frac{\tau}{2\kappa} a^\perp + \theta_3 a^\perp,
	\end{equation}
	where $\bar{\theta} = (\theta_1, \theta_2)$, $v^\perp = (-v_2, v_1)$ for any $v\in\R^2$, and $\bar{K} = \begin{pmatrix} K_{11} & K_{12} \\ K_{21} & K_{22} \end{pmatrix}$.
	
	To conclude this section we note that it is often useful to consider an alternative formulation of \eqref{eq:pertub_sys_no_ten_pdt_u}--\eqref{eq:pertub_sys_no_ten_pdt_K}
	involving the stress tensor $T$ and the couple-stress tensor $M$.
	Note that, just as the classical stress tensor $T$ encodes a fluid's response to forces,
	the couple-stress tensor $M$ encodes a micropolar fluid's response to torques acting on the microstructure.
	These tensors are given by
	\begin{equation}
	\label{eq:intro_stress_tensors_def}
		T = \mu \symgrad u + \kappa\ten\brac{ \frac{1}{2} \nabla\times u - \omega} - pI
		\text{ and } 
		M = \alpha (\nabla\cdot\omega) I + \beta \symgrad^0 \omega + \gamma\ten\nabla\times\omega.
	\end{equation}
	Here $\symgrad v$ denotes (twice) the symmetric part of the derivative of a vector field $v$, i.e. $\symgrad v = \nabla v + {\nabla v}^T$,
	and $\symgrad^0 v$ denotes its trace-free part, i.e. $\symgrad^0 v = \symgrad v - \frac{2}{n} (\nabla\cdot v) I$.
	We may then formulate \eqref{eq:pertub_sys_no_ten_pdt_u}--\eqref{eq:pertub_sys_no_ten_pdt_K} equivalently as
	\begin{subnumcases}{}
		\pdt u + u\cdot\nabla u = (\nabla\cdot T)(u, p, \theta)
		&on $(0,T)\times \T^n$,
		\label{eq:pertub_sys_ten_pdt_u}\\
		\nabla\cdot u = 0
		&on $(0,T)\times \T^n$,
		\label{eq:pertub_sys_ten_div}\\
		(J_{eq} + K)(\pdt\theta + u\cdot\nabla\theta) + (\omega_{eq} + \theta) \times (J_{eq} + K)(\omega_{eq} + \theta)
		\nonumber\\\qquad
			= 2\vc T(u, p, \theta) + (\nabla\cdot M)(\theta)
		&on $(0,T)\times \T^n$,
		\label{eq:pertub_sys_ten_pdt_theta}\\
		\pdt K + u\cdot\nabla K = \sbrac{\Omega_{eq} + \Theta, J_{eq} + K}.
		&on $(0,T)\times \T^n$.
		\label{eq:pertub_sys_ten_pdt_K}
	\end{subnumcases}
	This formulation is particularly convenient when it comes to identifying the energy-dissipation relation
	since it makes it clear which terms contribute to the energy and which contribute to the dissipation.
	To be precise, we see that the dissipation comes precisely from the stress and couple-stress tensors since
	\begin{equation}
		\int_{\mathbb{T}^3} T:(\Theta - \nabla u) + M : \nabla\theta
			= \int_{\T^3} \frac{\mu}{2} \vbrac{\symgrad u}^2
			+ 2\kappa \vbrac{ \frac{1}{2} \nabla\times u - \theta}^2
			+ \alpha\vbrac{\nabla\cdot\theta}^2
			+ \frac{\beta}{2} \vbrac{\symgrad^0 \theta}^2
			+ 2\gamma \vbrac{\nabla\times\omega}^2
	\label{eq:dissip_first_apparition}
	\end{equation}
	where the right-hand side denotes the dissipation $D(u, \theta)$.
	In particular note that the dissipation does not provide any control over the perturbative microinertia $K$.

\subsection{Lack of spectral gap}
\label{sec:no_spectral_gap}

	In this section we describe the first of the two main obstructions in proving the asymptotic stability of the equilibrium, namely the \emph{provable} absence of a spectral gap.
	To prove that the linearization of \eqref{eq:pertub_sys_no_ten_pdt_u}--\eqref{eq:pertub_sys_no_ten_pdt_K} about equilibrium does not have a spectral gap we may leverage
	the careful spectral analysis carried out in the companion paper \cite{rt_tice_amf_iut} where the instability of inertially oblong microstructure is established.

	In order to describe this spectral analysis we must first recall the block structure of the linearization.
	Upon linearizing \eqref{eq:pertub_sys_no_ten_pdt_theta}--\eqref{eq:pertub_sys_no_ten_pdt_K} and \eqref{eq:full_sys_u_Leray_proj}
	and pre-multiplying the linearization of \eqref{eq:pertub_sys_no_ten_pdt_theta} by $J_{eq}\inv$ we see that the resulting linear operator may
	be written in block form as $\mathcal{B} \oplus \frac{\tau}{2\kappa} \sbrac{R, \,\cdot\,} \oplus 0$.
	Here the first block acts on the variables $(u, \theta, a)$, the second block acts on $\bar{K} = \begin{pmatrix} K_{11} & K_{12} \\ K_{21} & K_{22} \end{pmatrix}$,
	and the third block acts on $K_{33}$.
	Crucially, the last two blocks have trivial dynamics since $\frac{\tau}{2\kappa} \sbrac{R, \,\cdot\,}$ is a three-dimensional anti-symmetric operator
	which gives rise to one trivial mode and two conjugate oscillatory modes.
	The linear stability of \eqref{eq:pertub_sys_no_ten_pdt_theta}--\eqref{eq:pertub_sys_no_ten_pdt_K} and \eqref{eq:full_sys_u_Leray_proj} is therefore dictated by the spectrum of $\mathcal{B}$.

	In practice we study the spectrum of the symbol $\hat{\mathcal{B}}(k)$, $k\in{(2\pi\Z)}^3$, of $\mathcal{B}$.
	Note that the torus is rescaled for convenience since this particular scaling means that $\hat{\nabla} (k) = ik$.
	A careful spectral analysis (the details of which can be found in \cite{rt_tice_amf_iut}) then allows us to prove the following.
	For any $k\in {(2\pi\Z)}^3$ the spectrum of $\hat{\mathcal{B}}(k)$ is contained in the half-slab $H = \cbrac{z\in\C : \re z \leqslant 0 \text{ and } \abs{\im z} \leqslant C}$,
	for some constant $C>0$.
	In particular we may find a radius $R > C$ and a cutoff $k_* > 0$ such that if $\abs{k} > k_*$ then there are precisely three eigenvalues of $\hat{\mathcal{B}}(k)$ in $H \cap B_R(0)$:
	zero (which is associated with the incompressibility constraint) and a conjugate pair of eigenvalues $z(k)$ and $\bar{z}(k)$.
	Crucially, this pair of eigenvalues satisfies $\re z(k) \to 0$ as $\abs{k}\to\infty$.
	This analysis, summarized pictorially in \fref{Figure}{fig:spectral_analysis}, proves that $\mathcal{B}$, and hence the linearization itself, does not admit a spectral gap.

	\begin{figure}[h!]
		\centering
		\includegraphics{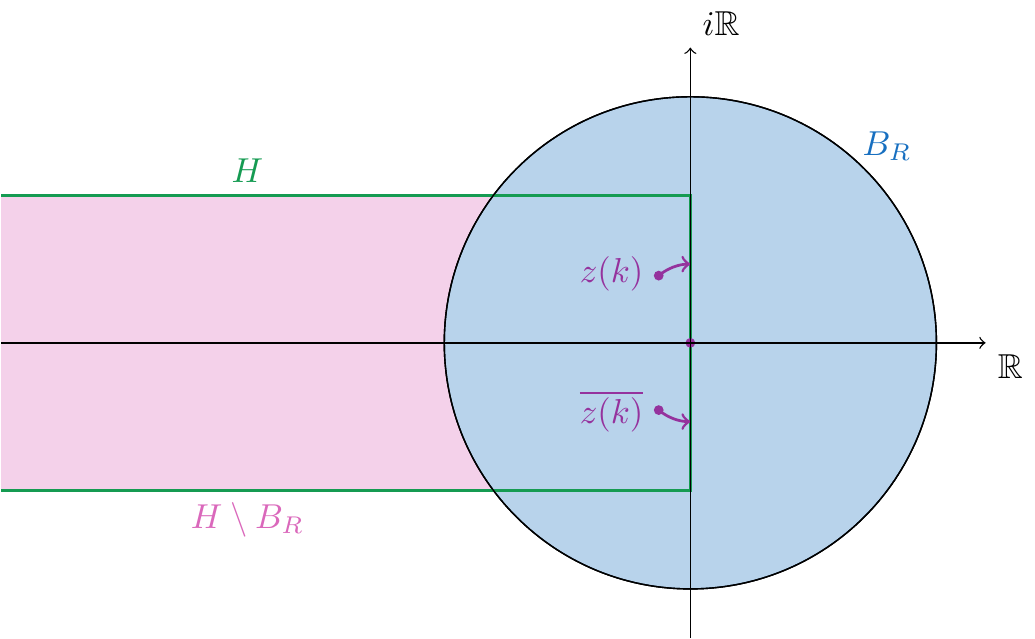}
		\caption{A pictorial summary of the argument carried out in \fref{Section}{sec:no_spectral_gap} which proves the lack of a spectral gap.}
		\label{fig:spectral_analysis}
	\end{figure}

	At the nonlinear level, the manifestation of the lack of a spectral gap is a failure of coercivity, meaning that an estimate of the form $\mathcal{E} \leqslant \mathcal{D}$ is out of reach.
	To overcome this, we prove a $\theta$-coercivity estimate, which takes the form $ \mathcal{E} \lesssim \mathcal{D}^\theta$ for some $\theta\in (0,1)$
	and leads to algebraic decay. The implementation of this is discussed in more details in \fref{Section}{sec:discuss_linear_analysis} below.

\subsection{Weak coupling}
\label{sec:weak_coupling}
	
	In this section we discuss the second major obstruction to proving the stability of the equilibrium, namely the so-called \emph{weak coupling}
	of the parabolic part of the problem to its hyperbolic part.

	Recall that as discussed in \fref{Section}{sec:mixed_type}, the term ``weak coupling'' describes the fact that only \emph{some} of the unknowns appear in the energy.
	The remaining unknowns are solely controlled by conservative hyperbolic-type estimates.
	As seen in \eqref{eq:def_E_lin}, here the weak coupling manifests itself at the level of the perturbative microinertia $K$ since only
	two of its components, denote by $a = (K_{13}, K_{23})$, appear in the energy.
	The reason why only $a$ appears in the energy is due to the precession term $\omega\times J\omega$ from \eqref{eq:full_sys_omega}.
	More precisely: when writing the precession term in perturbative form, as is done in \eqref{eq:pertub_sys_no_ten_pdt_theta},
	we notice the appearance of the term $\omega_{eq} \times J \omega_{eq} = {\brac{\frac{\tau}{2\kappa}}}^2 (-a_2, a_1)$.

	At first glance, this means that we only expect decay of $a$, and not of the remaining components of $K$.
	This is how our scheme of a priori estimates is built (see \fref{Section}{sec:discuss_ap} below for more details).
	This means that the nonlinear estimates are particularly delicate.
	Indeed, since some terms appearing in the nonlinear interactions are not assumed to decay, at any level of regularity,
	it follows that we must be very careful about playing off these non-decaying terms against terms that decay sufficiently fast.

	That being said, it most also be noted that, independently of our scheme of a priori estimates, a minor miracle of linear algebra occurs.
	This allows us to prove a \emph{quantitative rigidity} result: if $ \normtyp{K}{L}{\infty} \leqslant \nu - \lambda $, then $ \abs{K} \leqslant  2\abs{a}$ pointwise.
	In particular, we may then deduce the $L^2$ decay of $K$, and then bootstrap via interpolation to obtain the decay of norms of $K$ at higher regularity.
	Note that this ``minor miracle'' is recorded in \fref{Proposition}{prop:quant_rigidity}.

	Crucially, since the decay of $K$ can be recovered a posteriori via algebraic identities, we do not incorporate it into our scheme of a priori estimates.
	Indeed, doing so would not strengthen the final statement of the theorem, and while it would give us another lever to pull when performing nonlinear estimates,
	it would further complicate  our scheme of a priori estimates since the numerology of the precise decay rates of $K$ and $\pdt K$ is not particularly pleasant.	
	
\subsection{A priori estimates}
\label{sec:discuss_ap}

	In this section we discuss the a priori estimates, which are carried out in \fref{Section}{sec:a_prioris}.
	As mentioned previously,
	a fundamental observation about the problem at hand is that it is of \emph{mixed} type.
	On one hand, the equations driving the dynamics of the velocity $u$ and the (perturbative) angular velocity $\theta$ are parabolic.
	On the other hand, the equation driving the dynamics of the (perturbative) microinertia $K$ is hyperbolic.
	Having made this fundamental observation, the two questions we seek to answer are the following.
	\begin{enumerate}
		\item	What kind of decay does the linearized problem possess?
		\item	How can we massage the nonlinear structure to push this decay through to the nonlinear problem?
	\end{enumerate}

	We will address the decay of the linearized problem in \fref{Section}{sec:discuss_linear_analysis}
	and turn our attention to the nonlinear effects in \fref{Section}{sec:discuss_nonlinear_effects}.
	Throughout this discussion, we will underscore how the four pieces of the a priori estimates, namely
	(1) closing the energy estimates at the low regularity,
	(2) closing the energy estimates at the high regularity,
	(3) deriving advection-rotation estimates for $K$, and
	(4) obtaining the decay of intermediate norms,
	are related to one another. This is also summarized pictorially in \fref{Figure}{fig:a_priori_discuss}.

	Note that throughout this discussion we will use various versions of the energy and the dissipation.
	Their precise forms may be found in \fref{Section}{sec:notation} below.
	At first pass, the following heuristics may be useful for the reader:
	$\overline{\mathcal{E}}$ and $\overline{\mathcal{D}}$ denote the energy and dissipation functionals that naturally arise when
	performing the energy estimates whereas $\mathcal{E}$ and $\mathcal{D}$ denote \emph{improved} versions of these functionals.

\subsubsection{Linear analysis}
\label{sec:discuss_linear_analysis}

	\paragraph{\textbf{$\theta$-coercivity and the two-tier energy structure.}}
	We begin our discussion with the analysis of the linearized system, and how it leads to almost-exponential decay.
	We will emphasize that it naturally gives rise to the aforementioned two-tier energy structure where the decay of the low level energy is tied to the boundedness of the high level energy.

	The starting point is the energy-dissipation relation, which tells us that
	\begin{equation}
	\label{eq:discuss_ap_ED_low}
		\frac{\mathrm{d}}{\mathrm{d}t} \overline{\mathcal{E}}_\text{low} + \overline{\mathcal{D}}_\text{low} = 0.
	\end{equation}
	Note that the mixed parabolic-hyperbolic structure already manifests itself here: the energy $ \overline{\mathcal{E}}_\text{low} $ is a function of $(u, \theta, a)$
	whereas the dissipation $ \overline{\mathcal{D}}_\text{low} $ is only a function of $(u, \theta)$.
	To derive any decay estimate from \eqref{eq:discuss_ap_ED_low} we need the dissipation to control the energy in some fashion,
	which at first glance is out of reach due to the absence of $a$ in the dissipation.

	Given the structure of the equations, this gap between the energy and the dissipation may not be fully closed.
	In particular, note that this energy-dissipation gap is consistent with the lack of spectral gap discussed in \fref{Section}{sec:no_spectral_gap}.
	To partially close this energy-dissipation gap we \emph{improve} the dissipation,
	i.e. leverage auxiliary estimates for $a$ to see that
	\begin{equation*}
		\overline{\mathcal{D}}_\text{low} (u,\theta) \gtrsim \mathcal{D}_\text{low} (u, \theta, a).
	\end{equation*}
	With this improvement in hand there is hope for the dissipation to control the energy.
	More precisely, what we can show is that
	\begin{equation}
	\label{eq:discuss_ap_hypo}
		\overline{\mathcal{E}}_\text{low} \lesssim \overline{\mathcal{E}}_M^{1-\theta} \mathcal{D}_\text{low}^{\theta},\;
		\text{ for } \theta = \frac{2M-2}{2M-1} \in (0,1),
	\end{equation}
	where $ \overline{\mathcal{E}}_M $ is a high regularity counterpart to the low regularity energy $ \overline{\mathcal{E}}_\text{low}$.
	When \eqref{eq:discuss_ap_hypo} holds we say that the dissipation is \emph{$\theta$-coercive} over the energy.

	Crucially, this $\theta$-coercivity estimate is only useful if we know that the high regularity energy $ \overline{\mathcal{E}}_M $ remains bounded.
	Thankfully this is immediate from the high regularity version of the energy-dissipation relation, which reads
	\begin{equation*}
		\frac{\mathrm{d}}{\mathrm{d}t} \overline{\mathcal{E}}_M + \overline{\mathcal{D}}_M = 0.
	\end{equation*}
	The non-negativity of $ \overline{\mathcal{D}}_M $ then tells us that $ \overline{\mathcal{E}}_M (t) \leqslant \overline{\mathcal{E}}_M (0)$.

	Combining the $\theta$-coercivity estimate at low regularity, the boundedness of the high regularity energy,
	and a nonlinear Gronwall argument allows us to deduce the decay of the low regularity energy.
	Indeed, we have that, for some $C>0$,
	\begin{equation}
	\label{eq:discuss_ap_decay_E_low}
		\frac{\mathrm{d}}{\mathrm{d}t} \overline{\mathcal{E}}_\text{low}
		+ \frac{C \overline{\mathcal{E}}_\text{low}^{1/\theta}}{ {\overline{\mathcal{E}}_M (0)}^{\frac{1}{\theta}-1} }
		\leqslant 0,
		\text{ and hence }
		\overline{\mathcal{E}}_\text{low} (t) \lesssim \frac{ \overline{\mathcal{E}}_M (0) }{ {\brac{1+t}}^{2M-2} }.
	\end{equation}

	It is important to make two remarks here.
	First we note that as shown above the almost-exponential decay is \emph{not} a nonlinear effect. It is the best rate of decay we can expect given the structure of the linearized problem.
	Second we recall that, as mentioned previously, the two-tier energy structure is significantly more intricate for the nonlinear problem,
	since in that case the decay of $ \overline{\mathcal{E}}_\text{low} $ and the boundedness of $ \overline{\mathcal{E}}_M $ are \emph{inter-dependent} on one another,
	whereas here in this linear setting only the decay of the low regularity energy is predicated on the boundedness of the high regularity energy.

\subsubsection{Nonlinear effects}
\label{sec:discuss_nonlinear_effects}

	\paragraph{\textbf{Decay of intermediate norms.}}
	We begin our discussion of the nonlinear effects with an description of how
	the decay of the low level energy and the boundedness of the high level energy lead to the (slower) decay of intermediate norms.
	While this is not, in essence, a nonlinear feature, it is crucial in order to wrest some of the nonlinear effects under control, as described further below in this section.
	We note that this interpolation argument is carried out precisely in \fref{Section}{sec:decay_int_norms}.

	Interpolation theory tells us that if the low regularity energy decays as in \eqref{eq:discuss_ap_decay_E_low} and the high regularity energy is bounded by its initial value, then
	\begin{equation}
	\label{eq:discuss_ap_decay_int_norms}
		\overline{\mathcal{K}}_I (t) 
		\lesssim \frac{ \overline{\mathcal{E}}_M (0) }{ {\brac{1+t}}^{2M-2I} },\;
		\text{ for } 1\leqslant I\leqslant M,
	\end{equation}
	where $\overline{\mathcal{K}}_I = \normtyp{(u,\theta,a) (t) }{H}{2I}^2 + \normtyp{(\pdt u,\pdt\theta,\pdt a)(t)}{H}{2I-2}^2$ is the sum of the (squared) norms for which we expect decay.
	Note that since only $(u, \theta, a)$ and $\pdt(u, \theta, a)$ appear in $ \overline{\mathcal{E}}_\text{low}$,
	these are also the only terms appearing in $ \overline{\mathcal{K}}_I$.
	In other words: \eqref{eq:discuss_ap_decay_int_norms} does not yield any decay information on higher temporal derivatives,
	a fact which will be important later.

	\hspace{0.1em}
	\paragraph{\textbf{Controlling $K$: advection-rotation estimates.}}
	We continue our discussion of the nonlinear effect with an explanation of why energy estimates are not sufficient to close the scheme of a priori estimates.
	Then we discuss how the advection-rotation estimates which give us control over $K$ give rise to a dichotomy between ``good'' terms and ``bad'' terms.
	The advection-rotation estimates for $K$ are carried out in \fref{Section}{sec:adv_rot_est_K} and culminate in \fref{Proposition}{prop:est_K}.

	Energy estimates are not sufficient to close the scheme of a priori estimates for a simple reason:
	they produce interactions which are out of control due to the absence of $K$ in the dissipation.
	Indeed, suppose that instead of using the equation which governs the dynamics of $a$ in the energy estimates
	we used the equation which governs the dynamics of the full perturbative microinertia $K$.
	Schematically, we could then obtain an energy-dissipation relation of the form
	\begin{equation*}
		\frac{\mathrm{d}}{\mathrm{d}t} \overline{\mathcal{E}} (u, \theta, K) + \overline{\mathcal{D}} (u, \theta) = \overline{\mathcal{I}} (u, \theta, K),
	\end{equation*}
	where $\overline{\mathcal{I}}$ denotes the interaction terms.
	However, as described in \fref{Section}{sec:discuss_linear_analysis}, even after improving the dissipation we can only wrest $a$ under control, and not $K$.
	This is due to the fact that only $a$ appears in the equation governing the dynamics of $\theta$, and this is where our auxiliary estimates for $a$ begin.
	Ultimately, as described previously this is because $K$ only appears in that equation through the precession term, and we have the identity
	$\omega_{eq} \times K \omega_{eq} = {\brac{ \frac{\tau}{2\kappa} }}^2 \tilde{a}^\perp$.

	This lack of dissipative control over $K$ is fatal when it comes to gaining control over the interaction terms.
	More precisely, when taking $\alpha$ many derivatives we see that one of the interaction terms is
	\begin{equation}
	\label{eq:discuss_bad_interaction}
		\int_{\T^3} \partial^\alpha \brac{ \sbrac{\Theta, K} } : \partial^\alpha K =\vcentcolon \mathcal{I}^\alpha.
	\end{equation}
	We cannot hope to control this term since we are after an estimate that would allow us to absorb the interaction term into the dissipation, provided the energy is small,
	i.e. an estimate of the form $\vbrac{\mathcal{I}^\alpha} \lesssim \mathcal{E}^{1/2} \mathcal{D}$.
	In light of this inability to close the scheme of a priori estimates by solely relying on energy estimates, we turn our attention to the equation governing the dynamics of $K$.
	This is essentially a reminder that since the problem is of mixed parabolic-hyperbolic type, we cannot build a complete scheme of a priori estimates leveraging only the parabolic
	structure of the problem (i.e. the structure that gives rise to the energy-dissipation relation) and must also take into account the hyperbolic structure embedded in the equation governing
	the dynamics of $K$.

	The equation satisfied by $K$ is an advection-rotation equation since it involves both advective effects due to the velocity $u$
	and rotational effects due to the (perturbative) angular velocity $\theta$.
	The fundamental observation is the following: if $v$ is divergence-free, $A$ is anti-symmetric, and $S$ is symmetric, then (provided all unknowns are sufficiently regular)
	\begin{equation*}
		\int_{\T^3} \brac{ \pdt + v\cdot\nabla - \sbrac{A, \,\cdot\,} } S : S = 0.
	\end{equation*}
	This leads to the following $L^p$ estimate.
	If $S$ solves
	$
		\brac{\pdt + v\cdot\nabla - \sbrac{A, \,\cdot\,} } S = F
	$
	for some forcing $F$, then
	\begin{equation*}
		\normtyp{ S(t) }{L}{p} \leqslant \normtyp{ S(0) }{L}{p} + \int_0^t \normtyp{ \sym(F) (s) }{L}{p} ds.
	\end{equation*}
	This immediately grants us control over $L^p$ norms of $K$.
	To gain control over $K$ in $H^k$ we must couple this $L^p$ estimate with high-low estimates.
	In particular, provided that some low regularity norms decay sufficiently fast (which, as discussed in \fref{Section}{sec:discuss_linear_analysis}, is expected),
	we may combine such high-low estimates with the $L^p$ estimate above to deduce that
	\begin{equation*}
		\normtyp{K(t)}{H}{k} \lesssim \normtyp{K(0)}{H}{k} + \int_0^t \normtyp{(u,\theta)(s)}{H}{k} ds.
	\end{equation*}

	Crucially, there are only two ways in which we can control $\int_0^t \normtyp{(u,\theta)}{H}{k} $:
	(1) through the boundedness of $\int_0^t \overline{\mathcal{D}}_M $ and
	(2) through the decay of the intermediate norms in $ \overline{\mathcal{K}}_I $.
	Comparing items (1) and (2), the following trade-off comes to light:
	item (1) gives us control of $K$ at a higher regularity, at the cost of an upper bound growing in time.
	Indeed, on one hand it follows from item (1) and the Cauchy-Schwarz inequality that, for all $k\leqslant 2M+1$,
	\begin{equation*}
		\int_0^t \normtyp{(u,\theta)}{H}{k}
		\leqslant t {\brac{ \fint_0^t \normtyp{(u,\theta)}{H}{k}^2 }}^{1/2}
		\leqslant {\brac{ t \overline{\mathcal{D}}_M }}^{1/2}.
	\end{equation*}
	On the other hand combining item (2) and the decay of the intermediate norms of \eqref{eq:discuss_ap_decay_int_norms} tells us that, for $k\leqslant 2M-3$,
	\begin{equation*}
		\int_0^t \normtyp{(u, \theta)}{H}{k}
		\lesssim \int_0^t \frac{ \overline{\mathcal{E}}_M (0) }{ {\brac{1+s}}^{M-\frac{k}{2}} } ds
		\lesssim \overline{\mathcal{E}}_M (0).
	\end{equation*}

	Note that this trade-off is only at play when two or fewer temporal derivatives hit $K$.
	This is because control of time derivatives of $K$ does not come from advection-rotation estimates.
	Instead, it comes from applying derivatives to the equation satisfied by $K$.
	For the sake of exposition let us discuss this process under the assumption that $K$ solves the linearized equation
	\begin{equation*}
		\pdt K = \sbrac{\Omega_{eq}, K} + \sbrac{\Theta, J_{eq}}.
	\end{equation*}
	To control $\pdt^j K$ we apply $j-1$ temporal derivatives to the equation.
	The crux of the argument is this: since $\pdt^{j-1} \theta$ is controlled through the high regularity energy $ \overline{\mathcal{E}}_M $ in the space $H^{2M-2j+2}$,
	we see that this derivative count is below $2M-3$, i.e. $2M-2j+2 \leqslant 2M-3$, precisely when $j\geqslant 3$.
	So indeed this trade-off only concerns the first two temporal derivatives of $K$.

	The practical implication of this trade-off is the following dichotomy between ``good'' and ``bad'' terms.
	If we seek to control $K$ or one of its time derivatives in $H^k$ for $k\leqslant 2M-3$ then we are dealing with a ``good'' term which is bounded in time.
	If we seek to control $K$, $\pdt K$, or $\pdt^2 K$ in $H^k$ for $k > 2M-3$ then we are dealing with a ``bad'' term for which the only bound we have is growing in time.

	Note that this distinction is by no means purely academic: nonlinear interaction terms appear that require us to control $K$ (and its temporal derivatives)
	at high regularity, and for example it is critical to be able to control $K$ in $H^{2M+1}$ due to interactions of the form
	$
		\int_{\mathbb{T}^3} ( \partial^\alpha K ) \theta \cdot \partial^\alpha a
	$
	when $\abs{\alpha} = 2M$.
	Since we seek an upper bound of the form $\mathcal{E}^{1/2} \mathcal{D}$ even though $K$ is not in the dissipation and $a$ is only controlled dissipatively up to $H^{2M-1}$
	(this is precisely the manifestation of the lack of coercivity), we must integrate by parts, which requires control of $K$ in $H^{2M+1}$.

	As a concluding note regarding the advection-rotation estimates, it is essential to remember that this control of $K$ is conditioned on the decay of intermediate norms.
	This is precisely what informs how the advection-rotation estimates fit in the overall scheme of a priori estimates.

	\hspace{0.1em}
	\paragraph{\textbf{Closing the energy estimates at the low level.}}
	We continue our discussion of the nonlinear effects and sketch how to close the energy estimates at the low level.
	The key observation here is that we may proceed as we did in the linear case (discussed in \fref{Section}{sec:discuss_linear_analysis} above),
	with two differences.
	Note that the closure of the energy estimates at the low level is done in \fref{Proposition}{prop:close_est_low_level},
	which puts together all the pieces from \fref{Section}{sec:close_at_low}.

	The first difference is that the microinertia appears as a weight in the energy in the energy.
	This is readily addressed by the propagation in time of the spectrum of the microinertia, since then $ \int_{\mathbb{T}^3} J\theta\cdot\theta \asymp \int_{\mathbb{T}^3} \abs{\theta}^2$.
	The second difference s that nonlinear interactions appear on the right-hand side of the energy-dissipation relation of \eqref{eq:discuss_ap_ED_low}.
	As was the case in the linear setting of \fref{Section}{sec:discuss_linear_analysis} we leverage the boundedness of the high level energy,
	which is used here to control these interactions.
	We may then deduce the decay of $ \mathcal{E}_\text{low} $ as in \eqref{eq:discuss_ap_decay_E_low}.

	Crucially, this decay is once again (as was the case in the linear analysis) predicated on the boundedness of the high level energy $ \overline{\mathcal{E}}_M $.
	However, by contrast with the linear case, it is very delicate to ensure that the high level energy remains bounded in the nonlinear setting.
	This is discussed in detail below.

	\hspace{0.1em}
	\paragraph{\textbf{Closing the energy estimates at the high level.}}
	We near the end of our discussion of the a priori estimates and provide a sketch of how to close the energy estimates at the high level,
	noting in particular the difficulties that arise due to the presence of $K$, and describing how to overcome these challenges.
	This is carried out rigorously in \fref{Section}{sec:close_at_high}, leading up to the closure of the energy estimates at the high level in \fref{Proposition}{prop:close_est_high_level}.

	The fundamental principles used to close the energy estimates at the high level are the same as those used to close the estimates at the low level:
	improve the dissipation and control the interactions.
	However, difficulties arise due to the presence of $K$ and the fact that, as discussed above, the only control we have over $K$, $\pdt K$, and $\pdt^2 K$
	at regularity above $2M-3$ is growing in time.

	To be precise, let us write the energy-dissipation relation at the high level as
	\begin{equation}
	\label{eq:discuss_ap_ED_high}
		\frac{\mathrm{d}}{\mathrm{d}t} \overline{\mathcal{E}}_M + \overline{\mathcal{D}}_M = \overline{\mathcal{I}}_M,
	\end{equation}
	where $ \overline{\mathcal{I}}_M$ denotes the interactions.
	Immediately, when improving the dissipation and controlling the interactions, ``bad'' terms from the advection-rotation estimates for $K$ appear.
	Since the upper bound on these bad terms is growing in time, our only hope that their appearance does not break the scheme of a priori estimates
	is that they may be counter-balanced by terms which decay in time.
	The decay of intermediate norms therefore plays an essential role in the closure of the energy estimates at the high level.
	With this careful balancing act in mind, between ``bad'' terms involving $K$ and terms decaying sufficiently fast,
	the estimates establishing the improvement of the dissipation $ \overline{\mathcal{D}}_M$
	and the control of the interactions $ \overline{\mathcal{I}}_M$ can be shown to take the form
	\begin{equation}
	\label{eq:discuss_ap_close_high_first_est}
			\mathcal{D}_M \lesssim \overline{\mathcal{D}}_M + \overline{\mathcal{K}}_2 \mathcal{F}_M
			\text{ and }
			\vbrac{ \overline{\mathcal{I}}_M } \lesssim \mathcal{E}_M^{1/2} \mathcal{D}_M + \mathcal{K}_\text{low}^{1/2} \mathcal{F}_M^{1/2} \mathcal{D}_M^{1/2}
	\end{equation}
	where $ \mathcal{K}_\text{low}$ contains all the terms whose decay is needed to counteract the potential growth of $ \mathcal{F}_M $.

	A particular subtlety, which is worth pointing out, arises when identifying $ \mathcal{K}_\text{low}$.
	Indeed, it turns out that
	$
		\mathcal{K}_\text{low} = \overline{\mathcal{K}}_2 + \normtyp{ \pdt^2 \theta }{L}{2}^2,
	$
	where the appearance of $\pdt^2 \theta$ is ultimately due to the interaction with the commutator with $J\pdt$.
	The first term, $ \overline{\mathcal{K}}_2$, is immediately known to decay from the decay of intermediate norms discussed at the beginning of \fref{Section}{sec:discuss_nonlinear_effects}.
	The decay of $\pdt^2 \theta$ is not quite so immediate since the norms in $ \overline{\mathcal{K}}_I $ only involve one temporal derivative.
	We are therefore required to perform an auxiliary estimate for $\pdt^2 \theta$,
	which hinges on the structure of the equation governing the dynamics of $\theta$ and the propagation in time of the spectrum of the microinertia $J$,
	to establish that $\pdt^2 \theta$ decays when $ \overline{\mathcal{K}}_2$ decays.

	Having established \eqref{eq:discuss_ap_close_high_first_est}, the heuristic which guides our next step is, as discussed above,
	that the decay of $ \mathcal{K}_\text{low}$ will balance out the potential growth of $ \mathcal{F}_M $.
	It turns out that establishing such a result rigorously can only be carried out in a time-integrated fashion.
	We end up proving that
	\begin{equation}
	\label{eq:discuss_ap_close_high_second_est}
			\int_0^t \overline{\mathcal{K}}_2 \mathcal{F}_M \lesssim \alpha \brac{ 1 + \int_0^t \overline{\mathcal{D}}_M } \text{ and }
			\int_0^t \mathcal{K}_\text{low}^{1/2} \mathcal{F}_M^{1/2} \mathcal{D}_M^{1/2} \lesssim \alpha \int_0^t \overline{\mathcal{D}}_M
	\end{equation}
	where $\alpha > 0$, which depends on the initial conditions and the decay of intermediate norms, can be made small.
	Crucially, the estimates above are obtained by leveraging the control of $ \mathcal{F}_M $ afforded to us by the advection-rotation estimates for $K$.
	This shows that closing the energy estimates at the high level is a delicate affair which relies directly on two of the other three pieces of our scheme of a priori estimates:
	the decay of intermediate norms and the advection-rotation estimates for $K$.

	To conclude it suffices to combine \eqref{eq:discuss_ap_close_high_first_est} and \eqref{eq:discuss_ap_close_high_second_est} with the energy-dissipation relation
	\eqref{eq:discuss_ap_ED_high}, from which we deduce the boundedness of the high level energy $ \overline{\mathcal{E}}_M $.

	\hspace{0.1em}
	\paragraph{\textbf{Synthesis.}}
	We conclude our discussion of the a priori estimates with a brief note on how to put all the pieces together.
	Each of the four pieces of the a priori estimates, namely
	closing the energy estimates at the low regularity,
	closing the energy estimates at the high regularity,
	deriving advection-rotation estimates for $K$, and
	obtaining the decay of intermediate norms,
	depends on one or more of the other.
	A careful assembly is therefore required to ensure that the argument does not end up being circular.
	This is summarized pictorially in \fref{Figure}{fig:a_priori_discuss} and done carefully in \fref{Section}{sec:ap_synthesis},
	culminating in the main a priori estimates result recorded in \fref{Theorem}{thm:a_priori}.

	The key insight is to kick off the scheme of a priori estimates by assuming the smallness of the solution.
	From there we can take two passes at the estimates:
	in the first pass we use the smallness assumption to ensure that all the pieces of our scheme of a priori estimates are in play,
	and in the second pass we obtain structured estimates where the smallness parameter disappears from the estimates
	and all the estimates obtained are in terms of the initial data.

	\begin{figure}
	\begin{center}
	\begin{tikzpicture}
		% Define the styles
		\tikzstyle{sblock} = [rectangle, draw, fill=RoyalBlue!20, text width = 6cm, text centered, minimum height = 1cm];
		\tikzstyle{block} = [rectangle, draw, fill=RoyalBlue!20, text width = 6cm, text centered, minimum height = 2cm];
		\tikzstyle{arrow} = [draw, ->, thick];
		% Boxes
		\node [sblock]		(smallAssum)	at (-4.5,  3.5)	{Smallness\\ assumption on $ \overline{\mathcal{E}}_M $};
		\node [block]		(closeLowLvl)	at (-4.5,  0)	{
			\textbf{Closing at the low level}\\
			Boundedness of $ \overline{\mathcal{E}}_M $\\
			$\implies$ Decay of $ \mathcal{E}_\text{low}$
		};
		\node [block]		(decayIntNorms)	at (-4.5, -3.5)	{
			\textbf{Decay of intermediate norms}\\
			Boundedness of $ \overline{\mathcal{E}}_M $\\
			\& decay of $ \mathcal{E}_\text{low} $\\
			$\implies$ Decay of intermediate norms
		};
		\node [block]		(advRotEq)	at ( 4.5, -3.5)	{
			\textbf{Advection-rotation equations}\\
			Decay of intermediate norms\\
			$\implies$ control of $ \mathcal{F}_M $
		};
		\node [block]		(closeHighLvl)	at ( 4.5, 0)	{
			\textbf{Closing at the high level}\\
			Decay of intermediate norms\\
			\& control of $ \mathcal{F}_M $\\
			$\implies$ boundedness of $ \overline{\mathcal{E}}_M $
		};
		% Arrows
		\draw [arrow, dashed, color=Thistle]	(smallAssum.270-57)	--		node [left]						{1}	(closeLowLvl.90+40);
		\draw [arrow, dashed, color=Thistle]	(closeLowLvl.270-40)	--		node [left]						{2}	(decayIntNorms.90+40);
		\draw [arrow, color=ForestGreen]	(decayIntNorms.270-40)	-- + (0, -1) -|	node [below, xshift = -4.5cm]				{3}	(advRotEq.270);
		\draw [arrow, color=ForestGreen]	(advRotEq.90)		--		node [right]						{4}	(closeHighLvl.270);
		\draw [arrow, color=ForestGreen]	(closeHighLvl.90)	-- + (0,  1) -| node [above, xshift = 3.8cm]				{5}	(closeLowLvl.90-40);
		\draw [arrow, color=ForestGreen]	(closeLowLvl.270+40)	--		node [right]						{6}	(decayIntNorms.90-40);
	\end{tikzpicture}
	\end{center}
	\caption{
		Pictorial summary of how the various pieces of the a priori estimates depend on one another.
		The arrows indicate the steps taken to close our scheme of a priori estimates -- c.f. \fref{Theorem}{thm:a_priori} for details.
		In the first pass all estimates obtained are in terms of the smallness parameter, this is indicated by the pink dashed arrows.
		In the second pass all estimates obtained are in terms of the initial conditions, this is indicated by the green solid arrows.
		Note that the seventh step of \fref{Theorem}{thm:a_priori} is omitted here since
		it plays an essential role in the propagation of the estimates over time, but is not essential for their closure.
	}
	\label{fig:a_priori_discuss}
	\end{figure}
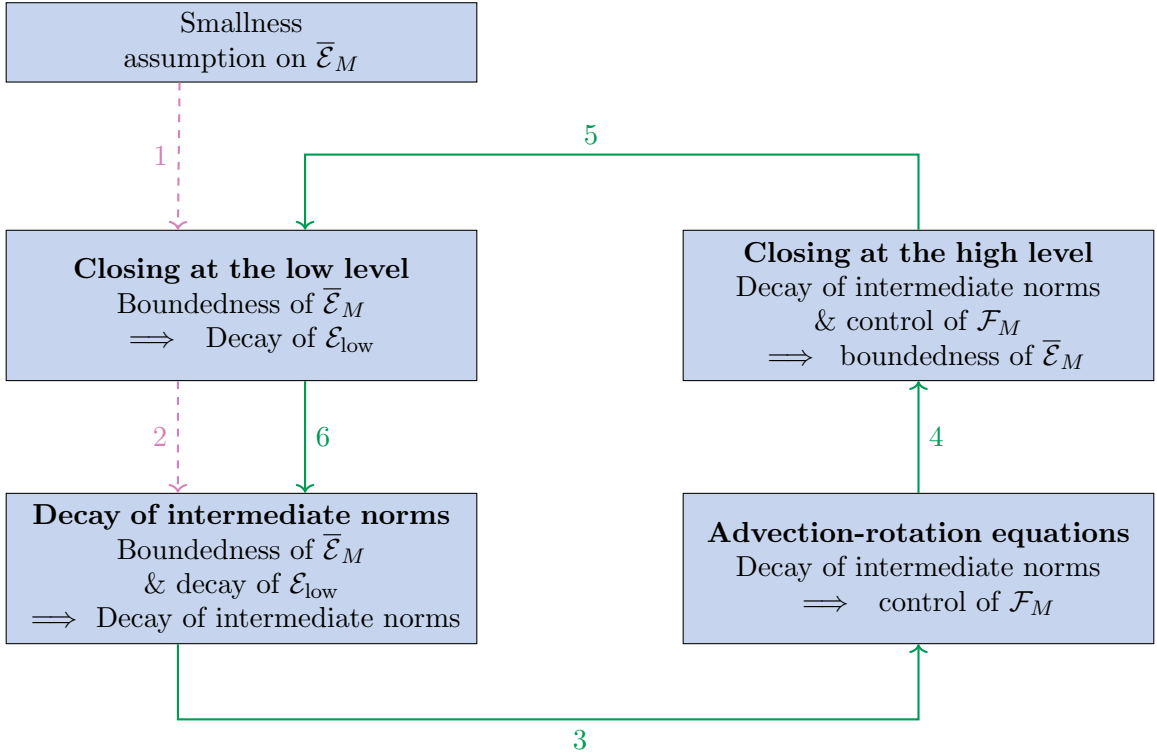

\subsection{Local well-posedness}
\label{sec:discuss_lwp}

	In this section we discuss the local well-posedness. In a nutshell, the key question is:
	``how much of the nonlinear structure do we keep in order to be able to obtain good estimates on the sequence of approximate solutions''.
	The local-posedness theory is developed in \fref{Section}{sec:lwp}, whose main take-away is \fref{Theorem}{thm:lwp}.

	\hspace{0.1em}
	\paragraph{\textbf{Strategy.}}
	We will produce solutions locally-in-time via a Galerkin scheme.
	We will
	(1) solve a sequence of approximate problems on finite-dimensional subspaces of the solution space,
	(2) obtain uniform estimates on the sequence of approximate solutions, and
	(3) pass to the limit by compactness.
	Since the domain is the torus, it is natural to approximate by cutting off at the first $n$ Fourier modes.
	More precisely: writing $W_n \subseteq L^2$ such that $f \in W_n$ if and only if $\hat{f}(k) = 0$ for all $\abs{k} > n$,
	we are looking to solve the approximate problem
	\begin{equation}
	\label{eq:lwp_discuss_A}
		\widetilde{T}_n (K) \pdt Z = L Z + P_n \mathcal{N} (Z)
		\text{ for } Z = (u, \theta, K) \in W_n
	\end{equation}
	where $L$ is a linear operator with constant coefficients, $\mathcal{N}$ accounts for the nonlinearities, and $\widetilde{T}_n (K)$ is an appropriate approximation of
	$I_3 \oplus (J_{eq} + K) \oplus I_{3\times 3}$, namely
	$
		\widetilde{T}_n (K) \vcentcolon= I_3 \oplus (J_{eq} + P_n \circ K) \oplus I_{3\times 3}
	$
	where $(P_n \circ K)\theta \vcentcolon= P_n (K\theta)$ for every $\theta \in L^2$,
	for $P_n$ denoting the $L^2$-orthogonal projection onto $W_n$.

	\hspace{0.1em}
	\paragraph{\textbf{A subtle point.}}
	Due to the presence of $\widetilde{T}_n (K)$ we will need to invert $J_{eq} + P_n \circ K$.
	Whilst fairly straightforward to do, this must nonetheless be done carefully since we are no longer merely inverting the matrix $J_{eq} + K$ pointwise,
	but rather we are inverting the \emph{operator} $J_{eq} + P_n \circ K$ as an operator from $W_n$ to itself.
	The corresponding results are recorded in \fref{Section}{sec:inv_T_K}, where we also obtain $H^k$-to-$H^k$ bounds on ${ \widetilde{T}_n (K) }\inv$.

	\hspace{0.1em}
	\paragraph{\textbf{Nonlinear structure}}
	Constructing a sequence of approximate solutions solving \eqref{eq:lwp_discuss_A} is easy,
	however we run into trouble when looking for estimates of the approximate solutions.
	The issue is that in \eqref{eq:lwp_discuss_A} we have stripped away the nonlinear structure of the problem which helps us by providing good energy estimates.

	To make this idea precise let us compare the two systems below.
	Both systems are cartoon versions of \eqref{eq:lwp_discuss_A} where we neglect the velocity $u$,
	dismiss the dissipative contributions, and omit the projection $P_n$.
	Note that we write $J = J_{eq} + K$ and $\omega = \omega_{eq} + \theta$.
	We consider
	\begin{equation}
	\label{eq:lwp_discuss_cartoon}
		(1)	\left\{
			\begin{aligned}
				&J \pdt \theta = f_1,\\
				&\pdt K = F_2
			\end{aligned}
			\right.
		\;\text{ and }\;
		(2)	\left\{
			\begin{aligned}
				&(J(\pdt + u\cdot\nabla) + \omega\times J)\pdt\theta = f_1,\\
				&(\pdt + u\cdot\nabla)K = \sbrac{\Omega, J}.
			\end{aligned}
			\right.
	\end{equation}
	The energy associated with both systems is
	\begin{equation*}
		\frac{1}{2} \int_{\mathbb{T}^3}  J \theta\cdot\theta + \frac{1}{2} \int_{\mathbb{T}^3}  \abs{K}^2,
	\end{equation*}
	however the interaction terms differ.
	To be precise, the issue is this: when taking $\alpha$ many derivatives the first system gives rise to an interaction of the form
	$
		I^\alpha = \int_{\mathbb{T}^3} \partial^\alpha (\sbrac{\Theta, K}) \partial^\alpha \theta \cdot \partial^\alpha \theta.
	$
	However, the only grants us control of $\theta$ and $K$ in $H^{\abs{\alpha}}$, which is not sufficient to control $I^\alpha$.
	Crucially, this interaction is not present when performing energy estimates with the second system.
	The moral of the story is that some nonlinear structure is optional while some is not.
	In particular, it is essential to keep the full nonlinear advection-rotation equation satisfied by $K$.

	\hspace{0.1em}
	\paragraph{\textbf{A final wrinkle.}}
	In the cartoon \eqref{eq:lwp_discuss_cartoon} above we brazenly dismissed any mention of the projection $P_n$.
	Of course, since we seek to frame the approximate problem as an ODE on the finite-dimensional space where only finitely many Fourier modes are nonzero
	and since that space is not closed under multiplication, the nonlinearities of \eqref{eq:lwp_discuss_cartoon} will require the presence of projections.
	However, this must be done carefully.
	Due to the fact that some nonlinear structure must be kept in the approximate problem (as discussed above),
	it turns that we need to approximate $K$ by using (schematically) twice as many Fourier modes as are used for the velocities $u$ and $\theta$.

	\hspace{0.1em}
	\paragraph{\textbf{Discrepancy in the energies.}}
	It is important here to note that the energies of the local well-posedness differ from those of the main scheme of a priori estimates.
	Schematically, these energies are of the form
	\begin{equation}
	\label{eq:discrep_energies_def}
		E_{loc} \sim \normtyp{(u, \theta, K)}{H}{2M}^2
		\text{ and }
		E_{ap} \sim \normtyp{(u, \theta, a)}{H}{2M}^2 + \normtyp{K}{H}{2M-3}^2
	\end{equation}
	respectively, where for simplicity we have omitted norms involving temporal derivatives.

	In order to explain this discrepancy recall that, as discussed in \fref{Section}{sec:discuss_nonlinear_effects} above,
	energy estimates are not sufficient to close the a priori estimates due to the absence of $K$ from the dissipation
	and the appearance of interactions terms as in \eqref{eq:discuss_bad_interaction}.
	We are thus led to employing advection-rotation estimates to control $K$ in the scheme of a priori estimates,
	which means that we control $K$ in $H^{2M-3}$ and not $H^{2M}$ (as would be the case employing energy estimates).

	However, when it comes to the local well-posedness theory, the interaction $I^\alpha$ of \eqref{eq:discuss_bad_interaction} is harmless
	since it can be estimated as $\abs{I^\alpha} \lesssim E_{loc}^3$.
	Such an estimate would be fatal for the a priori estimates since it cannot be absorbed into the dissipation
	but it is harmless locally-in-time since it is amenable to a nonlinear Gronwall argument.

	The consequence of this discrepancy is that some additional work is required in order to ensure that the a priori estimates and the local well-posedness theory
	``glue'' together nicely. This is discussed in \fref{Section}{sec:discuss_cont} below.

\subsection{Continuation argument}
\label{sec:discuss_cont}

	In this section we discuss the continuation argument whose purpose is to allow us to glue together the a priori estimates and the local well-posedness theory.
	This gluing is nontrivial, in the sense that it requires a new set of estimates,
	precisely because of the mismatch between the energies used for the local well-posedness and the energies used for the a priori estimates
	(as discussed at the end of \fref{Section}{sec:discuss_lwp} above).
	The gluing is carried out in \fref{Section}{sec:cont}, where the key continuation argument it leads to is recorded in \fref{Theorem}{thm:cont}.

	In order to justify the necessity of this additional set of estimates let us consider $E_{ap}$ and $E_{loc}$ defined as in \eqref{eq:discrep_energies_def}
	as cartoons of the energies used in the a priori estimates and in the local well-posedness theory, respectively.
	In particular note that for the sake of exposition we have omitted any mention of norms controlling temporal derivatives of the unknowns.
	Let us also consider the following cartoons of the a priori estimates and of the local well-posedness
	(which are simplified to the point of technical inaccuracy, but remain informative nonetheless)
	\begin{align}
		\sup_{0\leqslant t\leqslant T} E_{ap}(t) \leqslant \delta
		\Rightarrow
		\sup_{0\leqslant t\leqslant T} E_{ap} (t) + \frac{\normtyp{K(t)}{H}{2M}^2}{1+t}  \leqslant C_1 E_{ap} (0)
	\label{eq:AP}\tag{AP}\\
		\text{ and }
		\sup_{0\leqslant t\leqslant T} E_{loc} (t) \leqslant \rho\brac{ E_{loc}(0) }
	\label{eq:LWP}\tag{LWP}
	\end{align}
	where $\rho : (0,\infty) \to (0,\infty)$ is a strictly increasing function vanishing asymptotically at zero
	(whose appearance comes from the nonlinear Gronwall argument used in the local well-posedness theory).
	Note that here $ \normtyp{K}{H}{2M}^2 $ is a placeholder for the ``bad'' terms comprising $ \mathcal{F}_M$
	(whose appearance is discussed in detail in \fref{Section}{sec:discuss_nonlinear_effects}).
	To glue the a priori estimates and the local well-posedness theory it suffices to fulfill the following goal.
	\hspace{0.1em}
	\paragraph{\textbf{Goal:}} If \eqref{eq:AP} holds on the time interval $\sbrac{0, T}$, find a sufficiently small timescale $\tau > 0$
	such that \eqref{eq:AP} continues to hold on the interval $\sbrac{0, T+\tau}$.
	\hspace{0.1em}
	\paragraph{\textbf{Difficulty:}}
	For $\tau$ small enough the local well-posedness theory guarantees that we can always continue the solution from $\sbrac{0, T}$ to $\sbrac{0, T+\tau}$.
	The crux of the argument is therefore to ensure that the smallness hypothesis of \eqref{eq:AP} remains satisfied on $\sbrac{0, T+\tau}$.
	However, the growth of the bad term $ \normtyp{K}{H}{2M}^2 $ in \eqref{eq:AP} renders this impossible.
	Indeed, combining \eqref{eq:AP} and \eqref{eq:LWP} tells us that
	\begin{equation*}
		\sup_{T\leqslant t\leqslant T+\tau} E_{ap} (t)
		\leqslant \sup_{T\leqslant t\leqslant T+\tau} C_2 E_{loc} (t)
		\leqslant C_2 \rho\brac{ E_{loc} (T) }
		\leqslant C_2 \rho\brac{ C_3 (1+T) E_{ap} (0) }
	\end{equation*}
	and we cannot guarantee that the right-hand side be small independently of the time horizon $T$.
	\hspace{0.1em}
	\paragraph{\textbf{Solution:}}
	The remedy is to prove an estimate of the form
	\begin{equation*}
		\sup_{T\leqslant t\leqslant T+\tau} E_{ap} (t)
		\leqslant \tilde{\rho}\brac{ E_{ap} (T)}
	\label{eq:E}
	\tag{E}
	\end{equation*}
	for $\tau > 0$ sufficiently small, where $\tilde{\rho}$ is another strictly increasing function which vanishes asymptotically at zero.
	We may then couple \eqref{eq:E} to \eqref{eq:AP} to deduce that
	\begin{equation*}
		\sup_{0\leqslant t\leqslant T+\tau} E_{ap}(t)
		\leqslant \tilde{\rho}\brac{E_{ap}(T)}
		\leqslant \tilde{\rho} \brac{C_1 E_{ap} (0)}
		\leqslant \delta
	\end{equation*}
	provided the initial condition is sufficiently small.
	Note that this estimate is referred to in the sequel as a \emph{reduced energy estimates}
	since it estimates the ``reduced'' unknown $(u, \theta, a)$, by contrast with the ``full'' unknown $(u, \theta, K)$.
	In practice, performing the estimate \eqref{eq:E} relies on the same fundamental estimates as those used to prove \eqref{eq:AP},
	with one fundamental difference: whereas \eqref{eq:AP} relies on the smallness of the energy,
	\eqref{eq:E} relies instead on the smallness of the timescale on which it holds.

\subsection{Global well-posedness and decay}
\label{sec:discuss_gwp}

	In this section we discuss how to put together all the pieces of the puzzle to deduce the main result of \fref{Theorem}{thm:gwp_decay_clean}.
	This is carried out in \fref{Section}{sec:gwp}.
	In a nutshell: the local well-posedness developed in \fref{Section}{sec:lwp} couples to the a priori estimates of \fref{Section}{sec:a_prioris} to produce a solution which
	lives in the small energy regime, at which point the continuation argument recorded in \fref{Section}{sec:cont}
	kicks in to tell us that the solution lives in the small energy regime globally-in-time.

	The only subtlety in this process comes from coupling the local well-posedness theory to the a priori estimates.
	Indeed, the estimates provided by the local well-posedness theory are not quite strong enough to invoke the a priori estimates, due to insufficient control over $K$.
	To close that gap we rely on an auxiliary estimate for $K$, which is recorded in \fref{Lemma}{lemma:local_in_time_aux_est_K}.

%----------------------------------------------------------------------------------------------------
%	NOTATION
%----------------------------------------------------------------------------------------------------

\section{Notation}
\label{sec:notation}

For the reader's convenience we record here the notation used throughout the paper.

Throughout, the unknown $Z$ comprises all perturbative variables, i.e. $Z = (u, \theta, K)$, while $Y = (u, \theta, a)$ comprises all variables that are proved to decay.

The constant $\tilde{\tau}$ is defined to be $\tilde{\tau} = \frac{\tau}{2\kappa}$. It is omnipresent in the paper since it is equal to the magnitude of the angular velocity at equilibrium $\omega_{eq}$.

Now we record some notation from linear algebra.
\begin{itemize}
	\item	For any vectors $a$ and $b$, $a \otimes b$ denotes the matrix acting via $(a \otimes b) v = (b\cdot v)a$ for any vector $v$.
	\item 	For any $v = (v_1, v_2, v_3) \in \R^3$ and any $w\in\R^2$
		$
			\bar{v} \vcentcolon= (v_1, v_2)
		$,$
			\tilde{w} \vcentcolon= (w_1, w_2, 0)
		$,$
			\bar{v}^\perp \vcentcolon= (-v_2, v_1)
		$ and $
			\tilde{w}^\perp \vcentcolon= (-w_2, w_1, 0)
		$.
		In other words: $v \mapsto \bar{v}$ is the projection onto the $e_1 - e_2$ plane, $w\mapsto \tilde{w}$ is its canonical right-inverse,
		and the superscript $\perp$ denotes a $\frac{\pi}{2}$ (counterclockwise) rotation in the $e_1-e_2$ plane.
	\item	$\ten$ and $\vc$ denote the canonical identification of $\R^3$ with the space of anti-symmetric $3$-by-$3$ matrices using the cross product, and vice-versa.
		To be precise: $(\ten a) v = a \times v$ and $(\vc A) \times v = A v$ for any vectors $a, v \in \R^3$ and any $3$-by-$3$ anti-symmetric matrix $A$.
	\item	$\sym (n)$ denotes the space of real symmetric $n$-by-$n$ matrices.
		Moreover, for any matrix $M$, $\sym(M)$ denotes its symmetric part, i.e. $\sym(M) \vcentcolon= \frac{1}{2}(M + M^T)$.
	\item	Given two linear maps $L_1 : V_1 \to W_1$ and $L_2 : V_2 \to W_2$, the linear map $L_1 \oplus L_2 : V_1 \times V_2 \to W_1 \times W_2$
		is defined as $(L_1 \oplus L_2)(v_1, v_2) \vcentcolon= (L_1 v_1, L_2 v_2)$ for every $v_1 \in V_1$ and $v_2 \in V_2$.
\end{itemize}

We now record the various functionals present throughout the paper. In order to do so, we first introduce parabolic norms.
\begin{itemize}
	\item	For any multi-index $\alpha \in\N^{1+3}$ we define the \emph{parabolic count of derivatives} $\abs{\alpha}_P$ to be
		$\abs{\alpha}_P = 2\alpha_0 + \abs{\bar{\alpha}}$, where we have written $\alpha = (\alpha_0, \bar{\alpha}) \in \N \times \N^3$.
	\item	For $i,j,k\in\N$ satisfying $0\leqslant i\leqslant j \leqslant k/2$ we define the parabolic norms
		\begin{equation}
		\label{eq:not_parabolic_norms}
			\normtyp{f}{P}{k}^2 = \sum_{\abs{\alpha}_P \leqslant k} \normtyp{ \partial^\alpha f }{L}{2}^2,\;
			\norm{f}{P^k_j}^2 = \sum_{\substack{ \abs{\alpha}_P \leqslant k \\ \alpha_0 \leqslant j }} \normtyp{ \partial^\alpha f }{L}{2}^2, \text{ and }
			\norm{f}{P^k_{i,j}}^2 = \sum_{\substack{ \abs{\alpha}_P \leqslant k \\ i \leqslant \alpha_0 \leqslant j }} \normtyp{ \partial^\alpha f }{L}{2}^2.
		\end{equation}
\end{itemize}
	First we record some energy and energy-like functionals. For any non-negative integer $M$ we define
	\begin{equation}
	\label{eq:not_E_M_bar}
		\widetilde{\mathcal{E}}_M =
		\sum_{\abs{\alpha}_P \leqslant 2M}
			\frac{1}{2} \int_{\T^3} \abs{ \partial^\alpha u}^2
			+ \frac{1}{2} \int_{\T^3} J \partial^\alpha \theta \cdot \partial^\alpha \theta
			+ \frac{\tilde{\tau}^2}{\nu-\lambda} \frac{1}{2} \int_{\T^3} \abs{ \partial^\alpha a}^2
		\text{ and }
		\overline{\mathcal{E}}_M = \normtyp{(u, \theta, a)}{P}{2M}^2.
	\end{equation}
	In particular when $M = 1$ we define
	\begin{equation}
	\label{eq:not_E_low}
		\widetilde{\mathcal{E}}_\text{low} = \widetilde{\mathcal{E}}_1,\,
		\overline{\mathcal{E}}_\text{low} = \overline{\mathcal{E}}_1, \text{ and }
		\mathcal{E}_\text{low} = \overline{\mathcal{E}}_\text{low} + \normtyp{\pdt a}{H}{1}^2 + \normtypns{ \pdt^2 a }{L}{2}^2.
	\end{equation}
	When $M\geqslant 3$ we define
	\begin{align}
		\mathcal{E}^{(K)}_M = \normtyp{K}{H}{2M-3}^2 + \normtyp{\pdt K}{H}{2M-3}^2 + \normtypns{\pdt^2 K}{H}{2M-3}^2 + \sum_{j=3}^M \normtypns{\pdt^j K}{H}{2M-2i+2}^2,
	\label{eq:not_E_M_K}
	\\
		\mathcal{E}_M = \overline{\mathcal{E}}_M + \mathcal{E}_M^{(K)}
		\text{ and }
		\mathcal{F}_M = \normtyp{K}{H}{2M+1}^2 + \normtyp{\pdt K}{H}{2M}^2 + \normtyp{\pdt^2 K}{H}{2M-2}^2.
	\label{eq:not_E_M_and_F_M}
	\end{align}
	We also define the intermediate energy functionals
	\begin{equation}
	\label{eq:not_K_bar}
		\overline{\mathcal{K}}_I
		= \norm{(u, \theta, a)}{P^{2I}_{1}}^2
		\text{ and } 
		\mathcal{K}_\text{low} = \overline{\mathcal{K}}_2 + \normtypns{ \pdt^2 \theta }{L}{2}^2.
	\end{equation}
	We now record the dissipation functionals. The dissipation is given by
	\begin{equation}
	\label{eq:not_dissip}
		D(u, \theta)
			= \int_{\T^3} \frac{\mu}{2} \vbrac{\symgrad u}^2
			+ 2\kappa \vbrac{ \frac{1}{2} \nabla\times u - \theta}^2
			+ \alpha\vbrac{\nabla\cdot\theta}^2
			+ \frac{\beta}{2} \vbrac{\symgrad^0 \theta}^2
			+ 2\gamma \vbrac{\nabla\times\theta}^2
	\end{equation}
	and we define
	\begin{equation}
	\label{eq:not_D_M}
		\overline{\mathcal{D}}_M = \normtyp{(u,\theta)}{P}{2M+1}^2,\;
		\mathcal{D}_M^a = \sum_{j=0}^3 \normtypns{\pdt^j a}{H}{2M-j-1}^2 + \sum_{j=4}^M \normtypns{\pdt^j a}{H}{2M-2i+3}^2,
		\text{ and }
		\mathcal{D}_M = \overline{\mathcal{D}}_M + \mathcal{D}_M^a.
	\end{equation}
	When $M=1$ we also define
	\begin{equation}
	\label{eq:not_D_low}
		\overline{\mathcal{D}}_\text{low} = \overline{\mathcal{D}}_1 \text{ and }
		\mathcal{D}_\text{low} = \overline{\mathcal{D}}_\text{low} + \normtyp{a}{H}{1}^2 + \normtyp{ \pdt a}{L}{2}^2.
	\end{equation}
	Finally we write the interaction terms as
	\begin{equation}
	\label{eq:not_I}
		\overline{\mathcal{I}}_I \vcentcolon= \sum_{\abs{\alpha}_P \leqslant 2M} \mathcal{I}^\alpha
		\text{ and }
		\overline{\mathcal{I}}_\text{low} \vcentcolon= \overline{\mathcal{I}}_1
	\end{equation}
	for $\mathcal{I}^\alpha$ as in \fref{Lemma}{lemma:record_form_interactions}.

%----------------------------------------------------------------------------------------------------
%	A PRIORIS
%----------------------------------------------------------------------------------------------------

\section{A priori estimates}
\label{sec:a_prioris}

In this section we develop the scheme of a priori estimates central to the stability result proven in this paper.
We begin with advection-rotation estimates for $K$ in \fref{Section}{sec:adv_rot_est_K} and then turn our attention to energy estimates in \fref{Sections}{sec:ED_structure}-\ref{sec:close_at_high}.
More precisely: in \fref{Section}{sec:ED_structure} we identify of the energy-dissipation structure of the problem and use it in \fref{Sections}{sec:close_at_low} and \ref{sec:close_at_high}
to close the energy estimates at the low and high level, respectively.
We then record the interpolation result giving us decay of intermediate norms provided both the low and high level energies are controlled in \fref{Section}{sec:decay_int_norms}.
We conclude this section by putting all the pieces of the scheme of a priori estimates together in \fref{Section}{sec:ap_synthesis}.

\subsection{Advection-rotation estimates for $K$}
\label{sec:adv_rot_est_K}

	In this section we record the advection-rotation estimates we may derive for $K$ based on the advection-rotation equation \eqref{eq:pertub_sys_no_ten_pdt_K}.
	The culmination of this section is \fref{Proposition}{prop:est_K}, which synthesizes the estimates obtained in this section.
	We begin with $L^p$ estimates for the advection-rotation operator encountered in \eqref{eq:pertub_sys_no_ten_pdt_K}
	which are foundational for all other advection-rotation estimates obtained here.

	\begin{prop}[$L^p$ estimates for advection-rotation equations]
	\label{prop:Lp_est_adv_rot_eq_diff_form}
		Let $T>0$ be a finite time horizon and let $1\leqslant p < \infty$.
		Let $v$ be a continuously differentiable vector field on $\cobrac{0,T}\times\T^n$,
		let $M$ be a continuous matrix field on $\cobrac{0,T}\times\T^n$,
		and let $F \in L^\infty\brac{ \cobrac{0,T};\, L^p\brac{ \T^n;\, \R^{n\times n}}}$.
		If $S \in L^\infty\brac{ \cobrac{0,T};\, L^p\brac{ \T^n;\, \sym(n)}}$ is a distributional solution of
		\begin{equation*}
			\brac{\pdt + u\cdot\nabla - \sbrac{M,\,\cdot\,}} S = F \text{ on } (0,T)\times \T^n \text{ and }
			S(t=0) = S_0
		\end{equation*}
		for some $S_0 \in L^p$ then it satisfies the estimate
		\begin{equation*}
			\norm{S}{L^\infty_T L^p}
			\leqslant \exp\brac{\int_0^t \frac{1}{p} \normtyp{(\nabla\cdot v)(s)}{L}{\infty} ds} \normtyp{S_0}{L}{p}
			+ \int_0^t \exp\brac{\int_0^s \frac{1}{p} \normtyp{(\nabla\cdot v)(r)}{L}{\infty} dr } \normtyp{\sym(F)(s)}{L}{p} ds.
		\end{equation*}
	\end{prop}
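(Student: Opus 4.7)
The plan is to perform an $L^p$ energy estimate by contracting the equation with $|S|^{p-2} S$ in the Frobenius inner product, relying on a key algebraic cancellation to handle the rotation term. The crucial observation is that for any symmetric matrix $S$ and any matrix $M$, cyclicity of the trace yields
\begin{equation*}
	S : [M, S] = \operatorname{tr}(SMS) - \operatorname{tr}(S^2 M) = 0,
\end{equation*}
so the commutator contribution drops out of the estimate entirely, without any structural hypothesis on $M$. The symmetry of $S$ also forces $S : F = S : \sym(F)$, which explains why only $\sym(F)$ appears on the right-hand side.

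Assuming momentarily that $S$ is smooth enough to justify the computation, I would contract the PDE with $|S|^{p-2} S$ and integrate over $\T^n$. Using the pointwise identity $|S|^{p-2} S : \partial_k S = \frac{1}{p} \partial_k |S|^p$, the transport term rewrites upon integration by parts as
\begin{equation*}
	\int_{\T^n} |S|^{p-2} S : (v\cdot\nabla S) = \frac{1}{p} \int_{\T^n} v\cdot\nabla |S|^p = -\frac{1}{p} \int_{\T^n} (\nabla\cdot v) |S|^p,
\end{equation*}
the rotation term vanishes by the trace identity above, and the forcing term is controlled by H\"older's inequality by $\|S\|_{L^p}^{p-1} \|\sym(F)\|_{L^p}$. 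Combining these three pieces and dividing by $p\|S\|_{L^p}^{p-1}$ yields the scalar differential inequality
\begin{equation*}
	\frac{d}{dt} \|S(t)\|_{L^p} \leqslant \frac{1}{p} \|(\nabla\cdot v)(t)\|_{L^\infty} \|S(t)\|_{L^p} + \|\sym(F)(t)\|_{L^p},
\end{equation*}
to which Gronwall's inequality applies and delivers the claimed bound.

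The only genuine technicality is justifying this computation for a mere distributional solution $S \in L^\infty_T L^p$, where $|S|^p$ is not differentiable a priori. I would handle this by spatial mollification: setting $S_\varepsilon = \rho_\varepsilon \ast S$, one finds
\begin{equation*}
	(\pdt + v\cdot\nabla - [M, \,\cdot\,]) S_\varepsilon = \rho_\varepsilon \ast F + R_\varepsilon^v + R_\varepsilon^M,
\end{equation*}
where $R_\varepsilon^v = [v\cdot\nabla, \rho_\varepsilon \ast] S$ and $R_\varepsilon^M$ is the analogous commutator arising from the rotation term. The DiPerna--Lions commutator lemma, applicable because $v \in C^1$ and $S \in L^\infty_T L^p$, gives $R_\varepsilon^v \to 0$ in $L^1_{\mathrm{loc},t} L^p_x$, while continuity of $M$ combined with dominated convergence handles $R_\varepsilon^M \to 0$ analogously. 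Applying the smooth estimate to $S_\varepsilon$ and passing to the limit $\varepsilon \to 0$ recovers the inequality in full generality. This passage to the limit is the main technical step, but it is by now entirely routine for transport-type equations with $C^1$ coefficients, so I do not anticipate any fundamental obstruction.
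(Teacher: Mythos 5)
Your proof is correct and follows essentially the same route as the paper: the pointwise cancellation $S:[M,S]=0$ for symmetric $S$ (the paper's Lemma~\ref{lemma:commut_are_antisym_maps_on_space_sym_matrices}), integration by parts on the advection term to produce the $\nabla\cdot v$ factor, H\"older with $\sym(F)$, and a Gronwall argument. The only difference is the regularization used to justify the formal computation — you mollify $S$ in space and invoke the DiPerna--Lions commutator lemma, while the paper approximates $s\mapsto|s|^p$ and the data — and both are routine for transport equations with $C^1$ coefficients.
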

	\begin{proof}
		The fundamental idea behind this estimate is the following formal computation.
		First we compute, in light of \fref{Lemma}{lemma:commut_are_antisym_maps_on_space_sym_matrices}, that
		\begin{equation*}
			\Dt\norm{S}{L^p}
			= \Dt {\brac{ \int_{\T^n} \abs{S}^p }}^{\frac{1}{p}}
			= \frac{1}{p} {\brac{ \int_{\T^n} \abs{S}^p }}^{\frac{1}{p} - 1} \brac{ \int_{\T^n} p \abs{S}^{p-2} S:F - \int_{\T^n} p \abs{S}^{p-2} S:\brac{u\cdot\nabla}S }.
		\end{equation*}
		Now observe that on one hand
		\begin{equation*}
			-p\int_{\T^n} \abs{S}^{p-2} S : \brac{u\cdot\nabla} S
			= -\int_{\T^n} \brac{u\cdot\nabla} \abs{S}^p
			= \int_{\T^n} \brac{\nabla\cdot u} \abs{S}^p
		\end{equation*}
		whilst on the other hand, since $p' \brac{1-p} = p$,
		\begin{equation*}
			\int_{\T^n} \abs{S}^{p-2} S:F
			\leqslant \int_{\T^n} \abs{S}^{p-1} \abs{\sym \brac{F}}
			\leqslant {\brac{ \int_{\T^n} \abs{S}^p }}^{\frac{1}{p'}} {\brac{ \int_{\T^n} \abs{\sym\brac{F}}^p }}^{\frac{1}{p}}.
		\end{equation*}
		So finally we deduce that
		\begin{equation*}
			\Dt\norm{S}{L^p}
			\leqslant \frac{1}{p} {\brac{ \int_{\T^n} \abs{S}^p }}^{\frac{1}{p} - 1} \norm{\nabla\cdot u}{L^\infty} \brac{ \int_{\T^n} \abs{S}^p }
			+ {\brac{ \int_{\T^n} \abs{S}^p }}^{\frac{1}{p} - 1} {\brac{ \int_{\T^n} \abs{S}^p }}^{\frac{1}{p'}} \norm{\sym\brac{F}}{L^p}.
		\end{equation*}
		from which the claim would follow upon performing a Gronwall argument.

		To make this computation precise it suffices to use standard approximation techniques from the theory of $L^p$ estimates for transport equations.
		For example we may approximate $s \mapsto \abs{s}^p$ by non-negative $C^1$ functions in a monotone fashion and approximate $S_0$ and $F$ by continuously differentiable functions.
		The computation above then holds rigorously at the level of the approximation, and we may pass to the limit using standard tools from measure theory.
	\end{proof}

	With the $L^p$ estimates above in hand we derive $L^\infty$ bounds on both $K$ and $\nabla K$.
	These bounds are used to control low-order terms appearing later in this section when we seek to parlay the $L^p$ estimates above into $H^k$ estimates for $K$.

	\begin{lemma}[$L^\infty$ estimate for $K$]
	\label{lemma:L_infty_est_K}
		Suppose that $K$ solves \eqref{eq:pertub_sys_no_ten_pdt_K} for some given $u$ and $\theta$. Then it satisfies the estimate
		\begin{equation*}
			\norm{K\brac{t}}{L^\infty} \lesssim \norm{K\brac{0}}{L^\infty} + \int_0^t \normns{\bar{\theta} (s)}{L^\infty}ds.
		\end{equation*}
	\end{lemma}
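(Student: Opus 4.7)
The plan is to cast \eqref{eq:pertub_sys_no_ten_pdt_K} into the form required by \fref{Proposition}{prop:Lp_est_adv_rot_eq_diff_form}, apply that proposition at each finite $p$, and then pass to the $L^\infty$ limit. Concretely, I would first expand the commutator on the right-hand side as
\begin{equation*}
[\Omega_{eq}+\Theta,\, J_{eq}+K] = [\Omega_{eq},J_{eq}] + [\Omega_{eq}+\Theta,\,K] + [\Theta,J_{eq}],
\end{equation*}
and note that $[\Omega_{eq},J_{eq}]=0$, since $J_{eq}=\diag(\lambda,\lambda,\nu)$ commutes with $\Omega_{eq}=\tilde\tau\,\ten(e_3)$. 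This lets us rewrite the evolution equation for $K$ as
\begin{equation*}
\brac{\pdt + u\cdot\nabla - [\Omega_{eq}+\Theta,\,\cdot\,]} K = [\Theta,J_{eq}]
\end{equation*}
on $(0,T)\times\T^3$, which is exactly the structure considered in \fref{Proposition}{prop:Lp_est_adv_rot_eq_diff_form} with $v=u$, $M = \Omega_{eq}+\Theta$, $F=[\Theta,J_{eq}]$, and $S=K$.

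Next I would invoke \fref{Proposition}{prop:Lp_est_adv_rot_eq_diff_form}. The crucial simplification is the incompressibility constraint $\nabla\cdot u=0$ from \eqref{eq:pertub_sys_no_ten_div}, which causes the exponential prefactors in the proposition's estimate to equal $1$ uniformly in $p$, leaving
\begin{equation*}
\normtyp{K(t)}{L}{p} \leqslant \normtyp{K(0)}{L}{p} + \int_0^t \normtyp{\sym\brac{[\Theta(s),J_{eq}]}}{L}{p}\,ds.
\end{equation*}
Since $[\Theta,J_{eq}]$ is already symmetric (the commutator of an antisymmetric and a symmetric matrix is symmetric), $\sym([\Theta,J_{eq}]) = [\Theta,J_{eq}]$. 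A direct computation using $J_{eq}=\diag(\lambda,\lambda,\nu)$ gives the explicit pointwise identity
\begin{equation*}
[\Theta,J_{eq}] = (\nu-\lambda)\bigl(\tilde{\bar\theta}^\perp \otimes e_3 + e_3 \otimes \tilde{\bar\theta}^\perp\bigr),
\end{equation*}
so that $\abs{[\Theta,J_{eq}]}\lesssim \abs{\bar\theta}$ pointwise, with a constant depending only on $\nu-\lambda$.

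Finally, I would pass to the limit $p\to\infty$. Since $\T^3$ has finite (unit) volume, $\|\,\cdot\,\|_{L^p}\to \|\,\cdot\,\|_{L^\infty}$ for bounded functions, and the integrand $\normtyp{[\Theta(s),J_{eq}]}{L}{p}\leqslant \normtyp{\bar\theta(s)}{L}{\infty}$ uniformly in $p$, so dominated convergence yields
\begin{equation*}
\normtyp{K(t)}{L}{\infty}\lesssim \normtyp{K(0)}{L}{\infty} + \int_0^t \normns{\bar\theta(s)}{L^\infty}\,ds,
\end{equation*}
which is the claim. There is no real obstacle here; the only subtlety worth flagging is the algebraic observation that $[\Omega_{eq},J_{eq}]=0$ and that the commutator $[\Theta,J_{eq}]$ involves only $\bar\theta$ and not the full $\theta$, which is what makes the forcing in the final bound depend on $\bar\theta$ alone.
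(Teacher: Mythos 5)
Your proposal is correct and follows essentially the same route as the paper: use $[\Omega_{eq},J_{eq}]=0$ to rewrite \eqref{eq:pertub_sys_no_ten_pdt_K} with forcing $[\Theta,J_{eq}]$, apply \fref{Proposition}{prop:Lp_est_adv_rot_eq_diff_form} (with the exponential factors trivial since $\nabla\cdot u=0$), bound the commutator pointwise by $|\bar\theta|$ via its explicit block form, and let $p\to\infty$. The only differences are cosmetic (sign convention in the commutator formula and the dominated-convergence phrasing of the limit, where the paper simply bounds $L^p$ by $L^\infty$ and sends $p\to\infty$).
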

	\begin{proof}
		Since $\sbrac{\Omega_{eq}, J_{eq}} = 0$ (c.f. \fref{Lemma}{lemma:block_form_Omega_J}) we may write \eqref{eq:pertub_sys_no_ten_pdt_K} as
		$
			\pdt K + u\cdot\nabla K = \sbrac{\Omega_{eq} + \Theta, K} + \sbrac{\Theta, J_{eq}}.
		$
		It then follows from \fref{Proposition}{prop:Lp_est_adv_rot_eq_diff_form} that, for any $1<p<\infty$,
		$
			\norm{K\brac{t}}{L^p} \leqslant \norm{K\brac{0}}{L^p} + \int_0^t \norm{ \sbrac{\Theta\brac{s}, J_{eq}} }{L^p} ds.
		$
		Note that \fref{Lemma}{lemma:block_form_Omega_J} tells us that
		\begin{equation*}
			\sbrac{\Theta, J_{eq}} = - (\nu-\lambda) \begin{pmatrix}
				0 & \bar{\theta}^\perp \\
				{\brac{ \bar{\theta}^\perp }}^T & 0
			\end{pmatrix}
		\end{equation*}
		from which we deduce that the Fr\"{o}benius norm of this commutator is $\vbrac{ \sbrac{ \Theta, J_{eq} } } = \sqrt{2(\nu-\lambda)} \abs{\bar{\theta}}$.
		Since $\norm{\,\cdot\,}{L^p} \leqslant \norm{\,\cdot\,}{L^\infty}$ we may conclude that
		\begin{equation*}
			\norm{K\brac{t}}{L^p}
			\leqslant \norm{K\brac{0}}{L^p} + \int_0^t \sqrt{2\brac{\nu-\lambda}} \norm{\bar{\theta}\brac{s}}{L^\infty} ds
			\lesssim \norm{K\brac{0}}{L^p} + \int_0^t \norm{\bar{\theta} (s)}{L^\infty} ds.
		\end{equation*}
		The claim holds upon taking $p\to\infty$.
	\end{proof}

	\begin{lemma}[$L^\infty$ estimate for $\nabla K$]
	\label{lemma:L_infty_est_nabla_K}
		Suppose that $K$ solves \eqref{eq:pertub_sys_no_ten_pdt_K} for some given $u$ and $\theta$.
		Then $\nabla K$ satisfies the estimate
		\begin{equation*}
			\norm{\nabla K\brac{t}}{L^\infty} \lesssim \exp\brac{\int_0^t \norm{\nabla u \brac{r}}{L^\infty} dr} \brac{
				\norm{\nabla K \brac{0}}{L^\infty} + \int_0^t \brac{1 + \normtyp{ K(s) }{L}{\infty} } \norm{\nabla\theta\brac{s}}{L^\infty} ds
			}.
		\end{equation*}
	\end{lemma}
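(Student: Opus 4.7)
The plan is to differentiate the equation \eqref{eq:pertub_sys_no_ten_pdt_K} in space and recast it as an advection-rotation equation to which \fref{Proposition}{prop:Lp_est_adv_rot_eq_diff_form} applies, after which $p\to\infty$ yields the desired $L^\infty$ bound.

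To begin, I would apply $\partial_i$ (for $i=1,2,3$) to \eqref{eq:pertub_sys_no_ten_pdt_K}, using that $[\Omega_{eq}, J_{eq}] = 0$ (\fref{Lemma}{lemma:block_form_Omega_J}), to obtain
\begin{equation*}
	\brac{\pdt + u\cdot\nabla - \sbrac{\Omega_{eq} + \Theta, \,\cdot\,}} \partial_i K
	= -(\partial_i u)\cdot\nabla K + \sbrac{\partial_i\Theta, J_{eq} + K}.
\end{equation*}
Since $\partial_i K \in \sym(3)$ and since the right-hand side $F_i$ is also symmetric (each $\partial_j K$ is symmetric so $(\partial_i u)\cdot\nabla K = \sum_j (\partial_i u_j)\partial_j K$ is symmetric, and the commutator of an antisymmetric matrix with a symmetric one is symmetric), we have $\sym(F_i) = F_i$, and \fref{Proposition}{prop:Lp_est_adv_rot_eq_diff_form} applies directly with $v = u$, $M = \Omega_{eq} + \Theta$, $S = \partial_i K$. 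Since $\nabla\cdot u = 0$ the exponential factors there collapse to $1$, yielding for every $1\leqslant p <\infty$
\begin{equation*}
	\normtyp{\partial_i K(t)}{L}{p}
	\leqslant \normtyp{\partial_i K(0)}{L}{p}
	+ \int_0^t \normtyp{(\partial_i u)\cdot\nabla K(s)}{L}{p} + \normtyp{\sbrac{\partial_i\Theta, J_{eq}+K}(s)}{L}{p}\, ds.
\end{equation*}

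Next I would bound the two forcing contributions. For the advective piece, H\"older gives $\normtyp{(\partial_i u)\cdot\nabla K}{L}{p} \leqslant \normtyp{\nabla u}{L}{\infty} \normtyp{\nabla K}{L}{p}$, which produces the Gronwall-type factor. For the rotational piece, since $J_{eq}$ is a constant matrix one has $\normtyp{\sbrac{\partial_i\Theta, J_{eq}}}{L}{p} \lesssim \normtyp{\nabla\theta}{L}{p}$, and splitting the remaining commutator via $L^p \times L^\infty$ yields $\normtyp{\sbrac{\partial_i\Theta, K}}{L}{p} \lesssim \normtyp{\nabla\theta}{L}{p}\normtyp{K}{L}{\infty}$. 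Summing over $i$, one obtains
\begin{equation*}
	\normtyp{\nabla K(t)}{L}{p}
	\lesssim \normtyp{\nabla K(0)}{L}{p}
	+ \int_0^t \normtyp{\nabla u(s)}{L}{\infty} \normtyp{\nabla K(s)}{L}{p} ds
	+ \int_0^t \brac{1 + \normtyp{K(s)}{L}{\infty}} \normtyp{\nabla\theta(s)}{L}{p} ds.
\end{equation*}
Applying the standard integral Gronwall inequality to the function $t \mapsto \normtyp{\nabla K(t)}{L}{p}$ (noting the $(1+\normtyp{K}{L}{\infty})\normtyp{\nabla\theta}{L}{p}$ term is non-negative, so the forcing integral is monotone) yields
\begin{equation*}
	\normtyp{\nabla K(t)}{L}{p}
	\lesssim \exp\brac{\int_0^t \normtyp{\nabla u(r)}{L}{\infty} dr}
	\brac{ \normtyp{\nabla K(0)}{L}{p} + \int_0^t \brac{1+\normtyp{K(s)}{L}{\infty}}\normtyp{\nabla\theta(s)}{L}{p}\, ds }.
\end{equation*}
Finally, letting $p\to\infty$ (using that on $\T^3$ the $L^p$ norm of an $L^\infty$ function converges monotonically to the $L^\infty$ norm) gives the claimed estimate.

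The only subtle point is verifying symmetry of the forcing, which is what allows clean use of \fref{Proposition}{prop:Lp_est_adv_rot_eq_diff_form} without losing a constant through $\sym$; everything else is routine Gronwall-and-pass-to-limit. No genuine obstacle is expected.
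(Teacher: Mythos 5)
Your proposal is correct and follows essentially the same route as the paper: differentiate \eqref{eq:pertub_sys_no_ten_pdt_K}, apply \fref{Proposition}{prop:Lp_est_adv_rot_eq_diff_form} to $\partial_i K$, bound the forcing by $\normtyp{\nabla u}{L}{\infty}\normtyp{\nabla K}{L}{p} + (1+\normtyp{K}{L}{\infty})\normtyp{\nabla\theta}{L}{p}$, and pass $p\to\infty$. You merely make explicit the Gronwall step producing the factor $\exp\brac{\int_0^t \normtyp{\nabla u}{L}{\infty}}$ (which the paper folds into its citation of the proposition) and the symmetry check on the forcing; both are fine.
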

	\begin{proof}
		Since $K$ solves \eqref{eq:pertub_sys_no_ten_pdt_K} we see that $ \partial_i K$ solves
		\begin{equation*}
			\brac{ \pdt + u\cdot\nabla - \sbrac{\Omega_{eq} + \Theta, \,\cdot\,} } \partial_i K =  \sbrac{\partial_i \Theta, J_{eq} + K} - \partial_i u \cdot\nabla K.
		\end{equation*}
		We note that the $L^p$ norm of the right-hand side can be estimated in the following way:
		\begin{equation*}
			\normtyp{(RHS)}{L}{p}
			\leqslant \normtyp{ \sbrac{\partial_i \Theta, J} }{L}{p} + \normtyp{ \partial_i u \cdot\nabla K}{L}{p}
			\lesssim \normtyp{ \nabla\theta }{L}{p} \brac{1 + \normtyp{ K }{L}{\infty} }  + \normtyp{ \nabla u }{L}{\infty} \normtyp{ \nabla K}{L}{p}.
		\end{equation*}
		\fref{Proposition}{prop:Lp_est_adv_rot_eq_diff_form} therefore tells us that
		\begin{equation*}
			\norm{\nabla K\brac{t}}{L^p}
			\lesssim \exp\brac{ \int_0^t \norm{\nabla u\brac{r}}{L^\infty} dr} \brac{
				\norm{\nabla K\brac{0}}{L^p} + \int_0^t \brac{1 + \normtyp{ K(s) }{L}{\infty} } \norm{\nabla\theta\brac{s}}{L^p} ds
			}
		\end{equation*}
		from which the result follows upon first recalling that $\norm{\,\cdot\,}{L^p} \leqslant \norm{\,\cdot\,}{L^\infty}$ and then taking $p~\to~\infty$.
	\end{proof}

	We now move towards estimates of $K$ and its time derivatives in $H^k$. We begin with estimating $K$.

	\begin{lemma}[$H^k$ estimate for $K$]
	\label{lemma:H_k_est_K}
		Suppose that $K$ solves \eqref{eq:pertub_sys_no_ten_pdt_K} for some given $u$ and $\theta$. Then, for any $k\in\N$, it satisfies the estimate
		\begin{align*}
			\normtyp{K(t)}{H}{k}
			\lesssim \exp\brac{ \int_0^t \normtyp{ \nabla u }{L}{\infty} + \normtyp{ \theta }{L}{\infty} }
			\left(
				\normtyp{K(0)}{H}{k}
				+ \int_0^t \brac{
					1 + \normtyp{ K }{L}{\infty} + \normtyp{ \nabla K }{L}{\infty} 
				} \normtyp{(u, \theta)}{H}{k}
			\right).
		\end{align*}
	\end{lemma}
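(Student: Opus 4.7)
The plan is to differentiate the evolution equation for $K$ by $\partial^\alpha$ for each multi-index with $|\alpha|\le k$, rewrite the result as an advection-rotation equation of the type treated by \fref{Proposition}{prop:Lp_est_adv_rot_eq_diff_form} with $p=2$, bound the resulting forcing with standard commutator and Moser-type tame estimates, sum in $\alpha$, and finish with a Gr\"onwall argument.

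Starting from \eqref{eq:pertub_sys_no_ten_pdt_K} and using $\sbrac{\Omega_{eq},J_{eq}}=0$ (\fref{Lemma}{lemma:block_form_Omega_J}), I would first rewrite the equation in the convenient form
\begin{equation*}
\brac{\pdt + u\cdot\nabla - \sbrac{\Omega_{eq}+\Theta,\,\cdot\,}} K = \sbrac{\Theta, J_{eq}}.
\end{equation*}
Applying $\partial^\alpha$ and commuting $\partial^\alpha$ past $\pdt + u\cdot\nabla$ and past the bracket $\sbrac{\Omega_{eq}+\Theta,\,\cdot\,}$ (the constant piece $\Omega_{eq}$ commutes trivially) yields
\begin{equation*}
\brac{\pdt + u\cdot\nabla - \sbrac{\Omega_{eq}+\Theta,\,\cdot\,}} \partial^\alpha K = F^\alpha,
\end{equation*}
where the forcing term is
\begin{equation*}
F^\alpha = \sbrac{\partial^\alpha \Theta, J_{eq}+K} + \sum_{0<\beta<\alpha}\tbinom{\alpha}{\beta} \sbrac{\partial^{\alpha-\beta}\Theta, \partial^\beta K} - \sbrac{\partial^\alpha, u\cdot\nabla} K.
\end{equation*}

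The next step is to estimate $\normtyp{\sym(F^\alpha)}{L}{2}\le \normtyp{F^\alpha}{L}{2}$ using tame product/commutator estimates in $H^k$. The bracket term contributes $\normtyp{\theta}{H}{k}(1+\normtyp{K}{L}{\infty})+\normtyp{\theta}{L}{\infty}\normtyp{K}{H}{k}$ by the Moser inequality applied to $\partial^\alpha(\Theta K - K\Theta)$ together with the fact that $J_{eq}$ is constant. The transport commutator $\sbrac{\partial^\alpha,u\cdot\nabla}K$ is handled by a Kato-Ponce-type estimate, yielding a bound of the form $\normtyp{\nabla u}{L}{\infty}\normtyp{K}{H}{k}+\normtyp{\nabla K}{L}{\infty}\normtyp{u}{H}{k}$. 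Combining these gives
\begin{equation*}
\normtyp{F^\alpha}{L}{2} \lesssim (1+\normtyp{K}{L}{\infty}+\normtyp{\nabla K}{L}{\infty})\normtyp{(u,\theta)}{H}{k} + (\normtyp{\nabla u}{L}{\infty}+\normtyp{\theta}{L}{\infty})\normtyp{K}{H}{k}.
\end{equation*}
Now invoking \fref{Proposition}{prop:Lp_est_adv_rot_eq_diff_form} for $\partial^\alpha K$ with $p=2$ (and noting that $\nabla\cdot u=0$ kills the pre-exponential factor on the $S_0$ term there), summing over $|\alpha|\le k$, and writing $g(t)=\normtyp{K(t)}{H}{k}$, I obtain
\begin{equation*}
g(t) \lesssim g(0) + \int_0^t (1+\normtyp{K}{L}{\infty}+\normtyp{\nabla K}{L}{\infty})\normtyp{(u,\theta)}{H}{k} + \int_0^t (\normtyp{\nabla u}{L}{\infty}+\normtyp{\theta}{L}{\infty})\, g(s)\, ds.
\end{equation*}
Gr\"onwall's inequality then produces exactly the claimed bound, the exponential prefactor coming from the last integral.

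The only step requiring any real care is the estimation of $F^\alpha$: one must keep careful track of which factor absorbs the top-order derivative and which is measured in $L^\infty$, so that the bound depends only on $\normtyp{K}{L}{\infty}$ and $\normtyp{\nabla K}{L}{\infty}$ (already controlled by \fref{Lemma}{lemma:L_infty_est_K} and \fref{Lemma}{lemma:L_infty_est_nabla_K}) and on the $H^k$-norms of the hyperbolic drivers $u$ and $\theta$, and no higher $K$-norm than $\normtyp{K}{H}{k}$ itself appears on the right-hand side. Once that bookkeeping is done, the Gr\"onwall closure is immediate.
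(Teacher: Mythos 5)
Your proof is correct and takes essentially the same route as the paper's: apply $\partial^\alpha$ to the advection-rotation equation, write the resulting equation for $\partial^\alpha K$ with the same transport-rotation operator on the left and commutator forcing on the right, estimate the forcing with commutator/high-low estimates in terms of $\|(u,\theta)\|_{H^k}$, $\|K\|_{H^k}$, and the $L^\infty$ norms, then invoke \fref{Proposition}{prop:Lp_est_adv_rot_eq_diff_form} with $p=2$, sum over $\alpha$, and close with Gr\"onwall. You make the Gr\"onwall step explicit, which the paper's terse ``yields the claim'' leaves implicit; that is a welcome clarification rather than a deviation.
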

	\begin{proof}
		Since $K$ solves \eqref{eq:pertub_sys_no_ten_pdt_K} we know that, for any multi-index $\alpha$ with length $\abs{\alpha} = k$, $ \partial^\alpha K $ solves
		\begin{equation*}
			\brac{ \pdt + u\cdot\nabla - \sbrac{\Omega_{eq} + \Theta, \,\cdot\,}} \partial^\alpha K
			= \sbrac{ \partial^\alpha \Theta, J_{eq}} + \sbrac{u\cdot\nabla, \partial^\alpha} K - \sbrac{ \sbrac{\Theta, \,\cdot\,}, \partial^\alpha } K.
		\end{equation*}
		Applying \fref{Lemmas}{lemma:comm_est_transp_op} and \ref{lemma:comm_est_transp_op} then tells us that the right-hand side may be estimate in the following way:
		\begin{align*}
			\normtyp{(RHS)}{L}{2} 
			&\leqslant \norm{ \sbrac{ \partial^\alpha \Theta, J_{eq} }}{L^2}
			+ \norm{ \sbrac{u\cdot\nabla, \partial^\alpha} K}{L^2}
			+ \norm{ \sbrac{\sbrac{\Theta, \,\cdot\,}, \partial^\alpha } K}{L^2}
			\\
			&\lesssim \normns{ \partial^\alpha \bar{\theta}}{L^2}
			+ \brac{ \norm{\nabla u}{L^\infty} + \norm{\theta}{L^\infty} } \norm{K}{H^k}
			+ \brac{ \norm{K}{L^\infty} + \norm{\nabla K}{L^\infty} } \brac{ \norm{u}{H^k} + \norm{\theta}{H^k} }.
		\end{align*}
		Summing over $\abs{\alpha}$ and appealing to \fref{Proposition}{prop:Lp_est_adv_rot_eq_diff_form} then yields the claim.
	\end{proof}

	Once $K$ is under control we can read off estimates on $\pdt K$ from \eqref{eq:pertub_sys_no_ten_pdt_K}. The resulting estimate is recorded below.

	\begin{lemma}[$H^k$ estimates for $\pdt K$]
	\label{lemma:H_k_est_pdt_K}
		Suppose that $K$ solves \eqref{eq:pertub_sys_no_ten_pdt_K} for some given $u$ and $\theta$. Then, for any $k\in\N$, $\pdt K$ satisfies the estimate
		\begin{equation*}
			\normtyp{\pdt K}{H}{k} \lesssim \normtyp{K}{H}{k}
			+ \brac{ \norm{u}{L^\infty} + \norm{\theta}{L^\infty} } \normtyp{K}{H}{k+1}
			+ \brac{ 1 + \norm{K}{L^\infty} + \norm{\nabla K}{L^\infty} } \brac{ \normtyp{u}{H}{k} + \normtyp{\theta}{H}{k} }.
		\end{equation*}
	\end{lemma}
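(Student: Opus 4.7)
The plan is to regard this not as an advection-rotation estimate but as a direct algebraic consequence of the equation \eqref{eq:pertub_sys_no_ten_pdt_K}. Rewriting that equation and using the identity $\sbrac{\Omega_{eq},J_{eq}} = 0$ from \fref{Lemma}{lemma:block_form_Omega_J} gives
\begin{equation*}
    \pdt K = -u\cdot\nabla K + \sbrac{\Omega_{eq},K} + \sbrac{\Theta,J_{eq}} + \sbrac{\Theta,K},
\end{equation*}
so that controlling $\normtyp{\pdt K}{H}{k}$ reduces to controlling the $H^k$ norms of the four terms on the right.

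The two commutators with constant matrices are immediate: since $\Omega_{eq}$ and $J_{eq}$ are constant, $\normtyp{\sbrac{\Omega_{eq},K}}{H}{k}\lesssim \normtyp{K}{H}{k}$ and $\normtyp{\sbrac{\Theta,J_{eq}}}{H}{k}\lesssim\normtyp{\theta}{H}{k}$, the latter accounting for the factor of $1$ in the coefficient $(1+\norm{K}{L^\infty}+\norm{\nabla K}{L^\infty})$ on the right-hand side of the claim. For the nonlinear commutator $\sbrac{\Theta,K}$ and the advective term $u\cdot\nabla K$ I would appeal to the standard Moser product inequality on $\T^3$, which gives
\begin{equation*}
    \normtyp{\sbrac{\Theta,K}}{H}{k}\lesssim \norm{\theta}{L^\infty}\normtyp{K}{H}{k} + \norm{K}{L^\infty}\normtyp{\theta}{H}{k}
\end{equation*}
and
\begin{equation*}
    \normtyp{u\cdot\nabla K}{H}{k}\lesssim \norm{u}{L^\infty}\normtyp{\nabla K}{H}{k} + \norm{\nabla K}{L^\infty}\normtyp{u}{H}{k}\lesssim \norm{u}{L^\infty}\normtyp{K}{H}{k+1} + \norm{\nabla K}{L^\infty}\normtyp{u}{H}{k}.
\end{equation*}

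Summing these four contributions, the low-regularity $L^\infty$ factors of $u$ and $\theta$ collect against the top-order norm $\normtyp{K}{H}{k+1}$ while the low-regularity $L^\infty$ factors of $K$ and $\nabla K$ collect against $\normtyp{(u,\theta)}{H}{k}$, which exactly matches the form of the claimed bound. There is no real obstacle here; the only point to watch is the bookkeeping that ensures the term $\sbrac{\Theta,J_{eq}}$ is absorbed into the coefficient $1$ in $(1+\norm{K}{L^\infty}+\norm{\nabla K}{L^\infty})$ rather than into the $(\norm{u}{L^\infty}+\norm{\theta}{L^\infty})$ coefficient of $\normtyp{K}{H}{k+1}$, which is the reason one wants to isolate the constant-coefficient part of the commutator structure via $\sbrac{\Omega_{eq},J_{eq}}=0$ before applying the product estimates.
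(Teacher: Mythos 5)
Your proof is correct and matches the paper's own (one-line) argument: both consist of isolating $\pdt K$ in \eqref{eq:pertub_sys_no_ten_pdt_K}, using $\sbrac{\Omega_{eq}, J_{eq}} = 0$, and applying the high-low product estimate of \fref{Corollary}{cor:est_interactions_L_2_via_Gagliardo_Nirenberg} (your ``Moser product inequality'') to the quadratic terms $u\cdot\nabla K$ and $\sbrac{\Theta, K}$. You have simply spelled out the bookkeeping that the paper leaves implicit.
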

	\begin{proof}
	This follows immediately from using the high-low estimates of \fref{Corollary}{cor:est_interactions_L_2_via_Gagliardo_Nirenberg}
	to estimate the quadratic terms in \eqref{eq:pertub_sys_no_ten_pdt_K}.
	\end{proof}

	We continue establishing estimates on $K$ and its time derivatives by taking a time derivative of \eqref{eq:pertub_sys_no_ten_pdt_K} and thus reading off an estimate for $\pdt^2 K$,
	which is recorded below.

	\begin{lemma}[$H^k$ estimates for $\pdt^2 K$]
	\label{lemma:H_k_est_pdt_2_K}
		Suppose that $K$ solves \eqref{eq:pertub_sys_no_ten_pdt_K} for some given $u$ and $\theta$. Then, for any $k\in\N$, $\pdt K$ satisfies the estimate
		\begin{align*}
			\norm{\pdt^2 K}{H^k} \lesssim \norm{\pdt K}{H^k}
			+ \brac{ 1 + \norm{K}{L^\infty} + \norm{\nabla K}{L^\infty} + \norm{\pdt K}{L^\infty} + \norm{\nabla\pdt K}{L^\infty} }
			\brac{ \norm{\brac{u, \theta}}{H^k} + \norm{\pdt\brac{u, \theta}}{H^k} }
			\\
			+ \brac{ \norm{\brac{u, \theta}}{L^\infty} + \norm{\pdt\brac{u, \theta}}{L^\infty} }
			\brac{ \norm{K}{H^{k+1}} + \norm{\pdt K}{H^{k+1}} }.
		\end{align*}
	\end{lemma}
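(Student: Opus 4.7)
The plan is to mirror the argument used for \fref{Lemma}{lemma:H_k_est_pdt_K}, but starting one time derivative higher. Namely, we differentiate the advection-rotation equation \eqref{eq:pertub_sys_no_ten_pdt_K} once in time. Using the fact that $\Omega_{eq}$ and $J_{eq}$ are constant in time and space, this yields the pointwise identity
\begin{equation*}
	\pdt^2 K = -\pdt u \cdot \nabla K - u\cdot\nabla \pdt K + \sbrac{\pdt \Theta, J_{eq} + K} + \sbrac{\Omega_{eq} + \Theta, \pdt K}.
\end{equation*}
From here the strategy is to take $H^k$ norms of both sides and estimate each term on the right by the high-low product bounds of \fref{Corollary}{cor:est_interactions_L_2_via_Gagliardo_Nirenberg}.

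First I would handle the two transport terms. The product estimate gives
$\normtyp{\pdt u \cdot \nabla K}{H}{k} \lesssim \normtyp{\pdt u}{L}{\infty} \normtyp{K}{H}{k+1} + \normtyp{\nabla K}{L}{\infty} \normtyp{\pdt u}{H}{k}$
and analogously
$\normtyp{u \cdot \nabla \pdt K}{H}{k} \lesssim \normtyp{u}{L}{\infty} \normtyp{\pdt K}{H}{k+1} + \normtyp{\nabla \pdt K}{L}{\infty} \normtyp{u}{H}{k}.$
These contribute the terms $(\normtyp{u}{L}{\infty} + \normtyp{\pdt u}{L}{\infty})(\normtyp{K}{H}{k+1} + \normtyp{\pdt K}{H}{k+1})$ and $(\normtyp{\nabla K}{L}{\infty} + \normtyp{\nabla \pdt K}{L}{\infty})(\normtyp{u}{H}{k} + \normtyp{\pdt u}{H}{k})$ in the final bound.

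Next I would deal with the two commutator terms. Splitting $\sbrac{\pdt \Theta, J_{eq} + K} = \sbrac{\pdt \Theta, J_{eq}} + \sbrac{\pdt \Theta, K}$, the first piece is bounded by $\normtyp{\pdt \theta}{H}{k}$ since $J_{eq}$ is a constant matrix (so only a zeroth-order contribution survives), and the second piece satisfies $\normtyp{\sbrac{\pdt \Theta, K}}{H}{k} \lesssim \normtyp{\pdt \theta}{L}{\infty} \normtyp{K}{H}{k} + \normtyp{K}{L}{\infty} \normtyp{\pdt \theta}{H}{k}$ by \fref{Corollary}{cor:est_interactions_L_2_via_Gagliardo_Nirenberg}. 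Analogously, writing $\sbrac{\Omega_{eq} + \Theta, \pdt K} = \sbrac{\Omega_{eq}, \pdt K} + \sbrac{\Theta, \pdt K}$, the first contribution is controlled by $\normtyp{\pdt K}{H}{k}$ (giving the leading $\normtyp{\pdt K}{H}{k}$ term in the claimed bound) and the second by $\normtyp{\theta}{L}{\infty} \normtyp{\pdt K}{H}{k} + \normtyp{\pdt K}{L}{\infty} \normtyp{\theta}{H}{k}$.

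Collecting all contributions and regrouping the $L^\infty$ factors and $H^k$ factors into the four clusters appearing in the claimed inequality produces the stated estimate. The only step that requires any care is bookkeeping which factor goes on which side of each product so that the $H^{k+1}$ norm is applied only to $K$ and $\pdt K$ (paired with the $L^\infty$ norms of $u, \theta, \pdt u, \pdt \theta$), while all other pairings feed into the $H^k$ cluster paired with the $L^\infty$ norms of $K, \nabla K, \pdt K, \nabla \pdt K$; I do not anticipate any genuine obstacle beyond this matching.
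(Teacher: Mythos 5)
Your proposal is correct and follows essentially the same route as the paper: differentiate \eqref{eq:pertub_sys_no_ten_pdt_K} once in time to get the identity for $\pdt^2 K$ (yours is the grouped form of the paper's expanded one) and then apply the high-low estimates of \fref{Corollary}{cor:est_interactions_L_2_via_Gagliardo_Nirenberg} term by term. The bookkeeping you describe matches the paper's intended grouping, so nothing further is needed.
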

	\begin{proof}
		As in \fref{Lemma}{lemma:H_k_est_pdt_K} this follows from the high-low estimates of \fref{Corollary}{cor:est_interactions_L_2_via_Gagliardo_Nirenberg} upon noticing that $\pdt^2 K$ solves
		\begin{equation*}
			\pdt^2 K = \sbrac{\Omega_{eq}, \pdt K} + \sbrac{\pdt\Theta, J_{eq}} + \sbrac{\pdt\Theta, K} + \sbrac{\Theta, \pdt K}
			- \pdt u \cdot \nabla K - u\cdot\nabla \pdt K. \qedhere
		\end{equation*}
	\end{proof}
	
	We conclude our sequence of estimates on $K$ and its temporal derivatives with an estimate on $K$ when an arbitrary number of temporal derivatives are applied.

	\begin{lemma}[$H^k$ estimates for $\pdt^j K$]
	\label{lemma:H_k_est_pdt_j_K}
		Suppose that $K$ solves \eqref{eq:pertub_sys_no_ten_pdt_K} for some given $u$ and $\theta$. Then, for any $k\in\N$ with $k > \frac{n}{2}$ and any $j \geqslant 1$,
		\begin{equation*}
			\normns{\pdt^j K}{H^k} \lesssim \normns{\pdt^{j-1} K}{H^k} + \normns{\pdt^{j-1}\theta}{H^k}
			+ \sum_{l=0}^{j-1} \brac{
				\normns{ \pdt^l u }{H^k}^2 + \normns{ \pdt^l \theta }{H^k}^2 + \normns{\pdt^l K}{H^{k+1}}^2
			}.
		\end{equation*}
	\end{lemma}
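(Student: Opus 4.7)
The plan is to apply $\pdt^{j-1}$ to the advection-rotation equation \eqref{eq:pertub_sys_no_ten_pdt_K}, expand via the Leibniz rule, and estimate each resulting term in $H^k$ using that $H^k\brac{\T^n}$ is a Banach algebra when $k > n/2$ (precisely the regularity assumed here; this is the content underlying the high-low estimates of \fref{Corollary}{cor:est_interactions_L_2_via_Gagliardo_Nirenberg}).

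Using that $\sbrac{\Omega_{eq}, J_{eq}} = 0$ by \fref{Lemma}{lemma:block_form_Omega_J}, equation \eqref{eq:pertub_sys_no_ten_pdt_K} can be rewritten as $\pdt K = \sbrac{\Omega_{eq}, K} + \sbrac{\Theta, J_{eq}} + \sbrac{\Theta, K} - u \cdot \nabla K$. Applying $\pdt^{j-1}$ and the Leibniz rule then yields
\begin{equation*}
\pdt^j K = \sbrac{\Omega_{eq}, \pdt^{j-1} K} + \sbrac{\pdt^{j-1}\Theta, J_{eq}} + \sum_{l=0}^{j-1} \binom{j-1}{l} \brac{ \sbrac{\pdt^l \Theta, \pdt^{j-1-l} K} - \pdt^l u \cdot \nabla \pdt^{j-1-l} K }.
\end{equation*}
The first two (linear) terms furnish directly the contributions $\normtyp{\pdt^{j-1}K}{H}{k}$ and $\normtyp{\pdt^{j-1}\theta}{H}{k}$ appearing in the statement, since $\Omega_{eq}$ and $J_{eq}$ are constant matrices. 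For each term in the quadratic sum, the Banach algebra property yields
\begin{align*}
\normtyp{\sbrac{\pdt^l \Theta, \pdt^{j-1-l} K}}{H}{k} &\lesssim \normtyp{\pdt^l \theta}{H}{k}\, \normtyp{\pdt^{j-1-l} K}{H}{k},\\
\normtyp{\pdt^l u \cdot \nabla \pdt^{j-1-l} K}{H}{k} &\lesssim \normtyp{\pdt^l u}{H}{k}\, \normtyp{\pdt^{j-1-l} K}{H}{k+1}.
\end{align*}
Young's inequality $ab \leqslant \tfrac{1}{2}(a^2 + b^2)$ and reindexing via $l \mapsto j-1-l$ on the factors involving $K$ then produce the claimed sum of squared norms. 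Note that only $\pdt^l K$ appears at regularity $H^{k+1}$, because the single extra derivative arises from the $\nabla$ in the transport term and always lands on the microinertia factor; the remaining factors need only be estimated in $H^k$.

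The main obstacle is bookkeeping rather than analysis: one must peel off the two linear terms $\sbrac{\Omega_{eq}, \pdt^{j-1}K}$ and $\sbrac{\pdt^{j-1}\Theta, J_{eq}}$ from the Leibniz expansion so that they enter the final estimate linearly (rather than being absorbed into the quadratic sum, which would spoil the structure the estimate is used for later in the paper), and one must ensure that the extra derivative from the transport term is routed onto $K$ rather than onto $u$ so that $\pdt^l u$ is only needed at regularity $H^k$.
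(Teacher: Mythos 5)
Your proposal is correct and follows essentially the same route as the paper: apply $\pdt^{j-1}$ to \eqref{eq:pertub_sys_no_ten_pdt_K} (using $\sbrac{\Omega_{eq}, J_{eq}} = 0$), keep the two linear terms $\sbrac{\Omega_{eq}, \pdt^{j-1}K}$ and $\sbrac{\pdt^{j-1}\Theta, J_{eq}}$ separate from the quadratic Leibniz sum, and estimate the quadratic terms via the Banach algebra property of $H^k$ for $k > \tfrac{n}{2}$. The paper leaves the Young-inequality and reindexing step implicit, which you simply make explicit; there is no substantive difference.
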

	\begin{proof}
		The proof is immediate upon noting that taking $j-1$ time derivatives of \eqref{eq:pertub_sys_no_ten_pdt_K} results in
		\begin{equation*}
			\pdt^j K = \sbrac{\Omega_{eq}, \pdt^{j-1} K} + \sbrac{\pdt^{j-1} \Theta, J_{eq}}
			+ \sum_{l=0}^{j-1} \brac{
				\sbrac{ \pdt^{\brac{j-1}-l} \Theta, \pdt^l K} - \brac{ \pdt^{\brac{j-1}-l} u \cdot\nabla } \pdt^l K
			}
		\end{equation*}
		and recalling that $H^k$ is a Banach algebra precisely when $k > \frac{n}{2}$.
	\end{proof}

	Having obtained estimates for $K$ and its time derivatives above, we may now synthesize the results of this section in \fref{Proposition}{prop:est_K}.
	Note that, as discussed in \fref{Section}{sec:discuss_ap}, this proposition is one of the four building blocks of the scheme of a priori estimates.
	Recall that the functionals $ \overline{\mathcal{E}}_M$, $ \mathcal{E}_M^{(K)}$, $ \mathcal{F}_M$, $ \overline{\mathcal{K}}_I $, and $ \overline{\mathcal{D}}_M$
	are defined in \eqref{eq:not_E_M_bar}, \eqref{eq:not_E_M_K}, \eqref{eq:not_E_M_and_F_M}, \eqref{eq:not_K_bar}, and \eqref{eq:not_D_M}, respectively.

	\begin{prop}[Advection-rotation estimates for $K$]
	\label{prop:est_K}
		Let $M\geqslant 3$ be an integer and suppose that, for some time horizon $T > 0$ and some universal constant $\overline{C} > 0$,
		\begin{equation}
		\label{eq:est_K_star}
			\sup_{1 \leqslant I \leqslant M} \sup_{0 \leqslant t \leqslant T} \overline{\mathcal{K}}_I (t) {\brac{1+t}}^{2M-2I}
			+ \overline{\mathcal{E}}_M (t) + \int_0^t \overline{\mathcal{D}}_M (s) ds
			=\vcentcolon C_0 \leqslant \overline{C} < \infty.
		\end{equation}
		Then, for every $0 \leqslant t \leqslant T$,
		\begin{align*}
			{\mathcal{E}_M^{(K)} (t)}^{1/2} \lesssim P_f
			\text{ and } 
			\mathcal{F}_M^{1/2} (t) \lesssim \mathcal{F}_M^{1/2} (0) + \brac{1 + P_e} \int_0^t \overline{\mathcal{D}}_M^{1/2} (s) ds + (1 + P_f) \overline{\mathcal{K}}_M^{1/2} (t)
		\end{align*}
		where the constants appearing in these two estimates depends on $\overline{C}$, and where
		\begin{equation*}
			P_e \vcentcolon= P\brac{ C_0^{1/2},\, {\mathcal{E}_M^{(K)} (0)}^{1/2} }
			\text{ and }
			P_f \vcentcolon= P\brac{ C_0^{1/2},\, { \mathcal{E}_M^{(K)}  (0)}^{1/2},\, {\mathcal{F}_M (0)}^{1/2} }
		\end{equation*}
		for $P$, which may differ in each instance,
		a polynomial with non-negative coefficients which vanishes at zero.
	\end{prop}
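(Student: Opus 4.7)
My plan is to apply the hierarchy of estimates from Lemmas \ref{lemma:L_infty_est_K}--\ref{lemma:H_k_est_pdt_j_K} at two distinct regularity levels: the ``good'' regularities constituting $\mathcal{E}_M^{(K)}$, where $K, \pdt K, \pdt^2 K$ sit in $H^{2M-3}$ and higher time derivatives sit at correspondingly lower regularities, for which the target bound is uniform in time; and the ``bad'' regularities in $\mathcal{F}_M$, namely $H^{2M+1}, H^{2M}, H^{2M-2}$, for which the only available bound necessarily grows in time. What makes both estimates close is the combination of the decay of the intermediate norms $\overline{\mathcal{K}}_I$ with the integrability of $\overline{\mathcal{D}}_M$ guaranteed by \eqref{eq:est_K_star}.

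First I would establish uniform-in-time $L^\infty$ bounds on $K$ and $\nabla K$ using Lemmas \ref{lemma:L_infty_est_K} and \ref{lemma:L_infty_est_nabla_K}. The Gronwall and forcing integrals appearing there, $\int_0^t \|\theta\|_{L^\infty}$, $\int_0^t \|\nabla u\|_{L^\infty}$ and $\int_0^t \|\nabla\theta\|_{L^\infty}$, are dominated via Sobolev embedding by $\int_0^t \overline{\mathcal{K}}_I^{1/2}$ for small $I$, and $\overline{\mathcal{K}}_I^{1/2}(t) \lesssim (1+t)^{-(M-I)}$ is integrable as soon as $I \leqslant M-2$, which is admissible because $M\geqslant 3$. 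With these $L^\infty$ bounds in hand I would cascade through Lemma \ref{lemma:H_k_est_K} at $k=2M-3$ to bound $\|K\|_{H^{2M-3}}$, Lemma \ref{lemma:H_k_est_pdt_K} at $k=2M-3$ for $\pdt K$, Lemma \ref{lemma:H_k_est_pdt_2_K} at the same regularity for $\pdt^2 K$, and finally Lemma \ref{lemma:H_k_est_pdt_j_K} inductively for $j=3,\ldots,M$ at $k=2M-2j+2$. Each step features a time integral $\int_0^t \|(u,\theta)\|_{H^{k'}}$ with $k'<2M-2$ that is either covered directly by a $\overline{\mathcal{K}}_I$ with integrable decay, or, for $k'$ close to the threshold, by interpolating via H\"older between such a $\overline{\mathcal{K}}_I$ and a factor drawn from $\overline{\mathcal{D}}_M$. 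The initial values $\|\pdt^l K(0)\|$ generated by the cascade are expressed through the equation \eqref{eq:pertub_sys_no_ten_pdt_K} at $t=0$ in terms of initial $(u,\theta,K)$ at various regularities, yielding the polynomial $P_f$ in the conclusion.

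For the $\mathcal{F}_M$ estimate I would invoke the same lemmas at the bad regularities. Lemma \ref{lemma:H_k_est_K} at $k=2M+1$ produces the forcing term $\int_0^t \|(u,\theta)\|_{H^{2M+1}} = \int_0^t \overline{\mathcal{D}}_M^{1/2}$, which is precisely the corresponding term in the conclusion. Lemma \ref{lemma:H_k_est_pdt_K} at $k=2M$ features $\|K\|_{H^{2M+1}}$, now controlled by the first part of the $\mathcal{F}_M$ bound, together with $\|(u,\theta)\|_{H^{2M}}$ which is dominated by $\overline{\mathcal{K}}_M^{1/2}$; this explains the $(1+P_f)\overline{\mathcal{K}}_M^{1/2}$ term. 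Lemma \ref{lemma:H_k_est_pdt_2_K} at $k=2M-2$ calls additionally for $\|K\|_{H^{2M-1}}$ and $\|\pdt K\|_{H^{2M-1}}$, both of which live at the good regularity and are therefore already bounded by $(\mathcal{E}_M^{(K)})^{1/2} \lesssim P_f$, plus $\|\pdt(u,\theta)\|_{H^{2M-2}}$ which also sits in $\overline{\mathcal{K}}_M^{1/2}$. Assembling these pieces reconstructs the stated bound.

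The main obstacle will be the delicate choice of interpolation exponents in the intermediate time integrals $\int_0^t \|(u,\theta)\|_{H^{k'}}$ for $k'$ just below the $2M-2$ threshold, where the direct estimate $\overline{\mathcal{K}}_{M-1}^{1/2} \lesssim (1+t)^{-1}$ fails to be integrable and one must interpolate with either a lower $\overline{\mathcal{K}}_I$ or the $L^2_t$-bound on $\overline{\mathcal{D}}_M^{1/2}$ to land in $L^1_t$. Coordinating these choices across every invocation of Lemmas \ref{lemma:H_k_est_K}--\ref{lemma:H_k_est_pdt_j_K}, while keeping the polynomial dependencies on $\mathcal{E}_M^{(K)}(0)$ and $\mathcal{F}_M(0)$ separated so as to correctly reproduce the stated $P_e$ versus $P_f$ structure, is the main technical burden.
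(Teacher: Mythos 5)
Your strategy is essentially the paper's: the same preliminary time-integrated bounds drawn from the decay of $\overline{\mathcal{K}}_I$ and the integrability of $\overline{\mathcal{D}}_M$, the same $L^\infty$ bounds on $K$ and $\nabla K$, and the same cascade through Lemmas \ref{lemma:H_k_est_K}--\ref{lemma:H_k_est_pdt_j_K} at the two tiers, with the top-tier forcing integral handled by Cauchy--Schwarz against $\int_0^t\overline{\mathcal{D}}_M$. Two points in your write-up need repair or sharpening, though neither is structural. First, in your treatment of $\normtyp{\pdt^2 K}{H}{2M-2}$ you assert that $\normtyp{K}{H}{2M-1}$ and $\normtyp{\pdt K}{H}{2M-1}$ ``live at the good regularity'' and are bounded by ${\mathcal{E}_M^{(K)}}^{1/2}$; this is false, since $\mathcal{E}_M^{(K)}$ controls $K$ and $\pdt K$ only in $H^{2M-3}$. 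The fix is what the paper does: bound these factors by the already-established (growing) bounds on $\normtyp{K}{H}{2M+1}$ and $\normtyp{\pdt K}{H}{2M}$, whose contribution is precisely of the admissible form $\mathcal{F}_M^{1/2}(0) + (1+P_e)\int_0^t\overline{\mathcal{D}}_M^{1/2}$.

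Second, for the good-tier bounds on $\pdt K$ and $\pdt^2 K$ in $H^{2M-3}$ the obstruction is not a time integral of $(u,\theta)$: Lemmas \ref{lemma:H_k_est_pdt_K} and \ref{lemma:H_k_est_pdt_2_K} are pointwise in time and produce the products $\normtyp{(u,\theta)}{L}{\infty}\normtyp{K}{H}{2M-2}$ and $\normtyp{\pdt(u,\theta)}{L}{\infty}\normtyp{\pdt K}{H}{2M-2}$, where the $K$-factors sit \emph{above} the good threshold. The paper's device is to play the decay of the $L^\infty$ factors (rates $(1+t)^{-(M-1)}$, resp. $(1+t)^{-(M-2)}$) against the $t^{1/2}$ growth of the high norms coming from $\int_0^t\overline{\mathcal{D}}_M^{1/2}\leqslant C_0^{1/2}t^{1/2}$; this is where $M\geqslant 3$ enters. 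Your alternative of obtaining a uniform bound on $\normtyp{K}{H}{2M-2}$ from Lemma \ref{lemma:H_k_est_K} at $k=2M-2$, with $\int_0^t\normtyp{(u,\theta)}{H}{2M-2}$ controlled by H\"older between the interpolated decay and the $L^2_t$ bound on $\normtyp{(u,\theta)}{H}{2M+1}$, does work for that term, but you would still need the decay--growth balancing (or the paper's route) for $\normtyp{\pdt K}{H}{2M-2}$ in the $\pdt^2 K$ estimate, so the mechanism should be made explicit rather than folded into a generic remark about interpolation exponents.
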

	\begin{proof}
		The strategy of the proof is as follows.
		The target estimates on $ \mathcal{E}_M^{(K)}$ and $ \mathcal{F}_M $ follow from putting together the advection-rotation estimates of \fref{Section}{sec:adv_rot_est_K}
		and appropriately leveraging the decay afforded to us by \eqref{eq:est_K_star}.
		The key idea is that the potential growth of terms controlled by $ \mathcal{F}_M $ may be offset by the decay of terms appearing in $ \overline{\mathcal{K}}_I $.

		We begin by recording, in Step 1, elementary estimates which are consequences of \eqref{eq:est_K_star} and can be used to control some of the time integrals appearing
		in the advection-rotation estimates of \fref{Section}{sec:adv_rot_est_K}.
		We then obtain $L^\infty$ bounds on $K$ and $\nabla K$ in Step 2 and deduce estimates on $K$, $\pdt K$, $\pdt^2 K$, and higher-order temporal derivatives in Steps 3--6.
		We conclude in Step 7 by recording how to perform the synthesis of Steps 1-6 and read off the desired estimates of $ \mathcal{E}_M $ and $ \mathcal{F}_M $.

		Before we begin the proof in earnest, we fix some notation. For $x_1,\, \dots\, x_n \geqslant 0$, $P(x_1,\, \dots,\, x_n)$ denotes a polynomial
		of $(x_1,\, \dots,\, x_n)$ which may change from line to line and has the following properties: it vanishes at zero and it has non-negative coefficients.
		In particular, we write
		\begin{equation*}
			P_e \vcentcolon= P\brac{ C_0^{1/2},\, {\mathcal{E}_M^{(K)} (0)}^{1/2} }
			\text{ and } 
			P_f \vcentcolon= P\brac{ C_0^{1/2},\, { \mathcal{E}_M^{(K)}  (0)}^{1/2} ,\, {\mathcal{F}_M (0)}^{1/2}}.
		\end{equation*}

		\paragraph{\textbf{Step 1: Preliminary estimates.}}
		We begin by recording some elementary estimates which are consequences of \eqref{eq:est_K_star},
		such as estimates on time integrals of the functionals $ \overline{\mathcal{K}}_I $.
		First, note that for any $1\leqslant I \leqslant M - 2$,
		\begin{equation*}
		\label{eq:est_K_sq_1}
			\int_0^t \normtyp{(u, \theta) (s)}{H}{2I} ds
			\lesssim \int_0^t \frac{C_0^{1/2}}{ {\brac{1+s}}^{M - I} } ds.
			\lesssim C_0^{1/2}.
		\end{equation*}
		By interpolation, we note that similar estimates also hold for $H^k$ norms of $u$ and $\theta$ when $k$ is odd.
		Indeed, observe first that for any odd $k$ satisfying $3 \leqslant k \leqslant 2M-1$,
		if we write $k = 2I+1$ for some $1 \leqslant I \leqslant M-1$ then we have the following bounds, pointwise-in-time,
		\begin{equation*}
			\normtyp{(u, \theta)(t)}{H}{k}
			\lesssim \normtyp{(u, \theta)(t)}{H}{k-1}^{1/2} \normtyp{(u, \theta)(t)}{H}{k+1}^{1/2}
			\lesssim \frac{ C_0^{1/4} }{ {\brac{1+t}}^{\frac{M-I}{2}} } \frac{ C_0^{1/4} }{ {\brac{1+t}}^{\frac{M-I-1}{2}} }
			= \frac{ C_0^{1/2} }{ {\brac{1+t}}^{M - \frac{k}{2}} }.
		\end{equation*}
		Therefore, if $k \leqslant 2M-3$ we may deduce the time-integrated bound
		\begin{equation}
		\label{eq:est_K_sq_2}
			\int_0^t \normtyp{(u, \theta) (s)}{H}{k} ds \lesssim C_0^{1/2}.
		\end{equation}
		Since the functionals $ \overline{\mathcal{K}}_I $ which appear in \eqref{eq:est_K_star} also involve temporal derivatives, we may proceed in the same way to deduce that,
		for any $2 \leqslant k \leqslant 2M-5$,
		\begin{equation}
		\label{eq:est_K_sq_3}
			\int_0^t \normtyp{\pdt(u,\theta) (s)}{H}{k} \lesssim C_0^{1/2}.
		\end{equation}
		The final preliminary estimate, before we being estimating $K$, has to do with exponential factors that arise in
		\fref{Lemmas}{lemma:L_infty_est_K} and \ref{lemma:L_infty_est_nabla_K}.
		In light of \eqref{eq:est_K_sq_1} and \eqref{eq:est_K_sq_2} we see that, for some constants $C > 0$ which may change from line to line
		\begin{equation}
		\label{eq:est_K_sq_4}
			\exp\brac{ \int_0^t \normtyp{ (\nabla u, \theta) (s) }{L}{\infty} ds }
			\leqslant \exp\brac{ C \int_0^t \normtyp{(u,\theta)}{H}{3} }
			\leqslant \exp \brac{ C_2 C_0^{1/2} }
			\lesssim 1
		\end{equation}
		where recall that, as in the statement of the proposition, the constants implied by the notation ``$\lesssim$'' may depend on $\overline{C}$.

		\paragraph{\textbf{Step 2: $L^\infty$ estimates on $K$ and $\nabla K$}}
		We are now ready to record the first estimates on $K$, which are $L^\infty$ estimates on $K$ and $\nabla K$ coming from
		\fref{Lemmas}{lemma:L_infty_est_K} and \ref{lemma:L_infty_est_nabla_K}.
		We deduce from \fref{Lemma}{lemma:L_infty_est_K}, the fact that $M\geqslant 3$, and \eqref{eq:est_K_sq_1}, that
		\begin{equation}
		\label{eq:est_K_a}
			\normtyp{K}{L}{\infty}
			\lesssim \normtyp{K(0)}{H}{2} + \int_0^t \normtyp{\theta}{H}{2}
			\lesssim {\mathcal{E}_M^{(K)} (0)}^{1/2} + C_0^{1/2}
			\lesssim P_e.
		\end{equation}
		Similarly, we deduce from \fref{Lemma}{lemma:L_infty_est_nabla_K}, the fact that $M\geqslant 3$, \eqref{eq:est_K_sq_2}, \eqref{eq:est_K_sq_4}, and \eqref{eq:est_K_a} that
		\begin{equation}
		\label{eq:est_K_b}
			\normtyp{\nabla K }{L}{\infty}
			\lesssim \normtyp{K(0)}{H}{3} + (1 + P_e) \int_0^t \normtyp{\theta}{H}{3} 
			\lesssim {\mathcal{E}_M^{(K)} (0)}^{1/2} + (1 + P_e) C_0^{1/2}
			\lesssim P_e.
		\end{equation}

		\paragraph{\textbf{Step 3: Estimating $K$.}}
		We are now ready to use \fref{Lemma}{lemma:H_k_est_K} to estimate $K$.
		Combining \fref{Lemma}{lemma:H_k_est_K} with \eqref{eq:est_K_sq_4}, \eqref{eq:est_K_a}, and \eqref{eq:est_K_b} tells us that
		\begin{equation*}
			\normtyp{K(t)}{H}{2M-3}
			\lesssim \normtyp{K(0)}{H}{2M-3} + (1 + P_e) \int_0^t \normtyp{(u, \theta)}{H}{2M-3}.
		\end{equation*}
		Using \eqref{eq:est_K_sq_2} allows to conclude that
		\begin{equation}
		\label{eq:est_K_c1}
			\normtyp{K}{H}{2M-3}
			\lesssim {\mathcal{E}_M^{(K)} (0)}^{1/2} + (1 + P_e) C_0^{1/2}
			\lesssim P_e.
		\end{equation}
		Combining \fref{Lemma}{lemma:H_k_est_K} with \eqref{eq:est_K_sq_4}, \eqref{eq:est_K_a}, and \eqref{eq:est_K_b} also tells us that
		\begin{equation}
		\label{eq:est_K_c2}
			\normtyp{K}{H}{2M+1}
			\lesssim \normtyp{K(0)}{H}{2M+1} + (1 + P_e) \int_0^t \normtyp{(u, \theta)}{H}{2M+1} 
			\lesssim { \mathcal{F}_M (0) }^{1/2} + (1 + P_e) \int_0^t \overline{\mathcal{D}}_M^{1/2}.
		\end{equation}

		\paragraph{\textbf{Step 4: Estimating $\pdt K$.}}
		We now estimate $\pdt K$, using \fref{Lemma}{lemma:H_k_est_pdt_K}.
		\fref{Lemma}{lemma:H_k_est_pdt_K}, combined with \eqref{eq:est_K_star}, \eqref{eq:est_K_a}, \eqref{eq:est_K_b}, and \eqref{eq:est_K_c1}, tells us that
		\begin{align}
			\normtyp{\pdt K}{H}{2M-3}
			\lesssim \normtyp{K}{H}{2M-3} + \normtyp{ (u, \theta) }{L}{\infty} \normtyp{K}{H}{2M-2} + (1 + P_e) \normtyp{(u,\theta)}{H}{2M-3} 
		\nonumber
		\\
			\lesssim P_e + \normtyp{ (u,\theta) }{L}{\infty} \normtyp{K}{H}{2M-2}.
		\label{eq:est_K_intermediate_1}
		\end{align}
		The trick now lies in controlling the term $\normtyp{ (u,\theta) }{L}{\infty} \normtyp{K}{H}{2M-3}$ by playing off the decay of $ \normtyp{ (u,\theta) }{L}{\infty}$
		against the (potential) growth of $ \normtyp{K}{H}{2M-2}$.
		Using \eqref{eq:est_K_star} and \eqref{eq:est_K_c2} we see that
		\begin{equation*}
			\normtyp{ (u, \theta) }{L}{\infty} \normtyp{K}{H}{2M-2}
			\lesssim \frac{ C_0^{1/2} }{ {\brac{1+t}}^{M-1} } \brac{ {\mathcal{F}_M (0)}^{1/2} + (1 + P_e) \int_0^t \overline{\mathcal{D}}_M^{1/2} }.
		\end{equation*}
		Note that by applying Cauchy-Schwarz to $\int \overline{\mathcal{D}}_M$, we see that, by virtue of \eqref{eq:est_K_star},
		\begin{equation}
		\label{eq:est_K_D}
			\int_0^t \overline{\mathcal{D}}_M^{1/2}
			= t \fint_0^t \overline{\mathcal{D}}_M^{1/2}
			\leqslant t {\brac{ \fint_0^t \overline{\mathcal{D}}_M }}^{1/2}
			= t^{1/2} {\brac{ \int_0^t \overline{\mathcal{D}}_M }}^{1/2}
			\leqslant C_0^{1/2} t^{1/2}.
		\end{equation}
		Therefore, since $M\geqslant 2$,
		\begin{equation}
		\label{eq:est_K_intermediate_2}
			\normtyp{ (u,\theta) }{L}{\infty} \normtyp{K}{H}{2M-2}
			\lesssim C_0^{1/2} \brac{ { \mathcal{F}_M (0) }^{1/2} + (1 + P_e) C_0^{1/2} } \frac{ 1+t^{1/2} }{ {\brac{1+t}}^{M-1} }
			\lesssim P_f \frac{ {\brac{1+t}}^{1/2} }{ {\brac{1+t}}^{M-1} }
			\lesssim P_f.
		\end{equation}
		So finally, putting \eqref{eq:est_K_intermediate_1} and \eqref{eq:est_K_intermediate_2} together, we see that
		\begin{equation}
		\label{eq:est_K_c3}
			\normtyp{\pdt K}{H}{2M-3} \lesssim P_f.
		\end{equation}

		We now seek to control $\pdt K$ in $H^{2M}$, i.e. through $ \mathcal{F}_M $.
		This is slightly easier than controlling $K$ in $H^{2M-3}$ as is done above since now we do not have to deal with ``decay-growth'' interactions.
		Combining \fref{Lemma}{lemma:H_k_est_pdt_K} with \eqref{eq:est_K_star}, \eqref{eq:est_K_a}, \eqref{eq:est_K_b}, and \eqref{eq:est_K_c2} shows that
		\begin{align}
			\normtyp{\pdt K}{H}{2M}
			\lesssim \normtyp{K}{H}{2M} + \normtyp{ (u,\theta) }{L}{\infty} \normtyp{K}{H}{2M+1} + (1 + P_e) \normtyp{(u,\theta)}{H}{2M} 
		\nonumber
		\\
			\lesssim \brac{1 + C_0^{1/2}} \brac{ {\mathcal{F}_M (0)}^{1/2} + (1 + P_e) \int_0^t \overline{\mathcal{D}}_M^{1/2} } + (1 + P_e) C_0^{1/2}
		\nonumber
		\\
			\lesssim \mathcal{F}_M^{1/2} (0) + (1 + P_e) \int_0^t \overline{\mathcal{D}}_M^{1/2} + (1 + P_e) \overline{\mathcal{K}}_M^{1/2} (t).
		\label{eq:est_K_c4}
		\end{align}

		\paragraph{\textbf{Step 5: Estimating $\pdt^2 K$.}}
		We now use \fref{Lemma}{lemma:H_k_est_pdt_2_K} to control $\pdt^2 K$.
		\fref{Lemma}{lemma:H_k_est_pdt_2_K} tells us that
		\begin{align}
			\normtyp{\pdt^2 K}{H}{2M-3}
			\lesssim \normtyp{\pdt K}{H}{2M-3}
			+ \brac{
				\normtyp{ (u,\theta) }{L}{\infty} + \normtyp{ \pdt (u,\theta) }{L}{\infty}
			} \brac{
				\normtyp{K}{H}{2M-2} + \normtyp{\pdt K}{H}{2M-2} 
			}
		\nonumber
		\\
			+ \brac{
				1 + \normtyp{ (K, \nabla K) }{L}{\infty} + \normtyp{ \pdt(K, \nabla K) }{L}{\infty}
			} \brac{
				\normtyp{(u,\theta)}{H}{2M-3} + \normtyp{\pdt(u,\theta)}{H}{2M-3}
			}.
		\label{eq:est_K_intermediate_3}
		\end{align}
		In particular \eqref{eq:est_K_c3} allows us to control $\pdt K$ and $\pdt\nabla K$ in $L^\infty$ since $M\geqslant 3$ and hence
		\begin{equation}
		\label{eq:est_K_A}
			\normtyp{ \pdt(K, \nabla K)}{L}{\infty}
			\lesssim \normtyp{\pdt K}{H}{3}
			\lesssim \normtyp{\pdt K}{H}{2M-3}
			\lesssim P_f.
		\end{equation}
		As in the estimate of $\pdt K$ in $H^{2M-3}$, the subtlety now lies in estimating the decay-growth interaction.
		In light of \eqref{eq:est_K_star}, \eqref{eq:est_K_c2}, \eqref{eq:est_K_D}, \eqref{eq:est_K_c4},
		and the fact that $M\geqslant 3$,
		\begin{align}
			\brac{
				\normtyp{ (u,\theta) }{L}{\infty} + \normtyp{ \pdt(u, \theta) }{L}{\infty} 
			}\brac{
				\normtyp{K}{H}{2M-2} + \normtyp{\pdt K}{H}{2M-2} 
			}
			\lesssim \frac{ C_0^{1/2} }{ {\brac{1+t}}^{M-2} } \brac{
				P_f + (1 + P_e) \int_0^t \overline{\mathcal{D}}_M^{1/2}
			}
		\nonumber
		\\
			\lesssim C_0^{1/2} (1 + P_f) C_0^{1/2} \frac{ 1+t^{1/2} }{ {\brac{1+t}}^{M-2} }
			\lesssim P_f \frac{ {\brac{1+t}}^{1/2} }{ {\brac{1+t}}^{M-2} }
			\lesssim P_f.
		\label{eq:est_K_B}
		\end{align}
		So finally, combining \eqref{eq:est_K_star}, \eqref{eq:est_K_a}, \eqref{eq:est_K_b}, \eqref{eq:est_K_c3}, \eqref{eq:est_K_A}, and \eqref{eq:est_K_B}
		tells us that
		\begin{equation}
		\label{eq:est_K_c5}
			\normtyp{\pdt^2 K}{H}{2M-3} \lesssim P_f.
		\end{equation}

		We now seek to control $\pdt^2 K$ in $H^{2M-2}$.
		We may put together \fref{Lemma}{lemma:H_k_est_pdt_2_K},
		\eqref{eq:est_K_star}, \eqref{eq:est_K_a}, \eqref{eq:est_K_b}, \eqref{eq:est_K_c2}, \eqref{eq:est_K_c4}, and \eqref{eq:est_K_A} to see that
		\begin{align}
			\normtyp{\pdt^2 K}{H}{2M-2}
			\lesssim \normtyp{\pdt K}{H}{2M-2}
			+ \brac{
				\normtyp{ (u,\theta) }{L}{\infty} + \normtyp{ \pdt(u,\theta)}{L}{\infty} 
			}\brac{
				\normtyp{K}{H}{2M-1} + \normtyp{\pdt K}{H}{2M-1} 
			}
		\nonumber
		\\
			+ \brac{
				1 + \normtyp{ (K, \nabla K) }{L}{\infty} + \normtyp{ \pdt(K, \nabla K)}{L}{\infty} 
			}\brac{
			\normtyp{(u,\theta)}{H}{2M-2} + \normtyp{\pdt(u, \theta)}{H}{2M-2} 
			}
		\nonumber
		\\
			\lesssim \brac{1 + C_0^{1/2}} \brac{ \normtyp{K}{H}{2M-1} + \normtyp{\pdt K}{H}{2M-1} } + (1 + P_f) \overline{\mathcal{K}}_M^{1/2}
			\lesssim \mathcal{F}_M^{1/2} (0) + (1 + P_e) \int_0^t \overline{\mathcal{D}}_M^{1/2} + (1 + P_f) \overline{\mathcal{K}}_M^{1/2}.
		\label{eq:est_K_c6}
		\end{align}

		\paragraph{\textbf{Step 6: Estimating $\pdt^j K$ for $j\geqslant 3$.}}
		We conclude this proof by obtaining control over $\pdt^j K$ when $j \geqslant 3$.
		We proceed by induction, relying on \fref{Lemma}{lemma:H_k_est_pdt_j_K} for both the base case and the induction step,
		and we will show that
		\begin{equation}
		\label{eq:est_K_c7}
			\normtyp{\pdt^j K}{H}{2M-2j+2} \lesssim P_f
			\text{ for every } 3 \leqslant j \leqslant M.
		\end{equation}
		Note that the hypotheses of \fref{Lemma}{lemma:H_k_est_pdt_j_K} are always satisfied here since $2M - 2j + 2 \geqslant 2 > \frac{3}{2}$ when $j \leqslant M$.
		We begin with the base case.
		By \fref{Lemma}{lemma:H_k_est_pdt_j_K}, \eqref{eq:est_K_star}, \eqref{eq:est_K_c1}, \eqref{eq:est_K_c3}, and \eqref{eq:est_K_c5} we obtain that
		\begin{align*}
			\normtyp{\pdt^3 K}{H}{2M-4}
			\lesssim \normtyp{\pdt^2 K}{H}{2M-4} + \normtyp{\pdt^2 \theta}{H}{2M-4} + \sum_{l=0}^2 \brac{ \normtyp{\pdt^l (u,\theta) }{H}{2M-4}^2 + \normtyp{\pdt^l (u,\theta)}{H}{2M-3}^2 }
		\\
			\lesssim P_f + C_0^{1/2} + \brac{ C_0 + P_f}
			\lesssim P_f.
		\end{align*}
		We may now proceed with the induction step.
		Suppose that there is some $3 \leqslant j < M$ such that
		\begin{equation}
		\label{eq:est_K_IH}
			\normtyp{\pdt^l K}{H}{2M-2l+2} \lesssim P_f
			\text{ for every } 3 \leqslant l \leqslant j.
		\end{equation}
		Then, by \fref{Lemma}{lemma:H_k_est_pdt_j_K}, \eqref{eq:est_K_star}, \eqref{eq:est_K_c1}, \eqref{eq:est_K_c3}, \eqref{eq:est_K_c5}, and \eqref{eq:est_K_IH} we see that
		\begin{align*}
			\normtyp{\pdt^{j+1} K}{H}{2M-2j}
			\lesssim \normtyp{\pdt^j K}{H}{2M-2j} + \normtyp{\pdt^j \theta}{H}{2M-2j} + \sum_{l=0}^j \brac{
				\normtyp{\pdt^l (u,\theta)}{H}{2M-2j}^2 + \normtyp{\pdt^l K}{H}{2M-2j+1}^2
			}
		\\
			\lesssim P_f + C_0^{1/2} + \normtyp{(u,\theta)}{P}{2M}^2 + P_f^2
			\lesssim P_f.
		\end{align*}
		This proves that the induction step holds, from which \eqref{eq:est_K_c7} follows.

		\paragraph{\textbf{Step 7: Synthesis.}}
		We combine \eqref{eq:est_K_c1}, \eqref{eq:est_K_c3}, \eqref{eq:est_K_c5}, and \eqref{eq:est_K_c7} to deduce the bound on $ \mathcal{E}_M^{(K)}$
		and we combine \eqref{eq:est_K_c2}, \eqref{eq:est_K_c4}, and \eqref{eq:est_K_c6} to deduce the bound on $ \mathcal{F}_M $.
	\end{proof}

\subsection{Energy-dissipation structure}
\label{sec:ED_structure}

	In this section we identify the energy-dissipation structure of the problem and record some related auxiliary results,
	such as the precise form of the interactions, a comparison result for the various versions of the energy,
	and a coercivity estimate for the dissipation.
	Since the dissipation $D$ will appear frequently throughout this section we recall that it is defined in \eqref{eq:not_dissip}.
	We begin with the energy-dissipation relation.

	\begin{prop}[The generic energy-dissipation relation]
	\label{prop:gen_pert_ED_rel}
		Let the stress tensors $T$ and $M$ be as defined in \eqref{eq:intro_stress_tensors_def} and
		suppose that $\brac{v, q, \theta, b}$ solves
		\begin{subnumcases}{}
			(\pdt + u\cdot\nabla) v = \brac{\nabla\cdot T}\brac{v, q, \theta} + f,
			\label{eq:gen_pert_ED_lin_mom}\\
			\nabla\cdot v = 0,
			\label{eq:gen_pert_ED_incompress}\\
			J (\pdt + u\cdot\nabla)\theta + \brac{\omega\times J} \theta + \tilde{\tau}^2 \tilde{b}^\perp + \theta \times J\omega_{eq}
				= 2\vc T\brac{v, q, \theta} + \brac{\nabla\cdot M}\brac{\theta} + g, \text{ and }
			\label{eq:gen_pert_ED_ang_mom}\\
			(\pdt + u\cdot\nabla)b = -\brac{\nu-\lambda} \bar{\theta}^\perp + \omega_3 b^\perp + h,
			\label{eq:gen_pert_ED_b}
		\end{subnumcases}
		where $\brac{u, \omega, J}$ are given and satisfy
		\begin{subnumcases}{}
			\nabla\cdot u \text{ and }
			\label{eq:gen_pert_ED_constraint_incompress}\\
			(\pdt + u\cdot\nabla) J = \sbrac{\Omega, J},
			\label{eq:gen_pert_ED_constraint_cons_microinertia}
		\end{subnumcases}
		and where $f$, $g$, and $h$ are given.
		Then the following energy-dissipation relation holds:
		\begin{align}
			\frac{\mathrm{d}}{\mathrm{d}t} \brac{
				\int_{\T^3} \frac{1}{2} \abs{v}^2
				+ \frac{1}{2} J\theta\cdot\theta
				+ \frac{\tilde{\tau}^2}{\nu-\lambda}  \frac{1}{2} \abs{b}^2
			}
			+ D(v, \theta)
			= \int_{\T^3} f\cdot v + g \cdot \theta + \frac{\tilde{\tau}^2}{\nu-\lambda} h \cdot b.
			\label{eq:gen_pert_ED_rel}
		\end{align}
	\end{prop}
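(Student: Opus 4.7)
The plan is to test each evolution equation against its natural ``conjugate'' unknown (weighted appropriately), use the constraint \eqref{eq:gen_pert_ED_constraint_cons_microinertia} and the incompressibility of $u$ to process the time/advection pieces into clean time derivatives, and then sum the three identities so that the coupling terms cancel and the stress contributions assemble into the dissipation via the identity \eqref{eq:dissip_first_apparition}.

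\textbf{Step 1 (linear momentum).} I would test \eqref{eq:gen_pert_ED_lin_mom} against $v$. Since $\nabla\cdot u = 0$ and $\nabla\cdot v = 0$, the advective term vanishes after integration by parts and the pressure drops out, yielding
\[
\frac{d}{dt} \int_{\T^3} \frac{1}{2}\abs{v}^2 = -\int_{\T^3} T(v,q,\theta) : \nabla v + \int_{\T^3} f\cdot v.
\]

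\textbf{Step 2 (angular momentum).} I would test \eqref{eq:gen_pert_ED_ang_mom} against $\theta$. The heart of the argument is to verify that
\[
\int_{\T^3} J(\pdt+u\cdot\nabla)\theta\cdot\theta = \frac{d}{dt}\int_{\T^3}\frac{1}{2}J\theta\cdot\theta - \frac{1}{2}\int_{\T^3}[\Omega,J]\theta\cdot\theta,
\]
which follows by writing $\frac{d}{dt}(\frac{1}{2}J\theta\cdot\theta)$ explicitly, integrating by parts in the $u\cdot\nabla$ piece using $\nabla\cdot u = 0$, and invoking \eqref{eq:gen_pert_ED_constraint_cons_microinertia}. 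Next I would handle the algebraic LHS terms: the precession-like piece $\theta\times J\omega_{eq}$ is pointwise orthogonal to $\theta$ and so integrates to zero, while a direct expansion using the symmetry of $J$ and the antisymmetry of $\Omega$ gives $[\Omega,J]\theta\cdot\theta = -2(\omega\times\theta)\cdot J\theta$, so that $-\frac{1}{2}\int[\Omega,J]\theta\cdot\theta$ exactly cancels $\int (\omega\times J)\theta\cdot\theta = -\int(\omega\times\theta)\cdot J\theta$. On the RHS, the standard identity $2\vc T\cdot\theta = T:\Theta$ and an integration by parts on $(\nabla\cdot M)(\theta)$ leave
\[
\frac{d}{dt}\int_{\T^3}\frac{1}{2} J\theta\cdot\theta + \tilde{\tau}^2\int_{\T^3}\tilde{b}^\perp\cdot\theta = \int_{\T^3} T:\Theta - \int_{\T^3} M(\theta):\nabla\theta + \int_{\T^3} g\cdot\theta.
\]

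\textbf{Step 3 ($b$-equation).} I would test \eqref{eq:gen_pert_ED_b} against $\frac{\tilde{\tau}^2}{\nu-\lambda} b$. Again $\nabla\cdot u = 0$ kills advection, and the pointwise identity $b^\perp\cdot b = 0$ erases the rotational term, giving
\[
\frac{\tilde{\tau}^2}{\nu-\lambda}\frac{d}{dt}\int_{\T^3}\frac{1}{2}\abs{b}^2 + \tilde{\tau}^2\int_{\T^3}\bar\theta^\perp\cdot b = \frac{\tilde{\tau}^2}{\nu-\lambda}\int_{\T^3} h\cdot b.
\]

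\textbf{Step 4 (synthesis).} Summing Steps 1--3, the coupling between $b$ and $\theta$ cancels because $\tilde{b}^\perp\cdot\theta + \bar\theta^\perp\cdot b \equiv 0$ pointwise (both equal $b_1\theta_2 - b_2\theta_1$ up to sign). The remaining stress-tensor contributions reassemble as $-\int T:\nabla v + \int T:\Theta - \int M:\nabla\theta$, which by \eqref{eq:dissip_first_apparition} equals $-D(v,\theta)$. Rearranging yields \eqref{eq:gen_pert_ED_rel}.

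The main obstacle is Step 2: checking cleanly that the combination of the $J$-weighted material derivative, the precession-type term $\omega\times J\theta$, and the $\theta\times J\omega_{eq}$ correction conspire (using only symmetry of $J$, antisymmetry of $\Omega$, and the transport equation for $J$) to produce an honest time derivative of $\frac{1}{2}J\theta\cdot\theta$ with no leftover first-order terms. Everything else is a routine integration by parts once this identity is in hand.
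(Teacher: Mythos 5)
Your proposal is correct and follows essentially the same route as the paper's proof: test each equation against its conjugate unknown, use the transport equation $(\pdt + u\cdot\nabla)J = [\Omega,J]$ and $\nabla\cdot u = 0$ to convert $\int J(\pdt+u\cdot\nabla)\theta\cdot\theta$ into $\frac{d}{dt}\int\frac{1}{2}J\theta\cdot\theta$ plus a commutator term, observe that this commutator term cancels $\int(\omega\times J)\theta\cdot\theta$, kill $\theta\times J\omega_{eq}$ and $\omega_3 b^\perp$ by pointwise orthogonality, and finally note that the two $\tilde{\tau}^2$-weighted coupling terms are negatives of each other while the stress contributions assemble into $-D(v,\theta)$. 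The only cosmetic difference is that you verify the identity $[\Omega,J]\theta\cdot\theta = -2(\omega\times\theta)\cdot J\theta$ directly, whereas the paper invokes its Lemma~\ref{lemma:identity_comm_A_S_and_sym_A_cross_S} ($\frac{1}{2}[\Omega,J] = \sym(\omega\times J)$) to reach the same cancellation; both are perfectly fine.
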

	\begin{proof}
		We multiply by the unknowns and integrate by parts: since $u$ and $v$ are divergence-free,
		\begin{equation}
			\Dt \int_{\T^3} \frac{1}{2} \abs{v}^2
			= \int_{\T^3} (\pdt + u\cdot\nabla) v \cdot v
			= \int_{\T^3} \brac{\nabla\cdot T}\cdot v + \int_{\T^3} f\cdot v
			= - \int_{\T^3} T : \nabla v + \int_{\T^3} f\cdot v.
			\label{eq:gen_pert_ED_rel_proof_1}
		\end{equation}
		Similarly, using the incompressibility of $u$, \eqref{eq:gen_pert_ED_constraint_cons_microinertia}, \fref{Lemma}{lemma:identity_comm_A_S_and_sym_A_cross_S} we see that
		\begin{equation*}
			\frac{1}{2} (\pdt + u\cdot\nabla) J \theta\cdot\theta= \frac{1}{2} \sbrac{\Omega, J} \theta\cdot\theta = \brac{\omega\times J}\theta\cdot\theta,
		\end{equation*}	
		and from the fact that $\theta \times J\omega_{eq} \cdot \theta = 0$ we obtain that
		\begin{align}
			\Dt \int_{\T^3} \frac{1}{2} J\theta \cdot \theta
			= \int_{\T^3} \frac{1}{2} (\pdt + u\cdot\nabla) J \theta\cdot\theta + \int_{\T^3} J (\pdt + u\cdot\nabla) \theta\cdot\theta
			= \int_{\T^3} T:\Omega - M:\nabla\omega + g\cdot\theta + \tilde{\tau}^2 b \cdot \bar{\theta}^\perp.
			\label{eq:gen_pert_ED_rel_proof_2}
		\end{align}
		Finally we compute that
		\begin{equation}
			\Dt \int_{\T^3} \frac{1}{2} \abs{b}^2
			= \int_{\T^3} (\pdt + u\cdot\nabla) b \cdot b
			= - \int_{\T^3} \brac{\nu-\lambda} \bar{\theta}^\perp \cdot b + \int_{\T^3} h\cdot b.
			\label{eq:gen_pert_ED_rel_proof_3}
		\end{equation}
		To conclude it suffices to add \eqref{eq:gen_pert_ED_rel_proof_1}, \eqref{eq:gen_pert_ED_rel_proof_2}, and $\frac{\tilde{\tau}^2}{\nu-\lambda}$ \eqref{eq:gen_pert_ED_rel_proof_3}
		and observe that
		\begin{equation*}
			\int_{\mathbb{T}^3} T(v, q, \theta) : (\Theta - \nabla v) + M(\theta) : (\nabla \theta) = D(v, \theta).
		\end{equation*}
		This equation follows from the identities
		$
			(\vc M) \cdot v = \frac{1}{2} M : \ten v
			\text{ and }
			\Skew(\nabla v) = \frac{1}{2} \ten\nabla\times v
		$
		and the fact that $\R^{n\times n}$ may be orthogonally decomposed with respect to the Fr\"{o}benius inner product
		as $\R^{n\times n} \cong \R I \oplus \dev(n) \oplus \Skew(n)$ where $\dev(n)$ denotes set of trace-free symmetric $n$-by-$n$ matrices
		and $\Skew(n)$ denotes the set of antisymmetric $n$-by-$n$ matrices.

		To conclude we add \eqref{eq:gen_pert_ED_rel_proof_1}, \eqref{eq:gen_pert_ED_rel_proof_2}, and $\frac{\tilde{\tau}^2}{\nu-\lambda}$ \eqref{eq:gen_pert_ED_rel_proof_3},
		to obtain \eqref{eq:gen_pert_ED_rel}.
	\end{proof}

	Having established the precise form of the energy-dissipation relation we may now record the specific form of the interactions.
	\fref{Lemma}{lemma:record_form_interactions} is a necessary precursor to the interactions estimates of \fref{Section}{sec:close_at_low} and \ref{sec:close_at_high}.

	\begin{lemma}[Recording the form of the interactions]
	\label{lemma:record_form_interactions}
		If $(u, p, \theta, K)$ solve \eqref{eq:pertub_sys_no_ten_pdt_u}--\eqref{eq:pertub_sys_no_ten_pdt_K} then, for any multi-index $\alpha\in\N^{1+3}$,
		\begin{equation}
		\label{eq:record_int}
			\frac{\mathrm{d}}{\mathrm{d}t} \brac{
				\int_{\T^3} \frac{1}{2} \abs{ \partial^\alpha u }^2
				+ \frac{1}{2} J \partial^\alpha \theta \cdot \partial^\alpha \theta
				+ \frac{\tilde{\tau}^2}{\nu-\lambda} \frac{1}{2} \abs{ \partial^\alpha a}^2
			}
			+ D( \partial^\alpha u, \partial^\alpha \theta)
			= \mathcal{I}^\alpha
		\end{equation}
		where
		\begin{align*}
			\mathcal{I}^\alpha = 
			  \int_{\T^3} \sbrac{ u\cdot\nabla,		\partial^\alpha} u	\cdot \partial^\alpha u
			+ \int_{\T^3} \sbrac{ J\pdt,			\partial^\alpha} \theta \cdot \partial^\alpha \theta
			+ \int_{\T^3} \sbrac{ J\brac{u\cdot\nabla},	\partial^\alpha} \theta \cdot \partial^\alpha \theta
			+ \int_{\T^3} \sbrac{ \omega\times J,		\partial^\alpha} \theta \cdot \partial^\alpha \theta
		\\
			- \int_{\T^3} \sbrac{ J \omega_{eq} \times,	\partial^\alpha} \theta \cdot \partial^\alpha \theta
			+ \int_{\T^3} \sbrac{ u\cdot\nabla,		\partial^\alpha} a	\cdot \partial^\alpha a
			+ \int_{\T^3} \sbrac{ \omega_3 R,		\partial^\alpha} a	\cdot \partial^\alpha a
			+ \int_{\T^3} \partial^\alpha \brac{ \brac{\bar{K} - K_{33}I_2} \bar{\theta}^\perp } \cdot \partial^\alpha a
		\\
			=\vcentcolon \mathcal{I}_1^\alpha + \dots + \mathcal{I}_8^\alpha.
		\end{align*}
	\end{lemma}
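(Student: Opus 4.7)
The plan is to reduce this lemma to a direct application of \fref{Proposition}{prop:gen_pert_ED_rel} by differentiating the perturbative system with $\partial^\alpha$ and recasting the result as an instance of \eqref{eq:gen_pert_ED_lin_mom}--\eqref{eq:gen_pert_ED_b} for the unknowns $(v, q, \theta', b) = (\partial^\alpha u, \partial^\alpha p, \partial^\alpha \theta, \partial^\alpha a)$ with the same base fields $(u, \omega, J)$ and commutator-type forcing $(f, g, h)$.

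First I would perform the algebraic manipulation needed to align \eqref{eq:pertub_sys_no_ten_pdt_theta} with the form \eqref{eq:gen_pert_ED_ang_mom}. Writing $\omega = \omega_{eq} + \theta$ and $J = J_{eq} + K$, and using that $J_{eq}\omega_{eq}$ is parallel to $\omega_{eq}$, one obtains the decomposition
\begin{equation*}
    \omega \times J\omega = (\omega \times J)\theta + \omega_{eq} \times J\omega_{eq} + \theta \times J\omega_{eq},
\end{equation*}
and then from the block structure of \fref{Lemma}{lemma:block_form_Omega_J} the identity $\omega_{eq} \times J\omega_{eq} = \omega_{eq} \times K\omega_{eq} = \tilde{\tau}^2 \tilde{a}^\perp$. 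This puts \eqref{eq:pertub_sys_no_ten_pdt_theta} exactly in the form \eqref{eq:gen_pert_ED_ang_mom} with $b = a$ and zero forcing, and a similar rewriting of \eqref{eq:pertub_sys_pdt_a} using $\omega_3 = \tilde{\tau} + \theta_3$ identifies it with \eqref{eq:gen_pert_ED_b} modulo the inhomogeneous term $(\bar{K} - K_{33} I_2) \bar{\theta}^\perp$.

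Next I would apply $\partial^\alpha$ to each equation, commuting it past the operators that appear. The Leray--type and dissipative operators have constant coefficients and so they commute freely, producing the desired linear RHS at the $\partial^\alpha$-level. The noncommuting terms generate the forcing: from \eqref{eq:pertub_sys_no_ten_pdt_u} we obtain $f = [u\cdot\nabla, \partial^\alpha] u$; from the angular momentum equation the four terms $J\pdt\theta$, $J(u\cdot\nabla)\theta$, $(\omega\times J)\theta$ and $\theta \times J\omega_{eq} = -(J\omega_{eq}) \times \theta$ yield the four commutators comprising $g$; from \eqref{eq:pertub_sys_pdt_a} (after writing $\omega_3 a^\perp = \omega_3 R a$ for the constant rotation $R$) we get $h$ consisting of two commutators plus the genuinely inhomogeneous piece $\partial^\alpha[(\bar{K} - K_{33} I_2) \bar{\theta}^\perp]$. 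The structural constraints \eqref{eq:gen_pert_ED_constraint_incompress} and \eqref{eq:gen_pert_ED_constraint_cons_microinertia} required by the proposition are just \eqref{eq:pertub_sys_no_ten_div} and \eqref{eq:pertub_sys_no_ten_pdt_K}, so they hold automatically.

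Finally, I would invoke \fref{Proposition}{prop:gen_pert_ED_rel}, whose conclusion \eqref{eq:gen_pert_ED_rel} yields \eqref{eq:record_int} with right-hand side $\int f\cdot \partial^\alpha u + g\cdot \partial^\alpha\theta + \frac{\tilde{\tau}^2}{\nu-\lambda} h \cdot \partial^\alpha a$; matching each summand against the list $\mathcal{I}_1^\alpha,\ldots,\mathcal{I}_8^\alpha$ and using $[A,\partial^\alpha] = -[\partial^\alpha, A]$ where necessary completes the identification. The only real obstacle is sign/bookkeeping hygiene, especially for the $\theta \times J\omega_{eq}$ term (where one must flip the cross product to expose the form $(J\omega_{eq})\times \theta$) and the $\omega_3 a^\perp$ term (where recognising that $\perp$ is a constant linear map lets it be absorbed into the commutator with $\omega_3 R$); once these are handled, the eight interactions fall out term by term.
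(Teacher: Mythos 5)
Your proposal is correct and follows essentially the same route as the paper: rewrite the precession term via $\omega\times J\omega = (\omega\times J)\theta + \tilde{\tau}^2\tilde{a}^\perp + \theta\times J\omega_{eq}$ using $\omega_{eq}\times K\omega_{eq} = \tilde{\tau}^2\tilde{a}^\perp$, apply $\partial^\alpha$, identify the commutator-type forcing $(f^\alpha,g^\alpha,h^\alpha)$, and feed the differentiated system into \fref{Proposition}{prop:gen_pert_ED_rel}. The only cosmetic discrepancy is that the paper writes the $h$-commutator with $\theta_3 R$ while the lemma states $\omega_3 R$; these agree because $\tilde{\tau}R$ is constant and thus commutes with $\partial^\alpha$, which you implicitly account for in your bookkeeping remark.
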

	\begin{proof}
		The first order of business is to write \eqref{eq:pertub_sys_no_ten_pdt_u}--\eqref{eq:pertub_sys_no_ten_pdt_K} in the form of \fref{Proposition}{prop:gen_pert_ED_rel}.
		In order to do this we note that \eqref{eq:pertub_sys_no_ten_pdt_u}--\eqref{eq:pertub_sys_no_ten_pdt_K} can be written using the stress tensor $T$ and the couple stress tensor $M$
		as \eqref{eq:pertub_sys_ten_pdt_u}--\eqref{eq:pertub_sys_ten_pdt_K}.
		In light of \eqref{eq:pertub_sys_pdt_a} we therefore see that $(u, p, \theta, a)$ solve
		\begin{subnumcases}{}
			(\pdt + u\cdot\nabla) u = (\nabla\cdot T)(u, p, \theta),
			\label{eq:record_int_pdt_u}\\
			\nabla\cdot u = 0,
			\label{eq:record_int_div_free}\\
			J(\pdt + u\cdot\nabla)\theta + (\omega\times J)\theta + \tilde{\tau}^2 \tilde{b}^\perp + \theta\times J\omega_{eq}
				= 2\vc T(u, p, \theta) + (\nabla\cdot M)(\theta), \text{ and }
			\label{eq:record_int_pdt_theta}\\
			(\pdt + u\cdot\nabla) a = -(\nu-\lambda) \bar{\theta}^\perp + \omega_3 a^\perp + (\bar{K} - K_{33} I_2) \bar{\theta}^\perp
			\label{eq:record_int_pdt_a}
		\end{subnumcases}
		subject to $(\pdt + u\cdot\nabla)J = \sbrac{\Omega, J}$ where $J = J_{eq} + K$ and $\omega = \omega_{eq} + \theta$.
		Note in particular that the full precession $\omega \times J\omega$ is present in \eqref{eq:record_int_pdt_theta} since $\omega_{eq} \times J_{eq} \omega_{eq} = 0$
		and $\omega_{eq} \times K \omega_{eq} = \tilde{\tau}^2 \tilde{a}^\perp$ such that indeed
		$
			\omega \times J\theta + \tilde{\tau}^2 \tilde{b}^\perp + \theta\times J\omega_{eq}
			= \omega\times J\omega.
		$
		We may now apply derivatives to \eqref{eq:record_int_pdt_u}--\eqref{eq:record_int_pdt_a} and use \fref{Proposition}{prop:gen_pert_ED_rel}.
		This tells us
		\begin{equation}
		\label{eq:record_int_deriv}
			\frac{\mathrm{d}}{\mathrm{d}t} \brac{
				\int_{\T^3} \frac{1}{2} \abs{ \partial^\alpha u }^2
				+ \frac{1}{2} J \partial^\alpha \theta \cdot \partial^\alpha \theta
				+ \frac{\tilde{\tau}^2}{\nu-\lambda} \frac{1}{2} \abs{ \partial^\alpha a}^2
			}
			+ D( \partial^\alpha u, \partial^\alpha \theta)
			= \int_{\T^3} f^\alpha \cdot \partial^\alpha u + g^\alpha \cdot \partial^\alpha \theta + h^\alpha \cdot \partial^\alpha a
		\end{equation}
		for
		\begin{align*}
			f^\alpha = \sbrac{u\cdot\nabla, \partial^\alpha } u,\,
			g^\alpha = \sbrac{J\pdt, \partial^\alpha }\theta
				+ \sbrac{J(u\cdot\nabla), \partial^\alpha } \theta
				+ \sbrac{\omega\times J, \partial^\alpha }\theta
				- \sbrac{J\omega_{eq}\times, \partial^\alpha } \theta,\,
			\text{ and } 
		\\
			h^\alpha = \sbrac{u\cdot\nabla, \partial^\alpha} a
				+ \sbrac{\theta_3 R, \partial^\alpha } a
				+ \partial^\alpha \brac{ \brac{\bar{K} - K_{33}I_2}\bar{\theta}^\perp },
		\end{align*}
		where $R = e_2 \otimes e_1 - e_1 \otimes e_2 \in \R^{2\times 2}$ is the (counterclockwise) $\frac{\pi}{2}$ rotation in $\R^2$,
		such that \eqref{eq:record_int_deriv} is precisely \eqref{eq:record_int}, as desired.
	\end{proof}

	We now record two auxiliary results related to the energy-dissipation structure of the problem.
	First we record a precise comparison of various versions of the energy,
	and then we record the coercivity of the dissipation over $H^1$.
	Recall that $ \overline{\mathcal{E}}_M $ and $ \widetilde{\mathcal{E}}_M $ are defined in \eqref{eq:not_E_M_bar}.

	\begin{lemma}[Comparison of the different versions of the energy]
	\label{lemma:comp_version_en}
		There exist constants $c_E, C_E > 0$ such that for every time horizon $T>0$,
		if
		\begin{equation}
		\label{eq:comp_version_en_assumption}
			\sup_{0 \leqslant t < T} \normtyp{(u,\theta) (t)}{H}{3} + \normtyp{J (t)}{H}{3} + \normtyp{\pdt(u, \theta) (t)}{H}{2} + \normtyp{\pdt J (t)}{H}{2} < \infty
		\end{equation}
		then, for any non-negative integer $M$, $c_E \overline{\mathcal{E}}_M \leqslant \widetilde{\mathcal{E}}_M \leqslant C_E \overline{\mathcal{E}}_M $ on $\cobrac{0,T}$.
	\end{lemma}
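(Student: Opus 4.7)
The plan is to reduce the comparison to establishing the pointwise matrix inequality $\lambda I_3 \leqslant J(t,x) \leqslant \nu I_3$ on $[0,T) \times \T^3$, which immediately yields
$$\lambda \vbrac{\partial^\alpha \theta}^2 \leqslant J \partial^\alpha \theta \cdot \partial^\alpha \theta \leqslant \nu \vbrac{\partial^\alpha \theta}^2$$
pointwise. Once this is in hand, the claim follows by summing over $\abs{\alpha}_P \leqslant 2M$ and observing that the $u$ and $a$ contributions to $\widetilde{\mathcal{E}}_M$ differ from those in $\overline{\mathcal{E}}_M$ only by the fixed positive constants $1/2$ and $\tilde{\tau}^2/(2(\nu-\lambda))$, so that the choice
$$c_E = \tfrac{1}{2}\min\left(1,\, \lambda,\, \tfrac{\tilde{\tau}^2}{\nu-\lambda}\right), \qquad C_E = \tfrac{1}{2}\max\left(1,\, \nu,\, \tfrac{\tilde{\tau}^2}{\nu-\lambda}\right),$$
works and is manifestly independent of both $T$ and $M$.

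Thus the heart of the argument is the isospectral propagation of $J$. Since $(u, \omega, J)$ solves the full system, \eqref{eq:full_sys_J} gives $(\pdt + u \cdot \nabla) J = \sbrac{\Omega, J}$. The key structural observation is that $\sbrac{\Omega, \,\cdot\,}$ is an isospectral vector field on symmetric matrices: if $R(t)$ denotes the path of rotations solving $\dot R = \Omega R$ with $R(0) = I_3$, then any ODE solution of $\dot Y = \sbrac{\Omega, Y}$ takes the form $Y(t) = R(t) Y(0) R(t)^T$. Rather than proceeding via an explicit characteristic ODE for $J$ itself, I would make this rigorous by an ``energy-type'' test: using that $\sbrac{\Omega, \,\cdot\,}$ is a derivation of matrix multiplication and that $\operatorname{tr}\sbrac{\Omega, Y} = 0$, each of the three trace invariants $s_k(t,x) \vcentcolon= \operatorname{tr}(J^k)$, $k=1,2,3$, satisfies the pure transport equation $(\pdt + u \cdot \nabla) s_k = 0$. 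The regularity assumption \eqref{eq:comp_version_en_assumption} together with the Sobolev embedding $H^3(\T^3) \hookrightarrow C^1(\T^3)$ places $u \in L^\infty([0,T); C^1)$, so the flow of $u$ is well-defined on $[0, T)$ and the three trace invariants are conserved along characteristics. Since the elementary symmetric polynomials of the eigenvalues of a symmetric matrix determine its spectrum uniquely, and $\operatorname{spec}(J_0) = \cbrac{\lambda, \lambda, \nu}$ by \fref{Definition}{def:global_assumptions}, we conclude $\operatorname{spec}(J(t,x)) = \cbrac{\lambda, \lambda, \nu}$ throughout $[0,T) \times \T^3$, yielding the required pointwise two-sided bound on $J$.

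The main obstacle I anticipate is technical rather than conceptual: the surrounding energy-dissipation framework lives in Sobolev spaces, while spectrum propagation is most naturally phrased pointwise along characteristics. The hypothesis \eqref{eq:comp_version_en_assumption} is stated at precisely the threshold needed so that $u$, $\omega$, and $J$ all embed into $C^1(\T^3)$, which is exactly what one needs to rigorously run the transport argument for the $s_k$ on the full interval $[0, T)$. Crucially, no smallness or decay is required for the comparison itself, which is consistent with the fact that the statement is uniform in $T$ and $M$ and that $c_E, C_E$ depend only on the physical parameters $\lambda$, $\nu$, and $\tilde{\tau}$.
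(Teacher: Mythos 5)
Your argument is correct, and the choice of $c_E, C_E$ matches the paper's exactly. Where you diverge from the paper is in how the spectrum of $J$ is propagated: the paper invokes Proposition~\ref{prop:persist_spec_sols_adv_rot_eqtns}, which constructs the Lagrangian flow $\eta$ (Liouville's theorem and divergence-free $u$ give $\det\nabla\eta \equiv 1$), passes to Lagrangian coordinates where $\mathcal{J}=J\circ\eta$ solves $\pdt\mathcal{J}=\sbrac{\Theta,\mathcal{J}}$, and applies the two-sided integrating-factor Lemma~\ref{lemma:two_sided_ODE_commutators} to produce an explicit Eulerian rotation field $R(t,x)$ with $J = R\,(J_0\circ\eta^{-1})\,R^T$. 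You instead observe that the power sums $s_k = \operatorname{tr}(J^k)$ solve the pure transport equation $(\pdt + u\cdot\nabla)s_k = 0$ because $\operatorname{tr}(J^{k-1}\sbrac{\Omega,J}) = 0$ by cyclicity, and then conclude spectrum preservation from Newton's identities (a slight misnomer in your write-up --- these are power sums, not elementary symmetric polynomials, though the two determine each other). Both approaches are valid under the assumed $H^3/H^2$ regularity, which guarantees $u, J \in C^1_x$ so that the flow is classical. Your route is more economical if all one wants is spectrum preservation; the paper's route yields the stronger explicit conjugation $J = R(J_0\circ\eta^{-1})R^T$, which is reused elsewhere (e.g.\ \fref{Proposition}{prop:quant_rigidity}), so the extra work is amortized.
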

	\begin{proof}
		It is crucial here to remember the global assumption according to which the spectrum of $J_0(x)$ is $\cbrac{\lambda, \lambda, \nu}$, where $\nu > \lambda > 0$, for every $x\in\T^3$.
		The key observation is then that we may combine the assumption \eqref{eq:comp_version_en_assumption} and \fref{Proposition}{prop:persist_spec_sols_adv_rot_eqtns} to deduce that
		for every $(t,x) \in \cobrac{0,T}\times\T^3$ the spectrum of $J(t,x)$ is $\cbrac{\lambda, \lambda, \nu}$.
		Therefore
		\begin{equation*}
			\lambda \int_{\T^3} \abs{\theta}^2
			\leqslant \int_{\T^3} J\theta\cdot\theta
			\leqslant \nu \int_{\T^3} \abs{\theta}^2
		\end{equation*}
		and hence the claim follows upon picking
		$
			c_E = \frac{1}{2} \min\brac{1, \lambda, \frac{\tilde{\tau}^2}{\nu-\lambda}}
			\text{ and } 
			C_E = \frac{1}{2} \max\brac{1, \nu, \frac{\tilde{\tau}^2}{\nu-\lambda}}. \qedhere
		$
	\end{proof}
	
	We now record the coercivity of the dissipation over $H^1$ in \fref{Lemma}{lemma:coercivity_dissip} below.
	Note that this lemma is copied from Lemma 4.9 of the companion paper \cite{rt_tice_amf_iut}, and so we omit the proof.
	\begin{lemma}[Coercivity of the dissipation]
	\label{lemma:coercivity_dissip}
		There exists a universal constant $C_D > 0$ such that for every $(u, \theta)\in H^1$ where $u$ has average zero, $D(u,\theta) \geqslant C_D \normtyp{(u, \theta)}{H}{1}$.
	\end{lemma}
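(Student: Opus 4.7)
The plan is to extract control of $u$ and $\theta$ in $H^1$ separately from the five terms comprising $D(u,\theta)$, using two standard periodic identities (Korn-type and Helmholtz-type) together with the zero-average hypothesis on $u$ to recover the average of $\theta$ via the cross term $\tfrac{1}{2}\nabla\times u - \theta$.

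First, for the velocity. On $\mathbb{T}^3$ an integration-by-parts computation gives the Korn identity
\begin{equation*}
\int_{\mathbb{T}^3} |\mathbb{D} u|^2 = 2\int_{\mathbb{T}^3} |\nabla u|^2 + 2\int_{\mathbb{T}^3} (\nabla\cdot u)^2 \geqslant 2\int_{\mathbb{T}^3} |\nabla u|^2.
\end{equation*}
Combined with the Poincaré inequality applied to the zero-average field $u$, this yields $\|u\|_{H^1}^2 \lesssim \|\mathbb{D} u\|_{L^2}^2 \lesssim \tfrac{2}{\mu}\cdot\tfrac{\mu}{2}\|\mathbb{D} u\|_{L^2}^2 \lesssim D(u,\theta)$.

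Second, for the angular velocity, I would separately control $\nabla\theta$ and the spatial average $\bar{\theta} = \fint_{\mathbb{T}^3} \theta$. For the gradient part, another integration-by-parts identity on $\mathbb{T}^3$ yields the Helmholtz-type relation
\begin{equation*}
\int_{\mathbb{T}^3} |\nabla \theta|^2 = \int_{\mathbb{T}^3} (\nabla\cdot\theta)^2 + \int_{\mathbb{T}^3} |\nabla\times\theta|^2,
\end{equation*}
so that $\|\nabla\theta\|_{L^2}^2 \lesssim \alpha\|\nabla\cdot\theta\|_{L^2}^2 + 2\gamma\|\nabla\times\theta\|_{L^2}^2 \lesssim D(u,\theta)$. (Alternatively, one could use the $\mathbb{D}^0\theta$ term together with $\nabla\cdot\theta$ via $|\mathbb{D}\theta|^2 = |\mathbb{D}^0\theta|^2 + \tfrac{4}{3}(\nabla\cdot\theta)^2$, but the Helmholtz route is cleanest.) For the average, observe that since $u$ has zero average, $\int_{\mathbb{T}^3} \nabla\times u = 0$, and therefore $\bar{\theta} = -\fint_{\mathbb{T}^3}\left(\tfrac{1}{2}\nabla\times u - \theta\right)$. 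By Cauchy–Schwarz,
\begin{equation*}
|\bar{\theta}|^2 \leqslant \left\| \tfrac{1}{2}\nabla\times u - \theta \right\|_{L^2}^2 \lesssim \tfrac{1}{2\kappa}\cdot 2\kappa\left\| \tfrac{1}{2}\nabla\times u - \theta \right\|_{L^2}^2 \lesssim D(u,\theta).
\end{equation*}

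To conclude, I would decompose $\theta = \bar{\theta} + (\theta - \bar{\theta})$ and apply Poincaré to the mean-zero part to bound $\|\theta - \bar{\theta}\|_{L^2} \lesssim \|\nabla\theta\|_{L^2}$, so that $\|\theta\|_{L^2}^2 \lesssim |\bar{\theta}|^2 + \|\nabla\theta\|_{L^2}^2 \lesssim D(u,\theta)$, and combine with the gradient bound to get $\|\theta\|_{H^1}^2 \lesssim D(u,\theta)$. Adding the $u$ and $\theta$ bounds gives the coercivity with constant $C_D$ depending only on $\mu$, $\kappa$, $\alpha$, $\gamma$, and the domain's Poincaré constant. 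The main subtlety — and the step I expect to be the only nontrivial point — is the recovery of $\bar{\theta}$, since neither the dissipative bulk terms for $\theta$ nor the Korn term for $u$ alone can see it; it is precisely the coupling term $\tfrac{1}{2}\nabla\times u - \theta$ together with the zero-average hypothesis on $u$ that makes this control possible.
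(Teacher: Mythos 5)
Your proof is correct. The paper itself omits the proof of this lemma (it cites Lemma~4.9 of the companion paper), so no direct comparison is possible here, but your argument is the natural one: the Korn identity $\int_{\T^3}|\symgrad u|^2 = 2\int_{\T^3}|\nabla u|^2 + 2\int_{\T^3}(\nabla\cdot u)^2$ with Poincar\'e handles $u$, the Helmholtz identity $\int_{\T^3}|\nabla\theta|^2 = \int_{\T^3}(\nabla\cdot\theta)^2 + \int_{\T^3}|\nabla\times\theta|^2$ handles $\nabla\theta$, and the cross term $\tfrac12\nabla\times u - \theta$ recovers the mean of $\theta$. Two tiny remarks. First, the identity $\int_{\T^3}\nabla\times u = 0$ holds for any periodic $u$ (each component of the curl is a total derivative), so it is periodicity rather than the zero-average hypothesis that justifies that step; the zero average is only needed for the Poincar\'e inequality applied to $u$. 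Second, you prove $D(u,\theta)\gtrsim \|(u,\theta)\|_{H^1}^2$, which is the scale-consistent form; the statement as printed in the paper has the norm unsquared, which is evidently a typographical slip given that $D$ is quadratic and given how the lemma is applied (summing $D(\partial^\alpha u, \partial^\alpha\theta)$ to obtain $\overline{\mathcal{D}}_M = \|(u,\theta)\|_{P^{2M+1}}^2$). Your version is the correct one.
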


\subsection{Closing the estimates at the low level}
\label{sec:close_at_low}

	We now turn our attention to the second of the four building blocks of the scheme of a priori estimates: closing the energy estimates at the low level.
	This is carried out in this section and culminates in \fref{Proposition}{prop:close_est_low_level}.
	In the remainder of this section we proceed as follows:
	first we derive auxiliary estimates of $a$ and use them to improve the low level energy and dissipation,
	then we estimate the low level interactions,
	and finally we record the $\theta$-coercivity central to the algebraic decay of the energy at the low level.
	We recall that the energy, dissipation, and interaction functionals at the low level, which will appear throughout this section, are defined in \eqref{eq:not_E_low}, \eqref{eq:not_D_low},
	and \eqref{eq:not_I}, respectively. The various versions of the high level energy are defined in \eqref{eq:not_E_M_bar} and \eqref{eq:not_E_M_and_F_M}.

	We begin with auxiliary estimates for $a$ intended to improve the low level energy and dissipation.
	The strategy is simple: we use the appearance of $a$ in the conservation of angular momentum \eqref{eq:pertub_sys_no_ten_pdt_theta} to control $a$ via the dissipation, then
	use the equation \eqref{eq:pertub_sys_pdt_a} for $\pdt a$ to bootstrap the control of $a$ to new or better control of $\pdt a$ in the dissipation and the energy respectively,
	and finally use the time-differentiated equation for $\pdt a$ in order to control $\pdt^2 a$ energetically if the additional assumption (2) holds.

	\begin{lemma}[Auxiliary estimate for $a$]
	\label{lemma:aux_est_a_low_level}
		Suppose that \eqref{eq:pertub_sys_no_ten_pdt_theta} holds. For any $k\in\N$ and any $s > \frac{3}{2}$ such that $s\geqslant k$, we have the estimate
		\begin{equation*}
			\normtyp{a}{H}{k} \lesssim \brac{
				1 + \normtyp{K}{H}{s} 
				+ \normtyp{(u,\theta)}{H}{s} 
				+ \normtyp{K}{H}{s} \normtyp{(u,\theta)}{H}{s} 
			} \brac{
				\normtyp{\theta}{H}{k+2}
				+ \normtyp{\pdt\theta}{H}{k} 
				+ \normtyp{u}{H}{k+1} 
			}.
		\end{equation*}
	\end{lemma}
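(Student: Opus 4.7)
The strategy is purely algebraic: isolate $\tilde{a}^\perp$ from the angular momentum equation \eqref{eq:pertub_sys_no_ten_pdt_theta} and bound the resulting $H^k$ expression term by term. The key identity, already noted in the derivation of \eqref{eq:pertub_sys_pdt_a}, is that $\omega_{eq}\times K\omega_{eq} = \tilde{\tau}^2 \tilde{a}^\perp$. After expanding the full precession $(\omega_{eq}+\theta)\times (J_{eq}+K)(\omega_{eq}+\theta)$ and discarding the terms $\omega_{eq}\times J_{eq}\omega_{eq} = 0$ and $\omega_{eq}\times K\omega_{eq} = \tilde{\tau}^2\tilde{a}^\perp$, the equation \eqref{eq:pertub_sys_no_ten_pdt_theta} rearranges to an algebraic identity of the form
\begin{equation*}
    \tilde{\tau}^2 \tilde{a}^\perp = -\,\mathcal{L}(u,\theta,\pdt\theta) + \mathcal{N}(u,\theta,K,\pdt\theta),
\end{equation*}
where $\mathcal{L}$ collects the linear contributions $J_{eq}\pdt\theta$, $\omega_{eq}\times J_{eq}\theta$, $\theta\times J_{eq}\omega_{eq}$, $-\kappa\nabla\times u + 2\kappa\theta - (\tilde{\alpha}-\tilde{\gamma})\nabla(\nabla\cdot\theta) - \tilde{\gamma}\Delta\theta$, and $\mathcal{N}$ collects the genuinely nonlinear contributions: $K\pdt\theta$, $J_{eq}(u\cdot\nabla\theta)$, $K(u\cdot\nabla\theta)$, $\omega_{eq}\times K\theta$, $\theta\times K\omega_{eq}$, $\theta\times J_{eq}\theta$, and $\theta\times K\theta$. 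Since the map $a \mapsto \tilde{a}^\perp$ is an isometry on $\mathbb{R}^2$ (viewed inside $\mathbb{R}^3$) and $\tilde{\tau} > 0$, it suffices to bound the $H^k$ norm of the right-hand side.

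Next I would handle the linear part $\mathcal{L}$: by inspection, $\normtyp{\mathcal{L}}{H}{k} \lesssim \normtyp{\theta}{H}{k+2} + \normtyp{\pdt\theta}{H}{k} + \normtyp{u}{H}{k+1}$, where the worst terms are $\Delta\theta$ (costing two derivatives on $\theta$) and $\nabla\times u$ (costing one derivative on $u$). This produces the second factor in the claimed bound exactly. For the nonlinear part $\mathcal{N}$, I would apply the standard high-low product estimates on $H^k(\T^3)$: for $k\leqslant s$ with $s > 3/2$, $\normtyp{fg}{H}{k} \lesssim \normtyp{f}{H}{s}\normtyp{g}{H}{k}$, and iteratively for triple products. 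Thus, for example, $\normtyp{K\pdt\theta}{H}{k}\lesssim \normtyp{K}{H}{s}\normtyp{\pdt\theta}{H}{k}$, $\normtyp{J_{eq}(u\cdot\nabla\theta)}{H}{k}\lesssim \normtyp{u}{H}{s}\normtyp{\theta}{H}{k+1}$, $\normtyp{K(u\cdot\nabla\theta)}{H}{k}\lesssim \normtyp{K}{H}{s}\normtyp{u}{H}{s}\normtyp{\theta}{H}{k+1}$, and the purely quadratic-in-$\theta$ and $K\theta$ terms are handled similarly. Each such estimate is bounded by one of the factors $1$, $\normtyp{K}{H}{s}$, $\normtyp{(u,\theta)}{H}{s}$, or $\normtyp{K}{H}{s}\normtyp{(u,\theta)}{H}{s}$ multiplying one of $\normtyp{\theta}{H}{k+2}$, $\normtyp{\pdt\theta}{H}{k}$, or $\normtyp{u}{H}{k+1}$ (using $k+1\leqslant k+2$ and $k\leqslant k+2$ when needed to absorb lower-order derivative counts on $\theta$).

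Collecting all contributions and dividing by $\tilde{\tau}^2$ yields the claim. There is no real obstacle here: the proof is a bookkeeping exercise, and the only mildly delicate point is to make sure that every nonlinear term produced by expanding the precession and the advective derivative $J(u\cdot\nabla)\theta$ fits into the prescribed template, in particular that the quadratic-in-$(u,\theta)$ advective term $u\cdot\nabla\theta$ is charged to the $\normtyp{(u,\theta)}{H}{s}$ factor rather than creating a spurious $\normtyp{u}{H}{s}\normtyp{\theta}{H}{s}$-type contribution that does not appear in the statement — which is fine because $\normtyp{u}{H}{s}\normtyp{\theta}{H}{k+1}\leqslant \normtyp{(u,\theta)}{H}{s}\normtyp{\theta}{H}{k+2}$.
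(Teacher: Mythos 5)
Your proposal is correct and follows essentially the same route as the paper: isolate $\tilde{\tau}^2\tilde{a}^\perp$ in \eqref{eq:pertub_sys_no_ten_pdt_theta} using $\omega_{eq}\times J_{eq}\omega_{eq}=0$ and $\omega_{eq}\times K\omega_{eq}=\tilde{\tau}^2\tilde{a}^\perp$, then bound the right-hand side term by term with the product estimate of \fref{Lemma}{lemma:prod_est}. Your term-by-term bookkeeping simply spells out what the paper leaves implicit, and all the individual estimates fit the stated template.
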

	\begin{proof}
		This estimate follows from isolating $a$ in \eqref{eq:pertub_sys_no_ten_pdt_theta} and using \fref{Lemma}{lemma:prod_est} to estimate the nonlinearities.
		To isolate $a$ in \eqref{eq:pertub_sys_no_ten_pdt_theta} we use the facts that $\omega_{eq} \times J_{eq} \omega_{eq} = 0$
		and $\omega_{eq} \times K \omega_{eq} = \tilde{\tau}^2 \tilde{a}^\perp$ to rewrite the precession term as
		\begin{equation*}
			(\omega_{eq} + \theta) \times (J_{eq} + K) (\omega_{eq} + \theta)
			= (\omega_{eq} + \theta) \times (J_{eq} + K) \theta + \theta \times (J_{eq} + K) \omega_{eq} + \tilde{\tau}^2 \tilde{a}^\perp.
		\end{equation*}
		\eqref{eq:pertub_sys_no_ten_pdt_theta} may then be written as
		\begin{align*}
			\tilde{\tau}^2 \tilde{a}^\perp
			= -(J_{eq} + K)(\pdt\theta + u\cdot\nabla\theta) - (\omega_{eq} + \theta) \times (J_{eq} + K)\theta - \theta\times(J_{eq} + K) \omega_{eq}
		\\
		+ \kappa \nabla\times u - 2\kappa\theta + (\tilde{\alpha} - \tilde{\gamma}) \nabla(\nabla\cdot\theta) + \tilde{\gamma} \Delta\theta. &\qedhere
		\end{align*}
	\end{proof}

	We continue obtaining auxiliary estimates for $a$ by obtaining an estimate for its first two time derivatives.

	\begin{lemma}[Auxiliary estimate for $\pdt a$ and $\pdt^2 a$]
	\label{lemma:aux_est_pdt_a_low_level}
		Suppose that \eqref{eq:pertub_sys_pdt_a} holds. For any $k\in\N$ and any $s > \frac{3}{2}$ such that $s\geqslant k$, we have the estimates
		\begin{align*}
			\normtyp{\pdt a}{H}{k}
			&\lesssim \brac{1 + \normtyp{(u, \theta)}{H}{s} } \normtyp{a}{H}{k+1} + \brac{1 + \normtyp{K}{H}{s} } \normtyp{\theta}{H}{s}.
			\text{ and }\\
			\normns{\pdt^2 a}{H^k}
			&\lesssim
			\norm{\pdt\brac{u,\theta}}{H^s} \norm{a}{H^{k+1}} + \brac{1 + \norm{\brac{u, \theta}}{H^s}} \norm{\pdt a}{H^{k+1}}
			+ \norm{\pdt K}{H^s} \norm{\theta}{H^k} + \brac{1 + \norm{K}{H^s}} \norm{\pdt\theta}{H^k}.
		\end{align*}
	\end{lemma}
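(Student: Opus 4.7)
The plan is a short, direct computation: both bounds follow by isolating the appropriate time-differentiated quantity from \eqref{eq:pertub_sys_pdt_a} and applying the Sobolev product estimates invoked in the preceding lemma. Since \fref{Lemma}{lemma:aux_est_a_low_level} already relied on a product estimate valid for $s>3/2$ and $s\geqslant k$ of the schematic form $\|fg\|_{H^k}\lesssim \|f\|_{H^s}\|g\|_{H^k}$, the same tool is the natural one to use here.

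For the first estimate, I would simply isolate $\pdt a$ in \eqref{eq:pertub_sys_pdt_a} to write
\[
\pdt a = -(\nu-\lambda)\bar{\theta}^\perp - u\cdot\nabla a + (\bar{K}-K_{33}I_2)\bar{\theta}^\perp + \tilde{\tau}\, a^\perp + \theta_3\, a^\perp,
\]
and bound each summand in $H^k$. The two linear-in-the-unknowns terms contribute $\|\theta\|_{H^k}$ and $\|a\|_{H^k}$. The transport term $u\cdot\nabla a$ and the rotation term $\theta_3\, a^\perp$ are handled by the product estimate, yielding $\|u\|_{H^s}\|a\|_{H^{k+1}}$ and $\|\theta\|_{H^s}\|a\|_{H^k}$ respectively, which aggregate to a single factor $(1+\|(u,\theta)\|_{H^s})\|a\|_{H^{k+1}}$. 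The coupling term $(\bar{K}-K_{33}I_2)\bar{\theta}^\perp$ contributes $\|K\|_{H^s}\|\theta\|_{H^k}$, and combining this with the bare $\theta$-contribution yields $(1+\|K\|_{H^s})\|\theta\|_{H^k}$, which, since $k\leqslant s$, may be crudely upgraded to $(1+\|K\|_{H^s})\|\theta\|_{H^s}$ to recover the statement as written.

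For the second estimate, I would apply $\pdt$ to \eqref{eq:pertub_sys_pdt_a} and isolate $\pdt^2 a$:
\[
\pdt^2 a = -(\nu-\lambda)\pdt\bar{\theta}^\perp - \pdt u\cdot\nabla a - u\cdot\nabla\pdt a + \pdt(\bar{K}-K_{33}I_2)\bar{\theta}^\perp + (\bar{K}-K_{33}I_2)\pdt\bar{\theta}^\perp + \tilde{\tau}\pdt a^\perp + \pdt\theta_3\, a^\perp + \theta_3\,\pdt a^\perp,
\]
and estimate piece by piece with the same product rule. The two ``$\pdt$ hits the velocity'' terms $\pdt u\cdot\nabla a$ and $\pdt\theta_3\, a^\perp$ collapse into $\|\pdt(u,\theta)\|_{H^s}\|a\|_{H^{k+1}}$. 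The ``$\pdt$ hits $\pdt a$'' terms $u\cdot\nabla\pdt a$, $\tilde{\tau}\pdt a^\perp$, and $\theta_3\pdt a^\perp$, together with a crude bound on $(\nu-\lambda)\pdt\bar{\theta}^\perp$, are controlled collectively by $(1+\|(u,\theta)\|_{H^s})\|\pdt a\|_{H^{k+1}}$. The ``$\pdt$ hits $K$'' term $\pdt(\bar{K}-K_{33}I_2)\bar{\theta}^\perp$ contributes $\|\pdt K\|_{H^s}\|\theta\|_{H^k}$, while $(\bar{K}-K_{33}I_2)\pdt\bar{\theta}^\perp$ together with the remaining linear-in-$\pdt\theta$ piece combine to $(1+\|K\|_{H^s})\|\pdt\theta\|_{H^k}$.

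I do not anticipate any genuine obstacle: this lemma is essentially bookkeeping of a product-rule expansion, mirroring exactly the proof of \fref{Lemma}{lemma:aux_est_a_low_level}. The only mild care required is in choosing, for each bilinear term, which factor carries the $H^s$ norm (the low-regularity factor) and which carries the $H^k$ or $H^{k+1}$ norm (the high-regularity factor), so that the individual bounds aggregate neatly into the four groupings on the right-hand side of the statement. In particular, for the transport terms $u\cdot\nabla a$ and $u\cdot\nabla\pdt a$, the derivative must be placed on $a$ (respectively $\pdt a$) before applying the product estimate in order to produce the factors $\|a\|_{H^{k+1}}$ and $\|\pdt a\|_{H^{k+1}}$ with the requisite one-derivative gain.
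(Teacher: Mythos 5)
Your proposal is correct and follows essentially the same route as the paper: isolate $\pdt a$ (respectively, differentiate \eqref{eq:pertub_sys_pdt_a} in time and isolate $\pdt^2 a$) and estimate the quadratic terms with the product estimate of \fref{Lemma}{lemma:prod_est}, exactly as in \fref{Lemma}{lemma:aux_est_a_low_level}. One cosmetic slip: you momentarily lump $-(\nu-\lambda)\pdt\bar{\theta}^\perp$ into the group bounded by $\brac{1+\normtyp{(u,\theta)}{H}{s}}\normtyp{\pdt a}{H}{k+1}$, which cannot control a $\normtyp{\pdt\theta}{H}{k}$ contribution, but you then correctly absorb this linear-in-$\pdt\theta$ term into $\brac{1+\normtyp{K}{H}{s}}\normtyp{\pdt\theta}{H}{k}$, so the argument is unaffected.
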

	\begin{proof}
		The first estimate follows as in \fref{Lemma}{lemma:aux_est_a_low_level} from isolating $\pdt a$ in \eqref{eq:pertub_sys_pdt_a}
		and using \fref{Lemma}{lemma:prod_est} to estimate the quadratic terms.
		The second estimate follows from differentiating \eqref{eq:pertub_sys_pdt_a} in time and then proceeding as in \fref{Lemma}{lemma:aux_est_a_low_level},
		namely isolating $\pdt^2 a$ and using \fref{Lemma}{lemma:prod_est}.
	\end{proof}

	With these auxiliary estimates for $a$ and its first two temporal derivatives in hand we may now improve the low level energy and dissipation.

	\begin{prop}[Improvement of the low level energy and dissipation]
	\label{prop:imp_low_level_en_ds}
		Let $T > 0$ be a time horizon.
		Consider the assumptions
		\begin{equation*}
			(1)\; \sup_{0\leqslant t < T} \normtyp{(u,\theta)(t)}{H}{3} + \normtyp{K(t)}{H}{3} \leqslant \overline{C} < \infty \text{ and }
			\;(2)\; \sup_{0\leqslant t < T} \normtyp{\pdt(u,\theta)(t)}{H}{2} + \normtyp{\pdt K (t)}{H}{2} \leqslant \overline{C} < \infty.
		\end{equation*}
		Then we have the following estimates, where the constants implicit in ``$\lesssim$'' may depend on $\overline{C}$.
		If (1)  holds then
		$
			\overline{\mathcal{E}}_\text{low} \gtrsim \normtyp{\pdt a}{H}{1}^2
		$
		and
		$
			\overline{\mathcal{D}}_\text{low} \gtrsim \normtyp{a}{H}{1}^2 + \normtyp{ \pdt a }{L}{2}^2.
		$
		If (1) and (2) hold then
		$
			\overline{\mathcal{E}}_\text{low} \gtrsim \normtyp{ \pdt^2 a }{L}{2}^2.
		$
	\end{prop}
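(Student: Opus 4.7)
The plan is to apply the auxiliary Lemmas \ref{lemma:aux_est_a_low_level} and \ref{lemma:aux_est_pdt_a_low_level} in succession, bootstrapping from an estimate of $a$ itself to estimates of $\pdt a$ in various Sobolev norms, and finally to $\pdt^2 a$. In each step the assumptions (1) and (2) serve only to tame the nonlinear prefactors that appear in these two lemmas, while the remaining factors need to be matched against the definitions $\overline{\mathcal{D}}_\text{low} = \|(u,\theta)\|_{P^3}^2$ and $\overline{\mathcal{E}}_\text{low} = \|(u,\theta,a)\|_{P^2}^2$ via the parabolic counting in \eqref{eq:not_parabolic_norms}.

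For the dissipation improvement I would first apply Lemma \ref{lemma:aux_est_a_low_level} with $k=1$ and $s=3$. Under assumption (1) the prefactor $1 + \|K\|_{H^3} + \|(u,\theta)\|_{H^3} + \|K\|_{H^3}\|(u,\theta)\|_{H^3}$ is bounded by a constant depending on $\overline{C}$, and the squared remainder $\|\theta\|_{H^3}^2 + \|\pdt\theta\|_{H^1}^2 + \|u\|_{H^2}^2$ is dominated by $\overline{\mathcal{D}}_\text{low}$ once one checks that each of these derivative combinations has parabolic count at most $3$. This yields $\|a\|_{H^1}^2 \lesssim \overline{\mathcal{D}}_\text{low}$. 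Feeding this into the first estimate of Lemma \ref{lemma:aux_est_pdt_a_low_level} at $k=0$, $s=3$, the RHS becomes a bounded multiple of $\|a\|_{H^1} + \|\theta\|_{H^3}$, whose square is controlled by $\overline{\mathcal{D}}_\text{low}$, giving $\|\pdt a\|_{L^2}^2 \lesssim \overline{\mathcal{D}}_\text{low}$.

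For the energy improvements under (1) alone I would reapply the first estimate of Lemma \ref{lemma:aux_est_pdt_a_low_level}, but now at $k=1$ and $s=2$ (admissible since $s > 3/2$ and $s \geq k$, and $H^2 \subset H^3$ is controlled by (1)). The RHS, up to a bounded prefactor, is $\|a\|_{H^2} + \|\theta\|_{H^2}$, both of whose squares sit inside $\overline{\mathcal{E}}_\text{low}$. Hence $\|\pdt a\|_{H^1}^2 \lesssim \overline{\mathcal{E}}_\text{low}$. Finally, for the $\pdt^2 a$ bound I would apply the second estimate of Lemma \ref{lemma:aux_est_pdt_a_low_level} at $k=0$, $s=2$. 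Assumption (2) is precisely what is needed to control the new prefactors $\|\pdt(u,\theta)\|_{H^2}$ and $\|\pdt K\|_{H^2}$, and the remaining sum $\|a\|_{H^1} + \|\pdt a\|_{H^1} + \|\theta\|_{L^2} + \|\pdt\theta\|_{L^2}$ is controlled in square by $\overline{\mathcal{E}}_\text{low}$, invoking the $\|\pdt a\|_{H^1}^2$ bound just derived. This produces $\|\pdt^2 a\|_{L^2}^2 \lesssim \overline{\mathcal{E}}_\text{low}$.

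No deep new idea is required; the proof is a careful bookkeeping exercise. The only subtleties are the choice of $s$ in each application of the lemma—in particular taking $s=2$ rather than $s=3$ for the $k=1$ applications so as not to demand $H^3$ norms that $\overline{\mathcal{E}}_\text{low}$ cannot supply—and the explicit verification that the parabolic norms in $\overline{\mathcal{E}}_\text{low}$ and $\overline{\mathcal{D}}_\text{low}$ encompass exactly the mixed space-time derivative combinations appearing on the right-hand sides of the auxiliary lemmas.
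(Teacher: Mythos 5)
Your proposal is correct and follows essentially the same route as the paper's own proof: apply Lemma \ref{lemma:aux_est_a_low_level} to bound $\|a\|_{H^1}$ by the dissipation, then bootstrap through Lemma \ref{lemma:aux_est_pdt_a_low_level} to control $\pdt a$ (in $L^2$ for the dissipation, in $H^1$ for the energy) and finally $\pdt^2 a$, with assumptions (1) and (2) bounding the nonlinear prefactors. Your observation about choosing $s=2$ rather than $s=3$ in the $k=1$ applications is a sound piece of bookkeeping, though it only matters for the literal reading of the lemma's $\|\theta\|_{H^s}$ term; the paper's arithmetic (arriving at $\|\theta\|_{H^1}$ and $\|\theta\|_{L^2}$ in those slots) makes the same point implicitly.
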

	\begin{proof}
		(1) and \fref{Lemma}{lemma:aux_est_a_low_level} tell us that
		$\norm{a}{H^1}^2 \lesssim \norm{\brac{u,\theta}}{H^3}^2 + \norm{\pdt\theta}{H^1}^2 \lesssim \overline{\mathcal{D}}_\text{low}$.
		Then we may use (1), \fref{Lemma}{lemma:aux_est_pdt_a_low_level}, and the previous estimate to see that
		\begin{equation*}
			\norm{\pdt a}{L^2}^2 \lesssim \norm{a}{H^1}^2 + \norm{\theta}{L^2}^2 \lesssim \overline{\mathcal{D}}_\text{low} \text{ and }
			\norm{\pdt a}{H^1}^2 \lesssim \norm{a}{H^2}^2 + \norm{\theta}{H^1}^2 \lesssim \overline{\mathcal{E}}_\text{low}.
		\end{equation*}
		Finally, if both assumptions (1) and (2) hold then we may use \fref{Lemma}{lemma:aux_est_pdt_a_low_level} again to see that
		\begin{equation*}
			\norm{\pdt^2 a}{L^2}^2
			\lesssim \norm{a}{H^1}^2 + \norm{\pdt a}{H^1}^2 + \norm{\theta}{L^2}^2 + \norm{\pdt\theta}{L^2}^2
			\lesssim \overline{\mathcal{E}}_\text{low}. \qedhere
		\end{equation*}
	\end{proof}

	We now turn our attention to the low-level interactions and record their estimates here.
	Note that they may be estimated in a simpler way for the sole purpose of closing the energy estimates at the low level,
	but by doing the estimates slightly more carefully as done below we can also use them when we study the local well-posedness theory (in \fref{Section}{sec:lwp}).

	\begin{lemma}[Careful estimates of the low-level interactions]
	\label{lemma:careful_est_low_level_int}
		Recall that $\overline{\mathcal{I}}_\text{low}$ is defined in \eqref{eq:not_I} for $\mathcal{I}^\alpha$ as in \fref{Lemma}{lemma:record_form_interactions}.
		The following estimate holds.
		\begin{equation*}
			\abs{ \overline{\mathcal{I}}_\text{low} }
			\lesssim \brac{
				\normtyp{(u,\theta)}{P}{2} 
				+ \normtyp{a}{P}{3} 
				+ \brac{ 1 + \normtyp{(u,\theta)}{P}{2}  }
				\brac{ \norm{K}{H^3} + \norm{\pdt K}{L^\infty}}
			} \mathcal{D}_\text{low}.
		\end{equation*}
	\end{lemma}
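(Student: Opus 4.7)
The plan is to estimate each of the eight pieces $\mathcal{I}_j^\alpha$, for $j = 1,\dots,8$ and $\abs{\alpha}_P \leqslant 2$, that make up $\overline{\mathcal{I}}_\text{low}$, and then sum over $\alpha$. For every term the guiding principle is the same: expand the commutator via Leibniz's rule, then apply H\"older's inequality in the form $L^\infty \cdot L^2 \cdot L^2$ (or $L^6 \cdot L^3 \cdot L^2$ when no pure $L^\infty$ bound is available) and route the highest-order derivative into one of $\normtyp{(u,\theta)}{P}{3}$, $\normtyp{a}{H}{1}$, or $\normtyp{\pdt a}{L}{2}$, all of which are controlled by $\mathcal{D}_\text{low}$. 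The remaining low-order factors are bounded via Sobolev embeddings -- $H^2 \hookrightarrow L^\infty$ in three dimensions, $H^1 \hookrightarrow L^6$, and Gagliardo--Nirenberg interpolation for intermediate $L^p$ spaces -- and contribute to the stated prefactor. The parabolic weighting $\abs{\alpha}_P = 2\alpha_0 + \abs{\bar\alpha}$ is ideally matched to this scheme, since the dissipation has one parabolic level more than the energy and each commutator summand carries one fewer derivative on its argument than $\partial^\alpha$ does, so that at least one factor of $\mathcal{D}_\text{low}^{1/2}$ is always available.

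The first structural observation, which is what produces the $\norm{K}{H^3}$ and $\norm{\pdt K}{L^\infty}$ terms in the prefactor, concerns the commutators involving $J$. Since $J_{eq}$ is a constant matrix, in $\mathcal{I}_2^\alpha$ we have $\sbrac{J\pdt,\partial^\alpha} = \sbrac{K\pdt,\partial^\alpha}$, so every Leibniz summand contains at least one derivative of $K$ or a factor of $\pdt K$; the purely temporal case $\alpha = (1,0)$ produces the isolated term $-\int (\pdt K)\pdt\theta \cdot \pdt\theta$, bounded by $\norm{\pdt K}{L^\infty} \mathcal{D}_\text{low}$ since $\normtyp{\pdt\theta}{L}{2}^2 \leqslant \overline{\mathcal{D}}_\text{low}$. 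The remaining summands of $\mathcal{I}_2$, as well as those of $\mathcal{I}_5^\alpha$ -- which reduces to a commutator involving only $K$ thanks to $\sbrac{J_{eq}\omega_{eq}\times,\partial^\alpha} = 0$ -- are controlled using $\norm{K}{H^3}$ together with $H^2 \hookrightarrow L^\infty$ to place $K$ or $\nabla K$ in $L^\infty$, while the $\theta$-factors are split into one low-regularity $H^k$ factor absorbed by the prefactor and one high-regularity factor absorbed by $\mathcal{D}_\text{low}$.

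The remaining interactions $\mathcal{I}_1, \mathcal{I}_3, \mathcal{I}_4, \mathcal{I}_6, \mathcal{I}_7$ are cubic in $(u,\theta,a)$ and are treated by the same Leibniz-then-H\"older scheme, with the product estimate \fref{Lemma}{lemma:prod_est} handling the multilinear bookkeeping. The most delicate terms will be $\mathcal{I}_3^\alpha$ and $\mathcal{I}_4^\alpha$, where $J = J_{eq} + K$ gets differentiated: derivatives falling on the constant $J_{eq}$ contribute nothing and the term collapses to a pure-velocity commutator absorbed into the $\normtyp{(u,\theta)}{P}{2}\mathcal{D}_\text{low}$ piece of the right-hand side, whereas derivatives falling on $K$ generate additional $K$-dependent summands that must be routed through $(1 + \normtyp{(u,\theta)}{P}{2})\norm{K}{H^3}$ and paired with a $\mathcal{D}_\text{low}^{1/2}$ factor. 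For $\mathcal{I}_8^\alpha$ the critical distribution is when all derivatives fall on $K$, producing $\int (\partial^\alpha K)\bar\theta^\perp \cdot \partial^\alpha a$: here we place $\bar\theta \in L^\infty$ via $\normtyp{(u,\theta)}{P}{2}$, $\partial^\alpha K \in L^2$ via $\norm{K}{H^3}$ (since $\abs{\alpha}_P \leqslant 2$ forces $\abs{\bar\alpha} \leqslant 2$), and $\partial^\alpha a \in L^2$ into the $\normtyp{a}{H}{1}^2$ or $\normtyp{\pdt a}{L}{2}^2$ component of $\mathcal{D}_\text{low}$.

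Summing the bounds over the finitely many multi-indices with $\abs{\alpha}_P \leqslant 2$, collecting prefactors, and applying Young's inequality where needed to reorganize mixed cross-terms yields the claimed estimate. The main obstacle will be the combinatorial bookkeeping required to verify that every Leibniz summand admits the desired H\"older split at the chosen parabolic weightings -- particularly for the $J$-dependent terms $\mathcal{I}_2, \mathcal{I}_3, \mathcal{I}_4, \mathcal{I}_5$ where both the $K$-derivative split and the velocity-derivative split have to be considered simultaneously -- but no new analytic idea beyond the product estimate and standard Sobolev embeddings is needed.
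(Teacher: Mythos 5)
Your overall plan (Leibniz, H\"older, Sobolev embeddings, treat the $J$-commutators via $J = J_{eq} + K$) matches the easy part of the paper's proof, and your handling of $\mathcal{I}_1$--$\mathcal{I}_5$ and $\mathcal{I}_7$ is fine. But there is a genuine gap in your treatment of $\mathcal{I}_6$ and $\mathcal{I}_8$, and it is exactly the point of the lemma. Your routing rule sends the highest-order derivative into $\normtyp{(u,\theta)}{P}{3}$, $\normtyp{a}{H}{1}$, or $\normtyp{\pdt a}{L}{2}$, ``all of which are controlled by $\mathcal{D}_\text{low}$.'' However $\mathcal{D}_\text{low}$ controls $a$ only in $H^1$ (plus $\pdt a$ in $L^2$): this is precisely the coercivity failure the paper is built around. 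When $\alpha$ is a purely spatial multi-index with $\abs{\bar{\alpha}} = 2$, the term $\mathcal{I}_6^\alpha$ contains $\int (\partial_x u \cdot \nabla \partial_x a)\cdot \partial_x^2 a$, i.e.\ two copies of $\partial_x^2 a$, and $\mathcal{I}_8^\alpha$ contains $\int (\partial_x^\beta K)(\partial_x^\gamma \bar{\theta}^\perp)\cdot \partial_x^2 a$; in neither case can $\partial_x^2 a$ be placed into the dissipation, so your scheme simply does not close there. In particular your claim for $\mathcal{I}_8$ that ``$\partial^\alpha a \in L^2$ goes into the $\normtyp{a}{H}{1}^2$ or $\normtyp{\pdt a}{L}{2}^2$ component of $\mathcal{D}_\text{low}$'' is false for this $\alpha$, and a direct H\"older/interpolation attempt only yields a power $\mathcal{D}_\text{low}^{3/4}$, short of the required full power of $\mathcal{D}_\text{low}$.

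The paper resolves these two terms with two distinct ideas that are absent from your proposal. For $\mathcal{I}_6$ one interpolates $\normtyp{a}{H}{2}^2 \lesssim \normtyp{a}{H}{1}\normtyp{a}{H}{3}$, so that $\normtyp{u}{H}{3}\normtyp{a}{H}{2}^2 \lesssim \normtyp{a}{H}{3}\,\mathcal{D}_\text{low}$ after pairing $\normtyp{u}{H}{3}$ with $\normtyp{a}{H}{1}$ via Cauchy--Schwarz; this is the sole source of the $\normtyp{a}{P}{3}$ factor in the stated prefactor, which your argument never accounts for. For $\mathcal{I}_8$ one must integrate by parts to trade $\partial_x^2 a$ for $\partial_x a$, accepting a third derivative landing on $K$ (which is why the prefactor carries $\norm{K}{H^3}$); the resulting terms like $\int (\partial_x^3 K)\,\theta\cdot\partial_x a$ are then bounded by $\normtyp{K}{H}{3}\normtyp{\theta}{H}{2}\normtyp{a}{H}{1} \lesssim \normtyp{K}{H}{3}\mathcal{D}_\text{low}$. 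Without these two maneuvers the estimate as stated cannot be obtained, so the proposal as written does not prove the lemma.
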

	\begin{proof}
		Recall that, at the low level, $\abs{\alpha}_P \leqslant 2$.
		In particular if $\beta + \gamma = \alpha$ and $\beta > 0$ then $( \partial^\alpha , \partial^\beta , \partial^\gamma )$ corresponds to one of five possible cases:
		$
			(\partial_x^2, \partial_x^2, 0)
		$, $
			(\partial_x^2, \partial_x, \partial_x)
		$, $
			(\partial_x, \partial_x, 0)
		$, $
			(\pdt, \pdt, 0)
		$, and $
			(0, 0, 0),
		$
		where $\partial_x^k$ indicates a derivative $ \partial^\alpha $ for a purely spatial multi-index $\alpha\in\N^3$ of length $\abs{\alpha} = k$.
		Note that we have the bound
		$
			\normtyp{ K}{L}{\infty} + \normtyp{ \nabla K}{L}{\infty} + \normtyp{ \nabla^2 K}{L}{6} \lesssim \normtyp{K}{H}{3}
		$
		such that, for any $\abs{\alpha}_P \leqslant 2$,
		\begin{equation}
		\label{eq:careful_est_low_level_int_est_K_L6}
			\normtyp{ \partial^\alpha K}{L}{6} \lesssim \normtyp{K}{H}{3} + \normtyp{ \pdt K}{L}{\infty}.
		\end{equation}
		Recall from \fref{Lemma}{lemma:record_form_interactions} that $\overline{\mathcal{I}}_\text{low} = \sum_{\abs{\alpha}_P \leqslant 2} \sum_{i=1}^8 \mathcal{I}_i^\alpha$.
		In light of \eqref{eq:careful_est_low_level_int_est_K_L6} we may estimate $\mathcal{I}_1^\alpha - \mathcal{I}_5^\alpha$ and $\mathcal{I}_7^\alpha$ easily, obtaining:
		\begin{align*}
			\vbrac{\mathcal{I}_1^\alpha} &\lesssim
				\normtyp{u}{P}{2} \mathcal{D}_\text{low},\;
			\vbrac{\mathcal{I}_2^\alpha} \lesssim
				\brac{ \normtyp{K}{H}{3} + \normtyp{ \pdt K }{L}{\infty} } \mathcal{D}_\text{low},\;
			\vbrac{\mathcal{I}_3^\alpha} \lesssim
				\brac{ 1 + \normtyp{K}{H}{3} + \normtyp{ \pdt K }{L}{\infty} } \normtyp{u}{P}{2}  \mathcal{D}_\text{low},\\
			\vbrac{\mathcal{I}_4^\alpha} &\lesssim
				\brac{ 1 + \normtyp{\theta}{P}{2} } \brac{ \normtyp{K}{H}{3} + \normtyp{ \pdt K }{L}{\infty} } \mathcal{D}_\text{low}
				+ \brac{1 + \normtyp{K}{H}{3} } \normtyp{\theta}{P}{2} \mathcal{D}_\text{low},\\
			\vbrac{\mathcal{I}_5^\alpha} &\lesssim
				\brac{ \normtyp{K}{H}{3} + \normtyp{ \pdt K }{L}{\infty} } \mathcal{D}_\text{low}, \text{ and }
			\vbrac{\mathcal{I}_7^\alpha} \lesssim
				\normtyp{a}{P}{3} \mathcal{D}_\text{low}.
		\end{align*}
		The only two terms requiring particularly delicate care are $\mathcal{}I_6$ and $\mathcal{I}_8$, due to the presence of $\partial_x^2 a$.
		We provide the details on how to estimate these two interactions below.

		\vspace{0.5em}
		\paragraph{\textbf{Estimating $\mathcal{I}_6^\alpha$.}}
			Recall that
			\begin{equation*}
				\mathcal{I}_6^\alpha = - \sum_{\substack{ \beta + \gamma = \alpha \\ \beta > 0 }} \binom{\alpha}{\beta}
				\int_{\T^3} ( \partial^\beta u\cdot \nabla \partial^\gamma a) \cdot \partial^\alpha a.
			\end{equation*}
			The difficulty lies in $ \partial^\alpha = \partial_x^2$ since then two copies of $\partial_x^2 a$ may appear and we only control $a$ dissipatively in $H^1$.
			We thus split into two cases, emphasizing that only the first case is somewhat troublesome and requiring of particular care.
			In the first case we consider $\abs{\bar{\alpha}} = 2$ and $\abs{\bar{\beta}} = \abs{\bar{\gamma}} = 1$
			and proceed by interpolation:
			\begin{align*}
				\vbrac{ \int_{\T^3} (\partial_x u \cdot\nabla\partial_x a) \cdot \partial_x^2 a}
				\leqslant \normtyp{ \partial_x u}{L}{\infty} \normtyp{ \partial_x^2 a}{L}{2}^2
				\lesssim \normtyp{u}{H}{3} \normtyp{a}{H}{2}^2
				\lesssim \normtyp{u}{H}{3} {\brac{ \normtyp{a}{H}{1}^{1/2} \normtyp{a}{H}{3}^{1/2} }}^2
				\lesssim \normtyp{a}{H}{3} \mathcal{D}_\text{low}.
			\end{align*}
			In the second case we consider either $\abs{\bar{\alpha}} = \abs{\bar{\beta}} = 2$ or $\abs{\bar{\alpha}} \leqslant 1$.
			Either way, since $\beta > 0$ we deduce that $\gamma = 0$ and hence $\beta = \alpha$.
			The estimate is then immediate:
			\begin{equation*}
				\vbrac{\int_{\mathbb{T}^3} \brac{ \partial^\alpha u \cdot \nabla a} \cdot \partial^\alpha a}
				\leqslant \normtyp{ \partial^\alpha u}{L}{6} \normtyp{ \nabla a}{L}{2} \normtyp{ \partial^\alpha a}{L}{6} 
				\lesssim \normtyp{u}{P}{3} \normtyp{a}{H}{1} \normtyp{a}{P}{3} 
				\lesssim \normtyp{a}{P}{3} \mathcal{D}_\text{low}.
			\end{equation*}

		\paragraph{\textbf{Estimating $\mathcal{I}_8^\alpha$.}}
			Recall that
			\begin{equation*}
				\mathcal{I}_8^\alpha = - \sum_{\beta+\gamma = \alpha}
				\int_{\T^3} \partial^\beta ( \bar{K} - K_{33}I_2) \partial^\gamma \bar{\theta}^\perp \cdot \partial^\alpha a
				\;\sim\;
				\sum_{\beta+\gamma=\alpha} \int_{\T^3} (\partial^\beta K) \partial^\gamma \theta \cdot \partial^\alpha a.
			\end{equation*}
			where the left-hand side is the precise form of the interaction and the right-hand side is its schematic form which we will work with henceforth.
			The difficulty lies in $ \partial^\alpha = \partial_x^2 $ since we have no dissipative control over $K$ and only control $a$ dissipatively in $H^1$.
			We must therefore integrate by parts to reduce the term $\partial_x^2 a$ to $\partial_x a$.
			Now we split into two cases. As in the consideration of $\mathcal{I}_6^\alpha$ above, only the first case is somewhat troublesome.

			In the first case we consider $\abs{\bar{\alpha}} = 2$.
			We integrate by parts and estimate each term by hand.
			We write
			\begin{equation*}
				\sum_{\beta+\gamma = 2} \int_{\T^3} (\partial_x^\beta K ) \partial_x^\gamma \theta \cdot \partial_x^2 a
				= \sum_{\beta+\gamma = 2} \int_{\T^3} (\partial_x^{\beta+1}K) (\partial_x^\gamma \theta) \cdot \partial_x a
					+ \int_{\T^3} ( \partial_x^\beta K) (\partial_x^{\gamma+1} \theta) \cdot \partial^\alpha a
				=\vcentcolon \RN{1} + \RN{2}
			\end{equation*}
			where
			\begin{align*}
				\abs{\RN{1}}
				\leqslant \vbrac{ \int_{\T^3} (\partial_x K) (\partial_x^2 \theta) \partial_x a}
					+ \vbrac{ \int_{\T^3} (\partial_x^2 K) (\partial_x\theta) \cdot \partial_x a}
					+ \vbrac{ \int_{\T^3} (\partial_x^3 K) \theta\cdot \partial_x a}
			\\
				\leqslant \brac{
					\normtyp{ \partial_x K}{L}{\infty} \normtyp{ \partial_x^2 \theta}{L}{6} 
					+ \normtyp{ \partial_x^2 K}{L}{6} \normtyp{ \partial_x\theta}{L}{\infty} 
					+ \normtyp{ \partial_x^3 K}{L}{2} \normtyp{ \theta}{L}{\infty} 
				} \normtyp{ \partial_x a}{L}{2} 
			\\
				\lesssim \normtyp{K}{H}{3} \normtyp{\theta}{H}{3} \normtyp{a}{H}{1} 
				\lesssim \normtyp{K}{H}{3} \mathcal{D}_\text{low}.
			\end{align*}
			and
			\begin{equation*}
				\abs{\RN{1}}
				\leqslant \sum_{\beta+\gamma = 2} \normtypns{ \partial_x^\beta K}{L}{\infty} \normtyp{ \partial_x^{\gamma+1}\theta}{L}{2} \normtyp{ \partial_x a}{L}{2} 
				\lesssim \normtyp{K}{H}{3} \normtyp{\theta}{H}{3} \normtyp{a}{H}{1} 
				\lesssim \normtyp{K}{H}{3} \mathcal{D}_\text{low}.
			\end{equation*}

			In the second case we consider $\abs{\bar{\alpha}} \leqslant 1$.
			Note that, as was noted above when considering $\mathcal{I}_6^\alpha$,
			it then follows from the constraint $\beta > 0$ that $\gamma = 0$ and $\beta = \alpha$.
			The estimate is then immediate:
			\begin{align*}
				\abs{\RN{2}}
				\leqslant \normtyp{ \partial^\alpha K}{L}{2} \normtyp{ \theta}{L}{\infty} \normtyp{ \partial^\alpha a}{L}{2}
				\lesssim \brac{ \normtyp{K}{H}{3} + \normtyp{ \pdt K}{L}{\infty} } \normtyp{\theta}{H}{2} ( \normtyp{a}{H}{1} + \normtyp{ \pdt a}{L}{2} )
			\\
				\lesssim \brac{ \normtyp{K}{H}{3} + \normtyp{ \pdt K}{L}{\infty} } \mathcal{D}_\text{low}.&\qedhere
			\end{align*}
	\end{proof}

	In particular, for our purposes here it suffices to control the low-level interactions in the following way.

	\begin{cor}[Control of the low-level interactions]
	\label{cor:control_low_level_interactions}
		If $M\geqslant 3$ and $\mathcal{E}_M \leqslant 1$ then $\abs{ \overline{\mathcal{I}}_\text{low} } \lesssim \mathcal{E}_M^{1/2} \mathcal{D}_\text{low}$.
	\end{cor}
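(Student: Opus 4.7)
The plan is to invoke the careful bound on $\overline{\mathcal{I}}_\text{low}$ provided by the preceding lemma and then estimate each of the factors in front of $\mathcal{D}_\text{low}$ in terms of $\mathcal{E}_M^{1/2}$, using the smallness assumption $\mathcal{E}_M \leqslant 1$ to absorb harmless constants.

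Recall from \fref{Lemma}{lemma:careful_est_low_level_int} that
\[
	\abs{ \overline{\mathcal{I}}_\text{low} } \lesssim \brac{ \normtyp{(u,\theta)}{P}{2} + \normtyp{a}{P}{3} + \brac{1 + \normtyp{(u,\theta)}{P}{2}} \brac{\norm{K}{H^3} + \norm{\pdt K}{L^\infty}} } \mathcal{D}_\text{low}.
\]
Since $M \geqslant 3$, inspection of the definitions \eqref{eq:not_E_M_bar}, \eqref{eq:not_E_M_K}, and \eqref{eq:not_E_M_and_F_M} shows that $2M \geqslant 6 \geqslant 3$ and $2M-3 \geqslant 3$, so that $\norm{(u,\theta)}{P^2} + \norm{a}{P^3} \lesssim \norm{(u,\theta,a)}{P^{2M}} \leqslant \overline{\mathcal{E}}_M^{1/2} \leqslant \mathcal{E}_M^{1/2}$ and $\norm{K}{H^3} \leqslant \norm{K}{H^{2M-3}} \leqslant {\mathcal{E}_M^{(K)}}^{1/2} \leqslant \mathcal{E}_M^{1/2}$. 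For the $L^\infty$ bound on $\pdt K$ I would use the Sobolev embedding $H^s \hookrightarrow L^\infty$ valid for $s > 3/2$, applied at $s = 2M-3 \geqslant 3$, yielding $\norm{\pdt K}{L^\infty} \lesssim \norm{\pdt K}{H^{2M-3}} \leqslant \mathcal{E}_M^{1/2}$.

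Finally, the smallness assumption $\mathcal{E}_M \leqslant 1$ gives $\norm{(u,\theta)}{P^2} \leqslant 1$, so $1 + \norm{(u,\theta)}{P^2} \lesssim 1$. Combining these bounds yields
\[
	\abs{ \overline{\mathcal{I}}_\text{low} } \lesssim \brac{ \mathcal{E}_M^{1/2} + \mathcal{E}_M^{1/2} + \mathcal{E}_M^{1/2} } \mathcal{D}_\text{low} \lesssim \mathcal{E}_M^{1/2} \mathcal{D}_\text{low},
\]
which is the desired estimate. There is no real obstacle here: the corollary is essentially a bookkeeping consequence of the preceding careful lemma together with the inclusions between the norms comprising $\mathcal{E}_M$ and a single Sobolev embedding applied to $\pdt K$; the only thing one must verify is that $M \geqslant 3$ is large enough for every norm appearing in the right-hand side of \fref{Lemma}{lemma:careful_est_low_level_int} to be controlled by $\mathcal{E}_M^{1/2}$, which is straightforward.
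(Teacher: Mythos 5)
Your proof is correct and follows precisely the same route as the paper's: invoke \fref{Lemma}{lemma:careful_est_low_level_int}, then observe that under $M \geqslant 3$ every factor multiplying $\mathcal{D}_\text{low}$ (including $\normtyp{\pdt K}{L}{\infty}$ via the Sobolev embedding $H^{2M-3} \hookrightarrow L^\infty$) is controlled by $\mathcal{E}_M^{1/2}$, with $\mathcal{E}_M \leqslant 1$ absorbing the constant $1 + \normtyp{(u,\theta)}{P}{2}$. You simply spell out the bookkeeping that the paper leaves implicit in the line $\mathcal{E}_M \gtrsim \normtyp{(u,\theta,a)}{P}{2M}^2 + \normtyp{K}{H}{2M-3}^2 + \normtyp{\pdt K}{H}{2M-3}^2$.
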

	\begin{proof}
		Recall that
		$
			\mathcal{E}_M \gtrsim \normtyp{(u, \theta, a)}{P}{2M}^2 + \normtyp{K}{H}{2M-3}^2 + \normtyp{\pdt K}{H}{2M-3}^2.
		$
		In particular, since $M\geqslant 3$ and $ \mathcal{E}_M \leqslant 1$ we may deduce the claim from \fref{Lemma}{lemma:careful_est_low_level_int}.
	\end{proof}

	We now turn our attention to the last piece needed to close the energy estimates at the low level, namely the $\theta$-coercivity estimate recorded below.
	In particular note that below $\theta\uparrow 1$ as $M\uparrow\infty$.

	\begin{lemma}[$\theta$-coercivity]
	\label{lemma:hypocoercivity}
		If $M \geqslant 2$ then $ \overline{\mathcal{E}}_\text{low} \lesssim \overline{\mathcal{E}}_M^{1-\theta} \mathcal{D}_\text{low}^\theta$ where $\theta = \frac{2M-2}{2M-1}$.
	\end{lemma}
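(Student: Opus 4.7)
The plan is to split $\overline{\mathcal{E}}_\text{low}$ into its $(u,\theta)$-contribution and its $a$-contribution, noting from \eqref{eq:not_E_low} and \eqref{eq:not_parabolic_norms} that $\overline{\mathcal{E}}_\text{low} \sim \|(u,\theta)\|_{P^2}^2 + \|a\|_{H^2}^2 + \|\pdt a\|_{L^2}^2$. The $(u,\theta)$-piece should be essentially free: since every parabolic multi-index of length at most $2$ is also one of length at most $3$, we have $\|(u,\theta)\|_{P^2}^2 \leqslant \|(u,\theta)\|_{P^3}^2 = \overline{\mathcal{D}}_\text{low} \leqslant \mathcal{D}_\text{low}$. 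Similarly $\|\pdt a\|_{L^2}^2 \leqslant \mathcal{D}_\text{low}$ directly, by the definition of $\mathcal{D}_\text{low}$ in \eqref{eq:not_D_low}.

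The genuine content of the lemma is therefore in bounding $\|a\|_{H^2}^2$, since $\mathcal{D}_\text{low}$ only provides $\|a\|_{H^1}^2$. For this I would use the standard Sobolev interpolation
\begin{equation*}
    \|a\|_{H^2} \lesssim \|a\|_{H^1}^{1-\sigma}\, \|a\|_{H^{2M}}^{\sigma}, \qquad \sigma = \tfrac{1}{2M-1},
\end{equation*}
which is valid whenever $2M \geqslant 2$, i.e.\ $M \geqslant 1$. Squaring and using $\|a\|_{H^1}^2 \leqslant \mathcal{D}_\text{low}$ together with $\|a\|_{H^{2M}}^2 \leqslant \overline{\mathcal{E}}_M$ (the latter since $|\alpha|_P \leqslant 2M$ allows all purely spatial $\alpha$ with $|\alpha| \leqslant 2M$), one obtains
\begin{equation*}
    \|a\|_{H^2}^2 \lesssim \mathcal{D}_\text{low}^{\,(2M-2)/(2M-1)}\, \overline{\mathcal{E}}_M^{\,1/(2M-1)} = \mathcal{D}_\text{low}^{\,\theta}\, \overline{\mathcal{E}}_M^{\,1-\theta},
\end{equation*}
which matches the advertised exponents exactly.

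To promote the direct bounds for $\|(u,\theta)\|_{P^2}^2$ and $\|\pdt a\|_{L^2}^2$ into the same form, I would use that $\mathcal{D}_\text{low} \lesssim \overline{\mathcal{E}}_M$; indeed, the $(u,\theta)$-part of $\mathcal{D}_\text{low}$ is $\|(u,\theta)\|_{P^3}^2$, which is dominated by $\|(u,\theta)\|_{P^{2M}}^2 \leqslant \overline{\mathcal{E}}_M$ as soon as $2M \geqslant 3$ (which holds since $M \geqslant 2$), while $\|a\|_{H^1}^2 + \|\pdt a\|_{L^2}^2 \leqslant \|a\|_{P^{2M}}^2 \leqslant \overline{\mathcal{E}}_M$. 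Writing $\mathcal{D}_\text{low} = \mathcal{D}_\text{low}^{\theta}\mathcal{D}_\text{low}^{1-\theta} \lesssim \mathcal{D}_\text{low}^{\theta}\overline{\mathcal{E}}_M^{1-\theta}$ then upgrades those two pieces to the desired form, and summing the three contributions concludes the proof.

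There is no real obstacle here: the only verification worth checking carefully is that $2M \geqslant 3$ is what makes the direct dissipative control of $(u,\theta)$ at parabolic level $2$ cost nothing, so the hypothesis $M \geqslant 2$ is sharp in this simple argument. The interpolation exponent $\sigma = 1/(2M-1)$ is the unique one making $2 = (1-\sigma)\cdot 1 + \sigma\cdot 2M$, and its complement $1-\sigma = (2M-2)/(2M-1) = \theta$ is precisely the dissipative exponent stated in the lemma.
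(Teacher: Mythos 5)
Your proposal is correct and follows essentially the same route as the paper's proof: every term of $\overline{\mathcal{E}}_\text{low}$ except $\normtyp{a}{H}{2}^2$ is bounded directly by $\mathcal{D}_\text{low}$, the $\normtyp{a}{H}{2}^2$ term is handled by interpolating between $\normtyp{a}{H}{1}$ and $\normtyp{a}{H}{2M}$ with the same exponent, and the leftover $\mathcal{D}_\text{low}$ contribution is absorbed using $\mathcal{D}_\text{low} \lesssim \overline{\mathcal{E}}_M$ (valid for $M\geqslant 2$). The only difference is cosmetic bookkeeping of the parabolic norms, so no further changes are needed.
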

	\begin{proof}
		Since the low-level dissipation controls every term in the low-level energy \emph{except} $\norm{a}{H^2}^2$, we rely on an interpolation estimate to control that term
		using the low-level dissipation and the \emph{high-level} energy. More precisely, recall that
		\begin{equation*}
			\left\{
			\begin{aligned}
				\overline{\mathcal{E}}_\text{low} &= \normtyp{\brac{u,\theta}}{H}{2}^2 + \normtyp{a}{H}{2}^2 + \normtyp{\pdt\brac{u,\theta}}{L}{2}^2 + \normtyp{ \pdt a}{L}{2}^2
				\text{ and }\\
				\mathcal{D}_\text{low} &= \normtyp{\brac{u,\theta}}{H}{3}^2 + \normtyp{a}{H}{1}^2 + \normtyp{\pdt\brac{u,\theta}}{H}{1}^2  + \normtyp{\pdt a}{H}{1}^2.
			\end{aligned}
			\right.
		\end{equation*}
		So let us write $ \overline{\mathcal{E}}_\text{low} - \norm{a}{H^2}^2 =\vcentcolon \mathcal{E}_\text{good}$. Then
		\begin{equation*}
			\mathcal{E}_\text{good} \lesssim \mathcal{D}_\text{low} \text{ and }
			\norm{a}{H^2}^2 \lesssim \norm{a}{H^1}^{2\theta} \norm{a}{H^{2M}}^{2(1-\theta)},
			\text{ where } \theta = \frac{2M-2}{2M-1}.
		\end{equation*}
		So finally, since $M\geqslant 2$ we note that $ \overline{\mathcal{E}}_M \gtrsim \mathcal{D}_\text{low}$ and hence we may conclude that, for $\theta$ as above,
		\begin{equation*}
			\overline{\mathcal{E}}_\text{low}
			= \mathcal{E}_\text{good} + \norm{a}{H^2}^2
			\lesssim \mathcal{D}_\text{low} + \mathcal{D}_\text{low}^\theta \overline{\mathcal{E}}_M^{1-\theta}
			\lesssim \mathcal{D}_\text{low}^\theta \overline{\mathcal{E}}_M^{1-\theta}.\qedhere
		\end{equation*}
	\end{proof}

	In light of the $\theta$-coercivity result above we now record a particular instance of the Bihari Lemma which applies to the low level energy.
	This is recorded here in order to streamline the proof of \fref{Proposition}{prop:close_est_low_level} below in which we close the energy estimates at the low level.

	\begin{lemma}
	\label{lemma:Bihari_hypocoercive_energy}
		Suppose that $y : \cobrac{0, \infty} \to \cobrac{0, \infty}$ is continuously differentiable such that
		$
			y' + Cy^{\frac{1}{\theta}} \alpha_0^{1-\frac{1}{\theta}} \leqslant 0
		$
		on $\cobrac{0, \infty}$ for some $\alpha_0, C > 0$ and $\theta \in \brac{0,1}$. Then
		$
			y(t) \leqslant 
			\alpha_0 {\brac{ {\big( \alpha_0 / y(0) \big)}^{1/\beta} + \tilde{C}t }}^{-\beta}
		$
		for $\beta \vcentcolon= {\brac{\frac{1}{\theta}-1}}\inv > 0$ and $\tilde{C} = C\brac{\frac{1}{\theta} - 1} > 0$.
		In particular note that $\beta \uparrow +\infty$ if $\theta\uparrow 1$.
	\end{lemma}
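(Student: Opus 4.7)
The plan is to reduce the statement to a routine separation-of-variables argument for the underlying nonlinear ODE, since the differential inequality has the form $y' \leqslant -C\alpha_0^{1-p} y^p$ with $p = 1/\theta > 1$. Writing $\beta = (p-1)^{-1} = (1/\theta-1)^{-1}$, the exponent $1-p$ equals $-1/\beta$, so after division by $y^p$ (valid wherever $y > 0$) and integration one obtains $y^{-1/\beta}(t) \geqslant y^{-1/\beta}(0) + \tilde{C}\alpha_0^{-1/\beta}\, t$ with $\tilde{C} = C(p-1)$. Raising to the $-\beta$ power and factoring $\alpha_0^{-1/\beta}$ out of the bracketed term yields exactly $y(t) \leqslant \alpha_0\bigl((\alpha_0/y(0))^{1/\beta} + \tilde{C}t\bigr)^{-\beta}$, which is the claimed bound.

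To make this rigorous I would first handle the trivial case $y(0) = 0$, where the differential inequality forces $y' \leqslant 0$ and hence $y \equiv 0$, so that the right-hand side bound (interpreted as a limit, or by simply noting that both sides are $0$ in the inequality $y \leqslant \alpha_0(\cdots)^{-\beta}$) holds vacuously. Assuming then $y(0) > 0$, I would set $T^* = \inf\{t \geqslant 0 : y(t) = 0\} \in (0,+\infty]$ so that $y$ is strictly positive on $[0,T^*)$. On this interval the computation above is rigorous: divide the hypothesis by $y^p > 0$, observe that $\tfrac{d}{dt}\bigl(y^{1-p}/(1-p)\bigr) = y^{-p}y'$, integrate from $0$ to $t$, and rearrange.

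If $T^* = +\infty$ we are done. If $T^* < +\infty$, then continuity gives $y(T^*) = 0$, and the derived inequality, extended to $t = T^*$ by continuity, yields the claim at $T^*$ and trivially for all $t \geqslant T^*$ since $y$ must remain zero there (again by $y' \leqslant 0$ and non-negativity). The final observation that $\beta \uparrow \infty$ as $\theta \uparrow 1$ is immediate from the definition $\beta = (1/\theta - 1)^{-1}$.

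There is essentially no serious obstacle here; the only small care needed is to avoid dividing by $y$ at points where it vanishes, which is handled by the stopping-time argument above. The algebraic repackaging of $\alpha_0^{-1/\beta}\bigl(\alpha_0^{1/\beta}y(0)^{-1/\beta} + \tilde{C}t\bigr)$ into $\alpha_0^{-1/\beta}\bigl((\alpha_0/y(0))^{1/\beta} + \tilde{C}t\bigr)$ is what produces the precise form stated in the lemma.
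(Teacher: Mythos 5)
Your proof is correct, and it reaches the stated bound by a route that differs from the paper's in packaging rather than in substance. The paper first integrates the differential inequality to get $y(t)+\int_0^t C\,y(s)^{1/\theta}\alpha_0^{1-1/\theta}\,ds\leqslant y(0)$ and then invokes the Bihari Lemma (\fref{Lemma}{lemma:Bihari}) with $f(x)=Cx^{1/\theta}\alpha_0^{1-1/\theta}$, computing $F(x)=\tfrac{1}{\tilde C}(\alpha_0/x)^{1/\beta}$ and $F^{-1}(x)=\alpha_0(\tilde Cx)^{-\beta}$; all questions about the possible vanishing of $y$ are thereby delegated to the cited comparison lemma. You instead carry out the separation-of-variables computation by hand, dividing by $y^{p}$ with $p=1/\theta$, integrating $\tfrac{d}{dt}\bigl(y^{1-p}/(1-p)\bigr)$, and using a stopping-time argument at the first zero of $y$ (plus monotonicity from $y'\leqslant 0$) to cover the degenerate cases; your algebra, including the sign flip when multiplying by $1-p<0$ and the factoring of $\alpha_0^{-1/\beta}$, is right and yields exactly the stated bound, with $\beta\uparrow\infty$ as $\theta\uparrow 1$ immediate. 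What your approach buys is self-containedness and an explicit treatment of the case where $y$ vanishes (including the observation that at $y(0)=0$ the stated bound must be read as vacuous, since $(\alpha_0/y(0))^{1/\beta}$ is then ill-defined); what the paper's buys is brevity and reuse of a lemma it needs elsewhere anyway.
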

	\begin{proof}
		Integrating in time tells us that
		$
			y(t) + \int_0^t C {y(s)}^{\frac{1}{\theta}} \alpha_0^{1-\frac{1}{\theta}} ds \leqslant y(0).
		$
		We may thus apply the Bihari Lemma (\fref{Lemma}{lemma:Bihari}) with $f(x) = C x^{\frac{1}{\theta}} \alpha_0^{1-\frac{1}{\theta}}$.
		Using the same notation as in Bihari's Lemma we compute $F(x) = \frac{1}{\tilde{C}} {\brac{ \alpha_0 / x }}^{1/\beta}$
		and $F\inv(x) = \alpha_0 {(\tilde{C}x)}^{-\beta}$,
		from which the claim follows.
	\end{proof}

	We conclude this section with \fref{Proposition}{prop:close_est_low_level} below, which performs the synthesis of the results proved in this section in order
	to close the energy estimates at the low level.
	Recall that, as discussed in \fref{Section}{sec:discuss_ap} this is one of the four building blocks of the scheme of a priori estimates.

	\begin{prop}[Closing the energy estimates at the low level]
	\label{prop:close_est_low_level}
		Let $M \geqslant 3$ be an integer.
		There exist $0 < \delta_\text{low}, \delta_\text{low}^* \leqslant 1$ and $C_L > 0$ such that the following holds.
		For any time horizon $T > 0$ and any $0 < \delta \leqslant \delta_\text{low}$, if
		$
			\displaystyle\sup_{0 \leqslant t \leqslant T} \mathcal{E}_M (t) \leqslant \delta_\text{low}^*
		$
		and
		$
			\displaystyle\sup_{0 \leqslant t \leqslant t} \overline{\mathcal{E}}_M (t) \leqslant \delta
		$
		then
		$
			\displaystyle\sup_{0\leqslant t \leqslant T} \mathcal{E}_\text{low} (t) {\brac{1+t}}^{2M-2} \leqslant C_L \delta.
		$
	\end{prop}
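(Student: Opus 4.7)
The plan is to combine the four ingredients assembled earlier in this section: the energy--dissipation relation (\fref{Lemma}{lemma:record_form_interactions}), the improvement of the low-level energy and dissipation (\fref{Proposition}{prop:imp_low_level_en_ds}), the interaction estimate (\fref{Corollary}{cor:control_low_level_interactions}), and the $\theta$-coercivity estimate (\fref{Lemma}{lemma:hypocoercivity}). The smallness hypothesis $ \mathcal{E}_M \leqslant \delta_\text{low}^* $ is what allows us to absorb the nonlinear interactions into the dissipation, while the separate smallness $ \overline{\mathcal{E}}_M \leqslant \delta $ plays the role of the reference high-regularity bound in the $\theta$-coercivity estimate and hence sets the size of the prefactor in the final decay estimate.

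First, summing \eqref{eq:record_int} over $\abs{\alpha}_P \leqslant 2$ and using \fref{Lemma}{lemma:coercivity_dissip} together with the $M\geqslant 3$ bound on $ \mathcal{E}_M $ (which, via \fref{Proposition}{prop:persist_spec_sols_adv_rot_eqtns}, verifies the hypothesis of \fref{Lemma}{lemma:comp_version_en}) yields a differential inequality of the form
\begin{equation*}
	\frac{\mathrm{d}}{\mathrm{d}t} \widetilde{\mathcal{E}}_\text{low} + c_* \sum_{\abs{\alpha}_P \leqslant 2} \normtyp{(\partial^\alpha u, \partial^\alpha \theta)}{H}{1}^2 \leqslant \vbrac{\overline{\mathcal{I}}_\text{low}}.
\end{equation*}
Upon further assuming $\delta_\text{low}^*$ small enough so that the hypotheses (1) and (2) of \fref{Proposition}{prop:imp_low_level_en_ds} hold, the improved dissipation estimate and the comparison $ \widetilde{\mathcal{E}}_\text{low} \asymp \overline{\mathcal{E}}_\text{low} \asymp \mathcal{E}_\text{low} $ (the second equivalence combining both parts of \fref{Proposition}{prop:imp_low_level_en_ds}) convert this into
\begin{equation*}
	\frac{\mathrm{d}}{\mathrm{d}t} \widetilde{\mathcal{E}}_\text{low} + c \mathcal{D}_\text{low} \leqslant \vbrac{\overline{\mathcal{I}}_\text{low}}.
\end{equation*}
Applying \fref{Corollary}{cor:control_low_level_interactions} and choosing $\delta_\text{low}^*$ even smaller so that $C {\delta_\text{low}^*}^{1/2} \leqslant c/2$, we absorb the right-hand side into the dissipation to obtain
\begin{equation*}
	\frac{\mathrm{d}}{\mathrm{d}t} \widetilde{\mathcal{E}}_\text{low} + \frac{c}{2} \mathcal{D}_\text{low} \leqslant 0.
\end{equation*}

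Second, the $\theta$-coercivity estimate of \fref{Lemma}{lemma:hypocoercivity} rearranges to
\begin{equation*}
	\mathcal{D}_\text{low} \gtrsim \overline{\mathcal{E}}_\text{low}^{1/\theta} \overline{\mathcal{E}}_M^{1 - 1/\theta}, \qquad \theta = \frac{2M-2}{2M-1},
\end{equation*}
and since $ \overline{\mathcal{E}}_M \leqslant \delta $ (and $\delta \leqslant 1$), the non-negative power $1 - 1/\theta < 0$ pushes in the right direction:
\begin{equation*}
	\mathcal{D}_\text{low} \gtrsim \widetilde{\mathcal{E}}_\text{low}^{1/\theta} \delta^{1 - 1/\theta}.
\end{equation*}
Plugging this into the differential inequality above and using the comparison between $ \widetilde{\mathcal{E}}_\text{low} $ and $ \mathcal{E}_\text{low} $, we obtain an inequality of the form $y' + C y^{1/\theta} \delta^{1 - 1/\theta} \leqslant 0$ with $y = \widetilde{\mathcal{E}}_\text{low}$. \fref{Lemma}{lemma:Bihari_hypocoercive_energy} applied with $\alpha_0 = \delta$ then gives
\begin{equation*}
	\widetilde{\mathcal{E}}_\text{low} (t) \leqslant \delta \brac{{\brac{\delta/\widetilde{\mathcal{E}}_\text{low}(0)}}^{1/\beta} + \tilde{C} t}^{-\beta}, \qquad \beta = \brac{1/\theta - 1}^{-1} = 2M-2.
\end{equation*}
Since $\widetilde{\mathcal{E}}_\text{low}(0) \leqslant \widetilde{\mathcal{E}}_M(0) \lesssim \overline{\mathcal{E}}_M(0) \leqslant \delta$, the bracketed quantity is bounded below by a positive constant multiple of $1+t$, yielding $\widetilde{\mathcal{E}}_\text{low}(t) \lesssim \delta (1+t)^{-(2M-2)}$, and hence the same bound on $ \mathcal{E}_\text{low}$ after using the comparisons once more.

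The main obstacle is the careful management of the various equivalent energies, since the cartoon argument outlined above only holds once the hypotheses of \fref{Proposition}{prop:imp_low_level_en_ds} and \fref{Lemma}{lemma:comp_version_en} are verified; this forces us to choose $\delta_\text{low}^*$ first (to activate the improvements and the spectral persistence), then $\delta_\text{low}$ (to make the Bihari prefactor give exactly the advertised decay), and then to check that the chain $ \mathcal{E}_\text{low} \asymp \widetilde{\mathcal{E}}_\text{low}$ holds uniformly on the time interval. No new analytic estimate is required; the work is entirely in the bookkeeping of constants and the order in which the smallness parameters are chosen.
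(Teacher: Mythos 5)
Your proposal is correct and follows essentially the same route as the paper's proof: energy--dissipation relation plus coercivity of $D$, improvement of the low-level energy and dissipation under the smallness of $\mathcal{E}_M$, absorption of $\overline{\mathcal{I}}_\text{low}$ into $\mathcal{D}_\text{low}$, then $\theta$-coercivity with $\overline{\mathcal{E}}_M \leqslant \delta$ feeding the Bihari-type lemma, and finally the energy comparisons to pass back to $\mathcal{E}_\text{low}$. The only nitpick is the phrase ``non-negative power $1-1/\theta<0$'' (the exponent is negative, which is exactly what makes the bound go the right way); otherwise the bookkeeping and the order of choosing $\delta_\text{low}^*$ and $\delta_\text{low}$ match the paper.
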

	\begin{proof}
		The strategy of the proof is as follows.
		We combine the energy-dissipation relation, the control of the interactions, and the improvement of the dissipation to see that
		$
			\frac{\mathrm{d}}{\mathrm{d}t} \widetilde{\mathcal{E}}_\text{low} + \mathcal{D}_\text{low} \leqslant 0.
		$
		This differential inequality can then be coupled with the $\theta$-coercivity and the improvement of the low level energy to deduce the result.

		More precisely, recall that, by \fref{Lemma}{lemma:record_form_interactions} and \fref{Lemma}{lemma:coercivity_dissip},
		$
			\frac{\mathrm{d}}{\mathrm{d}t} \widetilde{\mathcal{E}}_\text{low} + \overline{\mathcal{D}}_\text{low} \lesssim \overline{\mathcal{I}}_\text{low}.
		$
		Since
		\begin{equation}
		\label{eq:main_a_priori_thm_eq1_clean}
			\sup_{0\leqslant t\leqslant T} \norm{(u,\theta)}{H^3}^2 + \norm{J}{H^3}^2 + \norm{\pdt(u,\theta)}{H^2}^2 + \norm{\pdt J}{H^2}^2
			\leqslant \sup_{0\leqslant t\leqslant T} \mathcal{E}_M (t) \leqslant 1
		\end{equation}
		it follows from \fref{Proposition}{prop:imp_low_level_en_ds} and \fref{Corollary}{cor:control_low_level_interactions} that
		$ \overline{\mathcal{D}}_\text{low} \gtrsim \mathcal{D}_\text{low}$ and $\abs{ \overline{\mathcal{I}}_\text{low} } \lesssim \mathcal{E}_M^{1/2} \mathcal{D}_\text{low}$.
		Therefore there exists $C_{ED} > 0$ such that
		$
			\frac{\mathrm{d}}{\mathrm{d}t} \widetilde{\mathcal{E}}_\text{low} + \mathcal{D}_\text{low} \lesssim C_{ED} \mathcal{E}_M^{1/2} \mathcal{D}_\text{low}.
		$
		In particular if $\delta_\text{low}^* > 0$ is chosen sufficiently small to ensure that $C_{ED} {\brac{\delta_\text{low}^*}}^{1/2} \leqslant \frac{1}{2} $ then
		$
			\frac{\mathrm{d}}{\mathrm{d}t} \widetilde{\mathcal{E}}_\text{low} + \frac{1}{2} \mathcal{D}_\text{low} \leqslant 0.
		$
		Now note that, as a consequence of \eqref{eq:main_a_priori_thm_eq1_clean}, \fref{Proposition}{prop:persist_spec_sols_adv_rot_eqtns} and \fref{Lemma}{lemma:comp_version_en}
		tell us that
		\begin{equation}
		\label{eq:main_a_priori_thm_eq2_clean}
			\frac{c_E}{2} \overline{\mathcal{E}}_\text{low} \leqslant \widetilde{\mathcal{E}}_\text{low} \leqslant \frac{C_E}{2} \overline{\mathcal{E}}_\text{low}.
		\end{equation}
		We may combine this with \fref{Lemma}{lemma:hypocoercivity} to deduce that, for $\theta = \frac{2M-2}{2M-1}$,
		$
			\widetilde{\mathcal{E}}_\text{low} \lesssim \overline{\mathcal{E}}_\text{low} \lesssim \overline{\mathcal{E}}_M^{1-\theta} \mathcal{D}_\text{low}^{\theta}
		$
		and hence there exists a constant $C>0$ such that
		$
			\frac{\mathrm{d}}{\mathrm{d}t} \widetilde{\mathcal{E}}_\text{low} + C \widetilde{\mathcal{E}}_\text{low}^{1/\theta} \delta^{1-\frac{1}{\theta}} \leqslant 0.
		$
		We deduce from \eqref{eq:main_a_priori_thm_eq1_clean} that $\theta$ and $J$ are sufficiently regular
		for $t \mapsto \widetilde{\mathcal{E}}_\text{low} (t)$ to be continuously differentiable.
		Applying \fref{Lemma}{lemma:Bihari_hypocoercive_energy} thus tells us that, for $0 \leqslant t \leqslant T$
		\begin{equation*}
			\widetilde{\mathcal{E}}_\text{low} (t) \lesssim \delta {\brac{ {\brac{ \frac{\delta}{ \widetilde{\mathcal{E}}_\text{low}(0) } }}^{1/\beta} + \tilde{C}t }}^{-\beta}
		\end{equation*}
		for some $\tilde{C} > 0$ and for $\beta = \frac{1}{\frac{1}{\theta} - 1} = 2M-2$.
		Using \eqref{eq:main_a_priori_thm_eq2_clean} once again we note that
		$
			\widetilde{\mathcal{E}}_\text{low}(0) \leqslant \frac{C_E}{2} \overline{\mathcal{E}}_\text{low} (0) \leqslant \frac{C_E \delta}{2}
		$
		and hence $\frac{\delta}{ \widetilde{\mathcal{E}}_\text{low} (0) } \geqslant \frac{2}{C_E}$ such that
		$ \widetilde{\mathcal{E}}_\text{low} \lesssim \delta {\brac{1+t}}^{-(2M-2)}$.
		To conclude this step, note that combining \eqref{eq:main_a_priori_thm_eq1_clean} and \fref{Proposition}{prop:imp_low_level_en_ds} tells us that
		$ \mathcal{E}_\text{low} \lesssim \overline{\mathcal{E}}_\text{low}$ and hence, in light of \eqref{eq:main_a_priori_thm_eq2_clean}, we deduce that
		$
			\mathcal{E}_\text{low} (t) {\brac{1+t}}^{2M-2} \lesssim \delta \text{ for } 0 \leqslant t \leqslant T.\qedhere
		$
	\end{proof}

\subsection{Closing the estimates at the high level}
\label{sec:close_at_high}

	In this section we consider the third of the four building blocks of the scheme of a priori estimates and close the energy estimates at the high level.
	This section is structured similarly, but not identically, to \fref{Section}{sec:close_at_low} above where we close the energy estimates at the \emph{low} level.
	The differences are due to the fact that at the high level the improvements to the dissipation and the estimates of the interactions
	only hold in a time-integrated sense, and not pointwise-in-time as was the case at the low level.
	This means that by contrast with the low level, where the auxiliary estimates relied on product estimates (c.f. \fref{Lemma}{lemma:prod_est})
	here at the high level the auxiliary estimates rely instead on high-low estimates (c.f. \fref{Corollary}{cor:est_interactions_L_2_via_Gagliardo_Nirenberg}).
	Recall that the functionals $ \mathcal{E}_M $ and $ \mathcal{F}_M$, $\overline{\mathcal{K}}_I$ and $ \mathcal{K}_\text{low}$, and $ \overline{\mathcal{D}}_M$, which
	will be used throughout this section, are defined in \eqref{eq:not_E_M_and_F_M}, \eqref{eq:not_K_bar}, and \eqref{eq:not_D_M}, respectively.

	We begin with auxiliary estimates for $a$ whose purpose is to allow the improvement of the high level dissipation.

	\begin{lemma}[Auxiliary estimate for $a$]
	\label{lemma:aux_est_a_high_level}
		Suppose that \eqref{eq:pertub_sys_no_ten_pdt_theta} holds.
		For any $k\in\N$ and any $s > \frac{3}{2}$ such that $s\geqslant k$, we have the estimate
		\begin{align*}
			\norm{a}{H^k}
			\lesssim \brac{
				\normtyp{K}{L}{\infty} + \normtyp{\theta}{L}{\infty} + \normtyp{\nabla\theta}{L}{\infty} + \normtyp{u}{H}{s} + \normtyp{\theta}{H}{s}
			} \normtyp{\brac{u,\theta}}{P}{k+2}
		\\
			+ \brac{
				\normtyp{u}{L}{\infty} + \normtyp{\theta}{L}{\infty} + \normtyp{\pdt\theta}{L}{\infty}
			} \normtyp{K}{H}{k}
		\end{align*}
	\end{lemma}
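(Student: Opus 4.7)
The plan is to proceed in close analogy with the proof of \fref{Lemma}{lemma:aux_est_a_low_level}, isolating $\tilde{\tau}^2 \tilde{a}^\perp$ algebraically in \eqref{eq:pertub_sys_no_ten_pdt_theta} and then applying $\partial^\alpha$ for $\abs{\alpha} \leqslant k$ and taking $L^2$ norms. The key difference from the low-level version is that here I would use the high-low Gagliardo--Nirenberg estimates of \fref{Corollary}{cor:est_interactions_L_2_via_Gagliardo_Nirenberg} in place of pointwise product estimates, in order to reveal the precise two-block structure of the target bound, in which one block carries the parabolic norm $\normtyp{(u,\theta)}{P}{k+2}$ and the other carries $\normtyp{K}{H}{k}$.

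The isolation step is identical to that used in \fref{Lemma}{lemma:aux_est_a_low_level}: using $\omega_{eq}\times J_{eq}\omega_{eq}=0$ and $\omega_{eq}\times K\omega_{eq}=\tilde{\tau}^2\tilde{a}^\perp$, one rewrites the precession term in \eqref{eq:pertub_sys_no_ten_pdt_theta} so that $\tilde{\tau}^2\tilde{a}^\perp$ is expressed as a linear combination of $\pdt\theta$, $u$, $\theta$, $\nabla\times u$, $\nabla(\nabla\cdot\theta)$, $\Delta\theta$, together with bilinear expressions in $K$, $\theta$, and $\omega_{eq}$, plus a single cubic convective remainder of the form $K(u\cdot\nabla\theta)$. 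Applying a purely spatial $\partial^\alpha$ with $\abs{\alpha}\leqslant k$ and taking $L^2$ norms, the linear contributions yield $\normtyp{\pdt\theta}{H}{k}+\normtyp{u}{H}{k+1}+\normtyp{\theta}{H}{k+2}$, which is absorbed into $\normtyp{(u,\theta)}{P}{k+2}$ by the parabolic counting $\abs{\alpha}_P=2\alpha_0+\abs{\bar{\alpha}}$.

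For each nonlinear term I would invoke \fref{Corollary}{cor:est_interactions_L_2_via_Gagliardo_Nirenberg} to distribute $\partial^\alpha$ across the factors, with the routing dictated by the dichotomy in the target bound. Either the $k$ derivatives are placed on a $u$ or $\theta$ factor, producing $\normtyp{(u,\theta)}{P}{k+2}$ paired with an $L^\infty$ coefficient of $K$, $\theta$, or $\nabla\theta$ (the contributions to the first block), or they are placed on $K$, producing $\normtyp{K}{H}{k}$ paired with an $L^\infty$ norm of $u$, $\theta$, or $\pdt\theta$ (the contributions to the second block). For the cubic term $K(u\cdot\nabla\theta)$, one of the three factors is absorbed via the algebra embedding $H^s\hookrightarrow L^\infty$ afforded by $s>\frac{3}{2}$, which is precisely what accounts for the $\normtyp{u}{H}{s}+\normtyp{\theta}{H}{s}$ contributions on the first-block coefficient.

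The main obstacle is bookkeeping: ensuring that every placement of derivatives respects the asymmetry between $(u,\theta)$ and $K$ -- no derivative above $L^\infty$ may land on $K$ when it is paired with $\normtyp{K}{H}{k}$, since no higher-regularity control of $K$ is available from the first block -- and that the convective cubic $K(u\cdot\nabla\theta)$ splits correctly so that its ``second-block'' contribution pairs $\normtyp{K}{H}{k}$ only with $\normtyp{u}{L}{\infty}$ or $\normtyp{\theta}{L}{\infty}$ (the $\nabla\theta$ factor then having to sit inside an $H^s$ norm on the first-block coefficient). A systematic case-by-case application of \fref{Corollary}{cor:est_interactions_L_2_via_Gagliardo_Nirenberg} to each bilinear and cubic interaction then yields the estimate.
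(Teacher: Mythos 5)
Your proposal follows essentially the same route as the paper's (very terse) proof: isolate $\tilde{\tau}^2\tilde{a}^\perp$ in \eqref{eq:pertub_sys_no_ten_pdt_theta} exactly as in \fref{Lemma}{lemma:aux_est_a_low_level}, then estimate the nonlinearities with the high-low bounds of \fref{Corollary}{cor:est_interactions_L_2_via_Gagliardo_Nirenberg} rather than the product estimate, handling cubic terms by the crude splitting $\norm{fgh}{H^k}\lesssim \norm{f}{L^\infty}\norm{gh}{H^k}+\norm{f}{H^k}\norm{gh}{L^\infty}$ together with \fref{Lemma}{lemma:prod_est} and the embedding $H^s\hookrightarrow L^\infty$. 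One small bookkeeping correction: the isolated right-hand side contains not only the convective cubic $K(u\cdot\nabla\theta)$ but also the cubic precession contribution $\theta\times K\theta$, which is treated by the same crude cubic splitting.
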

	\begin{proof}
		This estimate is similar to that of \fref{Lemma}{lemma:aux_est_a_low_level} since we begin by isolating $a$ in \eqref{eq:pertub_sys_no_ten_pdt_theta}.
		However we then use the high-low estimate of \fref{Corollary}{cor:est_interactions_L_2_via_Gagliardo_Nirenberg} instead of the product estimate of \fref{Lemma}{lemma:prod_est}
		to estimate the nonlinearities.
		Note that for cubic terms we perform high-low estimates in the following crude fashion:
		$
			\normtyp{fgh}{H}{k} \lesssim \normtyp{f}{H}{\infty} \normtyp{gh}{H}{k} + \normtyp{f}{H}{k} \normtyp{gh}{L}{\infty},
		$
		and then use \fref{Lemma}{lemma:prod_est} and the fact that $L^\infty$ is a Banach algebra.
	\end{proof}

	We continue our sequence of auxiliary estimates for $a$ with an estimate on its time derivative.

	\begin{lemma}[Auxiliary estimate for $\pdt a$]
	\label{lemma:aux_est_pdt_a_high_level}
		Suppose that \eqref{eq:pertub_sys_pdt_a} holds.
		For any $k\in\N$ and any $s > \frac{3}{2}$ such that $s\geqslant k$, we have the estimate
		\begin{align*}
			\norm{\pdt a}{H^k}
			\lesssim \brac{1 + \norm{u}{H^s} + \norm{\theta}{H^s}} \norm{a}{H^{k+1}}
			+ \brac{1 + \norm{K}{L^\infty}} \norm{\theta}{H^k}
			+ \norm{\theta}{L^\infty} \norm{K}{H^k}.
		\end{align*}
	\end{lemma}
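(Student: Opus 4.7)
The strategy mirrors that of \fref{Lemma}{lemma:aux_est_pdt_a_low_level}: isolate $\pdt a$ in \eqref{eq:pertub_sys_pdt_a} and bound each resulting term in $H^k$. Writing
\begin{equation*}
\pdt a = - u \cdot \nabla a - (\nu - \lambda) \bar{\theta}^{\perp} + (\bar K - K_{33} I_2) \bar{\theta}^{\perp} + \tilde{\tau}\, a^{\perp} + \theta_3 a^{\perp},
\end{equation*}
we see five contributions that we must control in $H^k$, and the target estimate is tailored to accommodate each of them.

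The linear terms are immediate: $\norm{\bar{\theta}^{\perp}}{H^k} \lesssim \norm{\theta}{H^k}$ contributes to the second factor on the right-hand side (with the ``$1$'' sitting in $1 + \norm{K}{L^\infty}$), while $\norm{\tilde{\tau} a^{\perp}}{H^k} \lesssim \norm{a}{H^k} \leqslant \norm{a}{H^{k+1}}$ is absorbed into the $\norm{a}{H^{k+1}}$ factor accompanied by the ``$1$''. The advective term $u \cdot \nabla a$ is handled by the high-low product estimate of \fref{Corollary}{cor:est_interactions_L_2_via_Gagliardo_Nirenberg}, which together with $s > 3/2$ and $s \geqslant k$ gives
\begin{equation*}
\norm{u \cdot \nabla a}{H^k} \lesssim \norm{u}{H^s} \norm{\nabla a}{H^k} \lesssim \norm{u}{H^s} \norm{a}{H^{k+1}}.
\end{equation*}
Similarly, the quadratic term $\theta_3 a^{\perp}$ obeys $\norm{\theta_3 a^{\perp}}{H^k} \lesssim \norm{\theta}{H^s} \norm{a}{H^k} \leqslant \norm{\theta}{H^s} \norm{a}{H^{k+1}}$, which again fits into the first factor on the right-hand side of the claim.

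The only remaining term is the genuine nonlinear interaction $(\bar K - K_{33} I_2) \bar{\theta}^{\perp}$ between the microinertia and the angular velocity, which schematically reads $K \theta$. Here we apply a symmetric high-low split: since $L^\infty$ is a Banach algebra and $s > 3/2$, \fref{Corollary}{cor:est_interactions_L_2_via_Gagliardo_Nirenberg} (or an application of \fref{Lemma}{lemma:prod_est} combined with Sobolev embedding) yields
\begin{equation*}
\norm{K \theta}{H^k} \lesssim \norm{K}{L^\infty} \norm{\theta}{H^k} + \norm{\theta}{L^\infty} \norm{K}{H^k},
\end{equation*}
which is precisely the structure needed to populate the last two groups of terms in the claimed bound. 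Summing the five contributions gives the result; no obstacles are anticipated since every estimate is a direct product estimate in which the derivative count on $a$ is at most $k+1$ and the regularity thresholds are automatically met by the hypotheses $s > 3/2$ and $s \geqslant k$.
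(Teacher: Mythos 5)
Your proof is correct and takes the same route as the paper, which itself gives only a one-line sketch of precisely this argument: isolate $\pdt a$ in \eqref{eq:pertub_sys_pdt_a} and estimate term by term using \fref{Corollary}{cor:est_interactions_L_2_via_Gagliardo_Nirenberg} and \fref{Lemma}{lemma:prod_est}. One small bookkeeping note: for the transport term $u\cdot\nabla a$ the estimate $\norm{u\cdot\nabla a}{H^k}\lesssim \norm{u}{H^s}\norm{a}{H^{k+1}}$ comes from the multiplier bound of \fref{Lemma}{lemma:prod_est} rather than from \fref{Corollary}{cor:est_interactions_L_2_via_Gagliardo_Nirenberg} (the latter would produce a $\norm{\nabla a}{L^\infty}$ factor that is not controlled by $\norm{a}{H^{k+1}}$ for small $k$), but this is merely a mislabeled citation and does not affect the validity of the argument.
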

	\begin{proof}
		This follows immediately from isolating $\pdt a$ in \eqref{eq:pertub_sys_pdt_a} and using \fref{Corollary}{cor:est_interactions_L_2_via_Gagliardo_Nirenberg}
		and \fref{Lemma}{lemma:prod_est}.
	\end{proof}

	We conclude our sequence of auxiliary estimates on $a$ with estimates on its higher-order temporal derivatives.

	\begin{lemma}[Auxiliary estimate for $\pdt^j a$]
	\label{lemma:aux_est_pdt_j_a_high_level}
		Suppose that \eqref{eq:pertub_sys_pdt_a} holds. For any $k\in\N$, any $s > \frac{3}{2}$, and any $j\geqslant 1$, if $s\geqslant k$ then
		\begin{equation*}
			\normns{\pdt^j a}{H^k} \lesssim
			\brac{
				1 + \sum_{l=0}^{j-1} \normns{\pdt^l \brac{u, \theta}}{H^s}
			}\brac{
				\sum_{l=0}^{j-1} \normns{\pdt^l a}{H^{k+1}}
			} + \brac{
				1 + \sum_{l=0}^{j-1} \normns{\pdt^l K}{H^s}
			}\brac{
				\sum_{l=0}^{j-1} \normns{\pdt^l \theta}{H^k}
			}.
		\end{equation*}
	\end{lemma}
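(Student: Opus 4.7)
The plan is to differentiate \eqref{eq:pertub_sys_pdt_a} in time $j-1$ times and estimate the resulting expression in $H^k$ via high--low product estimates. First I would isolate $\pdt a$ in \eqref{eq:pertub_sys_pdt_a}, then apply $\pdt^{j-1}$ together with the Leibniz rule to obtain
\begin{align*}
\pdt^j a &= -\sum_{l=0}^{j-1} \binom{j-1}{l} \pdt^{j-1-l}u\cdot\nabla\pdt^l a - (\nu-\lambda)\pdt^{j-1}\bar{\theta}^\perp + \tilde{\tau}\, \pdt^{j-1} a^\perp \\
&\quad + \sum_{l=0}^{j-1} \binom{j-1}{l} \pdt^{j-1-l}(\bar{K} - K_{33}I_2)\, \pdt^l\bar{\theta}^\perp + \sum_{l=0}^{j-1} \binom{j-1}{l}\, \pdt^{j-1-l}\theta_3\, \pdt^l a^\perp.
\end{align*}

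I would then bound each term in $H^k$. The two linear contributions $(\nu-\lambda)\pdt^{j-1}\bar{\theta}^\perp$ and $\tilde{\tau}\,\pdt^{j-1} a^\perp$ produce $\normns{\pdt^{j-1}\theta}{H^k}$ and $\normns{\pdt^{j-1}a}{H^k} \leqslant \normns{\pdt^{j-1}a}{H^{k+1}}$ respectively, both of which are absorbed into the $1\cdot\brac{\,\sum\,}$ part of the corresponding bracket. For the three families of bilinear terms I would invoke \fref{Corollary}{cor:est_interactions_L_2_via_Gagliardo_Nirenberg} (which, given $s > 3/2$ and $s \geqslant k$, furnishes the Moser-type bound $\normns{fg}{H^k} \lesssim \normns{f}{H^s}\normns{g}{H^k}$), placing the slowly-varying coefficient in $H^s$ and the dynamic factor in $H^k$. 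Concretely: for the transport term, $\pdt^{j-1-l}u \in H^s$ and $\nabla \pdt^l a \in H^k$, giving $\normns{\pdt^{j-1-l}u}{H^s}\normns{\pdt^l a}{H^{k+1}}$; for the $(\bar{K} - K_{33}I_2)\bar{\theta}^\perp$ term, $\pdt^{j-1-l}K \in H^s$ and $\pdt^l\theta \in H^k$; and for the $\theta_3\, a^\perp$ term, $\pdt^{j-1-l}\theta \in H^s$ and $\pdt^l a \in H^k \subseteq H^{k+1}$.

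Summing over $l$ and regrouping the resulting contributions according to whether the $H^s$-factor belongs to the $(u,\theta)$-family or to the $K$-family then yields the claimed inequality, with the additive $1$'s inside each bracket encoding the linear contributions. I do not expect a genuine obstacle: the Leibniz identity is explicit, every resulting term drops cleanly into the structure dictated by the right-hand side, and the hypothesis $s > 3/2$ with $s \geqslant k$ is precisely what the product estimate consumes. The only bookkeeping point worth flagging is to let the spatial derivative in the transport term land on $a$ (so that the bound features $\normns{\pdt^l a}{H^{k+1}}$, matching the first bracket exactly) rather than on $u$, where it would force a stronger norm on the "slow" factor and spoil the matching with the statement.
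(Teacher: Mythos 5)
Your proof is correct and mirrors the paper's argument: apply $\pdt^{j-1}$ to \eqref{eq:pertub_sys_pdt_a}, expand with the Leibniz rule, and bound each bilinear term with the asymmetric product estimate $\normns{fg}{H^k}\lesssim\normns{f}{H^s}\normns{g}{H^k}$, placing the slow factor in $H^s$. One small correction: that product estimate is Lemma~\ref{lemma:prod_est}, not Corollary~\ref{cor:est_interactions_L_2_via_Gagliardo_Nirenberg}. The Corollary gives the $L^\infty$-based high-low bound $\normns{fg}{H^k}\lesssim\normns{f}{L^\infty}\normns{g}{H^k}+\normns{f}{H^k}\normns{g}{L^\infty}$, which does not by itself yield your asymmetric bound when $k\leqslant 3/2$ (one cannot then control $\normns{g}{L^\infty}$ by $\normns{g}{H^k}$); Lemma~\ref{lemma:prod_est} is what actually furnishes $\normns{fg}{H^k}\lesssim\normns{f}{H^s}\normns{g}{H^k}$, and it is the result the paper cites. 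Also a notational slip: you wrote $H^k\subseteq H^{k+1}$, but the inclusion runs the other way; the norm inequality $\normns{\pdt^l a}{H^k}\leqslant\normns{\pdt^l a}{H^{k+1}}$ that you actually rely on is correct.
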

	\begin{proof}
		This is immediate upon applying $j-1$ temporal derivatives to \eqref{eq:pertub_sys_pdt_a} and using \fref{Lemma}{lemma:prod_est} to estimate the nonlinearities.
	\end{proof}

	With these various auxiliary estimates on $a$ in hand we may now improve the high level dissipation.
	Recall that $ \mathcal{D}_M^a$ is defined in \eqref{eq:not_D_M}.

	\begin{prop}[Improvement of the dissipation at the high level]
	\label{prop:imp_hig_level_ds}
		If $M \geqslant 3$ and $ \mathcal{E}_M \leqslant 1$ then
		$
			\mathcal{D}_M^a \lesssim \overline{\mathcal{D}}_M^{1/2} + \normns{(u, \theta, \pdt\theta)}{L^\infty} \mathcal{F}_M^{1/2}.
		$
	\end{prop}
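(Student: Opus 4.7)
The plan is to handle each of the summands in $\mathcal{D}_M^a$ separately, applying the three auxiliary estimates for $a$ (\fref{Lemma}{lemma:aux_est_a_high_level}, \fref{Lemma}{lemma:aux_est_pdt_a_high_level}, and \fref{Lemma}{lemma:aux_est_pdt_j_a_high_level}) with derivative counts tuned to match the definition of $\mathcal{D}_M^a$ in \eqref{eq:not_D_M}. The guiding principle throughout is that whenever a factor of $(u,\theta)$ appears with $k+2$ spatial derivatives or with $k$ derivatives and one time derivative, the parabolic derivative count is exactly $2M+1$, so it matches $\overline{\mathcal{D}}_M$; meanwhile, any factor of $K$ that appears must be paired with an $L^\infty$ prefactor so that the ``bad'' high-regularity $K$-terms are absorbed into $\mathcal{F}_M^{1/2}$ rather than into the sub-optimally controlled $\mathcal{E}_M^{(K)}$.

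First, for the $j=0$ term $\|a\|_{H^{2M-1}}$, I would apply \fref{Lemma}{lemma:aux_est_a_high_level} with $k=2M-1$ and $s=2$ (legitimate since $M \geq 3$ so $2M-1 \geq 5 \geq s$... actually we need $s \geq k$, so take $s=2M-1$ and use $\mathcal{E}_M \leq 1$ to dominate the resulting $H^s$ norms of $u,\theta$ by $1$). The bound produces $\|(u,\theta)\|_{P^{2M+1}} = \overline{\mathcal{D}}_M^{1/2}$ times a prefactor controlled by $\mathcal{E}_M \leq 1$, plus the term $\|(u,\theta,\partial_t\theta)\|_{L^\infty}\|K\|_{H^{2M-1}}$, and since $\|K\|_{H^{2M-1}} \leq \|K\|_{H^{2M+1}} \leq \mathcal{F}_M^{1/2}$ this slots into the desired form. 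For $j=1$, apply \fref{Lemma}{lemma:aux_est_pdt_a_high_level} with $k=2M-2$ to reduce $\|\partial_t a\|_{H^{2M-2}}$ to $\|a\|_{H^{2M-1}}$ (already done), to $\|\theta\|_{H^{2M-2}} \lesssim \overline{\mathcal{D}}_M^{1/2}$, and to $\|\theta\|_{L^\infty}\|K\|_{H^{2M-2}} \lesssim \|\theta\|_{L^\infty} \mathcal{F}_M^{1/2}$.

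For $j=2,3$, \fref{Lemma}{lemma:aux_est_pdt_j_a_high_level} reduces $\|\partial_t^j a\|_{H^{2M-j-1}}$ inductively to norms of $\partial_t^l a$ at one-higher spatial regularity for $l<j$ (handled by the preceding steps), plus terms $\|\partial_t^l \theta\|_{H^{2M-j-1}}$ which are controlled by $\overline{\mathcal{D}}_M^{1/2}$ once one checks that the parabolic count $2\ell+(2M-j-1) \leq 2M+1$ whenever $\ell \leq j \leq 3$, and plus $K$-terms of the form $\|\partial_t^l K\|_{H^{2M-j-1}}$ paired with $\|\partial_t^{j-1-l}\theta\|_{L^\infty}$; these $K$-terms are absorbed into $\mathcal{F}_M^{1/2}$ (for $l \leq 2$, using the definition of $\mathcal{F}_M$) or into $\mathcal{E}_M^{(K)\,1/2}$ and hence into $\mathcal{E}_M^{1/2} \lesssim 1$, with the $L^\infty$-multiplier belonging to the displayed prefactor $\|(u,\theta,\partial_t\theta)\|_{L^\infty}$. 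For $j \geq 4$, apply \fref{Lemma}{lemma:aux_est_pdt_j_a_high_level} with $k=2M-2j+3$; here the derivative counts sit comfortably inside the ranges covered by $\mathcal{E}_M^{(K)}$ and $\overline{\mathcal{D}}_M$, since $2\ell + (2M-2j+3) \leq 2M+1$ for $\ell \leq j-1$, and no further $\mathcal{F}_M$ contribution is needed.

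The main obstacle is the careful bookkeeping at $j=0,1,2,3$: these are precisely the indices for which the spatial derivative count on $K$ can exceed $2M-3$ (the ceiling provided by $\mathcal{E}_M^{(K)}$), which is why one must pay the price of an $L^\infty$ prefactor to cash in $\mathcal{F}_M^{1/2}$. One has to verify in each of these low-$j$ cases that every cubic or quadratic term in the auxiliary lemmas can be split so that $K$ (or $\partial_t K$) takes the high-derivative slot and the other factor lives in $L^\infty$, producing exactly the prefactor $\|(u,\theta,\partial_t\theta)\|_{L^\infty}$ advertised in the statement. Combining the estimates for $j=0,\dots,M$ and summing (using $\mathcal{E}_M \leq 1$ to absorb all prefactors of the form $1+\mathcal{E}_M^{1/2}$) yields the asserted bound.
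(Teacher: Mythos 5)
Your proposal is correct and follows essentially the same route as the paper's proof: first bound $\norm{a}{H^{2M-1}}$ and $\norm{\pdt a}{H^{2M-2}}$ by the right-hand side using Lemmas \ref{lemma:aux_est_a_high_level} and \ref{lemma:aux_est_pdt_a_high_level} (with $\mathcal{E}_M \leqslant 1$ and $H^2 \hookrightarrow L^\infty$ absorbing the prefactors, and $\normtyp{K}{H}{k}\leqslant \mathcal{F}_M^{1/2}$ paired with the $L^\infty$ factor), then handle $j\geqslant 2$ by induction via Lemma \ref{lemma:aux_est_pdt_j_a_high_level}. The only slight overstatement is that $j=2,3$ also need $\mathcal{F}_M$: for all $j\geqslant 2$ the relevant spatial counts on $K$ stay at or below $2M-3$ (choosing $s\leqslant 2M-3$ in the lemma), so only $j=0,1$ ever produce the $\normns{(u,\theta,\pdt\theta)}{L^\infty}\mathcal{F}_M^{1/2}$ term, as you in fact allow for with your alternative absorption into $\mathcal{E}_M^{1/2}\lesssim 1$.
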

	\begin{proof}
		For simplicity, we will write $d \defeq \overline{\mathcal{D}}_M^{1/2} + \norm{(u, \theta, \pdt\theta)}{L^\infty} \mathcal{F}_M^{1/2}$
		for the right-hand side of the inequality we are after.
		Since $M\geqslant 3$, since $H^2\brac{\T^3} \hookrightarrow L^\infty\brac{\T^3}$, and since $\mathcal{E}_M \leqslant 1$,
		\fref{Lemma}{lemma:aux_est_a_high_level} tells us that
		$
			\normtyp{a}{H}{2M-1}
			\lesssim d
			$
		and consequently \fref{Lemma}{lemma:aux_est_pdt_a_high_level} says that
		$
			\normtyp{\pdt a}{H}{2M-2}
			\lesssim d.
		$
		To see that
		\begin{equation*}
			\sum_{j=2}^3 \normtypns{\pdt^j a}{H}{2M-j-1} + \sum_{j=4}^M \normtypns{\pdt^j a}{H}{2M-2j+3}
			\lesssim d
		\end{equation*}
		and thus conclude the proof it suffices to prove by induction that
		\begin{equation*}
			\normtypns{\pdt^j a}{H}{k\brac{j}} \lesssim d
			\text{ for }
			k\brac{j} = \begin{cases}
				2M-j-1	&\text{if } j = 2 \text{ or } j=3, \text{ and }\\
				2M-2j+3 &\text{if } j = 4,\, \dots,\, M.
			\end{cases}
		\end{equation*}
		This induction argument is immediate: the base cases $j=0$ and $j=1$ were taken care of above, and the induction step is precisely given by \fref{Lemma}{lemma:aux_est_pdt_j_a_high_level}.
	\end{proof}

	We now turn our attention towards the control of the high level interactions.
	First we record a technical lemma used to control derivatives of $K$.

	\begin{lemma}
	\label{lemma:control_K_in_L_4_few_spatial_derivatives}
		If $\alpha\in\N^{1+3}$ satisfies $\abs{\alpha}_P \leqslant 2M$ and $\abs{\bar{\alpha}}\leqslant 2M-4$ then $ \normtyp{ \partial^\alpha K}{L}{4} \lesssim \mathcal{E}_M^{1/2}$.
	\end{lemma}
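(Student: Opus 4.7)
The plan is to control $\partial^\alpha K$ in $L^4$ via the Sobolev embedding $H^1(\T^3) \hookrightarrow L^4(\T^3)$, which reduces the problem to bounding $\partial^\alpha K$ in $H^1$, i.e., bounding $\partial_t^{\alpha_0} K$ in $H^{|\bar{\alpha}|+1}$. I would then simply check, in each case for $\alpha_0$, that the required Sobolev index is admitted by the energy $\mathcal{E}_M^{(K)}$ as defined in \eqref{eq:not_E_M_K}.

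More precisely, I would split into the following cases based on $\alpha_0$. When $\alpha_0 \in \{0,1,2\}$, $\mathcal{E}_M^{(K)}$ controls $\partial_t^{\alpha_0} K$ in $H^{2M-3}$, and the assumption $|\bar{\alpha}| \leqslant 2M-4$ yields $|\bar{\alpha}|+1 \leqslant 2M-3$, so
\[
\normtyp{\partial^\alpha K}{L}{4} \lesssim \normtyp{\partial^\alpha K}{H}{1} \lesssim \normtypns{\partial_t^{\alpha_0} K}{H}{2M-3} \lesssim \mathcal{E}_M^{1/2}.
\]
When $\alpha_0 = j \geqslant 3$, $\mathcal{E}_M^{(K)}$ controls $\partial_t^j K$ in $H^{2M-2j+2}$, and the parabolic bound $|\alpha|_P \leqslant 2M$ automatically enforces $|\bar{\alpha}| \leqslant 2M-2j$, so $|\bar{\alpha}|+1 \leqslant 2M-2j+2$ and
\[
\normtyp{\partial^\alpha K}{L}{4} \lesssim \normtyp{\partial^\alpha K}{H}{1} \lesssim \normtypns{\partial_t^{j} K}{H}{2M-2j+2} \lesssim \mathcal{E}_M^{1/2}.
\]

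There is no serious obstacle here: the lemma is essentially a bookkeeping check matching the regularity distribution built into $\mathcal{E}_M^{(K)}$ against the regularity required for $L^4$ control via Sobolev embedding. The only point worth noting is that the hypothesis $|\bar{\alpha}| \leqslant 2M-4$ is precisely the extra slack needed to handle the low-temporal-derivative cases $\alpha_0 \in \{0,1,2\}$, where the Sobolev regularity of $K$ inside $\mathcal{E}_M^{(K)}$ is only $2M-3$ rather than the full $2M$ one has for $u$ and $\theta$; for $\alpha_0 \geqslant 3$ the condition is redundant.
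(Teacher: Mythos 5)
Your proof is correct and follows essentially the same route as the paper: a case split on the number of temporal derivatives, the embedding $H^1(\T^3)\hookrightarrow L^4(\T^3)$, and a bookkeeping check against the regularity indices built into $\mathcal{E}_M^{(K)}$. The only cosmetic difference is that the paper groups the cases $\alpha_0\geqslant 2$ together by bounding $\normtyp{\pdt^2 K}{P}{1+\abs{\alpha}_P-4}$, whereas you handle $\alpha_0=2$ with the $H^{2M-3}$ bound directly; the content is the same.
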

	\begin{proof}
		We split the proof into two cases depending on the number of temporal derivatives hitting $K$.
		Suppose first that $\alpha_0 \leqslant 1$. Then the estimate is immediate:
		$
			\normtyp{ \partial_x^{\bar{\alpha}} K}{L}{4} \lesssim \normtyp{K}{H}{1+(2M-4)} \lesssim \mathcal{E}_M^{1/2}
		$
		and
		$
			\normtyp{ \pdt \partial_x^{\bar{\alpha}} K}{L}{4} \lesssim \normtyp{\pdt K}{H}{1+(2M-4)} \lesssim \mathcal{E}_M^{1/2} .
		$
		Suppose now that $\alpha_0 \geqslant 2$. The estimate in this case follows from the fact that 
		$\pdt^j K$ is controlled at parabolic order $2M+1$ when $j\geqslant 2$.
		Therefore, since $1 + \abs{\alpha}_P - 4\leqslant 2M-3$ we may deduce that
		$
			\normtyp{ \partial^\alpha K}{L}{4} \lesssim \normtyp{\pdt^2 K}{P}{1+\abs{\alpha}_P-4} \lesssim \mathcal{E}_M^{1/2} .
		$
	\end{proof}

	We may now state and prove the estimate of the high level interactions.
	Recall that $ \mathcal{D}_M $ and $ \overline{\mathcal{I}}_M$ are defined in \eqref{eq:not_D_M} and \eqref{eq:not_I}, respectively.

	\begin{prop}[Control of the high-order interactions]
	\label{prop:control_high_order_interactions}
		Suppose that $M\geqslant 3$ and that $ \mathcal{E}_M \leqslant 1 $.
		Then
		$
			\abs{ \overline{\mathcal{I}}_M } \lesssim \mathcal{E}_M^{1/2} \mathcal{D}_M + \mathcal{K}_\text{low}^{1/2} \mathcal{F}_M^{1/2} \mathcal{D}_M^{1/2}.
		$
	\end{prop}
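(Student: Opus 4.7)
The plan is to estimate each of the eight families of interactions $\{\mathcal{I}_i^\alpha\}_{|\alpha|_P \leqslant 2M}$ given in \fref{Lemma}{lemma:record_form_interactions} separately, expand each commutator via Leibniz, and then place the resulting factors in $L^\infty$, $L^4$, and $L^2$ according to their regularity. The smallness hypothesis $\mathcal{E}_M \leqslant 1$ lets us absorb polynomial factors in $\mathcal{E}_M^{1/2}$, and the hypothesis $M \geqslant 3$ ensures the Sobolev embedding $H^2(\mathbb{T}^3) \hookrightarrow L^\infty$ supplies the low-order $L^\infty$ bounds. In particular, from the definition $\mathcal{K}_\text{low} = \overline{\mathcal{K}}_2 + \|\pdt^2\theta\|_{L^2}^2$ we extract $\|(u,\theta,a)\|_{L^\infty} + \|\pdt(u,\theta,a)\|_{L^\infty} \lesssim \overline{\mathcal{K}}_2^{1/2} \leqslant \mathcal{K}_\text{low}^{1/2}$, which supply the $L^\infty$-pairing for any low-regularity factor, while \fref{Lemma}{lemma:control_K_in_L_4_few_spatial_derivatives} supplies $\|\partial^\beta K\|_{L^4} \lesssim \mathcal{E}_M^{1/2}$ whenever $|\bar\beta| \leqslant 2M-4$.

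For the commutator interactions $\mathcal{I}_1^\alpha, \mathcal{I}_6^\alpha, \mathcal{I}_7^\alpha$, the Leibniz expansion generates triple integrals of the form $\int (\partial^\beta u) \cdot \partial^{\alpha-\beta+e_i}(\cdot) \cdot \partial^\alpha(\cdot)$ with $|\beta|\geqslant 1$. Placing the lowest factor in $L^\infty$ or $L^4$ via Gagliardo-Nirenberg and the remaining two in $L^2$ or $L^4$ yields cleanly the bound $\mathcal{E}_M^{1/2}\mathcal{D}_M$; the term $\mathcal{I}_6^\alpha$ in addition requires the integration-by-parts trick already used in \fref{Lemma}{lemma:careful_est_low_level_int} when $\partial^\alpha a$ would otherwise land in too strong a norm. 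The constant-coefficient commutator $\mathcal{I}_5^\alpha$ is immediate, since $J\omega_{eq}$ is constant in space-time and the commutator produces only lower-order spatial derivatives of $\theta$, which are bounded by $\mathcal{E}_M^{1/2}\|\partial^\alpha\theta\|_{L^2} \lesssim \mathcal{E}_M^{1/2}\mathcal{D}_M^{1/2}$ and absorbed by the smallness hypothesis.

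The delicate terms are $\mathcal{I}_2^\alpha, \mathcal{I}_3^\alpha, \mathcal{I}_4^\alpha$ (involving $J = J_{eq} + K$, where $J_{eq}$ commutes with $\partial^\alpha$ and only the $K$-part survives) and $\mathcal{I}_8^\alpha$. After Leibniz, each becomes a sum of triple products $\int \partial^\beta K \cdot \partial^\gamma(\text{velocity-like}) \cdot \partial^\alpha(\text{outer})$. When $|\bar\beta| \leqslant 2M-4$, \fref{Lemma}{lemma:control_K_in_L_4_few_spatial_derivatives} places $\partial^\beta K$ in $L^4$ and the remaining two factors in $L^4$ and $L^2$, producing the good bound $\mathcal{E}_M^{1/2}\mathcal{D}_M$. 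When $\beta$ carries almost all of $\alpha$'s derivatives (so that $\partial^\beta K$ sits at the $\mathcal{F}_M$-level rather than the $\mathcal{E}_M$-level), I would place $\partial^\beta K$ in $L^2$ bounded by $\mathcal{F}_M^{1/2}$, the outer factor $\partial^\alpha(\cdot)$ in $L^2$ bounded by $\mathcal{D}_M^{1/2}$, and the remaining middle factor in $L^\infty$: if that middle factor is $u$, $\theta$, $a$, $\pdt u$, $\pdt\theta$, or $\pdt a$, then it is bounded by $\overline{\mathcal{K}}_2^{1/2} \leqslant \mathcal{K}_\text{low}^{1/2}$; the only remaining case—arising precisely in $\mathcal{I}_2^\alpha$ when the commutator $[K\pdt,\partial^\alpha]$ leaves $\pdt^2\theta$ as the low factor—is handled by placing $\pdt^2\theta$ in $L^2$ (bounded by $\mathcal{K}_\text{low}^{1/2}$) and $\partial^\beta K$ in $L^\infty$ via $H^2 \hookrightarrow L^\infty$, trading one derivative and still landing inside $\mathcal{F}_M^{1/2}$. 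In $\mathcal{I}_8^\alpha$ an extra integration by parts (as in \fref{Lemma}{lemma:careful_est_low_level_int}) is required when the top spatial order hits $K$, transferring a derivative from $\partial^\alpha a$ onto the other factors so that $\partial^\beta K$ stays within the reach of $\mathcal{F}_M^{1/2}$.

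The main obstacle is the case-by-case bookkeeping on $\mathcal{I}_2^\alpha, \mathcal{I}_3^\alpha, \mathcal{I}_4^\alpha, \mathcal{I}_8^\alpha$: one must verify, for each possible distribution of $2M$ parabolic derivatives among the three factors, that either $K$ ends up in a regime controlled by $\mathcal{E}_M$ (yielding the first bound $\mathcal{E}_M^{1/2}\mathcal{D}_M$) or that the low factor lies in $\mathcal{K}_\text{low}$ (yielding the second bound $\mathcal{K}_\text{low}^{1/2}\mathcal{F}_M^{1/2}\mathcal{D}_M^{1/2}$). The subtlety is making sure the seemingly ad hoc addition of $\|\pdt^2\theta\|_{L^2}^2$ in $\mathcal{K}_\text{low}$ exactly accounts for the last resistant configuration in $\mathcal{I}_2^\alpha$, and that no cubic term slips through requiring a norm not contained in $\mathcal{E}_M \cup \mathcal{F}_M \cup \mathcal{K}_\text{low} \cup \mathcal{D}_M$.
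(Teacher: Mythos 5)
Your overall scheme is the one the paper uses: expand each $\mathcal{I}_i$ by Leibniz, run hands-on high-low estimates with the $L^4$ control of $\partial^\beta K$ when $\abs{\bar\beta}\leqslant 2M-4$, integrate by parts whenever $2M$ purely spatial derivatives would land on $a$, and let $\|\pdt^2\theta\|_{L^2}$ inside $\mathcal{K}_\text{low}$ absorb the one configuration in $\mathcal{I}_2$ where the low factor is $\pdt^2\theta$. However, two of your intermediate claims are wrong. First, $\mathcal{I}_5$ is not a constant-coefficient commutator: $J\omega_{eq}=J_{eq}\omega_{eq}+K\omega_{eq}$ and only $J_{eq}\omega_{eq}$ is constant (if $J\omega_{eq}$ really were constant the commutator would vanish identically, not ``produce lower-order derivatives of $\theta$''). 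The commutator equals $\sbrac{K\omega_{eq}\times,\partial^\alpha}$, so it puts up to $2M$ spatial derivatives on $K$; since $\mathcal{E}_M$ controls $K$ only in $H^{2M-3}$, the configurations with $\abs{\bar\beta}\geqslant 2M-2$ force $\mathcal{F}_M^{1/2}$ paired with $\overline{\mathcal{K}}_2^{1/2}\mathcal{D}_M^{1/2}$, exactly as for $\mathcal{I}_3$ and $\mathcal{I}_4$; the bound $\mathcal{E}_M^{1/2}\mathcal{D}_M$ you claim for $\mathcal{I}_5$ alone is unattainable. Second, $\mathcal{I}_6$ does not come out ``cleanly'' as $\mathcal{E}_M^{1/2}\mathcal{D}_M$: in the purely spatial case with $\abs{\gamma}=2M-1$ both copies of $a$ carry $2M$ derivatives, and any integration by parts (or the interpolation $\|a\|_{H^{2M}}^2\lesssim\|a\|_{H^{2M-1}}\|a\|_{H^{2M+1}}$) necessarily produces $2M+1$ derivatives on $a$, which are controlled only through $\mathcal{F}_M$; the resulting contribution is $\overline{\mathcal{K}}_2^{1/2}\mathcal{F}_M^{1/2}\mathcal{D}_M^{1/2}$, which your final bound tolerates but your accounting denies.

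Beyond this, the bookkeeping you yourself flag as the main obstacle has a boundary case your explicit placements miss: in $\mathcal{I}_2$ with $\abs{\bar\beta}=2M-3$ and $\beta_0\leqslant 1$, the $L^4$ lemma does not apply, and the middle factor $\pdt\partial^\gamma\theta$ may carry $\abs{\gamma}_P=3$, so it is not $L^\infty$-controlled by $\mathcal{K}_\text{low}$; the fix, as in the paper, is to observe that $\|K\|_{H^{2M-3}}$ and $\|\pdt K\|_{H^{2M-3}}$ are bounded by $\mathcal{E}_M^{1/2}$, put $\partial^\beta K$ in $L^2$, and put the remaining two factors in $L^4$, which lands in $\mathcal{E}_M^{1/2}\mathcal{D}_M$ since $M\geqslant 3$. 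With $\mathcal{I}_5$ folded into your treatment of $\mathcal{I}_2$--$\mathcal{I}_4$, the $\mathcal{I}_6$ accounting corrected, and this boundary case added, your argument coincides with the paper's proof.
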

	\begin{proof}
		Recall that the interactions are recorded in \fref{Lemma}{lemma:record_form_interactions}.
		There are three difficulties that manifest themselves here.
		\begin{enumerate}
			\item	$a$ appears when hit with a full count of $2M$ spatial derivatives. This is troublesome because the lack of dissipative control of $a$ in $H^{2M}$ is precisely
				why coercivity fails. To handle this it will be necessary to integrate by parts. This issue manifests itself in $\mathcal{I}_6$ and $\mathcal{I}_8$.
			\item	We have poorer spatial regularity control over $\pdt^j K$ when $j$ is small (i.e. $j=0,1$) than when it is large
				-- this is due to the mixed hyperbolic-parabolic nature of the problem.
				There is no particularly clever workaround here besides simply breaking up the estimates into cases depending on the number of temporal derivatives hitting $K$
				and performing the estimates in each case. This manifests itself in $\mathcal{I}_2 - \mathcal{I}_5$, and $\mathcal{I}_8$.
			\item	In all but one of the interactions where $ \mathcal{F}_M $ must be invoked, it's possible growth is counteracted by the presence of $ \overline{\mathcal{K}}_2$.
				However, in $\mathcal{I}_2$ this is not possible. Instead, we may only counteract the growth of $ \mathcal{F}_M $ by $\normns{\pdt^2 \theta}{L^2}$.
				Since two temporal derivatives of $\theta$ are \emph{not} controlled in the low level energy, this term does, at first pass, have any decay.
				Producing such decay will require the auxiliary estimate recorded in \fref{Lemma}{lemma:aux_est_pdt_2_theta} in \fref{Section}{sec:decay_int_norms} below.
		\end{enumerate}
		For the reader's sake, we briefly remark on which interaction terms are discussed in detail and why.

		The details of the estimates of $\mathcal{I}_1$ are provided. This is a simple interaction to estimate but we do use ``hands-on high-low estimates'' which form the basis
		for all the other estimates of the interactions, and thus warrants a detailed discussion of $\mathcal{I}_1$.
		In particular, $\mathcal{I}_7$ is handled in exactly the same way.

		The interactions $\mathcal{I}_2 - \mathcal{I}_5$ are handled in essentially the same way. We only discuss $\mathcal{I}_2$ in detail since it
		has the additional wrinkle of requiring us to invoke $\normns{\pdt^2 \theta}{L^2}$ to counterbalance $ \mathcal{F}_M $.

		The last two interactions we discuss in detail are $\mathcal{I}_6$ and $\mathcal{I}_8$. Those are the most difficult interactions to control
		since they both require us to integrate by parts to get around the appearance of $\nabla_x^{2M} a$. Moreover, temporal derivatives of $K$ appear in $\mathcal{I}_8$,
		which requires us to divide the estimates of that interaction into further subcases.

		We now estimate the difficult interactions one by one.
		\vspace{0.5em}
		\paragraph{\textbf{Estimating $\mathcal{I}_1$ and $\mathcal{I}_7$.}}
			Recall that
			\begin{equation*}
				\mathcal{I}_1 = \sum_{\abs{\alpha}_P \leqslant 2M} \int_{\T^3} \sbrac{u\cdot\nabla, \partial^\alpha} u \cdot \partial^\alpha u
				= - \sum_{\abs{\alpha}_P \leqslant 2M} \sum_{\substack{\beta+\gamma=\alpha\\\beta>0}}
				\binom{\alpha}{\beta} \int_{\T^3} \brac{ \partial^\beta u \cdot \nabla \partial^\gamma u } \cdot \partial^\alpha u.
			\end{equation*}
			Therefore
			\begin{equation*}
				\abs{\mathcal{I}_1} \lesssim
				\sum_{\substack{
					\abs{\beta+\gamma}_P \leqslant 2M
				\\
					\abs{\gamma}_P \leqslant 2M-3
				\\
				}}
				\underbrace{\vbrac{\int_{\T^3} \brac{ \partial^\beta u \cdot \nabla \partial^\gamma u } \cdot \partial^\alpha u}}_{\RN{1}}
				+ \sum_{\substack{
					\abs{\beta+\gamma}_P \leqslant 2M
				\\
					\abs{\beta}_P \leqslant 2M-2
				\\
					\abs{\gamma}_P \leqslant 2M-1
				}}
				\underbrace{\vbrac{\int_{\T^3} \brac{ \partial^\beta u \cdot \nabla \partial^\gamma u } \cdot \partial^\alpha u}}_{\RN{2}}
			\end{equation*}
			where
			\begin{align*}
				\RN{1}
				\leqslant \normns{ \partial^\beta u }{L^4} \norm{ \nabla \partial^\gamma u }{L^\infty} \normns{ \partial^{\beta + \gamma} u}{L^4}
				\lesssim \normns{\partial^\beta u}{H^1} \norm{\nabla \partial^\gamma u}{H^2} \normns{\partial^{\beta+\gamma} u}{H^1}
				\\
				\leqslant \normtyp{u}{P}{\abs{\beta}_P+1} \normtyp{u}{P}{\abs{\gamma}_P+3} \normtyp{u}{P}{\abs{\beta+\gamma}_P+1}
				\lesssim \mathcal{D}_M^{1/2} \mathcal{E}_M^{1/2} \mathcal{D}_M^{1/2}
			\end{align*}
			and
			\begin{align*}
				\RN{2}
				\leqslant \normns{\partial^\beta u}{L^\infty} \norm{\nabla\partial^\gamma u}{L^4} \normns{\partial^{\beta+\gamma}u}{L^4}
				\lesssim \normtyp{u}{P}{\abs{\beta}_P+2} \normtyp{u}{P}{\abs{\gamma}_P+2} \normtyp{u}{P}{\abs{\beta+\gamma}_P +1}
				\lesssim \mathcal{E}_M^{1/2} \mathcal{D}_M^{1/2} \mathcal{D}_M^{1/2}
			\end{align*}
			such that $\abs{\mathcal{I}}_1 \leqslant \mathcal{E}_M^{1/2} \mathcal{D}_M $.

			To control $\mathcal{I}_7$ we proceed in exactly the same way. Note that the presence of $a$ in $\mathcal{I}_7$ is harmless since there is only at most
			one copy of $\nabla_x^{2M} a$ which appears, and hence there is no need to integrate by parts here. Hands-on high-low estimates very similar to those discussed
			in detail above therefore tell us that
			$
				\abs{\mathcal{I}_7} \lesssim \mathcal{E}_M^{1/2} \mathcal{D}_M.
			$
		\vspace{0.5em}
		\paragraph{\textbf{Estimating $\mathcal{I}_2$, $\mathcal{I}_3$, $\mathcal{I}_4$, and $\mathcal{I}_5$.}}
			Recall that $\mathcal{I}_2$ is of particular importance since it is the only terms that requires the incorporation of $\normns{\pdt^2\theta}{L^2}$
			into the decaying functional $ \mathcal{K}_\text{low}$.
			We seek to estimate
			\begin{equation*}
				\mathcal{I}_2 = \sum_{\abs{\alpha}_P \leqslant 2M} \int_{\T^3} \sbrac{J\pdt, \partial^\alpha} \theta \cdot \partial^\alpha\theta
				= - \sum_{\substack{ \abs{\alpha}_P \leqslant 2M \\ \beta+\gamma=\alpha,\, \beta > 0 }}
				\binom{\alpha}{\beta} \int_{\T^3} ( \partial^\beta K) \brac{ \pdt \partial^\gamma \theta} \cdot \brac{ \partial^\alpha \theta}
			\end{equation*}
			where we have used the fact that $ \partial^\beta J = \partial^\beta K$, since $\beta > 0$ and $J$ and $K$ differ by a constant.
			There are two difficulties in handling this term and so we split $\mathcal{I}_2$ accordingly as $\mathcal{I}_2 = \RN{1} + \RN{2}$.
			\begin{enumerate}
				\item	When few temporal derivatives hit $K$ the only way we have of controlling a high number of spatial derivatives is through $ \mathcal{F}_M $.
					Terms concerned by this issue are grouped in $\RN{1}$.
				\item 	The ``better'' terms in $\RN{2}$ are estimated directly.
					However, due to the poorer spatial regularity of $K$ and $\pdt K$ relatively to $\pdt^j K$ for $j\geqslant 2$ we split the estimate of $\RN{2}$ into two pieces
					that are handled differently from one another.
			\end{enumerate}
			To be precise, we write
			\begin{equation*}
				- \mathcal{I}_2 = \sum_{\substack{ \abs{\alpha}_P \leqslant 2M \\ \beta+\gamma = \alpha ,\, \beta > 0  }}
				\binom{\alpha}{\beta} \int_{\T^3} ( \partial^\beta K) \brac{ \pdt \partial^\gamma \theta} \cdot \brac{ \partial^\alpha \theta}
				= \sum_{\substack{ \cdots \\ \abs{\bar{\beta}} \geqslant 2M-3 }} \cdots + \sum_{\substack{ \cdots \\ \abs{\bar{\beta}} \leqslant 2M-4 }} \cdots
				=\vcentcolon \RN{1} + \RN{2}.
			\end{equation*}
			Note that the condition $\abs{\bar{\beta}} \geqslant 2M-3$ in the term $\RN{1}$ is coupled with the usual condition $\abs{\beta}_P \leqslant 2M$,
			and thus requires that $\beta_0 = 0, 1$.
			In other words: only $K$ and $\pdt K$ appear in $\RN{1}$.

			First we estimate $\RN{1}$.
			Two competing factors are at play here.
			\begin{enumerate}
				\item	$ \partial^\beta K$, for $\abs{\bar{\beta}} \geqslant 2M-3$, \emph{must} be controlled in $ \mathcal{F}_M $
					and hence $\pdt \partial^\gamma \theta$ \emph{must} be controlled via decaying factors.
				\item	Since only $\theta$ and $\pdt\theta$ are controlled by $ \mathcal{E}_\text{low}$,
					the decay of $\pdt^2 \theta$ is obtained through \fref{Lemma}{lemma:aux_est_pdt_2_theta}, which only yields control of $\pdt^2 \theta$ in $L^2$.
					We thus have fairly poor control of $\pdt^2 \theta$ through decaying factors.
			\end{enumerate}
			To carefully address this we split $\RN{1}$ into two:
			\begin{equation*}
				\RN{1} = \sum_{\substack{ \cdots \\ \abs{\bar{\beta}} \geqslant 2M-2 }} \cdots + \sum_{\substack{ \cdots \\ \abs{\bar{\beta}} = 2M-3 }} \cdots
				=\vcentcolon \RN{1}_1 + \RN{1}_2.
			\end{equation*}
			To estimate $\RN{1}_1$ we note that $\abs{\gamma}_P = \abs{\alpha}_P - \abs{\beta}_P \leqslant \abs{\alpha}_P - \abs{\bar{\beta}} \leqslant 2M - (2M-2) = 2$.
			Therefore
			\begin{align*}
				\abs{\RN{1}_1}
				&\lesssim \sum_{\cdots} \normtypns{ \partial^\beta K}{L}{4} \normtyp{ \pdt \partial^\gamma \theta}{L}{2} \normtyp{ \partial^\alpha \theta}{L}{4} 
			\\
				&\lesssim \max\brac{ \normtyp{K}{H}{2M+1}, \normtyp{\pdt K}{H}{2M-1} } \max \brac{ \normtyp{\pdt\theta}{H}{2}, \normtyp{ \pdt^2 \theta}{L}{2} }
					\normtyp{\theta}{P}{2M+1} 
			\\
				&\lesssim \mathcal{F}_M^{1/2} \brac{ \overline{\mathcal{K}}_2^{1/2} + \norm{\pdt^2 \theta}{L^2}} \mathcal{D}_M^{1/2}
				\lesssim \mathcal{F}_M^{1/2} \mathcal{K}_\text{low}^{1/2} \mathcal{D}_M^{1/2}.
			\end{align*}
			To estimate $\RN{1}_2$ we note that now $\abs{\gamma}_P \leqslant 3$ such that, since $M\geqslant 3$,
			\begin{align*}
				\abs{\RN{1}_2} \lesssim \sum_{\cdots} \normtypns{ \partial^\beta K}{L}{2} \normtyp{ \pdt \partial^\gamma \theta}{L}{4} \normtyp{ \partial^\alpha \theta}{L}{4} 
				\lesssim \max\brac{ \normtyp{K}{H}{2M-3}, \normtyp{\pdt K}{H}{2M-3} } \normtyp{\theta}{P}{6}  \normtyp{\theta}{P}{2M+1} 
				\lesssim \mathcal{E}_M^{1/2} \mathcal{D}_M.
			\end{align*}
			
			Second we estimate $\RN{2}$. Due to the poorer spatial regularity of $K$ and $\pdt K$ relative to $\pdt^j K$ when $j\geqslant 2$ we split $\RN{2}$ into two:
			\begin{equation*}
				\RN{2} = \sum_{\substack{ \abs{\alpha}_P \leqslant 2M ,\, \abs{\bar{\beta}} \leqslant 2M-4 \\ \beta+\gamma = \alpha ,\, \beta > 0 }}
				\binom{\alpha}{\beta} \int_{\T^3}  ( \partial^\beta K) \brac{  \partial_t \partial^\gamma  \theta} \cdot \brac{ \partial^\alpha \theta}
				= \sum_{\substack{ \cdots \\ \beta_0 \leqslant 1 }} + \sum_{\substack{ \cdots \\ \beta_0 \geqslant 2 }}
				=\vcentcolon \RN{2}_1 + \RN{2}_2.
			\end{equation*}
			Estimating $\RN{2}_1$ is immediate upon recalling that $\abs{\gamma}_P \leqslant 2M-1$ (since $\beta > 0$)
			and using \fref{Lemma}{lemma:control_K_in_L_4_few_spatial_derivatives}:
			\begin{align*}
				\abs{\RN{2}_1}
				\lesssim \sum_{\cdots} \normtypns{ \partial^\beta K}{L}{4} \normtyp{\pdt \partial^\gamma  \theta}{L}{2} \normtyp{ \partial^\alpha \theta}{L}{4} 
				\lesssim \mathcal{E}_M^{1/2} \normtyp{\theta}{P}{2+(2M-1)} \normtyp{\theta}{P}{2M+1} 
				\lesssim \mathcal{E}_M^{1/2} \mathcal{D}_M^{1/2} \mathcal{D}_M^{1/2}.
			\end{align*}
			Estimating $\RN{2}_2$ relies on the crucial observation that $ \normtyp{\pdt^2 K}{P}{2M-3} \lesssim \mathcal{E}_M^{1/2}$.
			The estimate for $\RN{2}_2$ is then immediate:
			\begin{equation*}
				\abs{\RN{2}_2}
				\lesssim \sum_{\cdots} \normtypns{ \partial^\beta K}{L}{4} \normtyp{ \partial_t \partial^\gamma \theta}{L}{2} \normtyp{ \partial^\alpha \theta}{L}{4} 
				\lesssim \normtyp{\pdt^2 K}{P}{1-4+\abs{\beta}_P}  \normtyp{\theta}{P}{2M+1} \normtyp{\theta}{P}{2M+1} 
				\lesssim \mathcal{E}_M^{1/2} \mathcal{D}_M^{1/2} \mathcal{D}_M^{1/2}.
			\end{equation*}
			Putting it all together tells us that
			$
				\abs{\mathcal{I}_2} \lesssim \mathcal{E}_M^{1/2} \mathcal{D}_M + \mathcal{F}_M^{1/2} \mathcal{K}_\text{low}^{1/2} \mathcal{D}_M^{1/2}.
			$

			We may proceed in a similar fashion to estimate $\mathcal{I}_3$, $\mathcal{I}_4$, and $\mathcal{I}_5$, splitting the interactions terms into cases depending on the
			number of temporal derivatives hitting $K$ and using \fref{Lemma}{lemma:control_K_in_L_4_few_spatial_derivatives} where appropriate.
			Proceeding in this fashion we obtain that
			$
				\abs{\mathcal{I}_3} + \abs{\mathcal{I}_4} + \abs{\mathcal{I}_5}
				\lesssim \mathcal{F}_M^{1/2} \overline{\mathcal{K}}_2^{1/2} \mathcal{D}_M^{1/2}
				+ \mathcal{E}_M^{1/2} \mathcal{D}_M.
			$
		\vspace{0.5em}
		\paragraph{\textbf{Estimating $\mathcal{I}_6$.}}
			We seek to estimate
			\begin{equation*}
				\mathcal{I}_6 = \sum_{\abs{\alpha}_P \leqslant 2M} \int_{\T^3} \sbrac{u\cdot\nabla, \partial^\alpha} a \cdot \partial^\alpha a
				= - \sum_{\substack{\beta+\gamma = \alpha\\\beta>0}} \binom{\alpha}{\beta} \int_{\mathbb{T}^3}
				( \partial^\beta u \cdot \nabla \partial^\gamma a) \cdot \partial^\alpha a.
			\end{equation*}
			However, recall that the failure of coercivity manifests itself precisely in the poor dissipative control over $a$.
			We thus treat the case where the multi-index $\alpha$ is purely spatial separately.
			This interaction is particularly troublesome when the derivatives are purely spatial \emph{and} $\abs{\gamma} = 2M-1$
			(note that $\abs{\gamma} = 2M$ is impossible since the conditions $\beta + \gamma = \alpha$ and $\beta > 0$ impose that $\gamma < \alpha$).
			In that case the interaction takes the (schematic) form
			\begin{equation}
			\label{eq:trickiest_term_I_6}
				\int_{\T^3} \brac{ \partial_x u} \brac{\partial_x^{2M} a} \brac{ \partial_x^{2M} a},
			\end{equation}
			where $\partial_x^k$ indicates a derivative $ \partial^\alpha $ for a purely spatial multi-index $\alpha\in\N^3$ of length $\abs{\alpha} = k$.
			This is out of reach of an estimate of the form $\mathcal{E}_M^{1/2} \mathcal{D}_M$ since we only control $a$ in $H^{2M-1}$ dissipatively.
			We thus treat this specific interaction \eqref{eq:trickiest_term_I_6} as a \emph{subcase} of the case of purely spatial derivatives.

			To summarize: (recall that $\alpha = \brac{\alpha_0, \bar{\alpha}} \in \N^{1+3}$)
			\begin{equation*}
				- \mathcal{I}_6 = \sum_{\abs{\alpha}_P \leqslant 2M} \sum_{\substack{ \beta+\gamma=\alpha \\ \beta>0 }}
				\binom{\alpha}{\beta} \int_{\T^3} ( \partial^\beta u \cdot \nabla \partial^\gamma a) \cdot \partial^\alpha a
				= \sum_{\substack{ \abs{\alpha}_P \leqslant 2M \\ \alpha_0 = 0}} \sum_{\cdots} \cdots
				+ \sum_{\substack{ \abs{\alpha}_P \leqslant 2M \\ \alpha_0 \geqslant 1 }} \sum_{\cdots} \cdots
				\eqdef \RN{1} + \RN{2}
			\end{equation*}
			where $\RN{1}$ corresponds to purely spatial derivatives and $\RN{2}$ corresponds to the remaining terms.
			We break up $\RN{1}$ further: (where now $\beta,\gamma\in\N^3$ and \emph{not} $\N^{1+3}$ as above)
			\begin{align*}
				\RN{1} &= \sum_{\substack{ \abs{\beta} + \abs{\gamma} \leqslant 2M \\ \beta>0  }}
				\binom{\beta+\gamma}{\beta} \int_{\T^3} ( \partial^\beta u \cdot \nabla \partial^\gamma a) \cdot \partial^{\beta+\gamma} a
				\\
				&= \sum_{\substack{ \abs{\beta}+\abs{\gamma} \leqslant 2M \\ \abs{\beta} = 1 }}
				\binom{\beta+\gamma}{\beta} \int_{\T^3} ( \partial^\beta u \cdot \nabla \partial^\gamma a) \cdot \partial^{\beta+\gamma} a
				+ \sum_{\substack{ \abs{\beta}+\abs{\gamma} \leqslant 2M \\ \abs{\beta} > 1 }}
				\binom{\beta+\gamma}{\beta} \int_{\T^3} ( \partial^\beta u \cdot \nabla \partial^\gamma a) \cdot \partial^{\beta+\gamma} a
				\eqdef \RN{1}_1 + \RN{1}_2,
			\end{align*}
			where $\RN{1}_1$ consists of the most troublesome term.
			We also break up $\RN{2}$ further:
			\begin{equation*}
				\RN{2}
				= \sum_{\substack{ \abs{\alpha}_P \leqslant 2M \\ \alpha_0 \geqslant 1 }} \sum_{\substack{ \beta + \gamma = \alpha \\ \beta > 0 }}
				\binom{\alpha}{\beta} \int_{\T^3} ( \partial^\beta u \cdot \nabla \partial^\gamma a) \cdot \partial^\alpha a
				= \sum_{\cdots} \sum_{\substack{ \cdots \\ \gamma_0 = 0 }} \cdots
				+ \sum_{\cdots} \sum_{\substack{ \cdots \\ \gamma_0 \geqslant 1 }} \cdots
				\eqdef \RN{2}_1 + \RN{2}_2.
			\end{equation*}
			Again, due to the poorer dissipative control of $a$ compared with $\pdt a$ and higher-order temporal derivatives of $a$,
			we must estimate $\RN{2}_1$ carefully. Note that by ``poorer control'' we mean that we have control of spatial derivatives at a lower parabolic count.
			To be very clear: we control $\norm{a}{H^{2M-1}}$ and $\norm{\pdt a}{H^{2M-2}}$ dissipatively,
			which means that we control $a$ at a parabolic count of $2M-1$ and $\pdt a$ at a parabolic count of $(2M-2) + 2 = 2M$.

			\paragraph{\textbf{Estimating $\RN{1}_1$.}}
			The key is to integrate by parts, at the cost of having to invoke $\mathcal{F}$, which is possibly growing in time, to control $\nabla^{2M+1}a$.
			Then, where for every $\gamma$ we pick $i$ such that $\gamma \geqslant e_i$,
			\begin{align*}
				\RN{1}_1 &= \sum_{\substack{ \abs{\beta}=1 \\ \abs{\gamma} \leqslant 2M-1 }}
				\binom{\beta+\gamma}{\beta}\int_{\T^3} ( \partial^\beta u \cdot \nabla \partial^\gamma a) \cdot \partial^{\beta+\gamma} a
				\\
				 &= - \sum_{\substack{ \abs{\beta} = 1 \\ \abs{\gamma} \leqslant 2M-1 }} \binom{\beta+\gamma}{\gamma} \brac{
					 \int_{\T^3} ( \partial^{\beta+e_i} u \cdot \nabla \partial^{\gamma-e_i} a) \cdot \partial^{\beta+\gamma}a
					 + \int_{\T^3} ( \partial^\beta u \cdot \nabla \partial^{\gamma-e_i} a) \cdot \partial^{\beta+\gamma+e_i}a
				}.
			\end{align*}
			Therefore
			\begin{align*}
				\abs{\RN{1}_1}
				\lesssim \sum_{\abs{\gamma}\leqslant 2M-1}
				\normtypns{\nabla^2 u}{L}{\infty} \normtypns{\nabla \partial^{\gamma-e_i}a}{L}{2} \normtyp{\nabla \partial^{\gamma} a}{L}{2} 
				+ \normtyp{\nabla u}{L}{\infty} \normtypns{\nabla \partial^{\gamma-e_i}a}{L}{2} \normtypns{\nabla \partial^{\gamma+e_i} a}{L}{2}
				\\
				\lesssim \mathcal{D}_2^{1/2} \mathcal{D}_M^{1/2} \mathcal{E}_M^{1/2} + \overline{\mathcal{K}}_2^{1/2} \mathcal{D}_M^{1/2} \mathcal{F}_M^{1/2}.
			\end{align*}

			\paragraph{\textbf{Estimating $\RN{1}_2$, $\RN{2}_1$, and $\RN{2}_2$.}}
			These terms are all estimated using standard ``hands-on high-low estimates'', which yield
			$
				\abs{\RN{1}_2} + \abs{\RN{2}_1} + \abs{\RN{2}_2} \lesssim \mathcal{E}_M^{1/2} \mathcal{D}_M
			$.
			We have thus shown that
			$
				\abs{\mathcal{I}_6} \lesssim \mathcal{E}_M^{1/2} \mathcal{D}_M + \mathcal{F}_M^{1/2} \overline{\mathcal{K}}_2^{1/2} \mathcal{D}_M^{1/2}
			$.
		\vspace{0.5em}
		\paragraph{\textbf{Estimating $\mathcal{I}_8$.}}
			This is the trickiest interaction to estimate since it involves both $2M$ spatial derivatives of $a$ and temporal derivatives of $K$.
			Recall that
			\begin{equation*}
				\mathcal{I}_8 = \sum_{\substack{ \abs{\alpha}_P \leqslant 2M \\ \beta + \gamma = \alpha }}
				\binom{\alpha}{\beta} \int_{\T^3} \partial^\beta \brac{ \brac{ \bar{K} - K_{33}I_2} \bar{\theta}^\perp} \cdot \partial^\alpha a
				\,\sim\,
				\binom{\alpha}{\beta} \int_{\T^3} ( \partial^\beta K) \brac{ \partial^\gamma \bar{\theta}} \cdot \brac{ \partial^\alpha a}
				\eqdef \sum_{\substack{ \abs{\alpha}_P \leqslant 2M \\ \beta+\gamma = \alpha }} \mathcal{I}_8^{\alpha, \beta, \gamma}
			\end{equation*}
			where the left-hand side is the precise form of the interaction and the right-hand side is its schematic form which we will now estimate.
			Note that, by contrast with \emph{all} the other interactions, this one does \emph{not} come from a commutator,
			and therefore there are no restrictions on $\beta$ and $\gamma$ besides
			the fact $\beta + \gamma = \alpha$ (i.e. there is no restriction $\beta > 0$, or equivalently $\gamma < \alpha$).

			To control $\mathcal{I}_8$ we will break it up into several pieces, as summarized pictorially in \fref{Figure}{fig:est_I_8_break_up_clean} below.

			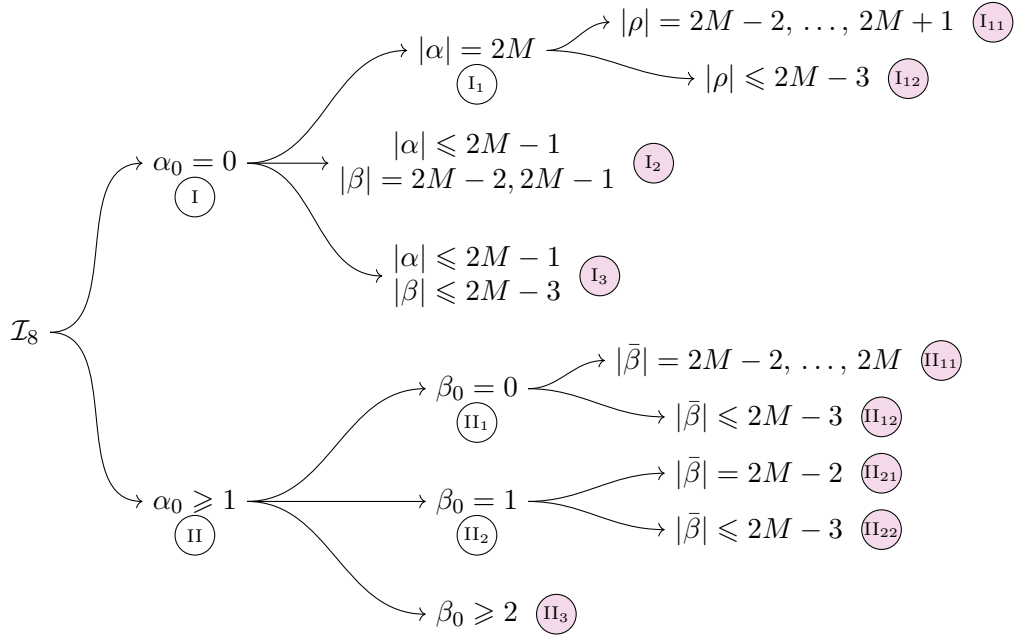
\begin{figure}
				\centering
				\begin{tikzpicture}[scale=0.75]
					% First column
					\node (I8) at (0,0) {$\mathcal{I}_8$};

					% Second column
					\node (I) at (3, 3) {$\alpha_0 = 0$};
					\draw        (3, 3-0.6) circle(10pt);
					\node at     (3, 3-0.6) {\tiny$\RN{1}$};

					\node (II) at (3, -3) {$\alpha_0 \geqslant 1$};
					\draw         (3, -3-0.6) circle(10pt);
					\node at      (3, -3-0.6) {\tiny$\RN{2}$};

					% Arrows from first to second column
					\draw[->] (I8.east) to[out=0, in=180] (I.west);
					\draw[->] (I8.east) to[out=0, in=180] (II.west);

					% Third column
					\node			(I1) at  (8,  5) {$\abs{\alpha} = 2M$};
					\draw				 (8,  5-0.6) circle(10pt);
					\node			     at  (8,  5-0.6) {\tiny$\RN{1}_1$};

					\node[align=center]	(I2) at  (8,  3) {$\abs{\alpha} \leqslant 2M-1$ \\ $\abs{\beta} = 2M-2, 2M-1$};
					\draw				 (I2.east)+(0.5, 0) circle(10pt) [fill=Thistle, fill opacity=0.3, text opacity=1] node {\tiny$\RN{1}_2$};

					\node[align=center]	(I3) at  (8,  1) {$\abs{\alpha} \leqslant 2M-1$ \\ $\abs{\beta} \leqslant 2M-3$};
					\draw				 (I3.east)+(0.5, 0) circle(10pt) [fill=Thistle, fill opacity=0.3, text opacity=1] node {\tiny$\RN{1}_3$};

					\node			(II1) at (8, -1) {$\beta_0 = 0$};
					\draw				 (8, -1-0.6) circle(10pt);
					\node			     at  (8, -1-0.6) {\tiny$\RN{2}_1$};

					\node			(II2) at (8, -3) {$\beta_0 = 1$};
					\draw				 (8, -3-0.6) circle(10pt);
					\node			     at  (8, -3-0.6) {\tiny$\RN{2}_2$};

					\node			(II3) at (8, -5) {$\beta_0 \geqslant 2$};
					\draw				 (II3.east)+(0.5, 0) circle(10pt) [fill=Thistle, fill opacity=0.3, text opacity=1] node {\tiny$\RN{2}_3$};

					% Arrows from second to third column
					\draw[->] (I.east)  to[out=0, in=180] (I1.west);
					\draw[->] (I.east)  to                (I2.west);
					\draw[->] (I.east)  to[out=0, in=180] (I3.west);
					\draw[->] (II.east) to[out=0, in=180] (II1.west);
					\draw[->] (II.east) to                (II2.west);
					\draw[->] (II.east) to[out=0, in=180] (II3.west);

					% Fourth column
					\node (I11) at (13.5, 5.5) {$\abs{\rho} = 2M-2,\, \dots,\, 2M+1$};
					\node (I12) at (13.5, 4.5) {$\abs{\rho} \leqslant 2M-3$};
					\node (II11) at (13, -0.5) {$\abs{\bar{\beta}} = 2M-2,\, \dots,\, 2M$};
					\node (II12) at (13, -1.5) {$\abs{\bar{\beta}} \leqslant 2M-3$};
					\node (II21) at (13, -2.5) {$\abs{\bar{\beta}} = 2M-2$};
					\node (II22) at (13, -3.5) {$\abs{\bar{\beta}} \leqslant 2M-3$};

					% Arrows from third to fourth column
					\draw[->] (I1.east) to[out=0, in=180] (I11.west);
					\draw	  (I11.east)+(0.5, 0) circle(10pt) [fill=Thistle, fill opacity=0.3, text opacity=1] node {\tiny$\RN{1}_{11}$};

					\draw[->] (I1.east) to[out=0, in=180] (I12.west);
					\draw	  (I12.east)+(0.5, 0) circle(10pt) [fill=Thistle, fill opacity=0.3, text opacity=1] node {\tiny$\RN{1}_{12}$};

					\draw[->] (II1.east) to[out=0, in=180] (II11.west);
					\draw	  (II11.east)+(0.5, 0) circle(10pt) [fill=Thistle, fill opacity=0.3, text opacity=1] node {\tiny$\RN{2}_{11}$};

					\draw[->] (II1.east) to[out=0, in=180] (II12.west);
					\draw	  (II12.east)+(0.5, 0) circle(10pt) [fill=Thistle, fill opacity=0.3, text opacity=1] node {\tiny$\RN{2}_{12}$};

					\draw[->] (II2.east) to[out=0, in=180] (II21.west);
					\draw	  (II21.east)+(0.5, 0) circle(10pt) [fill=Thistle, fill opacity=0.3, text opacity=1] node {\tiny$\RN{2}_{21}$};

					\draw[->] (II2.east) to[out=0, in=180] (II22.west);
					\draw	  (II22.east)+(0.5, 0) circle(10pt) [fill=Thistle, fill opacity=0.3, text opacity=1] node {\tiny$\RN{2}_{22}$};

				\end{tikzpicture}
				\caption{How the terms in the interaction $\mathcal{I}_8$ are broken up in order to be estimated.}
				\label{fig:est_I_8_break_up_clean}
			\end{figure}

			More precisely, we now detail how we go about breaking up $\mathcal{I}_8$.
			First we separate terms for which the derivatives of $a$ are purely spatial:
			\begin{equation*}
				\mathcal{I}_8
				= \sum_{\substack{ \abs{\alpha}_P \leqslant 2M \\ \beta+\gamma = \alpha }} \mathcal{I}_8^{\alpha, \beta, \gamma}
				= \sum_{\substack{ \cdots \\ \alpha_0 = 0 }} \mathcal{I}_8^{\alpha, \beta, \gamma}
				+ \sum_{\substack{ \cdots \\ \alpha_0 \geqslant 1 }} \mathcal{I}_8^{\alpha, \beta, \gamma}
				\eqdef \RN{1} + \RN{2}.
			\end{equation*}
			We now split $\RN{1}$ to account for two factors: the lack of dissipative control of $\partial_x^{2M} a$
			and the poorer (i.e. through $\mathcal{F}_M$ and not $\mathcal{E}_M$) control of $K$
			when many spatial derivatives are applied to it.
			Recall that $\partial_x^k$ indicates a derivative $ \partial^\alpha $ for a purely spatial multi-index $\alpha\in\N^3$ of length $\abs{\alpha} = k$.
			\begin{equation*}
				\RN{1} = \sum_{\substack{ \alpha_0 = 0 \\ \abs{\alpha} \leqslant 2M \\ \beta+\gamma = \alpha }} \mathcal{I}_8^{\alpha, \beta, \gamma}
				= \sum_{\substack{ \cdots \\ \abs{\alpha} = 2M }} \mathcal{I}_8^{\alpha, \beta, \gamma}
				+ \sum_{\substack{ \cdots \\ \abs{\alpha} \leqslant 2M-1 \\ \abs{\beta} = 2M-2, 2M-1 }} \mathcal{I}_8^{\alpha, \beta, \gamma}
				+ \sum_{\substack{ \cdots \\ \abs{\alpha} \leqslant 2M-1 \\ \abs{\beta} \leqslant 2M-3 }}
				\eqdef \RN{1}_1 + \RN{1}_2 + \RN{1}_3.
			\end{equation*}
			Estimating $\RN{1}_1$ is the trickiest part of estimating $\mathcal{I}_8$ since we \emph{must} control $K$ via the energy and do \emph{not}
			have control of $\partial_x^{2M} a$ via the dissipation.
			To get around this issue we integrate by parts: (below $i$ is an index dependent on $\alpha$ chosen such that $\alpha_i \geqslant 0$,
			which may always be done since $\alpha \neq 0$)
			\begin{align*}
				\RN{1}_1 
				= - \sum_{\substack{ \alpha_0 = 0 \\ \abs{\alpha} = 2M \\ \beta+\gamma = \alpha }} \binom{\alpha}{\beta} \brac{
					\int_{\T^3} ( \partial^{\beta + e_i} K ) ( \partial^\gamma \bar{\theta} ) \cdot ( \partial^{\alpha-e_i} a)
					+ \int_{\T^3} (\partial^\beta K) (\partial^{\gamma+e_i} \bar{\theta}) \cdot ( \partial^{\alpha-e_i} a)
				}
			\end{align*}
			which allows us to split $\RN{1}_1$ in the following way
			\begin{equation*}
				\abs{\RN{1}_1} \lesssim \vbrac{
					\sum_{\substack{ \abs{\pi} = 2M-1 \\ \abs{\rho+\sigma} = 2M+1 }}
					\int_{\mathbb{T}^3} \brac{\partial^\rho K} \brac{\partial^\sigma \bar{\theta}} \cdot \brac{\partial^\pi a}
				}
				\leqslant \vbrac{ \sum_{\substack{ \cdots \\ \abs{\rho} = 2M-2,\, \dots,\, 2M+1 }} \cdots }
				+ \vbrac{ \sum_{\substack{ \cdots \\ \abs{\rho} \leqslant 2M-3 }} \cdots }
				\eqdef \abs{\RN{1}_{11}} + \abs{\RN{1}_{12}}.
			\end{equation*}
			where $\pi$, $\rho$, and $\sigma$ are \emph{spatial} multi-indices, i.e. they belong to $\N^3$ and not $\N^{1+3}$.
			We now direct our attention to $\RN{2}$ (which is easier to handle than $\RN{1}$ since more temporal derivatives are involved).
			The key in the splitting here is that things get easier as more temporal derivatives of $K$ are involved.
			\begin{equation*}
				\RN{2} = \sum_{\substack{ \alpha_0 \geqslant 1 \\ \abs{\alpha}_P \leqslant 2M \\ \beta+\gamma = \alpha }} \mathcal{I}_8^{\alpha, \beta, \gamma}
				= \sum_{\substack{ \cdots \\ \beta_0 = 0 }} \mathcal{I}_8^{\alpha, \beta, \gamma}
				+ \sum_{\substack{ \cdots \\ \beta_0 = 1 }} \mathcal{I}_8^{\alpha, \beta, \gamma}
				+ \sum_{\substack{ \cdots \\ \beta_0 \geqslant 2 }} \mathcal{I}_8^{\alpha, \beta, \gamma}
				\eqdef \RN{2}_1 + \RN{2}_2 + \RN{2}_3.
			\end{equation*}
			We finally split $\RN{2}_1$ and $\RN{2}_2$ further depending on the number of spatial derivatives hitting $K$
			(since this determines whether we estimate the factor involving $K$
			using $\mathcal{E}_M$ or $\mathcal{F}_M$).
			\begin{equation*}
				\RN{2}_1 = \sum_{\substack{ \alpha_0 \geqslant 1 \\ \beta_0 = 0 \\ \abs{\alpha}_P \leqslant 2M \\ \beta + \gamma = \alpha }} \mathcal{I}_8^{\alpha, \beta, \gamma}
				= \sum_{\substack{ \cdots \\ \abs{\bar{\beta}} = 2M-2,\, \cdots,\, 2M }} \mathcal{I}_8^{\alpha, \beta, \gamma}
				+ \sum_{\substack{ \cdots \\ \abs{\bar{\beta}} \leqslant 2M-3 }} \mathcal{I}_8^{\alpha, \beta, \gamma}
				\eqdef \RN{2}_{11} + \RN{2}_{12}
			\end{equation*}
			and
			\begin{equation*}
				\RN{2}_2 = \sum_{\substack{ \alpha_0 \geqslant 1 \\ \beta_0 = 1 \\ \abs{\alpha}_P \leqslant 2M \\ \beta + \gamma = \alpha }} \mathcal{I}_8^{\alpha, \beta, \gamma}
				= \sum_{\substack{ \cdots \\ \abs{\bar{\beta}} = 2M-2}} \mathcal{I}_8^{\alpha, \beta, \gamma}
				+ \sum_{\substack{ \cdots \\ \abs{\bar{\beta}} \leqslant 2M-3 }} \mathcal{I}_8^{\alpha, \beta, \gamma}
				\eqdef \RN{2}_{21} + \RN{2}_{22}.
			\end{equation*}

			Having carefully split $\mathcal{I}_8$ into appropriate pieces, we now proceed to estimating each of these pieces.
			Note that due this extensive sub-division of $\mathcal{I}_8$ into various pieces, most terms can be handled with similar techniques.
			The table below summarizes how each term is handled.
			\begin{center}
			\begin{tabular}{|c|ccccccccc|}
				\hline
							&$\RN{1}_{11}$	&$\RN{1}_{12}$	&$\RN{1}_{2}$	&$\RN{1}_{3}$	&$\RN{2}_{11}$	&$\RN{2}_{12}$	&$\RN{2}_{21}$	&$\RN{2}_{22}$	&$\RN{3}$\\
				\hline
				Direct			&		&		&X		&X		&		&		&X		&X		&\\
				Hands-on high-low	&		&		&		&		&		&X		&		&		&\\
				\fref{Corollary}{cor:est_interactions_L_2_via_Gagliardo_Nirenberg}
							&X		&X		&		&		&		&		&		&		&\\
				Special consideration	&		&		&		&		&X		&		&		&		&X
				\\\hline
			\end{tabular}
			\end{center}
			\paragraph{\textbf{Direct estimates.}} The terms $\RN{1}_3$, $\RN{2}_{21}$, and $\RN{2}_{22}$ can be estimated directly:
			\begin{align*}
				\abs{\RN{1}_3}
					&\lesssim \sum_{\cdots} \normtypns{ \partial^\beta K}{L}{2} \normtypns{ \partial^\gamma \bar{\theta}}{L}{\infty} \normtyp{ \partial^\alpha a}{L}{2}
					\lesssim \normtyp{K}{H}{2M-3} \normtypns{\bar{\theta}}{H}{2M+1} \normtyp{a}{H}{2M-1}
					\lesssim \mathcal{E}_M^{1/2} \mathcal{D}_M^{1/2} \mathcal{D}_M^{1/2},
			\\
				\abs{\RN{2}_{21}}
					&\lesssim \sum_{\cdots} \normtypns{\pdt\partial_x^{\bar{\beta}}K}{L}{2} \normtyp{\theta}{L}{\infty} \normtyp{ \partial^\alpha a}{L}{2}
					\lesssim \normtyp{\pdt K}{H}{2M-2} \normtyp{\theta}{H}{2} \normtyp{\pdt a}{P}{2M-2}
					\lesssim \mathcal{F}_M^{1/2} \overline{\mathcal{K}}_1^{1/2} \mathcal{D}_M^{1/2}, \text{ and } 
			\\
				\abs{\RN{2}_{22}}
					&\lesssim \sum_{\cdots} \normtypns{ \partial^\beta K}{L}{2} \normtypns{ \partial^\gamma \bar{\theta}}{L}{\infty} \normtyp{ \partial^\alpha a}{L}{2} 
					\lesssim \normtyp{\pdt K}{H}{2M-3} \normtypns{\bar{\theta}}{P}{2M} \normtyp{\pdt a}{P}{2M-2} 
					\lesssim \mathcal{E}_M^{1/2} \mathcal{D}_M^{1/2} \mathcal{D}_M^{1/2}.
			\end{align*}
			Similarly, $\RN{1}_2$ can be split into precisely three terms which can all be estimated directly:
			\begin{align*}
				\RN{1}_2 = \sum_{\substack{ \alpha_0 = 0 ,\, \alpha=\beta \\ \abs{\alpha}=2M-1 }}
				\binom{\alpha}{\beta} \int_{\T^3} (\partial^\beta K) \bar{\theta} \cdot \brac{ \partial^\alpha a}
				+ \sum_{\substack{ \alpha_0 = 0 ,\, \abs{\alpha}=2M-1 \\ \abs{\beta} = 2M-2 ,\, \gamma = \alpha - \beta,\, \abs{\gamma}_P = 1 }}
				\binom{\alpha}{\beta}\int_{\T^3} (\partial^\beta K) \brac{ \partial^\gamma\bar{\theta}} \cdot \brac{ \partial^\alpha a}
			\\
				+ \sum_{\substack{ \alpha_0 = 0 ,\, \alpha=\beta \\ \abs{\alpha}=2M-2}}
				\binom{\alpha}{\beta} \int_{\T^3} (\partial^\beta K) \bar{\theta} \cdot \brac{ \partial^\alpha a}
			\end{align*}
			such that
			\begin{align*}
				\abs{\RN{1}_2} &\lesssim
				\norm{\nabla^{2M-1}K}{L^\infty} \norm{\bar{\theta}}{L^2} \norm{\nabla^{2M-1}a}{L^2}
				+ \norm{\nabla^{2M-2}K}{L^\infty} \norm{\bar{\nabla \theta}}{L^2} \norm{\nabla^{2M-1}a}{L^2}
				\\
				&\quad+ \norm{\nabla^{2M-2}K}{L^\infty} \norm{\bar{\theta}}{L^2} \norm{\nabla^{2M-2}a}{L^2}
				\lesssim \normtyp{K}{H}{2M+1} \normtyp{\bar{\theta}}{H}{1} \normtyp{a}{H}{2M-1}
				\lesssim \mathcal{F}_M^{1/2} \overline{\mathcal{K}}_1^{1/2} \mathcal{D}_M^{1/2}.
			\end{align*}

			\paragraph{\textbf{Hands-on high-low estimates.}} $\RN{2}_{12}$ can be estimated using ``hands-on high-low estimates''.
			We may split $\RN{2}_{12}$ as
			\begin{equation*}
				\RN{2}_{12} = \sum_{\substack{ \alpha_0 \geqslant 1 ,\, \beta_0 = 0 \beta + \gamma = \alpha \\ \abs{\alpha}_P \leqslant 2M ,\, \abs{\bar{\beta}} \leqslant 2M-3}}
				\binom{\alpha}{\beta}\int_{\T^3} (\partial^\beta K) \brac{ \partial^\gamma\bar{\theta}} \cdot \brac{ \partial^\alpha a}.
				\leqslant
					\sum_{\substack{ \cdots \\ \abs{\bar{\beta}} \leqslant 2M-5 }} \underbrace{\abs{\cdots}}_{(1)}
					+ \sum_{\substack{ \cdots \\ \abs{\gamma}_P \leqslant 2M-1 }} \underbrace{\abs{\cdots}}_{(2)}
			\end{equation*}
			where
			\begin{equation*}
				(1) + (2)
				\leqslant \normtypns{\partial^{\bar{\beta}} K}{L}{\infty} \normtyp{ \partial^\gamma \bar{\theta}}{L}{2} \normtyp{ \partial^\alpha a}{L}{2}
					+ \normtypns{\partial^{\bar{\beta}} K}{L}{2} \normtyp{ \partial^\gamma \bar{\theta}}{L}{\infty} \normtyp{ \partial^\alpha a}{L}{2}
				\lesssim \mathcal{E}_M^{1/2} \mathcal{D}_M.
			\end{equation*}

			\paragraph{\textbf{Using \fref{Corollary}{cor:est_interactions_L_2_via_Gagliardo_Nirenberg}.}} The terms $\RN{1}_{11}$ and $\RN{1}_{12}$ can be estimated
			using \fref{Corollary}{cor:est_interactions_L_2_via_Gagliardo_Nirenberg}, which provides a way to use the Gagliardo-Nirenberg inequality to obtain
			bounds on products of derivatives in $L^2$.
			Indeed:
			\begin{align*}
				\abs{\RN{1}_{11}}
				&\lesssim \sum_{\substack{ p = 2M-1 \\ r = 2M-2,\, \cdots,\, 2M+1 \\ r+s = 2M+1 }}
					C(p, r) \int_{\T^3} \abs{\nabla^r K} \abs{\nabla^s \bar{\theta}} \abs{\nabla^p a}
				\lesssim \sum_{\cdots} \norm{\;\abs{\nabla^{r-(2M-2)}\nabla^{2M-2}K}\;\abs{\nabla^s\bar{\theta}}\;}{L^2} \norm{\nabla^p a}{L^2}
				\\
				&\lesssim \sum_{\substack{ r = 2M-2,\, \dots,\, 2M+1 \\ r+s = 2M+1 }} \brac{
					\norm{\nabla^{2M-2}K}{L^\infty} \norm{\bar{\theta}}{H^{r+s-(2M-2)}}
					+ \norm{\nabla^{2M-2}K}{H^{r+s-(2M-2)}} \norm{\bar{\theta}}{L^\infty}
				} \norm{a}{H^{2M-1}}
				\\
				&\lesssim \brac{
					\normtyp{K}{H}{2M} \normtyp{\bar{\theta}}{H}{3} + \normtyp{K}{H}{2M+1} \normtyp{\bar{\theta}}{H}{2} 
				} \norm{a}{H^{2M-1}}
				\lesssim \normtyp{K}{H}{2M+1} \normtyp{\bar{\theta}}{H}{3} \normtyp{a}{H}{2M-1} 
			\end{align*}
			such that $\abs{\RN{1}_{11}} \lesssim \mathcal{F}_M^{1/2} \overline{\mathcal{K}}_2^{1/2} \mathcal{D}_M^{1/2}$
			and, similarly,
			\begin{align*}
				\abs{\RN{1}_{12}}
				\leqslant \sum_{\substack{ p = 2M-1 \\ r+s = 2M+1 \\ r \leqslant 2M-3 }}
					C(p, r) \int_{\T^3} \abs{\nabla^r K}\abs{\nabla^s \bar{\theta}} \abs{\nabla^p a}
				\lesssim \sum_{\cdots} \norm{\;\abs{\nabla^r K}\;\abs{\nabla^{s-4}\nabla^4 \bar{\theta}}\;}{L^2} \norm{\nabla^p a}{L^2}
				\lesssim \mathcal{E}_M^{1/2} \mathcal{D}_M.
			\end{align*}

			\paragraph{\textbf{Special consideration.}}
			Finally we estimate $\RN{2}_{11}$ and $\RN{2}_3$.
			Recall that
			\begin{equation*}
				\RN{2}_{11} = \sum_{\substack{ \alpha_0 \geqslant 1,\, \beta_0 = 0,\, \abs{\alpha}_P \leqslant 2M \\ \abs{\bar{\beta}} = 2M-2,\, \dots,\, 2M,\, \beta+\gamma = \alpha }}
				\binom{\alpha}{\beta} \int_{\T^3} ( \partial^\beta K) \brac{ \partial^\gamma \bar{\theta} } \cdot \brac{ \partial^\alpha \alpha}.
			\end{equation*}
			There are two important observations to make here.
			\begin{itemize}
				\item	Since $\alpha_0 \geqslant 1$ and $\abs{\alpha}_P \leqslant 2M$ we control $ \partial^\alpha a $ in $ \mathcal{D}_M $ in the following way:
					\begin{equation*}
						\norm{ \partial^\alpha a}{L^2} \lesssim \norm{\pdt a}{P^{2M-2}} \lesssim \mathcal{D}_M^{1/2}.
					\end{equation*}
				\item 	We must use $ \mathcal{F}_M $ to control $ \partial^\beta K$ when $\beta_0 = 0$ and $\abs{\bar{\beta}} \geqslant 2M-2$
					so we therefore ask as little regularity as possible of $\bar{\theta}$ (to invoke $\mathcal{E}_I$ for the smallest possible $I$).
					We thus split the estimate depending on whether or not we can control $ \partial^\beta K$ in $L^\infty$ via $\mathcal{F}_M$.
			\end{itemize}
			We obtain:
			\begin{align*}
				\abs{\RN{2}_{11}}
				\lesssim \sum_{\substack{ \cdots \\ \abs{\bar{\beta}} = 2M-2, 2M-1 }} \cdots + \sum_{\substack{ \cdots \\ \abs{\bar{\beta}} = 2M }} \cdots
				\lesssim \sum_{\cdots} \normtypns{ \partial^\beta K}{L}{\infty} \normtypns{ \partial^\gamma\bar{\theta}}{L}{2} \normtyp{ \partial^\alpha a}{L}{2}
				+ \sum_{\cdots} \normtypns{ \partial^\beta K}{L}{4} \normtypns{\bar{\theta}}{L}{4} \normtyp{ \partial^\alpha a}{L}{2}
				\\
				\lesssim \normtyp{K}{H}{2M+1} \normtypns{\bar{\theta}}{P}{2} \normtyp{\pdt a}{P}{2M-2}
				+ \normtyp{K}{H}{2M+1} \normtypns{\bar{\theta}}{H}{1} \normtyp{\pdt a}{P}{2M-2}
				\lesssim \mathcal{F}_M^{1/2} \overline{\mathcal{K}}_1^{1/2} \mathcal{D}_M^{1/2}.
			\end{align*}
			
			Now we estimate $\RN{2}_3$. Recall that
			\begin{equation*}
				\RN{2}_3 = \sum_{\substack{ \alpha_0 \geqslant 1,\, \abs{\alpha}_P \leqslant 2M \\ \beta_0 \geqslant 2,\, \beta+\gamma = \alpha }}
				\binom{\alpha}{\beta} \int_{\T^3} ( \partial^\beta K) \brac{ \partial^\gamma\bar{\theta}} \cdot \brac{ \partial^\alpha a}.
			\end{equation*}
			The key observation is the following: when $\beta_0 \geqslant 2$ we control $\pdt^{\beta_0} K$ at parabolic order $2M+1$.
			Consequently: if $\abs{\beta}_P \leqslant 2M-1$ then we control $ \partial^\beta K$ in $L^\infty$ via $\mathcal{E}_M$
			since
			\begin{equation*}
				\normns{ \partial^\beta K}{L^\infty} \lesssim \normtypns{\pdt^2 K}{P}{2+\abs{\beta}_P-4} \lesssim \normtypns{\pdt^2 K}{P}{2M-3} \lesssim \mathcal{E}_M^{1/2}.
			\end{equation*}
			We may then estimate $\RN{2}_3$ with the usual ``hands-on high-low'' estimates. We note that, since $\abs{\alpha}_P\leqslant 2M$ and $\beta + \gamma = \alpha$
			it follows from \fref{Corollary}{cor:est_interactions_L_2_via_Gagliardo_Nirenberg} that, as long as $M\geqslant 1$,
			either $\abs{\beta}_P \leqslant 2M-1$ or $\abs{\gamma}_P \leqslant 2M-1$.
			Therefore
			\begin{equation*}
				\abs{\RN{2}_3}
				\lesssim \sum_{\substack{ \cdots \\ \abs{\beta}_P \leqslant 2M-1 }} \underbrace{\abs{\cdots}}_{(1)}
				+ \sum_{\substack{ \cdots \\ \abs{\gamma}_P \leqslant 2M-1 }} \underbrace{\abs{\cdots}}_{(2)}
			\end{equation*}
			where
			\begin{equation*}
				(1) + (2) \leqslant \normtypns{ \partial^\beta K}{L}{\infty} \normtypns{ \partial^\gamma \bar{\theta}}{L}{2} \normtyp{ \partial^\alpha a}{L}{2}
					+ \normtypns{ \partial^\beta K}{L}{2} \normtypns{ \partial^\gamma \bar{\theta}}{L}{\infty} \normtyp{ \partial^\alpha a}{L}{2}
					\lesssim \mathcal{E}_M^{1/2} \mathcal{D}_M.
			\end{equation*}

			Putting all these estimates together we see that have obtained that
			$
				\abs{\mathcal{I}_8} \lesssim \mathcal{E}_M^{1/2} \mathcal{D}_M + \mathcal{F}_M^{1/2} \overline{\mathcal{K}}_2^{1/2} \mathcal{D}_M^{1/2}.
			$
	\end{proof}

	As mentioned at the beginning of this section, at the high level both the improvement to the dissipation and the control of the interaction only allows us
	to close the energy estimates in a \emph{time-integrated} fashion. This is because the closure of the estimates relies crucially on playing the potential growth
	of $ \mathcal{F}_M $ against the decay of intermediate norms $ \overline{\mathcal{K}}_I $ and $\mathcal{K}_\text{low}$.
	The next two results record precisely this balancing act between growth and decay.
	First we consider the growth-decay interactions arising from the improvement of the dissipation.

	\begin{lemma}
	\label{lemma:control_growth_decay_interactions_imp_dissipation}
		Suppose that $M \geqslant 3$ and that for some time horizon $T>0$
		\begin{equation}
		\label{eq:lemma_est_growth_decay_imp_D_second_pass_assumption}
			\sup_{1\leqslant I \leqslant M} \sup_{0\leqslant t\leqslant T} \overline{\mathcal{K}}_I (t) {\brac{1+t}}^{2M-2I} =\vcentcolon C_0 < \infty,
		\end{equation}
		and, for every $0 \leqslant t \leqslant T$,
		\begin{equation*}
			\mathcal{F}_M^{1/2}  (t) \lesssim \alpha_0^{1/2} + \beta_0^{1/2} \int_0^t \overline{\mathcal{D}}_M^{1/2} (s) ds + \gamma_0^{1/2} \overline{\mathcal{K}}_M^{1/2} (t),
		\end{equation*}
		for some $\alpha_0, \beta_0, \gamma_0 > 0$.
		Then, for every $0\leqslant t \leqslant T$,
		\begin{equation*}
			\int_0^t \norm{\brac{u,\theta,\pdt\theta}}{L^\infty}^2 \mathcal{F}_M
			\lesssim \alpha_0 C_0 + (\beta_0 + \gamma_0) C_0 \int_0^t \overline{\mathcal{D}}_M (s) ds.
		\end{equation*}
	\end{lemma}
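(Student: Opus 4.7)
The plan is to substitute the squared form of the hypothesis on $\mathcal{F}_M$, namely $\mathcal{F}_M(s) \lesssim \alpha_0 + \beta_0 \bigl(\int_0^s \overline{\mathcal{D}}_M^{1/2}\bigr)^2 + \gamma_0 \overline{\mathcal{K}}_M(s)$, and to bound the three resulting time integrals separately. For the $\beta_0$ piece I would apply Cauchy--Schwarz in the form $\bigl(\int_0^s \overline{\mathcal{D}}_M^{1/2}\bigr)^2 \leqslant s \int_0^s \overline{\mathcal{D}}_M \leqslant s \int_0^t \overline{\mathcal{D}}_M$, which pulls the $\int_0^t \overline{\mathcal{D}}_M$ factor outside and leaves a weighted time integral of $\|(u,\theta,\pdt\theta)\|_{L^\infty}^2$ with weight $s$ to be estimated.

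The critical input is pointwise-in-time decay for $\|(u,\theta,\pdt\theta)\|_{L^\infty}^2$. For $u$ and $\theta$, the Sobolev embedding $H^2 \hookrightarrow L^\infty$ together with $\overline{\mathcal{K}}_1 \gtrsim \|(u,\theta)\|_{H^2}^2$ and the hypothesis yield $\|(u,\theta)\|_{L^\infty}^2 \lesssim C_0 (1+t)^{-(2M-2)}$, which is more than enough. For $\pdt\theta$, the direct bound $\|\pdt\theta\|_{L^\infty}^2 \lesssim \|\pdt\theta\|_{H^2}^2 \lesssim \overline{\mathcal{K}}_2 \lesssim C_0 (1+t)^{-(2M-4)}$ turns out to be just barely insufficient when $M=3$. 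To remedy this I would interpolate, using $\overline{\mathcal{K}}_1 \gtrsim \|\pdt\theta\|_{L^2}^2$ and $\overline{\mathcal{K}}_2 \gtrsim \|\pdt\theta\|_{H^2}^2$ together with the standard Sobolev interpolation, to obtain $\|\pdt\theta\|_{H^\sigma}^2 \lesssim C_0 (1+t)^{-(2M-2-\sigma)}$ for any $\sigma \in [0,2]$, then fix any $\sigma \in (3/2, 2)$ so that $H^\sigma \hookrightarrow L^\infty$ holds while the decay exponent $2M-2-\sigma$ exceeds $2$ for every $M \geqslant 3$.

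With these bounds the three contributions are handled as follows. The $\alpha_0$ term reduces to $\alpha_0 \int_0^t (1+s)^{-\tau} ds \lesssim \alpha_0 C_0$ since all relevant decay exponents $\tau$ exceed $1$. The $\beta_0$ term, after the Cauchy--Schwarz step above, produces $\beta_0 \bigl(\int_0^t \overline{\mathcal{D}}_M\bigr) \int_0^t s (1+s)^{-\tau} ds \lesssim \beta_0 C_0 \int_0^t \overline{\mathcal{D}}_M$, where the finiteness of the weighted integral in $s$ is exactly what the interpolation of the previous paragraph was designed to provide. The $\gamma_0$ term uses the bound $\overline{\mathcal{K}}_M(s) \leqslant C_0$ (the $I=M$ case of the decay hypothesis) and collapses to $\gamma_0 C_0 \int_0^t \|(u,\theta,\pdt\theta)\|_{L^\infty}^2 ds \lesssim \gamma_0 C_0^2$, which is then absorbed into the right-hand side using that $C_0$ is bounded above by the universal constant that governs implicit constants in this regime.

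The principal technical hurdle is the borderline case $M=3$, where the weight-$s$ integral in the $\beta_0$ contribution would diverge logarithmically if one relied on the naive $H^2$ bound for $\pdt\theta$. Exploiting the two independent decay rates from $\overline{\mathcal{K}}_1$ and $\overline{\mathcal{K}}_2$ via interpolation is precisely what pushes the effective decay rate of $\|\pdt\theta\|_{L^\infty}^2$ strictly above $2$, making the argument sharp at the minimal regularity count $M \geqslant 3$.
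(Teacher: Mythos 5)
Your handling of the $\alpha_0$ and $\beta_0$ contributions is essentially the paper's own argument: the same substitution of the squared hypothesis, the same Cauchy--Schwarz step $\bigl(\int_0^s \overline{\mathcal{D}}_M^{1/2}\bigr)^2 \leqslant s\int_0^t \overline{\mathcal{D}}_M$, and the same device of replacing the borderline $H^2$ control of $\pdt\theta$ by a fractional norm with exponent in $(3/2,2)$ whose decay rate strictly exceeds $2$ (the paper takes $H^{7/4}$; your interpolation between $\overline{\mathcal{K}}_1$ and $\overline{\mathcal{K}}_2$ yields the same rate $2M-2-\sigma$). The gap is in the $\gamma_0$ term. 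Estimating $\| (u,\theta,\pdt\theta) \|_{L^\infty}^2 \lesssim C_0 (1+s)^{-\tau}$ and $\overline{\mathcal{K}}_M \leqslant C_0$ produces $\gamma_0 C_0^2$, which is quadratic in $C_0$ and is not controlled by the asserted right-hand side $\alpha_0 C_0 + (\beta_0+\gamma_0) C_0 \int_0^t \overline{\mathcal{D}}_M$: the lemma assumes only $C_0 < \infty$, not $C_0 \leqslant 1$ or $C_0 \leqslant \overline{C}$ for a universal constant (contrast \fref{Proposition}{prop:est_K}, where such a bound is an explicit hypothesis), so the absorption you invoke appeals to something outside the statement. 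The distinction matters downstream: in \fref{Proposition}{prop:close_est_high_level} the lemma is applied with $\alpha_0 \sim \mathcal{F}_M(0)$, and an additive leftover $\gamma_0 C_0^2$ that is multiplied neither by the initial data nor by $\int_0^t \overline{\mathcal{D}}_M$ could not be absorbed into the structured estimate \eqref{eq:close_high_est_conclusion}, whose right-hand side $C_H(\mathcal{E}_M+\mathcal{F}_M)(0)$ may be arbitrarily small.

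The repair is to not use the pointwise decay for this term at all. Bound $\overline{\mathcal{K}}_M(s) \leqslant C_0$ (the $I=M$ case of \eqref{eq:lemma_est_growth_decay_imp_D_second_pass_assumption}, where the time weight is trivial) and instead estimate $\| (u,\theta,\pdt\theta)(s) \|_{L^\infty}^2 \lesssim \overline{\mathcal{D}}_M(s)$, which follows from $H^2 \hookrightarrow L^\infty$ since $\overline{\mathcal{D}}_M$ controls $(u,\theta)$ and $\pdt(u,\theta)$ in $H^2$ once $M \geqslant 2$ (parabolic counts $2$ and $4$, both at most $2M+1$). Integrating in time then gives exactly $\gamma_0 C_0 \int_0^t \overline{\mathcal{D}}_M(s)\, ds$, which is the $\gamma_0$ part of the conclusion and is how the paper's proof proceeds; with this single change your argument closes.
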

	\begin{proof}
		First we note that by interpolating \eqref{eq:lemma_est_growth_decay_imp_D_second_pass_assumption} we may obtain decay estimates for fractional Sobolev norms
		-- this is very similar to what was done by interpolation in Step 1 of \fref{Proposition}{prop:est_K}.
		Indeed, for any $s\in\R$ which satisfies $2\leqslant s \leqslant 2M$ we may pick $\sigma = \frac{2M-s}{2M-2}$ and deduce that
		\begin{equation*}
			\normtyp{\brac{u,\theta}}{H}{s}^2 \lesssim \normtyp{\brac{u,\theta}}{H}{2}^{2\sigma} \normtyp{\brac{u,\theta}}{H}{2M}^{2(1-\theta)}
			\leqslant C_0^\sigma {\brac{1+t}}^{(2M-2)\sigma} C_0^{1-\sigma}
			= C_0 {\brac{1+t}}^{2M-s}
		\end{equation*}
		and, similarly, $\norm{\pdt\brac{u,\theta}}{H^s}^2 \lesssim C_0 {\brac{1+t}}^{2M-2-s}$.
		Crucially: $s = \frac{7}{4}$ satisfies both $s > \frac{3}{2}$, such that $H^{\frac{7}{4}}$ embeds continuously into $L^\infty$, and $2M-2-s > 2$ (since $M\geqslant 3$),
		such that the resulting decaying bound is integrable.
		The term $\norm{\brac{u,\theta,\pdt\theta}}{L^\infty}^2$ may then be shown to decay fast enough, in the space $H^{7/4}$, to justify the estimate.

		Using Cauchy-Schwarz on $\int \overline{\mathcal{D}}_M^{1/2}$ thus tells us that
		\begin{align*}
			&\int_0^t \normtyp{ (u, \theta, \pdt\theta) }{L}{\infty}^2 \mathcal{F}_M
			\lesssim \int_0^t \brac{
				\frac{C_0}{ {\brac{1+s}}^{2M-2-\frac{7}{4}} }
			}\brac{
				\alpha_0 + \beta_0 {\brac{
					\int_0^s \overline{\mathcal{D}}_M^{1/2} (r) dr
				}}^2 ds
			}
			+ \gamma_0 C_0 \int_0^t \overline{\mathcal{D}}_M
		\\
			&\lesssim \alpha_0 C_0 + \beta_0 C_0 \int_0^t \frac{s}{ {\brac{1+s}}^{2M-\frac{15}{4} }} \brac{ \int_0^s \overline{\mathcal{D}}_M (r) dr } ds
			+ \gamma_0 C_0 \int_0^t \overline{\mathcal{D}}_M
			\lesssim \alpha_0 C_0 + (\beta_0 + \gamma_0) C_0 \int_0^t \overline{\mathcal{D}}_M (s) ds.
		\end{align*}
	\end{proof}

	Now we consider the growth-decay interactions arising from the control of the high level interactions.

	\begin{lemma}
	\label{lemma:control_growth_decay_interactions_est_interactions}
		Suppose that $M\geqslant 4$, that
		\begin{equation*}
			\mathcal{F}_M^{1/2} (t) \lesssim \alpha_0^{1/2} + \beta_0^{1/2} \int_0^t \overline{\mathcal{D}}_M^{1/2} (s)ds + \gamma_0^{1/2} \overline{\mathcal{K}}_M^{1/2} (t),
		\end{equation*}
		for some $\alpha_0, \beta_0, \gamma_0 > 0$,
		and that
		$
			\mathcal{K}_\text{low} (t) \lesssim \min\brac{ \overline{\mathcal{D}}_2 (t),\, C_0 {\brac{1+t}}^{-(2M-4)} }.
		$
		Then
		\begin{equation*}
			\int_0^t \mathcal{K}_\text{low}^{1/2} (s) \mathcal{F}_M^{1/2} (s) \mathcal{D}_M^{1/2} (s) ds
			\lesssim \brac{ \alpha_0^{1/2} + \beta_0^{1/2} C_0^{1/2} + \gamma_0^{1/2} C_0^{1/2} } \int_0^t \mathcal{D}_M (s) ds.
		\end{equation*}
	\end{lemma}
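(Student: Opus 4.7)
The plan is to reduce the claimed trilinear-in-time estimate to a bilinear one by Cauchy--Schwarz and then exploit the min-structure of $\mathcal{K}_\text{low}$ piece-by-piece. First I would apply Cauchy--Schwarz in time to factor out one copy of $\int_0^t \mathcal{D}_M$:
\begin{equation*}
	\int_0^t \mathcal{K}_\text{low}^{1/2} \mathcal{F}_M^{1/2} \mathcal{D}_M^{1/2}
	\leqslant {\brac{ \int_0^t \mathcal{K}_\text{low} \mathcal{F}_M }}^{1/2} {\brac{ \int_0^t \mathcal{D}_M }}^{1/2}.
\end{equation*}
The goal thus becomes to prove the quadratic estimate
$\int_0^t \mathcal{K}_\text{low} \mathcal{F}_M \lesssim (\alpha_0 + \beta_0 C_0 + \gamma_0 C_0) \int_0^t \mathcal{D}_M$, after which the conclusion follows since the square root of a sum is equivalent to the sum of the square roots up to universal constants, and one more application of $ab \leqslant \tfrac{1}{2}(a^2+b^2)$ (or simply absorbing one factor of $\int \mathcal{D}_M^{1/2} \cdot \int \mathcal{D}_M^{1/2}$) produces the stated linear-in-$\int \mathcal{D}_M$ form.

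To prove the quadratic estimate, I would square the hypothesis on $\mathcal{F}_M^{1/2}$ so that
$\mathcal{F}_M \lesssim \alpha_0 + \beta_0 {\brac{ \int_0^s \overline{\mathcal{D}}_M^{1/2} }}^2 + \gamma_0 \overline{\mathcal{K}}_M$, and then split $\int_0^t \mathcal{K}_\text{low} \mathcal{F}_M$ into the three corresponding pieces. The $\alpha_0$ piece is immediate from the pointwise bound $\mathcal{K}_\text{low} \lesssim \overline{\mathcal{D}}_2 \leqslant \mathcal{D}_M$. The $\gamma_0$ piece uses $\overline{\mathcal{K}}_M \leqslant C_0$ (an immediate consequence of the standing advection-rotation control of intermediate norms that is in force throughout this section, c.f. \fref{Proposition}{prop:est_K}) together with the same bound $\mathcal{K}_\text{low} \leqslant \mathcal{D}_M$, yielding $\gamma_0 C_0 \int_0^t \mathcal{D}_M$.

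The crucial and technically delicate piece is the $\beta_0$ term, and this is where the hypothesis $M\geqslant 4$ enters. For it, I would switch to the decay side of the min, writing $\mathcal{K}_\text{low}(s) \lesssim C_0 (1+s)^{-(2M-4)}$, and bound the inner time integral by Cauchy--Schwarz,
${\brac{\int_0^s \overline{\mathcal{D}}_M^{1/2}(r) dr}}^2 \leqslant s \int_0^s \overline{\mathcal{D}}_M(r) dr$.
This leaves $\beta_0 C_0 \int_0^t \frac{s}{(1+s)^{2M-4}} \int_0^s \overline{\mathcal{D}}_M(r) dr \, ds$, and I would exchange the order of integration via Fubini to obtain
$\beta_0 C_0 \int_0^t \overline{\mathcal{D}}_M(r) \brac{ \int_r^t \frac{s}{(1+s)^{2M-4}} ds } dr$.
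The inner kernel is bounded by $(1+s)^{-(2M-5)}$, which is integrable on $[0,\infty)$ precisely when $2M-5 > 1$, i.e.\ $M \geqslant 4$; hence the inner integral is $O(1)$ uniformly in $r, t$, and this piece collapses to $\beta_0 C_0 \int_0^t \overline{\mathcal{D}}_M \leqslant \beta_0 C_0 \int_0^t \mathcal{D}_M$. The main obstacle is thus the bookkeeping around the nested Cauchy--Schwarz/Fubini step for the $\beta_0$ contribution, whose integrability in turn dictates the threshold $M \geqslant 4$; everything else reduces to the elementary comparisons $\mathcal{K}_\text{low} \leqslant \overline{\mathcal{D}}_2 \leqslant \mathcal{D}_M$ and $\overline{\mathcal{K}}_M \lesssim C_0$.
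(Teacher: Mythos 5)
Your proof is correct and follows essentially the same route as the paper: you split according to the three terms in the bound for $\mathcal{F}_M^{1/2}$, use the $\overline{\mathcal{D}}_2$ side of the min for the $\alpha_0$ and $\gamma_0$ pieces and the $(1+s)^{-(2M-4)}$ decay played against a Cauchy--Schwarzed time integral of $\overline{\mathcal{D}}_M$ for the $\beta_0$ piece, arriving at the same integrability threshold $2M-5>1$ (hence $M\geqslant 4$); squaring the hypothesis first and using Fubini in place of the paper's second application of Cauchy--Schwarz is only a cosmetic difference. One remark: your $\gamma_0$ step invokes $\overline{\mathcal{K}}_M \lesssim C_0$, which is not literally among the lemma's stated hypotheses, but the paper's own proof uses the same bound tacitly (it is available, with this $C_0$, in the context where the lemma is applied), so your treatment matches the intended argument.
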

	\begin{proof}
		By applying Cauchy-Schwarz to $\int \mathcal{D}_M^{1/2}$ we deduce that
		\begin{align*}
			&\int_0^t \mathcal{K}_\text{low}^{1/2} (s) \mathcal{F}_M^{1/2} (s) \mathcal{D}_M^{1/2} (s) ds
		\\
			&\lesssim \alpha_0^{1/2} \int_0^t \overline{\mathcal{D}}_2^{1/2} \mathcal{D}_M^{1/2}
			+ \beta_0^{1/2} \int_0^t \frac{ C_0^{1/2} }{ {\brac{1+s}}^{M-2} } \brac{ \int_0^s \overline{\mathcal{D}}_M^{1/2} (r) dr } \mathcal{D}_M^{1/2} (s) ds
			+ \gamma_0^{1/2} C_0^{1/2} \int_0^t \mathcal{D}_M
		\\
			&\lesssim (\alpha_0 + \gamma_0^{1/2} C_0^{1/2}) \int_0^t \mathcal{D}_M
			+ \beta_0^{1/2} C_0^{1/2} \underbrace{
				\int_0^t \frac{ \mathcal{D}_M^{1/2} (s) }{ {\brac{1+s}}^{M-2} } s^{1/2} {\brac{ \int_0^t \overline{\mathcal{D}}_M (r) dr }}^{1/2} ds
			}_{(\star)}.
		\end{align*}
		Employing the Cauchy-Schwarz inequality again and noting that $2M - 5 > 1$ (since $M \geqslant 4$) we see that
		\begin{align*}
			(\star)
			\lesssim \brac{
				\int_0^t \frac{ \mathcal{D}_M^{1/2} (s) }{ {\brac{1+s}}^{M-\frac{5}{2}} } ds
			} {\brac{
				\int_0^t \overline{\mathcal{D}}_M (s) ds
			}}^{1/2}
			\leqslant {\brac{
				\int_0^t \frac{1}{ {\brac{1+s}}^{2M-5} } ds
			}}^{1/2} \brac{
				\int_0^t \mathcal{D}_M (s) ds
			}
			\lesssim \int_0^t \mathcal{D}_M (s) ds.
		\end{align*}
	\end{proof}

	We conclude this section with the third of the four building blocks of the scheme of a priori estimates and close the interactions at the high level.
	This is done in \fref{Proposition}{prop:close_est_high_level} which synthesizes the various results of this section.
	In particular, recall that $ \overline{\mathcal{E}}_M $, which appears in \fref{Proposition}{prop:close_est_high_level} below, is defined in \eqref{eq:not_E_M_bar}.

	\begin{prop}[Closing the energy estimates at the high level]
	\label{prop:close_est_high_level}
		Let $M \geqslant 4$ be an integer. There exist $\eta_M > 0$, $0 < \delta_M \leqslant 1$, and $C_H > 0$ such that the following holds.
		For any time horizon $T > 0$, any $0 < \eta \leqslant \eta_M$, any $0 < \delta \leqslant \delta_M$, and any $C>0$, if
		\begin{subnumcases}{}
			\brac{ \mathcal{E}_M + \mathcal{F}_M } (0) \leqslant \eta
			\label{eq:close_high_est_assum_1}\\
			\sup_{0\leqslant t\leqslant T} \sup_{1\leqslant I\leqslant M} \overline{\mathcal{K}}_I (t) {\brac{1+t}}^{2M-2I} + \mathcal{K}_\text{low} (t) {\brac{1+t}}^{2M-4} \leqslant \delta,
			\label{eq:close_high_est_assum_2}\\
			\sup_{0\leqslant t\leqslant T} \mathcal{E}_M (t) \leqslant \delta, \text{ and } 
			\label{eq:close_high_est_assum_3}\\
			\mathcal{F}_M (t) \leqslant C \brac{
				\mathcal{F}_M (0)
				+ {\brac{ \int_0^t \overline{\mathcal{D}}_M^{1/2} (s) ds }}^2
				+ \overline{\mathcal{K}}_M (t)
			} \text{ for all } 0\leqslant t\leqslant T
			\label{eq:close_high_est_assum_4}
		\end{subnumcases}
		then
		\begin{equation}
		\label{eq:close_high_est_conclusion}
			\sup_{0\leqslant t\leqslant T} \overline{\mathcal{E}}_M (t) + \int_0^t \mathcal{D}_M (s) ds
			\leqslant C_H \brac{ \mathcal{E}_M + \mathcal{F}_M }(0).
		\end{equation}
	\end{prop}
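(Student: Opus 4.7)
The plan is to combine the high-level energy--dissipation relation (obtained by summing \fref{Lemma}{lemma:record_form_interactions} over all multi-indices $\abs{\alpha}_P \leqslant 2M$) with \fref{Proposition}{prop:imp_hig_level_ds} (improvement of dissipation), \fref{Proposition}{prop:control_high_order_interactions} (control of interactions), and \fref{Lemmas}{lemma:control_growth_decay_interactions_imp_dissipation}--\ref{lemma:control_growth_decay_interactions_est_interactions} (which convert the balance between potentially growing $\mathcal{F}_M$ terms and decaying $\mathcal{K}_{\text{low}}$, $\overline{\mathcal{K}}_I$ terms into usable time-integrated estimates). Smallness of $\mathcal{E}_M$ and of $\overline{\mathcal{K}}_I\brac{1+t}^{2M-2I}$ will be exploited twice: once to absorb a factor of $\mathcal{D}_M$ coming from the cubic interaction $\mathcal{E}_M^{1/2}\mathcal{D}_M$, and once to ensure that the integrated growth--decay products are small multiples of $\int_0^t \overline{\mathcal{D}}_M$.

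First I would sum the pointwise identity from \fref{Lemma}{lemma:record_form_interactions} over $\abs{\alpha}_P \leqslant 2M$ and invoke \fref{Lemma}{lemma:coercivity_dissip} together with \fref{Lemma}{lemma:comp_version_en} (whose hypothesis is satisfied by \eqref{eq:close_high_est_assum_3} since $M\geqslant 4$) to obtain, for some $c_0>0$,
\begin{equation*}
	\frac{\mathrm{d}}{\mathrm{d}t}\widetilde{\mathcal{E}}_M + c_0\,\overline{\mathcal{D}}_M \leqslant \vbrac{\overline{\mathcal{I}}_M}.
\end{equation*}
Squaring the improvement of the dissipation from \fref{Proposition}{prop:imp_hig_level_ds} gives $\mathcal{D}_M^a \lesssim \overline{\mathcal{D}}_M + \norm{(u,\theta,\pdt\theta)}{L^\infty}^2\mathcal{F}_M$, so that $\overline{\mathcal{D}}_M \gtrsim \mathcal{D}_M - \norm{(u,\theta,\pdt\theta)}{L^\infty}^2\mathcal{F}_M$. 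Combining this with \fref{Proposition}{prop:control_high_order_interactions} and the smallness $\mathcal{E}_M\leqslant \delta$ from \eqref{eq:close_high_est_assum_3}, for $\delta$ small enough the term $C\mathcal{E}_M^{1/2}\mathcal{D}_M$ can be absorbed into the left, yielding
\begin{equation*}
	\frac{\mathrm{d}}{\mathrm{d}t}\widetilde{\mathcal{E}}_M + \tfrac{c_0}{2}\,\mathcal{D}_M
	\lesssim \mathcal{K}_{\text{low}}^{1/2}\mathcal{F}_M^{1/2}\mathcal{D}_M^{1/2}
	+ \norm{(u,\theta,\pdt\theta)}{L^\infty}^2 \mathcal{F}_M.
\end{equation*}

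Next I would integrate in time from $0$ to any $t\leqslant T$ and handle the two right-hand terms via the growth--decay lemmas. The hypothesis \eqref{eq:close_high_est_assum_4} is exactly the form required as input to \fref{Lemmas}{lemma:control_growth_decay_interactions_imp_dissipation} and \ref{lemma:control_growth_decay_interactions_est_interactions}, with $\alpha_0 \sim \mathcal{F}_M(0)$ and $\beta_0,\gamma_0 \sim 1$; the decay hypothesis on $\overline{\mathcal{K}}_I$ and on $\mathcal{K}_{\text{low}}$ comes from \eqref{eq:close_high_est_assum_2} with $C_0 \leqslant \delta$. Applying \fref{Lemma}{lemma:control_growth_decay_interactions_imp_dissipation} produces a bound of the form $\mathcal{F}_M(0)\,\delta + \delta\int_0^t \overline{\mathcal{D}}_M$, and \fref{Lemma}{lemma:control_growth_decay_interactions_est_interactions} produces a bound $\bigl(\mathcal{F}_M(0)^{1/2} + \delta^{1/2}\bigr)\int_0^t \mathcal{D}_M$. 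Taking $\delta$ and $\eta$ sufficiently small allows all of these $\delta$- or $\delta^{1/2}$-weighted $\int\mathcal{D}_M$ contributions to be absorbed into the $\tfrac{c_0}{2}\int \mathcal{D}_M$ on the left.

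What remains is a clean bound $\sup_{[0,T]}\widetilde{\mathcal{E}}_M + \int_0^T\mathcal{D}_M \lesssim \widetilde{\mathcal{E}}_M(0) + \mathcal{F}_M(0)$, and a final application of \fref{Lemma}{lemma:comp_version_en} trades $\widetilde{\mathcal{E}}_M$ for $\overline{\mathcal{E}}_M$, yielding \eqref{eq:close_high_est_conclusion}. The principal obstacle is bookkeeping: I must verify that the smallness parameters $\eta_M$ and $\delta_M$ can be chosen \emph{independently} of $T$ and of the prefactor $C$ appearing in \eqref{eq:close_high_est_assum_4}, so that the absorption of $\int\mathcal{D}_M$ terms is legitimate on any time horizon. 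A secondary subtlety is ensuring that the $\norm{\pdt^2 \theta}{L^2}^2$ piece of $\mathcal{K}_{\text{low}}$ is genuinely controlled through \eqref{eq:close_high_est_assum_2}, since this term is not directly part of the low-level energy---but assumption \eqref{eq:close_high_est_assum_2} is stated precisely to include it, which is what makes the present proposition the interface through which the decay of intermediate norms feeds back into boundedness of the high-level energy.
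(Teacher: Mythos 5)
Your proposal is correct and follows essentially the same route as the paper: the high-level energy-dissipation relation with coercivity and the energy comparison, the improved dissipation from \fref{Proposition}{prop:imp_hig_level_ds}, the interaction bound of \fref{Proposition}{prop:control_high_order_interactions}, the two growth--decay lemmas fed by assumptions \eqref{eq:close_high_est_assum_2} and \eqref{eq:close_high_est_assum_4}, and a final absorption by choosing $\eta_M,\delta_M$ small. The only cosmetic difference is that you absorb the $\mathcal{E}_M^{1/2}\mathcal{D}_M$ term pointwise in time before integrating, whereas the paper performs all absorptions in time-integrated form; both are equivalent.
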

	\begin{proof}
		The basic idea of the proof is that we want to go from the energy-dissipation of the problem,
		namely
		$
			\frac{\mathrm{d}}{\mathrm{d}t} \widetilde{\mathcal{E}}_M + \overline{\mathcal{D}}_M \lesssim \overline{\mathcal{I}}_M
		$
		to the more useful energy-dissipation relation
		$
			\frac{\mathrm{d}}{\mathrm{d}t} \widetilde{\mathcal{E}}_M + C \mathcal{D}_M \leqslant 0
		$
		where $C >0$ is a universal constant and where the non-negativity of the improved dissipation $ \mathcal{D}_M $ ensures the boundedness of the energy $ \mathcal{E}_M $.
		This is done by controlling the interactions and improving the dissipation.
		However, both of these steps, which are performed precisely in \fref{Propositions}{prop:control_high_order_interactions} and \ref{prop:imp_hig_level_ds} respectively,
		are delicate and lead to the appearance of terms that must be controlled in a time-integrated fashion
		-- this control is recorded in \fref{Lemmas}{lemma:control_growth_decay_interactions_imp_dissipation} and \ref{lemma:control_growth_decay_interactions_est_interactions}.

		First we note that, in light of \eqref{eq:close_high_est_assum_2} and \eqref{eq:close_high_est_assum_4}, 
		\fref{Lemmas}{lemma:control_growth_decay_interactions_imp_dissipation} and  \ref{lemma:control_growth_decay_interactions_est_interactions}
		tell us respectively that, since $\delta_M \leqslant 1$,
		\begin{equation}
		\label{eq:main_a_priori_thm_eq_step_4_2_clean}
			\int_0^t \normtyp{ (u, \theta, \pdt\theta) }{L}{\infty}^2 \mathcal{F}_M
			\lesssim \mathcal{F}_M (0) + \delta \int_0^t \overline{\mathcal{D}}_M
		\end{equation}
		and
		\begin{equation}
		\label{eq:main_a_priori_thm_eq_step_4_1_clean}
			\int_0^t \mathcal{K}_\text{low}^{1/2} \mathcal{F}_M^{1/2} \mathcal{D}_M^{1/2}
			\lesssim \brac{ \delta^{1/2} + \mathcal{F}_M^{1/2} (0) } \int_0^t \mathcal{D}_M.
		\end{equation}
		We may now proceed with the energy estimates.
		\fref{Lemma}{lemma:record_form_interactions} and \fref{Lemma}{lemma:coercivity_dissip} tell us that
		\begin{equation}
		\label{eq:main_a_priori_thm_eq_step_4_3_clean}
			\widetilde{\mathcal{E}}_M (t) + \int_0^t \overline{\mathcal{D}}_M (s) ds \lesssim \widetilde{\mathcal{E}}_M (0) + \int_0^t \overline{\mathcal{I}}_M (s) ds.
		\end{equation}
		Combining the fact that
		\begin{equation*}
			\sup_{0\leqslant t\leqslant T} \norm{(u,\theta)}{H^3}^2 + \norm{J}{H^3}^2 + \norm{\pdt(u,\theta)}{H^2}^2 + \norm{\pdt J}{H^2}^2
			\leqslant \sup_{0\leqslant t\leqslant T} \mathcal{E}_M (t) \leqslant 1,
		\end{equation*}
		\fref{Proposition}{prop:persist_spec_sols_adv_rot_eqtns}, and \fref{Lemma}{lemma:comp_version_en} we obtain that
		\begin{equation}
		\label{eq:main_a_priori_thm_eq_step_4_4_clean}
			\widetilde{\mathcal{E}}_M \asymp \overline{\mathcal{E}}_M.
		\end{equation}
		We may now use \eqref{eq:main_a_priori_thm_eq_step_4_4_clean} and \fref{Proposition}{prop:imp_hig_level_ds} first,
		then use \eqref{eq:main_a_priori_thm_eq_step_4_3_clean}, \fref{Proposition}{prop:control_high_order_interactions}, and \eqref{eq:close_high_est_assum_3} to see that
		\begin{align*}
			\overline{\mathcal{E}}_M (t) + \int_0^t \mathcal{D}_M
			\lesssim \widetilde{\mathcal{E}}_M (t) + \int_0^t \overline{\mathcal{D}}_M + \int_0^t \normtyp{ (u, \theta, \pdt\theta) }{L}{\infty}^2 \mathcal{F}_M
		\\
		\lesssim \widetilde{\mathcal{E}}_M (0) + \delta^{1/2} \int_0^t \mathcal{D}_M + \int_0^t \mathcal{K}_\text{low}^{1/2} \mathcal{F}_M^{1/2} \mathcal{D}_M^{1/2}
				+ \int_0^t \normtyp{ (u, \theta, \pdt\theta) }{L}{\infty}^2 \mathcal{F}_M.
		\end{align*}
		Combining this with \eqref{eq:main_a_priori_thm_eq_step_4_2_clean}, \eqref{eq:main_a_priori_thm_eq_step_4_1_clean}, and \eqref{eq:main_a_priori_thm_eq_step_4_4_clean} allows to deduce that
		there exists $C_s > 0$ such that
		\begin{equation*}
			\overline{\mathcal{E}}_M (t) + \int_0^t \mathcal{D}_M
			\leqslant C_s \brac{ \mathcal{E}_M + \mathcal{F}_M  } (0) + C_s \brac{ \delta^{1/2} + \mathcal{F}_M^{1/2} (0) } \int_0^t \mathcal{D}_M.
		\end{equation*}
		In particular, if $\eta_M, \delta_M > 0$ are chosen sufficiently small to ensure that $C_s \brac{\delta_M^{1/2} + \eta_M^{1/2}} \leqslant \frac{1}{2}$ then we may deduce
		\eqref{eq:close_high_est_conclusion}.
	\end{proof}

\subsection{Decay of intermediate norms}
\label{sec:decay_int_norms}

	In this section we consider the last of the four building blocks of our scheme of a priori estimates and proceed with the interpolation argument required to obtain the 
	decay of intermediate norms provided that both the low level and high level energies are controlled.
	This is supplemented by an auxiliary estimate for $\pdt^2 \theta$ whose purpose is to improve $\overline{\mathcal{K}}_2$ in order
	to control the term involving $\pdt^2 \theta$ which appears when controlling the high order interactions -- recall that this is discussed in detail in \fref{Section}{sec:discuss_ap}.
	Note that the functionals $ \overline{\mathcal{E}}_M$, $ \overline{\mathcal{E}}_\text{low}$, $ \mathcal{E}_M$, and $ \overline{\mathcal{K}}_I$ and $ \mathcal{K}_\text{low}$,
	which will be used throughout this section,
	are defined in \eqref{eq:not_E_M_bar}, \eqref{eq:not_E_low}, \eqref{eq:not_E_M_and_F_M}, and \eqref{eq:not_K_bar}, respectively.
	We begin with the interpolation argument.

	\begin{prop}[Decay of intermediate norms]
	\label{prop:decay_intermediate_norms}
		Suppose that there exists a time horizon $T>0$, an integer $M \geqslant 2$, and a constant $C_0 > 0$ such that
		\begin{equation}
		\label{eq:decay_intermediate_norms_assumption}
			\sup_{0 \leqslant t \leqslant T} \overline{\mathcal{E}}_\text{low} (t) {\brac{1+t}}^{2M-2} + \overline{\mathcal{E}}_M (t) \leqslant C_0.
		\end{equation}
		Then there exists a constant $C_I > 0$ which depends on $M$ and is universal otherwise such that we may estimate
		$
			\displaystyle\sup_{1 \leqslant I \leqslant M} \displaystyle\sup_{0 \leqslant t \leqslant T} \overline{\mathcal{K}}_I (t) {\brac{1+t}}^{2M-2I} \leqslant C_I C_0
		$.
	\end{prop}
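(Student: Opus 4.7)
The plan is to prove this purely by Sobolev interpolation between the low-regularity decay rate supplied by $\overline{\mathcal{E}}_\text{low}(1+t)^{2M-2} \lesssim C_0$ and the uniform-in-time high-regularity bound $\overline{\mathcal{E}}_M \lesssim C_0$. Since $\overline{\mathcal{K}}_I = \norm{(u,\theta,a)}{P^{2I}_1}^2$ allows at most one temporal derivative, I will split the analysis into two parts depending on whether a given multi-index $\alpha$ with $\abs{\alpha}_P \leqslant 2I$ has $\alpha_0 = 0$ (purely spatial) or $\alpha_0 = 1$ (one time derivative), and interpolate each case separately between the matching pieces of $\overline{\mathcal{E}}_\text{low}$ and $\overline{\mathcal{E}}_M$.

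For the purely spatial part, I would use the standard Gagliardo--Nirenberg interpolation on the torus to write, for $1 \leqslant I \leqslant M$ and any component $f \in \{u,\theta,a\}$,
\begin{equation*}
\normtyp{f}{H}{2I}^2 \lesssim \normtyp{f}{H}{2}^{2\sigma_I} \normtyp{f}{H}{2M}^{2(1-\sigma_I)}, \qquad \sigma_I = \frac{M-I}{M-1} \in [0,1],
\end{equation*}
valid whenever $M \geqslant 2$. Plugging in the hypothesis \eqref{eq:decay_intermediate_norms_assumption}, this yields
\begin{equation*}
\normtyp{f}{H}{2I}^2 \lesssim \brac{\frac{C_0}{(1+t)^{2M-2}}}^{\sigma_I} C_0^{1-\sigma_I} = \frac{C_0}{(1+t)^{(2M-2)\sigma_I}} = \frac{C_0}{(1+t)^{2M-2I}},
\end{equation*}
because $(2M-2)\sigma_I = 2(M-I)$ by construction. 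The endpoint cases $I=1$ and $I=M$ are consistent: for $I=1$ we recover the full decay rate from $\overline{\mathcal{E}}_\text{low}$, and for $I=M$ we recover the uniform bound from $\overline{\mathcal{E}}_M$.

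For the one-time-derivative part, the norm $\norm{\partial_t f}{H^{2I-2}}$ appears because a multi-index $\alpha$ with $\alpha_0 = 1$ and $\abs{\alpha}_P \leqslant 2I$ has spatial part $\bar\alpha$ of length at most $2I-2$. The low energy $\overline{\mathcal{E}}_\text{low}$ controls $\norm{\partial_t f}{L^2}$, while $\overline{\mathcal{E}}_M$ controls $\norm{\partial_t f}{H^{2M-2}}$. A second application of Gagliardo--Nirenberg, with the same exponent $\sigma_I = \frac{M-I}{M-1}$, gives
\begin{equation*}
\normtyp{\partial_t f}{H}{2I-2}^2 \lesssim \normtyp{\partial_t f}{L}{2}^{2\sigma_I} \normtyp{\partial_t f}{H}{2M-2}^{2(1-\sigma_I)} \lesssim \frac{C_0}{(1+t)^{2M-2I}},
\end{equation*}
using the same algebra as above. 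Summing these two estimates over the finitely many $\alpha$ with $\abs{\alpha}_P \leqslant 2I$ and $\alpha_0 \leqslant 1$, and over the three unknowns $u$, $\theta$, $a$, produces $\overline{\mathcal{K}}_I(t) \lesssim C_0 (1+t)^{-(2M-2I)}$ for each $1 \leqslant I \leqslant M$, with implicit constant depending only on $M$. Taking the supremum over $t \in [0,T]$ and over $I$ then yields the claim.

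There is no real obstacle here; the result is essentially a bookkeeping exercise to verify that the parabolic counting in the definitions of $\overline{\mathcal{E}}_\text{low}$, $\overline{\mathcal{K}}_I$, and $\overline{\mathcal{E}}_M$ lines up with the Gagliardo--Nirenberg exponent $\sigma_I = (M-I)/(M-1)$ to produce exactly the decay rate $(1+t)^{-(2M-2I)}$. The only subtlety worth flagging is that the interpolation is consistent both for spatial and for temporal components precisely because the parabolic scaling assigns weight $2$ to $\partial_t$ and weight $1$ to each spatial derivative, which is why the same $\sigma_I$ works in both cases.
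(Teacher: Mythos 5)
Your proof is correct and follows essentially the same route as the paper: interpolate each piece of $\overline{\mathcal{K}}_I$ between the corresponding low- and high-regularity pieces using the Sobolev interpolation inequality with exponent $\theta = (M-I)/(M-1)$, then insert the hypothesis. The paper combines the spatial and temporal interpolants into a single $\overline{\mathcal{E}}_\text{low}^\theta\,\overline{\mathcal{E}}_M^{1-\theta}$ bound via a small concavity step, whereas you add the two decay estimates separately, but this is a cosmetic difference in bookkeeping rather than a different argument.
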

	\begin{proof}
		This estimate on intermediate norms follows from the interpolation of $H^s$ spaces which says that, if $f\in H^l \cap H^h$, then $f\in H^i$ for any $l\leqslant i \leqslant h$
		with the interpolation estimate
		$
			\normtyp{f}{H}{i} \lesssim \normtyp{f}{H}{l}^\theta \normtyp{f}{H}{h}^{1-\theta}
		$
		where
		$
			\theta = \frac{h-i}{h-l}.
		$
		Therefore, for $\theta = \frac{M-I}{M-1}$,
		\begin{align*}
			\overline{\mathcal{K}}_I
			&\lesssim \normtyp{(u, \theta, a)}{H}{2}^{2\theta} \normtyp{(u, \theta, a)}{H}{2M}^{2(1-\theta)}
				+ \normtyp{\pdt (u, \theta, a)}{L}{2}^{2\theta} \normtyp{\pdt (u, \theta, a)}{H}{2M-2}^{2(1-\theta)}
		\\
			&\leqslant 2 {\brac{ \normtyp{(u, \theta, a)}{H}{2}^2 + \normtyp{\pdt (u, \theta, a)}{L}{2}^2 }}^\theta {\brac{ \normtyp{(u, \theta, a)}{H}{2M}^2
			+\normtyp{\pdt (u, \theta, a)}{H}{2M-2}^2 }}^{1-\theta}
		\\
			&\lesssim \overline{\mathcal{E}}_\text{low}^\theta \overline{\mathcal{E}}_M^{1-\theta}
			\lesssim {\brac{\frac{C_0}{ {\brac{1+t}}^{2M-2} }}}^\theta C_0^{1-\theta}
			= \frac{C_0}{ {\brac{1+t}}^{2M-2I} }.&&\qedhere
		\end{align*}
	\end{proof}

	We now record an auxiliary estimate for $\pdt^2\theta$ which will be used to deduce the decay of $\pdt^2 \theta$ when $\overline{\mathcal{K}}_2$ decays.

	\begin{lemma}[Auxiliary estimate for $\pdt^2 \theta$]
	\label{lemma:aux_est_pdt_2_theta}
		Suppose that \eqref{eq:pertub_sys_no_ten_pdt_theta} holds. Then, for $J = J_{eq} + K$,
		\begin{align*}
			\norm{J\pdt^2 \theta}{L^2}
			&\lesssim \norm{\pdt a}{L^2} + \norm{\brac{u, \theta}}{P^2}
			+ \brac{1 + \norm{K}{L^\infty} + \norm{\pdt K}{L^\infty}} \norm{\theta}{P^1}
		\\
			&\quad+ \brac{1 + \norm{K}{L^\infty} + \norm{\pdt K}{L^\infty}} \brac{1 + \norm{\brac{u, \theta}}{L^\infty}} \norm{\brac{u,\theta}}{P^2}.
		\end{align*}
	\end{lemma}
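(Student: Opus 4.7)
The plan is to proceed in the same spirit as Lemmas \ref{lemma:aux_est_a_low_level}--\ref{lemma:aux_est_pdt_j_a_high_level}: isolate the highest order quantity, time-differentiate once, and then estimate all resulting terms with product estimates. Concretely, start from the angular momentum equation \eqref{eq:pertub_sys_no_ten_pdt_theta}, expand the precession via the identity $\omega_{eq}\times K\omega_{eq} = \tilde{\tau}^2 \tilde{a}^\perp$ (used already in the proof of \fref{Lemma}{lemma:record_form_interactions}), and isolate
\begin{equation*}
    J\pdt\theta = -J(u\cdot\nabla\theta) - \omega_{eq}\times J\theta - \theta\times J\omega_{eq} - \theta\times J\theta - \tilde{\tau}^2\tilde{a}^\perp + \kappa\nabla\times u - 2\kappa\theta + (\tilde{\alpha}-\tilde{\gamma})\nabla(\nabla\cdot\theta) + \tilde{\gamma}\Delta\theta =: F.
\end{equation*}

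Now apply $\pdt$ to both sides and use the product rule $\pdt(J\pdt\theta) = (\pdt K)\pdt\theta + J\pdt^2\theta$ (since $\pdt J = \pdt K$) to obtain
\begin{equation*}
    J\pdt^2\theta = \pdt F - (\pdt K)\pdt\theta.
\end{equation*}
The key qualitative point is that the $\pdt$ hitting the $\tilde{\tau}^2\tilde{a}^\perp$ term is the sole source of the $\norm{\pdt a}{L^2}$ contribution on the right-hand side of the estimate; every other term in $\pdt F$ is either linear in $\pdt(u,\theta)$ with at most two spatial derivatives (contributing $\norm{(u,\theta)}{P^2}$, matching the parabolic count $|\alpha|_P \leqslant 2$) or quadratic/cubic in the unknowns and their time derivatives.

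From there the proof reduces to bookkeeping: take $L^2$ norms and bound the nonlinear terms using the product estimate of \fref{Lemma}{lemma:prod_est}. For instance, $\pdt[J(u\cdot\nabla\theta)]$ produces $(\pdt K)(u\cdot\nabla\theta)$, $J(\pdt u \cdot\nabla\theta)$, and $J(u\cdot\nabla\pdt\theta)$; each is placed in $L^2$ by putting the lowest-regularity factor in $L^\infty$ (hence the $\norm{(u,\theta)}{L^\infty}$, $\norm{K}{L^\infty}$, and $\norm{\pdt K}{L^\infty}$ coefficients), which yields the cubic prefactor $(1+\norm{K}{L^\infty}+\norm{\pdt K}{L^\infty})(1+\norm{(u,\theta)}{L^\infty})\norm{(u,\theta)}{P^2}$. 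The precession and $\theta\times J\omega_{eq}$ terms produce the quadratic $(1+\norm{K}{L^\infty}+\norm{\pdt K}{L^\infty})\norm{\theta}{P^1}$ contribution (note $\norm{\theta}{P^1}$ suffices since no spatial derivatives fall on these algebraic terms but $\pdt\theta$ does appear). The remaining term $(\pdt K)\pdt\theta$ is handled in the same way and is absorbed into the same cubic block.

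The main obstacle is purely accounting: making sure that each derivative falling from $\pdt F$ lands at a parabolic level consistent with $P^2$ or $P^1$ and that the cubic prefactors collapse to the exact form stated. There is no delicate cancellation or integration by parts -- the identity is entirely pointwise once the equation is differentiated -- so the work consists in carefully enumerating the terms arising from $\pdt F$ and $(\pdt K)\pdt\theta$ and applying \fref{Lemma}{lemma:prod_est} to each. Summing these bounds yields the claimed inequality.
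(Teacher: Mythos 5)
Your route is the paper's route: the paper's entire proof of this lemma is the single remark that the estimate follows from differentiating \eqref{eq:pertub_sys_no_ten_pdt_theta} in time, and your isolation of $J\pdt^2\theta = \pdt F - (\pdt K)\pdt\theta$ followed by termwise H\"older/product estimates is exactly that argument written out, including the correct identification of $\tilde{\tau}^2\widetilde{\pdt a}^\perp$ as the sole source of the $\normtyp{\pdt a}{L}{2}$ term.

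One piece of your bookkeeping does not close as literally written, however. After the time derivative, the dissipative terms become $\kappa\nabla\times\pdt u$, $(\tilde{\alpha}-\tilde{\gamma})\nabla(\nabla\cdot\pdt\theta)$, and $\tilde{\gamma}\Delta\pdt\theta$, and the advection term produces $J(u\cdot\nabla\pdt\theta)$ and $J(\pdt u\cdot\nabla\theta)$. These carry parabolic counts $3$ and $4$ (one time derivative plus one or two spatial derivatives), so your parenthetical claim that the terms linear in $\pdt(u,\theta)$ ``match the parabolic count $\abs{\alpha}_P\leqslant 2$'' and are absorbed into $\normtyp{(u,\theta)}{P}{2}$ is not correct; similarly, $\pdt u\cdot\nabla\theta$ cannot be handled by putting ``the lowest-regularity factor in $L^\infty$'', since neither $\pdt u$ nor $\nabla\theta$ appears among the $L^\infty$ quantities on the right-hand side. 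To be fair, this looseness is inherited from the statement itself rather than from your idea: in the only place the lemma is used (\fref{Corollary}{cor:imp_K_bar_2}, feeding $\pdt^2\theta$ into $\mathcal{K}_\text{low}$), the available right-hand side is $\normtyp{\pdt a}{L}{2} + \overline{\mathcal{K}}_2^{1/2}$ with $\overline{\mathcal{K}}_2 = \norm{(u,\theta,a)}{P^4_1}^2$, which controls $\pdt(u,\theta)$ in $H^2$ and $(u,\theta)$ in $H^3$, and with that every term you enumerate closes exactly as you describe (the quadratic ones via the high-low estimates of \fref{Corollary}{cor:est_interactions_L_2_via_Gagliardo_Nirenberg} or \fref{Lemma}{lemma:prod_est}). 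So keep the argument, but record the time-differentiated viscous and advective terms at their true parabolic order, replacing the $\normtyp{(u,\theta)}{P}{2}$ bookkeeping by, e.g., $\norm{(u,\theta)}{P^4_1}$ or $\normtyp{\pdt(u,\theta)}{H}{2}+\normtyp{(u,\theta)}{H}{3}$, rather than asserting they sit at parabolic count at most $2$.
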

	\begin{proof}
		This estimate follows immediately from differentiating \eqref{eq:pertub_sys_no_ten_pdt_theta} in time.
	\end{proof}

	We now improve the control afforded to us by $\overline{\mathcal{K}}_2$ so as to also control the term involving $\pdt^2\theta$ which appears
	when controlling the high-level interactions.

	\begin{cor}[Improvement of $ \overline{\mathcal{K}}_2 $]
	\label{cor:imp_K_bar_2}
		For any time horizon $T > 0$, if
		\begin{equation}
		\label{eq:imp_K_bar_2_assumption}
			\sup_{0\leqslant t<T} \normtyp{(u, \theta)(t)}{H}{3} + \normtyp{J(t)}{H}{3} + \normtyp{\pdt(u, \theta)}{H}{2} + \normtyp{\pdt J}{H}{2} < \infty
		\end{equation}
		and $ \overline{\mathcal{E}}_3 \leqslant 1$ on $\cobrac{0,T}$ then
		$
			\normtypns{ \pdt\theta }{L}{2} \lesssim \brac{ 1 + \mathcal{E}_3^{1/2} } \brac{ \normtyp{ \pdt a }{L}{2} + \overline{\mathcal{K}}_2^{1/2} }
		$
		holds in $\cobrac{0,T}$, where the constant implicit in ``$\lesssim$'' is independent of the time horizon $T$.
	\end{cor}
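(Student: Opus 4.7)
My plan is to observe that the bound, as literally stated, is immediate from unpacking the definition of $\overline{\mathcal{K}}_2$. By \eqref{eq:not_K_bar} we have $\overline{\mathcal{K}}_2 = \|(u,\theta,a)\|_{P^4_1}^2$, and by \eqref{eq:not_parabolic_norms} this is the sum of $\|\partial^\alpha(u,\theta,a)\|_{L^2}^2$ over all multi-indices $\alpha = (\alpha_0,\bar\alpha) \in \N^{1+3}$ with $|\alpha|_P = 2\alpha_0 + |\bar\alpha| \leqslant 4$ and $\alpha_0 \leqslant 1$. The choice $\alpha = (1,0,0,0)$ satisfies both constraints and picks out the summand $\|\pdt\theta\|_{L^2}^2$, so that $\|\pdt\theta\|_{L^2}^2 \leqslant \overline{\mathcal{K}}_2$ pointwise in $t \in [0,T)$. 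Since $(1+\mathcal{E}_3^{1/2})(\|\pdt a\|_{L^2} + \overline{\mathcal{K}}_2^{1/2}) \geqslant \overline{\mathcal{K}}_2^{1/2}$, the claim follows with an implicit constant that can be taken to equal $1$ and hence is trivially independent of the time horizon $T$.

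In this reading the only role played by the hypotheses is qualitative: the regularity assumption \eqref{eq:imp_K_bar_2_assumption} ensures that $\pdt\theta(t)$ is defined in $L^2$ for each $t \in [0,T)$ so that $\overline{\mathcal{K}}_2(t)$ is itself well-defined, while the smallness assumption $\overline{\mathcal{E}}_3 \leqslant 1$ is not used at all. In particular, neither the auxiliary estimate \fref{Lemma}{lemma:aux_est_pdt_2_theta}, the spectral persistence statement \fref{Proposition}{prop:persist_spec_sols_adv_rot_eqtns}, nor the energy comparison \fref{Lemma}{lemma:comp_version_en} is invoked. The main obstacle here is therefore purely notational: one must simply verify that the multi-index $(1,0,0,0)$ is admissible for the parabolic norm $P^4_1$, which is immediate. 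By contrast, the analogous bound with $\pdt\theta$ replaced by $\pdt^2\theta$ on the left-hand side --- which is what is actually needed for the interaction estimates in \fref{Section}{sec:close_at_high} --- is genuinely nontrivial: it would require isolating $\pdt^2\theta$ from \fref{Lemma}{lemma:aux_est_pdt_2_theta}, inverting the matrix weight $J = J_{eq} + K$ using the uniform spectral lower bound $J \geqslant \lambda I$ provided by \fref{Proposition}{prop:persist_spec_sols_adv_rot_eqtns}, and absorbing the cubic and quartic coefficients appearing on the right-hand side of the auxiliary estimate into the factor $(1 + \mathcal{E}_3^{1/2})$ via the smallness assumption $\overline{\mathcal{E}}_3 \leqslant 1$ together with the Sobolev embedding $H^2 \hookrightarrow L^\infty$.
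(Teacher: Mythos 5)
You have correctly diagnosed the situation: the left-hand side of the stated inequality is a typo and should read $\normtypns{\pdt^2\theta}{L}{2}$ rather than $\normtypns{\pdt\theta}{L}{2}$. This is confirmed by the paper's own proof, which explicitly bounds $\normtypns{\pdt^2\theta}{L}{2} \leqslant \lambda^{-1}\normtypns{J\pdt^2\theta}{L}{2}$, by the sentence introducing the corollary (``so as to also control the term involving $\pdt^2\theta$''), and by its only use, namely \fref{Corollary}{cor:decay_K_low}, where it must supply the $\normtypns{\pdt^2\theta}{L}{2}^2$ summand of $\mathcal{K}_\text{low}$ that $\overline{\mathcal{K}}_2$ does not already contain. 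Your observation that the literal statement follows immediately from the definition of $\overline{\mathcal{K}}_2$ via the admissible multi-index $(1,0,0,0)$ is valid but, as you yourself note, vacuous for the paper's purposes. More importantly, the argument you sketch for the intended $\pdt^2\theta$ version is precisely the paper's proof: apply \fref{Lemma}{lemma:aux_est_pdt_2_theta}; combine \eqref{eq:imp_K_bar_2_assumption} with \fref{Proposition}{prop:persist_spec_sols_adv_rot_eqtns} to obtain the pointwise spectral lower bound $J \geqslant \lambda I$ and hence $\normtypns{\pdt^2\theta}{L}{2} \leqslant \lambda^{-1}\normtypns{J\pdt^2\theta}{L}{2}$; and absorb the coefficients using $\normtyp{K}{L}{\infty} + \normtyp{\pdt K}{L}{\infty} \lesssim \mathcal{E}_3^{1/2}$ and $\normtyp{(u,\theta)}{L}{\infty} \lesssim \overline{\mathcal{E}}_3^{1/2} \leqslant 1$ into the prefactor $1 + \mathcal{E}_3^{1/2}$, while $\normtyp{(u,\theta)}{P}{2} \leqslant \overline{\mathcal{K}}_2^{1/2}$ and $\normtyp{\pdt a}{L}{2}$ account for the remaining terms on the right-hand side of the auxiliary estimate. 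Had you written that sketch out in full, you would have reproduced the intended proof.
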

	\begin{proof}
		It is crucial to recall there the global assumption that the spectrum of $J_0(x)$ is equal to $\cbrac{\lambda, \lambda, \nu}$, where $\nu > \lambda > 0$, for every $x\in\T^3$.
		The key observation is then that the assumption \eqref{eq:imp_K_bar_2_assumption} combines with \fref{Proposition}{prop:persist_spec_sols_adv_rot_eqtns} to tell us that
		$\norm{\pdt^2 \theta}{L^2} \leqslant \lambda\inv \normtyp{ J\pdt^2 \theta}{L}{2}$. The result then follows from \fref{Lemma}{lemma:aux_est_pdt_2_theta}.
	\end{proof}

	To conclude this section we record the decay of $\mathcal{K}_\text{low}$, which is the improved version of $\overline{\mathcal{K}}_2$ which also controls $\pdt^2 \theta$.
	
	\begin{cor}[Decay of $ \mathcal{K}_\text{low} $]
	\label{cor:decay_K_low}
		Suppose that there exists a time horizon $T>0$, an integer $M\geqslant 2$, and a constant $C_0 > 0$ such that
		\begin{equation*}
			\sup_{0\leqslant t\leqslant T} \overline{\mathcal{E}}_\text{low}(t) {\brac{1+t}}^{2M-2} + \overline{\mathcal{E}}_M (t) \leqslant C_0 \leqslant 1
			\text{ and } \sup_{0\leqslant t\leqslant T} \norm{J(t)}{H^3} + \norm{\pdt J(t)}{H^2} < \infty.
		\end{equation*}
		Then
		$
			\sup_{0\leqslant t \leqslant T} \mathcal{K}_\text{low} (t) {\brac{1+t}}^{2M-4} \leqslant \widetilde{C}_I C_0.
		$
		for some constant $\widetilde{C}_I > 0$ which depends only on $M$, and is universal otherwise.
	\end{cor}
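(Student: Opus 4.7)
The plan is to control separately the two terms that make up $\mathcal{K}_\text{low} = \overline{\mathcal{K}}_2 + \normtypns{\pdt^2 \theta}{L}{2}^2$, by invoking \fref{Proposition}{prop:decay_intermediate_norms} for the first and \fref{Corollary}{cor:imp_K_bar_2} for the second. Both of those results have effectively done the heavy lifting, so this corollary will be assembled by combining them.

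First, I would apply \fref{Proposition}{prop:decay_intermediate_norms}, whose hypotheses match those assumed here exactly. This yields $\overline{\mathcal{K}}_I (t) (1+t)^{2M-2I} \lesssim C_0$ for every $1 \leqslant I \leqslant M$. Specializing to $I = 2$ delivers the desired $(1+t)^{-(2M-4)}$ decay of $\overline{\mathcal{K}}_2$; moreover, $\normtyp{\pdt a}{L}{2}^2 \leqslant \overline{\mathcal{E}}_\text{low}(t) \leqslant C_0 / (1+t)^{2M-2}$, since $\pdt a$ appears in the parabolic norm $\normtyp{a}{P}{2}^2$ comprising $\overline{\mathcal{E}}_1 = \overline{\mathcal{E}}_\text{low}$ (a rate even faster than the target).

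Next, I would apply \fref{Corollary}{cor:imp_K_bar_2} to obtain a bound on $\normtypns{\pdt^2 \theta}{L}{2}$. Its regularity hypothesis \eqref{eq:imp_K_bar_2_assumption} follows from the assumed regularity of $J$ together with $\overline{\mathcal{E}}_M \leqslant C_0 \leqslant 1$, which for $M$ sufficiently large controls $(u,\theta)$ in $H^3$ and $\pdt(u,\theta)$ in $H^2$; the smallness $\overline{\mathcal{E}}_3 \leqslant 1$ follows from $\overline{\mathcal{E}}_3 \leqslant \overline{\mathcal{E}}_M \leqslant 1$. Squaring the resulting estimate and absorbing the bounded $\mathcal{E}_3^{1/2}$ prefactor yields $\normtypns{\pdt^2 \theta}{L}{2}^2 \lesssim \normtyp{\pdt a}{L}{2}^2 + \overline{\mathcal{K}}_2 \lesssim C_0 / (1+t)^{2M-4}$, which combined with the decay of $\overline{\mathcal{K}}_2$ completes the proof with a constant depending only on $M$.

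The only mild wrinkle concerns controlling the $\mathcal{E}_3^{1/2}$ prefactor appearing in \fref{Corollary}{cor:imp_K_bar_2}. Since $\mathcal{E}_3 = \overline{\mathcal{E}}_3 + \mathcal{E}_3^{(K)}$ and $\mathcal{E}_3^{(K)}$ involves $K$ and a few of its time derivatives in low-order Sobolev norms, the hypothesis $\normtyp{J}{H}{3} + \normtyp{\pdt J}{H}{2} < \infty$ together with the advection-rotation equation \eqref{eq:pertub_sys_no_ten_pdt_K} (whose right-hand side is quadratic in already-controlled quantities) provides the required pointwise-in-time bound, so this prefactor is finite and can be absorbed into the constant $\widetilde{C}_I$.
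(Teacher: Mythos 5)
Your proposal takes the same route as the paper: the paper's own proof of \fref{Corollary}{cor:decay_K_low} is a one-liner, combining \fref{Proposition}{prop:decay_intermediate_norms} for the $\overline{\mathcal{K}}_2$ piece with \fref{Corollary}{cor:imp_K_bar_2} for the $\normtypns{\pdt^2\theta}{L}{2}^2$ piece, exactly as you do, and your bookkeeping of the two decay rates and the verification of the hypotheses of both ingredient results is sound.

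However, your handling of the $\mathcal{E}_3^{1/2}$ prefactor in the last paragraph is not quite right, and it is worth flagging. You argue that the hypothesis $\normtyp{J}{H}{3} + \normtyp{\pdt J}{H}{2} < \infty$ makes $\mathcal{E}_3$ (or at least the relevant $\normtyp{K}{L}{\infty} + \normtyp{\pdt K}{L}{\infty}$ factor) \emph{finite}, and that this finite quantity ``can be absorbed into the constant $\widetilde{C}_I$.'' But the conclusion of the corollary asserts that $\widetilde{C}_I$ depends only on $M$ and is otherwise universal; a bound that is merely finite pointwise in time --- whose size depends on the particular solution --- cannot be absorbed into a universal constant. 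To make the argument as you describe it close cleanly, one would need either a \emph{uniform} bound on $\mathcal{E}_3$ (or on $\normtyp{K}{L}{\infty} + \normtyp{\pdt K}{L}{\infty}$) in terms of $M$-dependent quantities only, or an additional hypothesis to that effect. In the actual applications inside \fref{Theorem}{thm:a_priori} this is no issue, since there $\mathcal{E}_M \leqslant \delta_{ap} \leqslant 1$ gives $\mathcal{E}_3 \lesssim 1$ for $M \geqslant 3$; but the standalone hypotheses of \fref{Corollary}{cor:decay_K_low}, taken literally, do not furnish such a uniform bound, and one should either strengthen them to $\mathcal{E}_3 \lesssim 1$ or observe that the application always provides it. So the decomposition and the invocation of the two ingredient results are correct, but the claim that finiteness alone yields a universal $\widetilde{C}_I$ is a genuine imprecision.
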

	\begin{proof}
		This follows directly from combining \fref{Proposition}{prop:decay_intermediate_norms} and \fref{Corollary}{cor:imp_K_bar_2}.
	\end{proof}

\subsection{Synthesis}
\label{sec:ap_synthesis}

	In this section we put together all four building blocks of our scheme of a priori estimates that we have constructed in \fref{Sections}{sec:adv_rot_est_K}--\ref{sec:decay_int_norms}.
	This allows us to state and prove our main ``a priori estimates'' result in \fref{Theorem}{thm:a_priori} below.
	Recall that the various energy and dissipation functionals encountered in the statement and the proof of the theorem below are
	defined in \eqref{eq:not_E_M_bar}--\eqref{eq:not_D_M}.

	\begin{thm}[A priori estimates]
	\label{thm:a_priori}
		Let $M\geqslant 4$.
		There exist $\eta_{ap}, \delta_{ap}, C_{ap} > 0$ depending only on $M$ such that if $\brac{u, p, \theta, K}$
		is a solution of \eqref{eq:pertub_sys_no_ten_pdt_u}--\eqref{eq:pertub_sys_no_ten_pdt_K} on the time interval $\sbrac{0,T}$, for any $T>0$, which satisfies the smallness conditions
		\begin{align}
			\brac{\mathcal{E}_M + \mathcal{F}_M} (0) =\vcentcolon \eta_0 \leqslant \eta_{ap} \leqslant 1 \text{ and }
			\label{eq:a_priori_smallness_intial}\\
			\sup_{0\leqslant t \leqslant T} \mathcal{E}_M (t) + \int_0^T \overline{\mathcal{D}}_M \leqslant \delta_{ap} \leqslant 1,
			\label{eq:a_priori_smallness_time_interval}
		\end{align}
		then the following estimates hold
		\begin{align*}
			\sup_{0\leqslant t\leqslant T}
				\mathcal{E}_\text{low} (t) {\brac{1+t}}^{2M-2}
				+ \overline{\mathcal{E}}_M (t) + \mathcal{E}_M^a (t)
				+ \frac{ \mathcal{F}_M (t) }{1+t}
			+ \int_0^T \mathcal{D}_M
			\leqslant C_{ap} (\overline{\mathcal{E}}_M + \mathcal{F}_M) (0)
		\\
			\text{ and } 
			\sup_{0\leqslant t\leqslant T} \mathcal{E}_M^{(K)} (t) \leqslant C_{ap} ( \mathcal{E}_M + \mathcal{F}_M ) (0).
		\end{align*}
	\end{thm}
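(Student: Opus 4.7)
The plan is to execute the two-pass bootstrap summarized pictorially in \fref{Figure}{fig:a_priori_discuss}, feeding the four building blocks (\fref{Propositions}{prop:est_K}, \ref{prop:close_est_low_level}, \ref{prop:close_est_high_level}, and \ref{prop:decay_intermediate_norms}) into one another so that each piece sharpens the hypotheses available to the next. In the first pass I would use only the smallness hypothesis \eqref{eq:a_priori_smallness_time_interval} to guarantee that every lemma's hypothesis of the form ``$\mathcal{E}_M \leqslant 1$'' or ``$\overline{\mathcal{E}}_M \leqslant \delta$'' is met, which lets me run the entire chain with constants that may still depend on $\delta_{ap}$. In the second pass I would revisit each step with the improved bounds proved in the first pass, this time producing estimates whose right-hand sides depend only on $\eta_0 = (\mathcal{E}_M + \mathcal{F}_M)(0)$.

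Concretely, in the first pass I would choose $\delta_{ap} \leqslant \min(\delta_\text{low}, \delta_\text{low}^*, \delta_M)$ so that \fref{Proposition}{prop:close_est_low_level} yields $\mathcal{E}_\text{low}(t)(1+t)^{2M-2} \lesssim \delta_{ap}$, then \fref{Proposition}{prop:decay_intermediate_norms} and \fref{Corollary}{cor:decay_K_low} give decay of $\overline{\mathcal{K}}_I$ and of $\mathcal{K}_\text{low}$ at the predicted rates with constant $\lesssim \delta_{ap}$. With these decay estimates in hand, \fref{Proposition}{prop:est_K} produces a bound of the form $\mathcal{F}_M^{1/2}(t) \lesssim \mathcal{F}_M^{1/2}(0) + (1+P_e)\int_0^t \overline{\mathcal{D}}_M^{1/2} + (1+P_f)\overline{\mathcal{K}}_M^{1/2}(t)$, which is precisely the structure demanded by hypothesis \eqref{eq:close_high_est_assum_4}. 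Feeding all of this into \fref{Proposition}{prop:close_est_high_level} (after further shrinking $\eta_{ap}$ and $\delta_{ap}$ to satisfy $\eta_M$ and $\delta_M$) produces $\sup_{[0,T]} \overline{\mathcal{E}}_M(t) + \int_0^T \mathcal{D}_M \lesssim \eta_0$.

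In the second pass I would re-enter \fref{Proposition}{prop:close_est_low_level} with $\delta = C_H \eta_0$ in place of $\delta_{ap}$, obtaining $\mathcal{E}_\text{low}(t)(1+t)^{2M-2} \lesssim \eta_0$. Rerunning \fref{Proposition}{prop:decay_intermediate_norms} and \fref{Corollary}{cor:decay_K_low} gives the intermediate decays with constant proportional to $\eta_0$, and rerunning \fref{Proposition}{prop:est_K} yields the $\mathcal{E}_M^{(K)}$ bound and the $\mathcal{F}_M$ bound. For the latter, the crucial observation is that the Cauchy--Schwarz estimate $\int_0^t \overline{\mathcal{D}}_M^{1/2} \leqslant (t \int_0^t \overline{\mathcal{D}}_M)^{1/2} \lesssim (t\, \eta_0)^{1/2}$ from Step 4 of the proof of \fref{Proposition}{prop:est_K} combined with $\overline{\mathcal{K}}_M^{1/2} \lesssim \eta_0^{1/2}$ gives $\mathcal{F}_M(t) \lesssim (1+t)\eta_0$, which is exactly the claimed $\mathcal{F}_M(t)/(1+t)$ bound.

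The most delicate part is calibrating the smallness thresholds. Each of the four propositions requires its own smallness hypothesis, and the hypotheses are not independent: the hypothesis \eqref{eq:close_high_est_assum_2} of \fref{Proposition}{prop:close_est_high_level} asks for decay of $\overline{\mathcal{K}}_I$ with a small constant, whereas \fref{Proposition}{prop:decay_intermediate_norms} only supplies that constant in terms of the bound on $\overline{\mathcal{E}}_M$ that is itself the output of \fref{Proposition}{prop:close_est_high_level}. The two-pass architecture is precisely the device that breaks this apparent circularity: in the first pass one swallows possibly suboptimal constants depending on $\delta_{ap}$ just to get all four estimates simultaneously active, and only in the second pass does one upgrade the constants to be linear in $\eta_0$. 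I would choose the final thresholds $\eta_{ap}, \delta_{ap}$ as the minimum of the thresholds demanded by the four component propositions, together with whatever additional smallness is needed so that the constants $P_e, P_f$ arising from \fref{Proposition}{prop:est_K} remain bounded.
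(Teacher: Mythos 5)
Your proposal follows essentially the same two-pass bootstrap as the paper's own Step 1–7 argument, in the same order: unstructured low-level decay, decay of intermediate norms, advection-rotation control of $\mathcal{F}_M$, structured high-level closure, then re-running the low-level, intermediate, and advection-rotation estimates to convert the constants from multiples of $\delta_{ap}$ into multiples of $(\mathcal{E}_M+\mathcal{F}_M)(0)$. You also correctly identify that the $\mathcal{F}_M(t)/(1+t)$ bound comes from Cauchy--Schwarz on $\int_0^t\overline{\mathcal{D}}_M^{1/2}$ together with the bounded time-integral of the dissipation, and your remark on the circularity-breaking role of the two passes is exactly the paper's rationale.
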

	\begin{proof}
		We take two passes at the estimates in this proof.
		During the first pass we obtain \emph{unstructured} estimates, meaning that the estimates are in term of the smallness parameter and not the initial conditions.
		During the second pass we obtain \emph{structured} estimates, meaning that the estimates are in terms of the initial conditions.

		Each of these two passes relies on the four key results we have proved in \fref{Section}{sec:a_prioris}, namely
		\fref{Proposition}{prop:est_K} where we record the advection-rotation estimates for $K$,
		\fref{Proposition}{prop:close_est_low_level} where we close the energy estimates at the low level,
		\fref{Proposition}{prop:close_est_high_level} where we close the energy estimates at the high level, and
		\fref{Proposition}{prop:decay_intermediate_norms} and \fref{Corollary}{cor:decay_K_low} where we obtain the decay of the intermediate norms.

		Before beginning the proof in earnest we record the smallness conditions which $\delta_{ap}$ and $\eta_{ap}$ must satisfy:
		\begin{enumerate}
			\item	\label{it:smallness_condition_1}
				$\delta_{ap} \leqslant \max\brac{ \delta_\text{low}, \delta_\text{low}^* }$ for $\delta_\text{low}$ and $\delta_\text{low}^*$
				as in \fref{Proposition}{prop:close_est_low_level},
			\item 	\label{it:smallness_condition_2}
				$\brac{1 + C_L} \delta_{ap} \leqslant 1$ for $C_L$ as in \fref{Proposition}{prop:close_est_low_level},
			\item 	\label{it:smallness_condition_3}
				$\brac{1 + \brac{C_I + \widetilde{C}_I} \brac{1 + C_L} } \delta_{ap} \leqslant 1$ for $C_I$ and $\widetilde{C}_I$
				as in \fref{Proposition}{prop:decay_intermediate_norms} and \fref{Corollary}{cor:decay_K_low}, respectively,
			\item 	\label{it:smallness_condition_4}
				$\eta_{ap} \leqslant \eta_M$ for $\eta_M$ as in \fref{Proposition}{prop:close_est_high_level},
			\item 	\label{it:smallness_condition_5}
				$\brac{C_I + \widetilde{C}_I}\brac{1 + C_L}\delta_{ap} \leqslant \delta_M$ for $\delta_M$ as in \fref{Proposition}{prop:close_est_high_level},
			\item	\label{it:smallness_condition_6}
				$\delta_{ap} \leqslant \delta_M$,
			\item 	\label{it:smallness_condition_7}
				$C_H \eta_{ap} \leqslant \delta_\text{low}$ for $C_H$ as in \fref{Proposition}{prop:close_est_high_level},
			\item 	\label{it:smallness_condition_8}
				$\brac{1 + C_L} C_H \eta_{ap} \leqslant 1$, and
			\item 	\label{it:smallness_condition_9}
				$\brac{1 + \brac{C_I + \widetilde{C}_I}\brac{1 + C_L}}C_H \eta_{ap} \leqslant 1$.
		\end{enumerate}

		To be very clear about the structure of the proof we break it up into seven steps.
		Note that our scheme of a priori estimates is also summarized diagrammatically in \fref{Figure}{fig:a_priori_thm_summary}.

		\begin{figure}
		\begin{center}
		\begin{tikzpicture}
			% Define the styles
			\tikzstyle{block} = [rectangle, draw, fill=RoyalBlue!20, text width = 3.5cm, text centered, minimum height = 1cm];
			\tikzstyle{arrow} = [draw, ->, thick];
			% Boxes
			\node [block]				(smallAssum)	at (0,  4)	{Smallness\\ assumption};
			\node [block]				(decayLowEn)	at (0,  2)	{Decay of the\\ low-level energy};
			\node [block]				(decayIntNorm)	at (0,  0)	{Decay of the\\ intermediate norms};
			\node [block]				(contF)		at (0, -2)	{Control of $ \mathcal{F}_M $};
			\node [block] 				(bddHighEn)	at (0, -4)	{Boundedness of\\ the high-level energy};
			\node [block] 				(contE)		at (4.5, -2)	{Control of $ \mathcal{E}_M^{(K)} $};
			% Arrows
			\draw [arrow, dashed, color=Thistle]	(smallAssum)		-- node [left]					{1L}	(decayLowEn);
			\draw [arrow, dashed, color=Thistle]	(decayLowEn.230)	-- node [left]					{2D}	(decayIntNorm.132);
			\draw [arrow, color=ForestGreen]	(decayLowEn.310)	-- node [right]					{6D}	(decayIntNorm.49);
			\draw [arrow, color=ForestGreen]	(decayIntNorm)		-- node [right]					{3A}	(contF);
			\draw [arrow, color=ForestGreen]	(contF)			-- node [right]					{4H}	(bddHighEn);
			\draw [arrow, color=ForestGreen]	(bddHighEn.west) 	-- + (-1.5, 0) |- node[left, yshift=-3cm]	{5L}	(decayLowEn.west);
			\draw [arrow, color=ForestGreen]	(decayIntNorm)		-| node [right, yshift=-0.7cm]			{7A}	(contE.155);
			% Legend
			\node [rectangle, draw, fill=Black!10, text width = 8.3cm, minimum height = 1cm] at (8.5, 2) {
				\textbf{Legend}
				\begin{tabular}{cl}
					\textcolor{Thistle}{$\dashrightarrow$} & Unstructured estimates\\
					\textcolor{ForestGreen}{$\longrightarrow$} & Structured estimates\\
					$n$ \dots	& Step $n$, $n = 1,\,\dots,\, 7$\\
					\dots A 	& Advection-rotation estimates for $K$\\
					\dots L 	& Closing the low level energy estimates\\
					\dots H 	& Closing the high level energy estimates\\
					\dots D 	& Decay of intermediate norms
				\end{tabular}
				For example: \textcolor{ForestGreen}{$\xrightarrow{3A}$} indicates that we use the advection-rotation estimates for $K$ in Step 3 to obtain a structured estimate.
			};
		\end{tikzpicture}
		\end{center}
		\caption{The strategy of the theorem for the main a priori estimates, namely \fref{Theorem}{thm:a_priori}.}
		\label{fig:a_priori_thm_summary}
		\end{figure}
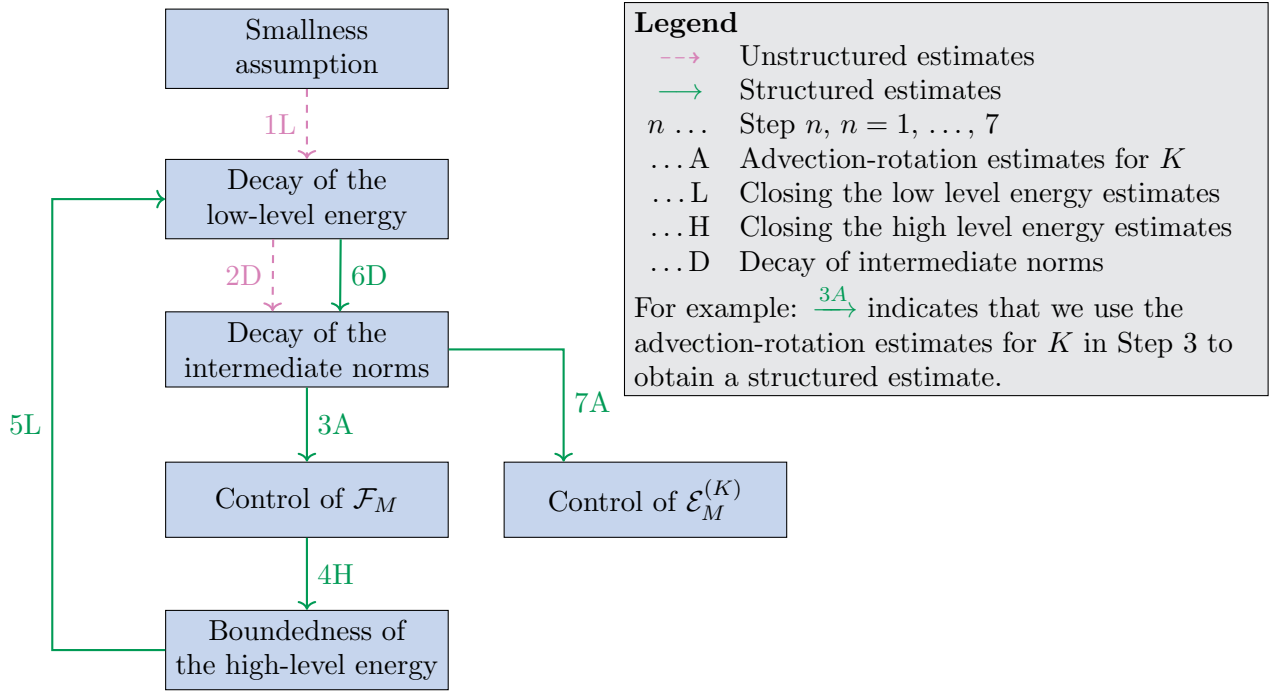

		\paragraph{\textbf{Step 1.}}
		In the first step we close the energy estimates at the low level to deduce the unstructured decay of the low level energy.
		We deduce from \eqref{eq:a_priori_smallness_time_interval}, the smallness condition \eqref{it:smallness_condition_1}, and \fref{Proposition}{prop:close_est_low_level} that
		\begin{equation}
		\label{eq:a_priori_1}
			\sup_{0\leqslant t\leqslant T} \mathcal{E}_\text{low} (t) {\brac{1+t}}^{2M-2} \leqslant C_L \delta_{ap}.
		\end{equation}
		
		\paragraph{\textbf{Step 2.}}
		In the second step we obtain the unstructured decay of the intermediate norms.
		Observe that
		\begin{equation*}
			\sup_{0\leqslant t\leqslant T} \normtyp{J(t)}{H}{3}^2 + \normtyp{\pdt J(t)}{H}{2}^2
			\leqslant \sup_{0\leqslant t\leqslant T} \mathcal{E}_M (t)
			< \infty.
		\end{equation*}
		Combining this with the smallness condition \eqref{it:smallness_condition_2} and \eqref{eq:a_priori_1},
		\fref{Proposition}{prop:decay_intermediate_norms} and \fref{Corollary}{cor:decay_K_low} tell us that
		\begin{equation}
		\label{eq:a_priori_2}
			\sup_{0\leqslant t\leqslant T} \sup_{1\leqslant I\leqslant M} \overline{\mathcal{K}}_I (t) {\brac{1+t}}^{2M-2I} + \mathcal{K}_\text{low} (t) {\brac{1+t}}^{2M-4}
			\leqslant \brac{C_I + \widetilde{C}_I} \brac{1 + C_L} \delta_{ap}.
		\end{equation}

		\paragraph{\textbf{Step 3.}}
		In the third step we obtain our first structured estimate, using the advection-rotation estimates for $K$ to wrest control over $\mathcal{F}_M$.
		We obtain from \eqref{eq:a_priori_smallness_intial}, \eqref{eq:a_priori_smallness_time_interval}, the smallness condition \eqref{it:smallness_condition_3},
		\eqref{eq:a_priori_2}, and \fref{Proposition}{prop:est_K}
		that, for all $0\leqslant t\leqslant T$,
		\begin{equation}
		\label{eq:a_priori_3}
			\mathcal{F}_M (t) \lesssim \mathcal{F}_M (0) + {\brac{ \int_0^t \overline{\mathcal{D}}_M^{1/2} (s) ds }}^2 + \overline{\mathcal{K}}_M (t).
		\end{equation}

		\paragraph{\textbf{Step 4.}}
		In the fourth step we close the energy estimates at the high level to obtain the structured boundedness of the high level energy
		(and the time-integrated control over the high level dissipation).
		By virtue of \eqref{eq:a_priori_smallness_intial}, \eqref{eq:a_priori_smallness_time_interval},
		the smallness conditions \eqref{it:smallness_condition_4}--\eqref{it:smallness_condition_6},
		\eqref{eq:a_priori_2}, and \eqref{eq:a_priori_3}, we may apply \fref{Proposition}{prop:close_est_high_level}, which tells us that
		\begin{equation}
		\label{eq:a_priori_4}
			\sup_{0\leqslant t\leqslant T} \overline{\mathcal{E}}_M (t) + \int_0^t \mathcal{D}_M (s) ds \leqslant C_H \brac{ \mathcal{E}_M + \mathcal{F}_M }(0).
		\end{equation}

		\paragraph{\textbf{Step 5.}}
		In the fifth step we continue our second pass by obtaining structured versions of previously unstructured estimates.
		We close the energy estimates at the low level to deduce the structured decay of the low level energy.
		The smallness condition \eqref{it:smallness_condition_7}, \eqref{eq:a_priori_4}, and \fref{Proposition}{prop:close_est_low_level} show us that
		\begin{equation}
		\label{eq:a_priori_5}
			\sup_{0\leqslant t\leqslant T} \mathcal{E}_\text{low} (t) {\brac{1+t}}^{2M-2}
			\leqslant C_L C_H \brac{ \mathcal{E}_M + \mathcal{F}_M } (0).
		\end{equation}

		\paragraph{\textbf{Step 6.}}
		In the sixth step we revisit Step 2 and obtain the structured decay of intermediate norms.
		\fref{Proposition}{prop:decay_intermediate_norms} and \fref{Corollary}{cor:decay_K_low} tell us, in light of \eqref{eq:a_priori_smallness_intial},
		the smallness condition \eqref{it:smallness_condition_8}, \eqref{eq:a_priori_4}, and \eqref{eq:a_priori_5}, that
		\begin{equation}
		\label{eq:a_priori_6}
			\sup_{0\leqslant t\leqslant T} \sup_{1\leqslant I\leqslant M} \overline{\mathcal{K}}_I (t) {\brac{1+t}}^{2M-2I} + \mathcal{K}_\text{low} (t) {\brac{1+t}}^{2M-4}
			\leqslant \brac{C_I + \widetilde{C}_I} \brac{1 + C_L} C_H \brac{ \mathcal{E}_M + \mathcal{F}_M }(0).
		\end{equation}

		\paragraph{\textbf{Step 7.}}
		In the seventh step we conclude the proof by using the advection-rotation estimates to get the energetic terms involving $K$, i.e. $ \mathcal{E}_M^{(K)} $, under control.
		We deduce from \eqref{eq:a_priori_smallness_intial}, the smallness condition \eqref{it:smallness_condition_9},
		\eqref{eq:a_priori_4}, \eqref{eq:a_priori_6}, and \fref{Proposition}{prop:est_K} that
		$
			\displaystyle\sup_{0\leqslant t\leqslant T} \mathcal{E}_M^{(K)} (t) \lesssim \brac{ \mathcal{E}_M + \mathcal{F}_M }(0),
		$
		which concludes the proof.
	\end{proof}

%----------------------------------------------------------------------------------------------------
%	LOCAL WELL-POSEDNESS
%----------------------------------------------------------------------------------------------------

\section{Local well-posedness}
\label{sec:lwp}

	In this section we build a local well-posedness theory sufficient to prove the existence of solutions in the spaces where our a priori estimates apply.
	We employ Galerkin scheme to construct a sequence of approximate solutions of \eqref{eq:pertub_sys_no_ten_pdt_u}--\eqref{eq:pertub_sys_no_ten_pdt_K} and
	this section is structured as follows. First we formulate appropriate approximate problems,
	then in \fref{Section}{sec:inv_T_K} we treat in detail the matter of inverting the operator $J_{eq} + P_n \circ K$ which appears in the
	approximate problems (where $P_n$ is a projection onto the subspaces where the approximate solutions live),
	we obtain various estimates on our sequence of solutions in \fref{Section}{sec:est_approx_prob},
	and finally we produce local solutions via our Galerkin scheme in \fref{Section}{sec:Galerkin}.

	Before writing down the approximate system we will solve, we must introduce the spaces in which we will solve it.
	We take $V$ to be the subspace of $L^2$ defined as
	\begin{equation*}
		V \vcentcolon= \cbrac{
			Z = (u, \theta, K) \in L^2 \brac{ \T^3;\, \R^3 \times \R^3 \times \sym(3) }
		\;:\;
			\nabla\cdot u = 0
			\text{ and } 
			\fint_{\T^3} u = 0
		},
	\end{equation*}
	we define, for any $\delta > 0$, $\sigma = \sigma (\delta) > 0$ such that for any $K \in L^2 \brac{ \T^3;\, \sym(3) }$,
	\begin{equation}
	\label{eq:def_sigma}
		\text{if } \normtyp{K}{H}{3} < \sigma(\delta)
		\text{ then } \normtyp{ K }{L}{\infty} < \frac{\lambda}{2}
		\text{ and } \normtyp{ \nabla K}{L}{\infty} < \delta,
	\end{equation}
	and we define
	\begin{equation}
	\label{eq:def_U}
		\U(\sigma) \vcentcolon= \cbrac{
			Z = (u, \theta, K) \in V : \normtyp{K}{H}{3} < \sigma
		}.
	\end{equation}
	Recall that $\lambda > 0$ is the smallest eigenvalue of $J_0$ (and hence of $J$), since our global assumption is that $J_0 (x) $ has spectrum
	$\cbrac{\lambda, \lambda, \nu}$ for every $x\in\T^3$, where $\nu > \lambda > 0$.
	To define the function spaces where the approximate solutions will live, we first define $P_n$ to be the projection onto the Fourier modes with wavenumber at most $n$,
	i.e. $ {\brac{ P_n f }}^\wedge (k) = \mathbbm{1}\brac{ \abs{k} \leqslant n} \hat{f}(k)$. This allows us to define $\mathcal{P}_n : V \to V$ via
	\begin{equation*}
		\mathcal{P}_n \vcentcolon= P_n \oplus P_n \oplus P_{2n}
		\text{ and } 
		V_n \defeq \mathcal{P}_n V
		= \cbrac{
			Z = (u, \theta, K) \in V : u, \theta \in \im P_n \text{ and } K \in \im P_{2n}
		}
	\end{equation*}
	as well as
	\begin{equation*}
		\U_n (\sigma) \vcentcolon= \cbrac{
			Z_n = (u_n, \theta_n, K_n) \in V_n : \normtyp{K_n}{H}{3} < \sigma
		} = \mathcal{P}_n \U(\sigma).
	\end{equation*}
	Note that since the projection is performed in a symmetric fashion, i.e. its symbol $\hat{\mathbb{P}}_n (k) = \mathbbm{1}\brac{\abs{k}\leqslant n}$ is even,
	it maps real-valued spaces to real-valued spaces.
	We will produce solutions $(u_n, \theta_n, K_n) \in \U_n(\sigma)$ to the following approximate system:
	\begin{subnumcases}{}
		\pdt u_n - (\nabla\cdot T)(u_n, \theta_n) = - P_n \mathbb{P}_L (u_n\cdot\nabla u_n),\\
		\nabla\cdot u_n = 0,\\
		(J_{eq} + P_n \circ K_n) \pdt\theta_n + P_n \brac{ (J_{eq} + K_n)(u_n\cdot\nabla)\theta_n } + P_n \brac{ (\omega_{eq} + \theta_n) \times (J_{eq} + K_n) \theta_n }
			\nonumber\\\qquad
			+ \tilde{\tau}^2 \tilde{a_n}^\perp + \theta \times J_{eq} \omega_{eq} - 2 \vc T(u_n, \theta_n) - (\nabla\cdot M)(\theta_n) = - P_n (\theta_n\times K_n\omega_{eq}), \text{ and }\\
		\pdt K_n - \sbrac{\Theta_n, J_{eq}} - \sbrac{\Omega_{eq}, K_n} = - P_{2n} (u_n\cdot\nabla K_n) + P_{2n} \brac{ \sbrac{\Theta_n, K_n} },
		\label{eq:approx_sys_pdt_K_n}
	\end{subnumcases}
	where $(P_n \circ K_n) v \vcentcolon= P_n ( K_n v) $ for any $v\in L^2\brac{\T^3;\, \R^3}$.

	When writing down the associated energy estimate further below we will need to distinguish between the variables that are viewed
	as \emph{unknowns} and those that are viewed as enforcing the \emph{constraints}. We will thus recapitulate the system above in the following form
	(in particular in order to fix notation regarding a compact way to write down the system above)
	\begin{subnumcases}{}
		\pdt v_n - (\nabla\cdot T)(v_n, \phi_n) = f_1,\\
		(J_{eq} + P_n \circ K_n) \pdt \phi_n + P_n \brac{ (J_{eq} + K_n) (u_n\cdot\nabla) \phi_n } + P_n \brac{ (\omega_{eq} + \theta_n) \times (J_{eq} + K_n) \phi_n }
			\nonumber\\\qquad
			+ \tilde{\tau}^2 \tilde{b}_n^\perp + \phi_n \times J_{eq} \omega_{eq} - 2 \vc T(v_n, \phi_n) - (\nabla\cdot M)(\phi_n) = f_2, \text{ and }\\
		\pdt H_n - \sbrac{\Phi_n, J_{eq}} - \sbrac{\Omega_{eq}, H_n} = F_3
	\end{subnumcases}
	subject to the constraints
	\begin{subnumcases}{}
		\nabla\cdot u_n = 0 \text{ and }
		\label{eq:compact_notation_div_free_constraint}\\
		\pdt K_n + P_{2n} (u_n\cdot\nabla K_n) = P_{2n} \brac{\sbrac{\Omega_{eq} + \Theta_n, J_{eq} + K_n}}.
		\label{eq:compact_notation_div_advection_rotation_K}
	\end{subnumcases}
	Here we view $(v_n, \phi_n, H_n)$ as the unknowns, where $b_n = ({(H_n)}_{12}, {(H_n)}_{13})$ and $\Phi_n = \ten\phi_n$,
	and $(u_n, \theta_n, K_n)$ as the variables enforcing the constraints.
	In particular, for $Z_n = (v_n, \phi_n, H_n)$, $ W_n = (u_n, \theta_n, K_n)$, and for $\mathcal{F} = \brac{f_1, f_2, F_3}$,
	we may rewrite this form of the system as
	$
		\widetilde{T}_n(K_n) \pdt Z_n - \Leb_{W_n, n} Z_n = \mathcal{F}
	$
	subject to the constraints \eqref{eq:compact_notation_div_free_constraint}--\eqref{eq:compact_notation_div_advection_rotation_K},
	where
	$
		\widetilde{T}_n (K_n) \vcentcolon= I_3 \oplus (J_{eq} + P_n \circ K_n ) \oplus I_{3\times 3}
	$
	and the operator $\Leb_{W_n, n}$ is given by
	$
		\Leb_{W_n, n} Z_n = \brac{
			- (\nabla\cdot T)(v_n, \phi_n),\,
			(\star),\,
			- \sbrac{\Phi_n, J_{eq}} - \sbrac{\Omega_{eq}, H_n}
		},
	$
	where
	\begin{align*}
		(\star) = P_n \brac{ (J_{eq} + K_n) (u_n\cdot\nabla) \phi_n } + P_n \brac{ (\omega_{eq} + \theta_n) \times (J_{eq} + K_n) \phi_n }
			+ \tilde{\tau}^2 \tilde{b}_n^\perp + \phi_n\times J_{eq}\omega_{eq}
	\\
			- 2\vc T(v_n,\phi_n) - (\nabla\cdot M)(\phi_n)
	\end{align*}
	Since $\widetilde{T}_n$ only has one non-trivial block we will write $ \widetilde{T}_n (K_n) = I_3 \oplus T_n (K_n) \oplus I_{3\times 3} $ where we define
	$
		T_n (K_n) \vcentcolon= J_{eq} + P_n \circ K_n.
	$
	This allows us to rewrite the approximate problem in the following form:
	\begin{equation}
	\label{eq:ref_approx_sys}
		\widetilde{T}_n(K_n) \pdt Z_n - \Leb_{Z_n, n} Z_n = N_n (Z_n)
		\text{ subject to }
		\nabla\cdot u_n = 0,
	\end{equation}
	where
	$
		N_n (Z_n) = \brac{
			- P_n \mathbb{P}_L (u_n\cdot\nabla u_n),\,
			- P_n (\theta_n \times K_n\omega_{eq}),\,
			- P_{2n} (u_n\cdot\nabla K_n) + P_{2n} \brac{ \sbrac{\Theta_n, K_n} }
		}.
	$

	Note that in some situations it is helpful to decompose the linear operator $\Leb_{W_n,n}$ into its part that has constant coefficient and the remainder.
	More precisely, we write
	\begin{equation}
	\label{eq:split_Leb_Z_n}
		\Leb_{W_n, n} = \Leb_0 + \overline{\Leb}_{W_n, n}
	\end{equation}
	where
	$
		\overline{\Leb}_{W_n, n} Z_n = \brac{
			0,\,
			P_n \brac{ (J_{eq} + K_n) (u_n\cdot\nabla) \phi_n } + P_n \brac{ (\omega_{eq} + \theta_n) \times (J_{eq} + K_n) \phi_n } - \omega_{eq} \times J_{eq} \theta_n,\,
			0
		}
	$ and
	\begin{equation*}
		\Leb_0 Z_n = \begin{pmatrix}
			- (\nabla\cdot T)(v_n, \phi_n)\\
			\tilde{\tau}^2 \tilde{b}_n^\perp + \phi_n\times J_{eq}\omega_{eq} + \omega_{eq} \times J_{eq} \theta_n - 2\vc T(v_n,\phi_n) - (\nabla\cdot M)(\phi_n)\\
			- \sbrac{\Phi_n, J_{eq}} - \sbrac{\Omega_{eq}, H_n}
		\end{pmatrix}.
	\end{equation*}
	
\subsection{Inverting $T(K)$}
\label{sec:inv_T_K}

	In this section we deal carefully with the inversion of $T(K) = J_{eq} + P \circ K$, the smoothness of its inverse, and we obtain $H^k$-to-$H^k$ bounds
	on the inverse. Note that in this section we will work in the generic framework where $P$ is an $L^2$-orthogonal projection onto a finite-dimensional subspace of $L^2$
	which is not necessarily $V_n$ (and so $P$ is not necessarily $P_n$).
	We begin by establishing the invertibility of $T(K)$.

	\begin{lemma}[Invertibility of $T(K)$]
	\label{lemma:invertibility_T_K}
		Let $V \subseteq L^2 (\T^3, \R^3)$ be a finite-dimensional subspace and let $P$ denote the $L^2$-orthogonal projection onto $V$.
		Let $K\in L^\infty (\T^3, \R^{3\times 3})$ be almost everywhere symmetric and satisfy $\norm{K}{\infty} < \frac{\lambda}{2}$.
		Recall that $\lambda$ is the repeated eigenvalue of the microinertia, as stated in the global assumptions of \fref{Definition}{def:global_assumptions}.
		Then
		$
			T(K) \vcentcolon= J_{eq} + P \circ K,
		$
		where $(P \circ K) v \vcentcolon= P(Kv)$ for every $v \in L^2 (\T^3, \R^3)$, is, with respect to the $L^2$ inner product, a self-adjoint invertible operator on $V$.
		Moreover we have the bound
		$
			\normns{{T(K)}\inv}{\Leb(V,V)} \leqslant \frac{2}{\lambda}.
		$
	\end{lemma}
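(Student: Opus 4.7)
The plan is to establish the three claims in sequence: well-definedness of $T(K)$ as an operator on $V$, self-adjointness, and a coercive lower bound that yields both invertibility and the norm estimate. The key algebraic trick that makes everything clean is that, since $P$ is the $L^2$-orthogonal projection onto $V$, for any $v \in V$ we have $Pv = v$, so the projection in $P \circ K$ effectively disappears when testing against elements of $V$.

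First I would verify that $T(K)$ maps $V$ into itself (using that $V$ is $J_{eq}$-invariant in the relevant usage, so $J_{eq}v \in V$ for $v \in V$, while $P(Kv) \in V$ by construction). Then, for self-adjointness, I would pick $u, v \in V$ and compute
\begin{equation*}
\langle T(K)u, v\rangle_{L^2} = \langle J_{eq} u, v\rangle + \langle P(Ku), v\rangle = \langle J_{eq} u, v\rangle + \langle Ku, v\rangle,
\end{equation*}
where the second equality uses that $P$ is self-adjoint and $Pv = v$. Since $J_{eq}$ is diagonal and $K$ is pointwise symmetric, both pairings are symmetric in $u$ and $v$, and the same computation run in reverse (using $Pu = u$) produces $\langle u, T(K)v\rangle$.

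Next, for coercivity, I would test against $v \in V$ itself:
\begin{equation*}
\langle T(K)v, v\rangle = \langle J_{eq} v, v\rangle + \langle Kv, v\rangle \geqslant \lambda \|v\|_{L^2}^2 - \|K\|_{L^\infty} \|v\|_{L^2}^2 \geqslant \tfrac{\lambda}{2}\|v\|_{L^2}^2,
\end{equation*}
using that the smallest eigenvalue of $J_{eq}$ is $\lambda$ and the hypothesis $\|K\|_{L^\infty} < \lambda/2$. This immediately implies injectivity of $T(K)$ on the finite-dimensional space $V$, hence bijectivity, and hence invertibility. For the norm bound on the inverse, given $w \in V$ I would set $v = T(K)^{-1} w$ and combine Cauchy--Schwarz with coercivity:
\begin{equation*}
\|w\|_{L^2}\|v\|_{L^2} \geqslant \langle w, v\rangle = \langle T(K)v, v\rangle \geqslant \tfrac{\lambda}{2}\|v\|_{L^2}^2,
\end{equation*}
yielding $\|T(K)^{-1} w\|_{L^2} \leqslant \tfrac{2}{\lambda}\|w\|_{L^2}$, which is the desired operator-norm estimate.

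There is no real obstacle in this lemma: the only subtlety is the interplay between the projection $P$ and multiplication by $K$, which is precisely handled by the identities $Pv = v$ and $P^* = P$ for $v \in V$. The smallness hypothesis $\|K\|_{L^\infty} < \lambda/2$ is perfectly tuned so that the perturbation $P \circ K$ cannot overwhelm the pointwise lower bound $\lambda$ coming from $J_{eq}$, and the factor $2$ in the final bound is exactly the reciprocal of the coercivity constant $\lambda/2$.
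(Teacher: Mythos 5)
Your proposal is correct and follows essentially the same route as the paper: both prove self-adjointness by using $P^* = P$ and $Pv = v$ on $V$ to drop the projection, then bound the quadratic form $\langle (J_{eq}+K)v, v\rangle \geqslant \tfrac{\lambda}{2}\|v\|_{L^2}^2$ to conclude invertibility and the bound $\|T(K)^{-1}\|_{\mathcal{L}(V,V)} \leqslant \tfrac{2}{\lambda}$. The only cosmetic differences are that you deduce invertibility from injectivity in finite dimension and get the norm bound via Cauchy--Schwarz, whereas the paper phrases it as $\lambda_{\min}(T(K)) \geqslant \tfrac{\lambda}{2}$; your explicit remark that $J_{eq}V \subseteq V$ in the relevant usage (true for the Fourier-cutoff spaces, since $J_{eq}$ is constant) is a point the paper leaves implicit.
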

	\begin{proof}
		The self-adjointness of $T(K)$ follows from the symmetry of $K$.
		Indeed, for every $\theta,\phi \in V$,
		\begin{equation*}
			{( T(K)\theta, \phi )}_{L^2}
			= {( (J_{eq} + P\circ K)\theta, \phi )}_{L^2}
			= {( (J_{eq} + K)\theta, \phi )}){L^2}
			= {( \theta, (J_{eq} + K) \phi )}_{L^2}
			= {( \theta, T(K)\phi )}_{L^2}.
		\end{equation*}
		The invertibility of $T(K)$ follows from the almost-everywhere invertibility of $J_{eq} + K$.
		Indeed, note that since $T(K)$ is a self-adjoint operator it suffices to study the quadratic form that it generates in order to determine its spectrum.
		So we note that for every $\theta\in V$,
		\begin{equation*}
			{( T(K)\theta, \theta )}_{L^2}
			= {( (J_{eq} + P\circ K) \theta, \theta )}_{L^2}
			= {( (J_{eq} + K)\theta, \theta )}_{L^2}
			> \frac{\lambda}{2} \norm{\theta}{L^2}^2,
		\end{equation*}
		and hence $\lambda_\text{min} \brac{ T(K) } \geqslant \frac{\lambda}{2}$.
		In particular we deduce that $T(K)$ is an invertible operator from $V$ to itself, and we have the bound
		$
			\normns{{T(K)}\inv}{\Leb(V,V)} \leqslant \frac{2}{\lambda}.\qedhere
		$
	\end{proof}

	Now that we know that ${T(K)}\inv$ is well-defined we verify that its dependence on $K$ is smooth.

	\begin{lemma}[Smoothness of ${T(K)}\inv$]
	\label{lemma:smooth_T_K_inv}
		Let $V \subseteq L^2 (\T^3, \R^3)$ and $W \subseteq L^2 (\T^3, \sym(\R^{3\times 3}))$ be finite-dimensional subspaces,
		let $P$ denote the $L^2$-orthogonal projection onto $V$, let
		$
			\U \vcentcolon= \cbrac{K\in W : \norm{K}{\infty} < \frac{\lambda}{2}}
		$
		be the open $L^\infty$-ball of radius $\frac{\lambda}{2}$ in $W$,
		and let, for any $K\in\U$,
		$
			T(K) \vcentcolon= J_{eq} + P \circ K
		$
		where $(P\circ K)v \vcentcolon= P(Kv)$ for every $v\in L^2 (\T^3, \R^3)$.
		Then the map $\Phi : \U \to \Leb(V,V)$ defined by
		$
			\Phi(K) \vcentcolon= {T(K)}\inv
		$
		is smooth.
	\end{lemma}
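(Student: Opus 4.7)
The plan is to express $\Phi$ as the composition $\mathrm{inv} \circ T$, where $T \colon \mathcal{U} \to \mathcal{L}(V,V)$ is the affine map $K \mapsto J_{eq} + P\circ K$ and $\mathrm{inv} \colon GL(V) \to GL(V)$ is operator inversion. Since the composition of smooth maps is smooth, it suffices to verify smoothness of each factor and that $T(\mathcal{U}) \subseteq GL(V)$.

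First I would check that $T$ is well-defined, continuous, and smooth as a map into $\mathcal{L}(V,V)$. The map $K \mapsto P \circ K$ is linear, so smoothness reduces to continuity. For $v \in V$ and $K \in W$, finite-dimensionality of $W$ ensures $K \in L^\infty$, so
\begin{equation*}
\normtyp{(P\circ K)v}{L}{2} = \normtyp{P(Kv)}{L}{2} \leqslant \normtyp{Kv}{L}{2} \leqslant \normtyp{K}{L}{\infty} \normtyp{v}{L}{2},
\end{equation*}
giving $\norm{P\circ K}{\mathcal{L}(V,V)} \leqslant \normtyp{K}{L}{\infty}$. Since all norms on the finite-dimensional space $W$ are equivalent, this shows both that $T$ is continuous (as an affine map between Banach spaces, hence smooth) and that $\mathcal{U}$ is open in $W$.

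Next, \fref{Lemma}{lemma:invertibility_T_K} guarantees that $T(K)$ is invertible for every $K \in \mathcal{U}$, so $T(\mathcal{U}) \subseteq GL(V)$. The inversion map $\mathrm{inv} \colon GL(V) \to GL(V)$ is smooth: this follows either from Cramer's rule (the entries of $A^{-1}$ depend rationally on those of $A$ on the open set where $\det A \neq 0$) or from the Neumann series $(A+H)^{-1} = \sum_{k\geqslant 0} (-A^{-1}H)^k A^{-1}$ valid for $\norm{H}{\mathcal{L}(V,V)} < \norm{A^{-1}}{\mathcal{L}(V,V)}^{-1}$, which exhibits $\mathrm{inv}$ as a locally convergent power series.

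Composing yields smoothness of $\Phi$, and there is no real obstacle to overcome: the only subtlety worth flagging is the need to use finite-dimensionality of $W$ to convert the $L^2$ structure of the ambient space into the $L^\infty$ control used in the operator-norm bound. I would close by recording, if needed later, the differential $D\Phi(K)[H] = -T(K)^{-1} (P \circ H) T(K)^{-1}$, which follows from the standard formula for the derivative of $\mathrm{inv}$ combined with the chain rule and the fact that $DT(K)[H] = P \circ H$.
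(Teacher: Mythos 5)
Your proposal is correct and follows essentially the same route as the paper: write $\Phi = \mathrm{inv} \circ T$, note that $T$ is affine (hence smooth) with $T(\U) \subseteq \mathcal{G}\Leb(V)$ by the invertibility lemma, and invoke smoothness of operator inversion. The extra details you supply — the operator-norm bound $\norm{P\circ K}{\Leb(V,V)} \leqslant \normtyp{K}{L}{\infty}$, the Neumann-series argument for $\mathrm{inv}$, and the formula for $D\Phi$ — are consistent with the paper (the latter reappears in its Lemma on derivatives of ${T(K)}\inv$) and do not change the argument.
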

	\begin{proof}
		The crucial observation here is that $\Phi$ may be written as the composition of $T : \U \to \Leb(V, V)$ and $\text{inv} : \mathcal{G}\Leb (V) \to \mathcal{G}\Leb (V)$,
		where
		$
			\mathcal{G}\Leb (V) \vcentcolon= \cbrac{ L \in \Leb(V, V) : L \text{ is invertible }}
		$
		and $\text{inv}(L) \vcentcolon= L\inv$ for every $L\in \mathcal{G}\Leb(V)$.
		Note that it is precisely \fref{Lemma}{lemma:invertibility_T_K} which tells us that $T(\U) \subseteq \mathcal{G}\Leb (V)$ such that
		$\Phi = \text{inv} \circ T$ is indeed well-defined.
		All that remains to show is that both $T$ and $\text{inv}$ are smooth.
		The smoothness of $\text{inv}$ is a well-known fact -- see for example \cite{marsden}.
		To see that $T$ is smooth note that, for every $K,H\in\U$, $T(K) - T(H) = P \circ (K-H)$.
		We deduce that $T$ is affine, and hence smooth.
	\end{proof}

	We now turn our attention towards the obtention of $H^k$-to-$H^k$ estimates on ${T(K)}\inv$.
	In order to do so we first define the operator $M$ which will be useful when deriving formulae for the derivatives of ${T(K)}\inv$.

	\begin{definition}
	\label{def:good_operator_M}
		Let $V \subseteq L^2 (\T^3, \R^3)$ be a finite-dimensional subspace and let $P$ denote the $L^2$-orthogonal projection onto $V$.
		For any $K\in L^\infty (\T^3, \R^{3\times 3})$ let
		$
			T(K) \vcentcolon= J_{eq} + P \circ K
		$
		where $(P \circ K) v \vcentcolon= P (Kv)$ for any $v$ in $L^2(\T^3, \R^3)$.
		For any multi-indices $\alpha_1,\, \dots,\, \alpha_m \in \N^{3\times 3}$ with $k \vcentcolon= \max \abs{\alpha_i}$
		and any $K \in W^{k,\infty} \brac{\T^3;\, \R^{3\times 3}}$ for which $T(K)$ is invertible we define
		\begin{equation*}
			M (\alpha_1,\, \dots,\, \alpha_m) \vcentcolon= 
			{T(K)}^{-1} ( P \circ \partial^{\alpha_1} K ) {T(K)}^{-1} ( P \circ \partial^{\alpha_2} K ) {T(K)}^{-1} \cdots {T(K)}^{-1} ( P \circ \partial^{\alpha_m} K ) {T(K)}^{-1}.
		\end{equation*}
	\end{definition}

	With the operator $M$ in hand we may write down useful formulae for derivatives of ${T(K)}\inv$.

	\begin{lemma}[Formula for the derivatives of ${T(K)}\inv$]
	\label{lemma:formula_deriv_T_K_inv}
		Let $\U$ and $T$ be as in \fref{Lemma}{lemma:smooth_T_K_inv} and let $M$ be as in \fref{Definition}{def:good_operator_M}.
		For any multi-index $\alpha\in \N^{3\times 3}$ and any $K\in W^{\abs{\alpha}, \infty} (T^3, \R^{3\times 3})$ we have the identity
		\begin{equation*}
			\partial^\alpha \brac{ {T(K)}^{-1} }
			= \sum_{k=1}^{\abs{\alpha}} {(-1)}^k \sum_{\beta_1 + \cdots + \beta_k = \alpha} M( \beta_1, \cdots, \beta_k) (K) .
		\end{equation*}
	\end{lemma}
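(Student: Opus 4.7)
The plan is to prove the identity by induction on $\abs{\alpha}$, using the Leibniz rule to derive a one-step recursion from the operator identity $T(K) \cdot {T(K)}^{-1} = I_V$ on $V$.

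For the base case $\abs{\alpha} = 1$, since $J_{eq}$ is constant we have $\partial^\alpha T(K) = P \circ \partial^\alpha K$, and differentiating the operator identity yields
\begin{equation*}
\partial^\alpha {T(K)}^{-1} = -{T(K)}^{-1} \brac{\partial^\alpha T(K)} {T(K)}^{-1} = -{T(K)}^{-1} \brac{P \circ \partial^\alpha K} {T(K)}^{-1} = -M(\alpha)(K),
\end{equation*}
which is precisely the $k=1$ contribution on the right-hand side (the $k=1$ summation having only the one term $\beta_1 = \alpha$).

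For the inductive step, suppose the identity holds for all multi-indices of order at most $n$, and let $\abs{\alpha} = n+1$. Choose an index $j$ with $\alpha_j \geqslant 1$ and write $\alpha = e_j + \gamma$ with $\abs{\gamma} = n$. Applying $\partial_j$ to the inductive hypothesis for $\gamma$ and then expanding each $\partial_j M(\beta_1, \ldots, \beta_k)$ by the Leibniz rule produces contributions of two kinds. First, whenever $\partial_j$ falls on one of the $k+1$ copies of ${T(K)}^{-1}$ appearing in $M(\beta_1, \ldots, \beta_k)$, the base case produces a factor $-{T(K)}^{-1}(P \circ \partial^{e_j}K){T(K)}^{-1}$, effectively inserting $e_j$ at one of the $k+1$ possible slots and yielding $-M(\beta_1, \ldots, \beta_i, e_j, \beta_{i+1}, \ldots, \beta_k)$; such terms become level-$(k+1)$ contributions with sign $(-1)^{k+1}$, and their arguments sum to $\gamma + e_j = \alpha$. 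Second, whenever $\partial_j$ falls on a factor $P \circ \partial^{\beta_i}K$, the argument $\beta_i$ is replaced by $\beta_i + e_j$, giving a level-$k$ contribution whose arguments again sum to $\alpha$.

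The main (and essentially only) step requiring care is the combinatorial bookkeeping: one must verify that collecting all insertion contributions (across the $k+1$ positions, at every level $k$) and all incrementation contributions (across the $k$ positions, at every level $k$) exactly reconstructs the signed sum on the right-hand side. Since $\partial_j$ is a single derivative, no nontrivial binomial coefficients are introduced by Leibniz, and the accounting reduces to a direct enumeration compatible with the summation convention used in the statement. No analytic subtlety arises, as all manipulations take place in the finite-dimensional space $\Leb(V, V)$ with ${T(K)}^{-1}$ bounded by virtue of \fref{Lemma}{lemma:invertibility_T_K}.
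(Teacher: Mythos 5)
Your argument is correct and follows essentially the same route as the paper: the one-derivative formulas $\partial_i \brac{T(K)} = P \circ \partial_i K$ and $\partial_i \brac{{T(K)}^{-1}} = -{T(K)}^{-1} (P \circ \partial_i K) {T(K)}^{-1}$, a Leibniz expansion of $\partial_j M(\beta_1, \dots, \beta_k)$ into incrementation and insertion contributions, and induction on $\abs{\alpha}$. The paper records the same derivative identity for $M$ and then simply concludes ``by induction,'' leaving the same combinatorial bookkeeping (with multiplicities absorbed into the summation convention) implicit, so your write-up matches its proof.
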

	\begin{proof}
		The fundamental observations are that taking a single derivative of the maps $K \mapsto T(K)$ and $K \mapsto {T(K)}\inv$ yields
		\begin{equation*}
			\partial_i ( T(K)) = P \circ \partial_i K
			\text{ and } 
			\partial_i \brac{ T{(K)}\inv } = - {T(K)}\inv ( P \circ \partial_i K ) {T(K)}\inv
		\end{equation*}
		(see \fref{Lemma}{lemma:smooth_T_K_inv} for analogous computations).
		Using these two identities we may deduce an identity for derivatives of $M$:
		\begin{align*}
			\partial_i \brac{ 
				M(\alpha_1,\, \alpha_2,\, \cdots,\, \alpha_m) (K)
			} =
			M(\alpha_1 + e_i,\, \alpha_2,\, \cdots,\, \alpha_m) (K)
			+ \cdots
			+ M(\alpha_1,\, \alpha_2,\, \cdots,\, \alpha_m + e_i) (K)
		\\
			- M(e_i,\, \alpha_1,\, \alpha_2,\, \cdots,\, \alpha_m) (K)
			- M(\alpha_1,\, e_i,\, \alpha_2,\, \cdots,\, \alpha_m) (K)
			- \cdots
			- M(\alpha_1,\, \alpha_2,\, \cdots,\, \alpha_m,\, e_i) (K).
		\end{align*}
		The result then follows by induction.
	\end{proof}

	In light of these formulae for derivatives of ${T(K)}\inv$ we may now conclude this section and obtain $H^k$-to-$H^k$ bounds on ${T(K)}\inv$.

	\begin{lemma}[$H^k$ bounds on ${T(K)}\inv$]
	\label{lemma:H_k_bounds_T_K_inv}
		Let $\U$ and $T$ be as in \fref{Lemma}{lemma:smooth_T_K_inv}.
		For every $k\geqslant 2$ and every $K\in\U\cap H^{k+2}$,
		$
			\normns{ {T(K)}\inv }{ \Leb(H^k, H^k) }
			\lesssim \norm{K}{H^k} + \norm{K}{H^{k+2}}^k.
		$
	\end{lemma}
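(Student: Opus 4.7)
The plan is to estimate $\|\partial^\alpha g\|_{L^2}$ for every multi-index $\alpha$ with $|\alpha|\leqslant k$, where $g \vcentcolon= {T(K)}^{-1} f$ for arbitrary $f \in H^k$, and then sum over $\alpha$ to produce the target operator bound. Applying the Leibniz rule to the identity $g = {T(K)}^{-1} f$ gives
\begin{equation*}
\partial^\alpha g = \sum_{\beta + \gamma = \alpha} \binom{\alpha}{\beta}\bigl(\partial^\beta {T(K)}^{-1}\bigr)(\partial^\gamma f).
\end{equation*}
The contribution from $\beta = 0$ is harmless: it is simply ${T(K)}^{-1} \partial^\alpha f$, whose $L^2$ norm is controlled by $(2/\lambda)\|f\|_{H^k}$ thanks to \fref{Lemma}{lemma:invertibility_T_K}.

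For the remaining terms, with $|\beta|\geqslant 1$, I would invoke the explicit expansion of \fref{Lemma}{lemma:formula_deriv_T_K_inv}, which writes $\partial^\beta({T(K)}^{-1})$ as a finite alternating sum of operators $M(\delta_1,\ldots,\delta_j)(K)$ with $\delta_1+\cdots+\delta_j = \beta$. Each such $M$ is an alternating composition of the $L^2$-resolvent ${T(K)}^{-1}$ (of norm at most $2/\lambda$) and of the multiplication-then-projection operators $P \circ \partial^{\delta_i} K$. Since $P$ is an $L^2$-orthogonal projection of norm at most one, the iterated application of \fref{Lemma}{lemma:invertibility_T_K} reduces the job to estimating
\begin{equation*}
\bigl\| M(\delta_1,\ldots,\delta_j)(K)\,\partial^\gamma f\bigr\|_{L^2}
\lesssim \prod_{i=1}^{j} \bigl\|\partial^{\delta_i} K\bigr\|_{L^\infty} \cdot \|\partial^\gamma f\|_{L^2},
\end{equation*}
where the implicit constant depends only on $\lambda$ through the factor $(2/\lambda)^{j+1}$.

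To close the bound I would use the three-dimensional Sobolev embedding $H^{k+2}(\mathbb{T}^3) \hookrightarrow W^{k,\infty}(\mathbb{T}^3)$, which dominates each $\|\partial^{\delta_i} K\|_{L^\infty}$ (with $|\delta_i| \leqslant k$) by $\|K\|_{H^{k+2}}$. Summing over the finitely many splittings $\delta_1+\cdots+\delta_j = \beta$ and over $j = 1,\ldots,|\beta|$ produces a polynomial bound of the form $\sum_{j=1}^{|\beta|} \|K\|_{H^{k+2}}^{j}\|f\|_{H^k}$. The $j=1$ piece admits a refinement: one can trade an $L^\infty$ bound for a Moser-type product estimate $\|\partial^{\delta_1} K \cdot h\|_{L^2} \lesssim \|K\|_{H^k}\|h\|_{L^\infty} + \|K\|_{L^\infty}\|h\|_{H^k}$, with the small $L^\infty$ norm of $K$ (controlled by the defining condition of $\U$ combined with $K \in H^3$) allowing one to generate the single linear term $\|K\|_{H^k}$ in the final bound. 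Collecting everything yields the desired estimate $\|g\|_{H^k} \lesssim (\|K\|_{H^k} + \|K\|_{H^{k+2}}^k)\|f\|_{H^k}$.

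The principal obstacle will be carefully bookkeeping the regularity of $K$ through the alternating composition: the resolvent ${T(K)}^{-1}$ is only known to be bounded on $L^2$, so at no intermediate stage can one measure the partial products $\brac{{T(K)}^{-1}(P\circ \partial^{\delta_i}K)\cdots {T(K)}^{-1}}$ in any space better than $L^2$. This forces one to place \emph{every} factor of $\partial^{\delta_i} K$ in $L^\infty$ simultaneously, which is precisely what produces the Sobolev loss from $H^k$ to $H^{k+2}$ and the power $k$ from having up to $k$ factors. The refined $j=1$ estimate must be handled separately to avoid polluting the linear term with this loss.
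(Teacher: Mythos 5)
Your core argument is the paper's argument: expand $\partial^\alpha\brac{{T(K)}\inv f}$ by the Leibniz rule, use the formula of \fref{Lemma}{lemma:formula_deriv_T_K_inv} to write $\partial^\beta\brac{{T(K)}\inv}$ as a finite sum of operators $M(\delta_1,\dots,\delta_j)(K)$, bound each of these in $\Leb(L^2,L^2)$ by ${(2/\lambda)}^{j+1}\prod_i \normtypns{\partial^{\delta_i}K}{L}{\infty}$ via \fref{Lemma}{lemma:invertibility_T_K} and $\norm{P}{\Leb(L^2,L^2)}\leqslant 1$, and control the $L^\infty$ factors through the Sobolev embedding (each $\normtypns{\partial^{\delta_i}K}{L}{\infty}\lesssim\normtyp{K}{H}{k+2}$). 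Up to that point your proof coincides with the paper's and is correct.

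The place where you deviate, the Moser-type ``refinement'' of the $j=1$ piece, does not work as written, and you name the reason yourself in your final paragraph: the function multiplied by $\partial^{\delta_1}K$ is ${T(K)}\inv\partial^\gamma f$, which you only control in $L^2$, so the estimate $\norm{\partial^{\delta_1}K\cdot h}{L^2}\lesssim\normtyp{K}{H}{k}\norm{h}{L^\infty}+\normtyp{K}{L}{\infty}\norm{h}{H^k}$ has nothing admissible to act on; making it rigorous would require $H^m$-to-$H^m$ bounds on ${T(K)}\inv$ for intermediate $m$, i.e. an induction on $k$ that you have not set up. Fortunately the refinement is also unnecessary. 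Note first that the $\beta=0$ term already contributes $\normns{{T(K)}\inv}{\Leb(L^2,L^2)}\leqslant 2/\lambda$, a constant that cannot be absorbed into norms of $K$ as $K\to 0$, so an additive constant belongs on the right-hand side in any case (the paper's statement elides this too; it is harmless for every subsequent use of the lemma). Once that constant is present, the intermediate powers $\normtyp{K}{H}{k+2}^{j}$, $1\leqslant j\leqslant k$, produced by your main argument are absorbed by Young's inequality into $1+\normtyp{K}{H}{k+2}^{k}$; the paper's own proof finishes the same bookkeeping by interpolating the norms $\normtyp{K}{H}{\abs{\beta}+2}$ between $H^2$ and $H^{k+2}$ and then using $\normtyp{K}{H}{2}\leqslant\normtyp{K}{H}{k}$ (valid since $k\geqslant 2$) to produce the linear term in the stated form. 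Dropping the refinement and closing with this elementary interpolation/Young step makes your proof complete and essentially identical to the paper's.
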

	\begin{proof}
		As a starting point, combining \fref{Lemma}{lemma:invertibility_T_K} and \fref{Lemma}{lemma:formula_deriv_T_K_inv} tells us that,
		for any multi-indices $\beta_1,\, \dots,\, \beta_m$ and for $M$ as in \fref{Definition}{def:good_operator_M},
		if we write $\alpha \vcentcolon= \beta_1 + \dots + \beta_m$ and $k = \abs{\alpha}$ then
		\begin{equation*}
			\norm{ M(\beta_1,\, \dots,\, \beta_m)(K) }{\Leb(L^2, L^2)}
			\leqslant \norm{ {T(K)}^{-1} }{\Leb(L^2, L^2)}^{k+1} \prod_{i=1}^m \norm{ \partial^{\beta_i} K }{L^\infty}
			\leqslant {\left(\frac{2}{\lambda}\right)}^{k+1} \norm{\nabla K}{W^{k-1,\infty}}^k
			\lesssim \norm{K}{H^{k+2}}^k
		\end{equation*}
		since, when $n=3$, $H^2 (\T^3) \hookrightarrow L^\infty (\T^3)$.

		We may now combine this inequality with \fref{Lemma}{lemma:formula_deriv_T_K_inv} to obtain $L^2$-to-$L^2$ bounds on
		$ \partial^\beta \brac{ {T(K)}^{-1} }$: for any multi-index $\beta\in\N^3$ with $\abs{\beta} = l$
		\begin{align*}
			\norm{ \partial^\beta \brac{ {T(K)}^{-1} }}{\Leb(L^2, L^2)}
			\leqslant \sum_{i=1}^l \sum_{\gamma_1 + \dots + \gamma_i = \beta}
				\norm{ M(\gamma_1,\, \dots,\, \gamma_i) (K) }{\Leb(L^2, L^2)}
			\lesssim \norm{K}{H^{l+2}}^l.
		\end{align*}
		We may now finally obtain $H^k$-to-$H^k$ bounds on $ {T(K)}^{-1} $.
		For any $v\in H^k (\T^3, \R^3)$
		\begin{align*}
			\norm{ {T(K)}^{-1} v}{H^k}^2
			= \sum_{\abs{\alpha} \leqslant k} \norm{ \partial^\alpha \brac{ {T(K)}^{-1} v } }{L^2}^2
			\leqslant \sum_{\abs{\alpha}\leqslant k} \sum_{\beta + \delta}
				\norm{ \partial^\beta \brac{ {T(K)}^{-1} } }{\Leb(L^2, L^2)}^2 \norm{ \partial^\delta v}{L^2}^2
		\\
			\lesssim \sum_{\abs{\alpha} \leqslant k} \sum_{\beta + \delta = \alpha}
				\normtyp{K}{H}{\abs{\beta}+2}^{2\abs{\beta}} \norm{v}{H^{\abs{\delta}}}^2
			\lesssim \brac{
				\norm{K}{H^2}^2 + \norm{K}{H^{k+2}}^{2k}
			} \norm{v}{H^k}^2,
		\end{align*}
		where note that the last inequality follows by interpolation.
	\end{proof}

\subsection{Estimates for the approximate problem}
\label{sec:est_approx_prob}

	In this section we obtain two types of estimates: a priori estimates on the sequence of approximate solutions
	and estimates of the initial energy (which involves temporal derivatives) in terms of purely spatial norms.

	Note that by contrast with the main scheme of a priori estimates built in \fref{Section}{sec:a_prioris},
	the a priori estimates here are almost exclusively centered around energy estimates
	(some advection-rotation estimates for $K_n$ are present, but play an auxiliary role).
	This is because we are working locally in time and therefore can get away with ``sloppier'' estimates,
	in the sense that the nonlinear interactions need not be estimated in structured ways (e.g. as $\abs{\mathcal{I}} \lesssim \sqrt{\mathcal{E}} \mathcal{D}$)
	such that cruder estimates (e.g. $\abs{\mathcal{I}} \lesssim \mathcal{E}^{3/2}$) suffice.

	This section is structured as follows.
	First we record some projected variants of the advection-rotations estimates, then we proceed with the energy estimates,
	and finally we turn our attention to estimates of the initial energy in terms of purely spatial norms.

	As a precursor to $H^k$ estimates for the projected advection-rotation operator appearing in \eqref{eq:approx_sys_pdt_K_n} we first obtain an $L^2$ estimate.
	Note that \eqref{eq:adv_rot_eq_involving_Pn_L_2} in the statement of \fref{Lemma}{lemma:L2_est_proj_adv_rot_eq}
	is equivalent to \eqref{eq:approx_sys_pdt_K_n} for an appropriate definition of $F$.
	It is written in this slightly different form since it makes it clear which operator produces good $L^2$ estimates,
	and hence when operator must be kept on the left-hand side when taking derivatives and performing $H^k$ estimates.

	\begin{lemma}[$L^2$ estimates for projected advection-rotation equations]
	\label{lemma:L2_est_proj_adv_rot_eq}
		Let $K_n \in L^2 \brac{ \T^3;\, \sym(3) } \cap \im P_n$, let $u$ be divergence-free,
		and let $\Theta$ be antisymmetric. If $K_n$ solves
		\begin{equation}
		\label{eq:adv_rot_eq_involving_Pn_L_2}
			P_n \circ (\pdt + u \cdot\nabla - \sbrac{\Omega_{eq} + \Theta, \,\cdot\,}) K_n = F
		\end{equation}
		then
		$
			\frac{\mathrm{d}}{\mathrm{d}t} \norm{K_n}{L^2} \leqslant \norm{F}{L^2}.
		$
	\end{lemma}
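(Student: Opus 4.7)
\medskip

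The plan is to perform the energy estimate obtained by testing the equation against $K_n$ itself, exploiting the fact that $K_n$ lies in $\im P_n$ to absorb the projection.

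Concretely, first I would observe that, since $P_n$ is the $L^2$-orthogonal projection onto $\im P_n$ and $K_n \in \im P_n$, we have the identity
\begin{equation*}
	\int_{\T^3} F \cdot K_n
	= \int_{\T^3} \brac{ P_n \circ \brac{\pdt + u\cdot\nabla - \sbrac{\Omega_{eq} + \Theta, \,\cdot\,}} K_n } : K_n
	= \int_{\T^3} \brac{\pdt + u\cdot\nabla - \sbrac{\Omega_{eq} + \Theta, \,\cdot\,}} K_n : K_n.
\end{equation*}
Next I would compute each of the three terms on the right-hand side. The time derivative term gives
$\int_{\T^3} (\pdt K_n) : K_n = \frac{1}{2} \frac{d}{dt} \norm{K_n}{L^2}^2$.
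The transport term vanishes by the divergence-free assumption on $u$:
$\int_{\T^3} (u\cdot\nabla K_n) : K_n = \frac{1}{2} \int_{\T^3} u\cdot\nabla \abs{K_n}^2 = -\frac{1}{2} \int_{\T^3} (\nabla\cdot u) \abs{K_n}^2 = 0$.
Finally, since $K_n$ is almost everywhere symmetric and $\Omega_{eq} + \Theta$ is antisymmetric, the commutator term vanishes pointwise thanks to \fref{Lemma}{lemma:commut_are_antisym_maps_on_space_sym_matrices}, namely $\int_{\T^3} \sbrac{\Omega_{eq} + \Theta, K_n} : K_n = 0$.

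Putting this together yields $\frac{1}{2} \frac{d}{dt} \norm{K_n}{L^2}^2 = \int_{\T^3} F : K_n \leqslant \norm{F}{L^2} \norm{K_n}{L^2}$ by Cauchy-Schwarz, from which the desired bound $\frac{d}{dt} \norm{K_n}{L^2} \leqslant \norm{F}{L^2}$ follows on the set where $\norm{K_n}{L^2} > 0$ (with the inequality being trivial elsewhere, and the passage from the squared to unsquared norm handled, if needed, by the standard $\sqrt{\varepsilon + \norm{K_n}{L^2}^2}$ regularization followed by $\varepsilon \downarrow 0$).

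There is no serious obstacle here: since $K_n \in \im P_n$ is a trigonometric polynomial with finitely many Fourier modes, all of the formal computations above are automatically justified, and no approximation in the $L^p$ framework of \fref{Proposition}{prop:Lp_est_adv_rot_eq_diff_form} is required. The only point meriting any care is the small technical wrinkle at $\norm{K_n}{L^2} = 0$, which is standard.
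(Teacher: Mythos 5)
Your proof is correct and follows essentially the same route as the paper's: test the equation against $K_n$ (removing the projection via self-adjointness of $P_n$ and $K_n\in\im P_n$), kill the transport term by incompressibility and the commutator term by \fref{Lemma}{lemma:commut_are_antisym_maps_on_space_sym_matrices}, then apply Cauchy--Schwarz and differentiate the norm. Your explicit handling of the projection and of the degenerate case $\norm{K_n}{L^2}=0$ only makes explicit what the paper leaves implicit.
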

	\begin{proof}
		The key observation is that
		\begin{equation}
		\label{eq:time_deriv_L2_norm}
			\frac{\mathrm{d}}{\mathrm{d}t} \norm{K_n}{L^2}
			= \frac{\mathrm{d}}{\mathrm{d}t} {\brac{ \int_{\T^3} \abs{K_n}^2 }}^{1/2}
			= \frac{1}{2} {\brac{ \int_{\T^3} \abs{K_n}^2 }}^{-1/2} \int_{\T^3} 2 K_n : \pdt K_n
			= \frac{
				{( K_n, \pdt K_n )}_{L^2}
			}{
				\norm{K_n}{L^2}
			}
		\end{equation}
		where we may bound ${(K_n, \pdt K_n)}_{L^2}$ via a simple energy estimate on \eqref{eq:adv_rot_eq_involving_Pn_L_2}.
		Indeed it follows from \eqref{eq:adv_rot_eq_involving_Pn_L_2}, the incompressibility of $u$, and \fref{Lemma}{lemma:commut_are_antisym_maps_on_space_sym_matrices} that
		\begin{equation}
		\label{eq:adv_rot_eq_involving_Pn_intermediate}
			\int_{\T^3} \pdt K_n : K_n
			= \int_{\T^3} \brac{ \pdt + u \cdot \nabla - \sbrac{\Omega_{eq} + \Theta,\,\cdot\,} } K_n : K_n
			= \int_{\T^3} F : K_n.
		\end{equation}
		Putting \eqref{eq:time_deriv_L2_norm} and \eqref{eq:adv_rot_eq_involving_Pn_intermediate} together with the Cauchy-Schwarz inequality allows us to conclude.
	\end{proof}

	With this $L^2$ estimate in hand we may now derive $H^k$ estimates for $K_n$.

	\begin{lemma}[$H^k$ estimates for projected advection-rotation equations]
	\label{lemma:H_k_est_proj_adv_rot_eq}
		Let $K_n \in L^2 \brac{ \T^3;\, \sym(3) } \cap \im P_n$, let $u$ be divergence-free,
		and let $\Theta$ be antisymmetric. If $K_n$ solves
		$
			P_n \circ (\pdt + u \cdot\nabla - \sbrac{\Omega_{eq} + \Theta, \,\cdot\,}) K_n = 0
		$
		and satisfies $\norm{K_n}{\infty}, \norm{\nabla K_n}{\infty} \lesssim 1$,
		then, for every $k\geqslant 0$,
		\begin{equation*}
			\norm{K_n (t)}{H^{k}} \lesssim \exp\brac{ \int_0^t \norm{ (u, \theta) (s) }{H^3} ds} \brac{ \norm{K_n(0)}{H^{k}} + \int_0^t \norm{ (u, \theta)(s) }{H^{k}} ds}.
		\end{equation*}
	\end{lemma}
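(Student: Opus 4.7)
The plan is to mimic the unprojected $H^k$ estimate of \fref{Lemma}{lemma:H_k_est_K}, but taking advantage of the fact that $K_n \in \im P_n$ and that $P_n$ is a Fourier multiplier which commutes with every spatial derivative. The key observation is that if we apply $\partial^\alpha$ for a multi-index $\alpha$ with $\abs{\alpha} \leqslant k$ to the equation, then since $P_n$ and $\partial^\alpha$ commute we obtain
\begin{equation*}
	P_n \circ \brac{ \pdt + u \cdot \nabla - \sbrac{\Omega_{eq} + \Theta, \,\cdot\,} } \partial^\alpha K_n
	= - P_n \brac{ \sbrac{ \partial^\alpha, u\cdot\nabla} K_n } + P_n \brac{ \sbrac{ \partial^\alpha, \sbrac{\Theta, \,\cdot\,}} K_n },
\end{equation*}
where the commutator with $\sbrac{\Omega_{eq}, \,\cdot\,}$ vanishes because $\Omega_{eq}$ is constant. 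Crucially, $\partial^\alpha K_n$ still belongs to $\im P_n$ since $P_n$ is a Fourier multiplier, so \fref{Lemma}{lemma:L2_est_proj_adv_rot_eq} applies to $\partial^\alpha K_n$ with forcing equal to the right-hand side above.

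Applying \fref{Lemma}{lemma:L2_est_proj_adv_rot_eq} (and dropping the outer $P_n$, which is an $L^2$-contraction) yields
\begin{equation*}
	\frac{\mathrm{d}}{\mathrm{d}t} \normtyp{\partial^\alpha K_n}{L}{2}
	\leqslant \normtyp{ \sbrac{ \partial^\alpha, u\cdot\nabla} K_n }{L}{2}
	+ \normtyp{ \sbrac{ \partial^\alpha, \sbrac{\Theta, \,\cdot\,}} K_n }{L}{2}.
\end{equation*}
The two commutators are then controlled by the standard commutator estimates already used in the proof of \fref{Lemma}{lemma:H_k_est_K} (namely \fref{Lemma}{lemma:comm_est_transp_op} and its rotational analogue), which give the bounds
\begin{align*}
	\normtyp{ \sbrac{ \partial^\alpha, u\cdot\nabla} K_n }{L}{2}
	&\lesssim \normtyp{\nabla u}{L}{\infty} \normtyp{K_n}{H}{k} + \normtyp{\nabla K_n}{L}{\infty} \normtyp{u}{H}{k},\\
	\normtyp{ \sbrac{ \partial^\alpha, \sbrac{\Theta, \,\cdot\,}} K_n }{L}{2}
	&\lesssim \normtyp{\theta}{L}{\infty} \normtyp{K_n}{H}{k} + \normtyp{K_n}{L}{\infty} \normtyp{\theta}{H}{k}.
\end{align*}
Summing over $\abs{\alpha} \leqslant k$, using the Sobolev embedding $H^3(\T^3) \hookrightarrow W^{1,\infty}(\T^3)$ to bound $\normtyp{\nabla u}{L}{\infty} + \normtyp{\theta}{L}{\infty} \lesssim \normtyp{(u,\theta)}{H}{3}$, and invoking the hypothesis $\normtyp{K_n}{L}{\infty} + \normtyp{\nabla K_n}{L}{\infty} \lesssim 1$, we arrive at the differential inequality
\begin{equation*}
	\frac{\mathrm{d}}{\mathrm{d}t} \normtyp{K_n}{H}{k}
	\lesssim \normtyp{(u,\theta)}{H}{3} \normtyp{K_n}{H}{k} + \normtyp{(u,\theta)}{H}{k}.
\end{equation*}
The desired estimate follows from a standard Gr\"onwall argument. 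There is no substantial obstacle here: the only mild subtlety is that one must apply the projected $L^2$ estimate of \fref{Lemma}{lemma:L2_est_proj_adv_rot_eq} at the level of $\partial^\alpha K_n$ (rather than at the level of $K_n$), which is legitimate precisely because $\partial^\alpha K_n \in \im P_n$ and because $P_n$ commutes with $\partial^\alpha$; after that, the argument is a direct transcription of \fref{Lemma}{lemma:H_k_est_K}.
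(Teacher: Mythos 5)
Your proposal is correct and follows essentially the same route as the paper: commute $\partial^\alpha$ with $P_n$, use that $P_n$ is an $L^2$-contraction to reduce the projected commutators to the unprojected ones, apply the projected $L^2$ estimate of \fref{Lemma}{lemma:L2_est_proj_adv_rot_eq} to $\partial^\alpha K_n$, and then run the argument of \fref{Lemma}{lemma:H_k_est_K} with the assumed $L^\infty$ bounds on $K_n$ and $\nabla K_n$ followed by Gr\"onwall. No gaps.
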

	\begin{proof}
		Since $ P_n$ commutes with $ \partial^\alpha $ and since $ \norm{ P_n }{\Leb(L^2, L^2)} \leqslant 1$ we may deduce that
		\begin{equation*}
			\norm{ \sbrac{ P_n \circ (u \cdot \nabla), \partial^\alpha } K_n }{L^2}
			= \norm{ \brac{ P_n \circ \sbrac{u \cdot \nabla, \partial^\alpha }} K_n }{L^2}
			\leqslant \norm{ \sbrac{u \cdot \nabla, \partial^\alpha } K_n }{L^2}
		\end{equation*}
		and similarly
		\begin{equation*}
			\norm{ \sbrac{ P_n \circ \sbrac{\Theta, \,\cdot\,}, \partial} K_n}{L^2} \leqslant \norm{ \sbrac{\sbrac{\Theta,\,\cdot\,}, \partial^\alpha } K_n}{L^2}.
		\end{equation*}

		With these two commutator inequalities and \fref{Lemma}{lemma:L2_est_proj_adv_rot_eq} in hand
		we may proceed as in \fref{Lemma}{lemma:H_k_est_K} to deduce the claim, keeping in mind that $\norm{K_n}{L^\infty}, \norm{\nabla K_n}{L^\infty} \lesssim 1$.
	\end{proof}

	We now turn our attention to the energy-dissipation structure of the approximate problem.
	We begin by defining appropriate versions of the energy.

	\begin{definition}[Versions of the local energies]
	\label{def:versions_local_energy}
		For $Z = (u, \theta, K)$ we define $E_{K,\, \text{loc}}$, $ \widetilde{\mathcal{E}}_{M,\text{loc}} $, and $ \overline{\mathcal{E}}_{M,\text{loc}} $ as follows:
		\begin{equation}
		\label{eq:def_local_energy}
			E_{K,\, \text{loc}} (u, \theta, K) \vcentcolon=
			\frac{1}{2} \int_{\T^3} \abs{u}^2
			+ \frac{1}{2} \int_{\T^3} (J_{eq} + K) \theta\cdot\theta
			+ \frac{1}{2} \frac{\tilde{\tau}^2}{\nu-\lambda} \int_{\T^3} \abs{K}^2
		\end{equation}
		while
		\begin{equation}
		\label{eq:def_summed_local_energy}
			\widetilde{\mathcal{E}}_{M,\text{loc}} (Z) \vcentcolon= \sum_{\abs{\alpha}_P \leqslant 2M} E_{K,\, \text{loc}} ( \partial^\alpha Z)
			\text{ and }
			\overline{\mathcal{E}}_{M,\text{loc}} (Z) \vcentcolon= \sum_{\abs{\alpha}_P \leqslant 2M} \norm{ \partial^\alpha Z}{L^2}^2.
		\end{equation}
	\end{definition}

	We now record a precise comparison of various versions of the energy.
	We emphasize here that \fref{Lemma}{lemma:exact_comparison_versions_E} below differs from \fref{Lemma}{lemma:comp_version_en}
	since the former is a consequence of \emph{smallness} whereas the latter is as consequence of \emph{regularity}.

	\begin{lemma}[Comparisons of the different verions of the local energies]
	\label{lemma:exact_comparison_versions_E}
		Let $ \overline{\mathcal{E}}_{M,\text{loc}} $ and $ \widetilde{\mathcal{E}}_{M,\text{loc}} $ be defined as in \fref{Definition}{def:versions_local_energy}.
		There exist constants $\tilde{c}_E, \widetilde{C}_E > 0$ such that if $ \normtyp{ K }{L}{\infty} < \frac{\lambda}{2}$
		then we have the estimate
		$
			\tilde{c}_E \overline{\mathcal{E}}_{M,\text{loc}}
			\leqslant \widetilde{\mathcal{E}}_{M,\text{loc}}
			\leqslant \widetilde{C}_E \overline{\mathcal{E}}_{M,\text{loc}}.
		$
	\end{lemma}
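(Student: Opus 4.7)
The plan is to reduce the comparison to the pointwise spectral bounds on the weight matrix $J_{eq}+K$ and then sum over multi-indices. The key observation is that $\widetilde{\mathcal{E}}_{M,\text{loc}}$ and $\overline{\mathcal{E}}_{M,\text{loc}}$ differ only in the middle term, where the former carries the weight $J_{eq}+K$ on $\partial^\alpha \theta\cdot\partial^\alpha\theta$ while the latter has the plain Euclidean inner product. The $u$- and $K$-contributions already match up to universal constants.

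First I would pin down the pointwise spectrum of $J_{eq}+K$. Since $J_{eq} = \diag(\lambda,\lambda,\nu)$ has smallest eigenvalue $\lambda$ and largest eigenvalue $\nu$, and since $K$ is (almost everywhere) symmetric with $\norm{K}{L^\infty} < \lambda/2$, Weyl's inequality yields that at almost every $x\in\T^3$ the symmetric matrix $(J_{eq}+K)(x)$ has spectrum contained in $[\lambda/2,\, \nu+\lambda/2]$. Consequently, for every $v\in\R^3$,
\begin{equation*}
    \tfrac{\lambda}{2}\abs{v}^2 \leqslant (J_{eq}+K)v\cdot v \leqslant \bigl(\nu+\tfrac{\lambda}{2}\bigr)\abs{v}^2
    \quad \text{a.e. in } \T^3.
\end{equation*}

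Next, I would apply this pointwise inequality with $v = \partial^\alpha\theta(x)$ and integrate over $\T^3$ to obtain
\begin{equation*}
    \tfrac{\lambda}{2}\normtyp{\partial^\alpha\theta}{L}{2}^2
    \leqslant \int_{\T^3} (J_{eq}+K)\partial^\alpha\theta\cdot\partial^\alpha\theta
    \leqslant \bigl(\nu+\tfrac{\lambda}{2}\bigr)\normtyp{\partial^\alpha\theta}{L}{2}^2.
\end{equation*}
Since the $u$- and $K$-pieces of $E_{K,\text{loc}}(\partial^\alpha Z)$ are already equal to $\tfrac{1}{2}\normtyp{\partial^\alpha u}{L}{2}^2$ and $\tfrac{1}{2}\frac{\tilde{\tau}^2}{\nu-\lambda}\normtyp{\partial^\alpha K}{L}{2}^2$ respectively, we obtain the termwise bound
\begin{equation*}
    \tilde{c}_E \norm{\partial^\alpha Z}{L^2}^2 \leqslant E_{K,\text{loc}}(\partial^\alpha Z) \leqslant \widetilde{C}_E \norm{\partial^\alpha Z}{L^2}^2
\end{equation*}
with the explicit choices $\tilde{c}_E = \tfrac{1}{2}\min\bigl(1,\, \lambda/2,\, \tfrac{\tilde{\tau}^2}{\nu-\lambda}\bigr)$ and $\widetilde{C}_E = \tfrac{1}{2}\max\bigl(1,\, \nu+\lambda/2,\, \tfrac{\tilde{\tau}^2}{\nu-\lambda}\bigr)$. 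Summing over $\abs{\alpha}_P \leqslant 2M$ yields the desired comparison.

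There is no serious obstacle here, in contrast with \fref{Lemma}{lemma:comp_version_en}: there the comparison constants were obtained by propagating the spectrum of $J$ via the advection-rotation equation, whereas here the smallness assumption $\norm{K}{L^\infty} < \lambda/2$ directly delivers the pointwise spectral bound on $J_{eq}+K$ without invoking any dynamical information. The only subtle point worth flagging is that the parameter $K$ appearing as a weight inside $E_{K,\text{loc}}(\partial^\alpha Z)$ is the full $K$ (not $\partial^\alpha K$), so the positivity needed to run the argument is only required of $J_{eq}+K$ itself.
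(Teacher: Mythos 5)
Your proof is correct and follows essentially the same route as the paper: the paper's proof is a two-sentence appeal to the fact that $\norm{K}{L^\infty} < \lambda/2$ forces the spectrum of $J_{eq}+K$ into $[\lambda/2,\,\nu+\lambda/2]$, after which it simply references the argument of Lemma \ref{lemma:comp_version_en}; you fill in exactly those details (Weyl's inequality for the spectral containment, pointwise integration, and the explicit constants $\tilde{c}_E = \tfrac{1}{2}\min(1,\lambda/2,\tfrac{\tilde\tau^2}{\nu-\lambda})$, $\widetilde{C}_E = \tfrac{1}{2}\max(1,\nu+\lambda/2,\tfrac{\tilde\tau^2}{\nu-\lambda})$), and you correctly flag that the weight in $E_{K,\text{loc}}(\partial^\alpha Z)$ is the undifferentiated $K$ carried in the subscript rather than $\partial^\alpha K$.
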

	\begin{proof}
		The key observation is that since the spectrum of $J_{eq}$ is $\cbrac{\lambda, \lambda, \nu}$, if $ \normtyp{ K }{L}{\infty} < \frac{\lambda}{2}$
		then the spectrum of $J_{eq} + K$ is contained in $\brac{ \lambda/2, \nu + \lambda/2}$.
		The claim then follows as in \fref{Lemma}{lemma:comp_version_en}.
	\end{proof}

	We now record an elementary lemma which is crucial in deriving the energy-dissipation relation associated with the approximate system.
	Indeed, \fref{Lemma}{lemma:dual_charac_proj_op} below is precisely what justifies approximating $K$ with twice as many Fourier modes as the other variables.

	\begin{lemma}[Finite Fourier mode cut-off of products]
	\label{lemma:dual_charac_proj_op}
		For any $M\in L^2 \brac{\T^3;\, \R^{3\times 3}}$ and any $L^2$ vector field $v\in\im P_n$, if $P_{2n} M = 0$ then $P_n (Mv) = 0$.
	\end{lemma}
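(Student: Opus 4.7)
The plan is to pass to the Fourier side and exploit the fact that, under the stated hypothesis, every nonzero frequency contribution to $M$ is ``too high'' to survive the convolution with the frequencies available to $v$, once further truncated by $P_n$.

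Concretely, I would first recall that on $\T^3$ pointwise multiplication becomes convolution of Fourier coefficients, so that for every $k\in\Z^3$
\begin{equation*}
    \widehat{Mv}(k) \;=\; \sum_{j\in\Z^3} \hat{M}(k-j)\,\hat{v}(j).
\end{equation*}
The hypothesis $v\in\im P_n$ means $\hat{v}(j)=0$ whenever $|j|>n$, so the sum above is actually restricted to $|j|\leqslant n$. The hypothesis $P_{2n}M=0$, on the other hand, means $\mathbbm{1}(|\ell|\leqslant 2n)\hat{M}(\ell)=0$ for every $\ell$, i.e.\ $\hat{M}(\ell)=0$ whenever $|\ell|\leqslant 2n$.

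Now to show $P_n(Mv)=0$ it suffices to check that $\widehat{Mv}(k)=0$ for every $|k|\leqslant n$, since $\widehat{P_n(Mv)}(k)$ vanishes by definition for $|k|>n$. Fix such a $k$. For any $j$ appearing in the (restricted) sum, $|j|\leqslant n$, and the triangle inequality yields
\begin{equation*}
    |k-j|\;\leqslant\; |k|+|j|\;\leqslant\; 2n,
\end{equation*}
so $\hat{M}(k-j)=0$ by the hypothesis on $M$. Hence every term in the convolution vanishes, giving $\widehat{Mv}(k)=0$ as desired.

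There is no genuine obstacle in this argument — the doubling in the index of $P_{2n}$ relative to $P_n$ is exactly what is needed so that the triangle inequality closes the frequency budget. I would present the proof as a short Fourier computation in at most a few lines, and only remark (perhaps in a parenthetical sentence afterwards) on the structural reason this is used in the Galerkin scheme: when the unknowns $u_n,\theta_n$ live in $\im P_n$ but $K_n$ is allowed to live in $\im P_{2n}$, the lemma guarantees that products of the form $K_n \theta_n$ interact correctly with the projection $P_n$, which is what makes the energy identities in the approximate system clean.
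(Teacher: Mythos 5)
Your proof is correct and takes essentially the same approach as the paper's: both pass to Fourier coefficients, write $\widehat{Mv}(k)$ as a convolution, and use a triangle inequality to conclude that every term vanishes when $|k|\leqslant n$. The only (cosmetic) difference is the parametrization of the convolution index: you sum over $j$ with $\hat{M}(k-j)\hat{v}(j)$, restrict to $|j|\leqslant n$, and invoke the forward triangle inequality $|k-j|\leqslant 2n$, whereas the paper sums over $l$ with $\hat{M}(l)\hat{v}(k-l)$, restricts to $|l|>2n$, and invokes the reverse triangle inequality $|k-l|>n$.
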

	\begin{proof}
		Suppose that $P_{2n} M = 0$ and that $v \in V_n$.
		Then
		$
			{\brac{ Mv }}^\wedge (k)
			= \sum_{\abs{l} > 2n} \hat{M} (l) \hat{v} (k-l).
		$
		In particular, if $\abs{k}\leqslant n$ and $\abs{l} > 2n$ then
		$
			\abs{k - l}
			> n
		$
		such that, since $v\in V_n$, $\hat{v}_j (k - l) = 0$.
		This shows that ${\brac{ Mv }}^\wedge (k) = 0$ for any $\abs{k} \leqslant n$,
		i.e. indeed $P_n( Mv ) = 0$.
	\end{proof}

	We are now equipped to state and prove the energy-dissipation relation associated with the approximate system.
	In particular, as discussed in more detail in \fref{Section}{sec:discuss_lwp},
	note that in the approximate system considered below in \fref{Proposition}{prop:gen_ED_approx_system} we use regular time derivatives for the unknowns $v$ and $H$
	and an advective time derivative for $\phi$.
	We could have used advective derivatives for $v$ and $H$, but this formulation makes it more clear which nonlinear structure is optional and which is not.
	In particular, as discussed in \fref{Section}{sec:discuss_lwp}, the nonlinear structure in the equation governing the dynamics of $\phi$ is essential in
	order to obtain a good energy-dissipation relation.

	\begin{prop}[The generic energy-dissipation relation associated with the approximate system]
	\label{prop:gen_ED_approx_system}
		Suppose that the unknowns $(v, \phi, H)$ and $b$, where $b = (H_{12}, H_{13})$ and $\Phi = \ten \phi$
		and the constraint variables $(u, \theta, K)$ satisfy
		\begin{equation*}
			\left\{
			\begin{aligned}
				&\pdt v - (\nabla\cdot T)(v, \phi) = f_1,\\
				&(J_{eq} + P_n \circ K) \pdt \phi + P_n \brac{ (J_{eq} + K) (u\cdot\nabla) \phi } + P_n \brac{ (\omega_{eq} + \theta) \times (J_{eq} + K) \phi }
					\\&\qquad
					+ \tilde{\tau}^2 \tilde{b}^\perp + \phi \times J_{eq} \omega_{eq} - 2 \vc T(v, \phi) - (\nabla\cdot M)(\phi) = f_2, \text{ and }\\
				&\pdt H - \sbrac{\Phi, J_{eq}} - \sbrac{\Omega_{eq}, H} = F_3,
			\end{aligned}
			\right.
		\end{equation*}
		subject to the constraints
		\begin{subnumcases}{}
			\nabla\cdot u = 0 \text{ and }
			\label{eq:gen_ED_approx_sys_constraint_div_free}\\
			\pdt K + P_{2n} (u\cdot\nabla K) = P_{2n} \brac{\sbrac{\Omega_{eq} + \Theta, J_{eq} + K}},
			\label{eq:gen_ED_approx_sys_constraint_advection_rotation_K}
		\end{subnumcases}
		where $(v, \phi, H) \in V_n$ and $K \in \im P_{2n}$.
		Then the following energy-dissipation identity holds:
		\begin{align*}
			\frac{\mathrm{d}}{\mathrm{d}t}\brac{
				\int_{\T^3} \frac{1}{2} \abs{v}^2
				+ \int_{\T^3} \frac{1}{2} (J_{eq} + K) \phi \cdot \phi
				+ \int_{\T^3} \frac{1}{2} \frac{\tilde{\tau}^2}{\nu-\lambda} \abs{H}^2
			}
			+ D(v,\phi)
			= \int_{\T^3} f_1 \cdot v
			+ \int_{\T^3} f_2 \cdot \phi
			+ \int_{\T^3} \check{\tau} F_3 : H,
		\end{align*}
		where $\check{\tau} = \frac{1}{2} \frac{\tilde{\tau}^2}{\nu-\lambda}$.
		Recall that the dissipation $D$ is defined in \eqref{eq:not_dissip}.
	\end{prop}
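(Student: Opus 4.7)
The plan is to adapt the proof of \fref{Proposition}{prop:gen_pert_ED_rel} by carefully tracking how the projections $P_n$ and $P_{2n}$ interact with the energy testing. The strategy is to test each of the three equations against the corresponding component of the natural energy, exploit the algebraic cancellations between the cross-coupling terms, and recognize the sum of the dissipative contributions as $D(v,\phi)$ via the orthogonal decomposition $\R^{3\times 3}=\R I\oplus\dev(3)\oplus\mathrm{Skew}(3)$ exactly as in the unprojected case. The first step is routine: test the $v$-equation against $v$; since $(v,\phi,H)\in V_n$ implies $\nabla\cdot v=0$, an integration by parts gives
\[
\tfrac{d}{dt}\!\int_{\T^3}\tfrac{1}{2}|v|^2 + \int_{\T^3} T(v,\phi):\nabla v = \int_{\T^3} f_1\cdot v.
\]

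Second, I would compute $\frac{d}{dt}\int\tfrac12(J_{eq}+K)\phi\cdot\phi = \tfrac12\int\pdt K\,\phi\cdot\phi + \int(J_{eq}+K)\pdt\phi\cdot\phi$ and handle each piece separately. For the $\pdt\phi$ piece, I would test the $\phi$-equation against $\phi$. The key observation is that since $\phi\in\im P_n$ and $P_n$ is $L^2$-self-adjoint, every occurrence of $P_n$ on the left-hand side of the equation can be stripped when tested against $\phi$ (e.g.\ $\int(P_n\circ K)\pdt\phi\cdot\phi = \int K\pdt\phi\cdot P_n\phi = \int K\pdt\phi\cdot\phi$), so the tested equation becomes formally identical to the unprojected one. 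For the $\pdt K$ piece, I would invoke the constraint equation for $K$, whose right-hand side involves $P_{2n}$. The key algebraic fact is that $\phi\otimes\phi$ has Fourier support in $\{|k|\le 2n\}$ whenever $\phi\in\im P_n$, so $P_{2n}(\phi\otimes\phi)=\phi\otimes\phi$; equivalently, by \fref{Lemma}{lemma:dual_charac_proj_op} any matrix field $M$ with $P_{2n}M=0$ satisfies $P_n(M\phi)=0$ and hence $\int M\phi\cdot\phi=\int M\phi\cdot P_n\phi=0$. This lets me remove $P_{2n}$ from the $K$-constraint when testing against $\phi\cdot\phi$, after which \fref{Lemma}{lemma:identity_comm_A_S_and_sym_A_cross_S} yields $\tfrac12[\Omega_{eq}+\Theta,J_{eq}+K]\phi\cdot\phi = ((\omega_{eq}+\theta)\times(J_{eq}+K))\phi\cdot\phi$, which cancels the corresponding precession contribution from the tested $\phi$-equation exactly as in the original proof.

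Third, I would test the $H$-equation against $H$ in the Frobenius inner product. The term $\int[\Omega_{eq},H]:H$ vanishes because commutators with antisymmetric matrices act antisymmetrically on $\sym(3)$ (c.f.\ \fref{Lemma}{lemma:commut_are_antisym_maps_on_space_sym_matrices}), while $[\Phi,J_{eq}]:H$, evaluated via the block form of \fref{Lemma}{lemma:block_form_Omega_J}, produces a cross-coupling proportional to $\int \bar\phi^\perp\cdot b$. Scaling the resulting identity by the coefficient that appears in front of $|H|^2$ in the energy, this term cancels against $\tilde\tau^2\int\tilde b^\perp\cdot\phi$ from the tested $\phi$-equation, mirroring the cancellation between \eqref{eq:gen_pert_ED_rel_proof_2} and \eqref{eq:gen_pert_ED_rel_proof_3} in the unprojected proof. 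Summing the three scaled identities then delivers the claim. The only genuinely new ingredient relative to \fref{Proposition}{prop:gen_pert_ED_rel} is the $P_{2n}$-duality step, and this is precisely why the Galerkin scheme uses twice as many Fourier modes for $K$ as for $v$ and $\phi$; I expect this to be the main (mild) obstacle, since without that asymmetry $\int\pdt K\,\phi\cdot\phi$ could not be cleanly related to the commutator $[\Omega_{eq}+\Theta,J_{eq}+K]$, and the whole energy-dissipation identity would collapse.
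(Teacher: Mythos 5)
Your proposal is correct and follows essentially the same route as the paper: test each equation against its natural multiplier, use \fref{Lemma}{lemma:dual_charac_proj_op} to strip $P_{2n}$ from the $K$-constraint when paired with $\phi\otimes\phi$, invoke \fref{Lemma}{lemma:identity_comm_A_S_and_sym_A_cross_S} for the precession cancellation, and use \fref{Lemmas}{lemma:commut_are_antisym_maps_on_space_sym_matrices} and \ref{lemma:block_form_Omega_J} to close the $H$-energy and produce the $b$--$\bar\phi^\perp$ cross term. The only cosmetic difference is that the paper regroups $\pdt K$ and $(J_{eq}+K)\pdt\phi$ immediately into advective derivatives $(\pdt+u\cdot\nabla)$ and lets the constraint $\nabla\cdot u=0$ do the rest, whereas you keep the bare $\pdt$ and verify that the two residual advective integrals combine into a total derivative of $(J_{eq}+K)\phi\cdot\phi$ and vanish; these are the same computation.
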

	\begin{proof}
		We begin by computing the time derivative of the kinetic energy due to $u$:
		\begin{equation*}
			\frac{\mathrm{d}}{\mathrm{d}t} \int_{\T^3} \frac{1}{2} \abs{v}^2
			= \int_{\T^3} \brac{\pdt v} \cdot v
			= \int_{\T^3} (\nabla\cdot T)(v, \phi) \cdot v + \int_{\T^3} f_1 \cdot v
			= - \int_{\T^3} T(v, \phi) : \nabla v + \int_{\T^3} f_1 \cdot v.
		\end{equation*}
		We now compute the time derivative of the kinetic energy due to $\omega$.
		In light of \eqref{eq:gen_ED_approx_sys_constraint_div_free} we see that
		\begin{equation*}
			\frac{\mathrm{d}}{\mathrm{d}t} \int_{\T^3} \frac{1}{2} (J_{eq} + K) \phi \cdot \phi
			= \int_{\T^3} \frac{1}{2} \brac{ (\pdt + u\cdot\nabla) K} \phi \cdot \phi
			+ \int_{\T^3} (J_{eq} + K) (\pdt + u\cdot\nabla) \phi \cdot \phi
			=\vcentcolon \RN{1} + \RN{2}
		\end{equation*}
		where we may combine \eqref{eq:gen_ED_approx_sys_constraint_advection_rotation_K} and \fref{Lemma}{lemma:dual_charac_proj_op},
		and use \fref{Lemma}{lemma:identity_comm_A_S_and_sym_A_cross_S} to see that
		\begin{align*}
			\RN{1}
			= \int_{\T^3} \frac{1}{2} P_n \brac{ \brac{ \brac{ \pdt + u\cdot\nabla } K } \phi } \cdot \phi
			= \int_{\T^3} \frac{1}{2} P_n \brac{ \sbrac{ \Omega_{eq} + \Theta, J_{eq} + K } \phi } \cdot \phi
			= \int_{\T^3} \frac{1}{2} \sbrac{\Omega_{eq} + \Theta, J_{eq} + K } \phi \cdot \phi
		\\
			= \int_{\T^3} (\omega_{eq} + \theta) \times (J_{eq} + K) \phi \cdot \phi
			= \int_{\T^3} P_n \brac{ (\omega_{eq} + \theta) \times (J_{eq} + K) \phi } \cdot \phi
		\end{align*}
		and where we may compute directly that
		\begin{equation*}
			\RN{2}
			= \int_{\T^3} P_n \brac{ (J_{eq} + K) (\pdt + u\cdot\nabla) \phi } \cdot\phi
			= \int_{\T^3} (J_{eq} + P_n \circ K) \pdt \phi + P_n \brac{ (J_{eq} + K) (u\cdot\nabla) \phi } \phi.
		\end{equation*}
		Adding $\RN{1}$ and $\RN{2}$ together therefore tells us that
		\begin{align*}
			\frac{\mathrm{d}}{\mathrm{d}t} \int_{\T^3} \frac{1}{2} (J_{eq} + K) \phi \cdot \phi
			= - \int_{\T^3} \tilde{\tau}^2 \tilde{b}^\perp \cdot \phi
				+ \int_{\T^3} 2\vc T(v, \phi) \cdot \phi
				+ \int_{\T^3} (\nabla\cdot M)(\phi) \cdot \phi
				+ \int_{\T^3} f_2 \cdot \phi
		\\
			= \int_{\T^3} \tilde{\tau}^2 b \cdot \bar{\phi}^\perp
				+ \int_{\T^3} T(v, \phi) : \Phi
				- \int_{\T^3} M(\phi) : \nabla\phi
				+ \int_{\T^3} f_2 \cdot \phi.
		\end{align*}
		Finally we compute the energetic contribution from $H$. As a preliminary, note that we can deduce from \fref{Lemma}{lemma:block_form_Omega_J}
		that
		\begin{equation*}
			\sbrac{\Phi, J_{eq}} = - (\nu - \lambda) \begin{pmatrix}
				0 & \bar{\phi}^\perp\\
				{\brac{ \bar{\phi}^\perp }}^T & 0
			\end{pmatrix}.
		\end{equation*}
		We may therefore compute that, in light of the equation above and \fref{Lemma}{lemma:commut_are_antisym_maps_on_space_sym_matrices},
		\begin{align*}
			\frac{\mathrm{d}}{\mathrm{d}t} \int_{\T^3} \frac{1}{2} \abs{H}^2
			= \int_{\T^3} (\pdt H) : H
			= \int_{\T^3} \sbrac{\Phi, J_{eq}} : H
				+ \int_{\T^3} \sbrac{\Omega_{eq}, H} : H
				+ \int_{\T^3} F_3 : H
		\\
			= - (\nu - \lambda) \int_{\T^3} \bar{\phi}^\perp \cdot b
				+ \int_{\T^3} F_3 : H.
		\end{align*}
		To conclude we multiply this last identity by $\frac{\tilde{\tau}^2}{\nu-\lambda}$
		and add it to the identities obtained above for the evolution of the different components of the kinetic energy.
		Upon noting that 
		\begin{equation*}
			\int_{\mathbb{T}^3} T(v, \phi) : (\nabla v - \Phi) + \int_{\mathbb{T}^3} M(\phi) : \nabla\phi
			= D(v, \phi)
		\end{equation*}
		(c.f. \fref{Proposition}{prop:gen_pert_ED_rel} for details) we deduce the claim.
	\end{proof}

	With the energy-dissipation in hand we may now tackle the nonlinear interactions.
	We begin by recording the precise form of the interactions.
	Recall that $E_{K, loc}$ and $D$ are defined in \eqref{eq:def_local_energy} and \eqref{eq:not_dissip}, respectively.

	\begin{lemma}[Recording the form of the local interactions]
	\label{lemma:record_form_local_interactions}
		Let $Z = (u, \theta, K) \in V_n$ solve \eqref{eq:ref_approx_sys}.
		Then for every multi-index $\alpha \in \N^{1+3}$ we have that
		$
			\frac{\mathrm{d}}{\mathrm{d}t} E_{K,\,\text{loc}} ( \partial^\alpha Z ) + D ( \partial^\alpha u, \partial^\alpha \theta ) = \mathcal{N}^\alpha
		$
		where
		\begin{align*}
			\mathcal{N}^\alpha =
			- \int_{\T^3} \partial^\alpha ( u\cdot\nabla u ) \cdot \partial^\alpha u
			- \int_{\T^3} \partial^\alpha ( \theta \times K \omega_{eq} ) \cdot \partial^\alpha \theta
			- \check{\tau} \int_{\T^3} \partial^\alpha ( u\cdot\nabla K ) : \partial^\alpha K
			+ \check{\tau} \int_{\T^3} \partial^\alpha ( \sbrac{\Theta, K} ) : \partial^\alpha K
		\\
			+ \int_{\T^3} \sbrac{K \pdt, \partial^\alpha } \theta \cdot \partial^\alpha \theta
			- \int_{\T^3} \sbrac{ (J_{eq} + K) (u\cdot\nabla), \partial^\alpha } \theta \cdot \partial^\alpha \theta
			- \int_{\T^3} \sbrac{ (\omega_{eq} + \theta) \times (J_{eq} + K), \partial^\alpha } \theta \cdot \partial^\alpha \theta.
		\end{align*}
		for $\check{\tau} \vcentcolon= \frac{\tilde{\tau}^2}{\nu-\lambda}$.
	\end{lemma}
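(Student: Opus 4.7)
The plan is to apply $\partial^\alpha$ to the approximate system \eqref{eq:ref_approx_sys} and recast the resulting equations in the form required by the generic energy-dissipation relation of \fref{Proposition}{prop:gen_ED_approx_system}, with $(v,\phi,H) = \partial^\alpha(u,\theta,K)$ viewed as unknowns and $(u,\theta,K)$ themselves as the constraint variables. Doing so identifies explicit forcing terms $(f_1, f_2, F_3)$ built from commutators between $\partial^\alpha$ and the variable-coefficient operators in the system, together with the genuine nonlinearities already sitting on the right-hand side of \eqref{eq:ref_approx_sys}. Invoking \fref{Proposition}{prop:gen_ED_approx_system} on the differentiated system then yields the claimed identity.

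For the $u$-equation, since $\partial^\alpha$ commutes with $\pdt$, with the constant-coefficient linear operator $(\nabla\cdot T)(\,\cdot\,,\,\cdot\,)$, and with the projection $P_n \mathbb{P}_L$, we obtain $\pdt \partial^\alpha u - (\nabla\cdot T)(\partial^\alpha u,\partial^\alpha\theta) = -P_n \mathbb{P}_L\, \partial^\alpha(u\cdot\nabla u)$. Pairing against $\partial^\alpha u \in V_n$, which is divergence-free and mean-zero, lets us drop $P_n \mathbb{P}_L$ in the integral and recover the first term of $\mathcal{N}^\alpha$. For the $K$-equation the situation is equally clean: $\partial^\alpha$ commutes with the constant operators $\sbrac{\,\cdot\,,J_{eq}}$ and $\sbrac{\Omega_{eq},\,\cdot\,}$, so we may read off $F_3 = -P_{2n}\,\partial^\alpha(u\cdot\nabla K) + P_{2n}\,\partial^\alpha\sbrac{\Theta,K}$ directly, and since $\partial^\alpha K \in \im P_{2n}$ the outer projection can be dropped when integrating against $\partial^\alpha K$, producing the third and fourth terms of $\mathcal{N}^\alpha$.

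The $\theta$-equation requires more careful bookkeeping since each nonconstant term on its left-hand side must be split into a constant-coefficient piece plus a commutator remainder. Using that $J_{eq}$ and the operator $\,\cdot\,\times J_{eq}\omega_{eq}$ have constant coefficients, we commute $\partial^\alpha$ through $(J_{eq} + P_n \circ K)\pdt$, through $P_n\sbrac{(J_{eq}+K)(u\cdot\nabla)}$, and through the precession operator $P_n\sbrac{(\omega_{eq}+\theta)\times(J_{eq}+K)}$, gathering the constant-coefficient contributions on the left so as to match the left-hand side of the system in \fref{Proposition}{prop:gen_ED_approx_system} and dumping all commutator remainders, together with the original right-hand side $-P_n(\theta\times K\omega_{eq})$, into $f_2$. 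Testing $f_2$ against $\partial^\alpha \theta \in V_n$ and using the identity $\int_{\T^3} P_n(F)\cdot \partial^\alpha\theta = \int_{\T^3} F\cdot \partial^\alpha\theta$ lets us strip off each outer $P_n$, at which point the three $\theta$-commutators and the precession-nonlinearity term in $\mathcal{N}^\alpha$ appear verbatim.

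The main obstacle is keeping careful track of the projections $P_n$ and $P_{2n}$ when extracting commutators from the $K$-dependent pieces of the $\theta$-equation. In particular, although $P_n \circ K$ does not commute with $\partial^\alpha$ in any simple way, the identity $P_n \partial^\alpha = \partial^\alpha P_n$ together with the inclusion $\partial^\alpha\theta \in V_n$ allows us to freely interchange $P_n(K\,\cdot\,)$ with $K(\,\cdot\,)$ inside any $L^2$ pairing with $\partial^\alpha\theta$, so that each commutator $\sbrac{P_n\circ K\pdt, \partial^\alpha}\theta$ reduces in the pairing to $\sbrac{K\pdt, \partial^\alpha}\theta$, as stated. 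Once this reduction is verified for each of the three commutators and the precession term, the identity follows directly from \fref{Proposition}{prop:gen_ED_approx_system} applied to the differentiated system.
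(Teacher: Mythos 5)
Your proof is correct and takes essentially the same route as the paper: apply $\partial^\alpha$ to \eqref{eq:ref_approx_sys}, view $\partial^\alpha Z$ as the unknowns in \fref{Proposition}{prop:gen_ED_approx_system} with the commutators $\sbrac{\widetilde{T}_n(K)\pdt, \partial^\alpha}Z - \sbrac{\Leb_{Z,n}, \partial^\alpha}Z$ plus $\partial^\alpha(N_n(Z))$ as forcing, split $\Leb_{Z,n}$ into its constant-coefficient part and remainder, and strip the projections using $P_n\partial^\alpha = \partial^\alpha P_n$ together with self-adjointness of the projections against $\partial^\alpha Z \in V_n$. Your explicit bookkeeping of the projections is simply a more detailed rendering of what the paper's terse proof leaves implicit.
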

	\begin{proof}
		If $Z$ solves \eqref{eq:ref_approx_sys} then, for any multi-index $\alpha$, $ \partial^\alpha Z$ solves
		\begin{equation*}
			\widetilde{T}_n (K) \pdt \partial^\alpha Z - \Leb_{Z, n} \partial^\alpha Z
			= \partial^\alpha \brac{ N_n (Z) }
			+ \sbrac{ \widetilde{T}_n (K) \pdt, \partial^\alpha } Z
			- \sbrac{ \Leb_{Z, n}, \partial^\alpha } Z
			=\vcentcolon F_n^\alpha
		\end{equation*}
		subject to
		\begin{equation*}
			\nabla\cdot u = 0
			\text{ and } 
			\pdt K + P_{2n} (u\cdot\nabla K) = P_{2n} ( \sbrac{ \Omega_{eq} + \Theta, J_{eq} + K })
		\end{equation*}
		and hence \fref{Proposition}{prop:gen_ED_approx_system} tells us that
		\begin{equation*}
			\frac{\mathrm{d}}{\mathrm{d}t} E_{K,\, \text{loc}} ( \partial^\alpha Z) + D( \partial^\alpha u, \partial^\alpha \theta)
			= \int_{\T^3} F_n^\alpha \cdot \mathfrak{C} \partial^\alpha Z
			=\vcentcolon \mathcal{N}^\alpha.
		\end{equation*}
		where $\mathfrak{C} = I_3 \oplus I_3 \oplus \check{\tau} I_{3\times 3}$.
		To compute $\mathcal{N}^\alpha$ is suffices to use the fact that $P_n$ and $ \partial^\alpha $ commute,
		to recall that $\widetilde{T}_n (K) = I_3 \oplus (J_{eq} + P_n \circ K) \oplus I_{3\times 3}$,
		and to split $\Leb_{Z,n}$ into its part with constant coefficients and the remainder as is done in \eqref{eq:split_Leb_Z_n}.
	\end{proof}

	Having recorded the precise form of the interactions we estimate them.

	\begin{lemma}[Estimates of the local interactions]
	\label{lemma:est_local_interactions}
		Let $M\geqslant 4$ be an integer and let
		$\mathcal{N} \vcentcolon= \sum_{\abs{\alpha}_P \leqslant 2M} \mathcal{N}^\alpha$ for $\mathcal{N}^\alpha$
		as in \fref{Lemma}{lemma:record_form_local_interactions}.
		The following estimate holds:
		\begin{equation*}
			\abs{\mathcal{N}}
			\lesssim \normtyp{ \nabla K }{L}{\infty} \normtyp{(u,\theta)}{P}{2M+1}^2
				+ \normtyp{(u, \theta, K)}{P}{2M}^3
				+ \normtyp{(u, \theta, K)}{P}{2M}^4.
		\end{equation*}
	\end{lemma}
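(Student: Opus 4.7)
The plan is to estimate each of the seven pieces $\mathcal{N}_1^\alpha,\dots,\mathcal{N}_7^\alpha$ of the integrand $\mathcal{N}^\alpha$ separately by Leibniz-expanding each into a finite sum of multilinear integrals, bounding each term by a tripartite or quadripartite H\"older split, and then summing over multi-indices with $\abs{\alpha}_P\leqslant 2M$. The three terms on the right-hand side of the claimed bound correspond to three distinct regimes of the expansion: the cubic $\normtyp{(u,\theta,K)}{P}{2M}^3$ arises from trilinear integrands produced by terms 1--4 and the constant-coefficient ($J_{eq}$) pieces of the commutators 5--7; the quartic $\normtyp{(u,\theta,K)}{P}{2M}^4$ arises from the genuinely bilinear pieces $\theta\times K$, $K(u\cdot\nabla)$ appearing in commutators 6--7 after full Leibniz expansion; and the dissipation-scaled $\normtyp{\nabla K}{L}{\infty}\normtyp{(u,\theta)}{P}{2M+1}^2$ comes from a single critical subcase inside those same commutators in which exactly one purely spatial derivative lands on $K$.

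Concretely, for each interaction $\mathcal{N}_i^\alpha$, apply the product/commutator Leibniz rule to rewrite it as a sum of integrals of the shape $\int_{\T^3}(\partial^{\beta_1}X_1)(\partial^{\beta_2}X_2)\cdots(\partial^\alpha Y)\,dx$ with $X_j,Y\in\{u,\theta,K\}$ and multi-indices satisfying $\sum_j\abs{\beta_j}_P \leqslant \abs{\alpha}_P\leqslant 2M$; for commutator-type pieces the combinatorics forces at least one $\beta_j$ with $\abs{\beta_j}_P\geqslant 1$, which eliminates the degenerate bilinear remainder. In each resulting integrand, single out the factor with the largest parabolic count, place it in $L^2$ and bound by $\normtyp{X}{P}{2M}$, and place every remaining factor in $L^\infty$ using the embedding $H^{2M-2}\hookrightarrow L^\infty$, valid on $\T^3$ precisely because $2M-2>3/2$ when $M\geqslant 4$. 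The upshot is that trilinear integrands yield $\normtyp{Z}{P}{2M}^3$, while quadrilinear integrands (which arise only from $\theta\times K$ and $K(u\cdot\nabla)$) yield $\normtyp{Z}{P}{2M}^4$.

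The single family of subterms that escapes this scheme comes from terms 6 and 7 when, inside the commutator involving $K$, exactly one spatial derivative lands on $K$ while all remaining derivatives stack onto the $\theta$-factor. These subterms have the schematic form $\int_{\T^3}(\partial_x K)(\partial^\gamma\theta)\cdot(\partial^\alpha\theta)$ with $\abs{\gamma}_P\leqslant 2M-1$ (and an analogous variant involving $u\cdot\nabla\partial^\gamma\theta$ for the $K(u\cdot\nabla)$ piece, which may be controlled in the same way after bounding $\normtyp{u}{L}{\infty}\lesssim\normtyp{Z}{P}{2M}$ and absorbing the result into the cubic contribution). Applying H\"older as $L^\infty\cdot L^2\cdot L^2$, pulling out $\normtyp{\nabla K}{L}{\infty}$ and bounding both $L^2$ factors by $\normtyp{\theta}{P}{2M}\leqslant\normtyp{(u,\theta)}{P}{2M+1}$, gives precisely the $\normtyp{\nabla K}{L}{\infty}\normtyp{(u,\theta)}{P}{2M+1}^2$ term in the conclusion.

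The main obstacle is the combinatorial bookkeeping: there are many subcases generated by the Leibniz expansions of seven distinct interactions, and one must verify in each of them that the factor selected for the $L^\infty$ estimate carries parabolic count at most $2M-2$, so that the Sobolev embedding $H^{2M-2}\hookrightarrow L^\infty$ is always legitimate, and that the only circumstance in which the $L^\infty$-norm of $\nabla K$ itself is unavoidable is the critical family isolated above. The hypothesis $M\geqslant 4$ is precisely what makes every embedding used in the proof sharp. Once the case analysis is complete, summing the resulting bounds over the finitely many $\alpha$ with $\abs{\alpha}_P\leqslant 2M$ produces the stated estimate.
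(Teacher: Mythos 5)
Your overall strategy—Leibniz-expand each commutator, apply H\"older, control low-derivative factors in $L^\infty$ via $H^{2M-2}\hookrightarrow L^\infty$—is the same as the paper's, and the classification into cubic, quartic, and dissipation-scaled contributions is the right skeleton. However, you have misidentified which interaction is responsible for the $\normtyp{\nabla K}{L}{\infty}\normtyp{(u,\theta)}{P}{2M+1}^2$ term, and in a way that would make the proof fail if carried out as described.

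The critical term is $\mathcal{N}_5^\alpha = \int_{\T^3}[K\pdt,\partial^\alpha]\theta\cdot\partial^\alpha\theta$, not $\mathcal{N}_6$ or $\mathcal{N}_7$. The reason is the temporal derivative sitting inside the bracket: parabolic counting assigns $\pdt$ a weight of $2$ and a spatial derivative a weight of $1$. When $\abs{\alpha}_P=2M$, Leibniz gives terms $(\partial^\beta K)(\pdt\partial^\gamma\theta)\cdot\partial^\alpha\theta$ with $\beta+\gamma=\alpha$, $\beta>0$, and when $\abs{\beta}=1$ (one spatial derivative on $K$) the factor $\pdt\partial^\gamma\theta$ has parabolic count $\abs{\gamma}_P+2 = 2M+1$. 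This factor cannot be placed in $L^2$ by the energy $\normtyp{\theta}{P}{2M}$ and cannot be placed in $L^\infty$ either, so one is forced to invoke the dissipation-level norm $\normtyp{\theta}{P}{2M+1}$, hence the $\normtyp{\nabla K}{L}{\infty}\normtyp{(u,\theta)}{P}{2M+1}^2$ contribution.

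By contrast, the subcase you isolated in $\mathcal{N}_6$ and $\mathcal{N}_7$ is in fact benign. In $\mathcal{N}_7 = \int_{\T^3}[(\omega_{eq}+\theta)\times(J_{eq}+K),\partial^\alpha]\theta\cdot\partial^\alpha\theta$ there are no derivatives inside the bracket, so after Leibniz expansion every $\theta$-factor has parabolic count at most $2M$. Similarly for $\mathcal{N}_6$, where $u\cdot\nabla$ adds a single spatial derivative: after removing at least one derivative from $\partial^\alpha$ to land on $K$ or $u$, the factor $\nabla\partial^\gamma\theta$ has parabolic count at most $2M$. Consequently, the integrand you wrote, $\int(\partial_x K)(\partial^\gamma\theta)\cdot(\partial^\alpha\theta)$ with $\abs{\gamma}_P\leqslant 2M-1$, is bounded by $\normtyp{\nabla K}{L}{\infty}\normtyp{\theta}{P}{2M}^2\lesssim\normtyp{K}{P}{2M}\normtyp{\theta}{P}{2M}^2$, and so absorbs into the cubic contribution; it never requires the dissipation-scaled bound. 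Your bookkeeping would therefore miss the place the $P^{2M+1}$ norm is actually needed.

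Notice also that $\mathcal{N}_5$ has no constant-coefficient ($J_{eq}$) piece—the commutator $[J_{eq}\pdt,\partial^\alpha]$ vanishes and only the $K$-part survives—so your description of the cubic contributions as arising from ``constant-coefficient pieces of the commutators 5--7'' is not accurate for term $5$ either.
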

	\begin{proof}
		Let us write the terms in $\mathcal{N}$ as $\mathcal{N}_1^\alpha,\,\cdots,\,\mathcal{N}_7^\alpha$, following the indexing of \fref{Lemma}{lemma:record_form_local_interactions}, such that
		\begin{equation}
			\mathcal{N} = \sum_{\abs{\alpha}_P \leqslant 2M} \mathcal{N}_1^\alpha + \cdots + \mathcal{N}_7^\alpha.
		\end{equation}
		We will estimate each of these seven terms in turn.
		First, however, we note that the interaction term
		$
			\mathcal{N}_5^\alpha = - \int_{\T^3} \sbrac{ K\pdt, \partial^\alpha } \theta \cdot \partial^\alpha \theta
		$
		bears a particular importance in this estimate. Indeed, due to the temporal derivative appearing in the commutator
		we must invoke a parabolic count of $2M+1$ derivatives acting on $\theta$, which gives rise to the term
		$ \normtyp{ \nabla K }{L}{\infty} \normtyp{(u,\theta,K)}{P}{2M+1}^2$ in the estimate.
		Most notably, $\mathcal{N}_5$ is the only interaction which requires us to invoke a parabolic count of $2M+1$ derivatives.

		We now go through the estimates of the interactions one by one -- although due to the great similarity in estimating many terms we will only provide details for a few of the interactions.
		\paragraph{\textbf{Estimating $\mathcal{N}_1$.}}
			By applying the Leibniz rule we see that
			\begin{equation*}
				-\mathcal{N}_1^\alpha = \sum_{\substack{ \beta+\gamma = \alpha \\ \abs{\alpha}_P \leqslant 2M }}
				\binom{\alpha}{\beta} \int_{\T^3} ( \partial^\beta u\cdot\nabla \partial^\gamma u) \cdot \partial^\alpha u
			\end{equation*}
			where we have used the fact that $u$ is divergence-free to deduce that
			\begin{equation*}
				\int_{\T^3} \brac{u\cdot\nabla \partial^\alpha u} \cdot \partial^\alpha u
				= - \frac{1}{2} \int_{\T^3} (\nabla\cdot u) \abs{ \partial^\alpha u}^2 = 0.
			\end{equation*}
			To estimate $\mathcal{N}_1^\alpha$ it then suffices to perform a ``hands-on high-low'' estimate.
			Since $M \geqslant 3$ we note that $(2M-2) + (2M-3) > 2M$ and hence either
			$\abs{\beta}_P \leqslant 2M-2$ or $\abs{\gamma}{P} \geqslant 2M-3$, so we may estimate
			\begin{equation*}
				\abs{\mathcal{N}_1^\alpha}
				\lesssim \sum_{\substack{\cdots \\ \abs{\beta}_P \leqslant 2M-2}}
					\normtypns{ \partial^\beta u}{L}{\infty} \normtyp{ \nabla \partial^\gamma u}{L}{2} \normtyp{ \partial^\alpha u}{L}{2}
					+ \sum_{\substack{ \cdots \\ \abs{\gamma}_P \leqslant 2M-3 }}
					\normtypns{ \partial^\beta u}{L}{2} \normtyp{ \nabla \partial^\gamma u}{L}{\infty} \normtyp{ \partial^\alpha u}{L}{2}
				\lesssim \normtyp{u}{P}{2M}^3.
			\end{equation*}
		\paragraph{\textbf{Estimating $\mathcal{N}_2$, $\mathcal{N}_3$, and $\mathcal{N}_4$.}}
			We proceed as we did for $\mathcal{N}_1$ and obtain that
			\begin{equation*}
				\abs{\mathcal{N}_2^\alpha} \lesssim \normtyp{K}{P}{2M} \normtyp{\theta}{P}{2M}^2,\,
				\abs{\mathcal{N}_3^\alpha} \lesssim \normtyp{K}{P}{2M}^2 \normtyp{u}{P}{2M} \text{ and } 
				\abs{\mathcal{N}_4^\alpha} \lesssim \normtyp{K}{P}{2M}^2 \normtyp{\theta}{P}{2M}.
			\end{equation*}
		\paragraph{\textbf{Estimating $\mathcal{N}_5$.}}
			We split $\mathcal{N}_5^\alpha$ into two pieces:
			\begin{equation*}
				\mathcal{N}_5^\alpha = \sum_{\substack{ \beta+\gamma = \alpha \\ \beta > 0 }}
				\int_{\T^3} ( \partial^\beta K) \brac{ \pdt \partial^\gamma \theta} \cdot \brac{ \partial^\alpha \theta}
				= \sum_{\substack{ \cdots \\ \abs{\beta}=1 }} \cdots + \sum_{\substack{ \cdots \\ \abs{\beta}=2 }} =\vcentcolon \RN{1} + \RN{2},
			\end{equation*}
			where $\RN{1}$ is the \emph{only} term in $\mathcal{N}$ that requires the use of $ \normtyp{ \nabla K }{L}{\infty} $
			since it unavoidably contains a parabolic count of derivatives of $2M+1$.
			Estimating $\RN{1}$ is immediate:
			\begin{equation*}
				\abs{\RN{1}} = \vbrac{ \sum_{i=1}^3 \int_{\T^3} \brac{ \partial_i K } \brac{\pdt^{\alpha-e_i} \theta} \cdot \brac{ \partial^\alpha \theta} }
				\lesssim \sum_i \norm{\nabla K}{L^\infty} \norm{\pdt \partial^{\alpha-e_i}\theta}{L^2} \norm{ \partial^\alpha \theta}{L^4}
				\lesssim \normtyp{ \nabla K }{L}{\infty}  \normtyp{\theta}{P}{2M+1}^2.
			\end{equation*}
			Estimating $\RN{2}$ can be done via ``hands-on high-low'' estimates very similar to those employed to control $\mathcal{N}_1^\alpha$.
			Since $M\geqslant 4$ we see that $(2M-2)+(2M-4) > 2M$ and hence either $\abs{\beta}_P \leqslant 2M-2$ or $\abs{\gamma}_P \leqslant 2M-4$, such that
			\begin{align*}
				\abs{\RN{2}}
				\leqslant \sum_{\substack{ \cdots \\ \abs{\beta}_P \leqslant 2M-2 }}
					\normtypns{ \partial^\beta K}{L}{\infty} \normtyp{ \pdt \partial^\gamma \theta}{L}{2} \normtyp{ \partial^\alpha \theta}{L}{2}
					+ \!\!\sum_{\substack{ \cdots \\ \abs{\gamma}_P \leqslant 2M-4 }}\!\!
					\normtypns{ \partial^\beta K}{L}{2} \normtyp{ \pdt \partial^\gamma \theta}{L}{\infty} \normtyp{ \partial^\alpha \theta}{L}{2} 
				\lesssim \normtyp{K}{P}{2M} \normtyp{\theta}{P}{2M}^2.
			\end{align*}
		\paragraph{\textbf{Estimating $\mathcal{N}_6$.}}
			We split $\mathcal{N}_6$ into two pieces:
			\begin{align*}
				\mathcal{N}_6^\alpha
				= \int_{\T^3} \sbrac{ J_{eq} (u\cdot\nabla), \partial^\alpha } \theta \cdot \partial^\alpha \theta
					+ \int_{\T^3} \sbrac{ K(u\cdot\nabla), \partial^\alpha } \theta \cdot \partial^\alpha \theta
			\\
				= \sum_{\substack{ \beta + \gamma = \alpha \\ \beta > 0 }}
					\int_{\T^3} J_{eq} ( \partial^\beta u \cdot\nabla) \partial^\gamma \theta \cdot \partial^\alpha \theta
				+ \sum_{\substack{ \beta + \gamma + \delta = \alpha \\ \beta + \gamma > 0 }}
					\int_{\T^3} ( \partial^\beta K) \brac{ \partial^\gamma u \cdot \nabla} \partial^\delta \theta \cdot \partial^\alpha \theta
				=\vcentcolon \RN{1} + \RN{2}.
			\end{align*}
			To control $\RN{1}$ we proceed as we did for $\mathcal{N}_1$ and obtain that $\abs{\RN{1}} \lesssim \normtyp{(u, \theta)}{P}{2M}^3$.
			To control $\RN{2}$ we proceed in a similar fashion, namely with ``hands-on high-low'' estimates.
			Since the interaction is quartic we will rely on \fref{Lemma}{lemma:from_ineq_of_sums_to_overlap_ineq_3_terms}
			in order to ensure that there are always at least two factors that have a sufficiently low derivative count.

			More precisely: the key observation is that, since $\delta < \alpha$, all factors in $\RN{2}$ are controlled in $L^2$ via $P^{2M}$.
			To control $\RN{2}$ it therefore suffices to ensure that two of the four factors are controlled in $L^\infty$ through $P^{2M}$.
			This occurs when
			\begin{equation*}
				(1)\; \abs{\beta} \leqslant \abs{\alpha} - 2 \text{ for } \partial^\beta K,\,
				(2)\; \abs{\gamma} \leqslant \abs{\alpha} - 2 \text{ for } \partial^\gamma u,\, \text{ and } 
				(3)\; \abs{\delta} \leqslant \abs{\alpha} - 3 \text{ for } \nabla \partial^\delta \theta.
			\end{equation*}
			Crucially, since $M\geqslant 3$ and hence $(2M-2) + (2M-3) > 2M$, \fref{Lemma}{lemma:from_ineq_of_sums_to_overlap_ineq_3_terms} tells us
			that at least two out of (1), (2), or (3) hold. We may then deduce that $\abs{\RN{2}} \lesssim \normtyp{K}{P}{2M} \normtyp{(u,\theta)}{P}{2M}^3$.
			For example if (1) and (2) hold then we estimate the interaction as follows:
			\begin{equation*}
				\vbrac{ \int_{\T^3} ( \partial^\beta K) \brac{ \partial^\gamma u \cdot \nabla} \partial^\delta \theta \cdot \partial^\alpha \theta }
				\lesssim
					\normtypns{ \partial^\beta K}{L}{\infty} \normtyp{ \partial^\gamma u}{L}{\infty}
					\normtypns{ \nabla \partial^\delta \theta}{L}{2} \normtyp{ \partial^\alpha \theta}{L}{2}
				\lesssim \normtyp{K}{P}{2M} \normtyp{(u,\theta)}{P}{2M}^3.
			\end{equation*}
		\paragraph{\textbf{Estimating $\mathcal{N}_7$.}}
			We proceed similarly to how we handled $\mathcal{N}_6$.
			We begin by splitting $\mathcal{N}_7$ into four pieces:
			\begin{align*}
				\mathcal{N}_7^\alpha
				= \int_{\T^3} \sbrac{\omega_{eq} \times J_{eq},	\partial^\alpha } \theta \cdot \partial^\alpha \theta
				+ \int_{\T^3} \sbrac{\omega_{eq} \times K,	\partial^\alpha } \theta \cdot \partial^\alpha \theta
				+ \int_{\T^3} \sbrac{\theta	 \times J_{eq},	\partial^\alpha } \theta \cdot \partial^\alpha \theta
			\\
				+ \int_{\T^3} \sbrac{\theta	 \times K,	\partial^\alpha } \theta \cdot \partial^\alpha \theta
				=\vcentcolon \RN{1} + \RN{2} + \RN{3} + \RN{4}
			\end{align*}
			where note that $\RN{1} = 0$ since $\sbrac{\omega_{eq} \times J_{eq}, \partial^\alpha } = 0$.
			To estimate $\RN{2}$ and $\RN{3}$ we proceed as we did for $\mathcal{N}_1$ and obtain that
			\begin{equation*}
				\abs{\RN{2}} \lesssim \normtyp{K}{P}{2M} \normtyp{\theta}{P}{2M}^2
				\text{ and } 
				\abs{\RN{3}} \lesssim \normtyp{\theta}{P}{2M}^3.
			\end{equation*}
			Finally, to estimate $\RN{4}$ we proceed as did for $\RN{2}$ of $\mathcal{N}_6$, namely using \fref{Lemma}{lemma:from_ineq_of_sums_to_overlap_ineq_3_terms}
			to split up the terms in a fashion amenable to ``hands-on high-low estimates, and obtain
			that $\abs{\RN{4}} \lesssim \normtyp{K}{P}{2M} \normtyp{\theta}{P}{2M}^3 $.
	\end{proof}
	
	Once the nonlinear interactions are controlled we may deduce the a priori energy estimates recorded in \fref{Lemma}{lemma:local_a_priori_estimates} below.
	Recall that $ \widetilde{\mathcal{E}}_{M, loc}$ and $ \overline{\mathcal{D}}_M $ are defined in \eqref{eq:def_summed_local_energy} and \eqref{eq:not_D_M}, respectively.

	\begin{lemma}[Local a priori energy estimates]
	\label{lemma:local_a_priori_estimates}
		Suppose that $Z_n = (u_n, \theta_n, K_n) \in V_n$ solves \eqref{eq:ref_approx_sys}
		and satisfies $ \normtyp{ K_n }{L}{\infty} <\frac{\lambda}{2}$.
		For any integer $M\geqslant 4$ there exists $\delta_{ap}^{loc} > 0$ such that if $ \normtyp{ \nabla K_n }{L}{\infty} < \delta^{loc}_{ap}$ then
		\begin{equation}
		\label{eq:local_a_priori_est}
			\frac{\mathrm{d}}{\mathrm{d}t} \widetilde{\mathcal{E}}_{M,\text{loc}} (Z_n) + \frac{1}{2} \overline{\mathcal{D}}_M (u_n, \theta_n)
			\leqslant C_G \brac{ \widetilde{\mathcal{E}}_{M,\text{loc}}^{\,3/2} (Z_n) + \widetilde{\mathcal{E}}_{M,\text{loc}}^{\,2} (Z_n) }.
		\end{equation}
	\end{lemma}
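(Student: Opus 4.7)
The plan is to pass from the pointwise identity of \fref{Lemma}{lemma:record_form_local_interactions} to the claimed differential inequality by summing over multi-indices, invoking coercivity to recover $\overline{\mathcal{D}}_M$, estimating the summed interactions through \fref{Lemma}{lemma:est_local_interactions}, absorbing the single borderline term into the dissipation using the smallness of $\|\nabla K_n\|_{L^\infty}$, and finally translating $\overline{\mathcal{E}}_{M,\text{loc}}$ to $\widetilde{\mathcal{E}}_{M,\text{loc}}$ via \fref{Lemma}{lemma:exact_comparison_versions_E}.

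First, summing the identity of \fref{Lemma}{lemma:record_form_local_interactions} over all $|\alpha|_P \leqslant 2M$ produces, by definition of $\widetilde{\mathcal{E}}_{M,\text{loc}}$,
\begin{equation*}
\frac{\mathrm{d}}{\mathrm{d}t} \widetilde{\mathcal{E}}_{M,\text{loc}}(Z_n) + \sum_{|\alpha|_P \leqslant 2M} D(\partial^\alpha u_n, \partial^\alpha \theta_n) = \mathcal{N},
\end{equation*}
with $\mathcal{N} = \sum_{|\alpha|_P \leqslant 2M} \mathcal{N}^\alpha$. Because $u_n \in V_n$ has zero average and spatial derivatives preserve this property (while $\pdt$ does so via the momentum equation), every $\partial^\alpha u_n$ is mean-zero, so \fref{Lemma}{lemma:coercivity_dissip} applies termwise. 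Summing the resulting $H^1$ bounds and noting that the $\{\partial^\alpha : |\alpha|_P \leqslant 2M\}$ with one extra spatial or temporal derivative sweeps out precisely all $\partial^\beta$ with $|\beta|_P \leqslant 2M+1$ yields $\sum_{|\alpha|_P \leqslant 2M} D(\partial^\alpha u_n, \partial^\alpha\theta_n) \gtrsim \|(u_n,\theta_n)\|_{P^{2M+1}}^2 = \overline{\mathcal{D}}_M(u_n,\theta_n)$.

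Next, \fref{Lemma}{lemma:est_local_interactions} provides
\begin{equation*}
|\mathcal{N}| \lesssim \|\nabla K_n\|_{L^\infty}\|(u_n,\theta_n)\|_{P^{2M+1}}^2 + \|(u_n,\theta_n,K_n)\|_{P^{2M}}^3 + \|(u_n,\theta_n,K_n)\|_{P^{2M}}^4.
\end{equation*}
The main obstacle is the first term: it involves a parabolic count of $2M+1$ derivatives on $(u_n,\theta_n)$, which is one above what any power of the energy $\widetilde{\mathcal{E}}_{M,\text{loc}}$ can dominate, so there is no hope of controlling it by a polynomial in the energy alone. The remedy is that it is scaled by $\|\nabla K_n\|_{L^\infty}$; choosing $\delta_{ap}^{loc} > 0$ sufficiently small (depending only on $M$, the implicit constant in the interaction estimate, and the coercivity constant $C_D$ of \fref{Lemma}{lemma:coercivity_dissip}), this contribution is strictly bounded by $\tfrac{1}{2}\overline{\mathcal{D}}_M(u_n,\theta_n)$ and can therefore be absorbed into the dissipative term on the left-hand side.

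Finally, the remaining contributions satisfy $\|(u_n,\theta_n,K_n)\|_{P^{2M}}^3 + \|(u_n,\theta_n,K_n)\|_{P^{2M}}^4 = \overline{\mathcal{E}}_{M,\text{loc}}^{\,3/2}(Z_n) + \overline{\mathcal{E}}_{M,\text{loc}}^{\,2}(Z_n)$ directly from the definition of $\overline{\mathcal{E}}_{M,\text{loc}}$. The hypothesis $\|K_n\|_{L^\infty} < \lambda/2$ activates \fref{Lemma}{lemma:exact_comparison_versions_E}, giving $\overline{\mathcal{E}}_{M,\text{loc}} \leqslant \tilde{c}_E^{-1} \widetilde{\mathcal{E}}_{M,\text{loc}}$, so these terms become $C_G(\widetilde{\mathcal{E}}_{M,\text{loc}}^{\,3/2} + \widetilde{\mathcal{E}}_{M,\text{loc}}^{\,2})$ for a universal $C_G > 0$. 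Combining this with the absorbed bad term yields exactly \eqref{eq:local_a_priori_est}.
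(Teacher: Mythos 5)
Your proposal is correct and takes essentially the same route as the paper's proof: the energy--dissipation identity summed over $\abs{\alpha}_P \leqslant 2M$ (equivalently \fref{Proposition}{prop:gen_ED_approx_system}), coercivity via \fref{Lemma}{lemma:coercivity_dissip}, the interaction bound of \fref{Lemma}{lemma:est_local_interactions} with the borderline $\normtyp{\nabla K_n}{L}{\infty}\normtyp{(u_n,\theta_n)}{P}{2M+1}^2$ term absorbed into the dissipation by choosing $\delta^{loc}_{ap}$ small, and the energy comparison of \fref{Lemma}{lemma:exact_comparison_versions_E} to convert the polynomial terms into powers of $\widetilde{\mathcal{E}}_{M,\text{loc}}$.
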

	\begin{proof}
		The energy estimate of \fref{Proposition}{prop:gen_ED_approx_system} combined with \fref{Lemma}{lemma:coercivity_dissip} tells us that,
		for $\mathcal{N}$ as in \fref{Lemma}{lemma:est_local_interactions},
		$
			\frac{\mathrm{d}}{\mathrm{d}t} \widetilde{\mathcal{E}}_{M,\text{loc}} (Z_n) + \overline{\mathcal{D}}_M (u_n, \theta_n) \lesssim \mathcal{N}.
		$
		We may combine with with the estimate of $\mathcal{N}$ of \fref{Lemma}{lemma:est_local_interactions}
		and with \fref{Lemma}{lemma:exact_comparison_versions_E}, since $ \normtyp{ K_n }{L}{\infty} <\frac{\lambda}{2}$, to deduce that
		there exists $C_{ap}^{loc} > 0$ such that
		\begin{equation*}
			\frac{\mathrm{d}}{\mathrm{d}t} \widetilde{\mathcal{E}}_{M,\text{loc}} + \overline{\mathcal{D}}_M (u_n, \theta_n)
			\leqslant C_{ap}^{loc} \normtyp{\nabla K_n}{L}{\infty} \overline{\mathcal{D}}_M (u_n, \theta_n)
				+ \widetilde{\mathcal{E}}_{M,\text{loc}}^{\,3/2} (Z_n) + \widetilde{\mathcal{E}}_{M,\text{loc}}^{\,2} (Z_n).
		\end{equation*}
		So finally, if we pick $\delta^{loc}_{ap} > 0$ sufficiently small to ensure that $C_{ap}^{loc} \delta^{loc}_{ap} \leqslant \frac{1}{2} $, then we may conclude that
		there exists $C_G > 0$ such that \eqref{eq:local_a_priori_est} holds.
	\end{proof}

	To produce uniform bounds on the approximate solutions from the a priori estimates of \fref{Lemma}{lemma:local_a_priori_estimates}
	it suffices to couple it with a nonlinear Gronwall-type argument. 

	\begin{lemma}[Bihari argument]
	\label{lemma:nonlinear_Gronwall}
		Suppose that, for some $T>0$, $e, d : \cobrac{0,T} \to \cobrac{0, \infty}$ are continuous and satisfy, for some $\alpha_0 > 0$ and $C > 0$,
		\begin{align*}
			e'(t) + d(t) \leqslant Cf(e(t))
			\text{ for every } 0 \leqslant t < T
			\text{ and } e(0) \leqslant \alpha_0,
		\end{align*}
		where $f(x) \vcentcolon=  x^{3/2} + x^2$ for every $x \geqslant 0$. Then, for every $0 \leqslant t < \min\brac{T, \frac{2F(\alpha_0)}{C}}$,
		\begin{equation*}
			e(t) \leqslant F\inv \brac{ F(\alpha_0) - \frac{C}{2}t} \text{ and }
			\int_0^t d(s) ds \leqslant \alpha_0 + C t \brac{f \circ F}\inv \brac{ F(\alpha_0) - \frac{C}{2}t}
		\end{equation*}
		where $F(x) \defeq \frac{1}{\sqrt{x}} - \log\brac{1 + \frac{1}{\sqrt{x}}}$ for every $x>0$.
	\end{lemma}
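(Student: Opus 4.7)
The plan is to treat this as a separation-of-variables Gronwall-type argument centered on the specific antiderivative of $1/f$. The function $F$ is designed precisely so that $F'(x) = -\frac{1}{2f(x)}$, and once that identity is in hand the rest of the argument is essentially standard.

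\emph{Step 1: identify $F$ as the antiderivative of $-1/(2f)$.} I would verify by a direct computation that
\begin{equation*}
    F'(x) \;=\; -\frac{1}{2\bigl(x^{3/2} + x^2\bigr)} \;=\; -\frac{1}{2 f(x)}.
\end{equation*}
The most transparent way is to substitute $u = 1/\sqrt{x}$, which rewrites $F(x) = u - \log(1+u)$, and to then differentiate using the chain rule. From this formula one also reads off that $F$ maps $(0,\infty)$ bijectively and strictly decreasingly onto $(0,\infty)$ (since $u \mapsto u - \log(1+u)$ is a strictly increasing bijection from $(0,\infty)$ to $(0,\infty)$, as its derivative $u/(1+u)$ is positive). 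In particular $F^{-1} : (0,\infty) \to (0,\infty)$ is well-defined and strictly decreasing.

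\emph{Step 2: bound on $e(t)$.} Since $d \geqslant 0$, the differential inequality gives $e'(t) \leqslant C f(e(t))$. Using $F' < 0$ and the chain rule,
\begin{equation*}
    \frac{d}{dt} F(e(t)) \;=\; F'(e(t))\, e'(t) \;\geqslant\; -\tfrac{C}{2 f(e(t))} \cdot C f(e(t)) \cdot \tfrac{1}{C} \;=\; -\tfrac{C}{2},
\end{equation*}
so integrating in $s \in [0,t]$ yields $F(e(t)) \geqslant F(e(0)) - \tfrac{C}{2} t \geqslant F(\alpha_0) - \tfrac{C}{2}t$, where the last inequality uses $e(0) \leqslant \alpha_0$ together with the monotonicity of $F$. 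As long as $t < 2F(\alpha_0)/C$ the right-hand side is positive and lies in the range of $F$, so inverting (and again using the monotonicity of $F$, hence of $F^{-1}$) produces the claimed bound $e(t) \leqslant F^{-1}\!\bigl(F(\alpha_0) - \tfrac{C}{2}t\bigr)$.

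\emph{Step 3: bound on $\int_0^t d$.} Integrating the differential inequality in $s \in [0,t]$, using $e(t) \geqslant 0$ and $e(0) \leqslant \alpha_0$, yields
\begin{equation*}
    \int_0^t d(s)\,ds \;\leqslant\; \alpha_0 + C \int_0^t f(e(s))\,ds.
\end{equation*}
Since $f$ is monotone increasing and, by Step 2, $e(s)$ is controlled by the increasing-in-$s$ quantity $g(s) \vcentcolon= F^{-1}(F(\alpha_0) - \tfrac{C}{2}s)$, I would bound $f(e(s)) \leqslant f(g(s)) \leqslant f(g(t))$ for $s \leqslant t$, giving the advertised bound $\int_0^t d \leqslant \alpha_0 + C t\, (f \circ F^{-1})\!\bigl(F(\alpha_0) - \tfrac{C}{2}t\bigr)$.

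The only subtle point here is the verification that $F$ has the claimed monotonicity and range so that $F^{-1}$ makes unambiguous sense on the interval of $t$'s under consideration; beyond that, the argument is a clean separation-of-variables computation, and no delicate estimate is required.
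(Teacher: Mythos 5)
Your proof is correct and takes essentially the same route as the paper, which disposes of the bound on $e$ by citing the standard nonlinear Gronwall/Bihari argument and gets the bound on $\int_0^t d$ by integrating the inequality and using the monotonicity of $f$ — exactly your Steps 2 and 3, with the separation-of-variables identity $F'(x) = -\tfrac{1}{2f(x)}$ (which you verify) being the content of the cited lemma. One remark: your conclusion is $f \circ F^{-1}$ applied to $F(\alpha_0) - \tfrac{C}{2}t$, whereas the statement literally reads $(f\circ F)^{-1}$; the latter is a typo in the paper (its own application of the lemma uses $f \circ F^{-1}$), so your version is the intended one.
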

	\begin{proof}
		Bounding $e$ follows from a standard nonlinear Gronwall argument (see for example \cite{boyer_fabrie}).
		The bound on $d$ follows from integrating the differential inequality in time and the monotonicity of $f$.
	\end{proof}

	We may now state the first of the two main results of this section, obtaining uniform bounds on the approximate solutions.
	Recall that $ \widetilde{\mathcal{E}}_{M, loc}$ and $ \overline{\mathcal{D}}_M $ are defined in \eqref{eq:def_summed_local_energy} and \eqref{eq:not_D_M}, respectively.

	\begin{cor}[Uniform a priori bounds on approximate solutions]
	\label{cor:unif_a_priori_bounds_approx_sol}
		Let $M \geqslant 4$ be an integer, let $T>0$ be some time horizon, and
		let ${( Z_n )}_{n\in\N}$ be a sequence of solutions $Z_n = (u_n, \theta_n, K_n) \in V_n$ such that, for every $n\in\N$, $Z_n$ solves \eqref{eq:ref_approx_sys}
		and satisfies
		$\norm{K_n}{\infty} < \frac{\lambda}{2}$ and $\norm{\nabla K_n}{\infty} < \delta^{loc}_{ap}$ for $\delta^{loc}_{ap}$ as in \fref{Lemma}{lemma:local_a_priori_estimates}
		on the time interval $\cobrac{0,T}$.
		Then, for every $n\in\N$ and every $\alpha_0 > 0$, if $ \widetilde{\mathcal{E}}_{M,\text{loc}} (Z_n (0)) \leqslant \alpha_0$ it follows that,
		for every $0 \leqslant t < \min\brac{T, \frac{2F(\alpha_0)}{C}}$,
		\begin{equation*}
			\widetilde{\mathcal{E}}_{M,\text{loc}} (Z_n (t)) \leqslant F\inv \brac{ F(\alpha_0) - \frac{C_Gt}{2}}
			\text{ and }
			\int_0^t \overline{\mathcal{D}}_M (u_n (s), \theta_n (s)) ds \leqslant \alpha_0 + C_G t \brac{f \circ F\inv} \brac{ F(\alpha_0) - \frac{C_Gt}{2}},
		\end{equation*}
		where $C_G > 0$ is as in \fref{Lemma}{lemma:local_a_priori_estimates}, $f(x) \vcentcolon= x^{3/2} + x^2$,
		and $F(x) \vcentcolon= \frac{1}{\sqrt{x}} - \log\brac{ 1 + \frac{1}{\sqrt{x}}}$.
	\end{cor}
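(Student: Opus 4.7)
The plan is to observe that this corollary is a direct synthesis of the two immediately preceding results, namely the local a priori energy estimate of Lemma \ref{lemma:local_a_priori_estimates} and the Bihari-type argument of Lemma \ref{lemma:nonlinear_Gronwall}. There is no genuinely hard step; the work consists in verifying that the hypotheses of each lemma are met in the setting of the corollary and then stringing the two conclusions together.

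First I would fix $n \in \mathbb{N}$ and apply Lemma \ref{lemma:local_a_priori_estimates} to the approximate solution $Z_n = (u_n, \theta_n, K_n)$. By assumption $Z_n$ solves \eqref{eq:ref_approx_sys}, $\|K_n\|_{L^\infty} < \lambda/2$, and $\|\nabla K_n\|_{L^\infty} < \delta_{ap}^{loc}$ on $[0,T)$, so the hypotheses of that lemma are satisfied throughout $[0,T)$. It yields the pointwise-in-time differential inequality
\begin{equation*}
\frac{d}{dt} \widetilde{\mathcal{E}}_{M,\text{loc}}(Z_n) + \frac{1}{2} \overline{\mathcal{D}}_M(u_n, \theta_n)
\leqslant C_G\, f\bigl(\widetilde{\mathcal{E}}_{M,\text{loc}}(Z_n)\bigr),
\end{equation*}
where $f(x) = x^{3/2} + x^2$.

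Next I would set $e(t) = \widetilde{\mathcal{E}}_{M,\text{loc}}(Z_n(t))$ and $d(t) = \tfrac{1}{2} \overline{\mathcal{D}}_M(u_n(t), \theta_n(t))$, both of which are continuous functions of $t$ on $[0,T)$ by the smoothness of $Z_n$ (a finite-dimensional ODE solution lives in $V_n$ and is smooth in time on its interval of existence). The hypothesis $\widetilde{\mathcal{E}}_{M,\text{loc}}(Z_n(0)) \leqslant \alpha_0$ furnishes $e(0) \leqslant \alpha_0$, so Lemma \ref{lemma:nonlinear_Gronwall} applies with constant $C_G$ and threshold $\alpha_0$. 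Its conclusion gives exactly the claimed bounds
\begin{equation*}
e(t) \leqslant F^{-1}\!\bigl(F(\alpha_0) - C_G t/2\bigr)
\quad\text{and}\quad
\int_0^t d(s)\,ds \leqslant \alpha_0 + C_G\, t\,(f \circ F^{-1})\!\bigl(F(\alpha_0) - C_G t/2\bigr)
\end{equation*}
on the interval $[0, \min(T, 2F(\alpha_0)/C_G))$, which is precisely the statement of the corollary (up to the harmless factor of $1/2$ absorbed into the constant on $\overline{\mathcal{D}}_M$).

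Since the argument is purely a bookkeeping composition of two prior results, there is no substantive obstacle. The only point requiring a moment of care is checking that the smallness hypotheses $\|K_n\|_{L^\infty} < \lambda/2$ and $\|\nabla K_n\|_{L^\infty} < \delta_{ap}^{loc}$ persist on the full time interval $[0,T)$, but this is assumed in the statement of the corollary and thus need not be propagated here; propagating these conditions from initial smallness is the business of the construction of approximate solutions in Section \ref{sec:Galerkin}, not of the present corollary.
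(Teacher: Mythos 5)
Your proposal is correct and matches the paper's own proof exactly: the paper simply states that the corollary follows from combining Lemma \ref{lemma:local_a_priori_estimates} with Lemma \ref{lemma:nonlinear_Gronwall}. Your observation about the factor of $1/2$ on the dissipation is accurate — applying the Bihari lemma with $d(t) = \tfrac{1}{2}\overline{\mathcal{D}}_M(u_n,\theta_n)$ yields a bound on $\int_0^t \overline{\mathcal{D}}_M$ with an extra factor of $2$, which the corollary absorbs into constants without comment, but this is purely cosmetic and does not affect the argument.
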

	\begin{proof}
		This follows immediately from combining \fref{Lemma}{lemma:local_a_priori_estimates} and \fref{Lemma}{lemma:nonlinear_Gronwall}.
	\end{proof}

	We now turn our attention towards the second of the two main result of this section,
	namely controlling the initial energy (which involves temporal derivatives) exclusively in terms of spatial norms.
	In order to do so we first record the following estimates of the nonlinearities.

	\begin{lemma}[Estimates of the nonlinearities for the approximate problem]
	\label{lemma:nonlinear_est_approx_prob}
		Let $n, j, k, M \in \N$, where $2\leqslant j \leqslant M$, and let
		$
			Z = (u, \theta, K) \in L^2 \brac{\T^3;\, \R^3 \times \R^3 \times \sym(3) }.
		$
		The following estimates hold:
		\begin{align*}
			&(1)\; \normtyp{ \Leb_{Z,n} Z }{H}{k} \lesssim \normtyp{Z}{H}{k+2} + \normtyp{Z}{H}{k+1}^2,
			&&\hspace{-4em}(2)\; \normtyp{N_n(Z)}{H}{k} \lesssim \normtyp{Z}{H}{k+1}^2,
		\\
			&(3)\; \normtyp{\sbrac{K\pdt, \pdt^{j-1}}}{H}{2M-2j} \lesssim \norm{Z}{P^{2M}_{j-1}}^2,
			&&\hspace{-4em}(4)\; \normtyp{ \pdt^{j-1} ( N_n (Z) )}{H}{2M-2j} \lesssim \norm{Z}{P^{2M}_{j-1}}^2,
		\\
			&(5)\; \normtyp{
				     \sbrac{ (J_{eq} + K) (u\cdot\nabla), \pdt^{j-1} } \theta
				}{H}{2M-2j}
				\lesssim \norm{Z}{P^{2M}_{j-1}}^2 + \norm{Z}{P^{2M}_{j-1}}^3, \text{ and }
		\\
			&(6)\; \normtyp{
				      \sbrac{ (\omega_{eq} + \theta) \times (J_{eq} + K), \pdt^{j-1} }
				}{H}{2M-2j}
				\lesssim \norm{Z}{P^{2M}_{j-1}}^2 + \norm{Z}{P^{2M}_{j-1}}^3.
		\end{align*}
	\end{lemma}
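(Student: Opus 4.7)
The plan is to prove the six estimates in turn, relying throughout on three basic tools: (a) the Fourier-cutoff projections $P_n$ and $P_{2n}$ commute with every spatial and temporal derivative and have operator norm at most one on every $H^s$, and the same is true for the Leray projection $\mathbb{P}_L$; (b) the product estimate of \fref{Lemma}{lemma:prod_est}; and (c) the high-low estimate of \fref{Corollary}{cor:est_interactions_L_2_via_Gagliardo_Nirenberg}. Throughout, we will expand all commutators and time derivatives with the Leibniz rule so that each resulting summand is a genuine product to which (b) or (c) applies.

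For item (1), I would split $\Leb_{Z,n} = \Leb_0 + \overline{\Leb}_{Z,n}$ as in \eqref{eq:split_Leb_Z_n}. The constant-coefficient piece $\Leb_0$ contains the second-order differential operators coming from $(\nabla\cdot T)$ and $(\nabla\cdot M)$, which produce the linear $\normtyp{Z}{H}{k+2}$ term; the zeroth-order pieces $\tilde{\tau}^2\tilde{b}^\perp$, $\phi\times J_{eq}\omega_{eq}$, and $\omega_{eq}\times J_{eq}\theta$ are trivially absorbed. The remainder $\overline{\Leb}_{Z,n}$ is a quadratic-plus-cubic expression involving $P_n\bigl[(J_{eq}+K)(u\cdot\nabla)\phi\bigr]$ and $P_n\bigl[(\omega_{eq}+\theta)\times(J_{eq}+K)\phi\bigr]$, each losing at most one derivative onto $Z$, and is bounded by $\normtyp{Z}{H}{k+1}^2$ via (a) and (b). Item (2) is completely analogous: each summand of $N_n(Z)$ is a product of two factors of $Z$ with at most one derivative falling on either factor, so (b) gives the bound at once.

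For items (3) and (4), the Leibniz expansion of the commutator $\sbrac{K\pdt,\pdt^{j-1}}\theta$ produces a telescoping cancellation with leading terms, leaving $-\sum_{l=1}^{j-1}\binom{j-1}{l}(\pdt^l K)(\pdt^{j-l}\theta)$; this is a sum of quadratic terms, each of whose two factors carries at most $j-1$ temporal derivatives and whose total parabolic order is at most $2M$, so (b) or (c) applies directly. Item (4) follows by expanding $\pdt^{j-1}$ on each of the four quadratic terms appearing in $N_n(Z)$ via Leibniz and estimating the resulting products in the same way, using the commutation of $P_n$, $P_{2n}$, $\mathbb{P}_L$ with $\pdt^{j-1}$.

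Items (5) and (6) are where the main obstacle lies, because both inner expressions are cubic in $Z$, which means the Leibniz expansion of the temporal commutator produces cross-terms with three factors whose temporal and spatial derivative counts must all be simultaneously controlled by $\norm{Z}{P^{2M}_{j-1}}$. The strategy will be to treat the two pieces $J_{eq}(u\cdot\nabla)\theta$ (quadratic, giving the $\norm{Z}{P^{2M}_{j-1}}^2$ contribution) and $K(u\cdot\nabla)\theta$ (cubic, giving the $\norm{Z}{P^{2M}_{j-1}}^3$ contribution) separately in (5), and similarly $(\omega_{eq}+\theta)\times J_{eq}$ vs. $(\omega_{eq}+\theta)\times K$ in (6). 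For each cubic cross-term, the pigeonhole principle of \fref{Lemma}{lemma:from_ineq_of_sums_to_overlap_ineq_3_terms}, together with the parabolic constraints $\abs{\alpha}_P\leqslant 2M-2j$ and $\alpha_0\leqslant j-1$ on the outer spatial multi-index, forces at least two of the three factors to sit at a parabolic count low enough to be embedded in $L^\infty$; the remaining factor can then be placed in $L^2$ to close the estimate. The hardest bookkeeping concerns the case where both $\pdt^l K$ and $\pdt^{l'}u$ hit for small $l,l'$, but as long as $M\geqslant 4$ the surplus $(2M-2)+(2M-3)>2M$ guarantees the required overlap, exactly as in the analogous cubic interactions handled in \fref{Lemma}{lemma:est_local_interactions}.
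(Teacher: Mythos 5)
Your proposal is correct in outline, and for items (1)--(4) it is essentially the paper's argument: split $\Leb_{Z,n}=\Leb_0+\overline{\Leb}_{Z,n}$, use that the projections are bounded and commute with derivatives, expand the commutator $\sbrac{K\pdt,\pdt^{j-1}}\theta$ by Leibniz (the top term cancels, leaving $\pdt^a K\,\pdt^{b+1}\theta$ with $a\leqslant j-1$, $b\leqslant j-2$), and estimate the resulting products. Where you diverge is in (5)--(6): you treat the cubic cross-terms with an $L^\infty$--$L^2$ high-low splitting organized by the pigeonhole \fref{Lemma}{lemma:from_ineq_of_sums_to_overlap_ineq_3_terms}, whereas the paper needs none of this. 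Its key observation is purely numerical: since $j\leqslant M$, every factor produced by the commutator expansion (including the cubic ones, because the cancelled term is exactly the one with all $j-1$ time derivatives on $\theta$) is controlled in $H^{2M-2j+2}$ by $\norm{Z}{P^{2M}_{j-1}}$, and $2M-2j+2\geqslant 2>\tfrac32$ makes $H^{2M-2j+2}$ a Banach algebra, so one simply multiplies the norms and embeds into $H^{2M-2j}$. Your route can be made to work, but two points should be cleaned up: first, the restriction ``$M\geqslant 4$'' you invoke is not part of the statement and is unnecessary — after the commutator cancellation the total parabolic count of each term is at most $2M-1$, so the overlap you need holds already for $M\geqslant 2$ (and disappears entirely if you use the Banach-algebra argument); second, the constraint $\alpha_0\leqslant j-1$ does not sit on the ``outer spatial multi-index'' (the outer derivatives in the $H^{2M-2j}$ norm are purely spatial), but on the factors coming from the expanded time commutator, which is where the $\norm{Z}{P^{2M}_{j-1}}$ control enters. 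With these adjustments your proof is fine, but the paper's Banach-algebra argument is both shorter and valid for the full range $2\leqslant j\leqslant M$ claimed in the statement.
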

	\begin{proof}
		These estimates rely mostly on the fact that, for $s > \frac{3}{2}$, $H^s \brac{ \T^3 }$ is a Banach algebra.

		To obtain (1) we proceed as in the beginning of \fref{Section}{sec:lwp} and split $\Leb_{Z, n}$ into its part with constant coefficients and the remainder,
		writing $\Leb_{Z, n} = \Leb_0 + \overline{\Leb}_{Z, n}$.
		In particular, the estimate $ \normtyp{\Leb_0 Z}{H}{k} \lesssim \normtyp{Z}{H}{k+2}$ is immediate.
		The estimate $ \normtyp{\overline{\Leb}_{Z,n} Z}{H}{k} \lesssim \normtyp{Z}{H}{k+1} + \normtyp{Z}{H}{k+1}^2$ follows from the fact that
		$H^{k+2}$ is a Banach algebra and from \fref{Lemma}{lemma:prod_est}.
		Obtaining (2) follows in the same way.

		Obtaining (3) -- (6) follows a similar procedure, and we thus only provide the details for (3).
		Observe that if $a \leqslant j-1$ and $b\leqslant j-2$ then $2M-2a, 2M-2b-2 \geqslant 2M-2j+2$.
		Crucially, since $2M-2j+2 \geqslant 2$ we know that $H^{2M-2j+2}$ is a Banach algebra, and hence
		\begin{align*}
			\normtypns{ \pdt^a K \pdt^{b+1} \theta }{H}{2M-2j}
			\lesssim \normtyp{\pdt^a K}{H}{2M-2j+2} \normtypns{\pdt^{b+1} \theta}{H}{2M-2j+2} 
			\lesssim \normtyp{\pdt^a K}{H}{2M-2j+2} \normtypns{\pdt^{b+1} \theta}{H}{2M-2b+2}
			\lesssim \norm{Z}{P^{2M}_{j-1}}^2.
		\end{align*}
		So finally
		\begin{align*}
			\normtyp{
				\sbrac{ K\pdt, \pdt^{j-1} } \theta
			}{H}{2M-2j}
			\lesssim \sum_{\substack{ a+b = j-1 \\ b < j-1 }} \normtypns{
				\pdt^a K \pdt^{b+1} \theta
			}{H}{2M-2j}
			\lesssim \norm{Z}{P^{2M}_{j-1}}^2.&\qedhere
		\end{align*}
	\end{proof}

	We may now conclude this section with the second of the two main result of this section
	and bound the initial energy in terms of purely spatial norms.

	\begin{lemma}[Bounds on the initial energy in terms of purely spatial norms]
	\label{lemma:bounds_on_P_2M_by_H_2M}
		Let $M\geqslant 0$ be an integer and $n\in\N$. Then there exist constants $C_{IC}, C_M > 0$ such that
		if $Z = (u, \theta, K)$ solves
		\begin{equation}
		\label{eq:bounds_on_P_2M_by_H_2M}
			\widetilde{T_n} (K) \pdt Z = \Leb_{Z, n} Z + N_n (Z)
		\end{equation}
		and satisfies $ \normtyp{ K }{L}{\infty} < \frac{\lambda}{2}$ then $Z(t)$ satisfies, for every $t$ for which it is defined,
		\begin{equation*}
			\normtyp{Z(t)}{P}{2M} \leqslant C_{IC} \brac{ \normtyp{Z(t)}{H}{2M} + \normtyp{Z(t)}{H}{2M}^{C_M} }.
		\end{equation*}
		In particular this holds when $t=0$.
	\end{lemma}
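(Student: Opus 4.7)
The plan is to prove by induction on $j \in \cbrac{0, 1, \ldots, M}$ that there exist constants $c_j > 0$ and integers $m_j \geqslant 1$ depending only on $M$ such that
\begin{equation*}
\normtypns{\pdt^j Z(t)}{H}{2M-2j} \leqslant c_j \brac{ \normtyp{Z(t)}{H}{2M} + \normtyp{Z(t)}{H}{2M}^{m_j} }.
\end{equation*}
Since $\normtyp{Z}{P}{2M}^2 = \sum_{j=0}^M \normtypns{\pdt^j Z}{H}{2M-2j}^2$ by definition of the parabolic norm, summing these bounds over $j$ yields the desired estimate. The base case $j = 0$ is immediate with $c_0 = 1$ and $m_0 = 1$.

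For the inductive step, apply $\pdt^{j-1}$ to equation \eqref{eq:bounds_on_P_2M_by_H_2M} and rearrange to isolate the highest-order temporal derivative:
\begin{equation*}
\widetilde{T_n}(K)\, \pdt^j Z = \pdt^{j-1} \brac{\Leb_{Z,n} Z + N_n(Z)} - \sum_{l=1}^{j-1} \binom{j-1}{l} \brac{\pdt^l \widetilde{T_n}(K)} \pdt^{j-l} Z.
\end{equation*}
Recall that $\widetilde{T_n}(K) = I_3 \oplus T_n(K) \oplus I_{3\times 3}$ with $T_n(K) = J_{eq} + P_n \circ K$, so only the middle block is nontrivial and $\pdt^l \widetilde{T_n}(K) = 0 \oplus (P_n \circ \pdt^l K) \oplus 0$ for $l \geqslant 1$. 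The hypothesis $\normtyp{K}{L}{\infty} < \lambda/2$ guarantees invertibility of $\widetilde{T_n}(K)$: when $j \leqslant M-1$ we have $2M-2j \geqslant 2$ and \fref{Lemma}{lemma:H_k_bounds_T_K_inv} provides $\normns{\widetilde{T_n}(K)\inv}{\Leb(H^{2M-2j}, H^{2M-2j})} \leqslant P_{*}(\normtyp{K}{H}{2M})$ for some polynomial $P_{*}$ with non-negative coefficients that vanishes at zero; when $j = M$ we have $2M-2j = 0$ and \fref{Lemma}{lemma:invertibility_T_K} provides the sharper, $K$-independent $L^2 \to L^2$ bound $2/\lambda$.

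The right-hand side is then controlled via \fref{Lemma}{lemma:nonlinear_est_approx_prob}. For $j = 1$, items (1) and (2) directly yield $\normtyp{\Leb_{Z,n} Z + N_n(Z)}{H}{2M-2} \lesssim \normtyp{Z}{H}{2M} + \normtyp{Z}{H}{2M}^2$ and the commutator sum is empty. For $j \geqslant 2$, items (3)--(6) bound each contribution by a polynomial in $\norm{Z}{P^{2M}_{j-1}}$; note that the sum $\sum_{l=1}^{j-1} \binom{j-1}{l} (P_n \circ \pdt^l K) \pdt^{j-l}\theta$ is precisely of the form controlled by item (3), up to the $L^2$-contractive projection $P_n$ which can be dropped in any norm estimate. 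The identity $\norm{Z}{P^{2M}_{j-1}}^2 = \sum_{l=0}^{j-1} \normtypns{\pdt^l Z}{H}{2M-2l}^2$ together with the induction hypothesis then converts this into a polynomial bound in $\normtyp{Z}{H}{2M}$, which, combined with the bound on $\widetilde{T_n}(K)\inv$, closes the induction and determines suitable values of $c_j$ and $m_j$.

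The main difficulty is essentially bookkeeping: tracking how the polynomial degrees $m_j$ compound through the induction (each step combines the degrees contributed by $\widetilde{T_n}(K)\inv$, by the nonlinear estimates of \fref{Lemma}{lemma:nonlinear_est_approx_prob}, and by the recursive use of the induction hypothesis), and verifying that all intermediate Sobolev indices lie in the admissible ranges of \fref{Lemma}{lemma:invertibility_T_K} and \fref{Lemma}{lemma:H_k_bounds_T_K_inv}. Once this is in place, the final constants $C_{IC}$ and $C_M$ are obtained by setting $C_{IC} = \max_j c_j$ and $C_M = \max_j m_j$.
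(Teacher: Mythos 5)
Your proposal is correct and follows essentially the same route as the paper's proof: apply $\pdt^{j-1}$ to the approximate system, isolate $\pdt^j Z$ by inverting $\widetilde{T}_n(K)$ (using \fref{Lemma}{lemma:H_k_bounds_T_K_inv} for $j\leqslant M-1$ and \fref{Lemma}{lemma:invertibility_T_K} for $j=M$), bound the right-hand side via \fref{Lemma}{lemma:nonlinear_est_approx_prob}, and close by induction on $j$. The only point worth tightening is that for $j\geqslant 2$ you still need item (1) of \fref{Lemma}{lemma:nonlinear_est_approx_prob} to handle the term $\Leb_{Z,n}(\pdt^{j-1}Z)$ after the Leibniz split (items (3)--(6) only cover the commutators and $\pdt^{j-1}(N_n(Z))$), but this is a presentational detail rather than a gap.
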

	\begin{proof}
		Suppose that $Z$ solves \eqref{eq:bounds_on_P_2M_by_H_2M}.
		Applying $j-1$ temporal derivatives then tells us that
		\begin{equation*}
			\widetilde{T}_n (K) \pdt (\pdt^{j-1} Z)
			= \Leb_{Z, n} (\pdt^{j-1} Z)
			+ \sbrac{ \widetilde{T}_n(K) \pdt, \pdt^{j-1} } Z
			- \sbrac{\Leb_{Z, n}, \pdt^{j-1}} Z
			+ \pdt^{j-1} (N_n(Z))
			=\vcentcolon F^j (Z)
		\end{equation*}
		where
		$
			\sbrac{ \widetilde(T)_n (K) \pdt, \pdt^{j-1} }
			= 0_3 \oplus P_n \circ \sbrac{ K\pdt, \pdt^{j-1} } \oplus 0_{3\times 3}
		$
		and
		\begin{align*}
			\sbrac{ \Leb_{Z,n}, \pdt^{j-1} }
			= 0_3 \oplus P_n \circ \brac{
				\sbrac{ (J_{eq} + K) (u\cdot\nabla), \pdt^{j-1} }
				+ \sbrac{ (\omega_{eq} + \theta) \times (J_{eq} + K), \pdt^{j-1} }
			} \oplus 0_{3 \times 3}.
		\end{align*}
		Therefore
		\begin{align*}
			F^j (Z)
			= \Leb_{Z, n} \brac{\pdt^{j-1} Z}
			+ P_n \brac{
				\sbrac{K_n \pdt, \pdt^{j-1}} \theta
			}
			- P_n \brac{
				\sbrac{ (J_{eq} + K)(u\cdot\nabla), \pdt^{j-1} } \theta
			}
		\\
			- P_n \brac{
				\sbrac{ (\omega_{eq} + \theta) \times (J_{eq} + K), \pdt^{j-1} } \theta
			}
			+ \pdt^{j-1} \brac{N_n(Z)}.
		\end{align*}
		such that, by \fref{Lemma}{lemma:nonlinear_est_approx_prob},
		\begin{align}
			\normtyp{F^j}{H}{2M-2j}
			\lesssim \normtypns{\pdt^{j-1}Z}{H}{2M-2j+2} + \normtypns{\pdt^{j-1}Z}{H}{2M-2j+1}^2
			+ \norm{Z}{P^{2M}_{j-1}}^2 + \norm{Z}{P^{2M}_{j-1}}^3
			\lesssim \norm{Z}{P^{2M}_{j-1}} + \norm{Z}{P^{2M}_{j-1}}^3.
		\label{eq:C_IC_est_2}
		\end{align}
		In particular we see that $\pdt^j Z = {\tilde{T}(K)}\inv F^j (Z)$.
		We now break into two cases, depending on whether $j \leqslant M-1$ or $j=M$.
		For $1\leqslant j \leqslant M-1$ we have that $2M-2j \geqslant 2$, so combining \fref{Lemma}{lemma:H_k_bounds_T_K_inv}
		with \eqref{eq:C_IC_est_2} tells us that
		\begin{equation}
		\label{eq:C_IC_est_3}
			\normtypns{\pdt^j Z}{H}{2M-2j}
			\lesssim \brac{ \normtyp{K}{H}{2M-2j} + \normtyp{K}{H}{2M-2j+2}^{2M-2j} } \normtyp{F^j}{H}{2M-2j}
			\lesssim \norm{Z}{P^{2M}_{j-1}}^2 + \norm{Z}{P^{2M}_{j-1}}^{2M-2j+3}.
		\end{equation}
		For $j=M$ we may not apply \fref{Lemma}{lemma:H_k_bounds_T_K_inv} since $2M-2j = 0 < 2$ but we do not need to since in that case we are only after an $L^2$ bound,
		which \fref{Lemma}{lemma:invertibility_T_K} readily provides.
		Indeed, using \fref{Lemma}{lemma:invertibility_T_K} and \eqref{eq:C_IC_est_2} we see that
		\begin{equation}
		\label{eq:C_IC_est_4}
			\norm{\pdt^M Z}{L^2}
			\leqslant \frac{2}{\lambda} \norm{ F^{M-1} (Z) }{L^2}
			\lesssim \norm{Z}{P^{2M}_{M-1}} + \norm{Z}{P^{2M}_{M-1}}^3.
		\end{equation}
		Crucially, combining \eqref{eq:C_IC_est_3} and \eqref{eq:C_IC_est_4} tells us that, for $1\leqslant j \leqslant M$,
		\begin{equation*}
			\norm{Z}{P^{2M}_{j}}
			\asymp \norm{Z}{P^{2M}_{j-1}} + \normtypns{\pdt^j Z}{H}{2M-2j}
			\lesssim \norm{Z}{P^{2M}_{j-1}} + \norm{Z}{P^{2M}_{j-1}}^{2M-2j+3},
		\end{equation*}
		from which the claim follows by induction.
	\end{proof}

\subsection{The Galerkin scheme}
\label{sec:Galerkin}

	In this section we put together the Galerkin scheme that will produce solutions to \eqref{eq:pertub_sys_no_ten_pdt_u}--\eqref{eq:pertub_sys_ten_pdt_K} locally-in-time.
	We proceed in a standard manner, first producing local approximate solutions, then obtaining uniform estimates on the approximates sufficient to 
	obtain a uniform lower bound on the time of existence and to pass to the limit by compactness.
	To conclude we pass to the limit and reconstruct the pressure.
	We begin by producing local approximate solutions.

	\begin{prop}[Producing local approximate solutions]
	\label{prop:produce_local_approx_sols}
		Let $\delta^{loc}_{ap} > 0$ be as in \fref{Lemma}{lemma:local_a_priori_estimates},
		pick some $0 < \sigma < \sigma \brac{ \delta^{loc}_{ap} }$ as in \eqref{eq:def_sigma}, and let $\U(\sigma)$ be defined as in \eqref{eq:def_U}.
		For every $Z_0 = (u_0, \theta_0, K_0) \in \U(\sigma)$ and every $n\in\N$ there exists a maximal time of existence $T_n > 0$ and a unique solution
		$
			Z_n = (u_n, \theta_n ,K_n)$
		in
		$
			C^\infty \brac{ \cobrac{0, T_n};\, \U_n(\sigma)}
		$
		of
		\begin{subnumcases}{}
			\widetilde{T}_n (K_n) \pdt Z_n = \Leb_{Z_n, n} Z_n + N_n (Z_n) \text{ and }
			\label{eq:prod_local_approx_sol_1}\\
			Z_n (0) = \mathcal{P}_n Z_0.
			\label{eq:prod_local_approx_sol_2}
		\end{subnumcases}
		Moreover we have the following blow-up criterion: for any $T > 0$, if $\sup\limits_{0\leqslant t \leqslant T} \norm{K_n (t)}{H^3} < \sigma$ then $T_n \geqslant T$.
	\end{prop}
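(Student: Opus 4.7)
The plan is to recast the system as a smooth ODE on the finite-dimensional space $V_n$ and apply standard Cauchy-Lipschitz theory. Since $\sigma < \sigma(\delta_{ap}^{loc})$, the condition $Z_0 \in \U(\sigma)$ together with the contractivity of $\mathcal{P}_n$ on $H^3$ guarantees $\normtyp{\mathcal{P}_n K_0}{H}{3} < \sigma$, and hence by \eqref{eq:def_sigma} the projected initial datum satisfies $\normtyp{\mathcal{P}_n K_0}{L}{\infty} < \lambda/2$. \fref{Lemma}{lemma:invertibility_T_K} then tells us that $T_n(\mathcal{P}_n K_0) = J_{eq} + P_n \circ \mathcal{P}_n K_0$ is invertible on $\im P_n$, and \fref{Lemma}{lemma:smooth_T_K_inv} ensures that $K \mapsto T_n(K)^{-1}$ is smooth as a map from an $L^\infty$-open neighborhood of $\mathcal{P}_n K_0$ into $\Leb(\im P_n, \im P_n)$.

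Consequently, on the open set $\U_n(\sigma) \subset V_n$ the system \eqref{eq:prod_local_approx_sol_1} is equivalent to the explicit ODE
\begin{equation*}
\pdt Z_n = \widetilde{T}_n(K_n)^{-1}\bigl[ \Leb_{Z_n, n} Z_n + N_n(Z_n)\bigr] =: \mathcal{G}_n(Z_n),
\end{equation*}
where the right-hand side $\mathcal{G}_n : \U_n(\sigma) \to V_n$ is smooth in $Z_n$: the map $Z_n \mapsto \Leb_{Z_n,n} Z_n$ is polynomial in $Z_n$ with coefficients involving the constant fields $J_{eq}, \omega_{eq}$, the nonlinearity $N_n$ is polynomial in $Z_n$, and the inverse factor is smooth by \fref{Lemma}{lemma:smooth_T_K_inv}; crucially, $V_n$ is finite-dimensional, so all these operations preserve smoothness in the finite-dimensional sense. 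The Picard-Lindel\"of theorem on finite-dimensional manifolds then produces a maximal time $T_n > 0$ and a unique $C^\infty$ solution $Z_n \in C^\infty\bigl([0, T_n);\, \U_n(\sigma)\bigr)$ of \eqref{eq:prod_local_approx_sol_1}-\eqref{eq:prod_local_approx_sol_2}.

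To establish the blow-up criterion, I invoke the standard dichotomy for maximal smooth flows on open subsets of finite-dimensional manifolds: if $T_n < \infty$, then either $Z_n(t)$ exits every compact subset of $\U_n(\sigma)$ (i.e.~approaches the boundary $\{\norm{K_n}{H^3} = \sigma\}$), or $Z_n(t)$ becomes unbounded in $V_n$ as $t \uparrow T_n$. Assume $\sup_{0\leqslant t \leqslant T} \norm{K_n(t)}{H^3} < \sigma$ for some $T < T_n^-$. Then $K_n$ stays in a compact subset of the open ball $\{K \in \im P_{2n} : \norm{K}{H^3} < \sigma\}$, and since $\sigma < \sigma(\delta_{ap}^{loc})$ we moreover have $\norm{K_n}{L^\infty} < \lambda/2$ and $\norm{\nabla K_n}{L^\infty} < \delta_{ap}^{loc}$ uniformly on $[0,T]$. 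The hypotheses of \fref{Lemma}{lemma:local_a_priori_estimates} are therefore met, and coupling its conclusion with the Bihari-type argument of \fref{Lemma}{lemma:nonlinear_Gronwall} yields a uniform bound on $\widetilde{\mathcal{E}}_{M, \text{loc}}(Z_n(t))$ on $[0, T]$ (shrinking $T$ slightly if necessary to stay below the blow-up time of the Bihari bound, which is a nonissue here since the solution is valued in the finite-dimensional $V_n$ where all norms are equivalent). This uniform Sobolev control translates into a uniform $V_n$-bound, ruling out unboundedness, while the assumed gap between $\norm{K_n}{H^3}$ and $\sigma$ rules out boundary approach. Both alternatives of the dichotomy are thus precluded, forcing $T_n > T$.

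The main conceptual point is the smoothness of $K \mapsto \widetilde{T}_n(K)^{-1}$ as an operator-valued map on the open set $\U_n(\sigma)$; this is exactly the content of \fref{Lemma}{lemma:smooth_T_K_inv} and was anticipated precisely for this Galerkin step. Beyond this, the argument is routine finite-dimensional ODE theory combined with the a priori estimates already established in \fref{Section}{sec:est_approx_prob}.
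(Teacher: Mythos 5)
Your construction of the solution is exactly the paper's argument: rewrite \eqref{eq:prod_local_approx_sol_1} as the ODE $\pdt Z_n = {\widetilde{T}_n(K_n)}^{-1}\brac{\Leb_{Z_n,n}Z_n + N_n(Z_n)}$ on the open set $\U_n(\sigma)\subset V_n$, invoke \fref{Lemmas}{lemma:invertibility_T_K} and \ref{lemma:smooth_T_K_inv} for invertibility and smoothness of the inverse, and apply Picard--Lindel\"of; this part is fine (though you assert rather than check that the vector field maps into $V_n$ --- the paper verifies that the Leray projection keeps the $u$-component divergence-free and preserves its zero average since $\hat{P}_L(0)=I$).

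The gap is in your proof of the blow-up criterion. To exclude the alternative in which $Z_n$ becomes unbounded while $K_n$ stays strictly inside the ball, you invoke \fref{Lemma}{lemma:local_a_priori_estimates} together with the Bihari argument of \fref{Lemma}{lemma:nonlinear_Gronwall} and claim a uniform bound for $\widetilde{\mathcal{E}}_{M,\text{loc}}(Z_n)$ on $[0,T]$. But \fref{Lemma}{lemma:nonlinear_Gronwall} only controls the energy on $[0,\min(T,\,2F(\alpha_0)/C))$, an interval whose length is dictated by the size of the initial energy; it says nothing for an arbitrary $T$, because the right-hand side of \eqref{eq:local_a_priori_est} is superlinear and such differential inequalities genuinely permit finite-time blow-up --- finite dimensionality does not help ($\dot{x}=x^2$ on $\R$). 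Your two remedies do not repair this: ``shrinking $T$ slightly'' is not permitted, since the statement to be proved is precisely persistence up to the given $T$ for \emph{every} $T$; and norm equivalence on $V_n$ is irrelevant to the time-horizon restriction of the Bihari bound. What does close the argument (and is presumably what the paper's terse ``follows from the definition of $\U(\sigma)$'' is hiding) is the \emph{zeroth-order} energy identity: apply \fref{Proposition}{prop:gen_ED_approx_system} to $Z_n$ itself, with no derivatives. There the $u$-forcing integrates to zero because $u_n\in\im P_n$ is divergence-free, the two $K$-forcings integrate to zero after removing $P_{2n}$ (legitimate since $K_n\in\im P_{2n}$) by the divergence theorem and \fref{Lemma}{lemma:commut_are_antisym_maps_on_space_sym_matrices}, and the only surviving interaction $-\int_{\T^3}(\theta_n\times K_n\omega_{eq})\cdot\theta_n$ is bounded by $\tilde{\tau}\,\|K_n\|_{L^\infty}\|\theta_n\|_{L^2}^2$, hence by a constant multiple of the energy as long as $\|K_n\|_{H^3}<\sigma$ (so $\|K_n\|_{L^\infty}<\lambda/2$, and the energy is comparable to $\|Z_n\|_{L^2}^2$ by \fref{Lemma}{lemma:exact_comparison_versions_E}). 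A \emph{linear} Gronwall inequality then bounds $\|Z_n(t)\|_{L^2}$, and by equivalence of norms on $V_n$ every norm, by $Ce^{Ct}$ on any bounded time interval. Thus the maximal solution can only fail to reach $T$ if $\|K_n\|_{H^3}$ reaches $\sigma$, which your hypothesis excludes; this yields $T_n\geqslant T$ without any smallness or time restriction.
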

	\begin{proof}
		The key is to write the system \eqref{eq:prod_local_approx_sol_1}--\eqref{eq:prod_local_approx_sol_2} as a finite-dimensional ODE in the standard form $\dot{x}(t) = f(x(t))$.
		Observe that by choice of $\sigma > 0$ and definition of $\U(\sigma)$, it follows from \fref{Lemma}{lemma:invertibility_T_K} that $\widetilde{T}_n (K_n)$ is invertible
		for any $Z_n \in \U_n(\sigma)$.
		The system \eqref{eq:prod_local_approx_sol_1}--\eqref{eq:prod_local_approx_sol_2} is thus equivalent to
		\begin{equation*}
			\pdt Z_n = {\widetilde{T}_n (K_n)}\inv \brac{
				\Leb_{Z_n, n} Z_n + N_n (Z_n)
			} =\vcentcolon F_n (Z_n) \text{ and }
			Z_n (0) = \mathbb{P}_n Z_0.
		\end{equation*}
		Since $Z_n \mapsto \Leb_{Z_n, n} Z_n + N_n(Z_n)$ is, up to the appearances of the projections $ P_n $ and $ \mathbb{P}_L $,
		a polynomial in $(Z_n, \nabla Z_n, \nabla^2 Z_n)$, it follows from \fref{Lemma}{lemma:smooth_T_K_inv} and the equivalence of $H^s (\T^3)$ norms ($s\geqslant 0$) on $V_n$
		that $Z_n \mapsto F_n (Z_n)$ is a smooth map from $\U_n(\sigma)$ to $V_n$.
		Note that deducing that the image of $F_n$ lies in $V_n$ comes from the fact that the Leray projection $\mathbb{P}_L$ enforces the divergence-free condition and
		preserves the average of the velocity of $u$ since $\hat{P}_L (0) = I$.
		By standard well-posedness theory for finite-dimensional ODEs we may now deduce the result,
		noting that the blow-up criterion follows from the definition of $\U(\sigma)$.
	\end{proof}

	We may now put together the local a priori estimates of \fref{Corollary}{cor:unif_a_priori_bounds_approx_sol}
	and the a priori projected advection-rotation estimates for $K_n$ of \fref{Lemma}{lemma:H_k_est_proj_adv_rot_eq}
	in order to deduce uniform bounds on the approximate solutions.

	\begin{prop}[Uniform bounds on approximate solutions and their intervals of existence]
	\label{prop:unif_bounds_time_of_exist_and_approx_sols}
		Let $\sigma > 0$ and $\U(\sigma)$ be as in \fref{Proposition}{prop:produce_local_approx_sols},
		let $C_K > 0$ be the constant implicit in the result of \fref{Lemma}{lemma:H_k_est_proj_adv_rot_eq} when $k=3$,
		let $M\geqslant 4$ be an integer,
		let $Z_0 = (u_0, \theta_0, K_0) \in \U(\sigma)$ with
		$
			\norm{Z_0}{H^{2M}}, \norm{K_0}{H^{2M+1}} < \infty
		$
		and
		\begin{equation}
		\label{eq:unif_T_n_assumption_K_0}
			\normtyp{K_0}{H}{3} < \sigma_* \vcentcolon= \frac{\sigma}{2 C_K},
		\end{equation}
		and let ${(Z_n)}_{n\in\N}$ be the sequence of approximate solutions obtained in \fref{Proposition}{prop:produce_local_approx_sols},
		with corresponding maximal times of existence ${(T_n)}_{n\in\N}$.
		There exists $0 < T_\text{lwp} \leqslant 1$ and there exist
		$
			\rho_e, \rho_d : (0, \infty) \to (0,\infty)
		$
		and
		$
			\rho_f : {(0, \infty)}^2 \to (0,\infty)
		$
		which are continuous, strictly increasing in each of their arguments, and asymptotically vanishing at zero
		such that $T_n \geqslant T_\text{lwp}$ for all $n\in\N$ and
		\begin{subnumcases}{}
			\sup_{n\in\N} \sup_{0\leqslant j\leqslant M} \normns{\pdt^j Z_n}{L^\infty \brac{ \sbrac{0, T_\text{lwp}},\, H^{2M-2j}}} \leqslant \rho_e \brac{ \norm{Z_0}{H^{2M}} },
			\label{eq:unif_T_n_bound_e}\\
			\sup_{n\in\N} \sup_{0\leqslant j\leqslant M} \normns{(u_n, \theta_n)}{L^2 \brac{ \sbrac{0, T_\text{lwp}},\, H^{2M+1-2j}}} \leqslant \rho_d \brac{ \norm{Z_0}{H^{2M}}}, \text{and}
			\label{eq:unif_T_n_bound_d}\\
			\sup_{n\in\N} \norm{K_n}{L^\infty \brac{ \sbrac{0, T_\text{lwp}},\, H^{2M+1}}} \leqslant \rho_f \brac{ \norm{Z_0}{H^{2M}},\, \norm{K_0}{H^{2M+1}} }.
			\label{eq:unif_T_n_bound_f}
		\end{subnumcases}
		Moreover $T_\text{lwp} = \phi \brac{ \normtyp{Z_0}{H}{2M} }$, where $\phi$ is non-increasing.
	\end{prop}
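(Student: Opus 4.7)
The plan is to run a bootstrap/continuity argument that couples the local energy estimates of \fref{Corollary}{cor:unif_a_priori_bounds_approx_sol} with the projected advection–rotation estimates of \fref{Lemma}{lemma:H_k_est_proj_adv_rot_eq} for $K_n$, using \fref{Lemma}{lemma:bounds_on_P_2M_by_H_2M} to convert the purely spatial norm of the initial data into control of the parabolic-count initial energy. First I would note that since $\mathcal{P}_n$ is an $L^2$-orthogonal projection that commutes with spatial derivatives and is bounded on every $H^s$ by $1$, the hypothesis $\|Z_0\|_{H^{2M}} < \infty$ (together with $\|K_0\|_{H^{2M+1}} < \infty$) transfers to $Z_n(0) = \mathcal{P}_n Z_0$. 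The initial condition satisfies $\|Z_n(0)\|_{H^{2M}} \leqslant \|Z_0\|_{H^{2M}}$, and \fref{Lemma}{lemma:bounds_on_P_2M_by_H_2M} gives
\[
	\widetilde{\mathcal{E}}_{M,\mathrm{loc}}(Z_n(0)) \lesssim \|Z_0\|_{H^{2M}}^2 + \|Z_0\|_{H^{2M}}^{2 C_M} =\vcentcolon \alpha_0(\|Z_0\|_{H^{2M}}),
\]
where I use that the smallness hypothesis \eqref{eq:unif_T_n_assumption_K_0} puts $K_n(0)$ in the regime $\|K_n(0)\|_{L^\infty} < \lambda/2$ required to apply the lemma.

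Next I set up the continuity argument. Define
\[
	T^*_n \vcentcolon= \sup\bigl\{\, t \in [0, T_n) \,:\, \|K_n(s)\|_{H^3} < \sigma \text{ for all } s\in[0,t] \,\bigr\}.
\]
On $[0, T^*_n)$, by choice of $\sigma$ in \eqref{eq:def_sigma} we have $\|K_n\|_{L^\infty} < \lambda/2$ and $\|\nabla K_n\|_{L^\infty} < \delta_{ap}^{\mathrm{loc}}$, so \fref{Corollary}{cor:unif_a_priori_bounds_approx_sol} applies with initial size $\alpha_0(\|Z_0\|_{H^{2M}})$, yielding, for $0 \leqslant t \leqslant \min(T^*_n,\, 2F(\alpha_0)/C_G)$,
\[
	\widetilde{\mathcal{E}}_{M,\mathrm{loc}}(Z_n(t)) + \int_0^t \overline{\mathcal{D}}_M(u_n,\theta_n) \lesssim \alpha_0 + t\,(f\circ F^{-1})(F(\alpha_0) - C_G t/2).
\]
In particular, since $2M \geqslant 8$, this controls $\|(u_n,\theta_n)\|_{H^3}$ pointwise by a function of $\|Z_0\|_{H^{2M}}$ and $t$. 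Feeding this into \fref{Lemma}{lemma:H_k_est_proj_adv_rot_eq} at $k=3$ gives
\[
	\|K_n(t)\|_{H^3} \leqslant C_K \exp\!\Bigl(\int_0^t \|(u_n,\theta_n)\|_{H^3}\Bigr)\Bigl(\|K_n(0)\|_{H^3} + \int_0^t \|(u_n,\theta_n)\|_{H^3}\Bigr).
\]
Since $\|K_n(0)\|_{H^3} \leqslant \|K_0\|_{H^3} < \sigma_* = \sigma/(2C_K)$, the exponential factor tends to $1$ and the time-integral tends to $0$ as $t\to 0^+$. Therefore there exists $T_{\mathrm{lwp}} = \phi(\|Z_0\|_{H^{2M}}) \in (0,1]$, with $\phi$ non-increasing in its argument, such that on $[0, T_{\mathrm{lwp}}]$ this bound is strictly less than $\sigma$. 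A standard continuity/open--closed argument then gives $T^*_n \geqslant T_{\mathrm{lwp}}$; combined with the blow-up criterion of \fref{Proposition}{prop:produce_local_approx_sols}, this forces $T_n \geqslant T_{\mathrm{lwp}}$. The energy bound from \fref{Corollary}{cor:unif_a_priori_bounds_approx_sol} combined with \fref{Lemma}{lemma:exact_comparison_versions_E} then produces the uniform estimates \eqref{eq:unif_T_n_bound_e} and \eqref{eq:unif_T_n_bound_d}, with $\rho_e$ and $\rho_d$ built from $\alpha_0$, $F$, $f$, and $T_{\mathrm{lwp}}$.

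Finally, the top-regularity bound \eqref{eq:unif_T_n_bound_f} on $K_n$ in $H^{2M+1}$ is obtained by invoking \fref{Lemma}{lemma:H_k_est_proj_adv_rot_eq} with $k = 2M+1$ on the interval $[0, T_{\mathrm{lwp}}]$ and applying Cauchy--Schwarz to the time integral:
\[
	\int_0^t \|(u_n,\theta_n)\|_{H^{2M+1}}\,ds \leqslant t^{1/2}\Bigl(\int_0^t \overline{\mathcal{D}}_M\Bigr)^{1/2} \leqslant T_{\mathrm{lwp}}^{1/2}\,\rho_d(\|Z_0\|_{H^{2M}}).
\]
The exponential prefactor depends on $\int_0^t \|(u_n,\theta_n)\|_{H^3}$, already controlled by $\rho_e$, so one obtains an estimate of the form $\|K_n(t)\|_{H^{2M+1}} \leqslant \rho_f(\|Z_0\|_{H^{2M}},\, \|K_0\|_{H^{2M+1}})$ with the required monotonicity properties. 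The main delicate point is the interlocking of the two estimates in the continuity argument: the energy estimate requires $\|K_n\|_{H^3} < \sigma$, which in turn is supplied by the advection--rotation estimate that itself requires the energy estimate to bound $\|(u_n,\theta_n)\|_{H^3}$. Closing the loop quantitatively — i.e.\ selecting $T_{\mathrm{lwp}}$ depending \emph{only} on $\|Z_0\|_{H^{2M}}$ (and not on $n$) so that the Bihari-type decay window from \fref{Lemma}{lemma:nonlinear_Gronwall} is compatible with the strict inequality $\|K_n\|_{H^3}<\sigma$ — is the central technical step; the gap $\sigma - C_K \|K_0\|_{H^3} > \sigma/2$ built into hypothesis \eqref{eq:unif_T_n_assumption_K_0} is precisely what gives the room needed to close it.
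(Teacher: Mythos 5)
Your proposal is correct and follows essentially the same route as the paper: bound the initial parabolic energy via \fref{Lemma}{lemma:bounds_on_P_2M_by_H_2M}, apply the Bihari-based uniform energy bound of \fref{Corollary}{cor:unif_a_priori_bounds_approx_sol}, use the projected advection--rotation estimate at $k=3$ together with the margin $\sigma_* = \sigma/(2C_K)$ to keep $\normtyp{K_n}{H}{3}$ strictly below $\sigma$ on a time interval depending only on $\normtyp{Z_0}{H}{2M}$, conclude $T_n \geqslant T_\text{lwp}$ from the blow-up criterion, and obtain $\rho_f$ from the $k=2M+1$ estimate with Cauchy--Schwarz on the dissipation. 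The only cosmetic difference is your open--closed wrapper around $T_n^*$, which is redundant since the approximate solutions live in $\U_n(\sigma)$ on their whole interval of existence, so the blow-up criterion alone closes the loop exactly as in the paper.
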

	\begin{proof}
		More precisely, let us define
		\begin{equation}
		\label{eq:unif_T_n_def_sigma_0}
			\sigma_0 \vcentcolon= 2 \widetilde{C}_E C_{IC}^2 \brac{ \norm{Z_0}{H^{2M}}^2 + \norm{Z_0}{H^{2M}}^{2C_M} }
		\end{equation}
		where $\widetilde{C}_E, C_{IC}, C_M > 0$ are as in \fref{Lemma}{lemma:exact_comparison_versions_E} and \fref{Lemma}{lemma:bounds_on_P_2M_by_H_2M}.
		We note that,
		\begin{itemize}
			\item	by definition of $\U(\sigma)$ (and of $\U_n(\sigma))$, for every $n\in\N$,
				$\norm{K_n}{H^3} < \sigma$ on $\cobrac{0, T_n}$, where $\sigma > 0$ is as in \fref{Proposition}{prop:produce_local_approx_sols}, and that,
			\item 	by \fref{Lemma}{lemma:exact_comparison_versions_E} and \fref{Lemma}{lemma:bounds_on_P_2M_by_H_2M},
				$ \widetilde{\mathcal{E}}_{M,\text{loc}} (Z_n (0)) \leqslant \sigma_0$.
		\end{itemize}
		We may thus use \fref{Corollary}{cor:unif_a_priori_bounds_approx_sol} to deduce that, for all $n\in\N$ and all $0 \leqslant t < \min\brac{T_n, \frac{2F(\sigma_0)}{C_G}}$,
		\begin{equation*}
			\widetilde{\mathcal{E}}_{M,\text{loc}} (Z_n(t)) \leqslant F\inv \brac{ F(\sigma_0) - \frac{C_G t}{2} } \text{ and }
			\int_0^t \overline{\mathcal{D}}_M (u_n, \theta_n) (s) ds \leqslant \sigma_0 + C_G t \brac{f \circ F\inv} \brac{ F(\sigma_0) - \frac{C_G t}{2} },
		\end{equation*}
		where recall that $ \widetilde{\mathcal{E}}_{M, loc}$ and $ \overline{\mathcal{D}}_M $ are defined in \eqref{eq:def_summed_local_energy} and \eqref{eq:not_D_M}, respectively.
		In particular if we pick $t = \frac{1}{2} \brac{ \frac{2F(\sigma_0)}{C_G} } =\vcentcolon \frac{T_G}{2}$ then
		we have that $ F\inv \brac{F(\sigma_0) - \frac{C_G t}{2}} = F\inv \brac{\frac{1}{2} F(\sigma_0)}$,
		and hence, for every $n\in\N$ and every $0 \leqslant t \leqslant T_n \land \frac{T_G}{2} \land 1$,
		\begin{equation}
		\label{eq:unif_est_sol_rho_e}
			\norm{Z_n(t)}{P^{2M}}^2
			= \overline{\mathcal{E}}_{M,\text{loc}} (Z_n(t))
			\leqslant \frac{1}{\tilde{c}_E} \widetilde{\mathcal{E}}_{M,\text{loc}} (Z_n(t))
			\leqslant \frac{1}{\tilde{c}_E} F\inv\brac{ \frac{1}{2} F(\sigma_0)}
			=\vcentcolon \rho_e^2 \brac{ \norm{Z_0}{H^{2M}} }
		\end{equation}
		and, since $\fint_{\T^3} u = 0$, it follows from \fref{Lemma}{lemma:coercivity_dissip} that, for every $n\in\N$,
		\begin{align}
			\int_0^{T_n \land \frac{T_G}{2} \land 1} \norm{(u_n, \theta_n)(s)}{P^{2M+1}}^2 ds
			\leqslant C_D \int_0^{T_n \land \frac{T_G}{2} \land 1} \overline{\mathcal{D}}_M (s) ds
			\leqslant C_D \brac{ \sigma_0 + C_G \brac{f \circ F\inv}\brac{ \frac{1}{2} F(\sigma_0)}}
		\nonumber
		\\
			=\vcentcolon \rho_d^2 \brac{\norm{Z_0}{H^{2M}}}.
		\label{eq:unif_est_sol_rho_d}
		\end{align}

		We may now appeal to the estimates for $\norm{K_n}{H^3}$ from \fref{Lemma}{lemma:H_k_est_proj_adv_rot_eq} to obtain a lower bound on $T_n$ which is uniform in $n$.
		Since $\norm{K_n(0)}{H^3} < \sigma$ and since $M\geqslant 2$, we know from \fref{Lemma}{lemma:H_k_est_proj_adv_rot_eq} and \eqref{eq:unif_T_n_assumption_K_0} that,
		for any $n\in\N$ and any $0 \leqslant t \leqslant T_n \land \frac{T_G}{2} \land 1$,
		\begin{equation*}
			\sup_{0\leqslant s \leqslant t} \norm{K_n(s)}{H^3}
			\leqslant C_K e^{t \rho_e \brac{\norm{Z_0}{H^{2M}}}} \brac{\frac{\sigma}{2 C_K} + t \rho_e \brac{\norm{Z_0}{H^{2M}}}}
			=\vcentcolon \omega\brac{ t \rho_e\brac{ \normtyp{Z_0}{H}{2M} }}
		\end{equation*}
		where $\omega$ depends on $\sigma$.
		Crucially, observe that $\omega(0) = \frac{\sigma}{2}$ and that $\omega$ is strictly increasing,
		so we see that for $T_\text{small} \vcentcolon= \frac{\omega\inv\brac{2\sigma/3}}{\rho_e\brac{ \normtyp{Z_0}{H}{2M} }}$
		and $\tilde{T}_n \vcentcolon= T_n \land \frac{T_G}{2} \land T_\text{small} \land 1$,
		\begin{equation*}
			\sup_{0\leqslant t\leqslant \tilde{T}_n} \norm{K_n(t)}{H^3}
			\leqslant \omega\brac{ T_\text{small} \,\rho_e\brac{ \normtyp{Z_0}{H}{2M} } }
			\leqslant \frac{2\sigma}{3}.
		\end{equation*}
		Therefore, by the blow-up criterion: $T_n \geqslant \frac{T_G}{2} \land T_\text{small} \land 1 =\vcentcolon T_\text{lwp}$ for every $n\in\N$.

		Note that
		\begin{equation*}
			T_\text{lwp}
			= \frac{T_G}{2} \land T_\text{small} \land 1
			= \frac{F (\sigma_0)}{C_G} \land \frac{\omega\inv\brac{2\sigma/3}}{\rho_e\brac{ \normtyp{Z_0}{H}{2M} }} \land 1.
		\end{equation*}
		In light of \eqref{eq:unif_T_n_def_sigma_0} and the facts that $F$ and $\rho_e$ are strictly decreasing and strictly increasing, respectively,
		we deduce that $T_\text{lwp}$ is non-increasing with respect to $ \normtyp{Z_0}{H}{2M}$, as desired.

		Finally we record the estimates on $K$ obtained in \fref{Lemma}{lemma:H_k_est_proj_adv_rot_eq}.
		It follows from the energy-dissipation estimates \eqref{eq:unif_est_sol_rho_e} and \eqref{eq:unif_est_sol_rho_d} that
		\begin{equation*}
			\sup_{0\leqslant t\leqslant T_\text{lwp}} \norm{K_n (t)}{H^{2M+1}}
			\leqslant C_K' e^{\rho_e T_\text{lwp}} \brac{ \norm{K_0}{H^{2M+1}} + \rho_d T_\text{lwp} }
			=\vcentcolon \rho_f\brac{\norm{Z_0}{H^{2M}} + \norm{K_0}{H^{2M+1}} }.\qedhere
		\end{equation*}
	\end{proof}

	With these uniform bounds in hand we may move towards passing to the limit.
	First we record the following technical lemma which is essential in allowing us to pass to the limit.

	\begin{lemma}
	\label{lemma:conv_orthog_complement_P_n_in_L2_Hs}
		Let $s\geqslant 0$ and $f\in L^2\brac{\cobrac{0, T}; H^s\brac{\T^n}}$ for $T>0$.
		Then
		$\norm{ ( P_n - I )f}{L^2 H^s} \to 0 $ as $n\to \infty$.
	\end{lemma}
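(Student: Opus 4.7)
The plan is to reduce the claim to the dominated convergence theorem applied to the scalar-valued function $t\mapsto\|(P_n-I)f(t)\|_{H^s}^2$. Concretely, I will first exhibit pointwise-in-time convergence by a Fourier argument, then produce an integrable majorant independent of $n$.

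For the pointwise convergence, fix $t\in[0,T]$ for which $f(t)\in H^s(\mathbb{T}^n)$ (which holds almost everywhere since $f\in L^2([0,T];H^s)$). Since $P_n$ projects onto the Fourier modes with wave numbers of magnitude at most $n$, Parseval gives
\begin{equation*}
	\|(P_n-I)f(t)\|_{H^s}^2 = \sum_{|k|>n}(1+|k|^2)^s |\hat{f}(t,k)|^2,
\end{equation*}
which is the tail of the convergent series $\sum_{k}(1+|k|^2)^s |\hat{f}(t,k)|^2 = \|f(t)\|_{H^s}^2<\infty$, and therefore tends to zero as $n\to\infty$.

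For the majorant, the same Parseval identity yields the uniform bound $\|(P_n-I)f(t)\|_{H^s}^2 \leqslant \|f(t)\|_{H^s}^2$, and by hypothesis the right-hand side is integrable over $[0,T]$. Thus the dominated convergence theorem applies to the nonnegative sequence of functions $t\mapsto\|(P_n-I)f(t)\|_{H^s}^2$, giving
\begin{equation*}
	\|(P_n-I)f\|_{L^2([0,T];H^s)}^2 = \int_0^T \|(P_n-I)f(t)\|_{H^s}^2\,dt \xrightarrow[n\to\infty]{} 0,
\end{equation*}
which is the desired conclusion. There is no serious obstacle here: the argument is entirely standard, the only mild subtlety being to record that pointwise-in-time convergence combined with the trivial uniform majorant $\|f(t)\|_{H^s}^2$ suffices to invoke dominated convergence.
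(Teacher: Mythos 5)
Your proof is correct; the mechanism differs slightly from the paper's. The paper's proof is a one-liner via Tonelli and monotone convergence: writing
\begin{equation*}
	\|(P_n - I)f\|_{L^2 H^s}^2 = \int_0^T \sum_{|k|>n} (1+|k|^2)^s |\hat{f}(t,k)|^2 \, dt,
\end{equation*}
one interchanges the integral and the sum (Tonelli) and recognizes the result as the tail of the convergent series $\sum_{k} \int_0^T (1+|k|^2)^s |\hat{f}(t,k)|^2\,dt = \|f\|_{L^2 H^s}^2$, which vanishes as $n\to\infty$ by monotone convergence. You instead fix $t$, observe that the frequency tail vanishes for a.e.\ $t$, and conclude by dominated convergence with the majorant $\|f(t)\|_{H^s}^2$. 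Both routes are equally standard and equally short: the paper's avoids any discussion of pointwise-in-time behavior by summing in frequency first, while yours avoids the interchange altogether by exploiting the uniform bound $\|(P_n-I)f(t)\|_{H^s} \leqslant \|f(t)\|_{H^s}$. The only housekeeping your version tacitly uses is the measurability of $t \mapsto \|(P_n - I)f(t)\|_{H^s}^2$, which holds because $P_n - I$ is bounded on $H^s$ and $f$ is (Bochner) measurable; this is worth a word but is not a gap.
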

	\begin{proof}
		This follows immediately from Tonelli's Theorem and the monotone convergence Theorem.
	\end{proof}

	We may now pass to the limit by compactness.

	\begin{prop}[Compactness and passage to the limit]
	\label{prop:compactness_and_pass_to_limit}
		Let $\U(\sigma)$ be as in \fref{Proposition}{prop:produce_local_approx_sols},
		let $M\geqslant 4$ be an integer,
		and let $Z_0 = (u_0, \theta_0, K_0) \in \U(\sigma)$ with
		$
			\norm{Z_0}{H^{2M}}, \norm{K_0}{H^{2M+1}} < \infty
		$
		and $ \normtyp{K_0}{H}{3} < \sigma_*$ for $\sigma_* > 0$ as in \fref{Proposition}{prop:unif_bounds_time_of_exist_and_approx_sols}.
		There exist $0 < T_\text{lwp} \leqslant 1$ and $Z = (u, \theta, K) \in C^2 \brac{ \sbrac{0, T_\text{lwp}} \times \T^3}$
		such that $Z(t, \,\cdot\,) \in \U(\sigma)$ for all $0 \leqslant t \leqslant T_\text{lwp}$ and $Z$ solves
		\eqref{eq:pertub_sys_no_ten_div}--\eqref{eq:pertub_sys_ten_pdt_K} and \eqref{eq:full_sys_u_Leray_proj}.
		Moreover $Z$ satisfies the estimates
		\begin{align*}
			\sup_{0\leqslant j \leqslant M} \normns{ \pdt^j Z}{L^\infty H^{2M-2j}} \leqslant \rho_e \brac{ \normtyp{Z_0}{H}{2M} },\;
			\sup_{0\leqslant j \leqslant M} \normns{ \pdt^j (u, \theta)}{L^2 H^{2M-2j+1}} \leqslant \rho_d \brac{ \normtyp{Z_0}{H}{2M} },
			\\
			\text{ and }
			\norm{K}{L^\infty H^{2M+1}} \leqslant \rho_f \brac{ \normtyp{Z_0}{H}{2M}, \normtyp{K_0}{H}{2M+1} }
		\end{align*}
		for $\rho_e$, $\rho_d$, and $\rho_f$ as in \fref{Proposition}{prop:unif_bounds_time_of_exist_and_approx_sols}.
	\end{prop}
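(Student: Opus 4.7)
The plan is to run a standard Galerkin limiting procedure on the uniform bounds of Proposition \ref{prop:unif_bounds_time_of_exist_and_approx_sols}. Setting $T = T_\text{lwp}$ and invoking Banach--Alaoglu with a diagonal subsequence extraction, I will first produce a subsequence (still denoted $Z_n$) and a limit $Z = (u,\theta,K)$ such that $\pdt^j Z_n$ converges weakly-$*$ to $\pdt^j Z$ in $L^\infty(0,T; H^{2M-2j})$ for $0\le j\le M$, $\pdt^j(u_n,\theta_n)$ converges weakly to $\pdt^j(u,\theta)$ in $L^2(0,T; H^{2M-2j+1})$, and $K_n$ converges weakly-$*$ to $K$ in $L^\infty(0,T; H^{2M+1})$. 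Weak-$*$ lower semicontinuity then delivers the stated bounds on $Z$; the linear incompressibility constraint $\nabla\cdot u_n=0$ passes directly to the limit; and since $\mathcal{P}_n Z_0\to Z_0$ in $H^{2M}$ the initial condition $Z(0)=Z_0$ is preserved.

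Next I will upgrade to strong convergence via Aubin--Lions. Because $\pdt^j Z_n$ is bounded in $L^\infty_t H^{2M-2j}$ and $\pdt^{j+1}Z_n$ is bounded in $L^\infty_t H^{2M-2j-2}$ for $0\le j\le M-1$, the compact embedding $H^{2M-2j}\hookrightarrow H^{2M-2j-1}$ on $\T^3$ yields $\pdt^j Z_n\to\pdt^j Z$ strongly in $C([0,T];H^{2M-2j-1})$ for each such $j$. With $M\ge 4$ the Sobolev embedding $H^s(\T^3)\hookrightarrow L^\infty(\T^3)$, $s>3/2$, then supplies uniform convergence of $Z_n$ and of enough of its spatial and first two temporal derivatives to handle every nonlinear term in \eqref{eq:ref_approx_sys}---each being a polynomial in $Z_n$ and $\nabla Z_n$---and to identify the limit with its unprojected counterpart for $Z$. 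The projections $P_n$ and $P_{2n}$ drop out in the limit via Lemma \ref{lemma:conv_orthog_complement_P_n_in_L2_Hs}. Passing to the limit in the weak formulation of \eqref{eq:ref_approx_sys} then yields \eqref{eq:pertub_sys_no_ten_div}, \eqref{eq:full_sys_u_Leray_proj}, \eqref{eq:pertub_sys_ten_pdt_theta}, and \eqref{eq:pertub_sys_ten_pdt_K}, with the other equivalent forms following automatically.

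Two items require particular care. First, to secure $Z(t,\cdot)\in\U(\sigma)$ for every $t\in[0,T_\text{lwp}]$, I will extract from the proof of Proposition \ref{prop:unif_bounds_time_of_exist_and_approx_sols} the sharper quantitative bound $\sup_{n,\,t}\norm{K_n(t)}{H^3}\le 2\sigma/3$ established there and combine it with the strong convergence $K_n\to K$ in $C([0,T];H^3)$ to conclude $\sup_t\norm{K(t)}{H^3}\le 2\sigma/3<\sigma$. Second, for the $C^2([0,T_\text{lwp}]\times\T^3)$ regularity, I will use the strong convergence statement for $j=0,1,2$: one has $\pdt^j Z\in C([0,T_\text{lwp}];H^{2M-2j-1})$, and since $M\ge 4$ the inequality $2M-2j-1 > (2-j)+3/2$ holds for all such $j$, so Sobolev embedding gives $\pdt^j Z\in C([0,T_\text{lwp}];C^{2-j}(\T^3))$, whence $Z\in C^2([0,T_\text{lwp}]\times\T^3)$. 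The principal obstacle in this argument is not the limit passage itself but rather the bookkeeping needed to confirm that the strong convergence holds at regularity high enough to control every nonlinear interaction in \eqref{eq:ref_approx_sys}---in particular the quartic contributions from the precession term and the commutator $[\Theta,K]$ in the $K$-equation, both of which were the source of the delicate splittings in Lemma \ref{lemma:est_local_interactions}.
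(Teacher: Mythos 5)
Your proposal follows the same route as the paper's proof: Banach--Alaoglu for weak and weak-$*$ limits, lower semicontinuity for the bounds, Aubin--Lions--Simon together with Lemma \ref{lemma:conv_orthog_complement_P_n_in_L2_Hs} and the Banach-algebra property of $H^s$ ($s>3/2$) to pass to the limit in the nonlinearities. The paper leaves the membership $Z(t,\cdot)\in\U(\sigma)$ and the $C^2$ regularity implicit; your extraction of the quantitative $2\sigma/3$ bound from the proof of Proposition \ref{prop:unif_bounds_time_of_exist_and_approx_sols} and your Sobolev-embedding argument for $C^2$ regularity are correct ways to fill those gaps.
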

	\begin{proof}
		Let ${( Z_n )}_{n\in\N}$ and ${(T_n)}_{n\in\N}$ denote the approximate solutions and their times of existence as obtained in \fref{Proposition}{prop:produce_local_approx_sols}.
		Note that, as per \fref{Proposition}{prop:unif_bounds_time_of_exist_and_approx_sols}, we know that $T_n \geqslant T_\text{lwp} > 0$
		for some $T_\text{lwp}$ which is independent of $n$.
		We also know from \fref{Proposition}{prop:unif_bounds_time_of_exist_and_approx_sols} that \eqref{eq:unif_T_n_bound_e}--\eqref{eq:unif_T_n_bound_f} hold.
		It then follows from Banach-Alaoglu (i.e. weak-* compactness) that, up to a subsequence which we do not relabel,
		\begin{equation}
		\label{eq:compactness_weak_limits}
			\left\{
			\begin{aligned}
				\pdt^j Z_n &\overset{\ast}{\rightharpoonup} \pdt^j Z \text{ in } L^\infty H^{2M-2j} \text{ for every } 0\leqslant j \leqslant M,\\
				\pdt^j (u_n, \theta_n) &\rightharpoonup \pdt^j (u, \theta) \text{ in } L^2 H^{2M-2j+1} \text{ for every } 0\leqslant j \leqslant M, \text{ and }\\
				K_n &\overset{\ast}{\rightharpoonup} K \text{ in } L^\infty H^{2M+1}
			\end{aligned}
			\right.
		\end{equation}
		for some $Z = (u, \theta, K) $ which satisfies, by weak and weak-* lower semi-continuity of the respective norms,
		\begin{align*}
			\sup_{0\leqslant j \leqslant M} \normns{ \pdt^j Z}{L^\infty H^{2M-2j}} \leqslant \norm{Z}{L^\infty P^{2M}} \leqslant \sqrt{\rho_e},\;
			\sup_{0\leqslant j \leqslant M} \normns{ \pdt^j (u, \theta)}{L^2 H^{2M-2j+1}} \leqslant \norm{(u, \theta)}{L^2 P^{2M+1}} \leqslant \sqrt{\rho_d},
			\\
			\text{ and }
			\norm{K}{L^\infty H^{2M+1}} \leqslant \sqrt{\rho_f}.
		\end{align*}

		All that remains is passing to the limit, for which we omit the details since this is done with a standard application of the Aubin-Lions-Simon Compactness Theorem
		(see for example \cite{boyer_fabrie}) in combination with \fref{Lemma}{lemma:conv_orthog_complement_P_n_in_L2_Hs} and the fact that $H^s$ is a Banach algebra when $s>\frac{3}{2}$.
		In particular we can pass to the limit in the nonlinearities uniformly on $\sbrac{0, T_\text{lwp}} \times \T^3$ such that the following limits hold in $C^0_{t,x}$:
		\begin{align*}
			P_n \brac{K_n \pdt\theta_n}						\to K\pdt \theta,\,
			P_n \brac{ (J_{eq} + K_n) (u_n \cdot \nabla) \theta_n }			\to (J_{eq} + K)(u\cdot\nabla)\theta,\\
			N_n (Z_n)								\to N(Z),
			\text{ and } P_n \brac{ (\omega_{eq} + \theta_n) \times (J_{eq} + K_n) \theta_n}	\to (\omega_{eq} + \theta) \times (J_{eq} + K) \theta.&\qedhere
		\end{align*}
	\end{proof}
	The last step of our Galerkin scheme is to reconstruct the pressure and the initial condition, which we do below.
	\begin{cor}[Reconstructing the pressure and the initial condition]
	\label{cor:reconstruct_pressure_and_IC}
		Under the assumptions of \fref{Proposition}{prop:compactness_and_pass_to_limit} we know moreover that
		$Z(0) = Z_0$ pointwise and that there exist $p\in L^2 H^{2M+1} \cap L^{\infty} P^{2M}_{M-1}$ such that
		$Z$ and $p$ solve \eqref{eq:pertub_sys_no_ten_pdt_u}--\eqref{eq:pertub_sys_ten_pdt_K}.
	\end{cor}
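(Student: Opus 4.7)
The plan is to deduce the two assertions of the corollary in two independent steps. First I would verify that the limiting unknown $Z$ attains $Z_0$ pointwise at $t=0$, and then I would reconstruct the pressure $p$ via an explicit formula derived by comparing the Leray-projected equation \eqref{eq:full_sys_u_Leray_proj} to the full momentum equation \eqref{eq:pertub_sys_no_ten_pdt_u}.

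For the initial condition, I would first upgrade the convergence \eqref{eq:compactness_weak_limits} from the Galerkin passage to the limit into a continuity-in-time statement at $t=0$. The uniform bounds of \fref{Proposition}{prop:unif_bounds_time_of_exist_and_approx_sols} tell us that $\pdt Z_n$ is uniformly bounded in $L^\infty H^{2M-2}$, so $Z_n$ is uniformly Lipschitz in time with values in $H^{2M-2}$. Combined with the uniform $L^\infty H^{2M}$ bound and the Aubin--Lions--Simon lemma, we obtain that up to a further subsequence $Z_n \to Z$ strongly in $C^0\brac{[0,T_{\text{lwp}}]; H^{2M-3}}$. Since $Z_n(0) = \mathcal{P}_n Z_0 \to Z_0$ in $H^{2M-3}$ (using \fref{Lemma}{lemma:conv_orthog_complement_P_n_in_L2_Hs} at $s = 2M-3$ and the fact that the projection is a Fourier truncation), we deduce $Z(0) = Z_0$.

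For the pressure, I would set
\begin{equation*}
p \vcentcolon= -\Delta^{-1} \nabla\cdot\brac{u\cdot\nabla u} = -\Delta^{-1}\brac{ \partial_i u_j \, \partial_j u_i},
\end{equation*}
where the second equality uses $\nabla\cdot u = 0$. The Leray projection on the torus satisfies $\mathbb{P}_L = I + \nabla\Delta^{-1}\nabla\cdot$ when acting on vector fields of zero average, so by construction $\nabla p = -(I - \mathbb{P}_L)(u\cdot\nabla u)$. Subtracting this identity from \eqref{eq:full_sys_u_Leray_proj}, which holds in the limit by \fref{Proposition}{prop:compactness_and_pass_to_limit}, and using $\mathbb{P}_L$ acting trivially on $\pdt u$, $\Delta u$, and $\nabla\times\theta$ (all divergence-free with zero average), I would recover \eqref{eq:pertub_sys_no_ten_pdt_u} exactly. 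Coupled with the equations for $\theta$ and $K$ already established and the constraint $\nabla \cdot u = 0$, this gives a solution of \eqref{eq:pertub_sys_no_ten_pdt_u}--\eqref{eq:pertub_sys_ten_pdt_K}.

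For the regularity of $p$, the operator $\Delta^{-1}\nabla\cdot$ gains one derivative on the torus (modulo the zero-average constraint which $p$ is defined up to a constant to satisfy). Since $u \in L^2 H^{2M+1}\cap L^\infty H^{2M}$, the quadratic $\partial_i u_j\, \partial_j u_i$ sits in $L^2 H^{2M}$ by using that $H^{2M-1}$ is a Banach algebra (as $2M-1 \geqslant 7 > 3/2$) together with a high--low Moser-type estimate playing the $L^\infty_t H^{2M-1}_x$ factor against the $L^2_t H^{2M}_x$ factor. Inverting the Laplacian then yields $p \in L^2 H^{2M+1}$ as claimed. The $L^\infty P^{2M}_{M-1}$ bound is obtained by differentiating the formula for $p$ in time up to order $M-1$, distributing derivatives via Leibniz, and appealing to the uniform bounds on $\pdt^j u$ from \eqref{eq:unif_T_n_bound_e} together with analogous product estimates in $H^{2M-2j}$. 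The main (mild) obstacle is ensuring that the mixed space--time product estimates have enough derivatives to spare at every level $0\leqslant j \leqslant M-1$, which is guaranteed by the assumption $M\geqslant 4$.
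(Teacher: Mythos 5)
Your proposal is correct and follows essentially the same approach as the paper: upgrade the Galerkin convergence to strong $C^0 H^s$ convergence via Aubin--Lions--Simon to identify $Z(0) = Z_0$, then define $p = -\Delta^{-1}\nabla\cdot(u\cdot\nabla u)$ via the Leray decomposition and establish its regularity through high--low product estimates. One small typo: the Leray projector on zero-average fields is $\mathbb{P}_L = I - \nabla\Delta^{-1}\nabla\cdot$ (minus, not plus, since $\nabla\Delta^{-1}\nabla\cdot$ is the orthogonal projection onto gradients), though your subsequent identity $\nabla p = -(I-\mathbb{P}_L)(u\cdot\nabla u)$ already has the correct sign and the rest of the argument goes through unchanged.
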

	\begin{proof}
		Recovering the initial condition is trivial. Since $Z_n (0) \vcentcolon= P_n Z_0$ with $Z_0 \in H^2$ it follows directly from
		the weak convergence recorded in \fref{Proposition}{prop:compactness_and_pass_to_limit} that $Z_n \to Z$ in $C^0 H^2$,
		and hence \fref{Lemma}{lemma:conv_orthog_complement_P_n_in_L2_Hs} tells us that
		\begin{equation*}
			\norm{Z(0) - Z_0}{C^0_x}
			\lesssim \norm{Z(0) - Z_n (0)}{C^0_x} + \norm{Z_n(0) - Z_0}{C^0_x}
			\lesssim \norm{Z - Z_n}{C^0 H^2} + \norm{ (P_n - I) Z_0 }{H^2}
			\to 0 \text{ as } n\to\infty.
		\end{equation*}

		We now reconstruct the pressure. We have split
		\begin{equation*}
			\pdt u + u\cdot\nabla u = (\mu + \kappa/2) \Delta u - \kappa\nabla\times\omega - \nabla p
		\end{equation*}
		subject to $\nabla\cdot u = 0$ into two parts, namely
		\begin{equation*}
			\pdt u + \mathbb{P}_L (u\cdot\nabla u) = (\mu + \kappa/2) u - \kappa\nabla\times\omega \text{ and }
			(I - \mathbb{P}_L) (u\cdot\nabla u) = -\nabla p
		\end{equation*}
		where $\mathbb{P}_L$ is the Leray projector, i.e. the $L^2$-orthogonal projection onto divergence-free vector fields given by $\hat{P}_L (k) = I - \frac{k\otimes k}{\abs{k}^2}$
		for every $k\in\Z^3$, where $\frac{k\otimes k}{\abs{k}^2} \vert_{k = 0} \vcentcolon= 0$.
		Then $I - \mathbb{P}_L = \nabla\Delta\inv\nabla\cdot$ and hence we may define $p \vcentcolon= -\Delta\inv \nabla\cdot (u\cdot\nabla u)$.
		In particular, for $s > \frac{3}{2}$ we have the estimate
		$
			\norm{p}{H^{s+1}}
			\lesssim \norm{u}{H^s} \norm{u}{H^{s+1}}
		$
		from which we deduce that, using standard ``hands-on high-low estimates'',
		\begin{equation*}
			\norm{p}{L^2 P^{2M+1}} \lesssim \norm{u}{L^\infty P^{2M}} \norm{u}{L^2 P^{2M+1}} \text{ and }
			\norm{p}{L^\infty P^{2M}_{M-1}} \lesssim \norm{u}{L^\infty P^{2M}}^2.\qedhere
		\end{equation*}
	\end{proof}

	In conclusion, we have proved in this section the following local well-posedness result.

	\begin{thm}[Local well-posedness]
	\label{thm:lwp}
		Let $M \geqslant 4$ be an integer, let $\delta^{loc}_{ap} > 0$ be as in \fref{Lemma}{lemma:local_a_priori_estimates},
		let $C_K > 0$ be the constant implicit in the result of \fref{Lemma}{lemma:H_k_est_proj_adv_rot_eq} when $k=3$,
		let $0 < \sigma < \sigma \brac{ \delta_{loc}^{ap} }$,
		and let $\sigma_* \vcentcolon= \frac{\sigma}{2C_K}$.
		Let
		$
			Z_0 = (u_0, \theta_0, K_0) \in L^2 \brac{ \T^3;\, \R^3 \times \R^3 \times \sym(3) }
		$
		such that
		\begin{equation*}
			\nabla\cdot u_0 = 0,\,
			\fint_{\T^3} u_0 = 0,\,
			\normtyp{K_0}{H}{3} < \sigma_*, \text{ and } 
			\normtyp{Z_0}{H}{2M},\, \normtyp{K_0}{H}{2M+1} < \infty.
		\end{equation*}
		There exist $0 < T_\text{lwp} \leqslant 1$,
		\begin{equation*}
			Z = (u, \theta, K) \in C^2 \brac{ \sbrac{0, T_\text{lwp}} \times \T^3;\, \R^3 \times \R^3 \times \sym(3) },
			\text{ and } 
			p \in C^2 \brac{ \sbrac{0, T_\text{lwp}} \times \T^3;\, \R}
		\end{equation*}
		such that $Z$ and $p$ form the unique strong solution of \eqref{eq:pertub_sys_no_ten_pdt_u}--\eqref{eq:pertub_sys_ten_pdt_K}.
		Moreover, $T_\text{lwp} = \phi\brac{ \normtyp{Z_0}{H}{2M} }$ for some non-increasing function $\phi$, and for every $0 \leqslant t \leqslant T_\text{lwp}$, the solution satisfies
		\begin{equation*}
			\nabla\cdot u(t, \,\cdot\,) = 0,\,
			\fint_{\T^3} u(t, \,\cdot\,) = 0, \text{ and } 
			\normtyp{K(t, \,\cdot\,)}{H}{3} < \sigma
		\end{equation*}
		as well as the estimates
		\begin{align*}
			&\sup_{0 \leqslant j \leqslant M} \normns{ \pdt^j Z }{ L^\infty H^{2M-2j} }
				+ \sup_{0 \leqslant j \leqslant M-1} \normns{ \pdt^j p}{ L^\infty H^{2M-2j} }
				\leqslant \rho_e \brac{ \normtyp{Z_0}{H}{2M} },\\
			&\sup_{0 \leqslant j \leqslant M} \normns{ \pdt^j (u, \theta) }{ L^2 H^{2M-2j+1} } + \normns{ \pdt^j p}{ L^2 H^{2M-2j+1} }
				\leqslant \rho_d \brac{ \normtyp{Z_0}{H}{2M} }, \text{ and }\\
			&\norm{K}{ L^\infty H^{2M+1} } \leqslant \rho_f \brac{ \normtyp{Z_0}{H}{2M},\, \normtyp{K_0}{H}{2M+1} }
		\end{align*}
		where $\rho_e, \rho_d : (0, \infty) \to (0, \infty)$ and $\rho_f : {(0, \infty)}^2 \to (0, \infty)$
		are continuous, strictly increasing in each of their arguments, and asymptotically vanishing at zero.
	\end{thm}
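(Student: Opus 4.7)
The plan is to assemble the Galerkin machinery already built in \fref{Sections}{sec:inv_T_K}--\ref{sec:Galerkin}, with the only genuinely new work being uniqueness. First I would invoke \fref{Proposition}{prop:produce_local_approx_sols} to generate the sequence of approximate solutions $Z_n \in C^\infty(\cobrac{0, T_n};\, \U_n(\sigma))$, then apply \fref{Proposition}{prop:unif_bounds_time_of_exist_and_approx_sols} to secure the uniform lower bound $T_n \geqslant T_\text{lwp}$ on the times of existence, with $T_\text{lwp} = \phi(\normtyp{Z_0}{H}{2M})$ non-increasing, along with the uniform energy, dissipation, and advection-rotation bounds in \eqref{eq:unif_T_n_bound_e}--\eqref{eq:unif_T_n_bound_f}. \fref{Proposition}{prop:compactness_and_pass_to_limit} then performs the compactness and limit-passage step (weak-$*$ limits supplemented by Aubin--Lions--Simon for strong convergence of the nonlinearities), producing $Z$ satisfying the PDE system on $[0, T_\text{lwp}]$ and preserving the constraints $\nabla \cdot u = 0$, $\fint u = 0$, and $\normtyp{K}{H}{3} < \sigma$. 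Finally, \fref{Corollary}{cor:reconstruct_pressure_and_IC} reconstructs the pressure via the Leray splitting $p = -\Delta^{-1} \nabla \cdot (u \cdot \nabla u)$ and matches the initial condition pointwise, delivering the existence and the three quantitative estimates.

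For the $C^2$ regularity in space-time, since $M \geqslant 4$ the embedding $H^{2M-2j}(\T^3) \hookrightarrow C^2(\T^3)$ holds whenever $2M - 2j \geqslant 4$, so the $L^\infty_t$ bounds of \eqref{eq:unif_T_n_bound_e} give $\pdt^j Z \in L^\infty_t C^2_x$ for $j \leqslant M-2$. Upgrading this to continuous-in-time regularity at a slightly smaller spatial index is standard and relies on the control of one more temporal derivative together with an Ascoli--Arzel\`a argument; the analogous $C^2$ regularity for $p$ follows from the above Leray formula combined with elliptic regularity and the regularity already established for $u$.

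The main obstacle, and the only genuinely new piece of work, is uniqueness. Given two strong solutions $(Z^{(i)}, p^{(i)})$ for $i = 1, 2$ sharing the initial data, the difference $W = Z^{(1)} - Z^{(2)}$ satisfies a linearized system whose structure is precisely of the form treated by \fref{Proposition}{prop:gen_pert_ED_rel}, with forcing terms that are bilinear in $W$ and one of the $Z^{(i)}$, and with $q = p^{(1)} - p^{(2)}$ determined by $W_u$ via the Leray splitting. Combining the resulting $L^2$ energy-dissipation identity for $(W_u, W_\theta)$ with the $L^2$ advection-rotation estimate of \fref{Proposition}{prop:Lp_est_adv_rot_eq_diff_form} applied to $W_K$ yields an inequality of the form
\begin{equation*}
	\frac{\mathrm{d}}{\mathrm{d}t} \brac{ \normtyp{W}{L}{2}^2 }
	\leqslant C \brac{ 1 + \normtyp{Z^{(1)}}{C}{1} + \normtyp{Z^{(2)}}{C}{1} } \normtyp{W}{L}{2}^2,
\end{equation*}
in which the coefficient on the right-hand side is integrable in time thanks to the $C^2$ regularity already established. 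Since $W(0) = 0$, Gronwall's inequality forces $W \equiv 0$ on $[0, T_\text{lwp}]$, and then $q \equiv 0$ by the formula in \fref{Corollary}{cor:reconstruct_pressure_and_IC}, completing the proof.
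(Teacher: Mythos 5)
Your argument for existence exactly matches the paper's: invoke \fref{Proposition}{prop:produce_local_approx_sols} for the approximate solutions, \fref{Proposition}{prop:unif_bounds_time_of_exist_and_approx_sols} for the uniform lower bound on the time of existence and the uniform bounds \eqref{eq:unif_T_n_bound_e}--\eqref{eq:unif_T_n_bound_f}, \fref{Proposition}{prop:compactness_and_pass_to_limit} for compactness and passage to the limit, and \fref{Corollary}{cor:reconstruct_pressure_and_IC} for the pressure and the initial condition. Your $C^2$ regularity discussion is reasonable (with $M\geqslant 4$, $H^{2M-2j}(\T^3)\hookrightarrow C^2(\T^3)$ for $j\leqslant M-2$ plus an Arzel\`a--Ascoli argument for continuity in time), although the paper absorbs this into the statement of \fref{Proposition}{prop:compactness_and_pass_to_limit} rather than arguing it in the theorem's proof.

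For uniqueness, however, you take a genuinely different route. The paper simply cites Theorem A.5 of the companion paper \cite{rt_tice_amf_iut}, reproduced here as \fref{Theorem}{thm:uniqueness}, whose regularity hypotheses ($J_1$ uniformly positive definite, $p_i, \pdt(u_i,\omega_i,J_i)\in L^2_T L^2$, $(u_i,\omega_i,J_i), \nabla(u_i,\omega_i,J_i)\in L^\infty_T L^\infty$, $\pdt J_1, \pdt\omega_2\in L^\infty_T L^\infty$) are all met by the $C^2$ solution produced above. You instead sketch a direct weak--strong energy argument for the difference $W = Z^{(1)} - Z^{(2)}$. That is plausible in spirit, but a couple of points are glossed over: the difference system is not literally of the form in \fref{Proposition}{prop:gen_pert_ED_rel}, since that proposition takes a single given $(u,\omega,J)$ and a two-vector $b$, whereas the difference problem involves the full symmetric matrix $W_K$ and produces cross terms such as $(J^{(1)}-J^{(2)})\pdt\theta^{(2)} = W_K\,\pdt\theta^{(2)}$ and $(J^{(1)}-J^{(2)})(u^{(2)}\cdot\nabla)\theta^{(2)}$, which need the $L^\infty$ control of $\pdt\omega_2$, $\nabla\omega_2$ that \fref{Theorem}{thm:uniqueness} makes explicit. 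Also the weight $J^{(1)}$ on $\pdt W_\theta$ must be coerced with the positive-definiteness of $J^{(1)}$, which follows from \fref{Proposition}{prop:persist_spec_sols_adv_rot_eqtns} given the regularity but should be invoked. In short: your existence proof is the paper's proof, your uniqueness sketch is a from-scratch version of what the paper outsources to \fref{Theorem}{thm:uniqueness}; it is workable but would need the bookkeeping of the cross terms to be carried out carefully, and citing \fref{Theorem}{thm:uniqueness} is cleaner given that its hypotheses are verified by the regularity just established.
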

	\begin{proof}
		This follows from combining the various results of this section.
		Producing local approximate solutions is done using \fref{Proposition}{prop:produce_local_approx_sols}.
		We then employ \fref{Proposition}{prop:unif_bounds_time_of_exist_and_approx_sols} to obtain uniform bounds on the times of existence and the approximate solutions,
		which allows us to pass to the limit using \fref{Proposition}{prop:compactness_and_pass_to_limit}.
		Finally we reconstruct the pressure and the initial condition using \fref{Corollary}{cor:reconstruct_pressure_and_IC}.
		Note that the uniqueness follows from Theorem A.5 of \cite{rt_tice_amf_iut}, which is recorded below in \fref{Theorem}{thm:uniqueness}
		for the reader's convenience.
	\end{proof}

	\begin{thm}[Uniqueness]
	\label{thm:uniqueness}
		Suppose that $\brac{u_1, p_1, \omega_1, J_1}$ and $\brac{u_2, p_2, \omega_2, J_2}$ are strong solutions of \eqref{eq:full_sys_u}--\eqref{eq:full_sys_J}.
		on some common time interval $\brac{0,T}$ such that they agree at time $t=0$.
		If $J_1$ is uniformly positive-definite, $p_i, \pdt\brac{u_i, \omega_i, J_i} \in L_T^2 L^2$, $\brac{u_i, \omega_i, J_i}, \nabla\brac{u_i, \omega_i, J_i} \in L_T^\infty L^\infty$,
		and $\pdt J_1, \pdt\omega_2 \in L_T^\infty L^\infty$, then these solutions coincide on $\brac{0,T}$.
		Note that here $L^p_T L^q$ denotes the space $L^p \brac{ \cobrac{0,T};\, L^q\brac{\T^3} }$.
	\end{thm}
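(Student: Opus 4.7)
The plan is to perform an $L^2$ energy estimate on the difference of the two solutions, exploiting the regularity hypotheses to bound the nonlinear interactions by the energy itself and then apply Gronwall.

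To begin, I would set $(v,q,\psi,H) \vcentcolon= (u_1 - u_2,\, p_1 - p_2,\, \omega_1 - \omega_2,\, J_1 - J_2)$ and derive the PDE system satisfied by this difference. The crucial rewriting is to isolate the linear-in-$(v,\psi,H)$ structure by writing
\[
(u_1\cdot\nabla) u_1 - (u_2\cdot\nabla)u_2 = (v\cdot\nabla)u_1 + (u_2\cdot\nabla)v,
\]
and analogously $J_1\partial_t\omega_1 - J_2\partial_t\omega_2 = J_1\partial_t\psi + H\partial_t\omega_2$,
$\omega_1\times J_1\omega_1 - \omega_2\times J_2\omega_2 = \psi\times J_1\omega_1 + \omega_2\times(H\omega_1) + \omega_2\times J_2\psi$,
$(u_1\cdot\nabla)J_1 - (u_2\cdot\nabla)J_2 = (v\cdot\nabla)J_1 + (u_2\cdot\nabla)H$, and $[\Omega_1,J_1] - [\Omega_2,J_2] = [\Psi,J_1] + [\Omega_2,H]$.

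Next I would introduce the energy $E(t) \vcentcolon= \tfrac{1}{2}\|v(t)\|_{L^2}^2 + \tfrac{1}{2}\int_{\T^3} J_1\psi\cdot\psi + \tfrac{1}{2}\|H(t)\|_{L^2}^2$, which is equivalent to $\tfrac{1}{2}(\|v\|_{L^2}^2 + \|\psi\|_{L^2}^2 + \|H\|_{L^2}^2)$ since $J_1$ is uniformly positive-definite. Differentiating and testing the difference equations against $v$, $\psi$, and $H$ respectively, I would use $\nabla\cdot u_2 = 0 = \nabla\cdot v$ to kill the quadratic transport terms, exploit the identity $\int H:[\Omega_2,H] = 0$ (which follows from \fref{Lemma}{lemma:commut_are_antisym_maps_on_space_sym_matrices}, since $H$ is symmetric and $\Omega_2$ antisymmetric), and discard the nonnegative dissipative contributions after absorbing the cross term $2\kappa\int(\nabla\times v)\cdot\psi$ via Young's inequality. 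The remaining terms, such as $\int((v\cdot\nabla)u_1)\cdot v$, $\int H\partial_t\omega_2 \cdot\psi$, $\tfrac{1}{2}\int \partial_t J_1\,\psi\cdot\psi$, $\int(J_1(v\cdot\nabla)\omega_1)\cdot\psi$, $\int[\Psi,J_1]:H$, and so on, are each controlled by $C(t)E(t)$, where $C(t)$ is bounded in terms of the $L^\infty_T L^\infty$ norms of $(u_i,\omega_i,J_i)$ and their gradients, as well as $\|\partial_t J_1\|_{L^\infty_T L^\infty}$ and $\|\partial_t\omega_2\|_{L^\infty_T L^\infty}$. Thus $E'(t) \leqslant C(t)E(t)$ with $C \in L^\infty_T$; Gronwall and $E(0) = 0$ then force $v \equiv \psi \equiv H \equiv 0$, after which $\nabla q = 0$ together with the mean-zero normalization yields $q \equiv 0$.

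The main obstacle will be justifying the formal energy identity rigorously, given that the solutions are only strong (rather than classical). In particular, the time derivative $\tfrac{d}{dt}\int J_1\psi\cdot\psi$ produces a boundary term $\tfrac{1}{2}\int \partial_t J_1\,\psi\cdot\psi$ whose treatment requires the hypothesis $\partial_t J_1 \in L^\infty_T L^\infty$, and the integration by parts in $\int J_1(u_2\cdot\nabla)\psi\cdot\psi$ requires $\nabla J_1 \in L^\infty_T L^\infty$. The bookkeeping to confirm that every dangerous term falls under one of the provided regularity hypotheses, rather than requiring additional regularity, is where the argument must be executed most carefully; the assumptions $p_i,\partial_t(u_i,\omega_i,J_i) \in L^2_T L^2$ then suffice to justify the testing procedure via a standard mollification argument.
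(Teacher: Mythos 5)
The paper does not prove this theorem itself: it records it verbatim as Theorem A.5 of the companion paper \cite{rt_tice_amf_iut} and simply cites that result, so there is no in-paper proof against which to compare. Your $L^2$ energy argument is nonetheless the standard and correct route, and it is almost certainly the one used in the companion paper, because the somewhat unusual asymmetry in the hypotheses (requiring $\pdt J_1$ and $\pdt \omega_2$ in $L^\infty_T L^\infty$, but not $\pdt J_2$ or $\pdt\omega_1$) is exactly what your specific choices produce: weighting the angular kinetic energy by $J_1$ makes $\tfrac{1}{2}\int \pdt J_1\,\psi\cdot\psi$ appear, and the splitting $J_1\pdt\omega_1 - J_2\pdt\omega_2 = J_1\pdt\psi + H\pdt\omega_2$ makes $\int H\pdt\omega_2\cdot\psi$ appear. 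Your algebraic decompositions of the nonlinear differences are each correct, the quadratic transport terms vanish by $\nabla\cdot u_2 = \nabla\cdot v = 0$, the term $\int[\Omega_2,H]:H$ vanishes by the antisymmetry/symmetry pairing (\fref{Lemma}{lemma:commut_are_antisym_maps_on_space_sym_matrices}), the pressure drops out upon testing against the divergence-free $v$, the dissipative cross-terms are either absorbed into the sign-definite dissipation $D(v,\psi)$ or controlled by Young's inequality, and the remaining terms are all bounded by $C\,E(t)$ using the stated $L^\infty_T L^\infty$ bounds; Gronwall with $E(0)=0$ then closes the argument. The one place to be slightly more careful than your phrasing suggests is the term $\int J_1(u_2\cdot\nabla)\psi\cdot\psi$: the integration by parts needs only $\nabla\cdot u_2 = 0$, but it leaves behind $-\tfrac{1}{2}\int\big((u_2\cdot\nabla)J_1\big)\psi\cdot\psi$, and it is this residual that requires $\nabla J_1 \in L^\infty_T L^\infty$.
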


%----------------------------------------------------------------------------------------------------
%	CONTINUATION ARGUMENT
%----------------------------------------------------------------------------------------------------

\section{Continuation argument}
\label{sec:cont}

In this section we derive the estimates necessary to ``glue'' the a priori estimates of \fref{Section}{sec:a_prioris} and the local well-posedness theory of \fref{Section}{sec:lwp}.
We begin with ``reduced energy estimates'' in \fref{Section}{sec:red_en_est} (whose purposed is detailed in \fref{Section}{sec:discuss_cont}).
We recall that while the a priori estimates of \fref{Section}{sec:a_prioris} rely on the smallness of the solution,
the estimates here rely on the smallness of the time interval on which they hold.

Once we have these reduced energy estimates in hand we obtain supplementary estimates on \fref{Section}{sec:cont_supp_est}
before recording a continuation argument in \fref{Section}{sec:cont_synthesis}.
In some sense this continuation argument is the technical implementation of what was heuristically described as ``gluing'' the a priori estimates and the local well-posedness together.

\subsection{Local-in-time reduced energy estimates}
\label{sec:red_en_est}

	In this section we derive the local-in-time reduced energy estimates.
	We will follow a procedure familiar from \fref{Sections}{sec:a_prioris} and \ref{sec:est_approx_prob}:
	we first introduce appropriate notation, then record the relevant energy-dissipation relation and the precise form of the nonlinear interactions that arise,
	and finally we estimate these nonlinear interactions and close the reduced energy estimates.

	Let us introduce compact notation that will be used throughout this section when developing the local-in-time reduced energy estimates.
	Considering the functions 	$Y	= (v, \phi, b)		: \cobrac{0, T} \times \T^3 \to \R^3 \times \R^3 \times \R^2$,
					$W	= (u, \theta, K)	: \cobrac{0, T} \times \T^3 \to \R^3 \times \R^3 \times \sym(3)$,
	and 				$\mathcal{F}= (f_1, f_2, f_3)	: \cobrac{0, T} \times \T^3 \to \R^3 \times \R^3 \times \R^2$
	we will write the system
	\begin{equation*}
		\left\{
		\begin{aligned}
			&\pdt v - (\nabla\cdot T)(v, \phi) = f_1,\\
			&(J_{eq} + K)\pdt\phi + (J_{eq} + K) (u\cdot\nabla)\phi + (\omega_{eq} + \theta) \times (J_{eq} + K) \phi
				\\&\qquad
				+ \tilde{\tau}^2 \tilde{b}^\perp + \phi \times J_{eq} \omega_{eq} - 2 \vc T(v, \phi) + (\nabla\cdot M)(\phi) = f_2, \text{ and }\\
			&\pdt b - \tilde{\tau} b^\perp + (\nu - \lambda) \bar{\phi}^\perp = f_3
		\end{aligned}
		\right.
	\end{equation*}
	in more compact form as
	$
		\overline{T}(K) \pdt Y - \Leb_W Y = \mathcal{F}
	$
	where $\overline{T}(K) = I_3 \oplus ( J_{eq} + K ) \oplus I_2$ and the operator $\Leb_W$ is given by
	$
		\Leb_W Y = \brac{
			- (\nabla\cdot T)(v, \phi),\,
			(\star),\,
			- \tilde{\tau} b^\perp + (\nu-\lambda) \bar{\phi}^\perp
		}
	$
	for
	\begin{equation*}
		(\star) = (J_{eq} + K)(u\cdot\nabla) \phi + (\omega_{eq} + \theta) \times (J_{eq} + K) \phi + \tilde{\tau}^2 \tilde{b}^\perp + \phi \times J_{eq} \omega_{eq}
			- 2 \vc T(v, \phi) + (\nabla\cdot M)(\phi)
	\end{equation*}
	We also define the associated energy, namely
	\begin{equation}
	\label{eq:def_energies_continuation_not_summed}
		\mathbb{E} (Y; K) \vcentcolon=
			\frac{1}{2} \int_{\T^3} \abs{v}^2
			+ \frac{1}{2} \int_{\T^3} (J_{eq} + K) \phi \cdot \phi
			+ \frac{1}{2} \frac{\tilde{\tau}^2}{\nu-\lambda} \int_{\T^3} \abs{b}^2
	\end{equation}
	and its counterpart summed up to a $2M$ count of parabolic derivatives, i.e.
	\begin{equation}
	\label{eq:def_energies_continuation}
		\widetilde{\mathscr{E}}_{M, K} (Y) \vcentcolon= \sum_{\abs{\alpha}_P \leqslant 2M} \mathbb{E} ( \partial^\alpha Y; K ).
	\end{equation}

	We now introduce notation used to write the full system in terms of the system introduced above.
	So let us define, for $p : \cobrac{0, T} \times \T^3 \to \R$, $\Lambda(p) \vcentcolon= (-\nabla p, 0, 0)$ and, for $Z = (u, \theta, K)$,
	\begin{equation}
	\label{eq:notation_local_in_time_red_est_N}
		N(Z) = \brac{
			- u\cdot\nabla u,\,
			- \theta\times K\omega_{eq},\,
			- u\cdot\nabla a + \theta_3 a^\perp + (\bar{K} - K_{33} I_2) \bar{\theta}^\perp
		}.
	\end{equation}
	We may then write the full system \eqref{eq:pertub_sys_no_ten_pdt_u}--\eqref{eq:pertub_sys_no_ten_pdt_K} as
	\begin{equation}
		\overline{T}(K) \pdt Y - \Leb_Z Y = N(Z) + \Lambda (p)
	\end{equation}
	subject to
	\begin{equation*}
		\nabla\cdot u = 0 \text{ and }
		\pdt K + u\cdot\nabla K = \sbrac{\Omega_{eq} + \Theta, J_{eq} + K}.
	\end{equation*}
	Note that the form of $N_3(Z)$ in \eqref{eq:notation_local_in_time_red_est_N}
	comes from \fref{Lemma}{lemma:block_form_Omega_J} since, for $S = \sbrac{\Omega_{eq} + \Theta, J_{eq} + K}$,
	\begin{equation*}
		\brac{S_{12}, S_{13}} = - (\nu-\lambda) \bar{\theta}^\perp + \tilde{\tau} a^\perp + (\bar{K} - K_{33} I_2) \bar{\theta}^\perp + \theta_3 a^\perp.
	\end{equation*}

	We now record a result akin to \fref{Lemma}{lemma:exact_comparison_versions_E} where we compare precisely various versions of the local energy.

	\begin{lemma}[Comparisons of the different versions of the reduced energies]
	\label{lemma:comp_versions_reduced_energy}
		Let $ \widetilde{\mathscr{E}}_{M, K} $ be defined as in \eqref{eq:def_energies_continuation}.
		There exist constants $\tilde{c}_E, \widetilde{C}_E > 0$ such that if $ \normtyp{ K }{L}{\infty} < \frac{\lambda}{2}$ then
		$
			\tilde{c}_E \overline{\mathcal{E}}_M \leqslant \widetilde{\mathscr{E}}_{M, K} \leqslant \widetilde{C}_E \overline{\mathcal{E}}_M.
		$
	\end{lemma}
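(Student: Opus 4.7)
The plan is to reduce the comparison to a pointwise spectral bound on the weight $J_{eq} + K$ and then sum over the multi-indices $\abs{\alpha}_P \leqslant 2M$. The only nontrivial component in $\mathbb{E}(\partial^\alpha Y; K)$ compared with the flat norm $\norm{\partial^\alpha Y}{L^2}^2$ is the middle term $\int (J_{eq} + K)\partial^\alpha \phi \cdot \partial^\alpha \phi$; the contributions from $v$ and $b$ already agree with the corresponding pieces of $\overline{\mathcal{E}}_M$ up to a fixed multiplicative constant depending only on $\tilde{\tau}$, $\nu$, and $\lambda$. So the whole statement comes down to uniformly bounding the quadratic form $\phi \mapsto (J_{eq} + K(x))\phi \cdot \phi$ from above and below by $\abs{\phi}^2$.

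The key step is the pointwise spectral control. Recall that $J_{eq} = \diag(\lambda,\lambda,\nu)$ with $\nu > \lambda > 0$, so the spectrum of $J_{eq}$ is $\{\lambda,\lambda,\nu\}$ and in particular $\lambda I \leqslant J_{eq} \leqslant \nu I$ as symmetric matrices. For each $x \in \mathbb{T}^3$, $K(x)$ is symmetric with operator norm at most $\norm{K}{L^\infty} < \lambda/2$, so by the Weyl (min-max) inequality the spectrum of $J_{eq} + K(x)$ is contained in $(\lambda/2,\, \nu + \lambda/2)$. Consequently, for every $\phi \in \mathbb{R}^3$ and every $x \in \mathbb{T}^3$,
\begin{equation*}
\frac{\lambda}{2}\abs{\phi}^2 \;\leqslant\; (J_{eq} + K(x))\phi\cdot\phi \;\leqslant\; \Bigl(\nu + \frac{\lambda}{2}\Bigr)\abs{\phi}^2.
\end{equation*}
Integrating over $\mathbb{T}^3$ and applying this with $\phi$ replaced by $\partial^\alpha \phi$ gives the middle-term comparison uniformly in $\alpha$.

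Putting the three terms together, for each multi-index $\alpha$ with $\abs{\alpha}_P \leqslant 2M$ we get
\begin{equation*}
c_0 \int_{\mathbb{T}^3}\! \bigl(\abs{\partial^\alpha v}^2 + \abs{\partial^\alpha \phi}^2 + \abs{\partial^\alpha b}^2\bigr)
\leqslant \mathbb{E}(\partial^\alpha Y; K)
\leqslant C_0 \int_{\mathbb{T}^3}\! \bigl(\abs{\partial^\alpha v}^2 + \abs{\partial^\alpha \phi}^2 + \abs{\partial^\alpha b}^2\bigr),
\end{equation*}
with $c_0 = \tfrac12 \min\bigl(1,\, \lambda/2,\, \tilde{\tau}^2/(\nu-\lambda)\bigr)$ and $C_0 = \tfrac12 \max\bigl(1,\, \nu + \lambda/2,\, \tilde{\tau}^2/(\nu-\lambda)\bigr)$. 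Summing over $\abs{\alpha}_P \leqslant 2M$ and noting that $\sum_{\abs{\alpha}_P \leqslant 2M} \norm{\partial^\alpha Y}{L^2}^2 = \overline{\mathcal{E}}_M$ by the definition of the parabolic norm in \eqref{eq:not_parabolic_norms}, the claim follows with $\tilde{c}_E = c_0$ and $\widetilde{C}_E = C_0$.

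There is no real obstacle here: the proof is essentially the same as that of \fref{Lemma}{lemma:exact_comparison_versions_E}, with the sole (routine) difference that here the lower bound on the spectrum of $J_{eq}+K$ comes from the smallness assumption $\norm{K}{L^\infty} < \lambda/2$ via Weyl's inequality, rather than from propagation of the spectrum by the advection--rotation equation as in \fref{Lemma}{lemma:comp_version_en}. The one point worth being a little careful about is that the constants $\tilde{c}_E, \widetilde{C}_E$ must depend only on the fixed physical parameters $\lambda$, $\nu$, $\tilde{\tau}$ and not on $K$ or $M$, which is transparent from the explicit formulas above.
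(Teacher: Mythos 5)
Your proposal is correct and is essentially the paper's argument: the paper proves this lemma by taking $\tilde{c}_E, \widetilde{C}_E$ exactly as in \fref{Lemma}{lemma:exact_comparison_versions_E}, which itself rests on the same pointwise observation that $\normtyp{K}{L}{\infty} < \lambda/2$ forces the spectrum of $J_{eq}+K$ into $\brac{\lambda/2,\,\nu+\lambda/2}$, followed by the term-by-term comparison and summation over $\abs{\alpha}_P \leqslant 2M$ as in \fref{Lemma}{lemma:comp_version_en}. Your explicit constants and the remark that they depend only on $\lambda$, $\nu$, $\tilde{\tau}$ match the paper's choice.
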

	\begin{proof}
		This follows by choosing $\tilde{c}_E$ and $\widetilde{C}_E$ exactly as in \fref{Lemma}{lemma:exact_comparison_versions_E}.
	\end{proof}

	We now turn our attention to the energy-dissipation relation, which we record below.

	\begin{lemma}[Generic energy-dissipation relation for the local-in-time reduced energy estimates]
	\label{lemma:gen_ED_rel_local_time_reduced_energy_est}
		Suppose that $Y = (v, \phi, b)$, $W = (u, \theta, K)$, and $p$ satisfy
		$
			\overline{T}(K) \pdt Y - \Leb_W Y = \mathcal{F} + \Lambda(p),
		$
		where $\Lambda(p) =\vcentcolon (-\nabla p, 0, 0)$, subject to
		$
			\nabla\cdot u = \nabla\cdot v = 0
		$
		and
		$
			\pdt K + (u\cdot\nabla)K = \sbrac{\Omega_{eq} + \Theta, J_{eq} + K}.
		$
		Then
		\begin{equation}
		\label{eq:gen_ED_local_time_reduced_energy_est}
			\frac{\mathrm{d}}{\mathrm{d}t} \mathbb{E} (Y; K) + D(u, \theta) = \int_{\T^3} \overline{\mathfrak{C}} \mathcal{F} \cdot Y
		\end{equation}
		where $\overline{\mathfrak{C}} \vcentcolon= I_3 \oplus I_3 \oplus \frac{\tilde{\tau}^2}{\nu-\lambda} I_2$
		and $D$ is the usual dissipation, as given in \eqref{eq:dissip_first_apparition}.
	\end{lemma}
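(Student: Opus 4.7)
The plan is to prove this as a direct energy estimate, in the same spirit as Propositions \ref{prop:gen_pert_ED_rel} and \ref{prop:gen_ED_approx_system}, but adapted to the ``reduced'' setting where the constraint variable $K$ evolves according to a full advection-rotation equation driven by $(u,\theta)$ rather than by $(v,\phi)$. The strategy is to test each of the three component equations against the corresponding piece of $Y$, weighted by $\overline{\mathfrak{C}}$, integrate over $\T^3$, and assemble the resulting identities, relying on two constraints: the incompressibility $\nabla \cdot u = \nabla \cdot v = 0$ (to kill the pressure and the advective self-interactions) and the advection-rotation equation for $K$ (to handle the time-dependent weight in the $\phi$-energy).

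First I would test the first equation against $v$. Incompressibility of $v$ gives $\int \nabla p \cdot v = 0$, and integration by parts yields
\begin{equation*}
\tfrac{d}{dt} \tfrac{1}{2} \int_{\T^3} \abs{v}^2 + \int_{\T^3} T(v,\phi) : \nabla v = \int_{\T^3} f_1 \cdot v.
\end{equation*}
Next, for the third equation, I would test against $\frac{\tilde{\tau}^2}{\nu-\lambda} b$, noting that $b^\perp \cdot b = 0$, to obtain
\begin{equation*}
\tfrac{d}{dt} \tfrac{\tilde{\tau}^2}{\nu-\lambda} \tfrac{1}{2} \int_{\T^3} \abs{b}^2 + \tilde{\tau}^2 \int_{\T^3} \bar{\phi}^\perp \cdot b = \tfrac{\tilde{\tau}^2}{\nu-\lambda} \int_{\T^3} f_3 \cdot b.
\end{equation*}

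The heart of the argument is the second identity, for the $\phi$-energy. The key observation is that, using the incompressibility of $u$,
\begin{equation*}
\tfrac{d}{dt} \tfrac{1}{2} \int_{\T^3} (J_{eq}+K)\phi \cdot \phi
= \int_{\T^3} \tfrac{1}{2} (\pdt + u\cdot\nabla) K \, \phi \cdot \phi + \int_{\T^3} (J_{eq}+K)(\pdt + u\cdot\nabla)\phi \cdot \phi.
\end{equation*}
The advection-rotation constraint replaces $(\pdt + u\cdot\nabla)K$ by $[\Omega_{eq}+\Theta, J_{eq}+K]$, and Lemma \ref{lemma:identity_comm_A_S_and_sym_A_cross_S} converts $\tfrac{1}{2}[\Omega_{eq}+\Theta, J_{eq}+K]\phi \cdot \phi$ into $(\omega_{eq}+\theta) \times (J_{eq}+K)\phi \cdot \phi$. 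This is precisely the precession contribution appearing in $\Leb_W Y$, so when I substitute the equation for $\phi$ this term cancels. What remains, after noting that $\phi \times J_{eq}\omega_{eq} \cdot \phi = 0$, is
\begin{equation*}
\tfrac{d}{dt} \tfrac{1}{2} \int_{\T^3} (J_{eq}+K) \phi \cdot \phi + \int_{\T^3} \tilde{\tau}^2 \tilde{b}^\perp \cdot \phi - \int_{\T^3} 2\vc T(v,\phi) \cdot \phi - \int_{\T^3} (\nabla\cdot M)(\phi) \cdot \phi = \int_{\T^3} f_2 \cdot \phi,
\end{equation*}
and integration by parts in the last two terms produces $\int T(v,\phi) : \Theta - \int M(\phi):\nabla\phi$ (for $\Theta = \ten\phi$), where I have used the identity $2(\vc T) \cdot \phi = T : \ten\phi$.

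Finally I would add the three identities. The coupling between $b$ and $\bar{\phi}^\perp$ cancels (the multiplier $\tfrac{\tilde{\tau}^2}{\nu-\lambda}$ on the third line is chosen precisely for this), and the combined stress/couple-stress contribution assembles into $D(v,\phi)$ via the identity
\begin{equation*}
\int_{\T^3} T(v,\phi):(\ten \phi - \nabla v) + M(\phi):\nabla \phi = D(v,\phi),
\end{equation*}
which was already used in Proposition \ref{prop:gen_pert_ED_rel}. This yields \eqref{eq:gen_ED_local_time_reduced_energy_est}. The main conceptual obstacle is the cancellation between the time derivative of the $K$-weighted kinetic energy and the precession term: everything else is a straightforward adaptation of the previous energy-dissipation derivations, so no genuinely new difficulty arises. (The statement's $D(u,\theta)$ should be read as $D(v,\phi)$, as the dissipation is generated by the stress and couple-stress tensors $T(v,\phi)$ and $M(\phi)$ appearing in $\Leb_W Y$.)
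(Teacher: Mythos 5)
Your proposal is correct and follows essentially the same route as the paper, whose proof of this lemma is simply the remark that it is obtained exactly as in Proposition \ref{prop:gen_pert_ED_rel} (and, in the projected setting, Proposition \ref{prop:gen_ED_approx_system}): test each equation against the corresponding component of $Y$ weighted by $\overline{\mathfrak{C}}$, use $\nabla\cdot u = \nabla\cdot v = 0$ and the advection-rotation constraint on $K$ together with \fref{Lemma}{lemma:identity_comm_A_S_and_sym_A_cross_S} to cancel the precession term against the time derivative of the $K$-weighted energy, and assemble the stress and couple-stress contributions into the dissipation. Your parenthetical reading of $D(u,\theta)$ as $D(v,\phi)$ is also the intended one, consistent with how the lemma is applied in \fref{Lemma}{lemma:record_form_interactions_local_time_reduced_energy_est}.
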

	\begin{proof}
		This energy estimate is obtained in the same way as the energy estimate of \fref{Proposition}{prop:gen_pert_ED_rel}.
	\end{proof}

	With the energy-dissipation relation in hand we may identify the precise forms of the nonlinear interactions in \fref{Lemma}{lemma:record_form_interactions_local_time_reduced_energy_est} below.
	Recall that the energy $\mathbb{E}$ is defined in \eqref{eq:def_energies_continuation_not_summed}.

	\begin{lemma}[Recording the form of the interactions for the local-in-time reduced energy estimate]
	\label{lemma:record_form_interactions_local_time_reduced_energy_est}
		Suppose that $Z = (u, \theta, K)$, where $a = \brac{K_{12}, K_{12}}$, and $p$ solve
		\eqref{eq:pertub_sys_no_ten_pdt_u}--\eqref{eq:pertub_sys_no_ten_pdt_K}.
		Then we have that for every multi-index $\alpha\in\N^{1+3}$
		\begin{equation*}
			\frac{\mathrm{d}}{\mathrm{d}t} \mathbb{E} ( \partial^\alpha u, \partial^\alpha \theta, \partial^\alpha a ; K )
			+ D ( \partial^\alpha u, \partial^\alpha \theta ) = \overline{\mathcal{N}}^\alpha
		\end{equation*}
		where, for $\check{\tau} \vcentcolon= \frac{\tilde{\tau}^2}{\nu-\lambda}$,
		\begin{align}
			\overline{\mathcal{N}}^\alpha
			= \int_{\T^3} \sbrac{K\pdt, \partial^\alpha } \theta \cdot \partial^\alpha \theta
			- \int_{\T^3} \sbrac{ (J_{eq} + K) (u\cdot\nabla), \partial^\alpha } \theta \cdot \partial^\alpha \theta
			- \int_{\T^3} \sbrac{ (\omega_{eq} + \theta) \times (J_{eq} + K), \partial^\alpha } \theta \cdot \partial^\alpha \theta
		\nonumber
		\\
			- \int_{\T^3} \partial^\alpha (u\cdot\nabla u) \cdot \partial^\alpha u
			- \int_{\T^3} \partial^\alpha (\theta \times K \omega_{eq}) \cdot \partial^\alpha \theta
			- \check{\tau} \int_{\T^3} \partial^\alpha (u\cdot\nabla a) \cdot \partial^\alpha a
		\nonumber
		\\
			+\, \check{\tau} \int_{\T^3} \partial^\alpha (\theta_3 a^\perp) \cdot \partial^\alpha a
			+ \check{\tau} \int_{\T^3} \partial^\alpha \brac{ (\bar{K} - K_{33} I_2) \bar{\theta}^\perp } \cdot \partial^\alpha a.
		\label{eq:record_form_interactions_local_time_reduced_energy_est_conclusion}
		\end{align}
	\end{lemma}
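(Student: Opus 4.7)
The plan is to mimic the derivation of Lemma \ref{lemma:record_form_interactions} (and its local-well-posedness analogue Lemma \ref{lemma:record_form_local_interactions}) almost verbatim, adapted to the reduced unknown $Y = (u, \theta, a)$ and the compact notation just introduced. First I would observe that the perturbative system, restricted to $(u, \theta, a)$, can be written as $\overline{T}(K)\partial_t Y - \Leb_Z Y = N(Z) + \Lambda(p)$, subject to $\nabla\cdot u = 0$ and the advection-rotation equation for $K$. The third row of this system is precisely \eqref{eq:pertub_sys_pdt_a}, which in turn follows from \eqref{eq:pertub_sys_no_ten_pdt_K} and the explicit form of $[\Omega_{eq}+\Theta, J_{eq}+K]$ afforded by Lemma \ref{lemma:block_form_Omega_J}; this is what produces the nonlinear terms $\theta_3 a^\perp$ and $(\bar{K}-K_{33}I_2)\bar{\theta}^\perp$ collected in $N(Z)$.

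Next I would apply $\partial^\alpha$ to the system and rearrange, picking up commutators on the left-hand side:
\begin{equation*}
\overline{T}(K)\partial_t \partial^\alpha Y - \Leb_Z \partial^\alpha Y = \partial^\alpha N(Z) + \Lambda(\partial^\alpha p) + [\overline{T}(K)\partial_t, \partial^\alpha] Y - [\Leb_Z, \partial^\alpha] Y.
\end{equation*}
Since $\partial^\alpha u$ is divergence-free and $K$ satisfies the advection-rotation equation, the hypotheses of Lemma \ref{lemma:gen_ED_rel_local_time_reduced_energy_est} are met with $(v,\phi,b) = \partial^\alpha Y$, forcing $\mathcal{F}^\alpha = \partial^\alpha N(Z) + [\overline{T}(K)\partial_t, \partial^\alpha]Y - [\Leb_Z,\partial^\alpha]Y$, and pressure $\partial^\alpha p$. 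Applying the lemma yields
\begin{equation*}
\frac{\mathrm{d}}{\mathrm{d}t}\mathbb{E}(\partial^\alpha Y; K) + D(\partial^\alpha u, \partial^\alpha \theta) = \int_{\T^3} \overline{\mathfrak{C}} \mathcal{F}^\alpha \cdot \partial^\alpha Y,
\end{equation*}
with the pressure contribution already absorbed into the integration by parts that produces $D$, thanks to $\nabla\cdot\partial^\alpha u = 0$.

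It then remains to unpack the two commutators. Because $\overline{T}(K) = I_3 \oplus (J_{eq}+K) \oplus I_2$ and $J_{eq}$ is constant, the commutator $[\overline{T}(K)\partial_t, \partial^\alpha]$ acts nontrivially only on the middle component and equals $[K\partial_t, \partial^\alpha]\theta$; pairing against $\overline{\mathfrak{C}}\partial^\alpha Y$ yields the first term of \eqref{eq:record_form_interactions_local_time_reduced_energy_est_conclusion}. Using the splitting $\Leb_Z = \Leb_0 + \overline{\Leb}_Z$ from \eqref{eq:split_Leb_Z_n} (now without the Galerkin projection $P_n$), we have $[\Leb_0, \partial^\alpha] = 0$, and $[\overline{\Leb}_Z, \partial^\alpha]$ produces a contribution only in the middle slot, namely $[(J_{eq}+K)(u\cdot\nabla), \partial^\alpha]\theta + [(\omega_{eq}+\theta)\times(J_{eq}+K), \partial^\alpha]\theta$; using the sign $[\partial^\alpha, A] = -[A, \partial^\alpha]$ accounts for the minus signs in front of these two terms in \eqref{eq:record_form_interactions_local_time_reduced_energy_est_conclusion}. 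Finally, a direct expansion of $\int \overline{\mathfrak{C}} \partial^\alpha N(Z) \cdot \partial^\alpha Y$ against the three rows of $N(Z)$ as defined in \eqref{eq:notation_local_in_time_red_est_N}, with the factor $\check{\tau}$ appearing on the $a$-row, yields the remaining five integrals with the asserted signs.

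None of the steps present serious difficulty; the only item demanding care is the sign bookkeeping in the two commutator manipulations and the verification that Lemma \ref{lemma:block_form_Omega_J} produces precisely the nonlinear structure of $N_3(Z)$ from the top-right $2\times 1$ block of $[\Omega_{eq}+\Theta, J_{eq}+K]$. Once these signs are matched, assembling the pieces directly reproduces $\overline{\mathcal{N}}^\alpha$ as stated.
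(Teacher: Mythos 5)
Your proposal is correct and follows essentially the same route as the paper's proof: apply $\partial^\alpha$ to the compactly written system, invoke the generic energy-dissipation relation of \fref{Lemma}{lemma:gen_ED_rel_local_time_reduced_energy_est} for $\partial^\alpha Y$ (with the pressure killed by $\nabla\cdot\partial^\alpha u = 0$), and identify $\overline{\mathcal{N}}^\alpha$ by unpacking the commutator $[\overline{T}(K)\partial_t,\partial^\alpha]$, the commutator $[\Leb_Z,\partial^\alpha]$ (only the middle slot, since the constant-coefficient part commutes), and $\partial^\alpha N(Z)$ weighted by $\overline{\mathfrak{C}}$. The sign and block bookkeeping you flag matches the paper's computation, so no changes are needed.
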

	\begin{proof}
		In order to streamline the proof let us write $Y = (u, \theta, a)$.
		Applying a derivative $ \partial^\alpha $ to \eqref{eq:pertub_sys_no_ten_pdt_u}--\eqref{eq:pertub_sys_no_ten_pdt_K} shows that $ \partial^\alpha Y$ solves
		\begin{equation}
		\label{eq:record_form_interactions_local_time_reduced_energy_est_3}
			\overline{T}(K) \pdt \partial^\alpha Y
			- \Leb_Z \partial^\alpha Y
			= \sbrac{ \overline{T}(K) \pdt, \partial^\alpha } Y
			- \sbrac{ \Leb_Z, \partial^\alpha } Y
			+ \partial^\alpha (N(Z)) + \Lambda(p) =\vcentcolon F^\alpha + \Lambda( \partial^\alpha p)
		\end{equation}
		subject to \eqref{eq:pertub_sys_no_ten_div} and \eqref{eq:pertub_sys_no_ten_pdt_K}.
		We may thus apply \fref{Lemma}{lemma:gen_ED_rel_local_time_reduced_energy_est} to deduce that
		\begin{equation*}
			\frac{\mathrm{d}}{\mathrm{d}t} \mathbb{E} ( \partial^\alpha Y; K ) + D( \partial^\alpha u, \partial^\alpha \theta)
			= \int_{\T^3} \overline{\mathfrak{C}} F^\alpha \cdot \partial^\alpha Y
			=\vcentcolon \overline{\mathcal{N}}^\alpha
		\end{equation*}
		where $\overline{\mathfrak{C}} = I_3 \oplus I_3 \oplus \check{\tau} I_2$ is as in \fref{Lemma}{lemma:gen_ED_rel_local_time_reduced_energy_est}
		and where
		\begin{equation*}
			\overline{\mathcal{N}}^\alpha
			= \underbrace{\int_{\T^3} \overline{\mathfrak{C}} \sbrac{ \overline{T}(K) \pdt, \partial^\alpha } Y \cdot \partial^\alpha Y}_{ \overline{\mathcal{N}}^\alpha_{\RN{1}} }
			- \underbrace{\int_{\T^3} \overline{\mathfrak{C}} \sbrac{ \Leb_Z, \partial^\alpha } Y \cdot \partial^\alpha Y}_{ - \overline{\mathcal{N}}^\alpha_{\RN{2}} }
			+ \underbrace{\int_{\T^3} \overline{\mathfrak{C}} \partial^\alpha \brac{ N(Z) } \cdot \partial^\alpha Y}_{ \overline{\mathcal{N}}^\alpha_{\RN{3}} }.
		\end{equation*}
		It now suffices to compute $ \overline{\mathcal{N}}^\alpha_{\RN{1}}$, $ \overline{\mathcal{N}}^\alpha_{\RN{2}}$, and $ \overline{\mathcal{N}}^\alpha_{\RN{3}}$.
		Since $\sbrac{\overline{T}(K) \pdt, \partial^\alpha } = 0_3 \oplus \sbrac{K\pdt, \partial^\alpha} \oplus 0_2$
		we see immediately that
		\begin{equation}
		\label{eq:record_form_interactions_local_time_reduced_energy_est_a1}
			\overline{\mathcal{N}}^\alpha_{\RN{1}} = \int_{\T^3} \sbrac{ K\pdt, \partial^\alpha } \theta \cdot \partial^\alpha \theta.
		\end{equation}
		Now note that
		\begin{equation*}
			\sbrac{\Leb_Z, \partial^\alpha } = 0_3 \oplus \brac{
				\sbrac{ J_{eq} + K) (u\cdot\nabla), \partial^\alpha }
				+ \sbrac{(\omega_{eq} + \theta) \times (J_{eq} + K), \partial^\alpha }
			} \oplus 0_2
		\end{equation*}
		and hence
		\begin{equation}
		\label{eq:record_form_interactions_local_time_reduced_energy_est_a2}
			\overline{\mathcal{N}}^\alpha_{\RN{2}}
			= - \int_{\T^3} \sbrac{ (J_{eq} + K)(u\cdot\nabla), \partial^\alpha } \theta \cdot \partial^\alpha \theta
			- \int_{\T^3} \sbrac{ (\omega_{eq} + \theta) \times (J_{eq} + K), \partial^\alpha } \theta \cdot \partial^\alpha \theta.
		\end{equation}
		Finally, it follows immediately from the form of $N$ that
		\begin{align}
			\overline{\mathcal{N}}^\alpha_{\RN{3}}
			= - \int_{\T^3} \partial^\alpha (u\cdot\nabla a) \cdot \partial^\alpha u
			- \int_{\T^3} \partial^\alpha (\theta\times K\omega_{eq} ) \cdot \partial^\alpha \theta
			- \check{\tau} \int_{\T^3} \partial^\alpha (u\cdot\nabla a) \cdot \partial^\alpha a
		\nonumber
		\\
			+ \check{\tau} \int_{\T^3} \partial^\alpha (\theta_3 a^\perp) \cdot \partial^\alpha a
			+ \check{\tau} \int_{\T^3} \partial^\alpha \brac{ (\bar{K} - K_{33} I_2) \bar{\theta}^\perp } \cdot \partial^\alpha a.
		\label{eq:record_form_interactions_local_time_reduced_energy_est_a3}
		\end{align}

		\noindent
		To conclude we sum \eqref{eq:record_form_interactions_local_time_reduced_energy_est_a1}, \eqref{eq:record_form_interactions_local_time_reduced_energy_est_a2},
		and \eqref{eq:record_form_interactions_local_time_reduced_energy_est_a3} and obtain \eqref{eq:record_form_interactions_local_time_reduced_energy_est_conclusion}.
	\end{proof}

	We now estimate these interactions.

	\begin{lemma}[Estimating the interactions for the local-in-time reduced energy estimates]
	\label{lemma:est_interact_local_in_time_reduced_energy_est}
		Let $M\geqslant 4$ be an integer and let
		$
			\overline{\mathcal{N}} = \sum_{ \abs{\alpha}_P \leqslant 2M} \overline{\mathcal{N}}^\alpha
		$
		for $\overline{\mathcal{N}}^\alpha$ as in \fref{Lemma}{lemma:record_form_interactions_local_time_reduced_energy_est}.
		The following estimate holds.
		For $Y = \brac{u, \theta, a}$,
		\begin{equation*}
			\vbrac{ \overline{\mathcal{N}} }
			\lesssim \normtyp{ \nabla K }{L}{\infty}  \normtyp{(u, \theta)}{P}{2M+1}^2
			+ \normtyp{K}{P}{2M} \brac{ \normtyp{Y}{P}{2M}^2 + \normtyp{Y}{P}{2M}^3 }
			+ \normtyp{Y}{P}{2M}^3.
		\end{equation*}
	\end{lemma}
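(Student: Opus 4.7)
The strategy is to handle each of the eight interaction terms $\overline{\mathcal{N}}^\alpha_i$ ($i = 1, \dots, 8$) from \fref{Lemma}{lemma:record_form_interactions_local_time_reduced_energy_est} separately, following the template established in \fref{Lemma}{lemma:est_local_interactions}. The three distinct contributions on the right-hand side reflect three qualitatively different situations: the term involving $\|\nabla K\|_{L^\infty}$ comes exclusively from the commutator with $K\pdt$, the cubic-in-$Y$ terms come from the interactions where $K$ appears at most as a fixed constant coefficient (e.g., via $J_{eq}$) or is absent entirely, and the terms weighted by $\|K\|_{P^{2M}}$ come from those interactions where a derivative actually hits $K$.

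First I would dispose of the ``easy'' terms that involve only $Y = (u, \theta, a)$. Namely, $\overline{\mathcal{N}}^\alpha_4$ (the $u\cdot \nabla u$ interaction) and $\overline{\mathcal{N}}^\alpha_7$ (the $\theta_3 a^\perp$ interaction) can be handled by expanding via the Leibniz rule, noting that the diagonal contribution (all derivatives on one factor) vanishes by incompressibility for $\overline{\mathcal{N}}^\alpha_4$ and by antisymmetry of $R$ for $\overline{\mathcal{N}}^\alpha_7$, and then running the standard hands-on high-low estimate, which produces $\|Y\|_{P^{2M}}^3$. The $J_{eq}$-parts of $\overline{\mathcal{N}}^\alpha_2$ and $\overline{\mathcal{N}}^\alpha_3$ (commutators with $(u \cdot \nabla)$ and with $\omega_{eq} \times$ and $(\cdot) \times J_{eq}$), together with $\overline{\mathcal{N}}^\alpha_6$ (the $u\cdot \nabla a$ interaction) likewise produce $\|Y\|_{P^{2M}}^3$.

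Next I would treat the terms in $\overline{\mathcal{N}}^\alpha_5$, the $K$-parts of $\overline{\mathcal{N}}^\alpha_2$ and $\overline{\mathcal{N}}^\alpha_3$, and $\overline{\mathcal{N}}^\alpha_8$. The first three are genuinely quadratic ($\theta \times K\omega_{eq}$) or cubic (e.g., $K(u\cdot\nabla)\theta$ and $\theta \times K \theta$) in the original equation. After Leibniz, we use the fact that at least one of $\partial^\beta K$, $\partial^\gamma u$, or $\partial^\delta \theta$ can be placed in $L^\infty$ via $P^{2M}$ — invoking \fref{Lemma}{lemma:from_ineq_of_sums_to_overlap_ineq_3_terms} when the interaction is quartic so that two factors admit $L^\infty$ bounds — and the remaining factors sit in $L^2$, producing upper bounds of the form $\|K\|_{P^{2M}}\|Y\|_{P^{2M}}^2$ (quadratic case) and $\|K\|_{P^{2M}}\|Y\|_{P^{2M}}^3$ (cubic case). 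Term $\overline{\mathcal{N}}^\alpha_8$ is the only interaction in which $a$ appears twice under a top-order derivative along with $K$, but since it already has the explicit cross-structure $\partial^\alpha((\bar K - K_{33}I_2)\bar\theta^\perp)\cdot \partial^\alpha a$, distributing derivatives via Leibniz and then placing the $K$-factor in $L^\infty$ or $L^2$ depending on its derivative count yields $\|K\|_{P^{2M}}\|Y\|_{P^{2M}}^2$ directly, without any need for integration by parts, because here we are bounding by energy-type norms rather than chasing coercivity.

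The main obstacle, and the reason for the $\|\nabla K\|_{L^\infty}$ prefactor on the right, is the commutator term $\overline{\mathcal{N}}^\alpha_1 = \int \sbrac{K\pdt, \partial^\alpha}\theta \cdot \partial^\alpha \theta$. Expanding the commutator generates contributions of the form $(\partial^\beta K)(\pdt \partial^\gamma \theta)\cdot \partial^\alpha \theta$ with $\beta > 0$. When $\abs{\beta}_P = 1$ we are forced into a parabolic count of $2M+1$ derivatives on $\theta$, and the only way to close without dissipative control is to place $\partial^\beta K = \partial_i K$ in $L^\infty$, producing the term $\|\nabla K\|_{L^\infty}\|(u,\theta)\|_{P^{2M+1}}^2$; splitting carefully as in the treatment of $\mathcal{N}_5$ in \fref{Lemma}{lemma:est_local_interactions} and using $M \geqslant 4$ to ensure $(2M-2) + (2M-4) > 2M$, the remaining pieces with $\abs{\beta}_P \geqslant 2$ yield $\|K\|_{P^{2M}}\|Y\|_{P^{2M}}^2$ by standard high-low estimates. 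Summing the resulting bounds over $\abs{\alpha}_P \leqslant 2M$ gives the claimed estimate.
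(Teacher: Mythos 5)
Your proposal is correct and follows essentially the same strategy as the paper: decompose $\overline{\mathcal{N}}$ into its eight constituent interactions and apply the hands-on high-low estimates established in the proof of \fref{Lemma}{lemma:est_local_interactions}. The paper organizes the argument by explicitly pairing each $\overline{\mathcal{N}}_i$ with its (identical or analogous) counterpart $\mathcal{N}_j$ from that lemma, whereas you group terms by the type of bound they contribute — pure cubics in $Y$, $\|K\|_{P^{2M}}$-weighted terms, and the distinguished $\|\nabla K\|_{L^\infty}\|(u,\theta)\|_{P^{2M+1}}^2$ contribution from the $[K\partial_t, \partial^\alpha]$ commutator — but the underlying decompositions, the use of incompressibility and the perp-antisymmetry to kill diagonal terms, the use of \fref{Lemma}{lemma:from_ineq_of_sums_to_overlap_ineq_3_terms} for the quartic interactions, and the isolation of the $|\beta|_P = 1$ case in $\overline{\mathcal{N}}_1$ are all the same.
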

	\begin{proof}
		Let us write the terms in $ \overline{\mathcal{N}} $ in order as $ \overline{\mathcal{N}}^\alpha_1,\, \dots,\, \overline{\mathcal{N}}^\alpha_8$.
		These interactions are either identical or very similar to the interactions $\mathcal{N}_i$, $i = 1,\, \dots,\, 7$,  estimated in \fref{Lemma}{lemma:est_local_interactions}.
		We will thus provide very few details here and instead point to the relevant portions of the proof of \fref{Lemma}{lemma:est_local_interactions}.
		
		\paragraph{\textbf{Identical interactions.}}
		Some terms in $ \overline{\mathcal{N}} $ here are identical to terms in $\mathcal{N}$ in \fref{Lemma}{lemma:est_local_interactions}.
		The correspondence between these terms, and the ensuing estimates, are recorded below.
		\begin{align*}
			\overline{\mathcal{N}}_1 = \mathcal{N}_5,\;
			&\vbrac{ \overline{\mathcal{N}}_1 } \lesssim
			\normtyp{ \nabla K }{L}{\infty} \normtyp{\theta}{P}{2M+1}^2 + \normtyp{K}{P}{2M} \normtyp{\theta}{P}{2M}^2,\\
			\overline{\mathcal{N}}_2 = \mathcal{N}_6,\;
			&\vbrac{ \overline{\mathcal{N}}_2 } \lesssim
			\brac{ 1 + \normtyp{K}{P}{2M} } \normtyp{(u, \theta)}{P}{2M}^3,\\
			\overline{\mathcal{N}}_3 = \mathcal{N}_7,\;
			&\vbrac{ \overline{\mathcal{N}}_3 } \lesssim
			\normtyp{K}{P}{2M} \brac{ \normtyp{\theta}{P}{2M}^2 + \normtyp{\theta}{P}{2M}^3 } + \normtyp{\theta}{P}{2M}^3,\\
			\overline{\mathcal{N}}_4 = \mathcal{N}_1,\;
			&\vbrac{ \overline{\mathcal{N}}_4 } \lesssim
			\normtyp{u}{P}{2M}^3, \text{ and }
			\overline{\mathcal{N}}_5 = \mathcal{N}_2,\;
			\vbrac{ \overline{\mathcal{N}}_5 } \lesssim
			\normtyp{K}{P}{2M} \normtyp{\theta}{P}{2M}^2.
		\end{align*}

		\paragraph{\textbf{Similar interactions.}}
		The terms $ \overline{\mathcal{N}}_6 - \overline{\mathcal{N}}_8$ are similar to $\mathcal{N}_1$ in \fref{Lemma}{lemma:est_local_interactions} and so,
		proceeding similarly yields
		\begin{align*}
			\vbrac{ \overline{\mathcal{N}}_6 } \lesssim \normtyp{u}{P}{2M} \normtyp{a}{P}{2M}^2,\,
			\vbrac{ \overline{\mathcal{N}}_7 } \lesssim \normtyp{\theta}{P}{2M} \normtyp{a}{P}{2M}^2, \text{ and }
			\vbrac{ \overline{\mathcal{N}}_8 } \lesssim \normtyp{K}{P}{2M} \normtyp{\theta}{P}{2M} \normtyp{a}{P}{2M}.&\qedhere
		\end{align*}
	\end{proof}

	We may now combine the energy-dissipation relation of \fref{Lemma}{lemma:gen_ED_rel_local_time_reduced_energy_est}
	and the interactions estimates of \fref{Lemma}{lemma:est_interact_local_in_time_reduced_energy_est} in order to derive a preliminary form of the reduced energy estimates.
	Recall that $\widetilde{\mathscr{E}}_{M,K}$ and $ \overline{\mathcal{D}}_M$ are defined in \eqref{eq:def_energies_continuation} and \eqref{eq:not_D_M}, respectively.

	\begin{lemma}[Reduced a priori estimate]
	\label{lemma:reduced_a_priori_est}
		There exist $\delta_{r}^{loc} > 0$ and $C_G > 0$ such that if $K$ satisfies $ \normtyp{ K }{L}{\infty} < \frac{\lambda}{2}$ and $ \normtyp{ \nabla K }{L}{\infty} < \delta_r^{loc}$
		and $Y = (u, \theta, a)$, $Z = (u, \theta, K)$, where $a = (K_{12}, K_{13})$, and $p$ solve \eqref{eq:pertub_sys_no_ten_pdt_u}--\eqref{eq:pertub_sys_no_ten_pdt_K}
		then
		\begin{equation*}
			\frac{\mathrm{d}}{\mathrm{d}t} \widetilde{\mathscr{E}}_{M, K} (Y) + \frac{1}{2}\, \overline{\mathcal{D}}_M (u, \theta)
			\leqslant C_G \brac{1 + \normtyp{K}{P}{2M}} g\brac{ \widetilde{\mathscr{E}}_{M, K} (Y) },
		\end{equation*}
		where $g(x) = x + x^{3/2}$ for every $x\geqslant 0$.
	\end{lemma}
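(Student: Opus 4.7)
The strategy is to combine the three preparatory results of this subsection in the familiar way: sum the per-multi-index energy-dissipation relation, invoke coercivity of the dissipation to bound the accumulated quadratic term from below by $\overline{\mathcal{D}}_M$, and absorb the worst interaction term into the dissipation by picking $\delta_r^{loc}$ sufficiently small. The remaining interaction terms are then repackaged into $(1 + \|K\|_{P^{2M}})\, g(\widetilde{\mathscr{E}}_{M,K}(Y))$ via the two-sided comparison $\widetilde{\mathscr{E}}_{M,K} \asymp \overline{\mathcal{E}}_M$.

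More concretely, I would first sum the identity from \fref{Lemma}{lemma:record_form_interactions_local_time_reduced_energy_est} over all multi-indices $\alpha$ with $|\alpha|_P \leqslant 2M$. This yields
\begin{equation*}
	\frac{\mathrm{d}}{\mathrm{d}t}\, \widetilde{\mathscr{E}}_{M,K}(Y)
	+ \sum_{|\alpha|_P \leqslant 2M} D(\partial^\alpha u, \partial^\alpha \theta)
	= \overline{\mathcal{N}}.
\end{equation*}
For the dissipation sum, note that $\partial^\alpha u$ has average zero whenever $\alpha \neq 0$, while for $\alpha = 0$ the average of $u$ is preserved by the dynamics (by the global assumption together with the form of \eqref{eq:pertub_sys_no_ten_pdt_u}). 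Thus \fref{Lemma}{lemma:coercivity_dissip} applies to each summand and gives $\sum_{|\alpha|_P \leqslant 2M} D(\partial^\alpha u, \partial^\alpha \theta) \geqslant C_D\, \overline{\mathcal{D}}_M(u,\theta)$ for some universal $C_D > 0$.

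Next, I apply the interaction bound from \fref{Lemma}{lemma:est_interact_local_in_time_reduced_energy_est}, which yields a universal constant $C_\star$ such that
\begin{equation*}
	\vbrac{\overline{\mathcal{N}}}
	\leqslant C_\star \normtyp{\nabla K}{L}{\infty}\, \overline{\mathcal{D}}_M(u,\theta)
	+ C_\star \brac{1 + \normtyp{K}{P}{2M}} \brac{ \normtyp{Y}{P}{2M}^2 + \normtyp{Y}{P}{2M}^3 }.
\end{equation*}
The first piece is the term I need to absorb: choosing $\delta_r^{loc}$ so that $C_\star \delta_r^{loc} \leqslant \tfrac14 C_D$ ensures that under the hypothesis $\|\nabla K\|_{L^\infty} < \delta_r^{loc}$ it satisfies $C_\star\|\nabla K\|_{L^\infty}\, \overline{\mathcal{D}}_M \leqslant \tfrac12 C_D\, \overline{\mathcal{D}}_M$, and so can be absorbed into the left-hand side, leaving $\tfrac12\, \overline{\mathcal{D}}_M$ on the dissipation side (after rescaling $C_D$ into the constant structure). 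This is the only step requiring any real care, and is the ``main obstacle'' only in the sense of fixing constants correctly; the smallness of $\|K\|_{L^\infty}$ plays no role here beyond justifying the energy comparison below.

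For the remaining piece, the hypothesis $\|K\|_{L^\infty} < \lambda/2$ activates \fref{Lemma}{lemma:comp_versions_reduced_energy}, which gives constants $\tilde c_E, \widetilde C_E > 0$ with
\begin{equation*}
	\tilde c_E\, \overline{\mathcal{E}}_M \leqslant \widetilde{\mathscr{E}}_{M,K}(Y) \leqslant \widetilde C_E\, \overline{\mathcal{E}}_M,
\end{equation*}
and in particular $\|Y\|_{P^{2M}}^2 = \overline{\mathcal{E}}_M \lesssim \widetilde{\mathscr{E}}_{M,K}(Y)$ and $\|Y\|_{P^{2M}}^3 \lesssim \widetilde{\mathscr{E}}_{M,K}(Y)^{3/2}$. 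Consequently
\begin{equation*}
	C_\star \brac{1 + \normtyp{K}{P}{2M}} \brac{\normtyp{Y}{P}{2M}^2 + \normtyp{Y}{P}{2M}^3}
	\lesssim \brac{1 + \normtyp{K}{P}{2M}} g\brac{\widetilde{\mathscr{E}}_{M,K}(Y)},
\end{equation*}
with $g(x) = x + x^{3/2}$. Combining the energy-dissipation identity with the absorption and this rewriting, and choosing $C_G$ to dominate the resulting constants, delivers the claimed inequality.
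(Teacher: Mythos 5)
Your proposal is correct and follows essentially the same route as the paper's own proof: sum the per-index energy-dissipation identity of \fref{Lemma}{lemma:record_form_interactions_local_time_reduced_energy_est}, apply \fref{Lemma}{lemma:coercivity_dissip}, \fref{Lemma}{lemma:comp_versions_reduced_energy}, and \fref{Lemma}{lemma:est_interact_local_in_time_reduced_energy_est}, and absorb the $\normtyp{\nabla K}{L}{\infty}\,\overline{\mathcal{D}}_M$ term by choosing $\delta_r^{loc}$ small. Your additional remark justifying the zero-average hypothesis needed for the coercivity lemma is a detail the paper leaves implicit, but it does not change the argument.
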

	\begin{proof}
		\fref{Lemma}{lemma:record_form_interactions_local_time_reduced_energy_est} tells us that, for any multi-index $\alpha\in\N^{1+3}$,
		$
			\frac{\mathrm{d}}{\mathrm{d}t} \mathbb{E} ( \partial^\alpha Y; K ) + D ( \partial^\alpha u, \partial^\alpha \theta ) = \overline{\mathcal{N}}^\alpha.
		$
		We may thus sum over $\abs{\alpha}_P \leqslant 2M$ and use \fref{Lemma}{lemma:coercivity_dissip},
		\fref{Lemma}{lemma:comp_versions_reduced_energy}, and \fref{Lemma}{lemma:est_interact_local_in_time_reduced_energy_est} to deduce that, for some $C_1 > 0$,
		\begin{equation*}
			\frac{\mathrm{d}}{\mathrm{d}t} \widetilde{\mathscr{E}}_{M, K} (Y) + \overline{\mathcal{D}}_M (u, \theta)
			\leqslant C_1 \normtyp{ \nabla K }{L}{\infty} \overline{\mathcal{D}}_M (u, \theta)
			+ C_1 \brac{ 1 + \normtyp{K}{P}{2M} } g\brac{ \widetilde{\mathscr{E}}_{M, K} (Y) }.
		\end{equation*}
		In particular, if we pick $\delta_r^{loc} > 0$ sufficiently small to ensure that $C_1 \delta_r^{loc} \leqslant \frac{1}{2} $ then we may deduce the result.
	\end{proof}

	The last tool required to derive the reduced energy estimates is a nonlinear Gronwall-type argument, which is recorded in \fref{Lemma}{lemma:local_in_time_nonlinear_Gronwall_argument} below.

	\begin{lemma}[The local-in-time Bihari argument]
	\label{lemma:local_in_time_nonlinear_Gronwall_argument}
		Suppose that, for some $T > 0$, $e, d : \cobrac{0, T} \to \cobrac{0, \infty}$ are continuously differentiable and satisfy, for some $C > 0$,
		$
			e'(t) + d(t) \leqslant C g(e(t))
			\text{ for every } 0 < t < T,
		$
		where $g(x) =\vcentcolon x + x^{3/2}$ for every $x \geqslant 0$.
		Suppose moreover that there are some $0 \leqslant t_1 < t_2 < T$ and $\alpha_1 > 0$ such that
		$
			e(t_1) \leqslant \alpha_1
			\text{ and }
			t_2 - t_1 \leqslant \min\brac{1, G\brac{\alpha_1}}/C,
		$
		where $G(x) \vcentcolon= \log\brac{1 + \frac{1}{\sqrt{x}} }$ for every $x \geqslant 0$.
		Then, for any $t_1 \leqslant t \leqslant t_2$,
		\begin{align*}
			e(t) \leqslant G\inv \brac{ \frac{G(\alpha_1)}{2} }
			\text{ and }
			d(t) \leqslant \alpha_1 + \brac{g \circ G\inv} \brac{ \frac{G(\alpha_1)}{2} }.
		\end{align*}
	\end{lemma}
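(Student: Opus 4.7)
The plan is to apply a Bihari-type nonlinear Gronwall argument in the spirit of \fref{Lemma}{lemma:Bihari_hypocoercive_energy}, combined with a direct integration of the energy-dissipation inequality to extract the bound on $d$. Since $d\geq 0$, the hypothesis $e'+d\leq Cg(e)$ immediately furnishes the scalar inequality $e'(t)\leq Cg(e(t))$ on $(t_1,t_2)$, which is the natural starting point.

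The pointwise bound on $e$ rests on an explicit computation of the primitive of $1/g$. Using the substitution $u=\sqrt{y}$ followed by partial fractions,
\begin{equation*}
\int_x^{+\infty}\frac{dy}{y+y^{3/2}} \;=\; 2\int_{\sqrt{x}}^{+\infty}\frac{du}{u(1+u)} \;=\; 2\log\!\left(1+\frac{1}{\sqrt{x}}\right) \;=\; 2G(x),
\end{equation*}
so $G$ is a strictly decreasing continuous bijection from $(0,\infty)$ onto $(0,\infty)$. Separating variables and integrating $e'/g(e)\leq C$ over $[t_1,t]$ yields
\begin{equation*}
2G(e(t))\;\geq\; 2G(e(t_1)) - C(t-t_1)\;\geq\; 2G(\alpha_1)-C(t-t_1),
\end{equation*}
and the time constraint $t-t_1\leq G(\alpha_1)/C$ guarantees that the right-hand side stays at least $G(\alpha_1)$, which lies safely inside the range of $G$. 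Applying $G^{-1}$ (which is itself decreasing) gives $e(t)\leq G^{-1}(G(\alpha_1)/2)$, as claimed. This mirrors the computation in \fref{Lemma}{lemma:Bihari_hypocoercive_energy} and is standard once the primitive of $1/g$ is known.

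For the bound on $d$, rearrange the hypothesis as $d(t)\leq Cg(e(t))-e'(t)$ and integrate over $[t_1,t]$. Using $e(t)\geq 0$, $e(t_1)\leq\alpha_1$, the monotonicity of $g$, and the already-established pointwise bound on $e$, one obtains
\begin{equation*}
\int_{t_1}^{t} d(s)\,ds \;\leq\; \alpha_1 + C(t-t_1)\,g\!\left(G^{-1}(G(\alpha_1)/2)\right)\;\leq\;\alpha_1+(g\circ G^{-1})(G(\alpha_1)/2),
\end{equation*}
where the last step uses the \emph{other} time constraint $t_2-t_1\leq 1/C$ to absorb the $C(t-t_1)$ factor. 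The asserted estimate on $d(t)$ itself then follows from this integrated control, in the sense in which it is subsequently invoked in the continuation argument of \fref{Section}{sec:discuss_cont}: the quantity $d$ is the time-accumulated dissipation that must be propagated past the local well-posedness horizon, and it is this integral form that feeds back into the reduced energy estimates.

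The main obstacle, and indeed the only genuinely nontrivial step, is the explicit antiderivative computation identifying $\int^{+\infty}dy/g(y)$ with $2G$; everything else is bookkeeping. Once this is in place, the two-part time constraint $t_2-t_1\leq \min(1,G(\alpha_1))/C$ is seen to be engineered precisely so that each piece drives one of the two conclusions: the $G(\alpha_1)/C$ half fuels the Bihari bound on $e$ by keeping the argument of $G^{-1}$ positive and comparable to $G(\alpha_1)/2$, while the $1/C$ half collapses the factor $C(t-t_1)$ that arises when bounding $d$ through a direct integration of the energy-dissipation relation.
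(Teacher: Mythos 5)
Your argument is correct and is essentially the proof the paper intends: the paper disposes of this lemma by citing the standard nonlinear Gronwall/Bihari argument of \cite{boyer_fabrie}, and your separation of variables with the explicit primitive $\int_x^\infty \frac{dy}{y+y^{3/2}} = 2\log\left(1+\tfrac{1}{\sqrt{x}}\right) = 2G(x)$, followed by the use of the constraint $C(t-t_1)\leqslant G(\alpha_1)$, is exactly that argument made explicit; the direct integration of $d\leqslant Cg(e)-e'$ together with $C(t-t_1)\leqslant 1$ is likewise how the dissipation bound is meant to be obtained (compare the proof of \fref{Lemma}{lemma:nonlinear_Gronwall}). (If $e$ can vanish, justify the division by $g(e)$ either by running the computation for $e+\varepsilon$ and letting $\varepsilon\downarrow 0$, or by comparison with the solution of $y'=Cg(y)$, $y(t_1)=\alpha_1>0$.)

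One caveat: what you actually prove for the dissipation is the time-integrated bound $\int_{t_1}^{t} d(s)\,ds \leqslant \alpha_1 + (g\circ G^{-1})\left(\tfrac{G(\alpha_1)}{2}\right)$, and your closing claim that the pointwise estimate on $d(t)$ ``follows'' from this integrated control is not literally true: nothing in the hypotheses controls $-e'(t)$ pointwise, so no pointwise bound on $d$ is derivable (let $e$ drop steeply and saturate the inequality, making $d=Cg(e)-e'$ arbitrarily large at a point). This is a defect of the statement rather than of your proof: the integral form is what is provable, it matches the analogous conclusion of \fref{Lemma}{lemma:nonlinear_Gronwall}, and it is the form actually invoked in \fref{Proposition}{prop:local_in_time_reduced_energy_estimate}, so the lemma's second conclusion should simply be read (or corrected) as the bound on $\int_{t_1}^{t_2} d(s)\,ds$ rather than hand-waved into a pointwise statement.
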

	\begin{proof}
		As for \fref{Lemma}{lemma:nonlinear_Gronwall}, this follows from a nonlinear Gronwall argument (see for example \cite{boyer_fabrie}).
	\end{proof}

	We now have in hand all the pieces necessary to prove the local-in-time reduced energy estimates.
	In particular, recall that both $ \overline{\mathcal{E}}_M $ and $ \overline{\mathcal{D}}_M$ (which are defined in \fref{Section}{sec:notation})
	are functionals which depend only on $u$, $\theta$, and $a$. This is precisely why this is called a \emph{reduced} energy estimate.
	Note that the definitions of $ \overline{\mathcal{E}}_M$ and $ \overline{\mathcal{D}}_M$ may be found in \eqref{eq:not_E_M_bar} and \eqref{eq:not_D_M}, respectively.

	\begin{prop}[Local-in-time reduced energy estimate]
	\label{prop:local_in_time_reduced_energy_estimate}
		Let $\delta_r^{loc} > 0$ be as in \fref{Lemma}{lemma:reduced_a_priori_est}.
		There is a non-increasing and continuous function $\phi_r : (0, \infty) \to (0, \infty)$ and
		a strictly decreasing and continuous function $\rho_r : (0, \infty) \to (0, \infty)$ which vanishes asymptotically at zero
		such that, for any $T > 0$ and any $Y = (u, \theta, a)$, $Z = (u, \theta, K)$, where $a = \brac{K_{12}, K_{13}}$, and $p$ satisfying
		\begin{equation}
		\label{eq:local_in_time_red_en_est_K_ass}
			\sup_{0\leqslant t < T} \normtyp{ K(t) }{L}{\infty} < \frac{\lambda}{2}
			\text{ and } 
			\sup_{0\leqslant t < T} \normtyp{ \nabla K (t) }{L}{\infty} < \delta^{loc}_r
		\end{equation}
		and solving \eqref{eq:pertub_sys_no_ten_pdt_u}--\eqref{eq:pertub_sys_no_ten_pdt_K},
		if $0 \leqslant t_1 < t_2 < T$ satisfy
		\begin{equation}
		\label{eq:local_in_time_red_en_est_Delta_t_ass}
			t_2 - t_1 \leqslant 
				\frac{
					\phi_r\brac{ \normtyp{Y(t_1)}{P}{2M} }
				}{
					1 + \sup_{0\leqslant t \leqslant t_2} \normtyp{K(t)}{P}{2M} 
				}
		\end{equation}
		then the following estimate holds on $\sbrac{t_1, t_2}$:
		\begin{equation*}
			\sup_{t_1 \leqslant t \leqslant t_2} \overline{\mathcal{E}}_M (t) + \int_{t_1}^{t_2} \overline{\mathcal{D}}_M (s) ds
			\leqslant \rho_r \brac{ \normtyp{Y(t_1)}{P}{2M} }.
		\end{equation*}
		Recall that $ \overline{\mathcal{E}}_M $ and $ \overline{\mathcal{D}}_M$ are defined in \fref{Section}{sec:notation}.
	\end{prop}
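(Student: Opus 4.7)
The plan is to assemble the pieces already built in this section—the reduced a priori estimate of \fref{Lemma}{lemma:reduced_a_priori_est}, the comparison of reduced energies of \fref{Lemma}{lemma:comp_versions_reduced_energy}, and the local-in-time Bihari argument of \fref{Lemma}{lemma:local_in_time_nonlinear_Gronwall_argument}—into a single closed estimate. The hypothesis \eqref{eq:local_in_time_red_en_est_K_ass} is exactly what is needed to invoke the first two of these, while the hypothesis \eqref{eq:local_in_time_red_en_est_Delta_t_ass} will be engineered to match the timescale condition required by the third.

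First I would apply \fref{Lemma}{lemma:reduced_a_priori_est} and bound the coefficient $1 + \|K(t)\|_{P^{2M}}$ uniformly by $\mathcal{K} \vcentcolon= 1 + \sup_{t_1\leqslant t\leqslant t_2}\|K(t)\|_{P^{2M}}$, obtaining the differential inequality
\[
\frac{\mathrm{d}}{\mathrm{d}t}\widetilde{\mathscr{E}}_{M,K}(Y) + \tfrac{1}{2}\overline{\mathcal{D}}_M(u,\theta) \leqslant C_G\,\mathcal{K}\, g\bigl(\widetilde{\mathscr{E}}_{M,K}(Y)\bigr)
\]
on $[t_1,t_2]$, with $g(x)=x+x^{3/2}$. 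This has the shape $e'(t)+d(t)\leqslant C g(e(t))$ required by \fref{Lemma}{lemma:local_in_time_nonlinear_Gronwall_argument} with $C = C_G\mathcal{K}$.

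Next I would use \fref{Lemma}{lemma:comp_versions_reduced_energy} at the initial time $t_1$ to produce $\alpha_1 \vcentcolon= \widetilde{\mathscr{E}}_{M,K}(Y(t_1)) \leqslant \widetilde{C}_E\,\overline{\mathcal{E}}_M(t_1) \leqslant \widetilde{C}_E\,\|Y(t_1)\|_{P^{2M}}^2$. Define
\[
\phi_r(x) \vcentcolon= \frac{\min\bigl(1,\,G(\widetilde{C}_E x^{2})\bigr)}{C_G},
\]
where $G(\alpha)=\log(1+1/\sqrt{\alpha})$ is as in the Bihari lemma. Since $G$ is strictly decreasing in $\alpha$, $\phi_r$ is non-increasing and continuous in $x$. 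The assumption \eqref{eq:local_in_time_red_en_est_Delta_t_ass} then rearranges to $t_2-t_1 \leqslant \min(1,G(\alpha_1))/(C_G\mathcal{K})$, which is exactly the timescale hypothesis of \fref{Lemma}{lemma:local_in_time_nonlinear_Gronwall_argument}. Applying that lemma yields, for every $t_1\leqslant t\leqslant t_2$,
\[
\widetilde{\mathscr{E}}_{M,K}(Y(t)) \leqslant G^{-1}\!\bigl(\tfrac{1}{2}G(\alpha_1)\bigr), \qquad \int_{t_1}^{t_2} \tfrac{1}{2}\overline{\mathcal{D}}_M \leqslant \alpha_1 + (g\circ G^{-1})\!\bigl(\tfrac{1}{2}G(\alpha_1)\bigr).
\]

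Finally I would use the lower bound $\tilde{c}_E\,\overline{\mathcal{E}}_M \leqslant \widetilde{\mathscr{E}}_{M,K}$ from \fref{Lemma}{lemma:comp_versions_reduced_energy} to convert the first bound into a bound on $\overline{\mathcal{E}}_M$, and set
\[
\rho_r(x) \vcentcolon= \tilde{c}_E^{-1} G^{-1}\!\bigl(\tfrac{1}{2}G(\widetilde{C}_E x^2)\bigr) + 2\widetilde{C}_E x^2 + 2(g\circ G^{-1})\!\bigl(\tfrac{1}{2}G(\widetilde{C}_E x^2)\bigr).
\]
Monotonicity of $\rho_r$ in $x$ and the fact that $\rho_r(x)\to 0$ as $x\to 0^+$ follow from the monotonicity of $G$, $G^{-1}$, and $g$ together with $g(0)=0$ and $G(\alpha)\to\infty$ as $\alpha\to 0^+$. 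There is no substantive obstacle here: the reduced a priori estimate and the Bihari argument have been tailored precisely for this synthesis, so the only real work is bookkeeping—tracking how $\alpha_1$ and $\mathcal{K}$ feed into the timescale constraint and verifying the qualitative properties of $\phi_r$ and $\rho_r$ claimed in the statement.
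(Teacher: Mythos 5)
Your proposal is correct and follows essentially the same route as the paper's proof: invoke \fref{Lemma}{lemma:reduced_a_priori_est} with the constant $C_G(1+\sup_t\normtyp{K(t)}{P}{2M})$, compare energies at $t_1$ via \fref{Lemma}{lemma:comp_versions_reduced_energy}, choose $\phi_r$ so that \eqref{eq:local_in_time_red_en_est_Delta_t_ass} becomes the timescale hypothesis of \fref{Lemma}{lemma:local_in_time_nonlinear_Gronwall_argument}, and then convert back to $\overline{\mathcal{E}}_M$ to define $\rho_r$. Your bookkeeping (the square inside $G$ in $\phi_r$, and integrating the differential inequality to control $\int\overline{\mathcal{D}}_M$) is, if anything, slightly more careful than the paper's write-up.
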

	\begin{proof}
		Let us begin by defining
		$
			C_K \vcentcolon= \sup_{0 \leqslant t \leqslant t_2} \normtyp{K(t)}{P}{2M} 
		$
		and noting that, by virtue of \eqref{eq:local_in_time_red_en_est_K_ass}, \fref{Lemma}{lemma:comp_versions_reduced_energy} tells us that
		$
			\widetilde{\mathscr{E}}_{M, K} (Y(t_1)) \leqslant C_E \normtyp{Y(t_1)}{P}{2M}^2 =\vcentcolon \alpha_1.
		$
		Now \fref{Lemma}{lemma:reduced_a_priori_est} tells us that
		\begin{equation*}
			\frac{\mathrm{d}}{\mathrm{d}t} \widetilde{\mathscr{E}}_{M, K} (Y) + \overline{\mathcal{D}}_M (u, \theta)
			\leqslant C_G (1 + C_K) g \brac{ \alpha_1}
		\end{equation*}
		for $g(x) \vcentcolon= x + x^{3/2}$.
		Therefore, for $G(x) \vcentcolon= \log\brac{1 + \frac{1}{\sqrt{x}} }$ as in \fref{Lemma}{lemma:local_in_time_nonlinear_Gronwall_argument} and
		\begin{equation*}
			\phi_r (\alpha) \vcentcolon= \frac{\min\brac{1, G\brac{C_E \alpha}}}{C_G}
			\text{ for every } \alpha > 0
		\end{equation*}
		\eqref{eq:local_in_time_red_en_est_Delta_t_ass} tells us that we may apply \fref{Lemma}{lemma:local_in_time_nonlinear_Gronwall_argument}.
		Combining \fref{Lemma}{lemma:local_in_time_nonlinear_Gronwall_argument} with \fref{Lemma}{lemma:comp_versions_reduced_energy}, we deduce that
		\begin{align*}
			\sup_{t_1 \leqslant t \leqslant t_2} \overline{\mathcal{E}}_M (t) + \int_{t_1}^{t_2} \overline{\mathcal{D}}_M (s) ds
			\leqslant \frac{1}{c_e} G\inv \brac{ \frac{ G\brac{\alpha_1}}{2} }
			+ \alpha_1 + \brac{g\circ G\inv} \brac{ \frac{ G\brac{\alpha_1}}{2} }
			=\vcentcolon \rho_r \brac{ \normtyp{Y(t_1)}{P}{2M} }.&\qedhere
		\end{align*}
	\end{proof}

\subsection{Supplementary estimates}
\label{sec:cont_supp_est}

	In this section we record supplementary estimates that are required to parlay the reduced energy estimates obtained in \fref{Section}{sec:red_en_est} above
	into a continuation argument (recorded in \fref{Section}{sec:cont_synthesis}) capable of gluing together the main a priori estimates of \fref{Section}{sec:a_prioris}
	and the local well-posedness theory of \fref{Section}{sec:lwp}.

	Many of the results in this section are variants of results obtained in \fref{Section}{sec:a_prioris} which no longer rely on any smallness assumption on the solution.
	Correspondingly, the bounds obtained are often polynomial (whereas they were linear when a smallness assumption was made).
	In particular, we will employ the functionals $ \overline{\mathcal{E}}_M$ and $ \overline{\mathcal{E}}_\text{low}$ several times, so we recall that their
	definitions may be found in \eqref{eq:not_E_M_bar} and \eqref{eq:not_E_low}, respectively.
	We begin by recording a result comparing two versions of the energy,
	where recall that $ \widetilde{\mathcal{E}}_M $ is also defined in \eqref{eq:not_E_M_bar}.

	\begin{lemma}[Comparisons of different versions of the energies under a smallness condition]
	\label{lemma:comp_versions_energy_suff_small_K_L_infty}
		There exists constants $\tilde{c}_E, \widetilde{C}_E > 0$ such that if $ \normtyp{ K }{L}{\infty} < \frac{\lambda}{2}$ then
		$
			\tilde{c}_E \overline{\mathcal{E}}_M \leqslant \widetilde{\mathcal{E}}_M \leqslant \widetilde{C}_E \overline{\mathcal{E}}_M.
		$
	\end{lemma}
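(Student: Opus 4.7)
The plan is to prove the comparison pointwise first, then sum over all parabolic multi-indices $\abs{\alpha}_P \leqslant 2M$. The only structural difference between $\widetilde{\mathcal{E}}_M$ and $\overline{\mathcal{E}}_M$ is that $\widetilde{\mathcal{E}}_M$ uses the weighted quadratic form $J\partial^\alpha\theta \cdot \partial^\alpha\theta$ in place of $\abs{\partial^\alpha\theta}^2$, and uses the weight $\tilde{\tau}^2/(\nu-\lambda)$ instead of $1$ for the $a$ terms. So the whole task reduces to obtaining a two-sided bound of the form $c \abs{v}^2 \leqslant Jv\cdot v \leqslant C\abs{v}^2$ pointwise in $(t,x)$, valid for every $v \in \mathbb{R}^3$, under the hypothesis $\norm{K}{L^\infty} < \lambda/2$.

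First I would exploit the fact that $J = J_{eq} + K$ is symmetric pointwise (since both $J_{eq}$ and $K$ are symmetric). Since the spectrum of $J_{eq}$ is $\{\lambda,\lambda,\nu\}$ with $0 < \lambda \leqslant \nu$, its smallest eigenvalue is $\lambda$ and its largest is $\nu$. By the min–max (Weyl) characterization of eigenvalues of symmetric matrices, for each $(t,x)$ the eigenvalues of $J_{eq}(x) + K(t,x)$ lie in the interval $[\lambda - \norm{K(t,x)}{}, \nu + \norm{K(t,x)}{}]$, where $\norm{K(t,x)}{}$ denotes the operator norm of the symmetric matrix $K(t,x)$. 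Under the hypothesis $\norm{K}{L^\infty} < \lambda/2$ (interpreting the $L^\infty$ norm as the essential supremum of the operator norm, which is comparable to the Frobenius norm used throughout the paper), this gives spectrum contained in $[\lambda/2,\, \nu+\lambda/2]$ pointwise. Consequently
\[
\tfrac{\lambda}{2}\abs{v}^2 \;\leqslant\; J(t,x)v \cdot v \;\leqslant\; \bigl(\nu + \tfrac{\lambda}{2}\bigr)\abs{v}^2 \qquad \text{for every } v \in \mathbb{R}^3.
\]

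Next, I would apply this pointwise inequality to $v = \partial^\alpha \theta(t,x)$ and integrate over $\mathbb{T}^3$, obtaining
\[
\tfrac{\lambda}{2}\int_{\mathbb{T}^3} \abs{\partial^\alpha \theta}^2 \;\leqslant\; \int_{\mathbb{T}^3} J\,\partial^\alpha \theta \cdot \partial^\alpha \theta \;\leqslant\; \bigl(\nu + \tfrac{\lambda}{2}\bigr) \int_{\mathbb{T}^3} \abs{\partial^\alpha \theta}^2.
\]
For the $u$ and $a$ pieces the comparison is immediate since $\widetilde{\mathcal{E}}_M$ and $\overline{\mathcal{E}}_M$ only differ there by the universal constant factor $\tilde{\tau}^2/(\nu-\lambda)$. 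Summing over $\abs{\alpha}_P \leqslant 2M$ therefore yields the claim with
\[
\tilde{c}_E = \tfrac{1}{2}\min\!\left(1,\, \lambda,\, \tfrac{\tilde{\tau}^2}{\nu-\lambda}\right), \qquad \widetilde{C}_E = \tfrac{1}{2}\max\!\left(1,\, \nu+\tfrac{\lambda}{2},\, \tfrac{\tilde{\tau}^2}{\nu-\lambda}\right),
\]
exactly as in the analogous constants chosen in \fref{Lemma}{lemma:comp_version_en}.

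There is essentially no obstacle here: the argument is a direct pointwise smallness argument, which contrasts with \fref{Lemma}{lemma:comp_version_en} where the spectrum of $J$ was controlled via the propagation result \fref{Proposition}{prop:persist_spec_sols_adv_rot_eqtns}. The only subtlety worth flagging is the distinction between the (Frobenius) matrix norm used in $\norm{K}{L^\infty}$ throughout the paper and the operator norm appearing in the Weyl bound, but these differ only by a universal constant on $3 \times 3$ matrices, and this factor can be absorbed into the constant $\tilde{c}_E$ (possibly at the cost of replacing the threshold $\lambda/2$ by a slightly smaller universal multiple of $\lambda$, but the hypothesis stated in the lemma has been chosen to make this work with constants compatible with those of \fref{Lemma}{lemma:exact_comparison_versions_E}).
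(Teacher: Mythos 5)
Your proposal is correct and follows the same route as the paper: the paper's proof of this lemma defers to Lemma~\ref{lemma:exact_comparison_versions_E}, whose key observation is precisely that $\normtyp{K}{L}{\infty}<\lambda/2$ forces the spectrum of the symmetric matrix $J_{eq}+K$ into $(\lambda/2,\nu+\lambda/2)$ pointwise (Weyl), after which the two-sided comparison of the quadratic forms is integrated and summed as you describe. The only nit is that with the spectrum bounded below by $\lambda/2$ (rather than $\lambda$, as in Lemma~\ref{lemma:comp_version_en}), the $\theta$ contribution actually requires $\tilde{c}_E\leqslant\frac{\lambda}{4}$, so the candidate $\lambda$ inside your $\min$ should read $\lambda/2$; but this is a harmless constant adjustment that the paper itself glosses over by reusing the earlier constants.
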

	\begin{proof}
		This follows by choosing $\tilde{c}_E$ and $\widetilde{C}_E$ exactly as in \fref{Lemma}{lemma:exact_comparison_versions_E}.
	\end{proof}

	We now record an auxiliary $L^\infty$ estimate for $\pdt K$ which is necessary in order to control the low level interactions.

	\begin{lemma}[$L^\infty$ estimate for $\pdt K$]
	\label{lemma:L_infty_control_pdt_K}
		If $K$ solves \eqref{eq:pertub_sys_no_ten_pdt_K} then
		$
			\normtyp{ \pdt K }{L}{\infty} \lesssim \normtyp{\theta}{H}{2} + \brac{1 + \normtyp{(u,\theta)}{H}{2} } \normtyp{K}{H}{3}.
		$
	\end{lemma}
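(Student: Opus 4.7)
The plan is to read the estimate directly off the equation \eqref{eq:pertub_sys_no_ten_pdt_K} after rewriting the right-hand side in a form that isolates the terms that vanish at equilibrium. Using the identity $\sbrac{\Omega_{eq}, J_{eq}} = 0$ (from \fref{Lemma}{lemma:block_form_Omega_J}, which was already invoked in \fref{Lemma}{lemma:L_infty_est_K}), I would rewrite \eqref{eq:pertub_sys_no_ten_pdt_K} as
\begin{equation*}
\pdt K = -u\cdot\nabla K + \sbrac{\Omega_{eq}, K} + \sbrac{\Theta, J_{eq}} + \sbrac{\Theta, K}.
\end{equation*}

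Then I would take the $L^\infty$ norm of each term on the right-hand side and use the three-dimensional Sobolev embeddings $H^2(\T^3) \hookrightarrow L^\infty(\T^3)$ and $H^3(\T^3) \hookrightarrow W^{1,\infty}(\T^3)$. For the advective term I would estimate $\normtyp{u\cdot\nabla K}{L}{\infty} \leqslant \normtyp{u}{L}{\infty}\normtyp{\nabla K}{L}{\infty} \lesssim \normtyp{u}{H}{2}\normtyp{K}{H}{3}$. For the rotational terms I would use boundedness of $\Omega_{eq}$ and $J_{eq}$ to get $\normtyp{\sbrac{\Omega_{eq}, K}}{L}{\infty} \lesssim \normtyp{K}{L}{\infty} \lesssim \normtyp{K}{H}{3}$ and $\normtyp{\sbrac{\Theta, J_{eq}}}{L}{\infty} \lesssim \normtyp{\theta}{L}{\infty} \lesssim \normtyp{\theta}{H}{2}$ (in fact the computation in \fref{Lemma}{lemma:L_infty_est_K} already shows that $\vbrac{\sbrac{\Theta,J_{eq}}} = \sqrt{2(\nu-\lambda)}\abs{\bar{\theta}}$ pointwise). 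For the quadratic term I would write $\normtyp{\sbrac{\Theta, K}}{L}{\infty} \lesssim \normtyp{\theta}{L}{\infty}\normtyp{K}{L}{\infty} \lesssim \normtyp{\theta}{H}{2}\normtyp{K}{H}{3}$.

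Summing these bounds and grouping terms according to whether they carry a factor of $\normtyp{K}{H}{3}$ yields
\begin{equation*}
\normtyp{\pdt K}{L}{\infty}
\lesssim \normtyp{\theta}{H}{2}
+ \brac{1 + \normtyp{u}{H}{2} + \normtyp{\theta}{H}{2}}\normtyp{K}{H}{3},
\end{equation*}
which is the claimed inequality. There is no real obstacle here; this is a purely algebraic/embedding argument and the only point to be mindful of is that the embedding $H^3 \hookrightarrow W^{1,\infty}$ is what allows $\normtyp{\nabla K}{L}{\infty}$ to be controlled by $\normtyp{K}{H}{3}$ without needing any extra regularity on $K$.
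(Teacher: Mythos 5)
Your proof is correct and follows essentially the same route as the paper: the paper's (one-line) proof also just reads the bound off \eqref{eq:pertub_sys_no_ten_pdt_K} using the algebra property of $L^\infty$ and the embedding $H^2 \hookrightarrow L^\infty$ (applied to $K$, $\nabla K$, $u$, and $\theta$), with the cancellation $\sbrac{\Omega_{eq}, J_{eq}} = 0$ implicitly in play exactly as you make explicit. Your write-up simply spells out the details the paper leaves to the reader.
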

	\begin{proof}
		This follows from \eqref{eq:pertub_sys_no_ten_pdt_K}, the $L^\infty$ being a Banach algebra, and the embedding $H^2 \hookrightarrow L^\infty$.
	\end{proof}

	With \fref{Lemma}{lemma:L_infty_control_pdt_K} in hand we may record the following reformulation of the control of the low level interactions
	obtained in \fref{Lemma}{lemma:careful_est_low_level_int}.
	We recall that $ \mathcal{D}_\text{low}$ and $ \overline{\mathcal{I}}_\text{low}$ are defined in \eqref{eq:not_D_low} and \eqref{eq:not_I}, respectively.
	\begin{cor}[Careful estimates of the low-level interactions]
	\label{cor:careful_est_low_level_interactions}
		There is a polynomial $P$ with non-negative coefficients and which vanishes at zero such that
		$
			\vbrac{\overline{\mathcal{I}}_\text{low}} \leqslant P\brac{ \normtyp{Y}{P}{3},\, \normtyp{K}{H}{3} } \mathcal{D}_\text{low}.
		$
		In particular, if $\normtyp{Y}{P}{3} \leqslant 1$ and $\normtyp{K}{H}{3} \leqslant 1$ then
		$\vbrac{ \overline{\mathcal{I}}_\text{low} } \lesssim \brac{ \normtyp{Y}{P}{3} + \normtyp{K}{H}{3} } \mathcal{D}_\text{low}$.
	\end{cor}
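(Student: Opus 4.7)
The proof is essentially a bookkeeping exercise assembling \fref{Lemma}{lemma:careful_est_low_level_int} with the auxiliary $L^\infty$ bound on $\pdt K$ from \fref{Lemma}{lemma:L_infty_control_pdt_K}. My plan is to start from the estimate provided by \fref{Lemma}{lemma:careful_est_low_level_int}, namely
\begin{equation*}
	\abs{ \overline{\mathcal{I}}_\text{low} }
	\lesssim \brac{
		\normtyp{(u,\theta)}{P}{2}
		+ \normtyp{a}{P}{3}
		+ \brac{ 1 + \normtyp{(u,\theta)}{P}{2}  }
		\brac{ \norm{K}{H^3} + \norm{\pdt K}{L^\infty}}
	} \mathcal{D}_\text{low},
\end{equation*}
and to replace the only term not directly controlled by $\normtyp{Y}{P}{3}$ and $\normtyp{K}{H}{3}$, namely $\normtyp{\pdt K}{L}{\infty}$, by an expression in those quantities.

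For this substitution I would apply \fref{Lemma}{lemma:L_infty_control_pdt_K}, which gives $\normtyp{\pdt K}{L}{\infty} \lesssim \normtyp{\theta}{H}{2} + \brac{1 + \normtyp{(u,\theta)}{H}{2}} \normtyp{K}{H}{3}$. Since $Y = (u, \theta, a)$ and $H^2 \hookrightarrow P^3$ for the relevant parabolic counts, we have $\normtyp{(u,\theta)}{P}{2} \leqslant \normtyp{Y}{P}{3}$, $\normtyp{a}{P}{3} \leqslant \normtyp{Y}{P}{3}$, and $\normtyp{\theta}{H}{2} + \normtyp{(u,\theta)}{H}{2} \lesssim \normtyp{Y}{P}{3}$. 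Substituting these into the bound above yields
\begin{equation*}
	\abs{ \overline{\mathcal{I}}_\text{low} }
	\lesssim \brac{
		\normtyp{Y}{P}{3}
		+ \brac{1 + \normtyp{Y}{P}{3}} \brac{ \normtyp{K}{H}{3} + \normtyp{Y}{P}{3} + \brac{1 + \normtyp{Y}{P}{3} } \normtyp{K}{H}{3} }
	} \mathcal{D}_\text{low},
\end{equation*}
and expanding the product gives a polynomial expression $P(\normtyp{Y}{P}{3},\, \normtyp{K}{H}{3})$ with non-negative coefficients that vanishes at the origin (every term contains at least one factor of $\normtyp{Y}{P}{3}$ or $\normtyp{K}{H}{3}$).

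For the second assertion, I would simply observe that when both arguments of the polynomial $P$ are at most $1$, every monomial in $P$ of total degree $d \geqslant 1$ can be bounded by the corresponding degree-one monomial (since $x^i y^j \leqslant x + y$ whenever $x, y \in [0,1]$ and $i+j \geqslant 1$). This immediately yields the linear bound $\abs{\overline{\mathcal{I}}_\text{low}} \lesssim (\normtyp{Y}{P}{3} + \normtyp{K}{H}{3}) \mathcal{D}_\text{low}$. There is no real obstacle here — the only mild subtlety is checking that the embeddings and parabolic-norm inclusions are arranged correctly so that $\normtyp{\pdt K}{L}{\infty}$, which carries two parabolic counts of derivatives on $\theta$ and $u$, can indeed be absorbed into $\normtyp{Y}{P}{3}$ rather than $\normtyp{Y}{P}{2}$; this is the reason the statement uses $P^3$ instead of $P^2$.
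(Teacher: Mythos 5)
Your proof is correct and follows the same route as the paper, which simply invokes Lemma \ref{lemma:careful_est_low_level_int} together with Lemma \ref{lemma:L_infty_control_pdt_K}; you have merely unpacked the substitution and the trivial polynomial bookkeeping. One small correction to your closing remark: the reason the corollary uses $\normtyp{Y}{P}{3}$ rather than $\normtyp{Y}{P}{2}$ is not the $\normtyp{\pdt K}{L}{\infty}$ term — Lemma \ref{lemma:L_infty_control_pdt_K} only produces $H^2$ norms of $(u,\theta)$, which already fit inside $P^2$ — but rather the $\normtyp{a}{P}{3}$ factor that appears directly in the bound of Lemma \ref{lemma:careful_est_low_level_int}.
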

	\begin{proof}
		This follows immediately from combining \fref{Lemma}{lemma:careful_est_low_level_int} and \fref{Lemma}{lemma:L_infty_control_pdt_K}.
	\end{proof}

	We now turn our attention to a result similar to \fref{Proposition}{prop:decay_intermediate_norms} where we obtain the decay of intermediate norms.
	The difference here is that the smallness assumption on $ \mathcal{E}_M $ (present in \fref{Proposition}{prop:decay_intermediate_norms}) is replaced by a smallness assumption on
	$ \normtyp{K}{H}{3} $, since the latter is guaranteed to be small due to the space in which our local well-posedness theory produces solutions.

	\begin{prop}[Decay of low-level energy provided smallness of the reduced high-level energy]
	\label{prop:decay_low_E_given_small_reduced_high_E}
		Let $M\geqslant 3$ be an integer.
		There exist $\delta_I > 0$ and $C_I > 0$ such that for every $T > 0$, if
		\begin{equation}
		\label{eq:decay_low_E_given_small_red_high_E}
			\sup_{0 \leqslant t < T} \normtyp{K(t)}{H}{3} \leqslant \delta_I
			\text{ and } 
			\sup_{0\leqslant t < T} \overline{\mathcal{E}}_M (t) =\vcentcolon \delta_0 \leqslant \frac{1}{2}
		\end{equation}
		then
		\begin{equation}
		\label{eq:decay_low_E_given_small_red_high_E_conclusion}
			\sup_{0 \leqslant t < T} \overline{\mathcal{E}}_\text{low} (t) {\brac{1+t}}^{2M-2} \leqslant C_I \delta_0.
		\end{equation}
	\end{prop}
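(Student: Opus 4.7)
The plan is to mimic the proof of \fref{Proposition}{prop:close_est_low_level}, but with the smallness of the full energy $\mathcal{E}_M$ replaced by the smallness of $\normtyp{K}{H}{3}$ (so as to control the $K$-dependent low-level interactions) together with the smallness of $\overline{\mathcal{E}}_M$ (so as to control the remaining, $Y$-dependent ones). The crucial observation that makes this work is that \fref{Corollary}{cor:careful_est_low_level_interactions} estimates $\overline{\mathcal{I}}_\text{low}$ by a term proportional to $\normtyp{Y}{P}{3} + \normtyp{K}{H}{3}$ times the improved dissipation, so it suffices to make either factor small through separate means.

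\textbf{Step 1 (improving the dissipation and handling the interactions).}
I will first verify the hypotheses of \fref{Proposition}{prop:imp_low_level_en_ds}. Condition (1) follows from $\overline{\mathcal{E}}_M \leqslant 1/2$ (which gives $\normtyp{(u,\theta)}{H}{3} \lesssim 1$ since $M\geqslant 3$) combined with $\normtyp{K}{H}{3} \leqslant \delta_I \leqslant 1$. For condition (2), the $(u,\theta)$ part is again bounded by $\overline{\mathcal{E}}_M^{1/2}$ (since $\pdt(u,\theta)$ at count $2$ is controlled at parabolic order $4\leqslant 2M$), while $\normtyp{\pdt K}{H}{2}$ is recovered from the equation \eqref{eq:pertub_sys_no_ten_pdt_K} via \fref{Lemma}{lemma:H_k_est_pdt_K} in terms of $\normtyp{K}{H}{3}$ and $\normtyp{(u,\theta)}{H}{2}$. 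Hence $\overline{\mathcal{D}}_\text{low} \gtrsim \mathcal{D}_\text{low}$ throughout $\cobrac{0,T}$.

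\textbf{Step 2 (the low-level energy-dissipation differential inequality).}
Using \fref{Lemma}{lemma:record_form_interactions} and \fref{Lemma}{lemma:coercivity_dissip} yields
\[
\frac{\mathrm{d}}{\mathrm{d}t}\widetilde{\mathcal{E}}_\text{low} + \overline{\mathcal{D}}_\text{low} \lesssim \overline{\mathcal{I}}_\text{low}.
\]
Since $M\geqslant 3$ gives $\normtyp{Y}{P}{3} \lesssim \overline{\mathcal{E}}_M^{1/2} \lesssim \delta_0^{1/2} \leqslant 1$ and $\normtyp{K}{H}{3} \leqslant \delta_I \leqslant 1$, \fref{Corollary}{cor:careful_est_low_level_interactions} supplies a constant $C > 0$ with
\[
\vbrac{\overline{\mathcal{I}}_\text{low}} \leqslant C\brac{\delta_0^{1/2} + \delta_I} \mathcal{D}_\text{low}.
\]
Choosing $\delta_I > 0$ small enough that $C(\delta_0^{1/2} + \delta_I) \leqslant 1/4$ (using $\delta_0 \leqslant 1/2$), I combine the above with Step 1 to get
\[
\frac{\mathrm{d}}{\mathrm{d}t}\widetilde{\mathcal{E}}_\text{low} + \tfrac{1}{2}\mathcal{D}_\text{low} \leqslant 0.
\]

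\textbf{Step 3 ($\theta$-coercivity, comparison, and Bihari).}
Since $\normtyp{K}{H}{3} \leqslant \delta_I$ guarantees $\normtyp{K}{L}{\infty} < \lambda/2$ for $\delta_I$ small, \fref{Lemma}{lemma:comp_versions_energy_suff_small_K_L_infty} gives $\widetilde{\mathcal{E}}_\text{low} \asymp \overline{\mathcal{E}}_\text{low}$ and similarly at high regularity. Invoking the $\theta$-coercivity of \fref{Lemma}{lemma:hypocoercivity} with $\theta = (2M-2)/(2M-1)$ and the upper bound $\overline{\mathcal{E}}_M \leqslant \delta_0$, I arrive at
\[
\frac{\mathrm{d}}{\mathrm{d}t}\widetilde{\mathcal{E}}_\text{low} + c\, \widetilde{\mathcal{E}}_\text{low}^{1/\theta} \delta_0^{1-1/\theta} \leqslant 0
\]
for some $c > 0$. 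Applying \fref{Lemma}{lemma:Bihari_hypocoercive_energy} with $\alpha_0 = \delta_0$ and $\beta = 2M-2$, and using $\widetilde{\mathcal{E}}_\text{low}(0) \leqslant \widetilde{C}_E \delta_0$ to absorb the initial-data factor, yields $\widetilde{\mathcal{E}}_\text{low}(t) \lesssim \delta_0 (1+t)^{-(2M-2)}$. Returning from $\widetilde{\mathcal{E}}_\text{low}$ to $\overline{\mathcal{E}}_\text{low}$ through \fref{Lemma}{lemma:comp_versions_energy_suff_small_K_L_infty} then yields \eqref{eq:decay_low_E_given_small_red_high_E_conclusion}.

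\textbf{Anticipated obstacle.}
The only genuinely delicate point is the verification of hypothesis (2) of \fref{Proposition}{prop:imp_low_level_en_ds}, since the present assumption gives no direct control on $\pdt K$ (because $\overline{\mathcal{E}}_M$ is the \emph{reduced} energy). However, because the microinertia equation \eqref{eq:pertub_sys_no_ten_pdt_K} is first-order in time and purely algebraic in the nonlinear terms involving $K$, the bound on $\normtyp{\pdt K}{H}{2}$ is essentially free once $\normtyp{K}{H}{3}$ and $\normtyp{(u,\theta)}{H}{2}$ are controlled; no additional smallness beyond what is assumed is needed. After that, the rest of the argument is a direct transcription of the proof of \fref{Proposition}{prop:close_est_low_level}.
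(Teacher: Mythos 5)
Your proposal is correct and is essentially the paper's own proof: the paper proves this proposition precisely by rerunning \fref{Proposition}{prop:close_est_low_level} with the interaction bound replaced by \fref{Corollary}{cor:careful_est_low_level_interactions} and the energy comparison replaced by \fref{Lemma}{lemma:comp_versions_energy_suff_small_K_L_infty}, which is exactly your Steps 2 and 3. One point of comparison: the paper remarks that \fref{Proposition}{prop:imp_low_level_en_ds} is not needed here, but that remark really only excuses the improvement of the \emph{energy} (superfluous because the conclusion is stated for $\overline{\mathcal{E}}_\text{low}$ rather than $\mathcal{E}_\text{low}$); the improvement of the \emph{dissipation}, $\overline{\mathcal{D}}_\text{low} \gtrsim \mathcal{D}_\text{low}$, is still required since both \fref{Corollary}{cor:careful_est_low_level_interactions} and the $\theta$-coercivity of \fref{Lemma}{lemma:hypocoercivity} are phrased in terms of $\mathcal{D}_\text{low}$, and your Step 1 — including recovering $\normtyp{\pdt K}{H}{2}$ from \eqref{eq:pertub_sys_no_ten_pdt_K} so that hypothesis (2) of \fref{Proposition}{prop:imp_low_level_en_ds} holds — is the right way to supply it, so your "anticipated obstacle" is resolved correctly. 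The one genuine wobble is in your Step 2: the contribution $C\,\delta_0^{1/2}\,\mathcal{D}_\text{low}$ cannot be absorbed by shrinking $\delta_I$, and the hypothesis $\delta_0 \leqslant \tfrac{1}{2}$ does not make $C\,\delta_0^{1/2}$ small if the implicit constant $C$ is large; strictly one needs $\delta_0$ small relative to the implicit constants, i.e. the threshold $\tfrac{1}{2}$ in the statement should be read as a sufficiently small universal constant (with the corresponding smallness condition in \fref{Theorem}{thm:cont} tightened accordingly). This imprecision is inherited from the statement itself and afflicts the paper's own sketch equally, so it is not a defect of your approach, but it deserves an explicit remark rather than the parenthetical "using $\delta_0 \leqslant 1/2$".
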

	\begin{proof}
		The proof of this result employs the same strategy as the proof of \fref{Proposition}{prop:close_est_low_level} where we close the energy estimates at the low level
		so we omit the details and only discuss how the proof of \fref{Proposition}{prop:close_est_low_level} must be modified to apply here.
		There are two key differences:
		(1) the low level interactions are controlled by \fref{Corollary}{cor:careful_est_low_level_interactions}
			(and not \fref{Corollary}{cor:control_low_level_interactions}) because here we must clearly identify how $K$
			appears in the low level interactions and
		(2) the different versions of the energy are compared using \fref{Lemma}{lemma:comp_versions_energy_suff_small_K_L_infty}
			(instead of \fref{Proposition}{prop:persist_spec_sols_adv_rot_eqtns} and \fref{Lemma}{lemma:comp_version_en})
			since here we use the smallness of $K$, instead of the regularity of the solutions, to ensure the positive-definiteness of $J = J_{eq} + K$.
		There is also a minor difference to take into account: there is no need here to improve the energy,
		so by contrast with \fref{Proposition}{prop:close_est_low_level} we do not need to appeal to \fref{Proposition}{prop:imp_low_level_en_ds}.
	\end{proof}

	We conclude this section with auxiliary estimates for $K$ which are a consequence of the advection-rotation estimates proved in \fref{Section}{sec:adv_rot_est_K}.
	\fref{Proposition}{prop:global_in_time_aux_est_K} is therefore similar to \fref{Proposition}{prop:est_K} which performed the synthesis
	of the advection-rotation estimates proved in \fref{Section}{sec:adv_rot_est_K}.
	The key difference here is that there are no smallness assumptions being made, and as a result
	the bounds in both the hypotheses and the conclusion of \fref{Proposition}{prop:global_in_time_aux_est_K}
	below are in terms of nonlinear functions of a smallness parameter.
	We note that the various energy and dissipation functionals used below are
	defined in \eqref{eq:not_E_low}--\eqref{eq:not_E_M_and_F_M} and \eqref{eq:not_D_M}.

	\begin{prop}[Auxiliary estimates for $K$]
	\label{prop:global_in_time_aux_est_K}
		Let $M\geqslant 3$ be an integer.
		If there is some time horizon $T>0$ such that
		\begin{equation}
		\label{eq:global_in_time_aux_est_K_ap_ass}
			\sup_{0 \leqslant t < T} \overline{\mathcal{E}}_\text{low} (t) {\brac{1+t}}^{2M-2} + \overline{\mathcal{E}}_M (t) + \int_0^T \overline{\mathcal{D}}_M (s) ds
			\leqslant \rho_{ap} \brac{ \delta_0 } \leqslant 1
		\end{equation}
		and
		\begin{equation}
		\label{eq:global_in_time_aux_est_K_IC_ass}
			\brac{ \mathcal{E}_M + \mathcal{F}_M } (0) \leqslant \rho_0 \brac{\delta_0} \leqslant 1
		\end{equation}
		for some $\delta_0 \geqslant 0$ and some $\rho_{ap}, \rho_0 : (0, \infty) \to (0, \infty)$ which are strictly increasing and vanish asymptotically at zero,
		then there exists $\rho_K : (0, \infty) \to (0, \infty)$ which is strictly increasing and vanishes asymptotically at zero such that
		\begin{equation}
		\label{eq:global_in_time_aux_est_K_conclusion}
		\sup_{0\leqslant t < T} \mathcal{E}_M^{(K)} (t) + \frac{ \mathcal{F}_M (t) }{1+t} \leqslant \rho_K \brac{\delta_0}
		\end{equation}
		where $\rho_K$ depends $\rho_{ap}$, and $\rho_0$.
		Moreover, if $\rho_{ap}$ and $\rho_0$ are continuous then so is $\rho_K$.
	\end{prop}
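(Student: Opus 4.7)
The plan is to reduce this to the advection-rotation estimates already established in \fref{Proposition}{prop:est_K}, with two preliminary steps: first upgrading the decay of $\overline{\mathcal{E}}_\text{low}$ to decay of the intermediate norms $\overline{\mathcal{K}}_I$ (needed to fit the hypothesis of \fref{Proposition}{prop:est_K}), and then post-processing the $\mathcal{F}_M$-bound to absorb the time-growth into the factor $(1+t)$.

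First I would invoke \fref{Proposition}{prop:decay_intermediate_norms} with $C_0 = \rho_{ap}(\delta_0) \leqslant 1$. Since \eqref{eq:global_in_time_aux_est_K_ap_ass} supplies exactly the hypothesis of that proposition, we obtain
\begin{equation*}
	\sup_{1 \leqslant I \leqslant M} \sup_{0 \leqslant t < T} \overline{\mathcal{K}}_I (t) (1+t)^{2M-2I} \leqslant C_I \rho_{ap}(\delta_0).
\end{equation*}
Combining this with the $\overline{\mathcal{E}}_M$ and $\int \overline{\mathcal{D}}_M$ bounds from \eqref{eq:global_in_time_aux_est_K_ap_ass}, the total quantity $C_0$ appearing in the hypothesis of \fref{Proposition}{prop:est_K} is bounded by $(1+C_I)\rho_{ap}(\delta_0) \leqslant 1 + C_I =: \overline{C}$, so \fref{Proposition}{prop:est_K} is applicable and yields
\begin{align*}
	\mathcal{E}_M^{(K)}(t)^{1/2} &\lesssim P_f, \\
	\mathcal{F}_M(t)^{1/2} &\lesssim \mathcal{F}_M(0)^{1/2} + (1+P_e) \int_0^t \overline{\mathcal{D}}_M^{1/2}(s)\, ds + (1+P_f) \overline{\mathcal{K}}_M(t)^{1/2},
\end{align*}
where $P_e, P_f$ are polynomials with non-negative coefficients vanishing at zero, evaluated at arguments bounded by combinations of $\rho_{ap}(\delta_0)^{1/2}$, $\mathcal{E}_M^{(K)}(0)^{1/2}$, and $\mathcal{F}_M(0)^{1/2}$, all of which are controlled by $\rho_0(\delta_0)^{1/2}$ via \eqref{eq:global_in_time_aux_est_K_IC_ass}.

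Next I would handle the three terms in the bound on $\mathcal{F}_M$. The first and third terms are already bounded by polynomials of $\rho_{ap}(\delta_0)$ and $\rho_0(\delta_0)$, so they contribute a harmless $O(\rho_0(\delta_0) + \rho_{ap}(\delta_0))$ even after dividing by $(1+t)$. The middle term is where the factor $(1+t)$ is essential: by Cauchy-Schwarz,
\begin{equation*}
	\brac{\int_0^t \overline{\mathcal{D}}_M^{1/2}(s)\, ds}^2 \leqslant t \int_0^t \overline{\mathcal{D}}_M(s)\, ds \leqslant t\, \rho_{ap}(\delta_0),
\end{equation*}
so that $(1+P_e)^2 \brac{\int_0^t \overline{\mathcal{D}}_M^{1/2}}^2 / (1+t) \lesssim (1+P_e)^2 \rho_{ap}(\delta_0)$. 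Summing, we obtain
\begin{equation*}
	\mathcal{E}_M^{(K)}(t) + \frac{\mathcal{F}_M(t)}{1+t} \lesssim P_f^2 + \mathcal{F}_M(0) + (1+P_e)^2\rho_{ap}(\delta_0) + (1+P_f)^2 \rho_{ap}(\delta_0),
\end{equation*}
and the right-hand side is an explicit polynomial in $\rho_{ap}(\delta_0)^{1/2}$ and $\rho_0(\delta_0)^{1/2}$ that vanishes at $\delta_0=0$.

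The final step is bookkeeping: define $\rho_K(\delta_0)$ to be the right-hand side above, regarded as a function of $\delta_0$. It is manifestly a composition of polynomials with non-negative coefficients and the functions $\rho_{ap}, \rho_0$, so it inherits from them strict monotonicity, asymptotic vanishing at zero, and (if $\rho_{ap}, \rho_0$ are continuous) continuity. The main -- and essentially only -- technical subtlety is this bookkeeping: tracking exactly which of the polynomial factors from \fref{Proposition}{prop:est_K} multiply which of the small quantities, so as to ensure that $\rho_K$ retains the three required properties without needing an auxiliary smallness assumption on $\delta_0$.
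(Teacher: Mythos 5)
Your proof is correct and follows essentially the same route as the paper: invoke \fref{Proposition}{prop:decay_intermediate_norms} to convert the hypothesis \eqref{eq:global_in_time_aux_est_K_ap_ass} into decay of the intermediate norms $\overline{\mathcal{K}}_I$, apply \fref{Proposition}{prop:est_K} to bound $\mathcal{E}_M^{(K)}$ and $\mathcal{F}_M$, and then use Cauchy--Schwarz on $\int_0^t \overline{\mathcal{D}}_M^{1/2}$ to absorb the linear-in-time growth of $\mathcal{F}_M$ into the $(1+t)^{-1}$ weight. The only difference is that the paper simplifies the final bound by observing that, since the polynomial arguments are at most one, the polynomial factors may be replaced by linear ones, whereas you keep the explicit polynomial form — both are valid for defining $\rho_K$ with the required monotonicity, vanishing, and continuity properties.
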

	\begin{proof}
		In light of \eqref{eq:global_in_time_aux_est_K_ap_ass}, \fref{Proposition}{prop:decay_intermediate_norms} tells us that
		\begin{equation}
		\label{eq:global_in_time_aux_est_K_1}
			\sup_{0 \leqslant t < T} \sup_{1 \leqslant I \leqslant M} \overline{\mathcal{K}}_I (t) {\brac{1+t}}^{2M-2} \lesssim \rho_{ap} \brac{\delta_0}.
		\end{equation}
		Combining \eqref{eq:global_in_time_aux_est_K_ap_ass}, \eqref{eq:global_in_time_aux_est_K_IC_ass}, and \eqref{eq:global_in_time_aux_est_K_1},
		we use \fref{Proposition}{prop:est_K} to deduce that there exist $C_1, C_2 > 0$ such that, for every $0\leqslant t < T$,
		\begin{equation}
		\label{eq:global_in_time_aux_est_K_2}
		\mathcal{E}_M^{(K)} (t) \leqslant C_1 \brac{ \rho_{ap} \brac{\delta_0} + \rho_0 \brac{\delta_0} }
		\end{equation}
		and, also using Cauchy-Schwarz to deal with $\int \overline{\mathcal{D}}_M^{1/2}$,
		\begin{equation}
		\label{eq:global_in_time_aux_est_K_3}
			\mathcal{F}_M (t) \leqslant C_2 \brac{ \mathcal{F}_M (0) + {\brac{ \int_0^t \overline{\mathcal{D}}_M^{1/2} (s) ds }}^2 + \overline{\mathcal{K}}_M (t) }
			\leqslant C_2 \brac{ (1+t) \rho_0 \brac{\delta_0} + \rho_{ap} \brac{\delta_0} }.
		\end{equation}
		Combining \eqref{eq:global_in_time_aux_est_K_2} and \eqref{eq:global_in_time_aux_est_K_3} yields \eqref{eq:global_in_time_aux_est_K_conclusion}.
	\end{proof}

\subsection{Synthesis}
\label{sec:cont_synthesis}

	In this section we record the continuation argument which allows us to glue together the local well-posedness theory and our scheme of a priori estimates.
	Recall that $ \mathcal{E}_\text{low}$, $ \mathcal{E}_M $ and $ \mathcal{F}_M$, and $ \mathcal{D}_M$
	are defined in \eqref{eq:not_E_low}, \eqref{eq:not_E_M_and_F_M}, and \eqref{eq:not_D_M}, respectively.

	\begin{thm}[Continuation argument]
	\label{thm:cont}
		Let $M\geqslant 4$ be an integer.
		There exists $\eta_{cont} > 0$ such that the following holds.
		For any finite time horizon $T > 0$, if we have a solution of \eqref{eq:pertub_sys_no_ten_pdt_u}--\eqref{eq:pertub_sys_no_ten_pdt_K}
		on $\sbrac{0, T}$ whose initial condition satisfies
		\begin{equation}
		\label{eq:cont_IC}
			\brac{ \mathcal{E}_M + \mathcal{F}_M } (0) =\vcentcolon \eta_0 \leqslant \eta_{cont}
		\end{equation}
		and which lives in the small energy regime, i.e.
		\begin{equation}
		\label{eq:cont_small_energy_regime}
			\sup_{0\leqslant t \leqslant T} \mathcal{E}_\text{low} (t) {\brac{1+t}}^{2M-2} + \mathcal{E}_M (t) + \frac{ \mathcal{F}_M (t) }{1 + t} + \int_0^T \mathcal{D}_M (s) ds
			\leqslant C_{ap} \eta_0
		\end{equation}
		for $C_{ap} > 0$ as in \fref{Theorem}{thm:a_priori},
		then there exists a timescale $\tau > 0$ such that the solution can be uniquely continued on $\sbrac{0, T+\tau}$ where it continues to live in the small energy regime,
		i.e. \eqref{eq:cont_small_energy_regime} holds with $T$ replaced by $T + \tau$.
	\end{thm}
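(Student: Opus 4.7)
The plan is to extend the solution using \fref{Theorem}{thm:lwp}, and then to demonstrate that the extension continues to live in the small energy regime by combining the reduced energy estimate of \fref{Proposition}{prop:local_in_time_reduced_energy_estimate} with the auxiliary $K$-estimates of \fref{Proposition}{prop:global_in_time_aux_est_K}. The key point, as emphasized in \fref{Section}{sec:discuss_cont}, is that although the bounds from local well-posedness grow in time (and hence degrade as $T$ increases), the reduced energy estimate produces a bound on $\overline{\mathcal{E}}_M$ whose size depends only on the data at the starting time, not on the length of the time interval.

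First, I would extract from the hypothesis \eqref{eq:cont_small_energy_regime} a bound of the form $\|Z(T)\|_{H^{2M}}^2 + \|K(T)\|_{H^{2M+1}}^2 \lesssim (1+T)\eta_0$, together with $\|K(T)\|_{H^3} \lesssim \eta_0^{1/2}$, which for $\eta_{cont}$ sufficiently small falls within the hypotheses of \fref{Theorem}{thm:lwp}. This produces a unique strong continuation on $[T, T+T_{lwp}]$ where $T_{lwp} = \phi(\|Z(T)\|_{H^{2M}})$; in particular $T_{lwp}$ is bounded below uniformly in $T$ provided the initial smallness is enforced. This also guarantees the smallness conditions $\|K(t)\|_{L^\infty} < \lambda/2$ and $\|\nabla K(t)\|_{L^\infty} < \delta_r^{loc}$ throughout $[T, T+T_{lwp}]$ (by Sobolev embedding and the continuity of $K$), which is what is needed to invoke \fref{Proposition}{prop:local_in_time_reduced_energy_estimate}.

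Second, on the interval $[T, T+\tau]$ with $\tau$ to be chosen small, I would apply the local-in-time reduced energy estimate with $t_1 = T$ and $t_2 = T+\tau$. This requires
\[
\tau \leq \frac{\phi_r(\|Y(T)\|_{P^{2M}})}{1 + \sup_{T \leq t \leq T+\tau}\|K(t)\|_{P^{2M}}},
\]
where the denominator is controlled via the local well-posedness bounds applied at time $T$. The output is a time-horizon-independent bound
\[
\sup_{T \leq t \leq T+\tau}\overline{\mathcal{E}}_M(t) + \int_T^{T+\tau}\overline{\mathcal{D}}_M(s)\,ds \leq \rho_r(\|Y(T)\|_{P^{2M}}) \leq \rho_r(C\eta_0^{1/2}),
\]
which can be made as small as desired (independently of $T$) by choosing $\eta_{cont}$ small. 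Patching this with the bound \eqref{eq:cont_small_energy_regime} on $[0,T]$ yields control of $\sup_{[0,T+\tau]}\overline{\mathcal{E}}_M$ and $\int_0^{T+\tau}\overline{\mathcal{D}}_M$ by a constant multiple of $\eta_0$. Then \fref{Proposition}{prop:decay_low_E_given_small_reduced_high_E} upgrades this to the decay of $\overline{\mathcal{E}}_\text{low}$ with rate $(1+t)^{-(2M-2)}$ on the full interval $[0, T+\tau]$, since its hypothesis is the smallness of $\|K\|_{H^3}$, which is already in hand.

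Third, with hypothesis \eqref{eq:global_in_time_aux_est_K_ap_ass} now verified on $[0,T+\tau]$ (with $\rho_{ap}(\eta_0)$ a continuous function vanishing at zero) and \eqref{eq:global_in_time_aux_est_K_IC_ass} given by \eqref{eq:cont_IC}, I would invoke \fref{Proposition}{prop:global_in_time_aux_est_K} to deduce
\[
\sup_{0\leq t\leq T+\tau}\mathcal{E}_M^{(K)}(t) + \frac{\mathcal{F}_M(t)}{1+t} \leq \rho_K(\eta_0).
\]
Combining this with the decay of $\overline{\mathcal{E}}_\text{low}$, the bound on $\overline{\mathcal{E}}_M$, and the time-integrated control of $\mathcal{D}_M$ (where the $a$-component of the dissipation is recovered from the improvement estimates of \fref{Section}{sec:close_at_high} applied in integrated form) gives the full small-energy-regime bound on $[0, T+\tau]$. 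The last step is a bookkeeping argument: we must choose $\eta_{cont}$ so small that $\rho_r(C\eta_{cont}^{1/2}) + \rho_K(\eta_{cont}) \leq C_{ap}\eta_0$, which is possible since $\rho_r$ and $\rho_K$ both vanish asymptotically at zero.

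The main obstacle is ensuring that the various smallness thresholds and the choice of $\tau$ are compatible: the reduced energy estimate forces $\tau$ to scale inversely with $\sup_{[T,T+\tau]}\|K\|_{P^{2M}}$, which through local well-posedness scales like $(1+T)^{1/2}\eta_0^{1/2}$, so a priori $\tau$ could shrink as $T$ grows. The resolution is that the reduced estimate is only needed to propagate \emph{reduced} energy (which is $T$-independent), while the potentially growing $\mathcal{F}_M$ is handled separately by the advection-rotation estimates within \fref{Proposition}{prop:global_in_time_aux_est_K}, whose output $\mathcal{F}_M/(1+t)$ remains uniformly bounded. The delicate part is therefore verifying that $\tau$ can indeed be chosen as a fixed fraction of $T_{lwp}$ (uniform in $T$), which requires using the local well-posedness bounds only to control the extension itself and the reduced energy estimates to propagate smallness.
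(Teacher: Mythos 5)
Your overall architecture matches the paper's up to the final step: extend via \fref{Theorem}{thm:lwp}, control the extension with the local-in-time reduced energy estimate of \fref{Proposition}{prop:local_in_time_reduced_energy_estimate} (whose output depends only on $\normtyp{Y(T)}{P}{2M}\lesssim \eta_0^{1/2}$, not on $T$), then feed the result into \fref{Proposition}{prop:decay_low_E_given_small_reduced_high_E} and \fref{Proposition}{prop:global_in_time_aux_est_K}. (The paper runs these estimates on $\sbrac{T-\tau,T+\tau}$ rather than $\sbrac{T,T+\tau}$, a cosmetic difference.) The gap is in how you close. You propose to conclude by choosing $\eta_{cont}$ so small that $\rho_r\brac{C\eta_{cont}^{1/2}} + \rho_K\brac{\eta_{cont}} \leqslant C_{ap}\eta_0$. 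This cannot work: first, $\eta_0$ may be arbitrarily smaller than $\eta_{cont}$, so no choice of $\eta_{cont}$ makes a left-hand side depending on $\eta_{cont}$ dominate $C_{ap}\eta_0$ uniformly; second, even replacing $\eta_{cont}$ by $\eta_0$ on the left, the inequality $\rho_r\brac{C\eta_0^{1/2}}\leqslant C_{ap}\eta_0$ fails as $\eta_0\to 0$, since $\rho_r$ is only known to vanish at zero (and from its construction via the Bihari argument it behaves essentially linearly in its argument, so $\rho_r\brac{C\eta_0^{1/2}}\sim \eta_0^{1/2}$). The same loss appears in the low-level decay you get from \fref{Proposition}{prop:decay_low_E_given_small_reduced_high_E}, which is of size $\max\brac{C_{ap}\eta_0,\rho_r\brac{\sqrt{C_{ap}\eta_0}}}\sim\eta_0^{1/2}$. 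So assembling the small-energy-regime bound directly from the reduced and auxiliary estimates can only yield $\lesssim \eta_0^{1/2}$, not the constant $C_{ap}\eta_0$ demanded by \eqref{eq:cont_small_energy_regime}; and without recovering exactly that constant the continuation cannot be iterated in the global argument.

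The missing idea is that the reduced energy estimate, the low-level decay, and the $K$-estimates should be used only to verify the \emph{fixed}, $\eta_0$-independent smallness threshold of \fref{Theorem}{thm:a_priori} on the extended interval, namely $\sup_{0\leqslant t\leqslant T+\tau}\mathcal{E}_M(t) + \int_0^{T+\tau}\overline{\mathcal{D}}_M \leqslant \delta_{ap}$ (achievable since $\rho_r\brac{\sqrt{C_{ap}\eta_0}}+\rho_K\brac{\eta_0}\leqslant \delta_{ap}/2$ once $\eta_{cont}$ is small, with $\delta_{ap}$ fixed). One then re-invokes the a priori estimates \fref{Theorem}{thm:a_priori} on $\sbrac{0,T+\tau}$, and it is that theorem — not the reduced estimates — which returns the structured bound $C_{ap}\brac{\mathcal{E}_M+\mathcal{F}_M}(0)=C_{ap}\eta_0$ for all the terms in \eqref{eq:cont_small_energy_regime}, including $\int_0^{T+\tau}\mathcal{D}_M$ (so your hand-made recovery of the $a$-part of the dissipation is also unnecessary). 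With that final step replaced, the rest of your outline goes through.
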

	\begin{proof}
		\textbf{Step 1.}
		First we define the smallness parameter $\eta > 0$.
		We begin by picking $\sigma > 0$ small enough to satisfy
		\begin{enumerate}
			\item	\label{it:cont_small_cond_1}
				$\sigma \leqslant \sigma\brac{ \delta^{loc}_{ap} }$ for $\delta^{loc}_{ap}$ as in \fref{Lemma}{lemma:local_a_priori_estimates}
				and $\sigma = \sigma (\delta)$ as in the beginning of \fref{Section}{sec:lwp}.
			\item	\label{it:cont_small_cond_2}
				$\sigma \leqslant \sigma\brac{ \delta^{loc}_{r} }$ for $\delta^{loc}_{r}$ as in \fref{Lemma}{lemma:reduced_a_priori_est}
				and $\sigma = \sigma (\delta)$ as in the beginning of \fref{Section}{sec:lwp}.
			\item	\label{it:cont_small_cond_3}
				$\sigma \leqslant \delta_I$ for $\delta_I$ as in \fref{Proposition}{prop:global_in_time_aux_est_K}.
		\end{enumerate}
		We then pick $\eta > 0$ sufficiently small to satisfy
		\begin{enumerate}
			\setcounter{enumi}{3}
			\item	\label{it:cont_small_cond_4}
				$\eta \leqslant \frac{\sigma_*^2}{C_{ap}}$ for $\sigma_* = \frac{\sigma}{2C_K}$ as in \fref{Theorem}{thm:lwp},
			\item	\label{it:cont_small_cond_5}
				$\max\brac{ C_{ap} \eta, \rho_r \brac{ \sqrt{C_{ap} \eta} }} \leqslant \frac{1}{2} $ for $\rho_r$
				as in \fref{Proposition}{prop:local_in_time_reduced_energy_estimate},
			\item	\label{it:cont_small_cond_6}
				$\eta \leqslant 1$,
			\item	\label{it:cont_small_cond_7}
				$\rho_r\brac{\sqrt{ C_{ap} \eta}} + \rho_K\brac{\eta} \leqslant \frac{\delta_{ap}}{2}$ for $\delta_{ap}$ as in \fref{Theorem}{thm:a_priori}
				and $\rho_K$ as in \fref{Proposition}{prop:global_in_time_aux_est_K}, where $\rho_K$ depends in $\rho_0$ and $\rho_{ap}$ given by
				$\rho_0 \vcentcolon= \id$ and $\rho_{ap} (x) \vcentcolon= \max\brac{ C_{ap} x, \rho_r \brac{ \sqrt{C_{ap} x }}}$,
			\item 	\label{it:cont_small_cond_8}
				$C_{ap} \eta \leqslant \frac{\delta_{ap}}{2}$, and
			\item	\label{it:cont_small_cond_9}
				$\eta \leqslant \eta_{ap}$ for $\eta_{ap}$ as in \fref{Theorem}{thm:a_priori}.
		\end{enumerate}
		In particular, note that choosing the parameter $\eta$ in this way enforces the following implication: since $M\geqslant 3$,
		if \eqref{eq:cont_small_energy_regime} holds then
		\begin{equation}
		\label{eq:cont_1}
			\sup_{0\leqslant t\leqslant T} \normtyp{K(t)}{H}{3}^2
			\leqslant \sup_{0\leqslant t\leqslant T} \mathcal{E}_M^{(K)} (t)
			\leqslant \sigma_*^2,
		\end{equation}
		i.e. $ \normtyp{K(t)}{H}{3} < \sigma$ when the solution lives in the small energy regime.

		\paragraph{\textbf{Step 2.}}
		We now identify the timescale $\tau$ on which we may both (1) continue the solution and (2) obtain estimates on the continued solution.
		Correspondingly, there are two constraints on how large the timescale $\tau$ may be:
		(1) the first constraint comes from the local well-posedness theory of \fref{Theorem}{thm:lwp} and
		(2) the second constraint comes from the local-in-time reduced energy estimate of \fref{Proposition}{prop:local_in_time_reduced_energy_estimate}.

		So let us define, for $\phi$ as in \fref{Theorem}{thm:lwp} and $\phi_r$ as in \fref{Proposition}{prop:local_in_time_reduced_energy_estimate},
		\begin{align*}
			\tau_\text{lwp} \vcentcolon= \phi\brac{ \sqrt{C_{ap} (2 + T) \eta_0 }},\,
			\tau_r \vcentcolon= \frac{
				\phi_r \brac{\sqrt{ C_{ap} \eta_0 }}
			}{
				1 + \sqrt{C_{ap} {(2 + T)} \eta_0}
			}, \text{ and } 
			\tau \vcentcolon= \frac{1}{3} \min\brac{ \tau_\text{lwp}, \tau_r }.
		\end{align*}
		In particular note that $2\tau < \min\brac{ \tau_\text{lwp}, \tau_r}$.
		Note also that, for every $0 \leqslant t \leqslant T$, by virtue of \eqref{eq:cont_small_energy_regime},
		if we write $Z=(u,\theta, K)$ and $Y=(u,\theta, a)$ then
		\begin{equation}
		\label{eq:cont_2}
			\phi\brac{ \normtyp{Z(t)}{H}{2M} }
			\geqslant \phi\brac{ {\brac{ \mathcal{E}_M + \mathcal{F}_M }}^{1/2} (t) }
			\geqslant \tau_\text{lwp}
		\end{equation}
		and
		\begin{equation}
		\label{eq:cont_3}
			\frac{
				\phi_r \brac{ \normtyp{Y(t)}{P}{2M} }
			}{
				1 + \sup_{0\leqslant s\leqslant T} \normtyp{K(s)}{P}{2M} 
			}
			\geqslant \frac{
				\phi_r \brac{\sqrt{ C_{ap} \eta_0 }}
			}{
				1 + \sup_{0\leqslant s \leqslant T} {\brac{ \mathcal{E}_M + \mathcal{F}_M }}^{1/2} (s)
			}
			\geqslant \tau_r.
		\end{equation}

		\paragraph{\textbf{Step 3.}}
		Having identified the appropriate timescale $\tau$ we may now turn the crank of the local well-posedness theory.
		Feeding \eqref{eq:cont_1} and \eqref{eq:cont_2} into \fref{Theorem}{thm:lwp} using the initial condition $Z(T - \tau)$,
		where $Z = (u, \theta, K)$ as usual, we see that the solution may be uniquely continued to $\sbrac{0, T+\tau}$ where,
		in light of \eqref{eq:cont_1}, it satisfies
		\begin{equation}
		\label{eq:cont_4}
			\sup_{T - \tau \leqslant t \leqslant T + \tau} \normtyp{K(t)}{H}{3} < \sigma.
		\end{equation}

		\paragraph{\textbf{Step 4.}}
		We conclude by performing estimates on the solution on the time interval $\sbrac{T-\tau, T+\tau}$ that ensure that the solution remains in the
		small energy regime of \eqref{eq:cont_small_energy_regime}.
		In light of \eqref{eq:cont_3} and \eqref{eq:cont_4} we may apply the local-in-time reduced energy estimate
		of \fref{Proposition}{prop:local_in_time_reduced_energy_estimate} on the interval $\sbrac{T - \tau, T+\tau}$ to obtain that
		\begin{equation}
		\label{eq:cont_5}
			\sup_{T-\tau \leqslant t \leqslant T + \tau} \overline{\mathcal{E}}_M (t) + \int_{T-\tau}^{T+\tau} \overline{\mathcal{D}}_M (s) ds
			\leqslant \rho_r \brac{\sqrt{ C_{ap} \eta_0}},
		\end{equation}
		for $\rho_r$ as in \fref{Proposition}{prop:local_in_time_reduced_energy_estimate}.
		
		We now string together \eqref{eq:cont_5}, \fref{Proposition}{prop:decay_low_E_given_small_reduced_high_E}, and \fref{Proposition}{prop:global_in_time_aux_est_K},
		which tells us that, in light of the smallness conditions \eqref{it:cont_small_cond_3}, \eqref{it:cont_small_cond_4}, and \eqref{it:cont_small_cond_5},
		\begin{equation*}
			\sup_{T-\tau \leqslant t \leqslant T+\tau} \overline{\mathcal{E}}_\text{low} (t) {\brac{1+t}}^{2M-2}
			\leqslant C_I \max \brac{ C_{ap} \eta_0, \rho_r\brac{ \sqrt{ C_{ap} \eta_0} }}
		\end{equation*}
		where $C_I$ is as in \fref{Proposition}{prop:decay_low_E_given_small_reduced_high_E}, and hence,
		in light of the smallness condition \eqref{it:cont_small_cond_6},
		\begin{equation}
		\label{eq:cont_6}
		\sup_{T - \tau \leqslant t \leqslant T + \tau} \mathcal{E}_M^{(K)} (t) + \frac{ \mathcal{F}_M (t) }{1+t} \leqslant \rho_K \brac{\eta_0},
		\end{equation}
		for $\rho_K$ defined as in the smallness condition \eqref{it:cont_small_cond_7}.

		So finally, putting together \eqref{eq:cont_5} and \eqref{eq:cont_6} tells us that, in light of the smallness condition \eqref{it:cont_small_cond_7},
		\begin{equation}
		\label{eq:cont_7}
			\sup_{T-\tau \leqslant t \leqslant T+\tau} \mathcal{E}_M (t) + \int_{T-\tau}^{T+\tau} \overline{\mathcal{D}}_M (s) ds \leqslant \frac{\delta_{ap}}{2}.
		\end{equation}
		In light of the smallness condition \eqref{it:cont_small_cond_8} we may therefore combine \eqref{eq:cont_small_energy_regime} and \eqref{eq:cont_7} to see that
		\begin{equation}
		\label{eq:cont_8}
			\sup_{0 \leqslant t \leqslant T+\tau} \mathcal{E}_M (t) + \int_0^{T+\tau} \overline{\mathcal{D}}_M (s) ds \leqslant \delta_{ap}.
		\end{equation}
		To conclude we feed \eqref{eq:cont_8} into the a priori estimates of \fref{Theorem}{thm:a_priori},
		which is legal in light of the smallness condition \eqref{it:cont_small_cond_9}, and obtain that
		\begin{equation*}
			\sup_{0\leqslant t\leqslant T+\tau}
			\mathcal{E}_\text{low} (t) {\brac{1+t}}^{2M-2} + \mathcal{E}_M (t) + \frac{ \mathcal{F}_M (t) }{1+t} + \int_0^{T+\tau} \mathcal{D}_M (s) ds
			\leqslant C_{ap}\eta_0.\qedhere
		\end{equation*}
	\end{proof}

%----------------------------------------------------------------------------------------------------
%	GLOBAL WELL-POSEDNESS AND DECAY
%----------------------------------------------------------------------------------------------------

\section{Global well-posedness and decay}
\label{sec:gwp}

	In this section we put together the a priori estimates of \fref{Section}{sec:a_prioris}, the local well-posedness of \fref{Section}{sec:lwp},
	and the continuation argument of \fref{Section}{sec:cont} in order to obtain the main result of this paper,
	namely global well-posedness and decay about equilibrium.
	This is supplemented by a quantitative rigidity result which allows us to deduce decay of $K$.

	In order to prove the main result, there are two auxiliary results that we need in addition of the results proved in \fref{Sections}{sec:a_prioris}--\ref{sec:cont}.
	The first one is the first part of \fref{Lemma}{lemma:local_in_time_aux_est_K}, which accounts for the mismatch between the energies used for the local well-posedness and the a priori estimates.
	This result ensures that, close to time $t=0$, the local solution lives in the smallness regime to which the a priori estimates apply.
	The second one is \fref{Proposition}{prop:control_IC__energy_by_purely_spatial_deriv} which allows us to control the initial energy,
	involving time derivatives, in terms of purely spatial norms.
	Note that this is reminiscent of \fref{Lemma}{lemma:bounds_on_P_2M_by_H_2M} from the local well-posedness theory,
	which fulfilled a similar purpose for solutions of the approximate systems.
	In particular, the first part of \fref{Lemma}{lemma:local_in_time_aux_est_K} and \fref{Lemmas}{lemma:H_k_bounds_inverse_J_eq_plus_K}--\ref{lemma:control_P_2M_by_H_2M} only serve the purpose of
	leading up to \fref{Proposition}{prop:control_IC__energy_by_purely_spatial_deriv}.
	
	We begin with \fref{Lemma}{lemma:local_in_time_aux_est_K} below.
	Note that \fref{Lemma}{lemma:H_k_est_pdt_j_K} forms of the crux of the argument in the first part of \fref{Lemma}{lemma:local_in_time_aux_est_K},
	as it does for similar estimates in \fref{Section}{sec:a_prioris}.
	The difference here (by contrast with estimates recorded in \fref{Section}{sec:a_prioris}) is that we do not make any smallness assumptions.
	Note that in the first part of \fref{Lemma}{lemma:local_in_time_aux_est_K} we only control (parts of) $ \mathcal{E}_M^{(K)} $
	whereas in the second part we control $ \mathcal{F}_M $ as well.

	\begin{lemma}[Auxiliary estimates for $K$]
	\label{lemma:local_in_time_aux_est_K}
		Let $M \geqslant 2$ be an integer. There exists a constant $C_K > 0$ such that
		if $K$ solves \eqref{eq:pertub_sys_no_ten_pdt_K} then, for $Z=(u, \theta, K)$ we have the estimates
		\begin{align*}
			\normtyp{\pdt^2 K}{H}{2M-3}^2 + \sum_{j=3}^M \normtypns{\pdt^j K}{H}{2M-2j+2}^2
			&\leqslant C_K \brac{ \normtyp{Z}{P}{2M}^2 + \normtyp{Z}{P}{2M}^{\brac{2^{M+1}}}}
			\text{ and }\\
			\normtyp{K}{H}{2M+1}^2 + \sum_{j=1}^M \normtypns{\pdt^j K}{H}{2M-2j+2}^2
			&\leqslant C_K \brac{ \normtyp{Z}{P}{2M}^2 + \normtyp{K}{H}{2M+1}^2 + {\brac{ \normtyp{Z}{P}{2M}^2 + \normtyp{K}{H}{2M+1}^2 }}^{\brac{2^M}}}.
		\end{align*}
		Note that the summation on the left-hand side of the second inequality can be written more compactly as
		$
			\norm{K}{P^{2M+2}_{1, M}},
		$
		which comes in handy in the sequel.
	\end{lemma}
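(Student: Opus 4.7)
Both estimates will be proved by induction on $j$, exploiting the identity $\pdt K = [\Omega_{eq} + \Theta, J_{eq} + K] - u\cdot\nabla K$ and its time derivatives to trade temporal derivatives of $K$ for quadratic nonlinearities in $Z = (u,\theta,K)$, using \fref{Lemma}{lemma:H_k_est_pdt_K}, \fref{Lemma}{lemma:H_k_est_pdt_2_K}, and \fref{Lemma}{lemma:H_k_est_pdt_j_K}. Write $X \vcentcolon= \|Z\|_{P^{2M}}$. The key observation for the parabolic bookkeeping is that $\|\pdt^j K\|_{H^{2M-2j+2}}$ sits at parabolic count $2M+2$, i.e.\ one spatial derivative above what $X$ controls directly; this extra derivative must be supplied either by the induction hypothesis at the previous step (first estimate) or by $\|K\|_{H^{2M+1}}$ on the right-hand side (second estimate).

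For the first estimate, the base case $j=2$ comes from \fref{Lemma}{lemma:H_k_est_pdt_2_K} with $k=2M-3$: every linear quantity on its right-hand side has parabolic count $\leqslant 2M$ and is hence $\lesssim X$, while the lone $L^\infty$ factor $\|\pdt(K,\nabla K)\|_{L^\infty} \lesssim \|\pdt K\|_{H^3}$ is reduced via the equation itself together with the Banach-algebra property of $H^3$ to a polynomial of degree at most $2$ in $X$. This yields $\|\pdt^2 K\|_{H^{2M-3}}^2 \lesssim X^2 + X^6$, comfortably inside the target. For $j\in\{3,\ldots,M\}$, I apply \fref{Lemma}{lemma:H_k_est_pdt_j_K} with $k=2M-2j+2\geqslant 2>3/2$; every term on its right-hand side has parabolic count $\leqslant 2M$ except for $\|\pdt^{j-1}K\|_{H^{k+1}}$, whose spatial order $2M-2j+3=\bigl(2M-2(j-1)+2\bigr)-1$ lies exactly one below what the induction hypothesis at step $j-1$ provides, and is thus absorbed. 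Its quadratic appearance doubles the polynomial degree per step, producing a final degree bounded by $3\cdot 2^{M-2}$, so the squared bound fits inside $X^2+X^{2^{M+1}}$. The second estimate is proved analogously, starting the induction at $j=1$: \fref{Lemma}{lemma:H_k_est_pdt_K} with $k=2M$ yields
\[
\|\pdt K\|_{H^{2M}}^2 \lesssim X^2 + \|K\|_{H^{2M+1}}^2 + X^2\|K\|_{H^{2M+1}}^2 \lesssim \bigl(X^2+\|K\|_{H^{2M+1}}^2\bigr)^2;
\]
the case $j=2$ uses \fref{Lemma}{lemma:H_k_est_pdt_2_K} with $k=2M-2$, where now $\|K\|_{H^{k+1}} = \|K\|_{H^{2M-1}}$ sits at parabolic count $\leqslant 2M$ and is controlled by $X$; and $j\geqslant 3$ proceeds identically to the first estimate, giving degree at most $2^M$ after $M-1$ doublings from the base, matching the claimed exponent.

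The main obstacle is the parabolic-count alignment at each inductive step: one must verify for every $j$ that the ``lost'' spatial regularity in the quadratic term $\|\pdt^{j-1}K\|_{H^{k+1}}$ matches exactly what the previous inductive step supplies, which is precisely why the statement specifies the spacing $H^{2M-2j+2}$ (and why the case $j=2$ of the first estimate is singled out at the lower regularity $H^{2M-3}$, since no previous step is available). Once this alignment is checked the polynomial-degree tracking is elementary and the constants depend only on $M$.
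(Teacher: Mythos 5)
Your proposal is correct, and its core mechanism is the same as the paper's: induct on the number of time derivatives, use the equation \eqref{eq:pertub_sys_no_ten_pdt_K} (through the advection–rotation lemmas) to trade each $\partial_t$ for spatial derivatives and quadratic terms, and track the polynomial degree, which doubles at each step because the absorbed term $\|\partial_t^{j-1}K\|_{H^{2M-2j+3}}$ enters squared; your check that this regularity sits exactly at (for $j=3$) or one below (for $j\geqslant 4$) what the previous step supplies is precisely the paper's alignment argument. The differences are presentational but worth noting. For the base case you invoke Lemma~\ref{lemma:H_k_est_pdt_2_K} with $k=2M-3$ instead of the paper's use of Lemma~\ref{lemma:H_k_est_pdt_j_K}; this forces you to reduce $\|\partial_t(K,\nabla K)\|_{L^\infty}$ via the equation (which you do), but it also quietly sidesteps the restriction $k>3/2$ in Lemma~\ref{lemma:H_k_est_pdt_j_K}, which the paper's own base case ignores when $M=2$ — a small advantage of your route. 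For the second inequality the paper does not reuse these lemmas at all: it applies $\partial_t^{j-1}$ to the equation and estimates directly with the Banach-algebra property of $H^{2M-2j+2}$, obtaining
\begin{equation*}
\|\partial_t^{j}K\|_{H^{2M-2j+2}}\lesssim \|Z\|_{P^{2M}_{j-1}}+\|K\|_{P^{2M+1}_{j-1}}+\bigl(\|Z\|_{P^{2M}_{j-1}}+\|K\|_{P^{2M+1}_{j-1}}\bigr)^2,
\end{equation*}
and then composes polynomials $p_a^b(x)=x^a+x^b$; your version runs the same induction through Lemmas~\ref{lemma:H_k_est_pdt_K}, \ref{lemma:H_k_est_pdt_2_K}, \ref{lemma:H_k_est_pdt_j_K} while carrying $\|K\|_{H^{2M+1}}$ along, which is equivalent in substance (note that at $j=2$ the term $\|\partial_t K\|_{H^{2M-1}}$, of parabolic count $2M+1$, also appears and must be fed from your $j=1$ step, not from $\|Z\|_{P^{2M}}$ — your induction covers this even though you only mention the $\|K\|_{H^{2M-1}}$ factor). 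Finally, your displayed chain $X^2+\|K\|_{H^{2M+1}}^2+X^2\|K\|_{H^{2M+1}}^2\lesssim(X^2+\|K\|_{H^{2M+1}}^2)^2$ fails for small arguments; it is harmless here because the stated bound allows the linear term $X^2+\|K\|_{H^{2M+1}}^2$ as well, but the intermediate inequality should be written as $\lesssim Y+Y^2$ with $Y=X^2+\|K\|_{H^{2M+1}}^2$.
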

	\begin{proof}
		We begin with the first inequality.
		For any $j\geqslant 1$, \fref{Lemma}{lemma:H_k_est_pdt_j_K} tells us that
		\begin{equation}
		\label{eq:local_in_time_aux_est_K}
			\normtypns{\pdt^j K}{H}{2M-2j+2} \lesssim \normtyp{Z}{P}{2M} + \normtyp{Z}{P}{2M}^2 + \normtypns{\pdt^{j-1} K}{H}{2M-2j+3}^2. 
		\end{equation}
		We immediately deduce that, since $2M-3 = (2M - 2\cdot 2 + 2) -1$,
		\begin{equation*}
			\normtyp{\pdt^2 K}{H}{2M-3} \lesssim \normtyp{Z}{P}{2M-1} + \normtyp{Z}{P}{2M-1}^2 + \normtyp{\pdt K}{H}{2M-2}^2
			\lesssim \normtyp{Z}{P}{2M} + \normtyp{Z}{P}{2M}^4
		\end{equation*}
		and
		\begin{equation*}
			\normtyp{\pdt^3 K}{H}{2M-4} \lesssim \normtyp{Z}{P}{2M} + \normtyp{Z}{P}{2M}^2 + \normtyp{\pdt^2 K}{H}{2M-3}^2
			\lesssim \normtyp{Z}{P}{2M} + \normtyp{Z}{P}{2M}^8.
		\end{equation*}
		To conclude we proceed by induction.
		Suppose that, for some $3\leqslant j \leqslant M-1$,
		\begin{equation*}
			\normtypns{\pdt^j K}{H}{2M-2j+2} \lesssim \normtyp{Z}{P}{2M} + \normtyp{Z}{P}{2M}^{\brac{2^j}}.
		\end{equation*}
		Then, by \eqref{eq:local_in_time_aux_est_K},
		\begin{equation*}
			\normtypns{\pdt^{j+1}K}{H}{2M-2j} \lesssim \normtyp{Z}{P}{2M} + \normtyp{Z}{P}{2M}^2 + \normtypns{\pdt^j K}{H}{2M-2j+1}^2
			\lesssim \normtyp{Z}{P}{2M} + \normtyp{Z}{P}{2M}^{\brac{2^{j+1}}},
		\end{equation*}
		and so the claim follows by induction.

		We now prove the second inequality.
		The key observation is that, since $H^s$ is a Banach algebra when $s > \frac{3}{2}$, we may immediately deduce from \eqref{eq:pertub_sys_no_ten_pdt_K} that
		\begin{equation*}
			\normtypns{\pdt^j K}{H}{2M-2j+2} \lesssim \norm{Z}{P^{2M}_{j-1}} + \norm{K}{P^{2M+1}_{j-1}} + {\brac{ \norm{Z}{P^{2M}_{j-1}} + \norm{K}{P^{2M+1}_{j-1}} }}^2
			\text{ for every } 1\leqslant j \leqslant M.
		\end{equation*}
		For simplicity, we will write $p_a^b (x) \vcentcolon= x^a + x^b$. The inequality above may thus be written as
		\begin{equation}
		\label{eq:aux_est_K_intermediate}
			\normtypns{\pdt^j K}{H}{2M-2j+2} \lesssim p_1^2 \brac{ \norm{Z}{P^{2M}_{j-1}} + \norm{K}{P^{2M+1}_{j-1}} }
			\text{ for every } 1\leqslant j \leqslant M.
		\end{equation}
		This notation is particularly useful due to its behaviour under composition.
		Indeed, we see immediately that $p_a^b \circ p_c^d \lesssim p_{ac}^{bd}$.
		The result now follows from iterating \eqref{eq:aux_est_K_intermediate} and we may induct on $1\leqslant j\leqslant M$ to show that
		$
			\normtyp{K}{H}{2M+1} + \norm{K}{P^{2M+2}_{1,j}} \lesssim p_1^{2^j} \brac{ \normtyp{Z}{P}{2M} + \normtyp{K}{H}{2M+1} },
		$
		which proves the claim.
	\end{proof}

	We now turn our attention towards the control of the initial energy in terms of purely spatial norms.
	In order to do so, we first record $H^k$ bounds on the inverse of $J_{eq} + K$ reminiscent of \fref{Lemma}{lemma:H_k_bounds_T_K_inv}.
	However, such bounds are easier to obtain here since we do not have to deal with any projections, as was the case in \fref{Lemma}{lemma:H_k_bounds_T_K_inv}.
	Note that in the lemma below we consider $K : \T^3 \to \sym(3)$ (i.e. there is no dependence in time).
	When $K$ is satisfies \eqref{eq:pertub_sys_no_ten_pdt_u}--\eqref{eq:pertub_sys_no_ten_pdt_K} (and is hence time-dependent), this means that the lemma below applies pointwise in time.

	\begin{lemma}[$H^k$ bounds on $ {\brac{J_{eq} + K}}\inv $]
	\label{lemma:H_k_bounds_inverse_J_eq_plus_K}
		Suppose that $J_{eq} = \diag\brac{\lambda, \lambda, \nu}$ for $\nu > \lambda > 0$
		and that $K : \T^3 \to \sym(3)$ satisfies $ \normtyp{ K }{L}{\infty} < \frac{\lambda}{2} $.
		Then $J_{eq} + K$ is pointwise invertible and, for every integer $k \geqslant 1$,
		\begin{equation*}
			(1)\; \normns{ {\brac{ J_{eq} + K }}\inv }{\Leb\brac{L^2;\, L^2}} \leqslant \frac{2}{\lambda}
			\text{ and }
			(2)\; \normns{ {\brac{ J_{eq} + K }}\inv }{\Leb\brac{H^k;\, H^k}} \lesssim \normtyp{K}{H}{k+2} + \normtyp{K}{H}{k+2}^k.
		\end{equation*}
	\end{lemma}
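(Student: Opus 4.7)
The plan is to mirror the argument used for Lemma~\ref{lemma:H_k_bounds_T_K_inv}, noting that the absence of the projection $P_n$ makes everything simpler since we can work pointwise in the spatial variable. Throughout, $K$ is treated as a fixed symmetric matrix field, and the inverse is understood pointwise.

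For (1), the key observation is that the spectrum of $J_{eq}$ is contained in $[\lambda, \nu]$, so by Weyl's inequality (or directly from the min--max characterization of eigenvalues for symmetric matrices) the spectrum of $J_{eq}(x) + K(x)$ is contained in $[\lambda - \|K\|_{L^\infty}, \nu + \|K\|_{L^\infty}] \subset [\lambda/2, \nu + \lambda/2]$. Thus $(J_{eq} + K)^{-1}$ is well-defined pointwise, is bounded in the operator norm by $2/\lambda$ uniformly in $x$, and the $L^2 \to L^2$ bound on multiplication by this matrix field follows.

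For (2), I would derive a formula for $\partial^\alpha (J_{eq} + K)^{-1}$ analogous to Lemma~\ref{lemma:formula_deriv_T_K_inv}. The starting point is $\partial_i (J_{eq} + K)^{-1} = -(J_{eq} + K)^{-1} (\partial_i K) (J_{eq} + K)^{-1}$, and induction on $|\alpha|$ yields
\begin{equation*}
\partial^\alpha (J_{eq} + K)^{-1} = \sum_{m=1}^{|\alpha|} (-1)^m \sum_{\beta_1 + \cdots + \beta_m = \alpha} \widetilde{M}(\beta_1, \ldots, \beta_m),
\end{equation*}
where $\widetilde{M}(\beta_1, \ldots, \beta_m)$ is the analogue of the operator of Definition~\ref{def:good_operator_M} with $P \circ \partial^{\beta_j} K$ replaced by plain pointwise multiplication by $\partial^{\beta_j} K$. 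Using (1) and $H^2(\T^3) \hookrightarrow L^\infty(\T^3)$ to estimate each factor in $L^\infty$, we obtain for $|\beta| = l$,
\begin{equation*}
\|\partial^\beta (J_{eq} + K)^{-1}\|_{\Leb(L^2, L^2)} \lesssim \|\nabla K\|_{W^{l-1,\infty}}^{l} \lesssim \|K\|_{H^{l+2}}^{l}.
\end{equation*}

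To pass from this to the $H^k \to H^k$ bound, I would apply the Leibniz rule to $\partial^\alpha ((J_{eq}+K)^{-1} v)$ for $|\alpha| \leq k$, distributing derivatives between $(J_{eq}+K)^{-1}$ and $v$, giving
\begin{equation*}
\|(J_{eq}+K)^{-1} v\|_{H^k}^2 \lesssim \sum_{|\alpha| \leq k} \sum_{\beta + \delta = \alpha} \|K\|_{H^{|\beta|+2}}^{2|\beta|} \|v\|_{H^{|\delta|}}^2.
\end{equation*}
The main (minor) wrinkle is handling the case $|\beta| = 0$, where the bound degenerates; this is absorbed into the $\|K\|_{H^{k+2}}$ term via (1), and an interpolation of $\|K\|_{H^{|\beta|+2}}$ between $\|K\|_{H^2}$ and $\|K\|_{H^{k+2}}$ yields the stated bound $\|K\|_{H^{k+2}} + \|K\|_{H^{k+2}}^k$. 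I do not anticipate any genuine obstacle: the whole argument is a direct adaptation of the proof of Lemma~\ref{lemma:H_k_bounds_T_K_inv}, stripped of the technicalities associated with $P_n$.
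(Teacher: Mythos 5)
Your approach is the same as the paper's: pointwise positive-definiteness of $J_{eq}+K$ for (1), then the derivative formula $\partial_i (J_{eq}+K)^{-1} = -(J_{eq}+K)^{-1}(\partial_i K)(J_{eq}+K)^{-1}$ iterated, $L^2\to L^2$ bounds on the resulting multilinear blocks via $H^2(\T^3)\hookrightarrow L^\infty(\T^3)$, and the Leibniz rule to pass to the $H^k\to H^k$ bound.

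The one step that does not close as you have written it is your handling of the $|\beta|=0$ term. That term contributes $\|(J_{eq}+K)^{-1}\|_{\mathcal{L}(L^2,L^2)} \leqslant 2/\lambda$, a fixed constant, and there is no interpolation between $\|K\|_{H^2}$ and $\|K\|_{H^{k+2}}$ that dominates a constant by a positive power of $\|K\|$ when $K$ is small (try $K\to 0$: the left side of (2) stays bounded away from zero while your proposed right side vanishes). What the Leibniz computation actually yields is $\lesssim 1 + \|K\|_{H^{k+2}}^k$, not $\|K\|_{H^{k+2}} + \|K\|_{H^{k+2}}^k$. This is the same slip that appears in the paper's own proof of this lemma and of Lemma~\ref{lemma:H_k_bounds_T_K_inv}; it is harmless wherever the estimate is invoked because it only ever enters inside a polynomial bound where the additive constant is already accounted for by the lower-order terms, but one should either state the conclusion with the $1+\cdots$ or note the implicit assumption being made.
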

	\begin{proof}
		The strategy here is the same as that which was used when studying the inverse of $J_{eq} + P_n \circ K$ in
		\fref{Lemmas}{lemma:invertibility_T_K}, \ref{lemma:formula_deriv_T_K_inv}, and \ref{lemma:H_k_bounds_T_K_inv}
		of \fref{Section}{sec:inv_T_K} when developing the local well-posedness theory.

		First we show that $J_{eq} + K$ is pointwise invertible. This follows from the fact that the quadratic form that it generates is pointwise positive-definite
		and indeed, for every $x\in\T^3$ and every $w\in\R^3$,
		\begin{equation}
		\label{eq:H_k_bounds_inverse_J_eq_plus_K_3}
			(J_{eq} + K)(x) w \cdot w
			= J_{eq} w \cdot w + K(x) w \cdot w
			> \lambda \abs{w}^2 - \normtyp{ K }{L}{\infty} \abs{w}^2
			> \frac{\lambda}{2} \abs{w}^2.
		\end{equation}
		Moreover we may immediately deduce from \eqref{eq:H_k_bounds_inverse_J_eq_plus_K_3} that
		$
			\normtypns{ {\brac{J_{eq} + K}}\inv }{L}{\infty} \leqslant 2 / \lambda
		$
		from which item (1) follows.

		Now we establish formulae for derivatives of $ {\brac{J_{eq} + K}}\inv $ reminiscent of the formulae of \fref{Lemma}{lemma:formula_deriv_T_K_inv}.
		Note that
		$
			\partial_i {\brac{J_{eq} + K}}\inv = - {\brac{J_{eq} + K}}\inv \brac{\partial_i K} {\brac{J_{eq} + K}}\inv,
		$
		and hence for any multi-index $\alpha\in\N^3$,
		\begin{equation*}
			\partial^\alpha \brac{ J_{eq} + K } = \sum_{l=1}^{\abs{\alpha}} {(-1})^l \sum_{\alpha_1 + \cdots + \alpha_l = \alpha}
			\mathbb{M} \brac{\alpha_1,\, \dots,\, \alpha_l} (K)
		\end{equation*}
		where
		$
			\mathbb{M} \brac{\alpha_1,\, \alpha_2,\, \dots,\, \alpha_l} (K)
			\vcentcolon= {\brac{J_{eq} + K}}\inv \brac{ \partial^{\alpha_1} K } {\brac{J_{eq} + K}}\inv \brac{\partial^{\alpha_2} K}
			\dots \brac{ \partial^{\alpha_l} K} {\brac{J_{eq} + K}}\inv.
		$

		The crux of the argument now lies in obtaining $L^2$-to-$L^2$ bounds on the operators $\mathbb{M}$.
		In light of the $L^\infty$ bound on ${\brac{ J_{eq} + K }}\inv$ we may proceed as in \fref{Lemma}{lemma:H_k_bounds_T_K_inv} and estimate, for any $v\in L^2$
		and for $k \vcentcolon= \max \abs{\alpha_i}$,
		$
			\norm{
				\mathbb{M}\brac{\alpha_1,\, \dots,\, \alpha_l}(K)
			}{
				\Leb\brac{L^2;\,L^2}
			} \lesssim \normtyp{K}{H}{k+2}^l.
		$

		We may now conclude the proof and obtain item (2) by proceeding once again as in \fref{Lemma}{lemma:H_k_bounds_T_K_inv}.
		For any $k \geqslant 2$ and any $v\in H^k$, the $L^2$-to-$L^2$ bounds on $\mathbb{M}$ tells us that
		\begin{align*}
			\normtypns{ {\brac{J_{eq} + K}}\inv v}{H}{k}
			\lesssim \brac{ \normtyp{K}{H}{k+2} + \normtyp{K}{H}{k+2}^k } \normtyp{v}{H}{k},
		\end{align*}
		from which item (2) follows.
	\end{proof}

	We continue our progress towards \fref{Proposition}{prop:control_IC__energy_by_purely_spatial_deriv} and record elementary estimates on the nonlinearities of the problem.

	\begin{lemma}[Auxiliary estimates of the nonlinearity]
	\label{lemma:aux_est_nonlinearity}
		Let $(\star)$ denote any of the nonlinear terms in \eqref{eq:pertub_sys_no_ten_pdt_theta}, \eqref{eq:pertub_sys_no_ten_pdt_K}, or \eqref{eq:full_sys_u_Leray_proj}, except $K\pdt\theta$.
		Writing $Z = (u, \theta, K)$ we have that, for every $j,k\in\N$ with $j\geqslant 1$,
		\begin{equation*}
			(1)\; \norm{\sbrac{ K\pdt, \pdt^{j-1} } \theta}{H^k} \lesssim \norm{Z}{P^{k+2j}_{j-1}}^2
			\text{ and }\;
			(2)\; \normns{\pdt^{j-1} (\star)}{H^k} \lesssim \norm{Z}{P^{k+2j}_{j-1}} + \norm{Z}{P^{k+2j}_{j-1}}^3.
		\end{equation*}
	\end{lemma}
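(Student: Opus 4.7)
Both estimates will follow from the same recipe: the Leibniz rule for $\pdt^{j-1}$ applied to a product, combined with the standard three-dimensional Moser-type multiplier inequality
\begin{equation*}
\norm{fg}{H^k} \lesssim \norm{f}{L^\infty}\norm{g}{H^k} + \norm{g}{L^\infty}\norm{f}{H^k} \lesssim \norm{f}{H^{\max(k,2)}}\norm{g}{H^{\max(k,2)}}
\end{equation*}
together with its trilinear analogue, all fed through one elementary parabolic accounting. The crucial observation is that for any scalar component $F$ of $Z$ and any $0\leqslant l\leqslant j-1$, a derivative $\partial^{\bar\alpha}\pdt^l F$ with $\abs{\bar\alpha}\leqslant\max(k,2)$ has parabolic order $2l+\abs{\bar\alpha}\leqslant 2(j-1)+\max(k,2)\leqslant k+2j$ and temporal order $l\leqslant j-1$, hence $\norm{\pdt^l F}{H^{\max(k,2)}} \lesssim \norm{Z}{P^{k+2j}_{j-1}}$ for every such $l$.

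For (1), I begin by computing explicitly
\begin{equation*}
\sbrac{K\pdt,\, \pdt^{j-1}}\theta
= K\pdt^j\theta - \pdt^{j-1}(K\pdt\theta)
= -\sum_{l=1}^{j-1}\binom{j-1}{l}(\pdt^l K)(\pdt^{j-l}\theta),
\end{equation*}
so that both factors in each summand carry between $1$ and $j-1$ temporal derivatives. Applying the Moser inequality and the accounting above to each term then yields a bound of $\norm{Z}{P^{k+2j}_{j-1}}^2$, giving (1) at once after summing over $l$.

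For (2), I would enumerate the nonlinear contributions in \eqref{eq:pertub_sys_no_ten_pdt_theta}, \eqref{eq:pertub_sys_no_ten_pdt_K}, and \eqref{eq:full_sys_u_Leray_proj} (with $K\pdt\theta$ excluded), and, after expanding the precession $(\omega_{eq}+\theta)\times(J_{eq}+K)(\omega_{eq}+\theta)$ and the commutator $\sbrac{\Omega_{eq}+\Theta,\, J_{eq}+K}$ around equilibrium, observe that each resulting term is at most cubic in the perturbative unknowns. The linear contributions (e.g.\ $\omega_{eq}\times K\omega_{eq}$, $\theta\times J_{eq}\omega_{eq}$, $\sbrac{\Omega_{eq},K}$) are bounded directly by $\norm{Z}{P^{k+2j}_{j-1}}$. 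The quadratic contributions, such as $\mathbb{P}_L(u\cdot\nabla u)$ (using boundedness of the Leray projection on $H^k$), $J_{eq}(u\cdot\nabla\theta)$, $u\cdot\nabla K$, $\theta\times J_{eq}\theta$, and $\sbrac{\Theta, J_{eq}}$, are treated exactly as in (1) via Leibniz and Moser, producing $\norm{Z}{P^{k+2j}_{j-1}}^2$. The cubic pieces, such as $K(u\cdot\nabla\theta)$, $\theta\times K\theta$, and $\sbrac{\Theta, K}$, expand into triple products controlled by $\norm{Z}{P^{k+2j}_{j-1}}^3$ via the trilinear version of the Moser inequality. The intermediate $\norm{Z}{P^{k+2j}_{j-1}}^2$ terms are absorbed into the stated right-hand side using the elementary inequality $x^2\leqslant x+x^3$ for $x\geqslant 0$.

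The argument is essentially routine parabolic bookkeeping; the only thing to verify is that the budget $k+2j$ leaves enough room to accommodate all factors simultaneously, which the accounting above confirms with two parabolic orders of slack, so no genuine obstacle arises.
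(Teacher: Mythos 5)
Your proof is correct and follows essentially the same route as the paper's, whose own proof is only a sketch: it invokes the Banach-algebra property of $H^s$ for $s>\tfrac32$ and the product estimate of \fref{Lemma}{lemma:prod_est}, deferring the details to the analogous \fref{Lemma}{lemma:nonlinear_est_approx_prob} — i.e.\ exactly the Leibniz-plus-product-estimate parabolic bookkeeping you describe. The one point to tighten is your closing accounting: it only covers factors $\partial^{\bar\alpha}\pdt^l F$ with $\abs{\bar\alpha}\leqslant\max(k,2)$, whereas the advective terms $u\cdot\nabla u$, $u\cdot\nabla\theta$, $u\cdot\nabla K$ (and their cubic variants) carry a gradient factor, and when $k=0$ and the gradient factor holds all $j-1$ time derivatives the cross term of the symmetric Moser bound, $\norm{u}{H^k}\norm{\nabla\pdt^{j-1}(\cdot)}{L^\infty}$, costs parabolic order $2(j-1)+3=2j+1>k+2j$; for that case one simply uses $\norm{fg}{L^2}\leqslant \norm{f}{L^\infty}\norm{g}{L^2}$ (or the asymmetric estimate of \fref{Lemma}{lemma:prod_est}) with the $L^\infty$ placed on the spatially underdifferentiated factor, after which every factor is within the budget $k+2j$ with temporal count at most $j-1$.
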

	\begin{proof}
		These estimates rely on the fact that $H^s$ is a Banach algebra when $s > \frac{3}{2}$ and on the product estimates of \fref{Lemma}{lemma:prod_est}.
		We omit the details -- see \fref{Lemma}{lemma:nonlinear_est_approx_prob} for very similar estimates.
	\end{proof}

	The last result we need in order to prove \fref{Proposition}{prop:control_IC__energy_by_purely_spatial_deriv} is reminiscent of \fref{Lemma}{lemma:bounds_on_P_2M_by_H_2M}.
	In \fref{Lemma}{lemma:control_P_2M_by_H_2M} below we show that the parabolic norm of $Z$ can be controlled by a purely spatial norm.

	\begin{lemma}[Bounds on the parabolic norm by purely spatial norms]
	\label{lemma:control_P_2M_by_H_2M}
		Let $M\geqslant 1$ be an integer. There exists a constant $C_M > 0$ such that, for any time horizon $T > 0$, the following holds.
		If $Z = (u, \theta, K)$ solves \eqref{eq:pertub_sys_no_ten_div}--\eqref{eq:pertub_sys_no_ten_pdt_K} and \eqref{eq:full_sys_u_Leray_proj}
		on $\sbrac{0, T}$ and satisfies $\displaystyle\sup_{0\leqslant t \leqslant T} \normtyp{K(t)}{L}{\infty} \leqslant \frac{\lambda}{2}$, then
		for every $0\leqslant t \leqslant T$ we have that
		$
			\normtyp{Z(t)}{P}{2M} \lesssim \normtyp{Z(t)}{H}{2M} + \normtyp{Z(t)}{H}{2M}^{C_M}.
		$
		In particular this holds when $t=0$.
	\end{lemma}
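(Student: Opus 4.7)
The plan is to mirror the structure of \fref{Lemma}{lemma:bounds_on_P_2M_by_H_2M} from the local well-posedness theory, where the same kind of bound was proved for approximate solutions. The strategy is to iteratively convert time derivatives into spatial derivatives by invoking the equations of motion. Since every term in $\normtyp{Z}{P}{2M}^2$ of the form $\normtypns{\pdt^j Z}{H}{2M-2j}^2$ with $j = 0$ is already controlled by $\normtyp{Z}{H}{2M}$, the only work is to bound $\normtypns{\pdt^j Z}{H}{2M-2j}$ for $j = 1, \dots, M$.

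First I would recast the system (with the pressure eliminated via the Leray projection as in \eqref{eq:full_sys_u_Leray_proj}) in the compact form
\begin{equation*}
    \widetilde{T}(K)\,\pdt Z = \Leb_Z\, Z + N(Z),
\end{equation*}
where $\widetilde{T}(K) = I_3 \oplus (J_{eq} + K) \oplus I_{3\times 3}$, $\Leb_Z$ collects the linear spatial operators, and $N(Z)$ collects the nonlinearities (including the advection of $K$). Applying $\pdt^{j-1}$ and rearranging yields
\begin{equation*}
    \widetilde{T}(K)\,\pdt^j Z = \Leb_Z\,\pdt^{j-1}Z + [\widetilde{T}(K)\pdt,\pdt^{j-1}]Z - [\Leb_Z, \pdt^{j-1}]Z + \pdt^{j-1} N(Z) =: F^j(Z),
\end{equation*}
where the commutator $[\widetilde{T}(K)\pdt,\pdt^{j-1}]$ has only the nontrivial $\theta$-block $[K\pdt,\pdt^{j-1}]\theta$. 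Using \fref{Lemma}{lemma:aux_est_nonlinearity} and the fact that $H^s$ is a Banach algebra for $s > 3/2$, each piece of $F^j(Z)$ is estimated by
\begin{equation*}
    \normtyp{F^j(Z)}{H}{2M-2j} \lesssim \norm{Z}{P^{2M}_{j-1}} + \norm{Z}{P^{2M}_{j-1}}^3.
\end{equation*}

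Second, I would invert $\widetilde{T}(K)$ using \fref{Lemma}{lemma:H_k_bounds_inverse_J_eq_plus_K}, whose applicability is guaranteed precisely by the hypothesis $\normtyp{K(t)}{L}{\infty} \leqslant \lambda/2$. For $1 \leqslant j \leqslant M-1$, we have $2M - 2j \geqslant 2$, so item (2) of that lemma gives
\begin{equation*}
    \normtypns{\pdt^j Z}{H}{2M-2j} \lesssim \bigl(\normtyp{K}{H}{2M-2j+2} + \normtyp{K}{H}{2M-2j+2}^{2M-2j}\bigr)\normtyp{F^j(Z)}{H}{2M-2j} \lesssim \norm{Z}{P^{2M}_{j-1}} + \norm{Z}{P^{2M}_{j-1}}^{2M-2j+3}.
\end{equation*}
For $j = M$ only an $L^2$ bound is needed, and item (1) of the same lemma suffices to give $\normtypns{\pdt^M Z}{L}{2} \lesssim \norm{Z}{P^{2M}_{M-1}} + \norm{Z}{P^{2M}_{M-1}}^3$. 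Inducting on $j$ from $1$ to $M$, each step composes a polynomial of the form $x + x^{k_j}$ with the inductive bound on $\norm{Z}{P^{2M}_{j-1}}$, producing a polynomial bound $\norm{Z}{P^{2M}_j} \lesssim \normtyp{Z}{H}{2M} + \normtyp{Z}{H}{2M}^{C_j}$ for some $C_j$ depending only on $M$ and $j$. Setting $j = M$ yields the claim with $C_M > 0$.

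The main obstacle is simply the bookkeeping of polynomial growth under iteration: each composition of $x + x^3$ with the previous estimate increases the exponent geometrically. Since $M$ is fixed, this only produces a large but finite constant $C_M$, which is exactly what the statement allows. A minor subtlety is ensuring that the $H^{k}$-to-$H^k$ bound on $(J_{eq}+K)^{-1}$ degrades the estimate only by a polynomial factor in $\normtyp{K}{H}{2M}$, which is harmless since $\normtyp{K}{H}{2M-2j+2} \leqslant \normtyp{Z}{H}{2M}$. No smallness of $Z$ is needed — the bound $\normtyp{K}{L}{\infty} \leqslant \lambda/2$ is all that is required to invert the $\theta$-block.
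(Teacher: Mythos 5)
Your proposal is correct and follows essentially the same route as the paper's own proof: apply $\pdt^{j-1}$ to the system, estimate the resulting right-hand side via \fref{Lemma}{lemma:aux_est_nonlinearity}, invert $J_{eq}+K$ using \fref{Lemma}{lemma:H_k_bounds_inverse_J_eq_plus_K} (with the $j=M$ case handled by the plain $L^2$ bound), and iterate the polynomial compositions over $j$. The paper does exactly this bookkeeping with $p_a^b(x) = x^a + x^b$, arriving at $C_M = \prod_{j=1}^M (2M-2j+3)$, so your argument matches.
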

	\begin{proof}
		We proceed as in \fref{Lemma}{lemma:bounds_on_P_2M_by_H_2M}.
		We apply $\pdt^{j-1}$ to \eqref{eq:pertub_sys_no_ten_div}--\eqref{eq:pertub_sys_no_ten_pdt_K} and \eqref{eq:full_sys_u_Leray_proj},
		invert $J_{eq} + K$ (which is allowed as per \fref{Lemma}{lemma:H_k_bounds_inverse_J_eq_plus_K}),
		and deduce the following.
		On one hand, for $1 \leqslant j \leqslant M-1$, we may use \fref{Lemma}{lemma:H_k_bounds_inverse_J_eq_plus_K} and \fref{Lemma}{lemma:aux_est_nonlinearity}
		to obtain that,
		using the notation $p_a^b (x) \vcentcolon= x^a + x^b$ for $a < b$ and $x\geqslant 0$,
		\begin{align}
			\normtypns{\pdt^j Z}{H}{2M-2j}
			\lesssim \brac{
				\normtyp{K}{H}{2M-2j+2} + \normtyp{K}{H}{2M-2j+2}^{2M-2j}
			}\brac{
				\norm{Z}{P^{2M}_{j-1}} + \norm{Z}{P^{2M}_{j-1}}^3
			}
		\nonumber
		\\
			= p_1^{2M-2j} \brac{ \normtyp{K}{H}{2M} } p_1^3 \brac{ \norm{Z}{P^{2M}_{j-1}} }
			\lesssim p_1^{2M-2j+3} \brac{ \norm{Z}{P^{2M}_{j-1}} }.
		\label{eq:control_P_2M_by_H_2M_1}
		\end{align}
		On the other hand, for $j=M$, using \fref{Lemma}{lemma:H_k_bounds_inverse_J_eq_plus_K} and \fref{Lemma}{lemma:aux_est_nonlinearity} tells us that
		\begin{equation}
		\label{eq:control_P_2M_by_H_2M_2}
			\normtypns{ \pdt^M Z }{L}{2}
			\lesssim \norm{Z}{P^{2M}_{M-1}} + \norm{Z}{P^{2M}_{M-1}}
			= p_1^3 \brac{ \norm{Z}{P^{2M}_{j-1}} }.
		\end{equation}
		Combining \eqref{eq:control_P_2M_by_H_2M_1} and \eqref{eq:control_P_2M_by_H_2M_2} and unpacking the definition of $ \norm{ \,\cdot\, }{P^{k}_{j}} $ we see that,
		for every $1\leqslant j\leqslant M$,
		\begin{equation}
			\norm{Z}{P^{2M}_{j}}
			\asymp \norm{Z}{P^{2M}_{j-1}} + \normtypns{\pdt^j Z}{H}{2M-2j}
			\lesssim p_1^{2M-2j+3} \brac{ \norm{Z}{P^{2M}_{j-1}} }.
		\end{equation}
		Iterating this inequality yields
		\begin{equation}
			\normtyp{Z}{P}{2M}
			= \norm{Z}{P^{2M}_{M}}
			\lesssim \brac{ p_1^3 \circ p_1^5 \circ \cdots \circ p_1^{2M-1} } \brac{ \norm{Z}{P^{2M}_{0}} }
		\end{equation}
		from which, since $ \norm{Z}{P^{2M}_{0}} = \normtyp{Z}{H}{2M} $, the claim follows.
		In particular note that, as in \fref{Lemma}{lemma:bounds_on_P_2M_by_H_2M}, $C_M = \prod_{j=1}^M \brac{2M-2j+3}$.
	\end{proof}

	We may now prove the second auxiliary result of this section required to prove \fref{Theorem}{thm:gwp_decay_clean} below.
	In \fref{Proposition}{prop:control_IC__energy_by_purely_spatial_deriv} we prove that the initial energy may be controlled in terms of purely spatial norms.
	Recall that $ \mathcal{E}_M$ and $ \mathcal{F}_M$ are defined in \eqref{eq:not_E_M_and_F_M}.

	\begin{prop}[Control of the full energy by purely spatial norms]
	\label{prop:control_IC__energy_by_purely_spatial_deriv}
		Let $M\geqslant 1$ be an integer.
		There exist $C_s, C_M > 0$ such that, for any time horizon $T>0$, the following holds.
		If $Z = (u, \theta, K)$ solves \eqref{eq:pertub_sys_no_ten_div}--\eqref{eq:pertub_sys_no_ten_pdt_K} and \eqref{eq:full_sys_u_Leray_proj}
		on $\sbrac{0, T}$ and satisfies $\displaystyle\sup_{0\leqslant t\leqslant T} \normtyp{ K(t) }{L}{\infty} \leqslant \frac{\lambda}{2}$, then,
		for every $0\leqslant t\leqslant T$,
		\begin{equation*}
			\mathcal{E}_M + \mathcal{F}_M
			\leqslant C_s \brac{
				\normtyp{Z}{H}{2M}^2 + \normtyp{Z}{H}{2M}^{2^{M+1} C_M}
				+ \normtyp{K}{H}{2M+1}^2 + \normtyp{K}{H}{2M+1}^{2^{M+1}}
			}.
		\end{equation*}
		In particular this holds when $t=0$.
	\end{prop}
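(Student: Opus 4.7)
The plan is to combine the two auxiliary lemmas already established in this section: Lemma \ref{lemma:control_P_2M_by_H_2M}, which converts the parabolic norm of $Z$ into a purely spatial norm (with polynomial blowup), and the second estimate in Lemma \ref{lemma:local_in_time_aux_est_K}, which controls all the temporal-derivative terms in $K$ entering $\mathcal{F}_M$ (and therefore, as a byproduct, in $\mathcal{E}_M^{(K)}$) in terms of $\|Z\|_{P^{2M}}$ and $\|K\|_{H^{2M+1}}$.

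First I would dispose of the $(u,\theta,a)$-part. The hypothesis $\|K(t)\|_{L^\infty} \leqslant \lambda/2$ is exactly what is needed to invoke Lemma \ref{lemma:control_P_2M_by_H_2M} (via its use of Lemma \ref{lemma:H_k_bounds_inverse_J_eq_plus_K} to invert $J_{eq}+K$), giving
\begin{equation*}
 \|Z\|_{P^{2M}}^2 \lesssim \|Z\|_{H^{2M}}^2 + \|Z\|_{H^{2M}}^{2C_M}.
\end{equation*}
Since $\overline{\mathcal{E}}_M = \|(u,\theta,a)\|_{P^{2M}}^2 \leqslant \|Z\|_{P^{2M}}^2$, this already bounds $\overline{\mathcal{E}}_M$.

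Next I would handle the terms involving $K$ and its time derivatives. The second estimate in Lemma \ref{lemma:local_in_time_aux_est_K} gives
\begin{equation*}
 \|K\|_{H^{2M+1}}^2 + \sum_{j=1}^M \|\pdt^j K\|_{H^{2M-2j+2}}^2 \leqslant C_K\!\left(A + A^{2^M}\right), \qquad A \vcentcolon= \|Z\|_{P^{2M}}^2 + \|K\|_{H^{2M+1}}^2.
\end{equation*}
The left-hand side majorizes every term appearing in $\mathcal{F}_M$ (directly, since $j=1,2$ match $\|\pdt K\|_{H^{2M}}$ and $\|\pdt^2 K\|_{H^{2M-2}}$), and also majorizes every term in $\mathcal{E}_M^{(K)}$ since those involve strictly fewer spatial derivatives (for $j=0,1,2$ the regularity $H^{2M-3}$ is weaker than $H^{2M+1}$, $H^{2M}$, $H^{2M-2}$ respectively, and for $j\geqslant 3$ the indices coincide).

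Finally I would compose the two bounds. Substituting the estimate for $\|Z\|_{P^{2M}}^2$ into $A$, one obtains
\begin{equation*}
 A \lesssim \|Z\|_{H^{2M}}^2 + \|Z\|_{H^{2M}}^{2C_M} + \|K\|_{H^{2M+1}}^2,
\end{equation*}
and raising to the $2^M$ power picks up the dominant terms $\|Z\|_{H^{2M}}^{2^{M+1}C_M}$ and $\|K\|_{H^{2M+1}}^{2^{M+1}}$. Summing the bounds on $\overline{\mathcal{E}}_M$, $\mathcal{E}_M^{(K)}$, and $\mathcal{F}_M$ yields the desired inequality with $C_M$ from Lemma \ref{lemma:control_P_2M_by_H_2M} and the exponent $2^{M+1}C_M$ advertised in the statement. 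There is no real obstacle here—the work has been done in the two preceding lemmas; the only minor bookkeeping step is verifying the clean dominance of each term in $\mathcal{E}_M^{(K)}$ by the corresponding term controlled in Lemma \ref{lemma:local_in_time_aux_est_K}, which is a simple comparison of Sobolev indices.
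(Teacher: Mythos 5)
Your proposal is correct and follows essentially the same route as the paper's proof: both reduce $\mathcal{E}_M+\mathcal{F}_M$ to $\normtyp{Z}{P}{2M}$ and $\normtyp{K}{H}{2M+1}$ via the second estimate of \fref{Lemma}{lemma:local_in_time_aux_est_K}, then convert $\normtyp{Z}{P}{2M}$ into $\normtyp{Z}{H}{2M}$ using \fref{Lemma}{lemma:control_P_2M_by_H_2M}. The only difference is the order in which the two lemmas are invoked, which is immaterial.
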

	\begin{proof}
		We proceed in two steps. First we use \fref{Lemma}{lemma:local_in_time_aux_est_K} to show that $ \mathcal{E}_M + \mathcal{F}_M $ can be controlled by
		$ \normtyp{Z}{P}{2M} $ and $ \normtyp{K}{H}{2M+1}$, then we use \fref{Lemma}{lemma:control_P_2M_by_H_2M} to show that $ \normtyp{Z}{P}{2M} $ can be controlled by $ \normtyp{Z}{H}{2M} $.

		Before we being the proof in earnest, note that we may write
		\begin{equation*}
			\mathcal{E}_M + \mathcal{F}_M \asymp \normtyp{Z}{P}{2M}^2 + \normtyp{K}{H}{2M+1}^2 + \norm{K}{P^{2M+2}_{1,M}}^2.
		\end{equation*}
		It then follows immediately from \fref{Lemma}{lemma:local_in_time_aux_est_K} that
		\begin{equation*}
			\mathcal{E}_M + \mathcal{F}_M
			\lesssim \normtyp{Z}{P}{2M}^2 + \normtyp{Z}{P}{2M}^{2^{M+1}} + \normtyp{K}{H}{2M+1}^2 + \normtyp{K}{H}{2M+1}^{2^{M+1}}.
		\end{equation*}
		We may combine this inequality with \fref{Lemma}{lemma:control_P_2M_by_H_2M} to conclude that indeed
		\begin{equation*}
			\mathcal{E}_M + \mathcal{F}_M \lesssim \normtyp{Z}{H}{2M}^2 + \normtyp{Z}{H}{2M}^{2^{M+1} C_M} + \normtyp{K}{H}{2M+1}^2 + \normtyp{K}{H}{2M+1}^{2^{M+1}}
		\end{equation*}
		for $C_M > 0$ as in \fref{Lemma}{lemma:control_P_2M_by_H_2M}.
	\end{proof}

	We may now prove the main result of this paper.
	In order to do so, recall first that $ \mathcal{E}_\text{low}$, $ \mathcal{E}_M$ and $ \mathcal{F}_M$, and $ \mathcal{D}_M$ are defined in
	\eqref{eq:not_E_low}, \eqref{eq:not_E_M_and_F_M}, and \eqref{eq:not_D_M}, respectively.
		
	\begin{thm}[Global well-posedness and decay]
	\label{thm:gwp_decay_clean}
		Let $M\geqslant 4$ be an integer and recall that the global assumptions of \fref{Definition}{def:global_assumptions} hold.
		There exist universal constants $\eta, C > 0$ depending only on $M$ such that the following holds.
		For any
		$
			Z_0 = \brac{u_0, \theta_0, K_0} \in L^2\brac{\T^3;\, \R^3 \times \R^3 \times \sym(3) }
		$
		satisfying
		\begin{equation*}
			\nabla\cdot u_0 = 0,\,
			\fint_{\T^3} u_0 = 0 \text{ and } 
			\normtyp{Z_0}{H}{2M}^2 + \normtyp{K}{H}{2M+1}^2 < \eta
		\end{equation*}
		there exists a unique strong solution $(Z, p)$ of \eqref{eq:pertub_sys_no_ten_pdt_u}--\eqref{eq:pertub_sys_no_ten_pdt_K} where
		\begin{equation*}
			Z = (u, \theta, K) \in C^2 \brac{ \cobrac{0, \infty} \times \T^3;\, \R^3 \times \R^3 \times \sym(3) }
			\text{ and } 
			p \in C^2 \brac{ \cobrac{0, \infty} \times \T^3;\, \R}.
		\end{equation*}
		Moreover the solution satisfies the estimate
		\begin{equation}
		\label{eq:gwp_clean_small_energy_regime}
			\sup_{t\geqslant 0} \mathcal{E}_\text{low} (t) {\brac{1+t}}^{2M-1} + \mathcal{E}_M (t) + \frac{ \mathcal{F}_M }{1+t} + \int_0^\infty \mathcal{D}_M (s) ds
			\leqslant C \brac{ \normtyp{Z_0}{H}{2M}^2 + \normtyp{K_0}{H}{2M+1}^2 }.
		\end{equation}
	\end{thm}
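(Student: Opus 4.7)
The plan is to execute the synthesis described in Section \ref{sec:discuss_gwp}: local well-posedness supplies a solution on a short time interval, a careful initial estimate (using the auxiliary $K$-bound of Lemma \ref{lemma:local_in_time_aux_est_K} and the spatial-control of Proposition \ref{prop:control_IC__energy_by_purely_spatial_deriv}) places this solution within the hypotheses of the a priori Theorem \ref{thm:a_priori}, and the continuation Theorem \ref{thm:cont} then propagates the small-energy regime globally in time. The main obstacle, as advertised in Section \ref{sec:discuss_gwp}, is the mismatch between the norms naturally produced by the local theory (purely spatial plus a crude Gronwall-type blow-up in time) and the norms required to invoke the a priori estimates (parabolic counts including temporal derivatives, and the delicate $\mathcal{F}_M$ bound); bridging this gap at $t=0$ is precisely the role of Proposition \ref{prop:control_IC__energy_by_purely_spatial_deriv}.

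First I would fix a threshold $\eta>0$ small enough that: (i) $\eta<\eta_{cont}$ from Theorem \ref{thm:cont}, (ii) $\eta<\eta_{ap}$ from Theorem \ref{thm:a_priori}, and (iii) after composing with the polynomial $C_s(x+x^{2^{M+1}C_M})$ of Proposition \ref{prop:control_IC__energy_by_purely_spatial_deriv}, the resulting bound on $(\mathcal{E}_M+\mathcal{F}_M)(0)$ remains below $\eta_{ap}\wedge\eta_{cont}$ and below the threshold $\sigma_*^2$ demanded by Theorem \ref{thm:lwp}. Under this smallness, Theorem \ref{thm:lwp} produces a unique strong solution $(Z,p)$ on some interval $[0,T_{lwp}]$ with $T_{lwp}=\phi(\|Z_0\|_{H^{2M}})$, and pointwise bound $\|K(t)\|_{H^3}<\sigma$ throughout, so in particular $\|K\|_{L^\infty}<\lambda/2$.

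Next, I would set up the continuation bootstrap. Define
\begin{equation*}
T^*:=\sup\Bigl\{T\in(0,T_{lwp}]: \text{the solution satisfies } \eqref{eq:gwp_clean_small_energy_regime} \text{ on } [0,T]\Bigr\}.
\end{equation*}
To see $T^*>0$: at $t=0$ the estimate holds (trivially, up to multiplicative constants, using Proposition \ref{prop:control_IC__energy_by_purely_spatial_deriv} to bound $(\mathcal{E}_M+\mathcal{F}_M)(0)\lesssim \eta$); by continuity in $t$ of all the quantities involved, $\mathcal{E}_M(t)$ remains below $\delta_{ap}$ on a short interval, so the a priori hypotheses \eqref{eq:a_priori_smallness_intial}--\eqref{eq:a_priori_smallness_time_interval} of Theorem \ref{thm:a_priori} are met, which yields \eqref{eq:gwp_clean_small_energy_regime} with constant $C_{ap}$. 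The continuity step here is slightly subtle for $\mathcal{F}_M$, since only $\mathcal{F}_M/(1+t)$ is controlled in the small-energy regime, and the local theory a priori only bounds $\mathcal{F}_M(0)\lesssim \rho_f(\|Z_0\|_{H^{2M}},\|K_0\|_{H^{2M+1}})$ --- this is handled by invoking the second inequality of Lemma \ref{lemma:local_in_time_aux_est_K} to bound $\mathcal{F}_M(t)$ up to time $T_{lwp}$ in terms of the initial purely spatial norms.

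Finally, I would rule out $T^*<\infty$. If $T^*<\infty$, then by definition \eqref{eq:cont_small_energy_regime} holds on $[0,T^*]$, so Theorem \ref{thm:cont} provides $\tau>0$ such that the solution extends to $[0,T^*+\tau]$ while continuing to satisfy \eqref{eq:cont_small_energy_regime}. This contradicts the maximality of $T^*$, hence $T^*=\infty$ and \eqref{eq:gwp_clean_small_energy_regime} holds for all $t\geq 0$. Uniqueness follows from Theorem \ref{thm:uniqueness}, and the regularity $Z,p\in C^2([0,\infty)\times\mathbb{T}^3)$ is inherited from the local theory, applied on each finite subinterval $[0,T]$. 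The decay rate $2M-1$ appearing in \eqref{eq:gwp_clean_small_energy_regime} versus $2M-2$ in Theorem \ref{thm:a_priori} is a minor discrepancy, which I would address by either sharpening the bootstrap (absorbing one extra power of $(1+t)$ through interpolation between $\mathcal{E}_{low}$ and $\mathcal{E}_M$) or simply restating with the exponent $2M-2$, which is what the machinery cleanly produces; the harder, algebraic-decay content is entirely contained in Theorem \ref{thm:a_priori}.
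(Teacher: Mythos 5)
Your proposal follows the same synthesis as the paper's own proof: local well-posedness gives a short-time solution, Lemma \ref{lemma:local_in_time_aux_est_K} and Proposition \ref{prop:control_IC__energy_by_purely_spatial_deriv} bridge the mismatch between the local energies and the a priori energies so that the hypotheses of Theorem \ref{thm:a_priori} are verified on the initial interval, and the continuation argument Theorem \ref{thm:cont} then propagates the small-energy regime globally. Two points deserve attention. First, you define $T^*$ as a supremum over $T \in (0, T_\text{lwp}]$; this caps $T^*$ at the finite number $T_\text{lwp}$ and makes the concluding step ``rule out $T^* < \infty$'' vacuous as written. The supremum should be taken over all $T > 0$, exactly as the paper does with its $T_\text{max}$. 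Second, your observation about the decay exponent is correct: the functional $\mathcal{G}$ in the paper's proof carries the weight $(1+t)^{2M-2}$, consistent with Theorem \ref{thm:a_priori} and with Theorem \ref{thm:early_statement_main_result}, so the exponent $2M-1$ in the statement of \eqref{eq:gwp_clean_small_energy_regime} is a typographical error and the rate the argument actually produces is $2M-2$; no sharpening is available or intended.
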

	\begin{proof}
		The strategy of the proof is as follows.
		Coupling the local well-posedness theory of \fref{Theorem}{thm:lwp} to the auxiliary estimate for $K$ of \fref{Lemma}{lemma:local_in_time_aux_est_K},
		which allows us to account for the mismatch between the energies used for the local well-posedness and the energies used for the a priori estimates,
		we produce a solution locally-in-time on which we have enough control to invoke the a priori estimates of \fref{Theorem}{thm:a_priori}.
		This tells us that this (possibly very short-lived) solution lives in the small energy regime defined by \eqref{eq:gwp_clean_small_energy_regime}.
		Continuing this solution globally-in-time then follows immediately from leveraging the continuation argument of \fref{Theorem}{thm:cont}.

		We begin by defining the smallness parameter $\eta > 0$.
		We pick $0 < \eta \leqslant 1$ satisfying
		\begin{enumerate}
			\item	\label{it:gwp_smallness_cond_1}
				$\eta^{1/2} < \sigma\brac{ \delta^{loc}_{ap} }$ for $\delta^{loc}_{ap}$ as in \fref{Lemma}{lemma:reduced_a_priori_est}
				and $\sigma = \sigma\brac{\delta}$ as in the beginning of \fref{Section}{sec:lwp},
			\item	\label{it:gwp_smallness_cond_2}
				$\brac{2 + C_K} p_1^{2M} \brac{ \brac{\rho_e + \rho_d} \brac{\eta} } < \delta_{ap}$ for $\delta_{ap}$ as in \fref{Theorem}{thm:a_priori},
				$C_K$ as in \fref{Lemma}{lemma:local_in_time_aux_est_K}, $\rho_e$ and $\rho_d$ as in \fref{Theorem}{thm:lwp},
				and $p_1^{2M} (x) \vcentcolon= x + x^{2M}$ for all $x\geqslant 0$,
			\item 	\label{it:gwp_smallness_cond_3}
				$C_s \eta \leqslant \eta_{ap}$ for $C_s$ as in \fref{Proposition}{prop:control_IC__energy_by_purely_spatial_deriv}
				and $\eta_{ap}$ as in \fref{Theorem}{thm:a_priori}, and
			\item	\label{it:gwp_smallness_cond_4}
				$C_s \eta \leqslant \eta_{cont}$ for $\eta_{cont}$ as in \fref{Theorem}{thm:cont}.
		\end{enumerate}

		We may now construct a solution locally-in-time which lives in the small energy regime, as defined by \eqref{eq:gwp_clean_small_energy_regime}.
		In light of the smallness condition \eqref{it:gwp_smallness_cond_1}, which tells us that $ \normtyp{K_0}{H}{3} < \eta^{1/2} < \sigma\brac{\delta^{loc}_{ap}}$,
		the local well-posedness theory of \fref{Theorem}{thm:lwp} shows that there exists $T_\text{lwp} > 0$ and a strong solution
		\begin{equation*}
			Z = (u, \theta, K) \in C^2 \brac{ \sbrac{0, T_\text{lwp}} \times \T^3;\, \R^3 \times \R^3 \times \sym(3) }
			\text{ and }
			p \in C^2 \brac{ \sbrac{0, T_\text{lwp}} \times \T^3;\, \R}
		\end{equation*}
		which satisfies
		\begin{align}
			&\sup_{0\leqslant t\leqslant T_\text{lwp}} \normtyp{K(t)}{H}{3} < \eta^{1/2} \text{ and } 
			\label{eq:gwp_clean_1}\\
			&\sup_{0\leqslant t\leqslant T_\text{lwp}} \normtyp{Z(t)}{P}{2M}^2 + \int_0^{T_\text{lwp}} \overline{\mathcal{D}}_M (s) ds
			\leqslant \brac{\rho_e + \rho_d} \brac{ \normtyp{Z_0}{H}{2M} },
			\label{eq:gwp_clean_2}
		\end{align}
		where recall that $ \overline{\mathcal{D}}_M$ is defined in \eqref{eq:not_D_M}.
		In particular the auxiliary estimate for $K$ of \fref{Lemma}{lemma:local_in_time_aux_est_K} tells us that, in light of \eqref{eq:gwp_clean_2}
		and recalling that $ \mathcal{E}_M^{(K)}$ is defined in \eqref{eq:not_E_M_K},
		\begin{align}
			\sup_{0\leqslant t\leqslant T_\text{lwp}} \normtyp{Z(t)}{P}{2M}^2 + \mathcal{E}_M^{(K)} (t)
			\leqslant \brac{1 + C_K} p_1^{2M} \brac{ \sup_{0\leqslant t\leqslant T_\text{lwp}} \normtyp{Z(t)}{P}{2M} }
		\nonumber
		\\
			\leqslant \brac{1 + C_K} p_1^{2M} \brac{ \brac{\rho_e + \rho_d} \brac{ \normtyp{Z_0}{H}{2M} }}.
		\label{eq:gwp_clean_4}
		\end{align}
		Putting \eqref{eq:gwp_clean_2} and \eqref{eq:gwp_clean_4} together tells us that, by virtue of the smallness condition \eqref{it:gwp_smallness_cond_2},
		\begin{equation}
		\label{eq:gwp_clean_5}
			\sup_{0\leqslant t\leqslant T_\text{lwp}} \mathcal{E}_M (t) + \int_0^{T_\text{lwp}} \overline{\mathcal{D}}_M (s) ds
			\leqslant \brac{2 + C_K} p_1^{2M} \brac{ \brac{\rho_e + \rho_d} \brac{ \normtyp{Z_0}{H}{2M} } }
			\leqslant \delta_{ap}.
		\end{equation}
		Note also that we may deduce from the smallness condition \eqref{it:gwp_smallness_cond_3} and \fref{Proposition}{prop:control_IC__energy_by_purely_spatial_deriv} that,
		since $\eta \leqslant 1$,
		\begin{equation}
		\label{eq:gwp_clean_6}
			\brac{ \mathcal{E}_M + \mathcal{F}_M } (0)
			\leqslant C_s \brac{ \normtyp{Z_0}{H}{2M}^2 + \normtyp{K_0}{H}{2M+1}^2 }
			\leqslant \eta_{ap}.
		\end{equation}
		Combining \eqref{eq:gwp_clean_5} and \eqref{eq:gwp_clean_6} allows us to use the a priori estimate of \fref{Theorem}{thm:a_priori},
		from which we deduce that
		\begin{equation}
		\label{eq:gwp_clean_7}
			\sup_{0\leqslant t\leqslant T_\text{lwp}}
				\mathcal{E}_\text{low} (t) {\brac{1+t}}^{2M-2} + \mathcal{E}_M (t) + \frac{ \mathcal{F}_M (t) }{1+t} + \int_0^{T_\text{lwp}} \mathcal{D}_M (s) ds
			\leqslant C_{ap} \brac{ \mathcal{E}_M + \mathcal{F}_M } (0).
		\end{equation}

		To conclude we employ a standard continuation argument revolving around the continuation argument of \fref{Theorem}{thm:cont}.
		Let us define, for any $T\in\ocbrac{0, \infty}$,
		\begin{equation*}
			\mathcal{G}(T) \vcentcolon= \sup_{0\leqslant t\leqslant T}
				\mathcal{E}_\text{low} (t) {\brac{1+t}}^{2M-2} + \mathcal{E}_M (t) + \frac{ \mathcal{F}_M (t) }{1+t} + \int_0^{T} \mathcal{D}_M (s) ds
		\end{equation*}
		which we use to define the maximal time of existence
		\begin{equation*}
			T_\text{max} \vcentcolon= \sup \cbrac{ T > 0 :
				\exists ! \text{ strong solution on } \sbrac{0,T} \text{ and } \mathcal{G}(T) \leqslant C_{ap} \brac{ \mathcal{E}_M + \mathcal{F}_M } (0) 
			}.
		\end{equation*}
		By virtue of \eqref{eq:gwp_clean_7}, we know that $T_\text{max} > T_\text{lwp} > 0$.
		Crucially: \fref{Theorem}{thm:cont} tells us that, in light of the smallness condition \eqref{it:gwp_smallness_cond_4} and \eqref{eq:gwp_clean_6},
		and since $T_\text{max} > 0$,
		$T_\text{max}$ cannot be finite. So indeed the solution exists globally-in-time and, since $T_\text{max} = \infty$,
		we appeal to \fref{Proposition}{prop:control_IC__energy_by_purely_spatial_deriv} one last time to deduce that
		\begin{equation*}
			\mathcal{G} \brac{\infty}
			\leqslant C_{ap} \brac{ \mathcal{E}_M + \mathcal{F}_M } (0)
			\lesssim \normtyp{Z_0}{H}{2M}^2 + \normtyp{K_0}{H}{2M+1}^2,
		\end{equation*}
		i.e. indeed \eqref{eq:gwp_clean_small_energy_regime} holds.
	\end{proof}

	In order to deduce the decay of $K$ from \fref{Theorem}{thm:gwp_decay_clean} above we need the quantitative rigidity estimate of \fref{Proposition}{prop:quant_rigidity} below.
	Note that the term \emph{quantitative rigidity} is motivated by contrast with the following \emph{qualitative rigidity} result:
	if $a=0$ and $ \normtyp{J - J_{eq}}{L}{\infty}  < \nu - \lambda$ then $J = J_{eq}$
	(this can be seen by noticing that if $a=0$ then $J_{33}$ must be an eigenvalue of $J$, and it cannot be that $J_{33} = \lambda$
	since that would contradict the condition $ \normtyp{J-J_{eq}}{L}{\infty}  < \nu-\lambda$).

	\begin{prop}[Quantitative rigidity]
	\label{prop:quant_rigidity}
		Let $T>0$ be a time horizon and suppose that
		\begin{equation}
		\label{eq:quant_rigidity_assumption}
			\sup_{0\leqslant t\leqslant T} \normtyp{(u,\theta)(t)}{H}{3} + \normtyp{J(t)}{H}{3} + \normtyp{\pdt(u,\theta)}{H}{2}^2 + \normtyp{\pdt J}{H}{2}^2 < \infty.
		\end{equation}
		If $ \displaystyle\sup_{0\leqslant t\leqslant T} \normtyp{K}{L}{\infty} \leqslant \nu-\lambda$
		then $ \displaystyle\sup_{0\leqslant t\leqslant T} \normtyp{K}{L}{p} \leqslant 2 \displaystyle\sup_{0\leqslant t\leqslant T} \normtyp{a}{L}{p}$ for any $1\leqslant p\leqslant\infty$.
	\end{prop}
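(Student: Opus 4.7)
The plan is to reduce the claim to a pointwise algebraic inequality via a spectral decomposition, and then integrate. First, I would combine the regularity hypothesis \eqref{eq:quant_rigidity_assumption} with the spectrum-propagation result (\fref{Proposition}{prop:persist_spec_sols_adv_rot_eqtns}) and the global assumption that $J_0(x)$ has spectrum $\cbrac{\lambda,\lambda,\nu}$ to deduce that $J(t,x) = J_{eq} + K(t,x)$ has spectrum $\cbrac{\lambda,\lambda,\nu}$ at every $(t,x) \in \sbrac{0,T}\times\T^3$. This is the sole role played by \eqref{eq:quant_rigidity_assumption}; everything subsequent is a pointwise algebraic computation.

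Next, at each $(t,x)$ I would pick a unit eigenvector of $J$ corresponding to the simple eigenvalue $\nu$, call it $n = n(t,x) \in S^2$ (well-defined up to sign), so that $J = \lambda I + (\nu-\lambda)\, n n^T$. Subtracting $J_{eq} = \lambda I + (\nu-\lambda)\, e_3 e_3^T$ yields the central identity
\begin{equation*}
	K = (\nu-\lambda)\brac{ n n^T - e_3 e_3^T },
\end{equation*}
which is the ``minor miracle'' alluded to in \fref{Section}{sec:weak_coupling}: the spectrum constraint collapses the six-dimensional space of symmetric perturbations to the one-parameter family indexed by $n \in S^2/\cbrac{\pm 1}$. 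A short calculation then gives
\begin{equation*}
	\vbrac{K}^2 = (\nu-\lambda)^2\, \tr\brac{ (n n^T - e_3 e_3^T)^2 } = 2(\nu-\lambda)^2 (1 - n_3^2),
\end{equation*}
while, since $K_{i3} = (\nu-\lambda)\, n_i n_3$ for $i = 1, 2$,
\begin{equation*}
	\vbrac{a}^2 = K_{13}^2 + K_{23}^2 = (\nu-\lambda)^2\, n_3^2 (1 - n_3^2).
\end{equation*}

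To finish, I would invoke the smallness assumption $\vbrac{K} \leqslant \nu - \lambda$, which immediately forces $2(1 - n_3^2) \leqslant 1$, i.e.\ $n_3^2 \geqslant 1/2$. Dividing the two identities above (with the convention that both sides vanish when $1 - n_3^2 = 0$, the degenerate case which in fact forces $K = 0$) then yields the pointwise bound $\vbrac{K}^2 = (2/n_3^2)\, \vbrac{a}^2 \leqslant 4\, \vbrac{a}^2$. Raising to the $p/2$-th power and integrating in $x$ (or taking the essential supremum when $p = \infty$) gives $\normtyp{K(t)}{L}{p} \leqslant 2\, \normtyp{a(t)}{L}{p}$ for every $t \in \sbrac{0,T}$; taking the supremum in $t$ concludes the proof. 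There is no genuine obstacle: the content of the proposition is the spectral rigidity identity for $K$, and once that is in hand the computation is purely algebraic. The constant $2$ is in fact sharp, attained in the limiting case $n_3^2 = 1/2$ (equivalently, $\vbrac{K} = \nu - \lambda$).
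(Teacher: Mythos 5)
Your proof is correct and follows the same route as the paper's: propagate the spectrum of $J$ via \fref{Proposition}{prop:persist_spec_sols_adv_rot_eqtns}, decompose $J = \lambda I + (\nu-\lambda)nn^T$, compute $\abs{K}^2 = 2(\nu-\lambda)^2(1-n_3^2)$ and $\abs{a}^2 = (\nu-\lambda)^2 n_3^2(1-n_3^2)$, and use the smallness hypothesis to force $n_3^2 \geqslant 1/2$. The only cosmetic additions are the explicit trace computation and the remark on the degenerate case $n_3^2=1$, neither of which changes the argument.
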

	\begin{proof}
		Since \eqref{eq:quant_rigidity_assumption} holds we know from \fref{Proposition}{prop:persist_spec_sols_adv_rot_eqtns}
		that $J(t,x)$ is pointwise symmetric with spectrum $\cbrac{\lambda, \lambda, \nu}$.
		The key observation now is that we may therefore find a unit vector field $n(t,x)$ such that $J = \nu n\otimes n + \lambda (I - n\otimes n)$ pointwise
		(indeed we may simply take $n$ to be the unit eigenvector of $J$ corresponding to the eigenvalue $\nu$).
		Writing $J_{eq} = \nu e_3\otimes e_3 + \lambda (I-e_3\otimes e_3)$ we may then compute that
		\begin{equation*}
			{\abs{ J-J_{eq} }}^2
			= {(\nu-\lambda)}^2 {\abs{ n\otimes n - e_3\otimes e_3}}^2
			= 2 {(\nu-\lambda)}^2 (1 - n_3^2)
			= 2 {(\nu-\lambda)}^2 \abs{\bar{n}}^2.
		\end{equation*}
		In particular, if $ \normtyp{K}{L}{\infty}  \leqslant \nu - \lambda$ then we may deduce that $n_3^2 \geqslant \frac{1}{2}$ pointwise.
		To conclude we note that since $J_{ij} = J e_j \cdot e_i$ we may compute that $a = (\nu - \lambda) n_3 \bar{n}$.
		So finally
		$
			{\abs{K}}^2 = \frac{2 \abs{a}^2}{n_3^2} \leqslant 4\abs{a}^2,
		$
		from which the claim follows.
	\end{proof}

	In light of this quantitative rigidity result we may deduce the decay of $K$, and hence $\pdt K$, from the decay of $a$.
	As discussed in \fref{Section}{sec:intro}, this argument could be iterated further in order to derive the decay of higher-order temporal derivatives of $K$,
	but this is not done here since that decay is not used in the scheme of a priori estimates.
	Recall that $ \overline{\mathcal{K}}_I$ is defined in \eqref{eq:not_K_bar}.

	\begin{prop}[Rates of decay of $K$ and $\pdt K$]
	\label{prop:rates_decay_K_pdt_K}
		Let $M\geqslant 3$ be an integer and let $T>0$ be a time horizon.
		There exists $C_1 > 0$ such that the following holds.
		If $(u, \theta, K)$ solves \eqref{eq:pertub_sys_no_ten_pdt_K} and satisfies
		\begin{align}
		\label{eq:rates_decay_K_assumption}
			C \vcentcolon= \sup_{0\leqslant t \leqslant T}
			\normtyp{K(t)}{L}{2}^2 {\brac{1+t}}^{2M-4} + \overline{\mathcal{K}}_2 {\brac{1+t}}^{2M-2} + \frac{\mathcal{F}_M (t)}{1+t}
			< \infty
		\end{align}
		then, for $j=0,\,1$,
		$
			\displaystyle \sup_{0\leqslant s \leqslant s_j} \sup_{0\leqslant t\leqslant T} \normtypns{\pdt^j K}{H}{s}^2 {\brac{1+t}}^{2M-4-(2M-3)/s_j} \leqslant C_1 C,
		$
		where $s_0 = 2M+1$ and $s_1 = 2M$.
	\end{prop}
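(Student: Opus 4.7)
The plan is to combine standard Sobolev interpolation with the advection-rotation equation \eqref{eq:pertub_sys_no_ten_pdt_K} in order to transfer decay information across regularity levels. The two endpoints available from assumption \eqref{eq:rates_decay_K_assumption} are the low-regularity decay $\|K(t)\|_{L^2}^2 \lesssim C(1+t)^{-(2M-4)}$ (for $K$) and the highest-regularity growth bound $\|K\|_{H^{2M+1}}^2 + \|\pdt K\|_{H^{2M}}^2 \leqslant \mathcal{F}_M \lesssim C(1+t)$; interpolating between these will yield the rates for intermediate $s$, so the only real work is to produce the analogous $L^2$-endpoint bound for $\pdt K$.

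For $j=0$, I would apply the standard Sobolev interpolation
\begin{equation*}
\|K\|_{H^s}^2 \lesssim \|K\|_{L^2}^{2(1-s/(2M+1))}\, \|K\|_{H^{2M+1}}^{2s/(2M+1)}
\end{equation*}
for every $0\leqslant s \leqslant 2M+1$ and substitute the two endpoint bounds from \eqref{eq:rates_decay_K_assumption}. A routine tally of exponents gives $-(2M-4) + s(2M-3)/(2M+1)$ on the resulting power of $(1+t)$, which is precisely the claimed rate.

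For $j=1$, the key step is establishing the $L^2$-endpoint decay $\|\pdt K(t)\|_{L^2}^2 \lesssim C(1+t)^{-(2M-4)}$. I would apply \fref{Lemma}{lemma:H_k_est_pdt_K} with $k=0$, which gives
\begin{equation*}
\|\pdt K\|_{L^2} \lesssim \|K\|_{L^2} + (\|u\|_{L^\infty} + \|\theta\|_{L^\infty}) \|K\|_{H^1} + (1 + \|K\|_{L^\infty} + \|\nabla K\|_{L^\infty})(\|u\|_{L^2} + \|\theta\|_{L^2}),
\end{equation*}
and check that each contribution decays at rate at least $2M-4$: the pure $\|K\|_{L^2}$ term and the $\|(u,\theta)\|_{L^2}$ factor come directly from \eqref{eq:rates_decay_K_assumption} and $\overline{\mathcal{K}}_2$ respectively; the low-regularity $L^\infty$ factors on $(u,\theta)$ and on $(K,\nabla K)$ follow from $H^2 \hookrightarrow L^\infty$ combined with $\overline{\mathcal{K}}_2$-decay and the $j=0$ result at $s=1,2$; and the mixed product $\|u\|_{L^\infty}\|K\|_{H^1}$ is handled by pairing the $\overline{\mathcal{K}}_2$-decay of $\|u\|_{L^\infty}$ against the $H^1$-decay of $K$ obtained from Step~1. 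Once this endpoint is secured, interpolating exactly as in Step~1 but with $\sigma = 2M$ replacing $2M+1$ produces the claimed exponent $(2M-4) - s(2M-3)/(2M)$.

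The only delicate point is the bookkeeping in the $L^2$ endpoint for $\pdt K$: one must verify that the cross terms in \fref{Lemma}{lemma:H_k_est_pdt_K} all decay at least at rate $2M-4$, which requires the intermediate $H^1$-decay of $K$ from Step~1 rather than just its $L^2$-decay. Once that bound is in hand, the interpolation mechanism is completely mechanical, so I expect this endpoint estimate to be the only step demanding any genuine care.
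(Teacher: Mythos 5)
Your proposal is correct and follows essentially the same route as the paper: interpolate between the $L^2$ decay of $K$ and the growth bound from $\mathcal{F}_M$ for $j=0$, and for $j=1$ first extract the $L^2$ endpoint for $\pdt K$ from the advection-rotation equation (your use of \fref{Lemma}{lemma:H_k_est_pdt_K} with $k=0$ is just the packaged form of the Hölder estimate the paper applies directly) and then interpolate against $\|\pdt K\|_{H^{2M}}^2 \lesssim \mathcal{F}_M$. The exponent bookkeeping you give matches the stated rates, so no gap.
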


	\begin{proof}
		We interpolate between the decay of $ \normtyp{K}{L}{2}^2$ and the growth of $ \mathcal{F}_M$ in \eqref{eq:rates_decay_K_assumption}
		to deduce the bounds on $ \normtyp{K}{H}{s}^2$ recorded here.
		To obtain the bounds on $ \normtyp{\pdt K}{H}{s}^2 $ we first use \eqref{eq:pertub_sys_no_ten_pdt_K} to read off the $L^2$ bound on $\pdt K$ using H\"{o}lder's inequality
		and \eqref{eq:rates_decay_K_assumption} and then interpolate between this $L^2$ bound and $ \mathcal{F}_M $.
	\end{proof}

	\begin{remark}
		Note that the estimates recorded above are not all decay estimates.
		To be precise: $ \normtyp{K(t)}{H}{s}$ decays when $s < 2M - \frac{4}{2M-3}$ whereas $ \normtyp{\pdt K}{H}{s} $ decays when $s < 2M-1-\frac{3}{2M-3}$.
		In particular these regularity cut-offs approach $2M$ and $2M-1$, respectively, asymptotically from below as $M\to +\infty$.
	\end{remark}

	We conclude this section by proving \fref{Corollary}{cor:add_decay}, which records the precise decay rates of the unknowns and their temporal derivatives.
	\begin{proof}[Proof of \fref{Corollary}{cor:add_decay}]
		It suffices to combine \fref{Theorem}{thm:gwp_decay_clean} and \fref{Propositions}{prop:decay_intermediate_norms}, \ref{prop:quant_rigidity}, and \ref{prop:rates_decay_K_pdt_K}.
	\end{proof}
	
%----------------------------------------------------------------------------------------------------
%	APPENDIX
%----------------------------------------------------------------------------------------------------
\appendix

\section{Identities involving the microinertia}
\label{sec:app_id_microinertia}

	In this section we record various computations and identities involving the microinertia tensor $J$ which are used throughout the paper.

	We now record two lemmas that are used in the proof of \fref{Proposition}{prop:persist_spec_sols_adv_rot_eqtns} below.
	This proposition is essential to our scheme of a priori estimates,
	and is a fundamental feature of the micropolar fluid model.
	It shows that if the solution is regular enough, then the spectrum of the microinertia is propagated by the flow.
	First we record a well-known result showing that the advective derivative is simply a time derivative up to a change of variables using the flow map (i.e. with respect to Eulerian coordinates).

	\begin{lemma}[Calculus of advective derivatives]
	\label{lemma:calc_adv_deriv}
		Let $\eta \in C^2 \brac{ \cobrac{0, T} \times \R^n;\, \R^n}$ be a flow map, i.e., for all $0 \leqslant t < T$, $\eta_t \vcentcolon= \eta\brac{t, \,\cdot\,}$ is a $C^1$-diffeomorphism,
		with velocity $u \in C^1 \brac{ \cobrac{0, T} \times \R^n,\; \R^n}$ defined by $u(t,x) \vcentcolon= \pdt\eta \brac{t, \eta_t\inv (x) }$. Then
		$
			\pdt\brac{\det\nabla\eta} = \brac{\brac{\nabla\cdot u}\circ\eta} \det\nabla\eta
		$
		and, for every $f\in C^1 \brac{ \cobrac{0, T} \times \R^n;\, \R}$,
		$
			\pdt\brac{f\circ\eta} = \brac{\brac{\pdt + u\cdot\nabla} f}\circ\eta
		$
		where for any $g:\cobrac{0, T} \times \R^n \to \R$ we write $g\circ\eta$ to denote the composition $(g\circ\eta) (t,x) \vcentcolon= g\brac{t, \eta(t,x) }$.
	\end{lemma}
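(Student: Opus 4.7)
The plan is to treat the two identities separately, as the second is a direct application of the chain rule while the first requires Jacobi's formula together with a trace-conjugation argument.

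For the second identity, I would simply differentiate $f\circ\eta$ with respect to time using the chain rule:
\begin{equation*}
\pdt(f\circ\eta)(t,x) = (\pdt f)(t,\eta(t,x)) + \nabla f(t,\eta(t,x)) \cdot \pdt\eta(t,x).
\end{equation*}
Then I would invoke the definition of the velocity $u$ to rewrite $\pdt\eta(t,x) = u(t, \eta(t,x))$. Substituting back and recognizing that the right-hand side equals $\brac{(\pdt + u\cdot\nabla)f}$ evaluated at $(t, \eta(t,x))$ completes that part immediately.

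For the first identity, the approach is to apply Jacobi's formula for the derivative of a determinant along a smooth curve of invertible matrices, which says that $\pdt \det A = \det(A)\,\mathrm{tr}(A^{-1} \pdt A)$. Using this with $A = \nabla\eta$ (and noting that $\nabla\eta$ is invertible since $\eta_t$ is a diffeomorphism), the task reduces to computing $\mathrm{tr}\brac{(\nabla\eta)^{-1} \pdt \nabla\eta}$. The key step is to interchange $\pdt$ and $\nabla$ and then use the chain rule on $\pdt\eta(t,x) = u(t,\eta(t,x))$ to obtain the identity $\pdt \nabla\eta = (\nabla u \circ \eta)\, \nabla\eta$. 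Substituting this gives
\begin{equation*}
\mathrm{tr}\brac{(\nabla\eta)^{-1} (\nabla u \circ \eta)\, \nabla\eta} = \mathrm{tr}(\nabla u \circ \eta) = (\nabla\cdot u)\circ\eta,
\end{equation*}
where we used the similarity invariance of the trace. Multiplying through by $\det\nabla\eta$ yields the claimed identity.

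Neither step is a real obstacle; both are essentially one-line computations once set up correctly. The only mild care needed is to ensure the smoothness hypotheses ($\eta\in C^2$ and $u\in C^1$) suffice to justify interchanging $\pdt$ with spatial derivatives and to ensure the determinant is continuously differentiable in $t$, which is immediate from the $C^2$ assumption on $\eta$.
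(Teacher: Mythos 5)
Your proof is correct and matches the paper's approach: the paper simply cites Liouville's Theorem for the first identity and the chain rule for the second, and your Jacobi-formula computation is exactly the standard proof of Liouville's Theorem, while your chain-rule argument for the second identity is the intended one. No gaps.
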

	\begin{proof}
		The first identity is the well-known Liouville Theorem and the second identity follows from the first by the Chain Rule.
	\end{proof}

	We continue our progress towards a proof of \fref{Proposition}{prop:persist_spec_sols_adv_rot_eqtns} below
	with an ODE result recorded in \fref{Lemma}{lemma:two_sided_ODE_commutators}.
	This lemma provides an equivalent characterization of the ODE satisfied by the microinertia (denoted by $S$ in \fref{Lemma}{lemma:two_sided_ODE_commutators})
	in Lagrangian coordinates in terms of the ODE satisfied by its rotation matrix (denoted by $Q$ in \fref{Lemma}{lemma:two_sided_ODE_commutators}).

	\begin{lemma}[Two-sided integrating factors for ODEs with commutators]
	\label{lemma:two_sided_ODE_commutators}
		Let $S, A \in C^1 \brac{ \cobrac{0,T};\, \R^{n\times n}}$ be time-dependent symmetric and anti-symmetric matrices, respectively,
		and let $S_0$ be a fixed symmetric real $n$-by-$n$ matrix. The following are equivalent.
		\begin{enumerate}
			\item 	$S$ solves the initial value problem
				$
					\pdt S = \sbrac{A,S}
				$ on $
					\brac{0,T}
				$ and $
					S(0) = S_0.
				$
			\item	There exists a time-dependent orthogonal matrix $Q \in C^1 \brac{ \cobrac{0, T};\, O(n)}$ such that $S =QS_0 Q^T$ and $Q$ solves the initial value problem
				$
					\pdt Q = AQ
				$ on $
					\brac{0,T}
				$ and $
					Q(0) = I.
				$
		\end{enumerate}
		Here $O(n)$ denotes the space of $n$-by-$n$ real orthogonal matrices.
	\end{lemma}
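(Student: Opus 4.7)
The plan is to prove the equivalence by establishing each implication through direct construction and uniqueness arguments, exploiting that both initial value problems in play are linear in the matrix unknown, hence uniquely solvable.

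First I would tackle the easier direction, namely (2) $\Rightarrow$ (1). Given $Q$ with $\partial_t Q = AQ$ and $Q(0) = I$, I would simply differentiate $S = Q S_0 Q^T$ using the product rule, obtaining $\partial_t S = (\partial_t Q) S_0 Q^T + Q S_0 (\partial_t Q)^T$. Substituting $\partial_t Q = AQ$ and using the antisymmetry $A^T = -A$, this becomes $AQ S_0 Q^T - Q S_0 Q^T A = AS - SA = [A, S]$. The initial condition $S(0) = S_0$ is immediate from $Q(0) = I$.

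For the harder direction (1) $\Rightarrow$ (2), the plan is in two steps. First, I would invoke standard linear ODE theory (since $A \in C^1$) to produce the unique solution $Q \in C^1([0,T); \R^{n\times n})$ to $\partial_t Q = AQ$ with $Q(0) = I$. The crucial claim is that this $Q$ takes values in $O(n)$. To see this, I would compute $\partial_t(QQ^T) = AQQ^T + QQ^T A^T = [A, QQ^T]$, so that $P(t) := QQ^T$ satisfies the linear ODE $\partial_t P = [A,P]$ with $P(0) = I$. Since the constant function $P \equiv I$ is also a solution (as $[A, I] = 0$), uniqueness for this linear ODE forces $QQ^T \equiv I$. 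Second, I would set $\tilde S := QS_0 Q^T$ and use the computation from the first direction to see that $\tilde S$ solves $\partial_t \tilde S = [A, \tilde S]$ with $\tilde S(0) = S_0$. Uniqueness for this linear matrix ODE (again, the map $S \mapsto [A,S]$ is a linear operator on $\R^{n\times n}$ with continuous time-dependent coefficients) then gives $S \equiv \tilde S = Q S_0 Q^T$, completing the proof.

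The main obstacle — though ``obstacle'' overstates it — is justifying that $Q$ lands in $O(n)$, which is really the one nontrivial conceptual step. Everything else reduces to the product rule, antisymmetry of $A$, and uniqueness for linear matrix ODEs with continuous coefficients. There are no regularity subtleties since $A, S \in C^1$ makes all constructed objects $C^1$ as well, and the symmetry of $\tilde S$ at later times (needed for consistency with $S$ being symmetric) is automatic from $\tilde S = Q S_0 Q^T$ with $S_0$ symmetric.
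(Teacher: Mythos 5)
Your proposal is correct, and its overall architecture matches the paper's: the easy direction is the same product-rule computation using $A^T = -A$, and for the converse both you and the paper construct a solution $Q$ of $\partial_t Q = AQ$, $Q(0)=I$, and then invoke uniqueness for the linear matrix ODE $\partial_t S = [A,S]$ to identify $S$ with $QS_0Q^T$. The one genuine difference is how $Q$ is produced and shown to be orthogonal. The paper writes $Q(t) = \exp\big(\int_0^t A(s)\,ds\big)$, which makes orthogonality immediate (the exponent is antisymmetric) but silently assumes this formula solves $\partial_t Q = AQ$; for time-dependent $A$ this only holds when $A(t)$ commutes with $\int_0^t A(s)\,ds$, so the paper's construction is, strictly speaking, incomplete as stated. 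Your route — take the abstract solution of the linear ODE and prove $QQ^T \equiv I$ by noting that $P = QQ^T$ solves $\partial_t P = [A,P]$, $P(0)=I$, and so must coincide with the constant solution $I$ — avoids this issue entirely (one could even shortcut it by computing $\partial_t(Q^TQ) = Q^T A^T Q + Q^T A Q = 0$). So your argument is not only valid but slightly more robust than the printed proof.
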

	\begin{proof}
		First we show that $(2) \implies (1)$. If $(2)$ holds then $\pdt Q^T = {\brac{AQ}}^T = -Q^T A$ and
		therefore $$\pdt S = \pdt Q S_0 Q^T + Q S_0 \pdt Q^T = A Q S_0 Q^T - Q S_0 \pdt Q^T A = \sbrac{A, S}.$$
		Now we show that $(1) \implies (2)$. Suppose that $(1)$ holds and let us define $Q(t) \vcentcolon= \exp\brac{\int_0^t A(s) ds}$ such that $Q$ solves the initial value problem of $(2)$.
		Since $(1)$ is a linear ODE it has a unique solution, so in order to show that $S = QS_0S^T$ it suffices to show that $QS_0S^T$ is a solution of the initial value problem of $(1)$.
		This follows immediately from the same computation as that which was carried out above in order to show that $(2) \implies(1)$.
	\end{proof}

	We are now ready to prove \fref{Proposition}{prop:persist_spec_sols_adv_rot_eqtns}
	which shows that if the velocity fields and the microinertia are sufficiently regular then the spectrum of the microinertia is propagated in time.

	\begin{prop}[Persistence of the spectrum for solutons of advection-rotation equations]
	\label{prop:persist_spec_sols_adv_rot_eqtns}
		Suppose that $u\in C^1\brac{ \cobrac{0,T} \times \R^n;\, \R^n}$ is divergence-free and consider $\Omega, J \in C^1\brac{ \cobrac{0,T} \times \R^n;\, \R^{n\times n}}$,
		where $\Omega$ is anti-symmetric. If they satisfy
		\begin{equation*}
			\pdt J + u\cdot\nabla J = \sbrac{\Omega, J}
			\text{ and } 
			J(0, \,\cdot\,) = J_0
		\end{equation*}
		for some real $n$-by-$n$ matrix $J_0$ then there exists a flow map $\eta \in C^2 \brac{ \cobrac{0, T} \times \R^n;\, \R^n}$,
		where $\eta_t \vcentcolon= \eta(t, \,\cdot\,)$ is a $C^1$-diffeomorphism for all $0\leqslant t < T$,
		and there exists an Eulerian rotation map $R \in C^1 \brac{ \cobrac{0,T} \times \R^n;\, O(n)}$ such that
		\begin{equation*}
			J = R \brac{J_0 \circ \eta\inv} R^T,
		\end{equation*}
		or, more precisely, $J(t,x) = R(t,x) J\brac{t, \eta_t\inv (x)} R^T (t,x)$.
		In particular, for every $(t,x) \in \cobrac{0,T}\times\R^n$ if we write $y = \eta_t\inv (x)$ then $J_0 (y)$ and $J(t,x)$ have the same spectrum.
	\end{prop}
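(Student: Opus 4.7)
The plan is to reduce the PDE system for $J$ to a family of ODEs indexed by the Lagrangian label, apply \fref{Lemma}{lemma:two_sided_ODE_commutators} pointwise in that label, and then return to Eulerian coordinates via the flow map. The divergence-free condition on $u$ supplies the invertibility of the flow through the Liouville identity of \fref{Lemma}{lemma:calc_adv_deriv}, and the antisymmetry of $\Omega$ supplies the orthogonality of the rotation factor.

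First, I would construct the Lagrangian flow $\eta \in C^2(\cobrac{0,T}\times\R^n;\R^n)$ by solving
\begin{equation*}
\pdt \eta(t,y) = u(t, \eta(t,y)), \qquad \eta(0,y) = y,
\end{equation*}
which is legitimate by Cauchy--Lipschitz since $u \in C^1$. Standard ODE theory gives continuous differentiability of $\eta$ in $y$, and the regularity $u \in C^1$ upgrades $\eta$ to $C^2$ in $(t,y)$. The Liouville identity from \fref{Lemma}{lemma:calc_adv_deriv} combined with $\nabla\cdot u = 0$ gives $\det \nabla \eta \equiv 1$, so $\eta_t$ is a global $C^1$-diffeomorphism for every $0 \leqslant t < T$.

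Next, I would pull the advection-rotation equation back by $\eta$. Defining $\tilde{J}(t,y) \vcentcolon= J(t, \eta(t,y))$ and $\tilde\Omega(t,y) \vcentcolon= \Omega(t, \eta(t,y))$, \fref{Lemma}{lemma:calc_adv_deriv} converts the advective derivative into a plain time derivative:
\begin{equation*}
\pdt \tilde J (t,y) = \brac{(\pdt + u\cdot\nabla) J}(t, \eta(t,y)) = [\tilde\Omega(t,y), \tilde J(t,y)],
\end{equation*}
with initial condition $\tilde J(0,y) = J_0(y)$. For each fixed $y$, this is exactly the hypothesis of \fref{Lemma}{lemma:two_sided_ODE_commutators}, so there exists $Q(\cdot,y) \in C^1(\cobrac{0,T}; O(n))$ solving $\pdt Q = \tilde\Omega Q$ with $Q(0,y) = I$, such that $\tilde J(t,y) = Q(t,y) J_0(y) Q(t,y)^T$. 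Joint regularity of $Q$ in $(t,y)$ follows from smooth dependence of linear ODEs on parameters.

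Finally I would push everything back to Eulerian coordinates by setting $R(t,x) \vcentcolon= Q(t, \eta_t^{-1}(x))$. Since $\eta_t^{-1}$ is $C^1$ in $x$ and jointly continuous in $(t,x)$ (by the implicit function theorem applied to $\eta$), $R$ inherits $C^1$ regularity in $(t,x)$ and takes values in $O(n)$. Evaluating the identity for $\tilde J$ at $y = \eta_t^{-1}(x)$ gives
\begin{equation*}
J(t,x) = \tilde J(t, \eta_t^{-1}(x)) = R(t,x)\, J_0(\eta_t^{-1}(x))\, R(t,x)^T,
\end{equation*}
which is the desired factorization; the claim about spectra follows immediately since orthogonal conjugation preserves eigenvalues. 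The main obstacle is not conceptual but rather the careful verification that $\eta_t$ is a diffeomorphism on all of $\R^n$ and that $\eta_t^{-1}$ enjoys the regularity needed for $R$ to be $C^1$; once this is in place, the rest is a mechanical application of \fref{Lemmas}{lemma:calc_adv_deriv} and \ref{lemma:two_sided_ODE_commutators}.
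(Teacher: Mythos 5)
Your proposal is correct and follows essentially the same route as the paper: construct the flow map, use the divergence-free condition and \fref{Lemma}{lemma:calc_adv_deriv} to get a volume-preserving $C^1$-diffeomorphism, pull back to Lagrangian coordinates so the advective equation becomes the pointwise ODE $\pdt \tilde J = [\tilde\Omega,\tilde J]$, apply \fref{Lemma}{lemma:two_sided_ODE_commutators} to factor $\tilde J = Q J_0 Q^T$, and push forward $R = Q\circ\eta_t^{-1}$. The only cosmetic difference is that you invoke existence of $Q$ directly from the lemma and flag the parameter-regularity of $Q$ and $\eta_t^{-1}$, whereas the paper writes $Q$ via a matrix exponential; this does not change the argument.
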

	\begin{proof}
		The key ideas are that (1) by virtue of \fref{Lemma}{lemma:calc_adv_deriv}, $\pdt + u\cdot\nabla$ is nothing more than a time derivative up to a change of coordinates
		and (2) in light of \fref{Lemma}{lemma:two_sided_ODE_commutators} solutions of $\pdt = \sbrac{\Omega, \,\cdot\,}$ are pointwise conjugate to their initial conditions
		by some rotation matrix with angular velocity $\Omega$.

		\paragraph{\textbf{Step 1.}} We define the flow map $\eta$ to be the solution of
		$
			\pdt\eta = u\circ\eta
		$ with initial condition $
			\eta\brac{t=0} = \id.
		$
		As a consequence of $u$ being divergence-free it follows from \fref{Lemma}{lemma:calc_adv_deriv} that
		$
			\pdt\brac{\det\nabla\eta} = 0
		$
		and hence $\det\nabla\eta = \det\nabla\eta\brac{t=0} \equiv 1$, so indeed $\eta_t$ is invertible at all times $t$.
		Finally we deuce that $\eta_t$ is a $C^1$-diffeomorphism for all times $t$ from the fact that $\nabla\brac{\eta\inv} = {\brac{\nabla\eta}}\inv \circ \eta\inv$.

		\paragraph{\textbf{Step 2.}} Let us define $\mathcal{J}$ and $\Theta$ to be the Lagrangian counterparts of $J$ and $\Omega$ respectively,
		i.e. $\mathcal{J} \vcentcolon= J\circ\eta$ and $\Theta \vcentcolon= \Omega\circ\eta$
		Then, by \fref{Lemma}{lemma:calc_adv_deriv},
		\begin{equation*}
			\pdt\mathcal{J} = \pdt\brac{J\circ\eta} = \brac{\brac{\pdt+u\cdot\nabla}J}\circ\eta = \sbrac{\Theta, \mathcal{J}}
			\text{ and } \mathcal{J}\brac{0, \,\cdot\,} = J_0 \circ\eta_0 = J_0.
		\end{equation*}
		So $\mathcal{J}$ solves
		$
			\pdt\mathcal{J} = \sbrac{\Theta, \mathcal{J}}
		$
		with initial condition
		$
			\mathcal{J}\brac{0, \,\cdot\,} = J_0.
		$

		\paragraph{\textbf{Step 3.}} We define the Lagrangian rotation map $Q\brac{t,y} \vcentcolon= \exp\brac{\displaystyle\int_0^t \Theta\brac{s,y} ds}$ such that,
		by \fref{Lemma}{lemma:two_sided_ODE_commutators}, $\mathcal{J} = QJ_0 Q^T$.
		So finally, if we introduce the Eulerian rotation map $R \vcentcolon= Q \circ \eta\inv$ we may conclude that
		$
			J = R\brac{J_0 \circ \eta\inv}R^T.\qedhere
		$
	\end{proof}

	We now record some elementary identities which are useful throughout the paper.
	The first identity allows us to deal with the precession term appearing in the conservation of angular momentum when deriving energy-dissipation relations.

	\begin{lemma}
	\label{lemma:identity_comm_A_S_and_sym_A_cross_S}
		Let $A$ and $S$ be $n$-by-$n$ matrices which are anti-symmetric and symmetric, respectively.
		Then $ \frac{1}{2} \sbrac{A,S} = \sym\brac{AS}$.
		In particular: if $n=3$ and we let $a\defeq\vc A$ then $ \frac{1}{2} \sbrac{A,S} = \sym\brac{a\times S}$.
	\end{lemma}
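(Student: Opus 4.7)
The plan is to expand the commutator and exploit the (anti)symmetry directly. The identity $\frac{1}{2}[A,S] = \sym(AS)$ reduces to the observation that $(AS)^T = S^T A^T = -SA$, so that $\frac{1}{2}(AS + (AS)^T) = \frac{1}{2}(AS - SA) = \frac{1}{2}[A,S]$. This is a single-line calculation and carries no real difficulty; it only uses the hypotheses $A^T = -A$ and $S^T = S$.

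For the second assertion, in dimension $n = 3$, I would unpack the notation $a \times S$ by interpreting it columnwise: if $S = [S_1 \mid S_2 \mid S_3]$ then $a \times S$ is by definition the matrix $[a \times S_1 \mid a \times S_2 \mid a \times S_3]$. Since $A = \ten a$ satisfies $Av = a \times v$ for every $v \in \mathbb{R}^3$, this columnwise action coincides with the matrix product $AS$, i.e.\ $a \times S = AS$. Applying the first part then yields $\sym(a \times S) = \sym(AS) = \frac{1}{2}[A,S]$.

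There is no substantive obstacle here; the only thing to be careful about is to fix the convention for the notation ``$a \times S$'' in the second statement, after which both parts follow from one transposition computation.
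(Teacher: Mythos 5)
Your proof is correct and is essentially the same one-line transposition computation as in the paper, namely $\sym(AS) = \tfrac{1}{2}(AS + S^T A^T) = \tfrac{1}{2}(AS - SA) = \tfrac{1}{2}[A,S]$. Your additional columnwise unpacking of $a\times S$ for the $n=3$ case is a reasonable clarification that the paper simply leaves implicit.
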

	\begin{proof}
		This is immediate: $\sym\brac{AS} = \frac{1}{2}\brac{AS + S^T A^T} = \frac{1}{2} \brac{AS - SA} = \frac{1}{2} \sbrac{A,S}$.
	\end{proof}

	The second identity shows that one of the terms appearing in the conservation of microinertia \eqref{eq:full_sys_J} is antisymmetric (as a map on the space of symmetric matrices),
	and hence does not contribute to energy or transport estimates.

	\begin{lemma}
	\label{lemma:commut_are_antisym_maps_on_space_sym_matrices}
		Let $S$ and $M$ be real $n$-by-$n$ matrices such that $S$ is symmetric. Then $\sbrac{M,S}:S = 0$.
	\end{lemma}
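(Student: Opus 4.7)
The plan is to expand the commutator and use the standard cyclic/adjoint identities for the Frobenius inner product together with the symmetry of $S$.

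First I would write $\sbrac{M,S}:S = (MS):S - (SM):S$, with the convention $A:B = \tr(A^T B)$. For any $n$-by-$n$ matrices $A, B, C$ one has the identity $(AB):C = A:(CB^T) = B:(A^T C)$, which follows immediately from the cyclic property of the trace. Applying this to each of the two terms gives $(MS):S = M:(SS^T)$ and $(SM):S = M:(S^T S)$. Since $S$ is symmetric, $SS^T = S^TS = S^2$, so both terms equal $M:S^2$ and hence cancel. This is the entire argument; there is no obstacle, as everything reduces to one line of trace manipulations.

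Equivalently, one could compute the two terms in index notation: $(MS):S = M_{ik}S_{kj}S_{ij} = M_{ik}(SS^T)_{ki} = \tr(MS^2)$ using $S^T = S$, and similarly $(SM):S = S_{ik}M_{kj}S_{ij} = M_{kj}(S^TS)_{kj} = \tr(M^T S^2)$; after noting that both expressions equal $\tr(M^TS^2)$ (for the first, use one further trace cyclicity), the cancellation is immediate. Either presentation is short enough that I would simply pick the cleaner of the two (the coordinate-free version above) and write it as a single display.
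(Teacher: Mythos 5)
Your proof is correct and follows essentially the same route as the paper: both arguments move the factors across the Frobenius inner product (equivalently, trace cyclicity/adjoint identities) and use $S^TS = SS^T$ to identify $(MS):S$ with $(SM):S$. The coordinate-free one-line version you propose is exactly the paper's computation, so nothing further is needed.
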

	\begin{proof}
		The proof follows from the observation that $SM:S = M:S^TS = M:SS^T = MS:S$.
	\end{proof}

	Finally we record a detailed computation of the block form of $\sbrac{\Omega, J}$ which comes in handy when reading off the equation governing the dynamics of $a$.

	\begin{lemma}[Block form of $\sbrac{\Omega, J}$]
	\label{lemma:block_form_Omega_J}
		Let $J$ be a symmetric $3$-by-$3$ matrix written in $(2+1)$-by-$(2+1)$ block form as 
		$
			J = \begin{pmatrix}
				\bar{J} & a\\
				a^T & J_{33}
			\end{pmatrix}
		$
		and let $\Omega = \ten\omega$ for some $\omega\in\R^3$.
		Then we may write the commutator $\sbrac{\Omega, J}$ in $(2+1)$-by-$(2+1)$ block form as
		\begin{equation}
		\label{eq:block_form_Omega_J}
			\sbrac{\Omega, J} = \begin{pmatrix}
				\omega_3 \sbrac{R, \bar{J}} - \brac{ \bar{\omega}^\perp \otimes a + a \otimes \bar{\omega}^\perp}
				& \brac{\bar{J} - J_{33}I_2} \bar{\omega}^\perp + \omega_3 a^\perp
				\\ {\brac{ \brac{\bar{J} - J_{33}I_2} \bar{\omega}^\perp + \omega_3 a^\perp }}^T
				& 2 a\cdot\bar{\omega}^\perp
			\end{pmatrix}
		\end{equation}
		where $R = e_2 \otimes e_1 - e_1 \otimes e_2 \in \R^{2\times 2}$ denotes the (counterclockwise) $\frac{\pi}{2}$ rotation in $\R^2$.
	\end{lemma}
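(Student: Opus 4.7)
The plan is to prove the identity by direct block-matrix computation, exploiting the natural $(2+1)$-by-$(2+1)$ block decomposition of $\Omega$ that is compatible with the given block form of $J$.

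First, I would write $\Omega = \ten\omega$ in block form. Unpacking the definition $\Omega v = \omega \times v$ coordinate by coordinate, one finds
\begin{equation*}
    \Omega = \begin{pmatrix} \omega_3 R & -\bar{\omega}^\perp \\ (\bar{\omega}^\perp)^T & 0 \end{pmatrix},
\end{equation*}
where $R = e_2 \otimes e_1 - e_1 \otimes e_2$ is the counterclockwise $\tfrac{\pi}{2}$ rotation in $\R^2$. This is the only nontrivial preliminary step, and it follows from a direct inspection of the entries of $\Omega$ once one recalls the convention $\bar{\omega}^\perp = (-\omega_2, \omega_1)$.

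Next, I would compute $\Omega J$ and $J \Omega$ by block multiplication. This yields
\begin{align*}
    \Omega J &= \begin{pmatrix} \omega_3 R \bar{J} - \bar{\omega}^\perp a^T & \omega_3 R a - J_{33}\, \bar{\omega}^\perp \\ (\bar{\omega}^\perp)^T \bar{J} & (\bar{\omega}^\perp)^T a \end{pmatrix},\\
    J \Omega &= \begin{pmatrix} \omega_3 \bar{J} R + a (\bar{\omega}^\perp)^T & -\bar{J} \bar{\omega}^\perp \\ \omega_3 a^T R + J_{33} (\bar{\omega}^\perp)^T & -a^T \bar{\omega}^\perp \end{pmatrix}.
\end{align*}
Subtracting and using the identifications $u v^T = u \otimes v$, $R a = a^\perp$, and the scalar identity $(\bar{\omega}^\perp)^T a = a \cdot \bar{\omega}^\perp$, each block of $[\Omega, J] = \Omega J - J\Omega$ matches the corresponding block of \eqref{eq:block_form_Omega_J}. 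The symmetry of the result (already guaranteed abstractly by $J^T = J$ and $\Omega^T = -\Omega$, which give $[\Omega,J]^T = [\Omega,J]$) serves as an internal consistency check: the lower-left block is automatically the transpose of the upper-right block.

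There is no real obstacle here; the argument is entirely a bookkeeping exercise in block multiplication together with the sign conventions for $\bar{\omega}^\perp$ and $R$. The only place where one must exercise care is in the upper-right block, where the identity $Ra = a^\perp$ and the cancellation $\omega_3 Ra + \bar{J}\bar{\omega}^\perp - J_{33}\bar{\omega}^\perp$ must be organized into the clean form $(\bar{J} - J_{33} I_2)\bar{\omega}^\perp + \omega_3 a^\perp$.
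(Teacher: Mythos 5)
Your proof is correct and follows essentially the same route as the paper: write $\Omega$ in the same $(2+1)$-by-$(2+1)$ block form and compute the commutator by block multiplication. The only (inessential) difference is that the paper computes only $\Omega J$ and obtains $J\Omega$ from the identity $J\Omega = -\brac{\Omega J}^T$, whereas you compute both products explicitly and use the symmetry of $\sbrac{\Omega,J}$ merely as a consistency check.
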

	\begin{proof}
		Note that we may write $\Omega$ in block form using the rotation matrix $R$ as
		$
			\Omega = \begin{pmatrix}
				\omega_3 R & -\bar{\omega}^\perp\\
				{\brac{ \bar{\omega}^\perp }}^\perp & 0
			\end{pmatrix}.
		$
		We may then compute
		\begin{equation*}
			\Omega J = \begin{pmatrix}
				\omega_3 R \bar{J} - \bar{\omega}^\perp \otimes a & \omega_3 a^\perp - J_{33} \bar{\omega}^\perp\\
				{\brac{ \bar{\omega}^\perp }}^T \bar{J} & \bar{\omega}^\perp \cdot a
			\end{pmatrix}.
		\end{equation*}
		Since $J\Omega = -{\brac{\Omega J}}^T$ we deduce that indeed \eqref{eq:block_form_Omega_J} holds.
	\end{proof}

\section{Analytical results}
\label{sec:app_analyt_results}

	In this section we record precise statements of well-known analytical results for the reader's convenience.
	First we record the Gagliardo-Nirenberg interpolation inequalities on bounded domains, which is crucial in several nonlinear estimates.

	\begin{thm}[Gagliardo-Nirenberg interpolation inequalities]
	\label{thm:Gagliardo_Nirenberg}
		Let $u\in L^q\brac{\T^n}$ with $\nabla^k u \in L^r \brac{\T^n}$ such that
		\begin{equation*}
			\frac{1}{p} - \frac{l}{n} = \theta \frac{1}{q} + (1-\theta)\brac{ \frac{1}{r} - \frac{k}{n} }
			\text{ and } (1-\theta)k \geqslant l
			\text{ for some } 0 \leqslant \theta \leqslant 1.
		\end{equation*}
		Then $\nabla^l u \in L^p \brac{\T^n}$ and we have the estimate
		$
			\normns{\nabla^l u}{L^p \brac{\T^n}} \lesssim \norm{u}{L^q \brac{\T^n}}^\theta \norm{u}{W^{k,r} \brac{\T^n}}^{1-\theta}.
		$
	\end{thm}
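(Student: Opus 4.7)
The plan is to reduce the problem to a mean-zero function, then apply a Littlewood-Paley decomposition and Bernstein inequalities on the torus, and finally optimize the resulting dyadic estimate. This is the standard modern proof and works uniformly across the allowed exponent range; the alternative of transferring the Euclidean Gagliardo-Nirenberg inequality via an extension-cutoff argument is less clean in this periodic setting.

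First I would split $u = \bar{u} + v$ where $\bar{u} = \fint_{\T^n} u$ is constant and $v$ has zero mean. The constant part is handled directly: $\nabla^l \bar{u} = 0$ when $l \geqslant 1$, and when $l = 0$ one has $\abs{\bar{u}} \lesssim \norm{u}{L^q(\T^n)}$ by H\"{o}lder on the finite-volume torus, which is dominated by the right-hand side. The substantive inequality then concerns only the mean-zero part $v$, which admits a Littlewood-Paley decomposition $v = \sum_{j \geqslant 0} \Delta_j v$, where $\Delta_j$ projects onto Fourier modes $k \in {(2\pi\Z)}^n$ with $\abs{k} \sim 2^j$ via a standard dyadic partition of unity on the dual group away from the origin.

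The core tool is the Bernstein inequality on $\T^n$: for $f$ frequency-localized at scale $2^j$, any $1 \leqslant q_1 \leqslant q_2 \leqslant \infty$, and any $s \geqslant 0$,
\begin{equation*}
    \norm{\nabla^s f}{L^{q_2}(\T^n)} \lesssim 2^{js}\, 2^{jn(1/q_1 - 1/q_2)} \norm{f}{L^{q_1}(\T^n)},
\end{equation*}
which follows from writing $f = \varphi_j \ast f$ for a suitable Schwartz-class kernel at scale $2^j$ and applying Young's convolution inequality. Applying this two ways yields both a \emph{low} bound $\norm{\nabla^l \Delta_j v}{L^p} \lesssim 2^{j\gamma_1} \norm{v}{L^q}$ and a \emph{high} bound $\norm{\nabla^l \Delta_j v}{L^p} \lesssim 2^{-j\gamma_2} \norm{\nabla^k v}{L^r}$, with exponents $\gamma_1, \gamma_2 \geqslant 0$ uniquely determined by the scaling hypothesis on $(p,q,r,l,k,\theta)$. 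Crucially, the constraint $(1-\theta)k \geqslant l$ is precisely what guarantees $\gamma_2 \geqslant 0$.

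Finally, I would split the sum $\sum_j$ at a threshold $N \in \N$, use the low bound for $j \leqslant N$ and the high bound for $j > N$, and sum the two geometric series to obtain a bound of the form $C\bigl( 2^{N\gamma_1} \norm{v}{L^q} + 2^{-N\gamma_2} \norm{\nabla^k v}{L^r} \bigr)$. Optimizing the choice of $N$ balances the two terms and produces exactly the exponent $\theta$ from the statement. The main obstacle will be the degenerate case $(1-\theta)k = l$, in which $\gamma_2 = 0$ and one of the geometric sums becomes logarithmically divergent; this requires either a slight perturbation of exponents combined with a density argument, or a direct treatment via real interpolation. The remaining edge cases, namely $p \in \{1, \infty\}$ or $\theta \in \{0, 1\}$, either reduce to trivial estimates, to classical Sobolev embedding on the torus, or can be recovered by continuity in the exponents.
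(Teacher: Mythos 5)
The paper never proves this theorem at all: its ``proof'' is a citation to Section 13.3 of Leoni, where the inequality is established on cubes and then transferred to the torus. Your Littlewood--Paley/Bernstein sketch is therefore a genuinely different, self-contained route, and in the regime $\max(q,r) \leqslant p$ it does work essentially as you describe: with the scaling-exact Bernstein factors one finds $\gamma_1 = (1-\theta)\sigma$ and $\gamma_2 = \theta\sigma$ with $\sigma = k + n\brac{1/q - 1/r}$, and balancing the two geometric sums at the threshold $N$ reproduces exactly the exponent $\theta$. Note, however, that your bookkeeping of the constraints is off: the nonnegativity of $\gamma_1, \gamma_2$ is governed by the sign of $\sigma$, not by $(1-\theta)k \geqslant l$ (which is Nirenberg's classical admissibility condition and plays a different role), and the logarithmically degenerate situations are $\theta \in \cbrac{0,1}$ or $\sigma = 0$, not $(1-\theta)k = l$.

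The genuine gap is the regime $p < q$ (and likewise $p < r$). Your ``low'' bound with the factor $2^{j\brac{l + n(1/q - 1/p)}}$ is an instance of Bernstein's gain-of-integrability estimate and requires $q \leqslant p$; when $p < q$ it is simply false -- take $\Delta_j v$ to be a single Fourier mode $e^{ik\cdot x}$ with $\abs{k} \sim 2^j$, for which every Lebesgue norm of $\nabla^l \Delta_j v$ is comparable to $2^{jl}$, so no negative power of $2^j$ can be gained by lowering the integrability. Replacing it by H\"{o}lder on the finite-measure torus only yields the factor $2^{jl}$, and the balancing then produces an exponent $\theta' < \theta$ on $\norm{u}{L^q}$; after normalizing $\norm{u}{W^{k,r}} = 1$, where $\norm{u}{L^q}$ may be arbitrarily small, this is strictly weaker than the stated inequality and cannot be upgraded by ``continuity in the exponents.'' This is not an exotic corner of the statement: the only way the paper uses the theorem, in \fref{Corollary}{cor:est_interactions_L_2_via_Gagliardo_Nirenberg}, has $q = \infty > p$ (and $(1-\theta)k = l$). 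Closing these cases requires a genuinely different ingredient -- Nirenberg's integration-by-parts/induction argument, an interpolation inside each dyadic block combined with square-function summation, or simply the cube-based proof of Leoni that the paper in fact cites -- so as written your argument proves the theorem only on part of the admissible exponent range, and not on the part the paper actually needs.
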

	\begin{proof}
		This is a standard result. See for example Section 13.3 in \cite{leoni} for a proof of this result on cubes which immediately implies the result on the torus.
	\end{proof}

	In practice the Gagliardo-Nirenberg interpolation inequality is used in the form recorded in \fref{Corollary}{cor:est_interactions_L_2_via_Gagliardo_Nirenberg}
	throughout the paper.
	In particular the second inequality recorded in \fref{Corollary}{cor:est_interactions_L_2_via_Gagliardo_Nirenberg} is a high-low estimate
	which is central to our efforts to balance out terms that grow in time and terms that decay in time when designing our scheme of a priori estimates.

	\begin{cor}[Estimate of interactions in $L^2$]
	\label{cor:est_interactions_L_2_via_Gagliardo_Nirenberg}
		Let $f, g \in H^k \brac{\T^n}$ and let $\alpha$ and $\beta$ be multi-indices satisfying $\abs{\alpha} + \abs{\beta} = k$.
		Then we have the estimates
		$
			\normns{ ( \partial^\alpha f ) ( \partial^\beta g)}{L^2}
			\lesssim \normtyp{f}{L}{\infty} \normtyp{g}{H}{k} + \normtyp{f}{H}{k} \normtyp{g}{L}{\infty}
		$
		and
		$
			\normtyp{fg}{H}{k}
			\lesssim \normtyp{f}{L}{\infty} \normtyp{g}{H}{k} + \normtyp{f}{H}{k} \normtyp{g}{L}{\infty}.
		$
	\end{cor}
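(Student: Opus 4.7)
The plan is to derive the first inequality directly from H\"older's inequality combined with the Gagliardo--Nirenberg interpolation estimate (\fref{Theorem}{thm:Gagliardo_Nirenberg}), and then bootstrap the second inequality from the first via the Leibniz rule.

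For the first estimate, the key observation is that the constraint $\abs{\alpha} + \abs{\beta} = k$ makes the exponents $p_1 \vcentcolon= 2k/\abs{\alpha}$ and $p_2 \vcentcolon= 2k/\abs{\beta}$ H\"older-conjugate in the sense that $1/p_1 + 1/p_2 = 1/2$. I would apply Gagliardo--Nirenberg with $l = \abs{\alpha}$, $r = 2$, $q = \infty$ and interpolation parameter $\theta = 1 - \abs{\alpha}/k$, which is admissible since $(1 - \theta) k = \abs{\alpha} \geqslant l$. Writing $s \vcentcolon= \abs{\alpha}/k$, this yields
\begin{equation*}
	\normns{\partial^\alpha f}{L^{p_1}} \lesssim \normtyp{f}{L}{\infty}^{1-s} \normtyp{f}{H}{k}^{s}
\end{equation*}
and the analogous bound for $\partial^\beta g$, with $s$ replaced by $1-s$ throughout. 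Combining these two bounds via H\"older's inequality in $L^{p_1} \times L^{p_2} \to L^2$ and then invoking Young's inequality $a^{1-s} b^{s} \leqslant (1-s) a + s b$ with $a = \normtyp{f}{L}{\infty} \normtyp{g}{H}{k}$ and $b = \normtyp{f}{H}{k} \normtyp{g}{L}{\infty}$ produces the target bound.

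For the second inequality, I would expand $\partial^\gamma (fg)$ for each multi-index $\abs{\gamma} \leqslant k$ as a finite sum of terms of the form $(\partial^{\alpha'} f)(\partial^{\beta'} g)$ with $\abs{\alpha'} + \abs{\beta'} = \abs{\gamma}$ via the Leibniz rule, then apply the first inequality with $k$ replaced by $\abs{\gamma}$. The monotonicity $\normtyp{\,\cdot\,}{H}{\abs{\gamma}} \leqslant \normtyp{\,\cdot\,}{H}{k}$ for $\abs{\gamma} \leqslant k$ ensures the right-hand side remains controlled by the stated quantity; summing the $L^2$ bounds over $\abs{\gamma} \leqslant k$ then concludes.

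There is no genuine obstacle here: this is a standard Kato--Ponce-type high-low estimate. The only minor subtlety is the handling of the degenerate cases $\abs{\alpha} \in \cbrac{0, k}$ in the first inequality, where the Gagliardo--Nirenberg exponent $p_i$ is undefined. These extremes are immediate: when $\abs{\alpha} = 0$ one has $\abs{\beta} = k$, and the bound $\normtyp{f (\partial^\beta g)}{L^2} \leqslant \normtyp{f}{L}{\infty} \normtyp{g}{H}{k}$ holds by H\"older with no interpolation, and the case $\abs{\alpha} = k$ is symmetric.
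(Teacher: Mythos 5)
Your proposal is correct and follows essentially the same route as the paper: Hölder with the split exponents $2k/\abs{\alpha}$, $2k/\abs{\beta}$, Gagliardo--Nirenberg interpolation between $L^\infty$ and $H^k$ for each factor, Young's inequality to pass from the product of powers to the sum, and the Leibniz rule for the second estimate. The explicit handling of the endpoint cases $\abs{\alpha}\in\cbrac{0,k}$ is a harmless extra detail that the paper leaves implicit.
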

	\begin{proof}
		The first estimate follows from the Gagliardo-Nirenberg inequality recorded in \fref{Theorem}{thm:Gagliardo_Nirenberg}.
		So let us define $\theta \vcentcolon= \frac{\abs{\beta}}{k}$, $\frac{1}{p} \vcentcolon= \frac{1-\theta}{2}$, and $\frac{1}{q} \vcentcolon= \frac{\theta}{2}$.
		Then, by the H\"{o}lder and Gagliardo-Nirenberg inequalities,
		\begin{equation*}
			\normns{ ( \partial^\alpha f) ( \partial^\beta g ) }{L^2}
			\leqslant \normns{ \partial^\alpha f}{L^p} \normns{ \partial^\beta g}{L^q}
			\lesssim \normtyp{f}{L}{\infty}^\theta \normtyp{f}{H}{k}^{1-\theta} \normtyp{g}{L}{\infty}^{1-\theta} \normtyp{g}{H}{k}^\theta
			\lesssim \normtyp{f}{L}{\infty} \normtyp{g}{H}{k} + \normtyp{f}{H}{k} \normtyp{g}{L}{\infty},
		\end{equation*}
		where we have used Young's inequality at the end, namely using the fact that for any $x,y\geqslant 0$ and any $0\leqslant \theta \leqslant 1$,
		$xy \leqslant \theta x^{1/\theta} + (1-\theta) y^{1/(1-\theta)}$.
		The second estimate then follows from the first by using the Leibniz rule.
	\end{proof}

	From \fref{Corollary}{cor:est_interactions_L_2_via_Gagliardo_Nirenberg} we may commutator estimates for transport and multiplication operators.

	\begin{lemma}[Commutator estimates for transport and multiplication operators]
	\label{lemma:comm_est_transp_op}
		Let $u\in H^k \brac{\T^n;\, \R^n}$, let $f,g\in H^k \brac{\T^n;\, \R}$, and let $\alpha\in\N^n$ with $\abs{\alpha} = k$.
		Then
		$
			\normtyp{ \sbrac{g, \partial^\alpha } f}{L}{2}
			\lesssim \normtyp{g}{L}{\infty} \normtyp{f}{H}{k} + \normtyp{g}{H}{k} \normtyp{f}{L}{\infty}
		$
		and
		$
			\normtyp{ \sbrac{u\cdot\nabla, \partial^\alpha } f}{L}{2}
			\lesssim \normtyp{ \nabla u }{L}{\infty} \normtyp{\nabla f}{H}{k-1} + \normtyp{\nabla u}{H}{k-1} \normtyp{ \nabla f }{L}{\infty}.
		$
	\end{lemma}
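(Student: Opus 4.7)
The plan is to prove both estimates by expanding the commutators via the Leibniz rule and then invoking the high-low product estimate of Corollary \ref{cor:est_interactions_L_2_via_Gagliardo_Nirenberg}.

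For the first estimate, I would begin by writing
\begin{equation*}
[g, \partial^\alpha] f = g \partial^\alpha f - \partial^\alpha(g f) = -\sum_{\substack{\beta + \gamma = \alpha \\ \beta > 0}} \binom{\alpha}{\beta} (\partial^\beta g)(\partial^\gamma f).
\end{equation*}
For each term in the sum we have $|\beta| + |\gamma| = k$ with $|\gamma| \leqslant k - 1$, so the first estimate of Corollary \ref{cor:est_interactions_L_2_via_Gagliardo_Nirenberg} (applied with the multi-indices $\beta$ and $\gamma$ summing to $k$) yields
\begin{equation*}
\|(\partial^\beta g)(\partial^\gamma f)\|_{L^2} \lesssim \|g\|_{L^\infty} \|f\|_{H^k} + \|g\|_{H^k} \|f\|_{L^\infty}.
\end{equation*}
Summing the finitely many terms produces the claimed bound.

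For the second estimate, the analogous expansion reads
\begin{equation*}
[u \cdot \nabla, \partial^\alpha] f = -\sum_{\substack{\beta + \gamma = \alpha \\ \beta > 0}} \binom{\alpha}{\beta} (\partial^\beta u) \cdot (\nabla \partial^\gamma f).
\end{equation*}
The main observation, and the only subtlety, is that since $\beta > 0$ we may pick an index $i$ with $\beta_i \geqslant 1$ and rewrite $\partial^\beta u = \partial^{\beta - e_i}(\nabla u)_i$, so that each factor in the product is in fact a derivative of either $\nabla u$ or $\nabla f$, with the combined derivative count equal to $|\beta - e_i| + |\gamma| = k - 1$. Applying Corollary \ref{cor:est_interactions_L_2_via_Gagliardo_Nirenberg} to $\nabla u$ and $\nabla f$ at order $k-1$ then gives
\begin{equation*}
\|(\partial^{\beta - e_i} (\nabla u)_i)(\partial^\gamma \nabla f)\|_{L^2} \lesssim \|\nabla u\|_{L^\infty} \|\nabla f\|_{H^{k-1}} + \|\nabla u\|_{H^{k-1}} \|\nabla f\|_{L^\infty},
\end{equation*}
and summing over the finitely many multi-indices completes the proof.

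Neither step is hard; the only point requiring care is the bookkeeping in the second estimate that shifts one derivative off of $u$ and onto $\nabla u$ so that the bound is expressed in terms of $\nabla u$ (giving the sharper estimate without a full $\|u\|_{L^\infty}$ appearing), which is essential for its later use in the advection-rotation estimates of Section \ref{sec:adv_rot_est_K}.
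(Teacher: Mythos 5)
Your proof is correct and is precisely the argument the paper indicates: expand the commutators via the Leibniz rule (the $\beta = 0$ term cancels) and apply the high-low estimate of Corollary~\ref{cor:est_interactions_L_2_via_Gagliardo_Nirenberg}, shifting one derivative onto $u$ in the second case so that the bound is phrased in terms of $\nabla u$ and $\nabla f$ at total order $k-1$.
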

	\begin{proof}
		This follows from \fref{Corollary}{cor:est_interactions_L_2_via_Gagliardo_Nirenberg} and the Leibniz rule.
	\end{proof}

	We conclude this section with other well-known analytical results.
	First, a product estimate in $H^s$ spaces.

	\begin{lemma}[Product estimate]
	\label{lemma:prod_est}
		Let $s > \frac{n}{2}$ and let $0\leqslant t\leqslant s$.
		There exists $C = C(s,t) > 0$ such that, for every $f\in H^s\brac{\T^n}$ and every $g\in H^t\brac{\T^n}$,
		$
			\normtyp{fg}{H}{t} \leqslant C \normtyp{f}{H}{s} \normtyp{g}{H}{t}.
		$
		In other words: $H^s$ is a continuous multiplier on $H^t$.
	\end{lemma}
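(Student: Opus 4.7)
The plan is to prove this classical multiplier estimate in two stages: first for integer $t$ via the Leibniz rule combined with Gagliardo--Nirenberg, and then for fractional $t$ by complex interpolation in $t$.

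For integer $t = m$ with $0 \le m \le s$, I would expand $\|fg\|_{H^m}^2 = \sum_{|\alpha|\le m} \|\partial^\alpha(fg)\|_{L^2}^2$, apply the Leibniz rule to decompose each $\partial^\alpha(fg) = \sum_{\beta \le \alpha} \binom{\alpha}{\beta}\,(\partial^\beta f)(\partial^{\alpha-\beta}g)$, and then bound each individual term $\|(\partial^\beta f)(\partial^{\alpha-\beta}g)\|_{L^2}$ via Hölder and Gagliardo--Nirenberg (\fref{Theorem}{thm:Gagliardo_Nirenberg}). The key point is that $\partial^\beta f$ should be placed in a Lebesgue space $L^{p}$ that interpolates between $L^2$ and $L^\infty$ controlled by $\|f\|_{H^s}$ (using the Sobolev embedding $H^s \hookrightarrow L^\infty$, which holds since $s > n/2$), while $\partial^{\alpha-\beta}g$ is placed in the Hölder conjugate space $L^{p'}$ and controlled via Gagliardo--Nirenberg between $\|g\|_{L^2}$ and $\|g\|_{H^m}$. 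The condition $s > n/2$ provides exactly the slack needed in the Gagliardo--Nirenberg exponents to make the arithmetic close with $\|f\|_{H^s}$ on the one side and $\|g\|_{H^m}$ (not $\|g\|_{H^s}$) on the other.

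For non-integer $t \in (0, s)$, I would exploit that the multiplication operator $M_f : g \mapsto fg$ is linear in $g$, and that the integer-case estimate just proved supplies uniform operator bounds $\|M_f\|_{\Leb(H^m,H^m)} \le C(m)\|f\|_{H^s}$ for all integers $m \in [0, \lceil s \rceil]$. Since the Sobolev scale $\{H^k\}_{k\in\N}$ is a complex interpolation scale admitting the fractional spaces $H^t$ as intermediate spaces, the operator-norm bound transfers to every $H^t$ with $0 \le t \le s$; the fractional endpoint $t = s$ (when $s$ is non-integer) is either included in the interpolation or handled separately by the same Leibniz--Gagliardo--Nirenberg argument applied to the Bessel-potential definition of $H^s$, which gives the Banach algebra property of $H^s$ for $s > n/2$.

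The main obstacle is the careful bookkeeping in the Leibniz--Gagliardo--Nirenberg estimate to ensure that only $\|g\|_{H^t}$ (rather than $\|g\|_{H^s}$) appears on the right-hand side; naively invoking \fref{Corollary}{cor:est_interactions_L_2_via_Gagliardo_Nirenberg} at the top order produces a term $\|g\|_{L^\infty}$ that cannot be dominated by $\|g\|_{H^t}$ without assuming $t > n/2$. The fix is to apply Gagliardo--Nirenberg with a balanced Hölder split at each Leibniz term individually, using the slack $s - n/2 > 0$ to absorb derivatives onto the $f$-factor. Once the exponent arithmetic is set up correctly, the rest of the proof is routine.
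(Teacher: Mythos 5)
Your proof is correct but takes a longer path than the paper's. The paper observes that the multiplier $M_f : g \mapsto fg$ is bounded on $L^2$ with norm $\lesssim \|f\|_{L^\infty} \lesssim \|f\|_{H^s}$ (Sobolev embedding, since $s > n/2$), and bounded on $H^s$ with norm $\lesssim \|f\|_{H^s}$ (Banach algebra property of $H^s$), and then interpolates directly between these two endpoints via $H^t = \bigl[L^2, H^s\bigr]_{t/s}$. That is the whole proof. Your stage-one Leibniz--Gagliardo--Nirenberg argument for integer $t = m \le s$ is a valid bare-hands derivation, and the exponent bookkeeping does close (the condition $s > n/2$ is exactly what makes the relevant H\"{o}lder interval nonempty so that you can place $\partial^\beta f$ and $\partial^{\alpha-\beta}g$ in Lebesgue spaces controlled by $\|f\|_{H^s}$ and $\|g\|_{H^m}$ respectively). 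But it is ultimately a detour: as you yourself notice, when $s$ is not an integer, interpolating between the integer levels $H^0, \dots, H^{\lfloor s\rfloor}$ does not reach $H^t$ for $\lfloor s\rfloor < t \le s$, so you are forced to bring in the fractional endpoint $H^s$ --- i.e.\ the Banach algebra property --- anyway, at which point interpolating between $H^0$ and $H^s$ already covers all of $[0,s]$ and the integer-case estimates become redundant. The only thing your route buys is a self-contained reconstruction of the Banach algebra property rather than citing it as known; the paper treats it as a standard fact and goes straight to the two-endpoint interpolation.
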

	\begin{proof}
		The key observation is that $H^t$ is the interpolation space of order $t/s$ of the pair $(L^2, H^s)$.
		Since $H^s \hookrightarrow L^\infty$ and $H^s$ is a Banach algebra we know that $g\mapsto fg$ is bounded on both $L^2$ and $H^s$.
		The result then follows by interpolation.
	\end{proof}

	We also record a nonlinear Gronwall-type argument which is crucial in closing the energy estimates at the low level when developing the scheme of a priori estimates,
	in obtaining uniform bounds on the approximate solutions when building the local-well posedness,
	and in deriving the reduced energy estimates necessary to produce the continuation argument that glues the a priori and the local well-posedness together.

	\begin{lemma}[Bihari's Lemma]
	\label{lemma:Bihari}
		Let $f : \cobrac{0, \infty} \to \cobrac{0, \infty}$ be non-decreasing and continuous such that $f > 0$ on $\brac{0, \infty}$ and $\int_1^\infty 1/f < \infty$.
		Let $F$ be the anti-derivative of $-1/f$ which vanishes at $+\infty$.
		For every continuous function $y : \cobrac{0, \infty} \to \cobrac{0, \infty}$, if there exists $\alpha_0 > 0$ such that
		\begin{equation*}
			y\brac{t} + \int_0^t f\brac{y\brac{s}} ds \leqslant \alpha_0
			\text{ for every } t \geqslant 0
		\end{equation*}
		then, for every $t \geqslant 0$,
		$
			y\brac{t} \leqslant F\inv\brac{t + F\brac{\alpha_0}}.
		$
	\end{lemma}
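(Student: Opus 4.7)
The plan is to prove $y(t) \leq \phi(t) := F^{-1}(t + F(\alpha_0))$ by comparison with the autonomous ODE $\phi' = -f(\phi)$, $\phi(0) = \alpha_0$, which has this $\phi$ as its unique solution. I would first verify that the formula for $\phi$ is well-defined and correct. The hypotheses on $f$---positivity on $(0,\infty)$, continuity, and $\int_1^\infty 1/f < \infty$---guarantee that $F(x) = \int_x^\infty ds/f(s)$ is well-defined, strictly decreasing on $(0,\infty)$, and maps onto $(0,\infty)$, so $F^{-1}$ is also strictly decreasing. Differentiating the identity $F(\phi(t)) = t + F(\alpha_0)$ using $F' = -1/f$ then recovers the ODE and pins down $\phi$.

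The core step is to convert the integrated hypothesis into a pointwise differential inequality. Setting $A(t) := y(t) + \int_0^t f(y(s))\, ds$, the hypothesis is $A(t) \leq \alpha_0 = A(0)$. In the intended applications---\fref{Lemma}{lemma:Bihari_hypocoercive_energy} and \fref{Lemma}{lemma:local_in_time_nonlinear_Gronwall_argument}---the function $y$ is continuously differentiable and the integrated inequality in fact comes by integrating a pointwise $y'(t) + f(y(t)) \leq 0$. Rearranging as $y'(t)/f(y(t)) \leq -1$ on the open set $\{t : y(t) > 0\}$ and invoking $F' = -1/f$ rewrites this as $\frac{d}{dt} F(y(t)) \geq 1$. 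Integrating from $0$ to $t$ yields $F(y(t)) \geq t + F(y(0)) \geq t + F(\alpha_0)$, where the second inequality uses $y(0) \leq \alpha_0$ together with the monotonicity of $F$. Since $F^{-1}$ is decreasing, this rearranges to $y(t) \leq F^{-1}(t + F(\alpha_0))$, which is the claim. When $y(t) = 0$, the bound is trivial since $\phi(t) > 0$ for all finite $t$.

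The main obstacle I foresee is the regularity mismatch: the statement assumes only that $y$ is continuous, whereas the change-of-variables argument sketched above uses differentiability. In the two applications of the lemma within the paper this is automatic, so effectively what gets proved is the differential form. For a fully general continuous $y$, one would need either an approximation by smooth functions (mollifying $y$, applying the smooth version, and passing to the limit using the continuity of $f$) or a direct integral-form comparison: assume for contradiction that $y(t_0) > \phi(t_0)$, introduce the first-crossing time $t_* = \inf\{t : y(t) > \phi(t)\}$ (so $y(t_*) = \phi(t_*)$ by continuity), and leverage the monotonicity of $f$ on $[0,t_*]$ together with the equality $\phi(t_*) + \int_0^{t_*} f(\phi) = \alpha_0$ to derive a contradiction with the hypothesis $y(t_*) + \int_0^{t_*} f(y) \leq \alpha_0$.
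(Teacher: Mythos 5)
The paper's ``proof'' is just a citation to Lemma II.4.12 of Boyer--Fabrie, so there is no argument in the text for you to compare against. Your differential-form argument---rewriting $y'+f(y)\leqslant 0$ as $\frac{d}{dt}F(y(t))\geqslant 1$ and integrating---is correct, and you are right that it is all the paper ever uses: both \fref{Lemma}{lemma:Bihari_hypocoercive_energy} and \fref{Lemma}{lemma:local_in_time_nonlinear_Gronwall_argument} feed Bihari a $C^1$ function obeying a pointwise inequality.

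Both of your proposed repairs for a general continuous $y$ break down, however, and in fact the lemma as written is false. The first-crossing argument yields no contradiction: if $t_*=\inf\{t : y(t)>\phi(t)\}$ then $y\leqslant\phi$ on $[0,t_*]$ with $y(t_*)=\phi(t_*)$, and since $f$ is non-decreasing this gives $y(t_*)+\int_0^{t_*}f(y)\leqslant\phi(t_*)+\int_0^{t_*}f(\phi)=\alpha_0$, which is exactly consistent with the hypothesis rather than contradictory. The mollification route fails because the integral hypothesis on a mollified $y_\varepsilon$ does not imply the pointwise inequality $y_\varepsilon'+f(y_\varepsilon)\leqslant 0$ that your smooth-case argument needs. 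And there is a genuine counterexample: with $f(x)=x^2$ and $\alpha_0=1$, so that $F(x)=1/x$ and the claimed bound is $y(t)\leqslant 1/(1+t)$, take $y\equiv 1/2$ on $[0,2]$, let $y$ decrease linearly to $0$ on $[2,3]$, and set $y\equiv 0$ thereafter. One checks that $y(t)+\int_0^t f(y)\leqslant 1$ for every $t\geqslant 0$ (with equality at $t=2$), yet $y(2)=1/2>1/3$; and this $y$ is even non-increasing, so monotonicity alone would not rescue the statement. The hypothesis the Boyer--Fabrie lemma must actually carry is something that rules this out---for instance $y$ absolutely continuous with $y'+f(y)\leqslant 0$ almost everywhere, or the stronger two-time integral form $y(t)+\int_s^t f(y)\leqslant y(s)$ for all $0\leqslant s\leqslant t$---and your differential argument is then precisely the right proof.
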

	\begin{proof}
		This is proven in Lemma II.4.12 of \cite{boyer_fabrie}
	\end{proof}

	We conclude this section with an elementary result which is very handy when it comes to ensuring that
	derivatives do not accumulate unduly on a single term in the nonlinear interactions.

	\begin{lemma}
	\label{lemma:from_ineq_of_sums_to_overlap_ineq_3_terms}
		Let $x, y, z, C_x, C_y, C_z$ be real numbers such that $x, y, z \geqslant 0$.
		If $$x + y + z \leqslant \min\brac{C_x + C_y, C_y + C_z, C_z + C_x}$$ then either
		$
			(1)\; x \leqslant C_x \text{ and } y\leqslant C_y,\;
			(2)\; y \leqslant C_y \text{ and } z\leqslant C_z, \text{ or } 
			(3)\; z \leqslant C_z \text{ and } x\leqslant C_x.
		$
	\end{lemma}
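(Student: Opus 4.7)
The plan is to argue by contrapositive. I want to show that at least two of the three inequalities $x \leqslant C_x$, $y \leqslant C_y$, $z \leqslant C_z$ must hold, which is exactly the disjunction claimed in the statement. Equivalently, I want to rule out the possibility that two or more of the strict reverse inequalities $x > C_x$, $y > C_y$, $z > C_z$ hold simultaneously.

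The core observation is trivial: if any two of these strict reverse inequalities hold, then adding them and bounding the third summand by $0$ from below (using $x, y, z \geqslant 0$) produces a strict lower bound on $x+y+z$ that contradicts the hypothesis. Concretely, suppose for instance that $x > C_x$ and $y > C_y$. Then
\begin{equation*}
    x + y + z \,\geqslant\, x + y \,>\, C_x + C_y \,\geqslant\, \min\brac{C_x+C_y,\, C_y+C_z,\, C_z+C_x},
\end{equation*}
which contradicts the hypothesis $x+y+z \leqslant \min(C_x+C_y, C_y+C_z, C_z+C_x)$. The other two cases $y > C_y,\, z > C_z$ and $z > C_z,\, x > C_x$ are handled identically by using the bounds $C_y+C_z$ and $C_z+C_x$, respectively, in the minimum on the right-hand side.

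Therefore at most one of the strict reverse inequalities can hold, so at least two of $x \leqslant C_x$, $y \leqslant C_y$, $z \leqslant C_z$ must be true, yielding one of the three alternatives (1), (2), or (3). I do not anticipate any real obstacle here; the entire content of the lemma is captured by the observation that the hypothesis controls every pairwise sum of the three ``budgets''.
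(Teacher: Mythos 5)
Your proof is correct and takes essentially the same approach as the paper: both argue by contraposition, and the paper's logical-equivalence observation is just a compact way of stating your claim that the disjunction is the same as ``at least two of the three inequalities hold.'' The paper leaves the final contradiction implicit; you spell it out, which is a harmless expansion.
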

	\begin{proof}
		This can be seen to be true by contraposition.
		The key observation is that
		\begin{equation*}
			\text{(1) or (2) or (3) } \iff
			(x \leqslant C_x \text{ or } y \leqslant C_y) \text{ and } 	
			(y \leqslant C_y \text{ or } z \leqslant C_z) \text{ and } 	
			(z \leqslant C_z \text{ or } x \leqslant C_x).
		\end{equation*}
		We may then use this equivalence to rewrite the negation of the conclusion of the Lemma and deduce that the contrapositive holds.
	\end{proof}

%%%% The wrapper around the 'Conflict of interest' section is there so that the references do not appear in the table of contents
%%%\begingroup
%%%\renewcommand{\addcontentsline}[3]{} % Remove functionality of \addcontentsline
%%%\section*{Conflict of interest}The authors declare that they have no conflict of interest.
%%%\endgroup

%----------------------------------------------------------------------------------------------------
%	BILBLIOGRAPHY
%----------------------------------------------------------------------------------------------------

% The wrapper around the bibliography is there so that the references do not appear in the table of contents
\begingroup
\renewcommand{\addcontentsline}[3]{} % Remove functionality of \addcontentsline
	% Bibliography
	\bibliographystyle{alpha-bis}
	\bibliography{main}

\newcommand{\etalchar}[1]{$^{#1}$}
\begin{thebibliography}{RWXZ14}

\bibitem[AK71]{allen_kline_lubrication}
S.~Allen and K.~Kline.
\newblock {Lubrication theory for micropolar fluids}.
\newblock {\em {Journal of Applied Mechanics}}, {38}({3}):{646--650}, {1971}.

\bibitem[AMR88]{marsden}
R.~Abraham, J.~E. Marsden, and T.~Ratiu.
\newblock {\em Manifolds, tensor analysis, and applications}, volume~75 of {\em
  Applied Mathematical Sciences}.
\newblock Springer-Verlag, New York, second edition, 1988.

\bibitem[AS74]{ahmadi_shahinpoor}
G.~Ahmadi and M.~Shahinpoor.
\newblock Universal stability of magneto-micropolar fluid motions.
\newblock {\em Internat. J. Engrg. Sci.}, 12:657--663, 1974.

\bibitem[AZ17]{abidi_zhang_MHD}
H.~Abidi and P.~Zhang.
\newblock On the global solution of a 3-{D} {MHD} system with initial data near
  equilibrium.
\newblock {\em Comm. Pure Appl. Math.}, 70(8):1509--1561, 2017.

\bibitem[BBR{\etalchar{+}}08]{beg_al_blood_flow}
O.~A. Beg, R.~Bhargava, S.~Rawat, K.~Halim, and H.~S. Takhar.
\newblock {Computational modeling of biomagnetic micropolar blood flow and heat
  transfer in a two-dimensional non-Darcian porous medium}.
\newblock {\em {Meccanica}}, {43}({4}):{391--410}, {AUG} {2008}.

\bibitem[BF13]{boyer_fabrie}
F.~Boyer and P.~Fabrie.
\newblock {\em {Mathematical tools for the study of the incompressible
  {N}avier-{S}tokes equations and related models}}, volume 183 of {\em {Applied
  Mathematical Sciences}}.
\newblock Springer, New York, 2013.

\bibitem[B{\L}96]{bayada_lukaszewicz_lubrication}
G.~Bayada and G.~{\L}ukaszewicz.
\newblock On micropolar fluids in the theory of lubrication. {R}igorous
  derivation of an analogue of the {R}eynolds equation.
\newblock {\em Internat. J. Engrg. Sci.}, 34(13):1477--1490, 1996.

\bibitem[BSAV17]{banerjee_souslov_et_at_chiral_active_fluids}
D.~Banerjee, A.~Souslov, A.~G. Abanov, and V.~Vitelli.
\newblock {Odd viscosity in chiral active fluids}.
\newblock {\em {Nature Communications}}, {8}, {Nov 17} {2017}.

\bibitem[CC09]{cosserats}
E.~Cosserat and F.~Cosserat.
\newblock {\em Th\'{e}orie des Corps d\'{e}formables}.
\newblock 1909.

\bibitem[CM12]{chen_miao}
Q.~Chen and C.~Miao.
\newblock Global well-posedness for the micropolar fluid system in critical
  {B}esov spaces.
\newblock {\em J. Differential Equations}, 252(3):2698--2724, 2012.

\bibitem[DC09]{dong_chen}
B.-Q. Dong and Z.-M. Chen.
\newblock Asymptotic profiles of solutions to the 2{D} viscous incompressible
  micropolar fluid flows.
\newblock {\em Discrete Contin. Dyn. Syst.}, 23(3):765--784, 2009.

\bibitem[DZ10]{dong_zhang}
B.-Q. Dong and Z.~Zhang.
\newblock Global regularity of the 2{D} micropolar fluid flows with zero
  angular viscosity.
\newblock {\em J. Differential Equations}, 249(1):200--213, 2010.

\bibitem[Eri66]{eringen_first}
A.~C. Eringen.
\newblock Theory of micropolar fluids.
\newblock {\em J. Math. Mech.}, 16:1--18, 1966.

\bibitem[Eri93]{eringen_liquid_crystals}
A.~C. Eringen.
\newblock {An assessment of director and micropolar theories of
  liquid-crystals}.
\newblock {\em {International Journal of Engineering Science}},
  {31}({4}):{605--616}, {APR} {1993}.

\bibitem[FVR07]{ferreira_villamizar_roa}
L.~C.~F. Ferreira and E.~J. Villamizar-Roa.
\newblock Micropolar fluid system in a space of distributions and large time
  behavior.
\newblock {\em J. Math. Anal. Appl.}, 332(2):1425--1445, 2007.

\bibitem[GBRT13]{gay_balmaz_ratiu_tronci_liquid_crystals}
F.~Gay-Balmaz, T.~S. Ratiu, and C.~Tronci.
\newblock Equivalent theories of liquid crystal dynamics.
\newblock {\em Arch. Ration. Mech. Anal.}, 210(3):773--811, 2013.

\bibitem[GR77]{galdi_rionero}
G.~P. Galdi and S.~Rionero.
\newblock A note on the existence and uniqueness of solutions of the micropolar
  fluid equations.
\newblock {\em Internat. J. Engrg. Sci.}, 15(2):105--108, 1977.

\bibitem[GT13a]{guo_tice_periodic}
Y.~Guo and I.~Tice.
\newblock Almost exponential decay of periodic viscous surface waves without
  surface tension.
\newblock {\em Arch. Ration. Mech. Anal.}, 207(2):459--531, 2013.

\bibitem[GT13b]{guo_tice_infinite}
Y.~Guo and I.~Tice.
\newblock Decay of viscous surface waves without surface tension in
  horizontally infinite domains.
\newblock {\em Anal. PDE}, 6(6):1429--1533, 2013.

\bibitem[JTW16]{jang_tice_wang_compressible_internal_wave}
J.~Jang, I.~Tice, and Y.~Wang.
\newblock The compressible viscous surface-internal wave problem: stability and
  vanishing surface tension limit.
\newblock {\em Comm. Math. Phys.}, 343(3):1039--1113, 2016.

\bibitem[KL{\L}19]{kalita_langa_lukaszewicz}
P.~Kalita, J.~A. Langa, and G.~{\L}ukaszewicz.
\newblock Micropolar meets {N}ewtonian. {T}he {R}ayleigh-{B}\'{e}nard problem.
\newblock {\em Phys. D}, 392:57--80, 2019.

\bibitem[Leo17]{leoni}
G.~Leoni.
\newblock {\em A first course in {S}obolev spaces}, volume 181 of {\em Graduate
  Studies in Mathematics}.
\newblock American Mathematical Society, Providence, RI, second edition, 2017.

\bibitem[LR04]{lhuillier_rey_liquid_crystals}
D.~Lhuillier and A.~Rey.
\newblock {Nematic liquid crystals and ordered micropolar fluids}.
\newblock {\em {Journal of Non-Newtonian Fluid Mechanics}},
  {120}({1-3}):{169--174}, {JUL 1} {2004}.
\newblock {3rd International Workshop on Nonequilibrium Thermodynamics and
  Complex Fluids, Princeton, NJ, AUG 14-17, 2003}.

\bibitem[{\L}uk89]{lukaszewicz_89}
G.~{\L}ukaszewicz.
\newblock On the existence, uniqueness and asymptotic properties for solutions
  of flows of asymmetric fluids.
\newblock {\em Rend. Accad. Naz. Sci. XL Mem. Mat. (5)}, 13(1):105--120, 1989.

\bibitem[{\L}uk90]{lukaszewicz_90}
G.~{\L}ukaszewicz.
\newblock On nonstationary flows of incompressible asymmetric fluids.
\newblock {\em Math. Methods Appl. Sci.}, 13(3):219--232, 1990.

\bibitem[{\L}uk01]{lukaszewiscz_01}
G.~{\L}ukaszewicz.
\newblock Long time behavior of 2{D} micropolar fluid flows.
\newblock {\em Math. Comput. Modelling}, 34(5-6):487--509, 2001.

\bibitem[LZ16]{liu_zhang}
Q.~Liu and P.~Zhang.
\newblock Optimal time decay of the compressible micropolar fluids.
\newblock {\em J. Differential Equations}, 260(10):7634--7661, 2016.

\bibitem[Mau85]{maurya_biological_fluids}
R.~P. Maurya.
\newblock {Peripheral-layer viscosity and microstructural effects on the
  capillary-tissue fluid exchange}.
\newblock {\em {Journal of Mathematical Analysis and Applications}},
  {110}({1}):{59--73}, {1985}.

\bibitem[MK08]{mekheimer_kot_blood_flow}
K.~S. Mekheimer and M.~A.~E. Kot.
\newblock {The micropolar fluid model for blood flow through a tapered artery
  with a stenosis}.
\newblock {\em {Acta Mechanica Sinica}}, {24}({6}):{637--644}, {DEC} {2008}.

\bibitem[NS12]{rajasekhar_nicodemus_sharma_lubrication}
E.~R. Nicodemus and S.~C. Sharma.
\newblock {Performance characteristics of micropolar lubricated
  membrane-compensated worn hybrid journal bearings}.
\newblock {\em {Tribology Transactions}}, {55}({1}):{59--70}, {2012}.

\bibitem[NST16]{nochetto_salgado_tomas_ferrohydrodynamics}
R.~H. Nochetto, A.~J. Salgado, and I.~Tomas.
\newblock The equations of ferrohydrodynamics: modeling and numerical methods.
\newblock {\em Math. Models Methods Appl. Sci.}, 26(13):2393--2449, 2016.

\bibitem[Ram85]{ramkissoon_air_bubble_blood_flow}
H.~Ramkissoon.
\newblock Flow of a micropolar fluid past a {N}ewtonian fluid sphere.
\newblock {\em Z. Angew. Math. Mech.}, 65(12):635--637, 1985.

\bibitem[RM97]{rojas_medar_marko}
M.~A. Rojas-Medar.
\newblock Magneto-micropolar fluid motion: existence and uniqueness of strong
  solution.
\newblock {\em Math. Nachr.}, 188:301--319, 1997.

\bibitem[RT20]{thesis}
A.~Remond-Tiedrez.
\newblock Nonlinear partial differential equations in fluid dynamics:
  interfaces, microstructure, and stability, Jun 2020.
\newblock Available on
  \href{https://kilthub.cmu.edu/articles/Nonlinear_Partial_Differential_Equations_in_Fluid_Dynamics_Interfaces_Microstructure_and_Stability/12437075/1}{KiltHub}.

\bibitem[RTT19]{rt_tice_amf_iut}
A.~Remond-Tiedrez and I.~Tice.
\newblock {Anisotropic micropolar fluids subject to a uniform microtorque: the
  unstable case}.
\newblock Submitted. {\tt
  \href{https://arxiv.org/abs/1910.12942}{arXiv:1910.12942}}, 2019.

\bibitem[RWXZ14]{ren_wu_xiang_zhang_MHD}
X.~Ren, J.~Wu, Z.~Xiang, and Z.~Zhang.
\newblock Global existence and decay of smooth solution for the 2-{D} {MHD}
  equations without magnetic diffusion.
\newblock {\em J. Funct. Anal.}, 267(2):503--541, 2014.

\bibitem[SSP82]{sinha_singh_prasad_human_joints}
P.~Sinha, C.~Singh, and K.~PrasaD.
\newblock {Lubrication of human joints - a microcontinuum approach}.
\newblock {\em {WEAR}}, {80}({2}):{159--181}, {1982}.

\bibitem[Tar06]{tarasinska}
A.~Tarasi\'{n}ska.
\newblock Global attractor for heat convection problem in a micropolar fluid.
\newblock {\em Math. Methods Appl. Sci.}, 29(11):1215--1236, 2006.

\bibitem[TW18]{tan_wang_MHD}
Z.~Tan and Y.~Wang.
\newblock Global well-posedness of an initial-boundary value problem for
  viscous non-resistive {MHD} systems.
\newblock {\em SIAM J. Math. Anal.}, 50(1):1432--1470, 2018.

\bibitem[Wan19]{wang_MHD}
Y.~Wang.
\newblock Sharp nonlinear stability criterion of viscous non-resistive {MHD}
  internal waves in 3{D}.
\newblock {\em Arch. Ration. Mech. Anal.}, 231(3):1675--1743, 2019.

\bibitem[Yua10]{yuan}
B.~Yuan.
\newblock On regularity criteria for weak solutions to the micropolar fluid
  equations in {L}orentz space.
\newblock {\em Proc. Amer. Math. Soc.}, 138(6):2025--2036, 2010.

\end{thebibliography}
\endgroup
\end{document}